\theoremstyle{plain}
\newtheorem{Pocz}{Poczatek}[section]
\newtheorem{Proposition}[Pocz]{Proposition}
\newtheorem{Theorem}[Pocz]{Theorem}
\newtheorem{Corollary}[Pocz]{Corollary}
\newtheorem{Lemma}[Pocz]{Lemma}
\newtheorem{Axiom}[Pocz]{Axiom}
\newtheorem{Condition}[Pocz]{Condition}
\newtheorem{Observation}[Pocz]{Observation}
\newtheorem{Question}[Pocz]{Question}
\newtheorem{Example}[Pocz]{Example}
\theoremstyle{definition}
\newtheorem{Definition}[Pocz]{Definition}
\theoremstyle{remark}
\newtheorem{Remark}[Pocz]{Remark}
\numberwithin{equation}{section}
\title[Axiomatization of geometry employing group actions]
{Axiomatization of geometry employing group actions}
\author{Jerzy Dydak}
\address{University of Tennessee, Knoxville, TN 37996, USA}
\email{jdydak@utk.edu}
\date{ \today
}
\keywords{Birkhoff's axioms, Euclid's axioms, Euclidean geometry, foundations of geometry, Hilbert's axioms, hyperbolic geometry, isometries, rotations, Tarski's axioms, translations}
\subjclass[2000]{Primary 51K05; Secondary 51H99}
\begin{document}
\maketitle
\begin{center}
\today
\end{center}

\begin{abstract}
The aim of this paper is to develop a new axiomatization of planar geometry by reinterpreting the original axioms of Euclid. The basic concept is still that of a line segment but its equivalent notion of betweenness is viewed as a topological, not a metric concept. That leads quickly to the notion of connectedness without any need to dwell on the definition of topology. In our approach line segments must be connected. Lines and planes are unified via the concept of separation: lines are separated into two components by each point, planes contain lines that separate them into two components as well. We add a subgroup of bijections preserving line segments and establishing unique isomorphism of basic geometrical sets, and the axiomatic structure is complete. Of fundamental importance is the Fixed Point Theorem that allows for creation of the concepts of length and congruency of line segments.
The resulting structure is much more in sync with modern science than other axiomatic approaches to planar geometry. For instance, it leads naturally to the Erlangen Program in geometry. Our Conditions of Homogeneity and Rigidity have two interpretations. In physics, they correspond to the basic tenet that independent observers should arrive at the same measurement and are related to boosts in special relativity. In geometry, they mean uniqueness of congruence for certain geometrical figures.

Euclid implicitly assumes the concepts of length and angle measure in his axioms. Our approach is to let both of them emerge from axioms. Euclid obfuscates the fact that to compare lengths of line segments one needs rigid motions beforehand. Our system of axioms of planar geometry rectifies that defect of all current axiomatic approaches to planar geometry.

Another thread of the paper is the introduction of boundary at infinity, an important concept of modern mathematics, and linking of Pasch Axiom to endowing boundaries at infinity with a natural relation of betweenness. That way spherical geometry can be viewed as geometry of boundaries at infinity.

\end{abstract}

\tableofcontents

\section{Introduction}

At the end of the 19th century it became apparent that Euclid's axioms (see \cite{Eucl} or \cite{Stah}) of planar geometry are incomplete. See \cite{Gree1} for a short account and read \cite{Gree2} for an extensive account of historical developments aimed at rectifying axioms of Euclid.

Currently, there are three sets of axioms that describe geometry adequately. Those are (in historical order): Hilbert's axioms \cite{Hilb}, Tarski's axioms \cite{Tars}, and Birkhoff's axioms \cite{Birk}
(see \cite{SSTa} and \cite{Szmi} for discussions of axiomatic approaches to geometry). Of the three, Birkhoff's axioms are the most concise, yet they assume existence of reals and do not explain reals geometrically. The other two sets of axioms are difficult to memorize as they lack a clear scaffolding structure \cite{Scaf}.
Additionally, the post-Euclid sets of axioms have not gained a permanent foothold in the classroom - most teachers prefer using Euclid's axioms (see \cite{Stah} or \cite{Vene}  for examples of widely used textbooks where Euclid's axioms are prominent) and the other axiomatic approaches to geometry are considered more or less a curiosity. There is an interesting new textbook of Anton Petrunin \cite{Petr} using a variant of Birkhoff's axioms. Also, there is an axiomatic system of A. D. Alexandrov \cite{Alex} from 1994.

In this paper we develop a new axiomatization of planar geometry by reinterpreting the original Euclid's axioms. The basic concept is still that of a line segment but its equivalent notion of betweenness is viewed as a topological, not a metric concept. That leads quickly to the notion of connectedness without any need to dwell on the definition of topology. In our approach line segments must be connected. Lines and planes are unified via the concept of separation: lines are separated into two components by each point, planes contain lines that separate them into two components as well. We add a subgroup of bijections preserving line segments that establishes unique isomorphism of basic geometrical sets, and the axiomatic structure is complete. Of fundamental importance is the Fixed Point Theorem that allows for creation of the concepts of length and congruency of line segments.
The resulting structure is much more in sync with modern mathematics than other axiomatic approaches to planar geometry. For instance, it leads naturally to the Erlangen Program in geometry \cite{Erla}. Notice most results in modern geometry/topology start with group actions by isometries (see \v Svarc-Milnor Lemma in \cite{Roe lectures}  on p.8, \cite{BH}, \cite{BS}, or \cite{NY}). Our approach provides the missing link of constructing metrics for certain actions.

Let's look at

\begin{Axiom}[Axiom 1 of Euclid]\label{Axiom 1 of Euclid}
A straight line segment can be drawn joining any two points.
\end{Axiom}

If one overlooks the word "drawn", then Axiom \ref{Axiom 1 of Euclid} seems to simply say that for any two points $A$ and $B$, the line segment $AB$ exists. However, that is a very narrow interpretation.
The question is: What does it mean to draw a line segment? 
To answer it let's apply the relativity idea from physics: from the point of view of a pen it is the plane that is moving, not the pen. That implies the plane is equipped with motions called \textbf{translations}. For every two points $A$ and $B$ there is a unique translation $\tau_{AB}$ that sends point $A$ to point $B$:
$$\tau_{AB}(A)=B$$
Additional implication is that the line segment $AB$ ought to consist of points between $A$ and $B$. Yet another implication is that the segment $AB$ ought to contain no holes: drawing is a continuous motion. That leads to the concept of connectivity of line segments.

Similarly, one can reinterpret Axiom 3 of Euclid:

\begin{Axiom}[Axiom 3 of Euclid]\label{Axiom 3 of Euclid}
 Given any straight line segment, a circle can be drawn having the segment as radius and one endpoint as center.
\end{Axiom}

Again, relativistically speaking, from the point of view of a pen, it is the plane that is rotating, not the pen. That means the plane is equipped with motions called \textbf{rotations}. Translations and rotations are examples of \textbf{rigid motions} and their existence ought to be part of axiomatization of Euclidean geometry.

Let's look at
\begin{Axiom}[Axiom 2 of Euclid]\label{Axiom 2 of Euclid}
Any straight line segment can be extended indefinitely in a straight line.
\end{Axiom}

One possible interpretation is that the line joining two points $A$ and $B$ arises by applying the translation $\tau_{AB}$ to the line segment joining $A$ and $B$ over and over. A better way is to look at lines topologically. Namely, each point separates them into two components. That point of view can be extended to planes: lines of Euclid ought to separate planes into two components as well. It turns out this is equivalent to Pasch's Axiom. Thus, the correct way to interpret Axiom \ref{Axiom 2 of Euclid} is to say that each line segment is contained in a line (can be extended to a line) separating the plane into two components. That way the indefiniteness is the opposite of definiteness: a definite extension of a line segment does not separate the plane.

To summarize: Euclid's axioms are intuitive but incomplete. Hilbert's axioms are non-intuitive but complete. The same can be said of Tarski's axioms.
The problem is that Euclid assumes the concept of length of line segments and explains congruence via rigid motions yet he does not use rigid motions as part of the axiomatic system. Perhaps it is because it seems one needs the concept of length to define rigid motions. It is so if one concentrates on bounded geometrical figures only. In case of infinite figures one can define rigid motions without the concept of length. The way to arrive at it is to think how measurements occurs. In that way, a new system of axioms emerges which is both intuitive and complete.

Euclid assumes the concept of length in his axioms. Our approach is to let the concept of length emerge from axioms. Euclid obfuscates the fact that to compare lengths of line segments one needs rigid motions beforehand. Our system of axioms of planar geometry rectifies that defect of all current axiomatic approaches to planar geometry. Also, our system makes it possible to explain vectors without confusing them with points. Those are simply translations of a Euclidean plane (see \ref{CharOfEuclideanPlanesThm}). Notice we point out vectors do not exist for non-Euclidean geometries (see \ref{CharOfEuclideanPlanesThm}). 

Here is our scaffolding structure (or a Jacob's ladder in the sense of Marcel Berger \cite{Berg1} - see \cite{Berg2}, \cite{Berg3} for implementations of it) of axiomatizing of the Euclidean plane:
\begin{enumerate}
\item the basic concept is betweenness (or existence of line segments),
\item line segments ought to be connected,
\item lines are spaces that are separated by each point into two components,
\item Erlangen lines are lines with a group of isomorphisms that establish unique equivalence of any two maximal rays,
\item Erlangen lines have the concept of congruency of line segments and the concept of length of line segments,
\item all Erlangen lines are isomorphic,
\item circles are spaces with antipodal points that separate them into two lines,
\item Erlangen circles are circles with a group of isomorphisms that establish unique equivalence of any two maximal rays,
\item all Erlangen circles are isomorphic,
\item planes are spaces with every two points contained in a line separating the plane into two components,
\item Erlangen planes are planes with a group of isomorphisms that establish unique equivalence of any two half-planes,
\item Euclidean planes are Erlangen planes satisfying Axiom 5 of Euclid,
\item all Euclidean planes are isomorphic,
\item there exist non-Euclidean Erlangen planes.
\end{enumerate}

There are several references to real numbers prior to their formal introduction right after \ref{CreatingConnectedLines}. Obviously, those can be skipped if one wants a pure introduction of reals via geometry only. Similarly, one can omit more advanced concepts (linear vector spaces, linear order, etc).

The other thread of the paper is interpreting Axiom 2 of Euclid as one that leads to the concept of maximal rays, which in turn leads to the concept of the boundary at infinity of 
spaces with maximal rays. It is natural to seek conditions for the boundary at infinity to have a relation of betweenness induced by the betweenness of the original space. It turns out that natural condition is a variant of Pasch Axiom which guarantees the boundary at infinity to be a space with maximal rays that also satisfies Pasch Axiom (we call those Pasch spaces). In that setting Pasch spaces whose boundary at infinity consists of two points are exactly lines or circles and Pasch spaces whose boundary at infinity are circles are exactly planes. That thread provides another scaffolding structure (Jacob's ladder of Marcel Berger \cite{Berg1}) to geometry.

Foundations of geometry on one hand can be compared to Big Bang of physics since Euclid's Elements began modern mathematics. On the other hand Big Bang lasted mini-seconds and the search for the most effective way of presenting geometry is already taking more than 2300 years. The aim of this paper is to offer different rungs to Jacob's ladder of geometry with the goal of creating a comprehensive yet concise introduction to the subject: one lower rung is adding discussion of lines (as opposed to using them as an undefined concept), and another mobile rung is using group actions as a major tool that encodes chunks of mathematics.

The author is grateful to Nik Brodskiy, Chuck Collins, and Joan Lind for weekly conversations about philosophies of teaching and to Matic Cencelj, Brendon Labuz, and Maria Moszy\' nska for offering comments and suggestions that improved the exposition of the paper.

The author would like to thank Universidad Complutense de Madrid for hospitality during the summer of 2014 when the ideas of the paper started to emerge.

\section{Betweenness}
This section provides the background material for a rigorous discussion of Axiom 1 of Euclid \ref{Axiom 1 of Euclid}.

Euclid chose line segments as his basic concept. Both Hilbert and Tarski use the concept of betweenness which is connected to the physical world, hence less abstract. However, line segments are much more useful in developing a theory than betweenness, a nod to the wisdom of Euclid.

Betweenness is the first geometrical concept we encounter.

\textbf{Practical Definition}: A point $M$ is \textbf{between} points $A$ and $B$ if, when viewing the plane (or space) from point $A$, only point $M$ is visible.

\textbf{Example}: During a \textbf{lunar eclipse} the Earth is between the Moon and the Sun. 

\textbf{Example}: During a \textbf{solar eclipse} the Moon is between the Sun and the Earth. 

\textbf{Practical Definition}: A \textbf{line segment} with ends $A$ and $B$ is the set of all points between $A$ and $B$ including both $A$ and $B$.

\textbf{Example}: An arrow is a good practical example of a line segment.

A practical way to detect if a physical object is straight (i.e. it is a line segment) is to hold one of its ends in front of one's eye and look along the object.

\begin{Definition}\label{BetweennessDef}
A \textbf{betweenness} relation on a set $\Pi$ is a ternary relation satisfying the following conditions:\\
a. each point is between itself and any point of $\Pi$,\\
b. if a point $A$ is between points $B$ and $C$, then $A$ is between $C$ and $B$ as well,\\
c. Let $s[E,D]$ be the set of all points between points $E$ and $D$.
If $C$ is between $A$ and $B$, then $s[A,B]=s[A,C]\cup s[C,B]$ and $s[A,C]\cap s[C,B]=s[C,C]=\{C\}$. 
\end{Definition}

\begin{Remark}
Observe that both Hilbert and Tarski define betweenness quite differently from us. Their primary interest is extending line segments. Trying to derive \ref{BetweennessDef} from Hilbert's or Tarski's axioms is quite a challenge.
\end{Remark}

\begin{Definition}
$s(A,B):=s[A,B]\setminus \{A,B\}$, the set of points \textbf{strictly between} $A$ and $B$, will be called an \textbf{open line segment} and $s[A,B]$ will be called a \textbf{line segment} or a \textbf{closed line segment}.

A \textbf{half-open} line segment $s[A,B)$ is defined as $s[A,B):=s[A,B]\setminus \{B\}$.
Similarly, the half-open line segment $s(A,B]$ is defined as $s(A,B]:=s[A,B]\setminus \{A\}$.
\end{Definition}

Notice that two major mathematical structures carry implicitly a betweenness relation
and for the third one (metric spaces) the situation is a bit more complicated.

\begin{Example} A linear order $\leq$ on $\Pi$ induces betweenness as follows: $A$ is between $B$ and $C$ if $B\leq A\leq C$ or $C\leq A\leq B$.
\end{Example}

\begin{Example} A structure of a real vector space on $\Pi$ induces betweenness as follows: $A$ is between $B$ and $C$ if there is $0\leq t\leq 1$ such that $A=t\cdot B+(1-t)\cdot C$.
\end{Example}

\begin{Proposition}\label{SplittingOfClosedSegments}
Let $\Pi$ be a set with a relation of betweenness. If $S\subset s(A,B)$ is a finite set of points, then there is exactly one ordering $A_0=A, A_1,\ldots, A_n=B$ of $S\cup \{A,B\}$
such that $s[A,B]$ is the union of line segments $s[A_i,A_{i+1}]$ for $0\leq i < n$ and
if two segments $s[A_i,A_{i+1}]$, $s[A_j,A_{j+1}]$ have a non-empty intersection
for $i < j$, then $j=i+1$ and the intersection equals $\{A_{i+1}\}$.
\end{Proposition}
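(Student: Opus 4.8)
The plan is to convert this combinatorial statement about orderings into a statement about a single linear order on $s[A,B]$. Define a relation $\preceq$ on $s[A,B]$ by declaring $P\preceq Q$ iff $P\in s[A,Q]$ (that is, $P$ is between $A$ and $Q$). The whole proposition will follow once I show that $\preceq$ is a linear order whose order intervals are precisely the closed segments: then $S\cup\{A,B\}$ is a finite subset of a linearly ordered set with least element $A$ and greatest element $B$, and the required ordering is forced to be its unique increasing enumeration $A=A_0\prec A_1\prec\cdots\prec A_n=B$.

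First I would extract from Definition \ref{BetweennessDef} the few ``transitivity'' facts that make $\preceq$ an order. The decisive one is: if $P\in s[A,B]$ and $Q\in s[P,B]$, then $P\in s[A,Q]$. This is proved by applying clause (c) to $Q\in s[A,B]$ to split $s[A,B]=s[A,Q]\cup s[Q,B]$ with intersection $\{Q\}$; if $P$ landed in $s[Q,B]$ then, splitting $s[P,B]=s[P,Q]\cup s[Q,B]$, the point $P$ would lie in $s[P,Q]\cap s[Q,B]=\{Q\}$, forcing $P=Q$. Reflexivity is clause (a); transitivity is immediate, since $Q\in s[A,R]$ gives $s[A,R]=s[A,Q]\cup s[Q,R]\supseteq s[A,Q]$; antisymmetry uses the intersection clause of (c); and totality is exactly the decisive fact above. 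The same splitting arguments yield the key geometric identity: for $P\preceq Q$ the segment $s[P,Q]$ equals the order interval $\{X: P\preceq X\preceq Q\}$, and in particular $s[P,Q]\subseteq s[A,B]$.

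With $\preceq$ in hand, existence is routine. Taking the sorted enumeration $A=A_0\prec\cdots\prec A_n=B$ of $S\cup\{A,B\}$, the inclusion $\bigcup_i s[A_i,A_{i+1}]\subseteq s[A,B]$ is the interval description; conversely, given $X\in s[A,B]$ I pick the largest $i<n$ with $A_i\preceq X$, so that $A_i\preceq X\preceq A_{i+1}$ and hence $X\in s[A_i,A_{i+1}]$ by order-convexity (endpoints being trivially covered). The intersection clause follows because, for $i<j$, the set $s[A_i,A_{i+1}]\cap s[A_j,A_{j+1}]$ is contained by the interval description in $\{X: A_j\preceq X\preceq A_{i+1}\}$, which is empty when $j\ge i+2$ and is $\{A_{i+1}\}$ when $j=i+1$, by antisymmetry.

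The main obstacle is uniqueness, i.e. ruling out non-monotone valid orderings. Here I would argue by contradiction: given a valid ordering $B_0=A,\ldots,B_n=B$, let $i$ be least with $B_{i+1}\prec B_i$; then $B_0\prec\cdots\prec B_i$, and since $A=B_0\prec B_{i+1}\prec B_i$ there is some $l<i$ with $B_l\prec B_{i+1}\prec B_{l+1}$. Order-convexity now places $B_{i+1}$ in both $s[B_l,B_{l+1}]$ and $s[B_i,B_{i+1}]$, two segments that are either non-consecutive or consecutive, but in either case forbidden by the intersection clause from sharing the point $B_{i+1}$ (consecutive segments meet only at their common endpoint $B_i\ne B_{i+1}$) --- a contradiction. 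Hence every valid ordering is $\preceq$-increasing and therefore coincides with the sorted enumeration, which gives uniqueness. An alternative, slightly longer route runs the entire argument by induction on $|S|$, splitting $s[A,B]$ at a chosen interior point via clause (c) and applying the inductive hypothesis to the two halves; the linear-order packaging above is what keeps the uniqueness bookkeeping manageable.
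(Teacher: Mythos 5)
Your proof is correct and follows essentially the same route as the paper: both order the points of $S\cup\{A,B\}$ by betweenness relative to $A$ (the paper phrases this as proper inclusion of the segments $s[A,A_i]$, which by clause (c) is equivalent to your $\preceq$), take the sorted enumeration for existence, and obtain uniqueness by showing that a non-monotone ordering forces an intersection forbidden by the intersection clause. The differences are only presentational: you spell out the order-theoretic groundwork that the paper treats as immediate, and you use a least-descent contradiction where the paper inducts on the first point.
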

\begin{proof}
Notice there is exactly one ordering $A_0=A, A_1,\ldots, A_n=B$ of points in $S\cup \{A,B\}$ such that
$s[A,A_i]$ is a proper subset of $s[A,A_{i+1}]$ for all $i < n$.
Consequently, line segments $s[A_i,A_{i+1}]$ and $s[A_j,A_{j+1}]$ are disjoint if $i+1 < j$ and the fact that $s[A,B]$ is the union of line segments $s[A_i,A_{i+1}]$ for $0\leq i < n$ can be shown by induction.

If there is another ordering $B_0=A, B_1,\ldots, B_n=B$ of $S\cup \{A,B\}$
with the same property, then $A_1=B_1$ as otherwise (consider $A_1\in s(A,B_1)$ for simplicity) $s[B_0,B_1]$ intersects $s[B_j,B_{j+1}]$ (here $j$ is the index satisfying $A_1=B_j$) in its interior point.
By induction one can complete the proof.
\end{proof}

\begin{Corollary}\label{InclusionOfSegmentsProp}
Let $\Pi$ be a set with a relation of betweenness. If $C,D\in s[A,B]$, then $s[C,D]\subset s[A,B]$.
\end{Corollary}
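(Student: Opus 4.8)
The plan is to apply condition (c) of the betweenness relation (Definition \ref{BetweennessDef}) twice; the splitting Proposition \ref{SplittingOfClosedSegments} is not actually needed here, though it offers an alternative route. The key observation is that, by the very definition of $s[A,B]$ as the set of points between $A$ and $B$, the hypothesis $C\in s[A,B]$ is literally the statement ``$C$ is between $A$ and $B$'', which is exactly what is required to invoke part (c).

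First I would apply part (c) to the point $C$. Since $C\in s[A,B]$, i.e. $C$ is between $A$ and $B$, we obtain the decomposition $s[A,B]=s[A,C]\cup s[C,B]$. Because $D\in s[A,B]$ as well, the point $D$ must lie in at least one of the two halves $s[A,C]$ or $s[C,B]$.

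Next I would split into two symmetric cases. If $D\in s[C,B]$, then $D$ is between $C$ and $B$, so part (c) applied to $D$ gives $s[C,B]=s[C,D]\cup s[D,B]$; in particular $s[C,D]\subset s[C,B]\subset s[A,B]$. If instead $D\in s[A,C]$, then $D$ is between $A$ and $C$, so part (c) applied to $D$ gives $s[A,C]=s[A,D]\cup s[D,C]$, whence $s[D,C]\subset s[A,C]\subset s[A,B]$; invoking the symmetry $s[C,D]=s[D,C]$ furnished by part (b) then finishes this case. Either way $s[C,D]\subset s[A,B]$.

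There is no genuine obstacle in this argument: the conclusion drops out of two applications of (c) together with the symmetry (b). The only steps requiring a little care are the bookkeeping identification of ``$X\in s[A,B]$'' with ``$X$ is between $A$ and $B$'', and remembering to symmetrize $s[C,D]=s[D,C]$ in the second case so that the decomposition of $s[A,C]$ can be quoted directly without reordering the endpoints.
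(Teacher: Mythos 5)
Your proof is correct, but it takes a different route from the paper. The paper's proof is a one-line appeal to Proposition \ref{SplittingOfClosedSegments}, applied to $S=\{C,D\}\setminus\{A,B\}$: the canonical ordering $A, \ldots, C, \ldots, D, \ldots, B$ (or with $C$, $D$ transposed) and the resulting decomposition of $s[A,B]$ into consecutive subsegments yield the inclusion. You instead argue directly from the axioms of Definition \ref{BetweennessDef}: one application of (c) at $C$ gives $s[A,B]=s[A,C]\cup s[C,B]$, the case analysis on which half contains $D$ gives a second application of (c), and the symmetry (b) handles the case where the endpoints come out in the order $s[D,C]$. Both arguments are sound, and they rest on the same engine (repeated use of condition (c)); the difference is that yours is self-contained and avoids the heavier content of \ref{SplittingOfClosedSegments} (uniqueness of the ordering, disjointness of non-adjacent subsegments), none of which is actually needed here, while the paper's citation buys brevity and uniformity, since the very same proposition is reused immediately afterwards for Corollary \ref{BetwennessOfTripleProp} and Proposition \ref{IntersectionOfClosedSegments}. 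One small virtue of your write-up worth keeping: you make explicit the bookkeeping identification of ``$X\in s[A,B]$'' with ``$X$ is between $A$ and $B$,'' which the paper leaves tacit, and your argument covers the degenerate cases ($C$ or $D$ equal to $A$ or $B$) automatically, since condition (a) makes endpoints between themselves and any point, so condition (c) applies there as well.
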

\begin{proof}
Apply \ref{SplittingOfClosedSegments} to $S=\{C,D\}\setminus \{A,B\}$.
\end{proof}

\begin{Corollary}\label{BetwennessOfTripleProp}
Let $\Pi$ be a set with a relation of betweenness. 
Given three different points $A,B,C$ of a line segment $s[X,Y]$ in $\Pi$ there is exactly one pair such that the remaining point is between them.
\end{Corollary}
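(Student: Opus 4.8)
The plan is to transport the whole question into the canonical ordering produced by Proposition \ref{SplittingOfClosedSegments}. First I would apply that proposition to the finite set $S = \{A,B,C\} \setminus \{X,Y\}$, which yields the unique ordering $X = Z_0, Z_1, \ldots, Z_n = Y$ of $S \cup \{X,Y\}$ characterized by $s[X,Z_i] \subsetneq s[X,Z_{i+1}]$ for every $i$. Since $A,B,C$ are three distinct members of $S \cup \{X,Y\}$, they occupy three distinct slots in this chain, say $Z_p, Z_q, Z_r$ with $p < q < r$. The entire statement then reduces to proving that the middle entry $Z_q$ is between $Z_p$ and $Z_r$, while neither extreme entry is between the other two points. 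I would isolate two elementary consequences of the chain to use repeatedly: (i) for $i \le j$ one has $Z_i \in s[X,Z_j]$, since $Z_i \in s[X,Z_i] \subseteq s[X,Z_j]$; and (ii) for $i < j$ one has $Z_j \notin s[X,Z_i]$, because $Z_j \in s[X,Z_i]$ would give $s[X,Z_j] \subseteq s[X,Z_i]$ by Corollary \ref{InclusionOfSegmentsProp}, contradicting $s[X,Z_i] \subsetneq s[X,Z_j]$.

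For existence I would argue that $Z_q \in s[Z_p,Z_r]$. By (i), $Z_p \in s[X,Z_r]$, so $Z_p$ is between $X$ and $Z_r$, and Definition \ref{BetweennessDef}c gives $s[X,Z_r] = s[X,Z_p] \cup s[Z_p,Z_r]$ with $s[X,Z_p] \cap s[Z_p,Z_r] = \{Z_p\}$. Now $Z_q \in s[X,Z_r]$ by (i), while $Z_q \notin s[X,Z_p]$ by (ii) since $p < q$. Hence $Z_q$ must lie in the other piece $s[Z_p,Z_r]$, which is exactly the assertion that the middle point is between the extremes.

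For uniqueness I would show that an extreme point cannot sit inside the segment spanned by the other two. To rule out $Z_p \in s[Z_q,Z_r]$, note that $Z_q \in s[X,Z_r]$ by (i), so $s[X,Z_r] = s[X,Z_q] \cup s[Z_q,Z_r]$ with intersection $\{Z_q\}$; since $Z_p \in s[X,Z_q]$ by (i), membership of $Z_p$ in $s[Z_q,Z_r]$ would place $Z_p$ in that intersection and force $Z_p = Z_q$, which is false. To rule out $Z_r \in s[Z_p,Z_q]$, observe that $s[Z_p,Z_q] \subseteq s[X,Z_q]$ by Corollary \ref{InclusionOfSegmentsProp}, whereas $Z_r \notin s[X,Z_q]$ by (ii) since $q < r$. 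Together with the existence half, this establishes that the single pair $\{Z_p,Z_r\}$ (the two extremes) has its complementary point between its members, and no other pair does.

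The main obstacle I anticipate is not any deep step but the faithful translation of the ordering—which Proposition \ref{SplittingOfClosedSegments} supplies only through the proper inclusions $s[X,Z_i] \subsetneq s[X,Z_{i+1}]$—into honest betweenness statements. Facts (i) and (ii) are where that translation is concentrated, and the one piece requiring genuine care is (ii), since ruling out the "wrong" membership relies on combining the inclusion Corollary \ref{InclusionOfSegmentsProp} with the strictness of the inclusions in the chain. Once (i) and (ii) are in hand, both existence and uniqueness are short applications of the splitting identity in Definition \ref{BetweennessDef}c.
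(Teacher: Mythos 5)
Your proposal is correct and follows exactly the paper's route: the paper's entire proof of this corollary is ``Apply \ref{SplittingOfClosedSegments} to $S=\{A,B,C\}\setminus \{X,Y\}$,'' which is precisely your opening step. The rest of your argument---the ordering facts (i) and (ii) and the repeated use of the splitting identity in Definition \ref{BetweennessDef}c---simply fills in, correctly, the details the paper leaves to the reader.
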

\begin{proof}
Apply \ref{SplittingOfClosedSegments} to $S=\{A,B,C\}\setminus \{X,Y\}$.
\end{proof}

\begin{Proposition}\label{IntersectionOfClosedSegments}
Let $\Pi$ be a set with a relation of betweenness. If a finite family $\mathcal{F}$ of closed line subsegments of a line segment $s[A,B]$ has non-empty intersection $I$, then that intersection $I$ is a closed line segment.
\end{Proposition}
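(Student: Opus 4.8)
The plan is to equip the segment $s[A,B]$ with a linear order, show that every closed subsegment of $s[A,B]$ coincides with an order-interval, and then observe that a finite intersection of order-intervals in a linearly ordered set is again an order-interval, hence a segment.

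First I would define a relation $\preceq$ on $s[A,B]$ by declaring $X\preceq Y$ exactly when $s[A,X]\subseteq s[A,Y]$. Using \ref{InclusionOfSegmentsProp} one gets the useful reformulation $X\preceq Y\iff X\in s[A,Y]$: if $X\preceq Y$ then $X\in s[A,X]\subseteq s[A,Y]$, and conversely $X\in s[A,Y]$ gives $s[A,X]\subseteq s[A,Y]$ since $A,X\in s[A,Y]$. Reflexivity and transitivity of $\preceq$ are immediate; antisymmetry follows from \ref{BetwennessOfTripleProp}, as $X\in s[A,Y]$ and $Y\in s[A,X]$ for distinct $A,X,Y$ would make two of the three points lie between the other two; and totality follows from \ref{SplittingOfClosedSegments} applied to $S=\{X,Y\}$, whose unique nested ordering forces $s[A,X]$ and $s[A,Y]$ to be comparable. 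Thus $\preceq$ is a linear order on $s[A,B]$ with least element $A$ and greatest element $B$.

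The central step is to prove that for $C\preceq D$ in $s[A,B]$ one has $s[C,D]=\{Y: C\preceq Y\preceq D\}$. For the inclusion $\subseteq$, take $Y\in s[C,D]$; since $C\in s[A,D]$ we get $s[C,D]\subseteq s[A,D]$ by \ref{InclusionOfSegmentsProp}, so $Y\in s[A,D]$ and $Y\preceq D$; and if $Y\prec C$ held, then $Y\in s[A,C]\cap s[C,D]=\{C\}$ by the disjointness clause of Definition \ref{BetweennessDef}(c) applied to $C$ between $A$ and $D$, forcing $Y=C$. For the reverse inclusion, given $C\preceq Y\preceq D$, clause (c) splits $s[A,D]=s[A,C]\cup s[C,D]$, and $Y\in s[A,D]$ lands in $s[C,D]$ unless $Y\in s[A,C]$, in which case $Y\preceq C$ combines with $C\preceq Y$ to give $Y=C\in s[C,D]$.

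Finally I would finish the proposition by writing each member of $\mathcal{F}$ as $s[C_k,D_k]$ with $C_k\preceq D_k$; the central step identifies it with the order-interval $\{Y: C_k\preceq Y\preceq D_k\}$, so $I=\{Y: C^*\preceq Y\preceq D^*\}$ where $C^*=\max_k C_k$ and $D^*=\min_k D_k$ exist by finiteness and comparability. Non-emptiness of $I$ forces $C^*\preceq D^*$, and applying the central step once more yields $I=s[C^*,D^*]$, a closed line segment. The main obstacle is this central step, where the disjointness half of Definition \ref{BetweennessDef}(c) must be combined with \ref{InclusionOfSegmentsProp} to exclude the "wrong side" of the order; getting the boundary case $Y=C$ exactly right is the delicate part.
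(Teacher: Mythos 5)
Your proof is correct and takes essentially the same route as the paper: impose a linear order (the paper does this only on the finitely many endpoints, via \ref{SplittingOfClosedSegments}), then identify $I$ as the segment from the maximum $L$ of the left endpoints to the minimum $M$ of the right endpoints. Your ``central step'' --- that closed subsegments are exactly order-intervals --- is precisely the justification the paper leaves implicit in its ``Notice $I=s[L,M]$.''
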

\begin{proof}
Impose the linear order on the set of all endpoints of the family
$\mathcal{F}$ as in \ref{SplittingOfClosedSegments}. Notice $I=s[L,M]$, where $L$ is the maximum of left endpoints of segments in $\mathcal{F}$ and $M$ is the minimum of right endpoints of segments in $\mathcal{F}$.
\end{proof}

\subsection{Preserving betweenness}

Given a mathematical structure (betweenness in our case) on sets, it is natural to define morphisms and isomorphisms corresponding to that structure. There are two ways of looking at morphisms in the case of betweenness.

\begin{Definition}
Let $\Pi_1$ and $\Pi_2$ be two sets, each with its own relation of betweenness. A function
$f:\Pi_1\to \Pi_2$ \textbf{preserves betweenness} if $f(C)$ is between $f(A)$ and $f(B)$
whenever $C$ is between $A$ and $B$. In other words, $f(s[A,B])\subset s[f(A),f(B)]$ for all points $A,B$ of $\Pi_1$.
\end{Definition}

\begin{Definition}
Let $\Pi_1$ and $\Pi_2$ be two sets, each with its own relation of betweenness. A function
$f:\Pi_1\to \Pi_2$ \textbf{preserves closed line segments} if $f(s[A,B])=s[f(A),f(B)]$ for all points $A,B$ of $\Pi_1$.
\end{Definition}

Notice preserving of closed line segments implies preserving betweenness but the converse does not hold.

\begin{Example}
Consider the unit interval $[0,1]$ with the standard relation of betweenness (induced by the order on $[0,1]$). One can give $[0,1]$ the trivial relation of betweenness: each closed interval $s[A,B]$ is declared to be equal to $\{A,B\}$. Notice the identity function from the trivial $[0,1]$ to the standard $[0,1]$ preserves betweenness but does not preserve closed line segments.
\end{Example}

The above example justifies the following definitions:

\begin{Definition}\label{MorphismIsomorphismDef}
Let $\Pi_1$ and $\Pi_2$ be two sets, each with its own relation of betweenness. 
A \textbf{morphism} from $\Pi_1$ to $\Pi_2$ is a function $f:\Pi_1\to \Pi_2$ preserving closed line segments.
An \textbf{isomorphism} from $\Pi_1$ to $\Pi_2$ is a bijection $f:\Pi_1\to \Pi_2$ preserving closed line segments.
\end{Definition}

Notice that in the definition of an isomorphism in \ref{MorphismIsomorphismDef} we do not have to assume $f^{-1}$ preserves closed line segments. It follows automatically from the fact $f$ preserves closed line segments as shown in the next proposition.

\begin{Proposition}\label{BijectionPreservingBetweennessProp}
Let $f:\Pi_1\to \Pi_2$ be a bijection of sets, each equipped with its own relation of betweenness. \\
a. If $f$ preserves closed line segments, then so does $f^{-1}$.\\
b. If both $f$ and $f^{-1}$ preserve betweenness, then they preserve closed line segments as well.
\end{Proposition}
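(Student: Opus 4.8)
The plan is to exploit the bijectivity of $f$ throughout, since both parts hinge on the identities $f^{-1}\circ f=\mathrm{id}_{\Pi_1}$ and $f\circ f^{-1}=\mathrm{id}_{\Pi_2}$.

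For part (a), I would start from the hypothesis $f(s[A,B])=s[f(A),f(B)]$, valid for all $A,B\in\Pi_1$. Given any two points $P,Q\in\Pi_2$, bijectivity lets me write $P=f(A)$ and $Q=f(B)$ with $A=f^{-1}(P)$ and $B=f^{-1}(Q)$. Substituting into the hypothesis gives $s[P,Q]=f(s[f^{-1}(P),f^{-1}(Q)])$, and applying $f^{-1}$ to both sides collapses the composite on the left to yield $f^{-1}(s[P,Q])=s[f^{-1}(P),f^{-1}(Q)]$. That is precisely the statement that $f^{-1}$ preserves closed line segments.

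For part (b), the two preservation hypotheses supply the two inclusions separately. The hypothesis that $f$ preserves betweenness already gives $f(s[A,B])\subset s[f(A),f(B)]$. For the reverse inclusion I would feed $P=f(A)$, $Q=f(B)$ into the hypothesis that $f^{-1}$ preserves betweenness, obtaining $f^{-1}(s[f(A),f(B)])\subset s[A,B]$; applying $f$ to both sides and using $f\circ f^{-1}=\mathrm{id}_{\Pi_2}$ turns this into $s[f(A),f(B)]\subset f(s[A,B])$. Combining the two inclusions proves that $f$ preserves closed line segments, and the symmetric argument (or an appeal to part (a)) does the same for $f^{-1}$.

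There is no serious obstacle here; the content is entirely set-theoretic bookkeeping. The one point requiring care is that bijectivity is genuinely used—without it neither the collapse of $f\circ f^{-1}$ nor the lifting of arbitrary pairs $P,Q$ through $f$ would be available—so I would make sure that in each step $f$ or $f^{-1}$ is applied to the correct side and that the appropriate identity is invoked.
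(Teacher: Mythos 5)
Your proposal is correct and follows essentially the same route as the paper: both proofs are set-theoretic bookkeeping that uses bijectivity to pull points of $\Pi_2$ back to $\Pi_1$ and to cancel $f$ against $f^{-1}$ (the paper phrases it by element-chasing, you by applying $f$ or $f^{-1}$ to both sides of an inclusion, which is the same argument). Your explicit remark that the $f^{-1}$ half of part (b) follows by symmetry or from part (a) is a point the paper leaves implicit.
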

\begin{proof}
a. Suppose $f(C)\in s[f(A),f(B)]$. Since $f(s[A,B])=s[f(A),f(B)]$ there is $D\in s[A,B]$
satisfying $f(D)=f(C)$. Because $f$ is a bijection, $C=D$ and $f^{-1}(s[f(A),f(B)])=
s[A,B]$.\\
b.
Suppose $A,B\in \Pi_1$ and $C\in s[f(A),f(B)]$. Choose $D\in \Pi_1$ with $f(D)=C$.
Since $f^{-1}$ preserves betweenness, $D\in s[A,B]$. That shows $f(s[A,B])=s[f(A),f(B)]$.
\end{proof}

\begin{Proposition}
Let $\Pi$ be a set with a relation of betweenness. Self-isomorphisms
$f:\Pi\to\Pi$ form a group.
\end{Proposition}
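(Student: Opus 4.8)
The plan is to verify the four group axioms for the set of self-isomorphisms of $\Pi$ under composition of functions, taking the identity map as the candidate neutral element. Since a self-isomorphism is by \ref{MorphismIsomorphismDef} exactly a bijection $\Pi\to\Pi$ preserving closed line segments, each axiom amounts to checking that bijectivity and preservation of closed line segments are inherited by the relevant constructions.

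First I would record associativity and the identity, which are immediate: composition of functions is always associative, and the identity map $\mathrm{id}_\Pi$ is a bijection with $\mathrm{id}_\Pi(s[A,B])=s[A,B]=s[\mathrm{id}_\Pi(A),\mathrm{id}_\Pi(B)]$, so it is a self-isomorphism and acts as a two-sided unit. Next I would establish closure: if $f$ and $g$ both preserve closed line segments, then for all $A,B\in\Pi$ one computes $(g\circ f)(s[A,B])=g\bigl(f(s[A,B])\bigr)=g\bigl(s[f(A),f(B)]\bigr)=s[g(f(A)),g(f(B))]=s[(g\circ f)(A),(g\circ f)(B)]$, and since a composition of bijections is a bijection, $g\circ f$ is again a self-isomorphism.

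For inverses I would invoke the earlier result directly: if $f$ is a self-isomorphism, then $f^{-1}$ is a bijection, and by \ref{BijectionPreservingBetweennessProp}(a) the fact that $f$ preserves closed line segments forces $f^{-1}$ to preserve them as well, so $f^{-1}$ lies in the set. Together with $f\circ f^{-1}=f^{-1}\circ f=\mathrm{id}_\Pi$, this gives genuine two-sided inverses inside the collection.

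The proof is essentially routine, so there is no serious obstacle; the only substantive point is closure under inversion, namely that $f^{-1}$ again preserves closed line segments rather than merely preserving betweenness. This is precisely the content of \ref{BijectionPreservingBetweennessProp}(a), which is the reason that proposition was isolated beforehand, so I would lean on it rather than reproving it.
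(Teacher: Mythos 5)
Your proof is correct: the paper actually states this proposition without any proof, treating it as immediate once Proposition \ref{BijectionPreservingBetweennessProp}(a) is in hand, and your argument is exactly the intended one --- routine verification of associativity, identity, and closure under composition, with the only substantive point (closure under inversion) supplied by \ref{BijectionPreservingBetweennessProp}(a). Nothing is missing; you have simply written out what the paper leaves to the reader.
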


\section{Connectedness and convexity}\label{Convexity and connectedness}

This section contains a discussion of two concepts that are of primary importance
for the paper: connectedness and convexity. Notice connectedness is called continuity in foundations of geometry - see \cite{Gree1}.

The first attempt at attempt at connectedness arises when measuring line segments: a person trying to figure out the length of a line segment in feet is carefully planting consecutive steps so that there is no visible space between them. Mathematically it amounts to the requirement that a closed line segment cannot be expressed as the union of two disjoint closed line segments.

Geometrically, the first attempt at connectedness is that each non-empty closed line segment cannot be expressed as a half-open line segment.

The ultimate concept of connectedness of a closed line segment amounts to saying that there are no holes in it. From the point of view of physics it means the finality of results of measurements: making measurements with increasing precision should converge to a concrete amount.

The following result summarizes the first attempt at connectedness.
\begin{Proposition}\label{FirstApproximationOfConnectedness}
Let $\Pi$ be a set with a relation of betweenness. The following conditions are equivalent:\\
1. If $s[A,B]=s[C,D]\cup s[E,F]$, then $s[C,D]\cap s[E,F]\ne\emptyset$ or $s[A,B]=\emptyset$,\\
2. $s(A,B)$ is not empty if $A\ne B$,\\
3. Equality $s[A,B]=s[C,D)$ is not possible.
\end{Proposition}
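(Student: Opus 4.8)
The plan is to prove the cycle $1\Rightarrow 2\Rightarrow 3\Rightarrow 1$, mostly in contrapositive form, with all the real work carried by the unique ordering of \ref{SplittingOfClosedSegments} and its corollaries \ref{InclusionOfSegmentsProp} and \ref{BetwennessOfTripleProp}. First I would record two preliminaries. By part (a) of \ref{BetweennessDef} every segment $s[A,B]$ contains $A$, hence is nonempty, so the clause ``$s[A,B]=\emptyset$'' in condition 1 never occurs and condition 1 really asserts that two closed segments whose union is $s[A,B]$ must intersect. Second, I would isolate the fact that an endpoint is extreme: if $P,Q\in s[A,B]$ then $A$ is not strictly between $P$ and $Q$. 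This follows by applying \ref{SplittingOfClosedSegments} to the finite set $\{P,Q\}$ and observing that $A=A_0$ is the first element of the induced ordering, so $A\in s[P,Q]$ forces $A\in\{P,Q\}$.

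The implication $1\Rightarrow 2$ is immediate in contrapositive form: if $A\ne B$ but $s(A,B)=\emptyset$, then $s[A,B]=\{A,B\}=s[A,A]\cup s[B,B]$ is a union of two disjoint nonempty closed segments, violating 1.

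For $3\Rightarrow 1$ I would argue $\neg 1\Rightarrow\neg 3$. Suppose $s[A,B]=s[C,D]\cup s[E,F]$ with the two pieces disjoint. Since $s[C,D]\subseteq s[A,B]$ and $s[E,F]\subseteq s[A,B]$, \ref{InclusionOfSegmentsProp} shows $A$ and $B$ cannot lie in the same piece, for otherwise that piece is all of $s[A,B]$ and the other is forced to be empty; say $A\in s[C,D]$ and $B\in s[E,F]$. Applying the extremality observation to $A$ (which lies between $C,D\in s[A,B]$) gives $A\in\{C,D\}$, and likewise $B\in\{E,F\}$, so after relabelling $s[C,D]=s[A,D]$ and $s[E,F]=s[E,B]$. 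Now $D\in s[A,B]$, so \ref{BetweennessDef}(c) gives $s[A,B]=s[A,D]\cup s[D,B]$ with $s[A,D]\cap s[D,B]=\{D\}$. Comparing this with the disjoint decomposition $s[A,B]=s[A,D]\cup s[E,B]$ yields, after a short set computation, $s[E,B]=s[D,B]\setminus\{D\}=s[B,D)$. As $D\ne B$, this exhibits a closed segment equal to a half-open one, violating 3.

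The step I expect to be the crux is $2\Rightarrow 3$. Assuming condition 2, suppose toward a contradiction that $s[A,B]=s[C,D)$; then $D\notin s[A,B]$ while $A,B,C\in s[C,D)\subseteq s[C,D]$. Dispatching the trivial case $A=B$ separately, the points $A,B,D$ are distinct and all lie on $s[C,D]$, so \ref{BetwennessOfTripleProp} applies. The point $D$ cannot be the one between the other two, since that would force $D\in s[A,B]$; so, swapping $A$ and $B$ if necessary, $B$ lies between $A$ and $D$, and \ref{BetweennessDef}(c) gives $s[A,D]=s[A,B]\cup s[B,D]$ with $s[A,B]\cap s[B,D]=\{B\}$. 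Condition 2 now produces a point $M\in s(B,D)$; since $M\in s[B,D]\subseteq s[C,D]$ and $M\ne D$, it lies in $s[C,D)=s[A,B]$, whence $M\in s[A,B]\cap s[B,D]=\{B\}$, contradicting $M\ne B$. This rules out $s[A,B]=s[C,D)$ and establishes 3. The only care needed throughout is the bookkeeping of degenerate coincidences ($A=B$, or a segment collapsing to a point), which are routine; the conceptual content rests entirely on the triple dichotomy \ref{BetwennessOfTripleProp} and the extremality of endpoints extracted from \ref{SplittingOfClosedSegments}.
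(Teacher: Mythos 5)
Your proof is correct and takes essentially the same approach as the paper: the identical cycle $1\Rightarrow 2\Rightarrow 3\Rightarrow 1$, with the same decomposition $s[A,B]=s[A,A]\cup s[B,B]$ for $1\Rightarrow 2$, and for the other two implications the same contradictions (an empty open segment between $B$ and $D$, and a closed segment equal to a half-open one) that the paper extracts from the ordering of \ref{SplittingOfClosedSegments}. You merely route through the corollaries \ref{InclusionOfSegmentsProp} and \ref{BetwennessOfTripleProp} and an endpoint-extremality observation instead of invoking the ordering directly, which is a cosmetic, not substantive, difference.
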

\begin{proof}
1.$\implies$2. Suppose $s(A,B)$ is empty and $A\ne B$. Now, 
$s[A,B]=s[A,A]\cup s[B,B]$ and the two line segments are disjoint, a contradiction.

2.$\implies$3. Suppose $s[A,B]=s[C,D)$ for some points $A, B, C, D$ of $\Pi$. 
We may assume, using \ref{SplittingOfClosedSegments}, that points
$A, B, C, D$ are ordered as $A_0=C$, $A_1=A$, $A_2=B$, and $A_4=D$,
with the possibility of some of the first three points being equal. However, $B\ne D$,
which implies $s(B,D)=\emptyset$, a contradiction.

3.$\implies$1.
Suppose $s[A,B]=s[C,D]\cup s[E,F]$ is non-empty and $s[C,D]\cap s[E,F]=\emptyset$.
One of the points $C, D, E, F$ must be equal to $A$ and one of them must be equal to $B$. Without loss of generality, we may assume $D=A$ and $F=B$.
We may assume, using \ref{SplittingOfClosedSegments}, that points
$A, B, C, E$ are ordered as $A_0=A$, $A_1=C$, $A_2=E$, and $A_4=B$,
with the possibility that $A=C$ or $E=B$. In any case, $s(C,E)=\emptyset$
and $s[C,C]=s[C,E)$, a contradiction.
\end{proof}

To express the final attempt at connectedness we need the concept of convexity.

\begin{Definition}\label{ConvexDef}
Let $\Pi$ be a set with a relation of betweenness. A subset $S$ of $\Pi$
is \textbf{convex} if for all $A,B\in S$ the closed line segment $s[A,B]$ is contained in $S$.
\end{Definition}

\begin{Definition}\label{ConvConnectednessDef}
Let $\Pi$ be a set with a relation of betweenness. A closed line segment $s[A,B]$ is \textbf{connected} if, whenever expressed as a union of two disjoint non-empty convex subsets, precisely one of them is a closed line segment.
\end{Definition}

\begin{Example}
In the set $\mathbb{Q}$ of rational numbers with the standard relation of betweenness, the line segment $s[0,4]$ is not connected. Indeed, the sets $\{t\in s[0,4]\mid t^2 < 2\}$ and $\{t\in s[0,4]\mid t^2 > 2\}$ are both convex,
their union is $s[0,4]$, and none of them is a closed line segment.
\end{Example}

Our final attempt at connectedness is stronger than the first attempt.
\begin{Lemma}
Let $\Pi$ be a set with a relation of betweenness. If a closed line segment $s[A,B]$
is connected, then the open line segment $s(C,D)$ is non-empty for any two different points $C, D$ of $s[A,B]$.
\end{Lemma}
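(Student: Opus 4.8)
The plan is to argue by contradiction: I will assume there are two distinct points $C, D \in s[A,B]$ with $s(C,D) = \emptyset$, so that $s[C,D] = \{C, D\}$, and then exhibit a decomposition of $s[A,B]$ into two disjoint non-empty convex subsets in which \emph{both} pieces are closed line segments. This contradicts the connectedness of $s[A,B]$ as formulated in \ref{ConvConnectednessDef}, which demands that precisely one of the two pieces be a closed line segment.

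First I would organize the four points $A, B, C, D$ using \ref{SplittingOfClosedSegments} applied to $S = \{C,D\} \setminus \{A,B\}$. This produces a linear ordering of $S \cup \{A,B\}$ in which, after possibly relabelling, $C$ precedes $D$, and it yields the splitting $s[A,B] = s[A,C] \cup s[C,D] \cup s[D,B]$ together with the crucial intersection data: consecutive subsegments meet only in their shared endpoint, while non-consecutive subsegments are disjoint.

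Next, substituting $s[C,D] = \{C, D\}$ and using $C \in s[A,C]$ and $D \in s[D,B]$, I would conclude $s[A,B] = s[A,C] \cup s[D,B]$. Both sets are non-empty (each contains its endpoints) and convex, since closed line segments are convex by \ref{InclusionOfSegmentsProp}. Because $s[A,C]$ and $s[D,B]$ are non-consecutive in the splitting above, they are disjoint; hence $s[A,B]$ is the disjoint union of two non-empty convex sets, \emph{each} of which is a closed line segment. This violates the requirement in \ref{ConvConnectednessDef} that exactly one of the two pieces be a closed line segment, giving the desired contradiction and forcing $s(C,D) \ne \emptyset$.

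The only obstacle, routine but necessary, is the bookkeeping of the degenerate configurations where $C = A$ or $D = B$ (or both). When $C = A$ the left piece $s[A,C]$ collapses to the singleton $\{A\} = s[A,A]$, which is still a one-point closed line segment, and one must check its disjointness from $s[D,B]$ directly from the intersection data of \ref{SplittingOfClosedSegments}; the case $D = B$ is symmetric, and the fully degenerate case $C = A,\ D = B$ reduces $s[A,B]$ to $\{A\} \cup \{B\}$, again two disjoint closed segments. In every case the decomposition has both pieces closed, so connectedness fails. I expect this case analysis, rather than any single hard idea, to be the only place requiring care.
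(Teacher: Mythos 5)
Your proof is correct and takes essentially the same route as the paper's own: order $C < D$ via \ref{SplittingOfClosedSegments}, then note that $s(C,D)=\emptyset$ forces $s[A,B]=s[A,C]\cup s[D,B]$, a disjoint union of two non-empty closed (hence convex) line segments, contradicting the ``precisely one'' clause of \ref{ConvConnectednessDef}. The paper states this in two lines; your version merely spells out the convexity, the disjointness coming from the splitting data, and the degenerate cases $C=A$ or $D=B$ that the paper leaves implicit.
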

\begin{proof}
Assume $C < D$ as in \ref{SplittingOfClosedSegments}. If $s(C,D)$ is empty, then $s[A,B]=s[A,C]\cup s[D,B]$
is a union of two non-empty closed line segments that are disjoint, a contradiction.
\end{proof}

\begin{Proposition}\label{ConnectednessOfSubsegments}
Let $\Pi$ be a set with a relation of betweenness. If $C,D\in s[A,B]$ and the line segment $s[A,B]$ is connected, then $s[C,D]$ is connected as well.
\end{Proposition}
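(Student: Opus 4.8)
The plan is to transfer a putative splitting of $s[C,D]$ into non-empty disjoint convex pieces up to a splitting of the whole segment $s[A,B]$, and then invoke the connectedness of $s[A,B]$. First I would record, via \ref{InclusionOfSegmentsProp}, that $s[C,D]\subseteq s[A,B]$, and use the ordering of \ref{SplittingOfClosedSegments} on $\{A,B,C,D\}$ to assume without loss of generality that the induced order is $A\le C\le D\le B$; the case $C=D$ is immediate, since a one-point segment admits no splitting into two non-empty disjoint sets and is therefore vacuously connected. Writing $L:=s[A,C)$ and $R:=s(D,B]$, \ref{SplittingOfClosedSegments} gives the disjoint decomposition $s[A,B]=L\cup s[C,D]\cup R$ with the three pieces pairwise disjoint.

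Now suppose $s[C,D]=P\cup Q$ with $P,Q$ non-empty, disjoint, and convex. The first observation is that $C$ and $D$ must lie in different pieces: if both were in $P$, convexity would force $s[C,D]\subseteq P$, contradicting $Q\neq\emptyset$. So I may assume $C\in P$ and $D\in Q$, and set $P':=P\cup L$ and $Q':=Q\cup R$. These are non-empty and disjoint (using that $L,R,s[C,D]$ are pairwise disjoint) and satisfy $P'\cup Q'=s[A,B]$.

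The main technical step is to verify that $P'$ and $Q'$ are convex. The only nontrivial case for $P'$ is a segment $s[X,Y]$ with $X\in L$ and $Y\in P$: here the order gives $X<C\le Y$, so $C$ is between $X$ and $Y$, and Definition \ref{BetweennessDef}(c) yields $s[X,Y]=s[X,C]\cup s[C,Y]$. By \ref{InclusionOfSegmentsProp} the first piece lies in $s[A,C]$, so $s[X,C]\setminus\{C\}\subseteq L$ while $C\in P$; the second lies in $P$ by convexity of $P$. Hence $s[X,Y]\subseteq P'$, and the argument for $Q'$ is symmetric (splitting at $D$). With $P',Q'$ established as a splitting of $s[A,B]$ into two non-empty disjoint convex sets, connectedness of $s[A,B]$ gives that precisely one of them is a closed line segment.

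It remains to descend this conclusion back to $P,Q$, and the key claim is that $P$ is a closed segment if and only if $P'$ is, and likewise for $Q,Q'$. If $P'$ is closed, then it contains $A$ (the case $A=C$ makes $L$ empty and $P'=P$), and since $A$ is the minimum of $s[A,B]$ it must be an endpoint, so $P'=s[A,M]$; as every point of $L$ precedes $C\in P$, the maximum $M$ lies in $P$ with $M\ge C$, and removing $L=s[A,C)$ leaves $P=s[C,M]$. Conversely, if $P$ is a closed segment, then $C$, being the minimum of $s[C,D]$, is forced to be an endpoint, so $P=s[C,M]$ and $P'=s[A,C]\cup s[C,M]=s[A,M]$ by Definition \ref{BetweennessDef}(c). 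Thus closedness of $P$ matches that of $P'$ and closedness of $Q$ matches that of $Q'$, so exactly one of $P,Q$ is closed, proving $s[C,D]$ connected. I expect the convexity verification of $P'$ and $Q'$, together with the bookkeeping around the degenerate endpoints ($A=C$ or $D=B$, where $L$ or $R$ is empty), to be the main obstacle; everything else follows mechanically from the induced ordering and Definition \ref{BetweennessDef}(c).
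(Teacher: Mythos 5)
Your proof is correct and takes essentially the same route as the paper: both extend the putative splitting $P\cup Q$ of $s[C,D]$ to a splitting of $s[A,B]$ by attaching the outer pieces (you use $s[A,C)$ and $s(D,B]$, the paper uses $s[A,C]$ and $s[D,B]$, which give literally the same sets since $C\in P$ and $D\in Q$), then invoke connectedness of $s[A,B]$ and transfer the conclusion back down. Your explicit biconditional ``$P$ is a closed segment if and only if $P'$ is'' is just a more detailed rendering of the paper's one-line computation $V=(V\cup s[D,B])\cap s[C,D]=s[E,D]$ together with its remark that $U$ cannot be closed.
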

\begin{proof}
Assume $s[C,D]\subset s[A,D]$.
Suppose $s[C,D]$ is the union $U\cup V$ of two non-empty convex subsets that are disjoint. Without loss of generality, assume $C\in U$ and $D\in V$. Notice $s[A,C]\cup U$ and $V\cup s[D,B]$
are both convex in $s[A,B]$ and disjoint. Hence exactly one of them, say $V\cup s[D,B]$
is a closed line segment. Thus $V\cup s[D,B]=s[E,B]$
and $V=(V\cup s[D,B])\cap s[C,D]=s[E,B]\cap s[C,D]=s[E,D]$
is a closed line segment. It is easy to see $U$ is not a closed line segment.
\end{proof}

\begin{Proposition}\label{ConnectednessOfUnionOfSegments}
Let $\Pi$ be a set with a relation of betweenness. If $C\in s[A,B]$ and the line segments $s[A,C]$ and $s[C,B]$ are connected, then $s[A,B]$ is connected as well.
\end{Proposition}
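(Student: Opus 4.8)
The plan is to verify the defining property of connectedness in \ref{ConvConnectednessDef} directly. I would start with an arbitrary decomposition $s[A,B]=U\cup V$ into two disjoint non-empty convex subsets and aim to show that exactly one of $U,V$ is a closed line segment. The first reduction is to locate the endpoints: since $U$ is convex, if it contained both $A$ and $B$ then $s[A,B]\subseteq U$ and $V$ would be empty; hence one endpoint lies in each piece, and after relabelling I may assume $A\in U$ and $B\in V$. The point $C$ lies in one of the two pieces, and since the hypothesis is symmetric under interchanging $A$ with $B$ (which swaps the two connected halves $s[A,C]$ and $s[C,B]$ as well as $U$ with $V$, keeping $A\in U$), I may also assume $C\in U$.

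The key idea is then to push the decomposition onto the connected half $s[C,B]$. Because $A,C\in U$ and $U$ is convex, \ref{InclusionOfSegmentsProp} gives $s[A,C]\subseteq U$, so $V$ is disjoint from $s[A,C]$ and therefore $V\subseteq s[C,B]$. Intersecting with the convex set $s[C,B]$ (convex by \ref{InclusionOfSegmentsProp}) yields $s[C,B]=\big(U\cap s[C,B]\big)\cup V$, a decomposition into two disjoint non-empty convex subsets (the first contains $C$, the second is $V$). Applying the assumed connectedness of $s[C,B]$, exactly one of $U\cap s[C,B]$ and $V$ is a closed line segment. This is the step that produces the crucial ``exactly one'' dichotomy, and transferring it back to $U$ and $V$ is what the two remaining cases accomplish.

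In the case where $U\cap s[C,B]$ is a closed line segment, I would note that it is a closed subsegment of $s[C,B]$ containing the minimum point $C$, so by the ordering of \ref{SplittingOfClosedSegments} it must be of the form $s[C,M]$ with $A\le C\le M\le B$. Then $C$ is between $A$ and $M$, and part (c) of \ref{BetweennessDef} gives $U=s[A,C]\cup s[C,M]=s[A,M]$, a closed line segment, while $V$ is not; so exactly one of $U,V$ is a segment. In the complementary case where $V$ is a closed line segment, I must show $U$ is not. Assuming the contrary, $U$ would be a closed segment containing the endpoint $A$ as its minimum, hence of the form $s[A,Q]$ with $A\le C\le Q\le B$; but then \ref{IntersectionOfClosedSegments} forces $U\cap s[C,B]=s[A,Q]\cap s[C,B]=s[C,Q]$ to be a closed line segment, contradicting that in this case $U\cap s[C,B]$ is the non-segment. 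Hence again exactly one of $U,V$ is a closed line segment.

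The main obstacle I anticipate is the bookkeeping in the last paragraph: one must argue carefully that an endpoint of $s[A,B]$ is forced to be an endpoint of any closed subsegment containing it, which is where the linear order of \ref{SplittingOfClosedSegments} and the intersection formula \ref{IntersectionOfClosedSegments} do the real work, and one must stay vigilant that the dichotomy coming from the connectedness of $s[C,B]$ is genuinely ``exactly one'' and not merely ``at least one,'' so that no decomposition can make both or neither of $U,V$ a segment.
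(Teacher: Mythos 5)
Your proof is correct and follows essentially the same route as the paper's: both absorb one half-segment into the piece containing its endpoints by convexity, restrict the decomposition to the other (connected) half, invoke the ``exactly one'' dichotomy there, and reassemble; the only difference is a mirror-image labelling (the paper assumes $C\in V$ with $B$ and works in $s[A,C]$, while you assume $C\in U$ with $A$ and work in $s[C,B]$). Your case analysis is in fact more explicit than the paper's terse ``it is easy to see $U$ is not a closed line segment,'' but the underlying argument is identical.
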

\begin{proof}
Suppose $s[A,B]$ is the union $U\cup V$ of two non-empty convex subsets that are disjoint. Without loss of generality, assume $A\in U$ and $B\in V$. Also, we may assume $C\in V$ which implies $s[C,B]\subset V$.
Notice $s[A,C]\cap U$ and $V\cap s[A,C]$
are both convex in $s[A,C]$ and disjoint. Hence exactly one of them, say $V\cap s[A,C]$
is a closed line segment. Thus $V\cap s[A,C]=s[E,C]$ for some $E\in s[A,C]$
and $V=(V\cap s[A,C])\cup s[C,B]=s[E,C]\cup s[C,B]=s[E,B]$
is a closed line segment. It is easy to see $U$ is not a closed line segment.
\end{proof}

\begin{Proposition}\label{IntersectionOfInfinitelyClosedSegments}
Let $\Pi$ be a set with a relation of betweenness. If a family $\mathcal{F}$ of closed line subsegments of a connected line segment $s[A,B]$ has non-empty intersection $I$, then that intersection $I$ is a closed line segment.
\end{Proposition}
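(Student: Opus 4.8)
The plan is to reduce the statement to an order-completeness property and then extract the two endpoints of $I$ from the connectedness hypothesis. First I would fix a point $P\in I$, which exists since $I\ne\emptyset$, and equip $s[A,B]$ with the linear order supplied by \ref{SplittingOfClosedSegments}, writing $X\le Y$ when $X\in s[A,Y]$; for points of $s[A,B]$ this order is compatible with betweenness in the sense that $s[X,Y]=\{Z : X\le Z\le Y\}$. Each member of $\mathcal F$ may then be written as $s[C_\alpha,D_\alpha]$ with $C_\alpha\le D_\alpha$, all endpoints lying in $s[A,B]$ by \ref{InclusionOfSegmentsProp}. Since $P\in s[C_\alpha,D_\alpha]$ for every $\alpha$, we have $C_\alpha\le P\le D_\alpha$, so all left endpoints are bounded above by $P$ and all right endpoints below by $P$. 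Reading off the intersection in the order, $X\in I$ exactly when $X\ge C_\alpha$ and $X\le D_\alpha$ for all $\alpha$; thus once I know that $L:=\sup_\alpha C_\alpha$ and $M:=\inf_\alpha D_\alpha$ exist as points of $\Pi$, I will have $I=\{X : L\le X\le M\}=s[L,M]$, which is the desired conclusion.

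The heart of the argument, and the step where connectedness enters, is producing the point $L$; the point $M$ is obtained symmetrically. I would work inside $s[A,P]$, which is connected by \ref{ConnectednessOfSubsegments}. Set $L^-:=\bigcup_\alpha s[A,C_\alpha]$ and let $R:=s[A,P]\setminus L^-$. I would check that both are convex: the segments $s[A,C_\alpha]$ are nested by the linear order, so their union $L^-$ is convex, while $R=\{X\in s[A,P] : X>C_\alpha\text{ for all }\alpha\}$ is convex by transitivity of the strict order. They are disjoint with union $s[A,P]$ by construction, and $L^-$ is non-empty because $A\in L^-$. If $R=\emptyset$ then $L^-=s[A,P]$ and $L=P$ works; otherwise both pieces are non-empty and the connectedness of $s[A,P]$ (Definition \ref{ConvConnectednessDef}) forces exactly one of $L^-,R$ to be a closed line segment. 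In either case this yields the sought point $L$: if $L^-=s[A,L]$ then $L$ is the attained maximum of the $C_\alpha$, and if $R=s[L,P]$ then $L=\min R$ is the least upper bound of the $C_\alpha$ that is not itself an endpoint. Verifying that this $L$ is genuinely $\sup_\alpha C_\alpha$ in the second case is where I would be most careful, using that $s(X,L)\ne\emptyset$ for $X<L$ to find left endpoints arbitrarily close to $L$ from below.

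Running the mirror-image construction inside the connected segment $s[P,B]$ produces $M=\inf_\alpha D_\alpha$ as a point of $\Pi$. Finally I would assemble the two halves: since $L\le P\le M$ the segment $s[L,M]$ is non-empty, and the order characterization of $I$ from the first paragraph gives $I=s[L,M]$. The main obstacle is conceptual rather than computational: in the finite case of \ref{IntersectionOfClosedSegments} the extremal endpoints are literally among the given ones, whereas here the supremum and infimum need not be attained, so connectedness is exactly the hypothesis that guarantees these cuts are filled by actual points of $\Pi$ (contrast the rational $s[0,4]$, which is not connected and where such a cut has no point). Everything else, namely the convexity checks and the translation between the order and betweenness, is routine given \ref{SplittingOfClosedSegments} and \ref{InclusionOfSegmentsProp}.
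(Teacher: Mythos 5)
Your proof is correct, and it takes a recognizably different route from the paper's. The paper decomposes $s[A,B]$ using $I$ itself: after disposing of the trivial case $A,B\in I$, it shows (when, say, $A\notin I$) that the convex piece adjacent to $A$ cannot be a closed line segment --- if it were $s[A,C]$, one picks a member of $\mathcal{F}$ missing $C$, say with left endpoint $A'>C$, and any point of the non-empty open segment $s(C,A')$ yields a contradiction --- so by the connectedness dichotomy the pieces $I\cup S_B$ and $I\cup S_A$ are closed segments, and $I$ is recovered as their intersection via the finite-intersection result \ref{IntersectionOfClosedSegments}. You instead anchor at $P\in I$ and run a Dedekind-cut argument separately in $s[A,P]$ and $s[P,B]$, reading the endpoint $L$ (resp.\ $M$) off whichever side of the cut the dichotomy of \ref{ConvConnectednessDef} declares to be a closed segment; both horns of the dichotomy are usable in your setup, one giving an attained maximum of the $C_\alpha$ and the other a non-attained supremum, and you correctly flag the latter as the delicate spot, handled by the non-emptiness of $s(X,L)$ for $X<L$. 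What your route buys: no case analysis on whether $A$ or $B$ lies in $I$, no appeal to \ref{IntersectionOfClosedSegments} at the end, and an explicit realization of the endpoints as $\sup_\alpha C_\alpha$ and $\inf_\alpha D_\alpha$ (which the paper's strategy sentence announces but its final case achieves only indirectly). What the paper's route buys: a shorter write-up, since one horn of the dichotomy is killed outright instead of being verified in two sub-cases. Both arguments rest on the same pillars --- the linear order from \ref{SplittingOfClosedSegments} together with its compatibility with betweenness (which the paper also uses silently, e.g.\ in \ref{IntersectionOfClosedSegments}), and the fact that open segments between distinct points of a connected segment are non-empty --- so the difference is architectural rather than a matter of new machinery.
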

\begin{proof}
Notice $I$ is convex. Impose the linear order on $s[A,B]$ with $X\leq Y$ meaning $s[A,X]\subset s[A,Y]$. 
Our strategy is to show $I$ has both a minimum and a maximum in which case $I$ is a closed segment due to its convexity.

If both $A$ and $B$ belong to the intersection $I$, then $I=s[A,B]$ and we are done.
Assume $A\notin I$. If $B\in I$, then the complement $I'$ of $I$ is convex and the worst case is $I'=s[A,C]$ for some $C$. Since $C\notin I$, there is a closed line segment
$s[A',B]\in \mathcal{F}$ with $C \notin s[A',B]$. As $C < A'$ there is $D\in s(C,A')$
that belongs to $I'$, a contradiction.

The remaining case is when both $A$ and $B$ are outside of $I$ and $I$ separates $s[A,B]$ into two convex sets: $S_A$ and $S_B$, with $A \in S_A$ and $B \in S_B$. As above, $S_A$ cannot be a closed line segment, so $I\cup S_B$ is a closed line segment.
Similarly, $I\cup S_A$ is a closed line segment and their intersection, equal $I$,
is a closed line segment.
\end{proof}

\begin{Corollary}\label{IntersectionOfDecreasingSegments}
Let $\Pi$ be a set with a relation of betweenness. If $s[A_n,B_n]$ is a connected line segment in $\Pi$ for $n\ge 1$ and $s[A_n,B_n]\subset s[A_m,B_m]$
for all $n > m$, then the intersection of all line segments $s[A_n,B_n]$ is a non-empty closed line segment.
\end{Corollary}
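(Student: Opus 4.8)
The plan is to separate the two assertions---non-emptiness of the intersection and its being a closed line segment---and to obtain the second one essentially for free from Proposition \ref{IntersectionOfInfinitelyClosedSegments}. Indeed, by the nesting hypothesis together with \ref{InclusionOfSegmentsProp}, every $s[A_n,B_n]$ is a closed subsegment of $s[A_1,B_1]$, which is connected. Hence the family $\mathcal{F}=\{s[A_n,B_n]:n\ge1\}$ satisfies all the hypotheses of \ref{IntersectionOfInfinitelyClosedSegments} except possibly the non-emptiness of its intersection $I$. So once I show $I\ne\emptyset$, that proposition immediately yields that $I$ is a closed line segment and the proof is complete. The entire difficulty is therefore concentrated in proving $I\ne\emptyset$.

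To do this I would fix the linear order on $s[A_1,B_1]$ with $X\le Y$ meaning $s[A_1,X]\subset s[A_1,Y]$, exactly as in the proof of \ref{IntersectionOfInfinitelyClosedSegments}, and relabel the endpoints of each segment as $s[A_n,B_n]=s[L_n,R_n]$ with $L_n\le R_n$. The inclusions $s[A_{n+1},B_{n+1}]\subset s[A_n,B_n]$ then force $L_n\le L_{n+1}\le R_{n+1}\le R_n$, so the left endpoints ascend and the right endpoints descend while $L_n\le R_n$ always holds. Next I would introduce the two sets
$$U=\{X\in s[A_1,B_1]:X<L_n\text{ for some }n\},\qquad V=\{X\in s[A_1,B_1]:X>R_n\text{ for some }n\}.$$
A routine check shows $U$ and $V$ are convex, and they are disjoint: a common point would give $R_m<X<L_n$ for some $m,n$, contradicting $L_k\le R_k$ after passing to $k=\max(m,n)$. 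Moreover $X\in I$ precisely when $X\notin U$ and $X\notin V$, so $I=s[A_1,B_1]\setminus(U\cup V)$.

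The heart of the argument is then a proof by contradiction. If $I=\emptyset$, then $s[A_1,B_1]=U\cup V$ is a partition into two disjoint convex pieces, and both are non-empty: the maximum $B_1$ cannot lie in $U$ (it never falls below any $L_n$) and so lies in $V$, while symmetrically $A_1\in U$. Connectedness of $s[A_1,B_1]$, in the sense of \ref{ConvConnectednessDef}, now forces exactly one of $U,V$ to be a closed line segment. If that segment is $V=s[E,B_1]$, its minimum $E$ satisfies $E>R_n$ for some $n$; but then $R_n<E=\min V$ places $R_n$ in $U$, whence $R_n<L_m$ for some $m$, and $k=\max(m,n)$ gives $R_k\le R_n<L_m\le L_k$, contradicting $L_k\le R_k$. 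The symmetric computation excludes the possibility that $U$ is the closed segment. Hence $I\ne\emptyset$, and \ref{IntersectionOfInfinitelyClosedSegments} finishes the proof.

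I expect the non-emptiness step to be the main obstacle, since it is exactly where connectedness must be used in an essential way: in a general betweenness set (for instance $\mathbb{Q}$) a nested decreasing sequence of non-empty closed segments can intersect in the empty set. The delicate point is to package the failure $I=\emptyset$ as a genuine partition of $s[A_1,B_1]$ into two non-empty convex sets, so that the defining property of connectedness can be triggered and then contradicted by inspecting the endpoint of whichever piece turns out to be a closed segment.
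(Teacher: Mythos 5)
Your proof is correct and is, at its core, the same argument as the paper's: impose the linear order on $s[A_1,B_1]$, build two disjoint convex tail sets from the left and right endpoints, and use connectedness in the sense of \ref{ConvConnectednessDef} to produce a point lying in neither. The differences are in execution, and they favor your version. The paper uses the closed tails $\bigcup_i s[A_1,A_i]$ and $\bigcup_i s[B_i,B_1]$ (endpoints included) and asserts that neither is a closed line segment, so that their union cannot be all of $s[A_1,B_1]$; that assertion can fail when the endpoints stabilize --- e.g.\ if $A_i=A_1$ for all $i$, the first set equals $s[A_1,A_1]$, which is a closed segment --- and in that case the paper's argument yields nothing (the corollary still holds there, since the stabilized endpoint lies in every $s[A_n,B_n]$, but the write-up does not address it). Your strict tails $U=\{X : X<L_n \mbox{ for some } n\}$ and $V=\{X : X>R_n \mbox{ for some } n\}$ have complement exactly equal to the intersection $I$, so your contradiction from $I=\emptyset$ --- both pieces are then non-empty and convex, and whichever of them connectedness declares to be a closed segment has an extreme point whose defining inequalities collide with $L_k\le R_k$ at $k=\max(m,n)$ --- goes through with no caveats about stabilization. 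You also make explicit the appeal to \ref{IntersectionOfInfinitelyClosedSegments} for the conclusion that $I$ is a closed line segment, a step the paper's proof (which only establishes non-emptiness) leaves implicit.
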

\begin{proof}
Impose the linear order on $s[A_1,B_1]$ with $X\leq Y$ meaning $s[A_1,X]\subset s[A_1,Y]$. 
Without loss of generality we may assume $A_n < B_n$ for all $n$: if $A_k=B_k$ for some $k$, then clearly the intersection of all segments is $s[A_k,B_k]$. Consider $U=\bigcup\limits_{i=1}^\infty [B_i,B_1]$
and $V=\bigcup\limits_{i=1}^\infty s[A_1,A_i]$. Both are convex, disjoint, and none is a closed line segment. Hence there is $C$ in the complement of $U\cup V$
and it belongs to the intersection of all intervals $s[A_n,B_n]$, $n\ge 1$.
\end{proof}

The following is a fundamental result for the whole paper.

\begin{Theorem}[Fixed Point Theorem]\label{FixedPointTheorem}
Let $\Pi$ be a set with a relation of betweenness and let $f:s[A,B]\to s[A,B]$ be a one-to-one function such that $f(s[C,D])=s[f(C),f(D)]$ for all points $C,D\in s[A,B]$.
If the line segment $s[A,B]$ is connected, then $f$ has a fixed point.
\end{Theorem}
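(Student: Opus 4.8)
The plan is to recast the theorem as an order-theoretic fixed point statement and dispatch it with a Knaster--Tarski argument in one case and an intermediate-value argument in the other. First I would install the linear order on $s[A,B]$ coming from \ref{SplittingOfClosedSegments}, writing $X\le Y$ when $s[A,X]\subseteq s[A,Y]$, so that $A$ is the minimum and $B$ the maximum. I would then record that connectedness of $s[A,B]$ (see \ref{ConvConnectednessDef}) is exactly the statement that this order is dense and Dedekind complete: a partition of $s[A,B]$ into two disjoint nonempty convex sets is a cut $(U,V)$ with $U$ a down-set and $V$ an up-set, where ``$U$ is a closed segment'' means precisely ``$U$ has a maximum'' and ``$V$ is a closed segment'' means ``$V$ has a minimum'', so the requirement that exactly one of them be a closed segment forbids jumps (forcing density) and forbids gaps (forcing completeness, as already reflected in \ref{IntersectionOfDecreasingSegments}). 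Next I would show $f$ is monotone: since $f(s[A,B])=s[f(A),f(B)]$, every value lies between $f(A)$ and $f(B)$, and orienting that segment, for $x<y$ the betweenness $f(x)\in s[f(A),f(y)]$ together with injectivity and \ref{BetwennessOfTripleProp} pins down the order of $f(x),f(y)$; thus $f$ is order-preserving when $f(A)<f(B)$ and order-reversing when $f(A)>f(B)$ (the case $f(A)=f(B)$ forces $A=B$). Finally I would prove a \emph{no-skip lemma}: because the image $s[f(A),f(B)]$ is order-convex, $f$ cannot leap over a value, so $f(\sup S)=\sup f(S)$ for increasing $f$ and $f(\sup S)=\inf f(S)$ for decreasing $f$; concretely, if $f$ were increasing with $f(c)>\sup_{x<c}f(x)$, any value strictly between would lie in the image, and its preimage would contradict $c=\sup$.

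If $f$ is order-preserving, I would run Knaster--Tarski. Put $c=\sup\{x:\ x\le f(x)\}$, which exists by completeness since $A$ belongs to the set. Monotonicity gives $x\le f(x)\le f(c)$ for every such $x$, so $f(c)$ is an upper bound and $c\le f(c)$; applying $f$ yields $f(c)\le f(f(c))$, so $f(c)$ also lies in the set and hence $f(c)\le c$, whence $f(c)=c$. This case needs only completeness, not the no-skip lemma.

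If $f$ is order-reversing, I would argue by contradiction from a cut. Assuming no fixed point, the sets $U=\{x:\ x<f(x)\}$ and $V=\{x:\ f(x)<x\}$ are a down-set and an up-set respectively (here the order reversal is exactly what makes them convex), they are disjoint, and they cover $s[A,B]$ with $A\in U$ and $B\in V$. By connectedness precisely one is a closed segment; say $U=s[A,c]$ with $c=\max U$, so $c<f(c)$ strictly, while $V$ has no minimum and $\inf V=c$. Choosing points $x\in V$ with $x\to c$ from above (possible by density since $V$ has no least element) gives $f(x)<x$, and the no-skip lemma forces $f(x)\to f(c)$, so in the limit $f(c)\le c$, contradicting $c<f(c)$; the symmetric subcase in which $V$ is the closed segment is handled identically. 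Alternatively one may first replace $f$ by the order-preserving map $f\circ f$ to produce $p$ with $f(f(p))=p$ and then analyze the segment-preserving bijection that $f$ induces on the connected subsegment $s[p,q]$ (connected by \ref{ConnectednessOfSubsegments}), but the cut argument is more direct.

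I expect the real obstacle to be the order-reversing case, and within it the no-skip lemma. The entire theorem hinges on $f$ being unable to jump across the diagonal, and the only thing preventing such a jump is that $f$ carries $s[A,B]$ \emph{onto} a full segment, making its image order-convex and hence value-complete. Turning this convexity of the image into the continuity actually used at the cut point is the crux of the proof; once that is secured, both cases close quickly.
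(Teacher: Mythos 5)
Your proposal is correct, but it is genuinely a different proof from the paper's, even though both hinge on the same pivotal partition. The paper argues in two steps: first the surjective case $f(s[A,B])=s[A,B]$, where (assuming no fixed point) $f$ must swap $A$ and $B$, and the sets $U=\{C\mid C\in s[A,f(C)]\}$ and $V=\{C\mid C\in s[B,f(C)]\}$ --- exactly your cut $\{x\le f(x)\}$, $\{x\ge f(x)\}$ --- are disjoint, convex, and cover $s[A,B]$; since $f$ is onto and preserves closed segments, it interchanges $U$ and $V$, so either both or neither would be a closed segment, violating the ``precisely one'' clause of \ref{ConvConnectednessDef}. The general case is then reduced to the surjective one by passing to $\bigcap_{n\ge 1}f^n(s[A,B])$, a nonempty $f$-invariant segment by \ref{IntersectionOfDecreasingSegments}. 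You instead attack the non-surjective case head-on: monotone dichotomy, Knaster--Tarski for the order-preserving case, and for the order-reversing case the same cut, but with the contradiction extracted from your no-skip lemma (order-convexity of the image) rather than from the swap symmetry. Each route buys something: the paper's swap argument needs no continuity or limit reasoning whatsoever --- connectedness demands an asymmetry between $U$ and $V$ that a surjective segment-preserving $f$ cannot tolerate --- but it must purchase surjectivity with an infinite iteration and the nested-segment property; your argument needs no iteration, isolates exactly where completeness enters (existence of suprema), and makes explicit the hidden continuity, at the cost of the monotonicity and no-skip preliminaries. One sentence of yours needs tightening: ``in the limit $f(c)\le c$'' is not a topological limit but the order-theoretic observation that, by order reversal, $f(x)\le f(x')<x'$ for every $x'\in V$ with $x'\le x$, so each $f(x)$ with $x\in V$ is a lower bound of $V$, whence $f(x)\le c$ and $f(c)=\sup f(V)\le c$; since you already have monotonicity, this is a one-line repair. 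Finally, your parenthetical alternative --- replacing $f$ by $f\circ f$ and restricting to the connected segment $s[p,f(p)]$ --- is precisely the shortest bridge back to the paper's surjective special case.
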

\begin{proof}
Assume $f$ has no fixed points.\\
\textbf{Special Case}: $f(s[A,B])=s[A,B]$. That implies $f(A)=B$ and $f(B)=A$
as $A$ and $B$ are the only points of $s[A,B]$ that are not between different points.

Consider $U=\{C\mid C\in s[A,f(C)]\}$ and $V= \{C\mid C\in s[B,f(C)]\}$. 
Notice $A\in U$ and $s[A,C]\subset U$ if $C\in U$. That means $U$ is convex.
Indeed, $X\in s[A,C]$ implies $f(X)\in s[B,f(C)]$, so $X\in [A,f(X)]$.
Similarly, $V$ is convex.

If $C\in s[A,f(C)]$, then $f(C)\in s[B,f^2(C)]$, so $f(U)\subset V$. Similarly, $f(V)\subset U$.
Therefore $f(U)=V$ and $f(V)=U$ as $U$ and $V$ are disjoint.

$U$ cannot be a closed interval, as in that case $V$ is a closed interval as well contradicting connectedness of $s[A,B]$.
Therefore $V$ is a closed interval resulting in $U$ being a closed interval, a contradiction again.

\textbf{General Case}:
If $f(s[A,B])\ne s[A,B]$, consider the intersection of all $f^n(s[A,B])$, $n\ge 1$.
It is equal to $s[A',B']$ for some $A'$, $B'$ by \ref{IntersectionOfDecreasingSegments}.
Notice $f(s[A',B'])=s[A',B']$, so $f$ has a fixed point by the Special Case.
\end{proof}

\subsection{Another view of connectedness}

This part is to show that our definition of connectedness coincides with the standard concept of connectedness in the topology induced by a relation of betweenness.

\begin{Definition}\label{NbhdDef}
Let $\Pi$ be a set with a relation of betweenness. A subset $U$ of a closed line segment $s[A,B]$ is called a \textbf{neighborhood} of $X\in U$ in $s[A,B]$ if either\\
(i) there are points $D,E$ such that $X \in s(D,E) \subset U$\\
or\\
(ii) $X=A$ (or $X=B$) and there is $D \in s(A,B)$ such that $s[X,E) \subset U$.
\end{Definition}

\begin{Definition}\label{OpenDef}
Let $\Pi$ be a set with a relation of betweenness. A subset $U$ of a closed line segment $s[A,B]$ is called \textbf{open} in $s[A,B]$ if it is a neighborhood in $s[A,B]$ of its every point.
\end{Definition}

\begin{Theorem}\label{ConnectednessConvexityThm}
Let $\Pi$ be a set with a relation of betweenness. A closed line segment $s[A,B]$ in $\Pi$ is connected if and only if it cannot be expressed as a union of two disjoint non-empty open subsets.
\end{Theorem}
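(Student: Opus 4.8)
The plan is to translate both notions of (dis)connectedness into statements about Dedekind cuts of the linear order on $s[A,B]$ given by $X\le Y$ iff $s[A,X]\subset s[A,Y]$ (the order already used in \ref{IntersectionOfInfinitelyClosedSegments}), and then match the two up. First I would record a structural lemma: if $s[A,B]=U\cup V$ is a partition into two disjoint non-empty \emph{convex} sets, then one of them, say $U$, lies entirely below the other (every point of $U$ precedes every point of $V$), so that $A\in U$ and $B\in V$; this follows quickly from convexity by ruling out interleaved points. In this language $U$ is a closed line segment exactly when it has a largest element (then $U=s[A,\max U]$) and $V$ is a closed line segment exactly when it has a smallest element (then $V=s[\min V,B]$). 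Hence, by \ref{ConvConnectednessDef}, $s[A,B]$ is connected precisely when every such cut $(U,V)$ has \emph{exactly one} of the two features ``$U$ has a maximum'' and ``$V$ has a minimum''; equivalently, the order admits no \emph{jump} (both a max $M$ and a min $m$, forcing $s(M,m)=\emptyset$) and no \emph{gap} (neither a max nor a min).

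For the implication ``topologically connected $\Rightarrow$ connected'' I would argue contrapositively, producing an open disconnection from a bad cut. If $(U,V)$ is a jump, then $U=s[A,M]$ and $V=s[m,B]$ with $s(M,m)=\emptyset$; one checks directly from \ref{NbhdDef} that both are open, the only delicate points being $M$ and $m$, where the emptiness of $s(M,m)$ lets an open interval reaching toward the other side stay inside $U$, respectively inside $V$. If $(U,V)$ is a gap, then $U$ is a convex initial segment with no maximum and $V$ a convex final segment with no minimum; since $U$ has no maximum, each of its points has an open neighborhood in $U$ (the endpoint $A$ being handled by clause (ii) of \ref{NbhdDef}), and symmetrically for $V$, so again both are open. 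Either way $s[A,B]$ splits into two disjoint non-empty open sets, contradicting topological connectedness.

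The reverse implication, ``connected $\Rightarrow$ topologically connected,'' is the hard part, and I would again use the contrapositive. Given a partition $s[A,B]=P\cup Q$ into disjoint non-empty open sets with $A\in P$, I form the maximal convex initial segment inside $P$, namely $W:=\{x\in s[A,B]\mid s[A,x]\subset P\}$, together with $W':=s[A,B]\setminus W$. Then $W$ is a down-set and $W'$ an up-set, both convex, with $A\in W$ and (since $Q\ne\emptyset$) $W'\ne\emptyset$, so $(W,W')$ is a cut. The crux is to show this cut is bad. If $W$ has a maximum $M$, openness of $P$ at $M$ forces a successor of $M$ to exist and to equal $\min W'$ (otherwise a point just above $M$ would lie in $P$ and enlarge $W$), so the cut is a jump. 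If $W$ has no maximum, I show $W'$ can have no minimum either: a minimal element $m$ of $W'$ must lie in $Q$, because $s[A,m)\subset P$; then openness of $Q$ at $m$ produces a predecessor of $m$ lying in $P$, which would be the maximum of $W$, a contradiction. In either case the cut is bad, so by the first step $s[A,B]$ is not connected.

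The main obstacle is exactly this last extraction: the open sets $P$ and $Q$ need not be convex, so one cannot use them directly as the convex pieces of \ref{ConvConnectednessDef}; one must pass to the induced cut $(W,W')$ and then transfer the openness information across the single boundary of the cut, taking care with the endpoint clauses of \ref{NbhdDef} when that boundary point happens to be $A$ or $B$. Everything else is routine bookkeeping in the linear order supplied by \ref{SplittingOfClosedSegments}.
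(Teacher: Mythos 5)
Your proof is correct. It shares its skeleton with the paper's proof --- both directions are argued contrapositively, the easy direction by asserting that a decomposition violating \ref{ConvConnectednessDef} (both parts closed segments, or neither) consists of open sets, the hard direction by manufacturing a convex cut out of an open disconnection --- but the cut you manufacture is genuinely different, and the difference has content. The paper takes $S_1:=\bigcup_{X\in U}s[A,X]$, the \emph{down-closure} of the open set $U$ containing $A$, and argues that $S_1$ and its complement $S_2$ cannot have exactly one closed line segment among them; you take $W:=\{x\in s[A,B]\mid s[A,x]\subset P\}$, the \emph{largest initial segment contained in} $P$, and run the jump/gap dichotomy on $(W,W')$. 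Your version is more robust: the paper's argument tacitly needs $B\in V$, because if both endpoints lie in $U$ then $S_1=s[A,B]$ and $S_2=\emptyset$, so $(S_1,S_2)$ is not an admissible decomposition at all and that case is silently skipped. The case is not vacuous: realize $s[A,B]$ as $[0,1]\cup[2,3]\cup[4,5]$ with betweenness inherited from $\mathbb{R}$, $A=0$, $B=5$, $U=[0,1]\cup[4,5]$, $V=[2,3]$; both $U$ and $V$ are open in the sense of \ref{OpenDef}, yet the paper's $S_2$ is empty, while your cut $W=[0,1]$, $W'=[2,3]\cup[4,5]=s[2,5]$ is a jump and finishes the proof with no case distinction on where $B$ lies. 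Your explicit jump/gap bookkeeping also spells out the easy direction, which the paper disposes of in one unproved line, and it avoids a slip in the paper's text: the conclusion written there as $V=s[F,B]$ should be $S_2=s[F,B]$, since $V$ itself need not be convex --- a confusion that cannot arise in your setup, where the only sets ever compared are the two halves of the cut. What the paper's route buys in exchange is brevity. (One shared caveat: at endpoints both arguments must read clause (ii) of \ref{NbhdDef} as permitting the auxiliary point to be any point of $s[A,B]$ other than $X$; the clause as printed has a typo, but this affects the paper's proof and yours equally.)
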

\begin{proof}
If there is a decomposition of $s[A,B]$ into two disjoint non-empty convex subsets
$U$ and $V$
such that either both of them are closed line segments or none of them is a closed line segment, then both $U$ and $V$ are open.

Let $s[A,B]$ be the union of two disjoint non-empty open subsets $U$ and $V$. Let $S_1$ be the union of all closed line segments $s[A,X]$ such that $X$ belongs to $U$. Then $S_1$ is a convex subset of $s[A,B]$ and its complement $S_2$ is convex as well. It cannot happen that exactly one of sets $S_1$, $S_2$ is a closed line segment. Indeed, if $S_1=s[A,E]$, then $E\in U$
and there is $F$ such that $s[E,F)\subset U$. In this case $F\in V$
(otherwise $s[A,F]\subset S_1$, a contradiction) and $V=s[F,B]$.
A similar argument works if $S_2$ is a closed line segment.
\end{proof}

\section{Spaces with maximal rays}

In this section we formalize spaces that satisfy Axiom 2 of Euclid \ref{Axiom 2 of Euclid}. However, our point of view is to extend line segments to rays rather than straight lines (notice that Euclid did not consider infinite lines, so our approach can still be considered as the one following Euclid's ideas).
It is so because we plan to draw on a basic idea from physics of a space being isotropic (i.e. being the same in all directions) and rays correspond to directions in our system.
Also, rays lead naturally to the concept of boundary at infinity, a fundamental idea of contemporary mathematics (see \cite{Grom} and \cite{BH}).

\begin{Definition}\label{RayDef}
Let $\Pi$ be a set with a relation of betweenness. A \textbf{ray} $r$ in $\Pi$ \textbf{emanating} from $A$ is a convex subset
satisfying the following conditions:\\
1. $A\in r$ but $A$ is not between any two other points of $r$,\\
2. every three points of $r$ are contained in a closed line segment which is a subset of $r$,\\
3. $r$ is not equal to any closed line segment.

A \textbf{maximal ray} emanating from $A$ is a ray that is not a proper subset of another ray emanating from $A$.
\end{Definition}

\begin{Example}
$s[A,B)$ is a ray if $A\ne B$ and $s[A,B)$ is not a closed line segment.
\end{Example}

\begin{Observation}\label{OrderingOfRays}
$r$ is a ray emanating from $A$ if and only if it is convex and the order on $r$, defined by $B \leq C$ meaning $s[A,B]\subset s[A,C]$, is linear with $A$ being the minimum, and $r$ has no maximum.
\end{Observation}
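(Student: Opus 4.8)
The plan is to prove the Observation \ref{OrderingOfRays} by establishing the two directions of the equivalence separately, relying throughout on \ref{SplittingOfClosedSegments} to guarantee that finite subsets of a segment carry a well-defined linear order, and on \ref{InclusionOfSegmentsProp} to control how closed subsegments sit inside one another. The key structural fact I would isolate first is that for a convex set $r$ containing $A$, the relation $B \leq C$ defined by $s[A,B]\subset s[A,C]$ is automatically reflexive and transitive; the substance of the statement is precisely when this preorder is a \emph{linear} order (antisymmetric and total) with the stated extremal properties.

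First I would prove the forward direction: assume $r$ is a ray emanating from $A$. Convexity is part of the definition, so it remains to check that $\leq$ is linear, that $A$ is the minimum, and that $r$ has no maximum. That $A$ is the minimum is immediate since $s[A,A]=\{A\}\subset s[A,C]$ for every $C$. That $r$ has no maximum follows from condition 3 in \ref{RayDef}: if some $B\in r$ were the maximum, then every $C\in r$ would satisfy $s[A,C]\subset s[A,B]$, and convexity would force $r=s[A,B]$, contradicting that $r$ is not a closed line segment. For linearity I must show totality and antisymmetry. Given $B,C\in r$, condition 2 gives a closed segment $s[X,Y]\subset r$ containing $A,B,C$; applying \ref{SplittingOfClosedSegments} to these three (or fewer) points inside $s[X,Y]$ produces a unique ordering, and since $A$ is an endpoint of neither unless it coincides with one of $B,C$, the betweenness trichotomy yields either $s[A,B]\subset s[A,C]$ or $s[A,C]\subset s[A,B]$, which is totality. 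Antisymmetry, $B\leq C$ and $C\leq B$ forcing $B=C$, follows because $s[A,B]=s[A,C]$ together with condition 1 (that $A$ is not strictly between two points of $r$) pins down the non-$A$ endpoint uniquely.

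For the converse I would assume $r$ is convex and that $\leq$ is a linear order with $A$ as minimum and no maximum, then verify conditions 1--3 of \ref{RayDef}. Condition 3 is the cleanest: if $r$ were a closed line segment $s[A,M]$, then $M$ would be a maximum for $\leq$, contradicting the hypothesis. Condition 1 splits into two parts: $A\in r$ is given as the minimum, and $A$ not being between two other points of $r$ follows because if $A\in s(P,Q)$ with $P,Q\in r$ then by \ref{BetwennessOfTripleProp} applied inside $s[P,Q]$ neither $P\leq A$ nor $Q\leq A$ could hold with $A$ strictly interior, contradicting minimality of $A$. Condition 2 is where I expect the main obstacle: given three points $B_1\leq B_2\leq B_3$ of $r$, I need a single closed segment inside $r$ containing all three, and the natural candidate is $s[A,B_3]$, which contains $A,B_1,B_2,B_3$ by the definition of $\leq$ and lies in $r$ by convexity. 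The subtlety is confirming that $B_1,B_2$ genuinely lie in $s[A,B_3]$ rather than merely satisfying $s[A,B_i]\subset s[A,B_3]$; here I would invoke \ref{InclusionOfSegmentsProp} or directly note $B_i\in s[A,B_i]\subset s[A,B_3]$, so the inclusion of endpoints is free. Thus the hardest point is really bookkeeping the passage between the order relation $\leq$ and actual membership in segments, and I would handle it by consistently using that $B\in s[A,B]$ so that $B\leq C$ immediately gives $B\in s[A,C]$.
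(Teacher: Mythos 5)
The paper states \ref{OrderingOfRays} as an Observation with no written proof, so your proposal can only be judged on its own correctness. Your skeleton (check the three conditions of \ref{RayDef} against the order properties, using \ref{SplittingOfClosedSegments}, \ref{BetwennessOfTripleProp}, and \ref{InclusionOfSegmentsProp}) is the natural one, and your forward direction and your verification of condition 2 are essentially sound. But two steps in your converse direction are genuinely broken as written.

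First, your argument for condition 1 of \ref{RayDef} draws a contradiction that does not exist. Under this definition of $\leq$, the clause ``$A$ is the minimum'' is vacuous: $A\leq B$ means $s[A,A]=\{A\}\subset s[A,B]$, which holds for every $B$ by parts (a) and (c) of \ref{BetweennessDef}. So your observation that ``neither $P\leq A$ nor $Q\leq A$ could hold'' contradicts nothing; minimality asserts $A\leq P$ and $A\leq Q$, which is entirely compatible with that. The property that actually fails when $A\in s(P,Q)$ with $P,Q\in r$ is \emph{totality}: by axiom (c), $s[A,P]\cap s[A,Q]=\{A\}$, so $s[A,P]\subset s[A,Q]$ would force $P\in s[A,P]\cap s[A,Q]=\{A\}$, i.e.\ $P=A$, and symmetrically for the other inclusion. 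Hence $P$ and $Q$ are incomparable, contradicting linearity. (Equivalently: totality gives, say, $P\leq Q$, so $P$ is between $A$ and $Q$, while $A$ is between $P$ and $Q$, contradicting the uniqueness in \ref{BetwennessOfTripleProp}.)

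Second, your verification of condition 3 only excludes $r=s[A,M]$, i.e.\ segments having $A$ as an endpoint, whereas \ref{RayDef} requires $r$ to differ from \emph{every} closed segment $s[X,Y]$; a priori $A$ could be an interior point of $s[X,Y]=r$. You must first rule that out, which is exactly the corrected condition 1 argument: $X,Y\in r$ with $A$ strictly between them contradicts linearity, so $A\in\{X,Y\}$, and only then does ``the far endpoint is a maximum'' (via \ref{InclusionOfSegmentsProp}) finish the job. A smaller repair in the forward direction: antisymmetry does not come from condition 1 of \ref{RayDef}; it holds in any set with betweenness, since $s[A,B]=s[A,C]$ gives $C\in s[A,B]$, hence $s[A,B]=s[A,C]\cup s[C,B]$ with $s[A,C]\cap s[C,B]=\{C\}$, and then $B$ lies in that intersection, forcing $B=C$. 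With these three repairs your proof goes through.
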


\begin{Proposition}\label{MaximalRaysProp}
Let $\Pi$ be a set with a relation of betweenness. Assume that if line segments $s[A,B]$ and $s[A,C]$ have a common interior point, then one of them is contained in the other.
If a closed line segment $s[A,B]$, where $A\ne B$, is a subset of a ray $r$ emanating from $A$, then the union $r'$ of all rays emanating from $A$ and containing $B$ is the maximal ray emanating from $A$ and containing $s[A,B]$.
\end{Proposition}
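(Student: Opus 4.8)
The plan is to characterize $r'$ through its order structure and then invoke Observation \ref{OrderingOfRays}, which reduces ``being a ray emanating from $A$'' to three checkable properties: convexity, linearity of the order $\leq$ (where $X\leq Y$ means $s[A,X]\subset s[A,Y]$) with $A$ as minimum, and absence of a maximum. So the heart of the argument is to show that $r'$ is linearly ordered by $\leq$; convexity, minimality of $A$, and the no-maximum property will then follow quickly, and containment of $s[A,B]$ and maximality will be formal.

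First I would record that every point $C\in r'$ is comparable to $B$: by definition $C$ lies on some ray $r_C$ emanating from $A$ and containing $B$, and by Observation \ref{OrderingOfRays} the ray $r_C$ is linearly ordered, so $s[A,C]\subset s[A,B]$ or $s[A,B]\subset s[A,C]$. To prove $r'$ is linearly ordered I would take $C,D\in r'$ and split into cases according to their comparison with $B$. If both satisfy $\,\cdot\leq B$, then $C,D\in s[A,B]$ and Proposition \ref{SplittingOfClosedSegments} applied to $\{C,D\}$ makes them comparable. If one is $\leq B$ and the other $\geq B$, then transitivity of $\leq$ through $B$ settles it.

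The main obstacle, and the only place the hypothesis is used, is the case $B\leq C$ and $B\leq D$ with $C\neq B\neq D$. Here $B\in s[A,C]$, $B\neq A$, and $B\neq C$, so $B$ is an interior point of $s[A,C]$, and likewise of $s[A,D]$. Thus $s[A,C]$ and $s[A,D]$ share the interior point $B$, and the standing hypothesis forces one to contain the other, which is exactly the comparability of $C$ and $D$. This is precisely the step that would collapse without the nesting hypothesis, since otherwise two rays through $B$ could fan out incomparably past $B$ and their union would fail to be convex.

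With linearity in hand the rest is routine. For convexity, given $C\leq D$ in $r'$ choose a ray $r_D\ni D,B$; then $s[A,D]\subset r_D\subseteq r'$ and $s[C,D]\subset s[A,D]$ by Corollary \ref{InclusionOfSegmentsProp}, so $s[C,D]\subset r'$. The point $A$ is the minimum because $\{A\}=s[A,A]\subset s[A,C]$ for every $C$, and $A\in r'$ since the given $r$ with $s[A,B]\subset r$ is one of the rays comprising $r'$. There is no maximum: a purported maximum $C\in r'$ lies on a ray $r_C\ni B$, which has no maximum, so some $C'\in r_C\subseteq r'$ satisfies $C<C'$. By Observation \ref{OrderingOfRays}, $r'$ is a ray emanating from $A$, and it contains $s[A,B]$ because $r$ is among the rays whose union is $r'$. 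Finally, for maximality I would note that any ray $r''$ emanating from $A$ with $r'\subseteq r''$ contains $B$, hence is one of the rays defining $r'$, giving $r''\subseteq r'$ and thus $r''=r'$; the same observation shows $r'$ is the unique maximal ray emanating from $A$ and containing $s[A,B]$.
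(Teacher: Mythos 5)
Your proposal is correct and takes essentially the same route as the paper: both arguments turn on applying the nesting hypothesis to segments $s[A,C]$ and $s[A,D]$ sharing the interior point $B$, thereby showing that everything past $B$ is linearly nested, and then concluding that the union $r'$ inherits the ray properties. The only difference is packaging --- the paper phrases the nestedness at the level of whole rays (a chain under inclusion) and checks Definition \ref{RayDef} directly, while you verify the pointwise linear order and invoke Observation \ref{OrderingOfRays}; these are interchangeable, and your version usefully spells out the comparability argument the paper leaves as ``Notice that''.
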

\begin{proof}
Notice that, given two rays emanating from $A$ and containing $B$, one of them is contained in the other. Therefore $r'$ cannot have a maximal element, is connected, and every three points of $r'$ are contained in a closed line segment which is a subset of $r'$. That means $r'$ is a ray.
\end{proof}

The next definition summarizes \ref{MaximalRaysProp} and provides the background material for one way of understanding of Axiom 2 of Euclid \ref{Axiom 2 of Euclid}.

\begin{Definition}\label{SpaceWithMaximalRays}
$\Pi$ is a space with \textbf{maximal rays} if it is a set with a relation of betweenness
satisfying the following conditions:\\
1.
if two line segments $s[A,B]$ and $s[A,C]$ have a common interior point, then one of them is contained in the other.\\
2. if $A\ne B$ and $s[A,B]$ is connected, then there is a connected line segment $s[C,D]$ containing $s[A,B]$ in its interior.
\end{Definition}

In spaces with maximal rays we have a natural concept of rays being antipodal.

\begin{Definition}\label{AntipodalRaysDef}
Let $\Pi$ be a space with maximal rays. Two maximal rays $r_1$ and $r_2$ emanating from the same point $A$ are \textbf{antipodal} if there are points $B\in r_1$ and $C\in r_2$ such that $A\in s(B,C)$.
Equivalently, $r_1\cup r_2$ is convex.
\end{Definition}

Notice that each maximal ray has at most one antipodal maximal ray and every connected maximal ray (that means its subsegments are connected) has an antipodal maximal ray.

\section{Boundary at infinity and Pasch Axiom}

Given a space $\Pi$ with maximal rays we can interpret Euclidean angles as unordered pairs of connected maximal rays in $\Pi$ emanating from the same point.
The modern point of view is that connected maximal rays in $\Pi$ emanating from $A$ form the \textbf{boundary at infinity} $\partial(\Pi,A)$. Notice rays are spaces with the simplest boundary at infinity, namely one-point space.

\begin{Definition}
Given two different points $A$ and $B$ of a space $\Pi$ with maximal rays such that $s[A,B]$ is connected, $ray[A,B]$ is defined as the maximal ray emanating from $A$ and containing $B$.
\end{Definition}

It is natural to seek a betweenness structure on $\partial(\Pi,A)$. It turns out Pasch Axiom is exactly what is needed. 

There are two versions of Pasch Axiom used in the literature:\\
1. If $C$ is between $A$ and $B$ and $E$ is between $A$ and $D$, then line segments $s[C,D]$ and $s[B,E]$ intersect.\\
2. A line intersecting one side of a triangle must intersect another side as well.

We need a variant of Pasch's Axiom that fits our purposes better as it applies to non-planar cases as well. It is right in the middle of the two above axioms.

\begin{Lemma} Let $\Pi$ be a space with maximal rays. Consider the following conditions:\\
1. (Weak Pasch Condition) If $C$ is between $A$ and $B$ and $E$ is between $A$ and $D$, then line segments $s[C,D]$ and $s[B,E]$ intersect.\\
2. If $C$ is between $A$ and $B$ and $E$ is between $A$ and $D$, then for any $G$ between $B$ and $D$ there is
$H\in s[C,E]$ that is between $A$ and $G$.\\
3. (Medium Pasch Condition) If $C$ is between $A$ and $B$ and $E$ is between $A$ and $D$, then for any $G$ between $B$ and $D$ there is
$H\in s[C,E]$ that is between $A$ and $G$ and for any $H\in s[C,E]$ there is $G\in s[B,D]$ such that $H$ is between $A$ and $G$.\\
4. (Pasch Condition) Given a subset $\{r_1,r_2,r_3\}$ of $\partial(\Pi,A)$ disjoint with its antipodal set and given
points $B,C\in r_1$ and $D\in r_3$, if $r_2$ intersects $s[B,D]$, then it also intersects $s[C,D]$.\\
5. (Strong Pasch Condition) A maximal line intersecting one side of a triangle must intersect another side as well.

Conditions 1. and 2. are equivalent. Conditions 3. and 4. are equivalent. Higher numbered conditions imply lower numbered conditions.
\end{Lemma}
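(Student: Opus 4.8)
The plan is to lean on the convention, built into Definition \ref{BetweennessDef}(a)--(b), that a point counts as lying between itself and any other point. This lets me specialize a ``crossbar'' endpoint or an ``interior'' point to an actual vertex, which collapses most of the equivalences to a single application of the relevant condition. Throughout I will use the linear order along a ray supplied by \ref{OrderingOfRays} and the inclusion $s[A,Q]\subset s[A,G]$ whenever $Q\in s[A,G]$ from \ref{InclusionOfSegmentsProp} to certify that every intersection point produced lies on the near side (``between $A$ and $G$'') rather than beyond.

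For the equivalence of $1$ and $2$: to get $1\Rightarrow 2$, I would apply the Weak Pasch Condition twice. First, with apex $B$ and interior points $C\in s[A,B]$, $G\in s[B,D]$, it yields a point $Q\in s[C,D]\cap s[A,G]$. Second, with apex $D$ and interior points $E\in s[A,D]$, $Q\in s[C,D]$, it yields $H\in s[C,E]\cap s[A,Q]$; since $s[A,Q]\subset s[A,G]$, this $H$ is the point demanded by Condition $2$. For $2\Rightarrow 1$, I would apply Condition $2$ just once, with apex $B$, the two sides being $s[B,A]$ and $s[B,D]$, and---this is the trick---taking the interior point of $s[B,D]$ to be the endpoint $D$ itself (legitimate, as $D$ is between $B$ and $D$), so that the ``lower crossbar'' becomes $s[C,D]$. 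Feeding the base point $E\in s[A,D]$ into the conclusion produces $H\in s[C,D]\cap s[B,E]$, which is exactly Condition $1$.

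Next, $3\Rightarrow 2$ is immediate, since the first clause of the Medium Pasch Condition is verbatim Condition $2$. For $3\Leftrightarrow 4$ I read Condition $4$ as saying that the radial shadow from $A$ of a segment joining a point of $r_1$ to a point of $r_3$ does not depend on the chosen endpoints. Then $3\Rightarrow 4$ follows by applying the first clause of Condition $3$ with the interior point on $s[A,D]$ taken to be $D$ (so $s[C,E]=s[C,D]$): a point $G\in s[B,D]\cap r_2$ yields $H\in s[C,D]$ between $A$ and $G$, whence $H\in s[C,D]\cap r_2$. Conversely $4\Rightarrow 3$ follows by applying Condition $4$ twice---once to replace the far endpoint $B$ on $r_1$ by $C$, and once, with the roles of $r_1,r_3$ exchanged, to replace $D$ on $r_3$ by $E$---which recovers the first clause; the second clause comes out symmetrically by running the two replacements in the opposite direction, each step checked against the order of \ref{OrderingOfRays}.

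Finally, for $5\Rightarrow 4$: given $B,C\in r_1$ with $C$ between $A$ and $B$, a point $D\in r_3$, and $G\in s[B,D]\cap r_2$, let $\ell$ be the maximal line carrying $r_2$ and apply the Strong Pasch Condition to the nondegenerate triangle with vertices $C,B,D$. The line $\ell$ meets the side $s[B,D]$, and it cannot meet the side $s[C,B]\subset r_1$ because $\ell\cap r_1=\{A\}$ while $A\notin s[C,B]$; hence $\ell$ meets the third side $s[C,D]$. Chaining $5\Rightarrow 4\Leftrightarrow 3\Rightarrow 2\Leftrightarrow 1$ then shows every higher-numbered condition implies every lower-numbered one. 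I expect the main obstacle to be the orientation bookkeeping in $5\Rightarrow 4$: certifying that the crossing guaranteed by Strong Pasch lies on the ray $r_2$ and not on its antipode, which is where the hypothesis that $\{r_1,r_2,r_3\}$ is disjoint from its antipodal set, together with the order along $r_2$ from \ref{OrderingOfRays}, must be used carefully.
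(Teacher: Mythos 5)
Your treatment of $1\Leftrightarrow 2$ and of $3\Rightarrow 2$ is correct and is essentially the paper's own argument: two applications of Weak Pasch for $1\Rightarrow 2$, and the endpoint-as-degenerate-interior-point trick for $2\Rightarrow 1$. In those steps your bookkeeping tools really do suffice, because the conclusion of Weak Pasch names $s[A,G]$ (respectively $s[A,Q]$) as one of the two intersecting segments, so the produced point lies on the near side by construction.

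The genuine gap is in $4\Rightarrow 3$. Condition 4 concludes only that $r_2$ \emph{intersects} $s[C,D]$ and then $s[C,E]$; it says nothing about where on the maximal ray $r_2$ those intersections occur, and $r_2$ extends beyond $G$. Neither the order from \ref{OrderingOfRays} nor the inclusion \ref{InclusionOfSegmentsProp} can upgrade ``$H$ lies on $r_2$'' to ``$H$ lies in $s[A,G]$'' --- those tools only operate after you already know $H\in s[A,G]$, which is exactly what must be proved. This is where the paper does real work: it applies Condition 4 a second time from the apex $B$ (showing $ray[B,E]$ meets $s[A,G]$) and then argues that this meeting point must coincide with the point $G'\in r_2\cap s[B,E]$, since otherwise two rays would share two distinct points, forcing $A$, $B$, $D$ to be collinear; and the collinear configurations must in any case be treated separately, because there the hypothesis of Condition 4 (the three rays being disjoint from their antipodal set) fails and one argues directly with segments on a line. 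Your proposal contains neither the cross-apex step nor this degenerate-case analysis. Two smaller instances of the same problem: in $3\Rightarrow 4$ you only treat the order $C\in s[A,B]$, but Condition 4 allows $B\in s[A,C]$, which requires the second clause of Condition 3 together with a maximality argument (in the spirit of \ref{MaximalRaysProp}) that the point it produces lies on $r_2$; the paper handles both orders at once by choosing $B'\in r_1$ beyond both $B$ and $C$ and using the ``if and only if''. And in $5\Rightarrow 4$ the ray-versus-antipodal-ray issue you flag is real and is not settled by order bookkeeping; the paper sidesteps it entirely by invoking the later structural results that spaces satisfying Strong Pasch are planes and that planes satisfy the Medium Pasch Condition, whereas a direct argument along your lines needs a further Pasch-plus-collinearity analysis to rule out the crossing landing on $a(r_2)$.
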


\begin{proof}
2.$\implies$1. Suppose $C$ is between $A$ and $B$ and $E$ is between $A$ and $D$. Look at the situation from the point of view of the point $D$. Condition 2 says that $s[D,C]$ must intersect $s[B,E]$.

1.$\implies$2. Suppose $C$ is between $A$ and $B$, $E$ is between $A$ and $D$, and $G$ is between $B$ and $D$.
Condition 1 (viewed from $D$) says that $s[A,G]$ must intersect $s[B,E]$
at some point $G'$. Condition 1 (viewed from $B$) says that $s[A,G']$ must intersect $s[C,E]$
at some point $H$. Hence, there is
$H\in s[C,E]$ that is between $A$ and $G$.

Obvously, Condition 3 implies Condition 2.

2. If $C$ is between $A$ and $B$ and $E$ is between $A$ and $D$, then for any $G$ between $B$ and $D$ there is
$H\in s[C,E]$ that is between $A$ and $G$.

3. (Medium Pasch Condition) If $C$ is between $A$ and $B$ and $E$ is between $A$ and $D$, then for any $G$ between $B$ and $D$ there is
$H\in s[C,E]$ that is between $A$ and $G$ and for any $H\in s[C,E]$ there is $G\in s[B,D]$ such that $H$ is between $A$ and $G$.

3.$\implies$4. Suppose a subset $\{r_1,r_2,r_3\}$ of $\partial(\Pi,A)$ is disjoint with its antipodal set and there are
points $B,C\in r_1$ and $D\in r_3$. Pick a point $B'\in r_1$ such that both $B$ and $C$ are in $s[A,B']$.
Applying Condition 3 one notices that $r_2$ intersects $s[D,B']$ if and only if it intersects $s[D,B]$.
Applying Condition 3 again, one notices that $r_2$ intersects $s[D,B']$ if and only if it intersects $s[D,C]$. Therefore,
if $r_2$ intersects $s[B,D]$, then it also intersects $s[C,D]$.

4.$\implies$3. Suppose $C$ is between $A$ and $B$ and $E$ is between $A$ and $D$. Let $r_1:=ray[A,B]$
and $r_3:=ray[A,D]$. 
Given any $G$ between $B$ and $D$ define $r_2$
as $ray[A,G]$. If any two of the rays $r_1$, $r_2$, $r_3$ are antipodal, then all points are on the same line and existence of $H$ on $s[A,G]\cap s[C,E]$ follows from facts about line segments. Otherwise, Condition 4 says $r_2$ intersects $s[B,E]$ at some point $G'$. 

Notice $G'$ is between $A$ and $G$. Indeed, for the same reason $ray[B,E]$ intersects $s[A,G]$ and that point must be $G'$ as otherwise all points $A$, $B$, $D$ are on the same line which we already dealt with.

Apply the same reasoning again to conclude $s[A,G']$ intersects $s[C,E]$ at some point $H$.

The second case (for any $H\in s[C,E]$ there is $G\in s[B,D]$ such that $H$ is between $A$ and $G$) has analogous proof. 

5.$\implies$4. This follows from the fact that spaces satisfying the Strong Pasch Condition are planes (see subsequent sections) and planes satisfy the Medium Pasch Condition.
\end{proof}

\begin{Definition}\label{PaschCondition}
A space $\Pi$ with maximal rays satisfies \textbf{Pasch Condition}
if, given a subset $\{r_1,r_2,r_3\}$ of $\partial(\Pi,A)$ disjoint with its antipodal set and given
points $B,C\in r_1$ and $D\in r_3$, if $r_2$ intersects $s[B,D]$, then it also intersects $s[C,D]$.
\end{Definition}

We are ready to formulate the class of spaces that have boundaries at infinity at each point and those boundaries have the antipodal map.  
\begin{Definition}\label{PaschSpacesDef}
A \textbf{Pasch space} is a space with maximal rays satisfying Pasch Condition.

We are going to consider only two kinds of Pasch spaces: a \textbf{spherical Pasch space} $\Pi$ is one equipped with the antipodal map $a$ (i.e. $a^2=id$ and $a(B)\ne B$ for all $B\in \Pi$) preserving closed line segments
satisfying the following conditions:\\
1. each pair of non-antipodal points $A,B\in \Pi$, the segment $s[A,B]$ is connected,\\
2. $s[A,B]$ consists of exactly two points if $a(B)=A$,\\
3. if $r_1\cup r_2$ contains a pair of antipodal points, then $r_1$ and $r_2$ are antipodal if they emanate from the same point.

A \textbf{regular Pasch space} is one in which all line segments are connected.
\end{Definition}

\begin{Proposition}\label{StrongPaschCondition}
Let $\Pi$ be a regular Pasch space with three points $A,B,D$ not lying in a closed line segment. \\
a. If $C\in s(A,B)$, then for every $F\in s[B,D]$ there is unique $E\in s[D,C]\cap s[A,F]$.\\
b.  If $C\in s(A,B)$, then for every $E\in s[C,D]$ there is unique $F$ such that $s[A,F]\cap s[C,D]$ contains $E$.
\end{Proposition}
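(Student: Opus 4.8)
The plan is to treat the two clauses uniformly, splitting each into an \emph{existence} and a \emph{uniqueness} assertion, and to obtain existence directly from the Medium Pasch Condition (Condition 3 of the Lemma on Pasch conditions). Since $\Pi$ is a regular Pasch space it satisfies Pasch Condition \ref{PaschCondition}, i.e. Condition 4, and that Lemma asserts Conditions 3 and 4 are equivalent; hence the Medium Pasch Condition is available. Because $A,B,D$ do not lie in a common closed line segment, the rays $ray[A,B]=ray[A,C]$ and $ray[A,D]$ are distinct and non-antipodal, so none of the degenerate collinear situations obstruct the use of that condition.

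For existence I would apply the Medium Pasch Condition with apex $A$, with $C\in s(A,B)$ as given, and with the second intermediate point taken to be $D$ itself (legitimate since each point is between itself and any point, by condition a of Definition \ref{BetweennessDef}), so that the segment $s[C,E]$ of that condition becomes $s[C,D]$. The first clause then says that for $F\in s[B,D]$ there is $H\in s[C,D]$ between $A$ and $F$, i.e. $H\in s[D,C]\cap s[A,F]$; this is exactly the existence part of (a). The second clause says that for the given $E\in s[C,D]$ there is $G\in s[B,D]$ with $E$ between $A$ and $G$, i.e. $E\in s[A,G]\cap s[C,D]$; this is the existence part of (b), and it also shows the sought $F$ may be taken in $s[B,D]$.

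For uniqueness I would isolate the auxiliary fact that \emph{if two connected closed line segments share two distinct points, then all four of their endpoints lie in a single closed line segment.} Granting this, uniqueness in (a) follows because two distinct points of $s[D,C]\cap s[A,F]$ would place $A,C,D$ in one closed segment; that segment shares the two points $A$ and $C$ with $s[A,B]$, so a second application of the auxiliary fact would force $A,B,D$ into a single closed segment, contradicting the hypothesis. Uniqueness in (b) is analogous: two distinct $F_1,F_2\in s[B,D]$ with $E\in s[A,F_i]$ (note $E\ne A$, else $A$ would be collinear with $C,D$, hence with $B$) place $F_1,F_2$ on the common ray $ray[A,E]$, so a segment $s[A,F']$ along that ray meets $s[B,D]$ in the two points $F_1,F_2$, and the auxiliary fact applied to $s[A,F']$ and $s[B,D]$ again puts $A,B,D$ in one closed segment.

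It remains to prove the auxiliary fact, which I expect to be the main obstacle. Suppose $s[P,Q]$ and $s[R,S]$ share distinct points $E_1,E_2$. By Corollary \ref{InclusionOfSegmentsProp} the segment $s[E_1,E_2]$ lies in both, and since $\Pi$ is regular its interior is non-empty (connectedness, Definition \ref{ConvConnectednessDef}); choose $M\in s(E_1,E_2)$. Then $M$ is an interior point of $s[P,Q]$ and of $s[R,S]$, so $E_1,E_2$ lie on opposite sides of $M$ in each segment. Matching up the two sides yields, in either pairing, a point common to a segment from $M$ to an endpoint of $s[P,Q]$ and a segment from $M$ to an endpoint of $s[R,S]$; using connectedness to produce a common interior point and then Condition 1 of Definition \ref{SpaceWithMaximalRays}, one of these two segments from $M$ contains the other, so those two endpoints share a maximal ray from $M$. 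Since the two directions along $s[P,Q]$ (resp. $s[R,S]$) at $M$ are antipodal (Definition \ref{AntipodalRaysDef}) and antipodal rays are unique, the remaining two endpoints share the antipodal ray; hence all four endpoints lie in $r\cup r'$ for a pair of antipodal rays $r,r'$ from $M$, which is convex, and taking its two extreme points exhibits a closed segment containing $P,Q,R,S$. The delicate points will be the bookkeeping of the two sidedness pairings and the degenerate cases in which one of $E_1,E_2$ coincides with an endpoint; these I would dispatch by observing that any such coincidence only makes the required containment easier to verify.
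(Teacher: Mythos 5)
Your proof is correct, but it takes a genuinely different route from the paper's. The paper works with the Pasch Condition (Condition 4) directly, applied from two apexes: from $A$ it produces $E\in s[C,D]$ lying on $ray[A,F]$, from $D$ it produces $E'\in ray[D,C]\cap s[A,F]$, and then identifies $E=E'$; this single identification settles both the issue of whether $E$ lies in the closed segment $s[A,F]$ (not merely on the ray) and uniqueness, via the unproved remark that two common points of the two rays would force $A,C,D$, hence $A,B,D$, into one segment. Part (b) is then reduced to part (a) by choosing $F\in s[B,D]$ on $ray[A,E]$. You instead delegate the ray-versus-segment issue to the Lemma's equivalence of Conditions 3 and 4: substituting $E=D$ (legitimate by reflexivity of betweenness) into the Medium Pasch Condition yields both existence statements at once already in closed-segment form, which is cleaner, at the price of relying on the degenerate-case handling buried in the Lemma's proof of the implication from Condition 4 to Condition 3. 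On uniqueness you are more careful than the paper: the collinearity step the paper treats as obvious is exactly your auxiliary fact, and your proof of it (a midpoint $M\in s(E_1,E_2)$, matching of sides via Condition 1 of Definition \ref{SpaceWithMaximalRays}, uniqueness of antipodal maximal rays) is sound. The one soft spot is your closing phrase ``taking its two extreme points'': to conclude that finitely many points of the convex union $r\cup r'$ of two antipodal maximal rays lie in a single closed segment, you should cite that this union is a connected line (this is asserted in the proof of \ref{MaximalLineCor}) and then apply \ref{AddingClosedSegments} to $s[M,P]\cup s[M,Q]\cup s[M,R]\cup s[M,S]$, which all share the point $M$. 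With that citation added your argument is complete, and in fact more self-contained than the paper's, since it proves the collinearity principle that the paper's proof (and its uniqueness claim for $F$ in part (b), which the paper never addresses) uses implicitly.
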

\begin{proof}
Notice $ray[A,B]$ and $ray[A,D]$ are not antipodal.\\
1. There is $E\in s[C,D]$ that belongs to $ray[A,F]$. The only issue is whether
$E\in s[A,F]$. However, $ray[D,C]$ also intersects $s[A,F]$ at some point $E'$.
If $E'\ne E$, then points $A, D, C$ lie on one segment resulting in points  
$A,B,D$ lying in a closed line segment, a contradiction. For the same reason $E$ is unique.\\
2. Pick $F\in s[B,D]$ lying on $ray[A,E]$. By 1., $E\in s[A,F]$.
\end{proof}

\begin{Proposition}\label{BoundaryAtInfinityProp}
If $\Pi$ is a Pasch space, then for every point $A$ of $\Pi$ the boundary of infinity $\partial(\Pi,A)$ has a natural relation of betweenness defined as follows: \\
1. if $r_1$ and $r_2$ are antipodal or equal, then $s[r_1,r_2]$ is defined to be $\{r_1,r_2\}$,\\
2. if $r_1$ and $r_2$ are not antipodal and different, then
$r_3$ is declared to be between $r_1$ and $r_2$ if for every choice of $B\in r_1$ and $C\in r_2$, the ray $r_3$ intersects $s[B,C]$.
\end{Proposition}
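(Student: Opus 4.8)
The plan is to verify that the relation described in clauses 1 and 2 satisfies the three axioms of a betweenness relation (Definition \ref{BetweennessDef}). Symmetry (condition b) is immediate: clause 1 is symmetric in $r_1,r_2$, and in clause 2 the requirement that $r_3$ meet $s[B,C]$ for all $B\in r_1$, $C\in r_2$ is unchanged under swapping $r_1$ and $r_2$, since $s[B,C]=s[C,B]$. The reflexivity axiom (condition a) is equally direct and needs no appeal to Pasch: for any $r_2$, the ray $r_1$ meets each segment $s[B,C]$ with $B\in r_1$ at the point $B$ itself, so $r_1$ lies in $s[r_1,r_2]$ (and this also covers the clause-1 case, where $r_1\in\{r_1,r_2\}$).

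The substance is the splitting axiom (condition c), and the first step I would isolate is a well-definedness lemma extracted directly from the Pasch Condition (Definition \ref{PaschCondition}): for a triple $\{r_1,r_2,r_3\}\subset\partial(\Pi,A)$ that is pairwise distinct and pairwise non-antipodal, the ray $r_3$ meets $s[B,C]$ for \emph{some} choice of $B\in r_1$, $C\in r_2$ if and only if it meets $s[B,C]$ for \emph{every} such choice. This follows from two applications of the Pasch Condition: with $r_3$ as the testing ray, replacing $B\in r_1$ by another point of $r_1$ leaves the intersection property unchanged, and symmetrically for the point chosen on $r_2$. Hence, in the distinct non-antipodal case one may test betweenness against a single conveniently chosen pair of representatives.

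With this lemma in hand I would fix $B_1\in r_1$, $B_2\in r_2$ for a distinct non-antipodal pair and, assuming $r_3$ strictly between $r_1$ and $r_2$, pick $B_3\in r_3\cap s[B_1,B_2]$; strict betweenness forces $B_3\in s(B_1,B_2)$ and $B_3\ne A$. Since $s[B_1,B_2]$ is connected, \ref{SplittingOfClosedSegments} supplies the linear order $B_1\le B_3\le B_2$ together with $s[B_1,B_2]=s[B_1,B_3]\cup s[B_3,B_2]$ and $s[B_1,B_3]\cap s[B_3,B_2]=\{B_3\}$. For $s[r_1,r_2]=s[r_1,r_3]\cup s[r_3,r_2]$: if $r_4$ lies between $r_1,r_2$, then by the lemma it meets $s[B_1,B_2]$ in a point lying in $s[B_1,B_3]$ or in $s[B_3,B_2]$, and reapplying the lemma to the triple $\{r_1,r_4,r_3\}$ or $\{r_3,r_4,r_2\}$ places $r_4$ in $s[r_1,r_3]$ or $s[r_3,r_2]$; conversely, a ray between $r_1,r_3$ (resp. $r_3,r_2$) already meets the subsegment $s[B_1,B_3]\subseteq s[B_1,B_2]$ (resp. $s[B_3,B_2]$, using \ref{InclusionOfSegmentsProp}) and so lies between $r_1,r_2$. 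For the intersection clause $s[r_1,r_3]\cap s[r_3,r_2]=\{r_3\}$, suppose $r_4$ lies in both; then $r_4$ meets $s[B_1,B_3]$ at some $B_4$ and $s[B_3,B_2]$ at some $B_4'$, whence $B_4\le B_3\le B_4'$ in the order on $s[B_1,B_2]$, so $B_3\in s[B_4,B_4']\subseteq r_4$ by convexity of $r_4$. Thus $r_4$ and $r_3$ share the point $B_3\ne A$, and since a ray from $A$ is determined by any of its points other than $A$ (condition 1 of \ref{SpaceWithMaximalRays}), $r_4=r_3$.

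The main obstacle I anticipate is the bookkeeping that guarantees the triples fed into the Pasch-based lemma are genuinely pairwise distinct and pairwise non-antipodal, as Definition \ref{PaschCondition} demands. One must show that a ray strictly between two non-antipodal rays is distinct from, and non-antipodal to, each of them; I would argue this abstractly rather than by the Euclidean picture of angles in a sector of measure less than $\pi$: were $r_4$ antipodal to $r_1$, choosing $P\in r_4\cap s[B_1,B_2]$ and $B_1\in r_1$ with $P,B_1\ne A$ would give $A\in s(B_1,P)$ by the antipodality characterization of \ref{AntipodalRaysDef}, and then $A\in s[B_1,P]\subseteq s[B_1,B_2]$ by \ref{InclusionOfSegmentsProp}, contradicting non-antipodality of $r_1,r_2$. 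Separately one disposes of the boundary cases $r_4\in\{r_1,r_2,r_3\}$ via reflexivity, and of the clause-1 situations where $r_1,r_2$ are equal or antipodal, so that $s[r_1,r_2]=\{r_1,r_2\}$ and condition c of \ref{BetweennessDef} collapses to a triviality. Confirming that these exceptional configurations are exhaustive and correctly handled, rather than the Pasch argument itself, is where I expect the delicacy to lie.
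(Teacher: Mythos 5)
Your proposal is correct and takes essentially the same route as the paper's own proof, which likewise begins with the some-versus-every reformulation of clause 2 via the Pasch Condition and then verifies the splitting axiom by intersecting the middle ray with a test segment and splitting that segment at the crossing point. In fact you prove more than the paper records (its proof establishes only the inclusion $s[r_1,r_2]\subseteq s[r_1,r_3]\cup s[r_3,r_2]$, leaving the reverse inclusion, the clause $s[r_1,r_3]\cap s[r_3,r_2]=\{r_3\}$, and the non-antipodality bookkeeping implicit); the only blemish is that your appeal to connectedness of $s[B_1,B_2]$ is both unjustified in a general Pasch space and unnecessary, since \ref{SplittingOfClosedSegments} and condition c of \ref{BetweennessDef} hold for any betweenness relation.
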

\begin{proof}
Using Pasch Condition observe that 2. can be reformulated in a weaker form: 
$r_3$ is between non-antipodal $r_1$ and $r_2$ if for some choice of $B\in r_1\setminus \{A\}$ and $C\in r_2\setminus \{A\}$, the ray $r_3$ intersects $s[B,C]$.

Suppose $r_3, r_4$ are between rays $r_1$ and $r_2$. In order to show $s[r_1,r_2]=s[r_1,r_3]\cup s[r_3,r_2]$ assume $r_4$ is not between $r_1$ and $r_3$. 
Choose $B\in r_1$ and $C\in r_3$ such that $r_4$ does not intersect $s[B,C]$.
Also, choose $D\in r_2\setminus \{A\}$. Let $E$ be in the intersection of $r_3$ and $s[B,D]$.
Using Pasch Condition notice $r_4$ cannot intersect $s[B,E]$. Since $r_4$
intersects $s[B,D]$, it must intersect $s[E,D]$ which shows that
$r_4$ is between $r_3$ and $r_2$.
\end{proof}

\begin{Lemma}\label{PaschBetweennessLemma}
Let $\Pi$ be a Pasch space. If $r_2\in \partial(\Pi,A)$ is strictly between $r_1\ne r_3$, then $r_3$ is between $r_2$ and the antipodal ray $a(r_1)$ to $r_1$.
\end{Lemma}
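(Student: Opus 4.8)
The plan is to reduce the whole statement to producing a single crossing point, using the weak reformulation of boundary betweenness established inside the proof of Proposition \ref{BoundaryAtInfinityProp}: to conclude that $r_3$ is between $r_2$ and $a(r_1)$ it suffices to exhibit one pair $C\in r_2\setminus\{A\}$, $F\in a(r_1)\setminus\{A\}$ for which the ray $r_3$ meets $s[C,F]$, provided $r_2$ and $a(r_1)$ are distinct and non-antipodal so that the reformulation applies. Thus everything comes down to manufacturing that intersection and checking the two non-degeneracy facts.

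First I would fix $B\in r_1\setminus\{A\}$ and $D\in r_3\setminus\{A\}$. Since $r_2$ is strictly between $r_1$ and $r_3$, the rays $r_1$ and $r_3$ cannot be antipodal (an antipodal pair has $s[r_1,r_3]=\{r_1,r_3\}$, leaving no room for a ray strictly between), so $A\notin s[B,D]$ by Definition \ref{AntipodalRaysDef}. The betweenness of $r_2$ then yields a point $C\in r_2\cap s[B,D]$, and $C\ne A$ because $A\notin s[B,D]$. Next I would pick any $B^-\in a(r_1)\setminus\{A\}$; since $r_1$ and $a(r_1)$ are antipodal, $r_1\cup a(r_1)$ is convex and is separated by $A$ into its two rays, so $A\in s(B,B^-)$, i.e. $A$ lies between $B$ and $B^-$.

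The key step is then a single application of the Weak Pasch Condition with apex $B$: from $C$ between $B$ and $D$ and $A$ between $B$ and $B^-$, the condition forces $s[C,B^-]$ and $s[A,D]$ to intersect, say at $H$. Because $s[A,D]\subseteq ray[A,D]=r_3$, the point $H$ lies in $r_3\cap s[C,B^-]$, which is exactly the crossing point required, with $C\in r_2\setminus\{A\}$ and $B^-\in a(r_1)\setminus\{A\}$. The Weak Pasch Condition is available since $\Pi$ is a Pasch space and the Pasch Condition of Definition \ref{PaschCondition} implies it.

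It remains to dispatch the two non-degeneracy facts, which is where I expect the only real friction. That $r_2$ and $a(r_1)$ are not antipodal is immediate, since antipodality is an involution on maximal rays, so $r_2$ antipodal to $a(r_1)$ would give $r_2=r_1$, contradicting $r_2\ne r_1$. That $r_2\ne a(r_1)$ needs a short argument: if $r_2=a(r_1)$, the point $C\in a(r_1)\cap s[B,D]$ would satisfy $s[B,C]\subseteq s[B,D]$ by Corollary \ref{InclusionOfSegmentsProp} while $A\in s(B,C)$, forcing $A\in s(B,D)$ and hence making $r_1,r_3$ antipodal, the case already excluded. The mildly delicate ingredient common to both constructions is the claim that $A$ lies strictly between any $B\in r_1\setminus\{A\}$ and any point of $a(r_1)\setminus\{A\}$; this is the assertion that $A$ separates the convex set $r_1\cup a(r_1)$ into its two rays, which I would justify from convexity together with the ordering description in Observation \ref{OrderingOfRays}.
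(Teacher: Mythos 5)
Your skeleton is sound and actually parallels the paper's own argument (the paper likewise reduces to producing one crossing, applying the Pasch Condition at a point $B'\in a(r_1)$), and your check that $r_2$ cannot be antipodal to $a(r_1)$ is correct. The genuine gap is the ingredient you yourself flag as delicate and then use twice: the claim that $A\in s(B,B^-)$ for \emph{every} $B\in r_1\setminus\{A\}$ and \emph{every} $B^-\in a(r_1)\setminus\{A\}$. Definition \ref{AntipodalRaysDef} supplies only the \emph{existence} of one pair of points with $A$ strictly between them, and the universal statement is false in the generality of this lemma. The lemma is stated for all Pasch spaces, and Definition \ref{PaschSpacesDef} explicitly includes spherical ones; on a round sphere (where the lemma is not vacuous, since $\partial(\Pi,A)$ is a circle of meridian rays), take $A$ the north pole, $r_1$ and $a(r_1)$ two opposite open meridians, and $B$, $B^-$ points on them strictly below the equator. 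Then $s[B,B^-]$ is the shorter great-circle arc, which passes through the south pole $a(A)$ and misses $A$ entirely. The same example shows $r_1\cup a(r_1)$ is \emph{not} convex there (the arc contains $a(A)$, which lies in neither ray), so the ``equivalently, $r_1\cup r_2$ is convex'' clause of \ref{AntipodalRaysDef}, on which your closing separation argument rests, is exactly the part of that definition that cannot be invoked; Observation \ref{OrderingOfRays} orders each ray separately and cannot bridge the two. (Your claim is fine in \emph{regular} Pasch spaces, where $r_1\cup a(r_1)$ is a maximal line by \ref{MaximalLineCor}, but spherical spaces are within the scope of the lemma and of the theorem that uses it.)

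The failure hits both places the claim appears: the hypothesis ``$A$ is between $B$ and $B^-$'' of your Weak Pasch application, and your exclusion of $r_2=a(r_1)$, which assumes $A\in s(B,C)$ for the intersection point $C\in a(r_1)\cap s[B,D]$. The main step is cheaply repairable: since the weak reformulation from \ref{BoundaryAtInfinityProp} needs only \emph{some} crossing, you may take $B:=B_0$ and $B^-:=C_0$, where $B_0\in r_1$, $C_0\in a(r_1)$ are the witnesses with $A\in s(B_0,C_0)$ guaranteed by Definition \ref{AntipodalRaysDef}; then your single application of the Weak Pasch Condition at apex $B_0$ yields $H\in s[C,C_0]\cap s[A,D]\subset s[C,C_0]\cap r_3$ exactly as intended. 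The exclusion of $r_2=a(r_1)$, however, still needs a different argument: the intersection point $C$ may lie beyond $C_0$ on $a(r_1)$, and the bare betweenness axioms do not allow you to concatenate ``$A$ between $B_0$ and $C_0$'' with ``$C_0$ between $A$ and $C$'' into ``$A$ between $B_0$ and $C$'' (on the sphere this concatenation is genuinely false). So, as written, the proof has a real hole, even though the overall route is salvageable and close to the paper's.
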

\begin{proof}
It suffices to consider the case $r_3\ne r_2$. Choose points $B\in r_1\setminus \{A\}$, $C\in r_3\setminus \{A\}$ and let $D$ belong to the intersection of $r_2$ and $s[B,C]$. Choose $B'\in a(r_1)\setminus \{A\}$. The maximal ray emanating from $B'$ and containing $D$ must intersect $s[A,C]$ at some point $E$ as it intersects $s[C,B]$ unless
two of rays $ray[B',B]$, $ray[B',C]$, and $ray[B',D]$ are antipodal and existence of $E$ is not guaranteed.
If so, the only interesting case is that of $ray[B',C]$, and $ray[B',D]$ being antipodal, since the other cases lead to $r_2=a(r_1)$ or $r_3=a(r_1)$
and $r_3$ is between $r_2$ and $a(r_1)$ as $r_3=a(r_1)$ is not possible.
If $ray[B',C]$ and $ray[B',D]$ are antipodal, then $A\in s[D,C]$ and $r_2=r_1$
contradicting $r_2$ being strictly between $r_1\ne r_3$.
\end{proof}

\begin{Theorem}
If $\Pi$ is a Pasch space, then its boundary at infinity $\partial(\Pi,A)$ is a spherical Pasch space for every point $A$ of $\Pi$.
\end{Theorem}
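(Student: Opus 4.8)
The plan is to give $\partial:=\partial(\Pi,A)$ the betweenness of Proposition \ref{BoundaryAtInfinityProp} together with the antipodal map $a$ sending each connected maximal ray to its unique antipodal maximal ray (which exists by the remark following \ref{AntipodalRaysDef}), and then to check the clauses of Definition \ref{PaschSpacesDef} one at a time. The engine of the whole argument is the following correspondence. Fix distinct non-antipodal $r_1,r_2\in\partial$ and choose $B\in r_1\setminus\{A\}$, $C\in r_2\setminus\{A\}$ for which $s[B,C]$ is connected and, in the spherical case, misses the antipode of $A$ (such a choice exists by taking $B,C$ near $A$ and using that subsegments of the connected maximal rays $r_1,r_2$ are connected). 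I claim $\Phi(P):=ray[A,P]$ is an isomorphism of betweenness sets from $s[B,C]$ onto the boundary segment $s[r_1,r_2]$. Surjectivity is the weak reformulation in \ref{BoundaryAtInfinityProp}: each ray strictly between $r_1,r_2$ meets $s[B,C]$ in a point $P$ with $ray[A,P]$ equal to that ray. Injectivity follows from Condition 1 of \ref{SpaceWithMaximalRays} together with $A\notin s[B,C]$, which forbids a ray from $A$ meeting $s[B,C]$ twice. That $\Phi$ and $\Phi^{-1}$ preserve betweenness is again the weak reformulation, applied to the relevant subsegments of $s[B,C]$.

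Granting $\Phi$, several clauses are immediate. Since $s[B,C]$ is connected and connectedness is a betweenness invariant, $s[r_1,r_2]$ is connected, which is Condition 1 of a spherical Pasch space. Condition 2 is the convention, built into case 1 of \ref{BoundaryAtInfinityProp}, that $s[r,a(r)]=\{r,a(r)\}$. The map $a$ is an involution without fixed points by uniqueness of antipodal maximal rays; that it preserves closed segments I would obtain by iterating \ref{PaschBetweennessLemma}: from ``$r$ strictly between $r_1,r_2$'' one gets in turn $r_2\in s[r,a(r_1)]$, then $a(r_1)\in s[r_2,a(r)]$, and finally $a(r)\in s[a(r_1),a(r_2)]$, the symmetric chain giving the reverse inclusion.

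Next I would verify that $\partial$ is a space with maximal rays. For Condition 1 of \ref{SpaceWithMaximalRays}, let $r_3$ be a common interior point of $s[r_0,r_1]$ and $s[r_0,r_2]$; two applications of \ref{PaschBetweennessLemma} put both $r_1$ and $r_2$ into the connected segment $s[r_3,a(r_0)]$, so they are comparable in its linear order, and \ref{InclusionOfSegmentsProp} then forces one of $s[r_0,r_1]$, $s[r_0,r_2]$ to contain the other. Condition 2 of \ref{SpaceWithMaximalRays} asks that a connected $s[r_0,r_1]$ be enlargeable; here I would extend past $r_1$ toward $a(r_0)$ and past $r_0$ toward $a(r_1)$ -- both extensions having room precisely because $r_0,r_1$ are non-antipodal -- and use \ref{PaschBetweennessLemma} to see that the enlargement contains $s[r_0,r_1]$ in its interior. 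Condition 3 of \ref{PaschSpacesDef} then follows from the antipodal structure just built: a single maximal ray of $\partial$ contains no antipodal pair, so a pair of antipodal points lying on $r_1\cup r_2$ must be split between the two rays, forcing them to be antipodal.

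The remaining and genuinely hard step is the Pasch Condition for $\partial$, that is, \ref{PaschCondition} applied to the second boundary $\partial(\partial,\rho_0)$. The difficulty is that the rays $\rho_1,\rho_2,\rho_3$ occurring there are maximal rays of $\partial$, hence one-parameter families of rays-from-$A$ rather than subsets of $\Pi$. The strategy is to rewrite every boundary segment in the hypothesis as an honest segment of $\Pi$ via the isomorphism $\Phi$, to translate ``a maximal ray of $\partial$ meets a boundary segment'' into an incidence statement among rays and segments of $\Pi$, and then to invoke the Pasch Condition of $\Pi$ (\ref{PaschCondition}) in the configuration so obtained. I expect essentially all of the technical effort to concentrate here, in particular the repeated checking of non-antipodality hypotheses needed to keep every invoked segment connected and every $ray[A,\cdot]$ well defined.
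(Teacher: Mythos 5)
The decisive step is missing. The verification that $\partial(\Pi,A)$ satisfies the Pasch Condition \ref{PaschCondition} --- which is what makes the boundary a \emph{Pasch} space at all, and hence is the real content of the theorem --- is only announced as a strategy in your final paragraph (``the strategy is to rewrite\dots I expect essentially all of the technical effort to concentrate here''), not carried out. Moreover, the difficulty you anticipate (that points of $\partial(\partial(\Pi,A),\rho_0)$ are one-parameter families of rays) dissolves on unwinding the definitions, and no isomorphism $\Phi$ is needed: since betweenness on $\partial(\Pi,A)$ is \emph{defined} by incidence with segments of $\Pi$ (\ref{BoundaryAtInfinityProp}), the hypotheses ``$r_C$ strictly between $r_A,r_B$'' and ``$r_E$ strictly between $r_C,r_D$'' immediately produce points $A'\in r_A$, $B'\in r_B$, $D'\in r_D$ different from $A$, together with $C'\in r_C\cap s[A',B']$ and $E'\in r_E\cap s[C',D']$. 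One application of the Pasch Condition of $\Pi$ viewed from $A'$ (with the appropriate non-antipodality checks) yields $F'\in ray[A',E']\cap s[B',D']$, and $ray[A,F']$ is then the required point of $ray[r_A,r_E]\cap s[r_B,r_D]$, plus a symmetric case. That is exactly how the paper concludes, and it is short once set up; your proposal stops just before the step that matters.

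There is also a flaw in a step you do carry out. Your proof of Condition 1 of \ref{PaschSpacesDef} (and the well-definedness of $\Phi$ itself) rests on choosing $B\in r_1$, $C\in r_2$ with $s[B,C]$ connected, justified ``by taking $B,C$ near $A$ and using that subsegments of the connected maximal rays $r_1,r_2$ are connected.'' That justification is invalid: $s[B,C]$ is a transversal segment, not a subsegment of either ray, so connectedness of the rays says nothing about it, however close $B$ and $C$ are to $A$. (Consider the union of all rational-slope lines through the origin of the Descartes plane: every maximal ray at the origin is connected, yet every transversal segment is countable, hence disconnected.) The same issue undermines $\Phi$ mapping into $\partial(\Pi,A)$ at all, since for $P\in s(B,C)$ the ray $ray[A,P]$ is only defined when $s[A,P]$ is connected, and must itself be a connected maximal ray to be a point of the boundary. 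For what it is worth, the paper's own proof is silent on Condition 1, so this lacuna is shared; and where your write-up and the paper genuinely overlap --- the antipodal map on points preserving segments by iterating \ref{PaschBetweennessLemma}, and the overlapping-segments clause of \ref{SpaceWithMaximalRays} via \ref{PaschBetweennessLemma} --- your arguments agree with the paper's.
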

\begin{proof}
In order to show $\partial(\Pi,A)$ is a space with maximal rays,
we need to check that assumptions in \ref{SpaceWithMaximalRays} are satisfied.
Therefore assume two line segments $s[r_1,r_2]$ and $s[r_1,r_3]$ have a common interior point $r_4$. That implies neither $r_2$ nor $r_3$ is antipodal to $r_1$. 
By \ref{PaschBetweennessLemma} both $r_2$ and $r_3$ are between $r_4$ and
$a(r_1)$. Therefore we may assume $r_2$ is between $r_4$ and $r_3$.
That implies $r_2$ is between $r_1$ and $r_3$ resulting in  
$s[r_1,r_2]\subset s[r_1,r_3]$.

If $r_1$ is not antipodal to $r_2$, we choose points $B\in r_1\setminus \{A\}$, $C\in r_2\setminus \{A\}$, extend $s[B,C]$ to $s[D,E]$ containing $s[B,C]$ in its interior, and observe $s[ray[A,D],ray[A,E]]$ contains $s[r_1,r_2]$ in its interior.

The antipodal map on $ \partial(\Pi,A)$ is defined as follows:
$a(ray[r_1,r_2]):=ray[r_1,a(r_2)]$.

To verify the validity of our definition we need to show
$$ray[r_1,r_2]=ray[r_1,r_3]$$
$$\Downarrow$$
$$ray[r_1,a(r_2)]=ray[r_1,a(r_3)]$$

It suffices to consider the special case of $r_2$ being strictly between $r_1$ and $r_3$. Now,
it follows by applying \ref{PaschBetweennessLemma} twice. 

Indeed, \ref{PaschBetweennessLemma} converts a triple of points of $\partial(\Pi,A)$
$$U, V, W$$
to the triple
$$V, W, a(U)$$
when the notation means the middle ray is strictly between the other two. Indeed, $W=a(U)$ is not possible as in that case the open line segment $s(U,W)$ is empty contrary to it containing $V$.

Applying that move twice gives
$$r_3, r_2, r_1$$
$$\Downarrow$$
$$r_2, r_1, a(r_3)$$
$$\Downarrow$$
$$r_1, a(r_3), a(r_2)$$
$$\Downarrow$$
$$ray[r_1,a(r_2)]=ray[r_1,a(r_3)]$$
Suppose $r,r_1,r_2,r_3\in \partial(\Pi,A)$ such that $ray[r_1,r_2]$, $ray[r_1,r_3]$ exist and $r\in ray[r_1,r_2]$, $a(r)\in ray[r_1,r_3]$.
In that case $ray[r_1,r_2]=ray[r_1,r]$ and $ray[r_1,r_3]=ray[r_1,a(r)]$
which means $ray[r_1,r_2]$ is antipodal to $ray[r_1,r_3]$ as needed.

To show $\partial(\Pi,A)$ satisfies Pasch Condition \ref{PaschCondition} consider $r_A, r_B, r_C, r_D, r_E$ such that $r_C$ is strictly between $r_A$ and $r_B$, $r_E$ is strictly between $r_D$ and $r_C$, and 
$$\{ray[r_A,r_B],ray[r_A,r_D],ray[r_A,r_E]\}$$
 is disjoint with its antipodal image.
Choose points $A'\in r_A$, $B'\in r_B$, $D'\in r_D$ different from $A$ and let $C'\in r_C\cap s[A',B']$, $E'\in r_E\cap s[C',D']$. 

From the point of view of $A'$ the ray $ray[A',E']$  must intersect $s[B',D']$ at $F'$.
In that case $ray[r_A,r_E]$ intersects $s[r_B,r_D]$ at $ray[A,F']$.

The proof of the other remaining case (there is $r_F$ being strictly between $r_D$ and $r_B$ with the goal of finding $r_E$ so that $ray[r_A,r_F]$ intersects $s[r_C,r_D]$) is similar.

The final item to show is that $a(r_3)\in s[a(r_1),a(r_2)]$ if 
$r_3\in s(r_1,r_2)$. This is not obvious only if $r_3$ is strictly between $r_1$ and $r_2$ and follows by applying \ref{PaschBetweennessLemma} three times. 

Indeed, \ref{PaschBetweennessLemma} converts a triple of points of $\partial(\Pi,A)$
$$U, V, W$$
to the triple
$$V, W, a(U)$$
when the notation means the middle ray is strictly between the other two. Indeed, $W=a(U)$ is not possible as in that case the open line segment $s(U,W)$ is empty contrary to it containing $V$.

Applying that move three times gives
$$U, V, W$$
$$\Downarrow$$
$$V, W, a(U)$$
$$\Downarrow$$
$$W, a(U), a(V)$$
$$\Downarrow$$
$$a(U), a(V), a(W)$$
\end{proof}

\section{Lines}

In this section we describe lines intrinsically. The advantage of this approach is that when describing models of hyperbolic geometry one avoids confusing students. For example, it is customary to describe the Klein model (see \ref{Klein plane}) as follows (verbatim from \url{http://www.geom.uiuc.edu/docs/forum/hype/model.html}): \\
\emph{In the Klein model of the hyperbolic plane, the "plane" is the unit disk; in other words, the interior of the Euclidean unit circle. We call Euclidean points the "points" for our model. We call the portions of Euclidean lines which intersect the disk "lines."} \\
Perhaps this description is understandable to a seasoned mathematician but it is quite confusing to students.

Notice that most textbooks devote very little space to a discussion of lines. In our approach understanding lines is the most important issue to which we devote almost half of the whole paper. Once the concept of lines is understood and most importantly, the concept of length of line segments in lines, the rest of the material follows quite naturally.

\begin{Definition}\label{LineDef}
A \textbf{line} is a set $l$ with a relation of betweenness satisfying the following conditions:\\
a. $l$ consists of at least two points.\\
b. for every $A\in l$ the complement $l\setminus\{A\}$ of $A$ in $l$ can be expressed as the union of two disjoint non-empty convex sets $c_1$ and $c_2$ such that $A\in s[B,C]$ whenever $B\in c_1$ and $C\in c_2$.

If, in addition, every closed line segment in $l$ is connected, then $l$ is called a \textbf{connected line}.
\end{Definition}

\begin{Example}
The set of integers $\mathbb{Z}$ with the standard betweenness is a line but is not a connected line.
\end{Example}

\begin{Example}
Any one-dimensional real vector space $l$ is a connected line. Indeed, define betweenness in $l$ as follows: $C$ is between $A$ and $B$
if there is $t\ge 0$ such that $C-A=t(B-A)$ or $B=A$.. Notice that if $A\ne B$, then $s[A,B]=\{A+t(B-A)\mid t\in [0,1]\}$ and $l\setminus \{A\}$ has two components:
$r_1=\{A+t(B-A)\mid t > 0\}$
and $r_2=\{A+t(B-A)\mid t < 0\}$.
\end{Example}

\begin{Remark}
At a higher level one can see that any topological line has unique betweenness relation
which induces the same topology and makes it a connected line in the sense of \ref{LineDef}.
\end{Remark}

\begin{Example}
If $A$ and $B$ are two different points of a space with betweenness and $s[A,B]$ is connected, then the open segment $s(A,B)$ is a connected line.
\end{Example}

\begin{Lemma}\label{IntersectionOfSegmentsLine}
Let $l$ be a connected line.
The intersection of any two closed line segments with a common endpoint is a closed line segment. If the intersection is not a singleton, then one of the line segments is contained in the other.
\end{Lemma}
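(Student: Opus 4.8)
The statement has two parts: (i) the intersection of two closed segments sharing a common endpoint is itself a closed segment, and (ii) if that intersection has more than one point then one segment contains the other. My plan is to set up a common linear order on the two segments coming from their shared endpoint, locate the intersection inside that order, and then invoke the finite-intersection result \ref{IntersectionOfClosedSegments} (or its connected strengthening \ref{IntersectionOfInfinitelyClosedSegments}) together with the structural axioms of a line in \ref{LineDef}.

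\textbf{Setup.} Let the two segments be $s[A,B]$ and $s[A,C]$, sharing the endpoint $A$. On $s[A,B]$ impose the linear order with $X\le Y$ meaning $s[A,X]\subset s[A,Y]$, exactly as in \ref{SplittingOfClosedSegments} and \ref{IntersectionOfInfinitelyClosedSegments}; here $A$ is the minimum. First I would dispose of the degenerate case where one of $B$, $C$ equals $A$, in which the intersection is $\{A\}$, a (degenerate) closed segment, and trivially one segment is contained in the other. So assume $A\ne B$ and $A\ne C$. Both segments lie in the line $l$, and every point of the intersection $I=s[A,B]\cap s[A,C]$ lies on a segment emanating from $A$, so $I$ is convex (it is an intersection of two convex sets). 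Since $A\in I$, the set $I$ is nonempty.

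\textbf{Main step.} The intersection $I$ is the intersection of a finite family of two closed subsegments of the connected segment, say, $s[A,E]$ for $E$ chosen large enough to contain both $B$ and $C$ using condition 2 of \ref{LineDef} and connectedness; applying \ref{IntersectionOfClosedSegments} (for finitely many segments inside a common segment) shows $I$ is a closed line segment, necessarily of the form $s[A,M]$ since $A$ is the common minimal endpoint and $I$ is convex with minimum $A$. Concretely, $M$ is the smaller of $B$ and $C$ in the common order, so $I=s[A,M]$ where $M\in\{B,C\}$. This already gives part (i). For part (ii), if $I$ contains a point other than $A$, then $M\ne A$, so $M$ is an interior-or-endpoint point common to both segments with $M\ne A$; then the segments $s[A,B]$ and $s[A,C]$ share the interior point on the $A$-side, so by condition 1 of \ref{SpaceWithMaximalRays} (or directly by the ordering argument, comparing $B$ and $C$ in the common linear order) one of $s[A,B]$, $s[A,C]$ is contained in the other.

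\textbf{Main obstacle.} The delicate point is establishing that the two segments can be coherently linearly ordered from the shared endpoint \emph{before} knowing one contains the other — i.e. that $I$ really is an initial segment $s[A,M]$ rather than something more complicated. In a general betweenness set this could fail, but in a line the defining property \ref{LineDef}b (each point splits $l\setminus\{A\}$ into two convex halves with $A$ between them) forces points near $A$ on the two segments to lie on a common side, and connectedness of segments (via the Lemma preceding \ref{ConnectednessOfSubsegments} guaranteeing $s(C,D)\ne\emptyset$ for distinct $C,D$) rules out gaps. I expect the bulk of the rigor to go into verifying that $B$ and $C$ are \emph{comparable} in the $A$-order, after which both conclusions follow cleanly from \ref{IntersectionOfClosedSegments} and convexity.
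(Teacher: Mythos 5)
Your proposal has a genuine gap, and it sits exactly at the point you flag as the ``main obstacle.'' To invoke \ref{IntersectionOfClosedSegments} you need $s[A,B]$ and $s[A,C]$ to be subsegments of a \emph{single} closed segment $s[A,E]$, but nothing available at this stage of the paper produces such an $E$. Definition \ref{LineDef} has no ``condition 2'' guaranteeing that segments can be enlarged, and condition 1 of \ref{SpaceWithMaximalRays} is not a hypothesis here: a connected line is not known to be a space with maximal rays at this point, and for lines that property is essentially the statement of the present lemma. Likewise, the fact that $s[A,B]\cup s[A,C]$ lies in a common closed segment is Corollary \ref{AddingClosedSegments}, which the paper derives \emph{from} this lemma, so appealing to it would be circular. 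In short, ``$B$ and $C$ are comparable in the $A$-order'' is not a routine verification to be deferred to a closing remark; it is equivalent to the second assertion of the lemma, and your outline proposes to obtain the lemma from it. The final paragraph gestures at separation near $A$ and at connectedness, but contains no argument.

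For comparison, the paper's proof attacks the first assertion directly by contradiction, and that is where the real work happens. Assume $I=s[A,B]\cap s[A,C]$ is not a closed segment. Inside each of the two segments, $I$ is convex with convex complement, so connectedness (Definition \ref{ConvConnectednessDef}) forces $I=s[A,D)$ when viewed inside $s[A,C]$ and $I=s[A,E)$ when viewed inside $s[A,B]$, with $D\ne E$ (since $D\notin I$). One then picks $M\in s(E,D)$, non-empty because all segments of a connected line are connected, and uses the separation property (condition b of \ref{LineDef}) at $M$ to rule out every possible location of $M$: if $M$ misses both segments, then $A,E$ and $A,D$ lie on common sides of $M$, so $E$ and $D$ are on the same side of $M$, contradicting $M\in s(E,D)$; if $M\in I$, then $s[A,M]\subset I$ forces one of $D$, $E$ onto the same side of $M$ as $A$, contradicting one of the two half-open descriptions of $I$; and if $M$ lies in exactly one segment, say $s[A,C]\setminus s[A,B]$, then $M\in s(D,C]$, which again places both $D$ and $E$ on the same side of $M$ as $A$, contradicting $M\in s(E,D)$. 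To salvage your outline you would have to supply an argument of this kind establishing comparability \emph{before} any appeal to \ref{IntersectionOfClosedSegments}; the ordering machinery of \ref{SplittingOfClosedSegments} operates only inside a single segment, which is precisely what is not yet available.
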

\begin{proof}
Suppose $s[A,C]\cap s[A,B]$ is not a closed line segment. Since it is a convex subset of
$s[A,C]$ and its complement is convex as well, it follows that $s[A,C]\cap s[A,B]=s(D,A]$
for some $D\in s[A,C]$ (we are using our definition of connectivity here). Similarly, $s[A,C]\cap s[A,B]=s(E,A]$ for some $E\in s[A,B]$.
Notice $E\ne D$ since $s[A,C]\cap s[A,B]$ does not contain $D$. 

Choose any $M\in s(E,D)$. $M$ cannot miss $s[A,B]\cup s[A,C]$ as $M\notin s[A,B]\cup s[A,C]$ implies $E$ and $D$ belong to the same component of $l\setminus \{M\}$.
Also, $M\in s[A,B]\cap s[A,C]$ cannot happen because
in that case one of points $D$, $E$ has to be on the same side of $M$ as $A$ contradicting one of equalities
$s[A,C]\cap s[A,B]=s(D,A]$, $s[A,C]\cap s[A,B]=s(E,A]$
in view of $s[A,M]\subset s[A,C]\cap s[A,B]$.

Thus, $M$ belongs to exactly one of the segments
$s[A,B]$, $s[A,C]$. Without loss of generality,
we may assume $M\in s[A,C]\setminus s[A,B]$. 
$M\notin s[A,D)$ means $M\in s(D,C]$ which creates a contradiction: $D$ is on the same side of $M$ as $A$
and $E$ is on the same side of $M$ as $A$, hence $D$ is on the same side of $M$ as $E$.
\end{proof}

\begin{Corollary}\label{AddingClosedSegments}
If $l$ is a connected line, then any finite union of closed line segments is a closed line segment if there is a common point to all of them.
\end{Corollary}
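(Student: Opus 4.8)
The plan is to argue by induction on the number of segments, which reduces everything to understanding a finite family of closed segments sharing a common point $P$. The first move is to erase the distinction between ``common point'' and ``common endpoint'': if $P$ lies on a closed segment $s[A,B]$, then $P$ is between $A$ and $B$, so part c of Definition \ref{BetweennessDef} gives $s[A,B]=s[P,A]\cup s[P,B]$. Splitting every segment of the family at $P$ therefore rewrites the union as a finite union $\bigcup_j s[P,X_j]$ of closed segments all emanating from $P$ (degenerate pieces equal to $\{P\}$ may be discarded). By part b of Definition \ref{LineDef} the complement $l\setminus\{P\}$ splits into two convex sets $c_1$ and $c_2$, and since each $s[P,X_j]\setminus\{P\}$ is convex and avoids $P$ it lies entirely in $c_1$ or entirely in $c_2$. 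So it suffices to treat each side separately and then glue.

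Within one side, say the segments $s[P,X_j]$ with $X_j\in c_1$, I would show they are linearly ordered by inclusion, so their union is the largest of them, again a closed segment $s[P,X]$. Given two such segments $s[P,Q]$ and $s[P,R]$, Lemma \ref{IntersectionOfSegmentsLine} says their intersection is a closed segment $s[P,M]$ and that one contains the other as soon as $M\ne P$. The crux, and the step I expect to be the \emph{main obstacle}, is ruling out $M=P$ for two segments on the same side. The idea: if $s[P,Q]\cap s[P,R]=\{P\}$, then applying Lemma \ref{IntersectionOfSegmentsLine} to the pair $s[P,Q],s[Q,R]$ with common endpoint $Q$ forces $s[P,Q]\cap s[Q,R]=\{Q\}$, where $P\notin s[Q,R]$ because $s[Q,R]\subset c_1$ by convexity (this excludes $s[P,Q]\subset s[Q,R]$, and $R\in s[P,Q]$ is excluded since it would put $R$ in $s[P,Q]\cap s[P,R]=\{P\}$). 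Now pick $M\in s(P,Q)$, which is nonempty since $s[P,Q]$ is connected; removing $M$ separates $l$ into two pieces with $P$ and $Q$ on opposite sides. Using the separation clause of Definition \ref{LineDef} twice, first to place $R$ on $P$'s side (otherwise $M\in s[P,R]\cap s[P,Q]=\{P\}$), then to conclude from $R,Q$ being on opposite sides that $M\in s[R,Q]$, one obtains $M\in s[P,Q]\cap s[Q,R]=\{Q\}$, contradicting $M\in s(P,Q)$. This ``a line contains no triangle'' argument is the technical heart of the proof.

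With the ordering established, the union of the $c_1$-segments is a single closed segment $s[P,X]$, and likewise the union of the $c_2$-segments is $s[P,Y]$. To finish I would recombine the two sides: if one side is empty the union is already the closed segment of the other side, and otherwise $X\in c_1$ and $Y\in c_2$, so $P$ is between $X$ and $Y$ by the separation clause of Definition \ref{LineDef}. Hence $P\in s[X,Y]$, and part c of Definition \ref{BetweennessDef} yields $s[X,Y]=s[P,X]\cup s[P,Y]$, exhibiting the whole union as the closed line segment $s[X,Y]$. Feeding this two-sided analysis back through the induction completes the argument, the only delicate point being the same-side non-degeneracy claim highlighted above.
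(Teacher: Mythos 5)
Your proof is correct and follows essentially the same route as the paper's: split every segment at the common point, use Lemma \ref{IntersectionOfSegmentsLine} to reduce to nested segments, and glue the two opposite sides via the separation clause of Definition \ref{LineDef}. The only difference is that you explicitly prove the step the paper asserts without justification --- that two segments emanating from $P$ into the same component of $l\setminus\{P\}$ must be nested (your ``a line contains no triangle'' argument) --- which fills in a genuine gap in the paper's terse write-up rather than deviating from its approach.
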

\begin{proof}
It suffices to consider the case of all line segments having a common endpoint
by replacing each line segment $s[A,B]$ by the union $s[A,I]\cup s[B,I]$, where $I$ is a common point to all line segments.

By \ref{IntersectionOfSegmentsLine} we can get rid of all line segments contained in other line segments and arrive either at only one line segment or two line segments intersecting only at the common endpoint, say $A$. In that case the other endpoints, say $B$ and $C$, are in different components of $l\setminus \{A\}$ which means $A$ is between $B$ and $C$ resulting in $s[A,B]\cup s[A,C]$ being $s[B,C]$.
\end{proof}

\begin{Lemma}\label{BetwennessCorollary}
Given three different points of a connected line $l$ there is exactly one pair such that the remaining point is between them.
\end{Lemma}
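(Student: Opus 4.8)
The plan is to reduce this to Corollary~\ref{BetwennessOfTripleProp}, which already establishes the desired trichotomy for three points lying on a common closed line segment. The only thing missing is to exhibit a single closed line segment of $l$ that contains all three of the given points $A$, $B$, $C$; once that is done, the conclusion is immediate.

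To produce such a segment I would invoke the connectedness of $l$ through Corollary~\ref{AddingClosedSegments}. The two closed line segments $s[A,B]$ and $s[A,C]$ share the common point $A$, so their union $s[A,B]\cup s[A,C]$ is again a closed line segment, say $s[X,Y]$. Since $B\in s[A,B]$, $C\in s[A,C]$, and $A$ lies in both, the three distinct points $A$, $B$, $C$ all belong to $s[X,Y]$.

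Applying Corollary~\ref{BetwennessOfTripleProp} to the three different points $A$, $B$, $C$ of the closed line segment $s[X,Y]$ then yields exactly one pair such that the remaining point is between them, which is precisely the assertion.

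Each of these steps is short, so I do not expect a genuine obstacle; the one place where the hypothesis that $l$ is a \emph{connected} line is actually consumed is in the appeal to Corollary~\ref{AddingClosedSegments} (merging two segments that meet only at $A$ into one segment fails on a non-connected line such as $\mathbb{Z}$). An alternative, more hands-on route would bypass \ref{AddingClosedSegments} and argue directly from clause~(b) of Definition~\ref{LineDef}: for each of the three points, the other two lie either in the same convex component of its complement (in which case that point is not between them) or in different components (in which case it is between them), and one would then have to rule out the possibilities of zero or of two simultaneous separations. That route is workable, but it requires checking the consistency of the separations at the three points at once, so the segment-merging argument is the cleaner path.
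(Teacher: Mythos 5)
Your argument is correct and is exactly the paper's proof: the paper also forms $s[A,B]\cup s[A,C]$, cites \ref{AddingClosedSegments} to see it is a closed line segment, and then applies \ref{BetwennessOfTripleProp}. Nothing further is needed.
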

\begin{proof}
It follows from \ref{BetwennessOfTripleProp} as $s[A,B]\cup s[A,C]$ is a closed line segment by \ref{AddingClosedSegments}.
\end{proof}

\begin{Corollary}\label{LinesAsPaschSpaces}
Every connected line is a Pasch space such that each boundary at infinity consists of two points.
\end{Corollary}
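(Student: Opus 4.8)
The plan is to verify separately the three assertions bundled into the statement: that a connected line $l$ is a space with maximal rays in the sense of \ref{SpaceWithMaximalRays}, that it satisfies Pasch Condition \ref{PaschCondition}, and that $\partial(l,A)$ has exactly two points for every $A\in l$. The backbone of the whole argument is the splitting $l\setminus\{A\}=c_1\cup c_2$ furnished by clause (b) of \ref{LineDef}. I expect the sets $r_1:=c_1\cup\{A\}$ and $r_2:=c_2\cup\{A\}$ to be precisely the two maximal rays emanating from $A$, and to be antipodal and connected.

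For the space-with-maximal-rays conditions, condition 1 of \ref{SpaceWithMaximalRays} is immediate from \ref{IntersectionOfSegmentsLine}: if $s[A,B]$ and $s[A,C]$ share an interior point $M$, then $s[A,B]\cap s[A,C]$ contains both $A$ and $M$, hence is not a singleton, so one segment lies in the other. For condition 2, given $A\ne B$ (the segment $s[A,B]$ being automatically connected in $l$) I would extend past each endpoint. Applying (b) at $A$ locates $B$ in one component, say $c_2$; picking $C$ in the other component gives $A\in s(C,B)$. Applying (b) at $B$, one checks via \ref{BetwennessOfTripleProp} that $C$ and $A$ lie in the same component of $l\setminus\{B\}$, so picking $D$ in the opposite component gives $B\in s(A,D)$. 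Ordering the four distinct points through \ref{SplittingOfClosedSegments} yields the chain $C,A,B,D$, whence $s[A,B]\subset s(C,D)$ with $s[C,D]$ connected, as required.

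For the boundary at infinity I would verify that $r_1=c_1\cup\{A\}$ satisfies the ray axioms of \ref{RayDef}. Convexity of $r_1$ and the fact that $A$ is never strictly between two other points of $r_1$ both follow from clause (b) combined with \ref{BetwennessOfTripleProp}, while the requirement that every three points of $r_1$ lie on a common subsegment contained in $r_1$ uses \ref{BetwennessCorollary}. The one delicate clause is that $r_1$ is not a closed line segment, and this is the step I expect to be the main obstacle, as it is where the defining separation property of a line is genuinely used. I would argue by contradiction: if $r_1=s[A,E]$ had a maximum $E$, then every point of $l\setminus\{E\}$ lies on the $A$-side of $E$ — the points of $s(A,E)$ trivially, and every point of $c_2$ because (b) places $A\in s(E,F)$ for $F\in c_2$ — forcing one component of $l\setminus\{E\}$ to be empty and contradicting clause (b).

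It then remains to see that $r_1,r_2$ are the only maximal rays from $A$ and that they are antipodal. Any ray from $A$ cannot meet both $c_1$ and $c_2$, since otherwise $A$ would be strictly between two of its points, violating condition 1 of \ref{RayDef}; hence every ray sits inside $r_1$ or $r_2$, so $r_1,r_2$ are maximal and $\partial(l,A)=\{r_1,r_2\}$ consists of exactly two points, which are connected since their subsegments are subsegments of $l$. Choosing $B\in c_1$ and $C\in c_2$ gives $A\in s(B,C)$, so $r_1$ and $r_2$ are antipodal. Finally, Pasch Condition \ref{PaschCondition} holds vacuously: it quantifies over triples $\{r_1,r_2,r_3\}\subset\partial(l,A)$ disjoint from their antipodal set, but $\partial(l,A)$ consists of just two mutually antipodal rays, so no admissible triple of distinct rays exists, and any degenerate triple makes both hypothesis and conclusion trivially true. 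This completes the verification that every connected line is a Pasch space whose boundary at infinity is a two-point set.
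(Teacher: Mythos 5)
Your proof is correct and follows the route the paper intends: the paper states this corollary with no proof at all, leaving it to follow from \ref{IntersectionOfSegmentsLine}, \ref{BetwennessOfTripleProp}, \ref{BetwennessCorollary}, and the observation (made explicit immediately afterwards in the subsection on rays in lines) that the maximal rays at a point $A$ are exactly $\{A\}$ united with each of the two components of $l\setminus\{A\}$. Your verification of the ray axioms of \ref{RayDef}, of the two conditions of \ref{SpaceWithMaximalRays}, and of the fact that Pasch Condition \ref{PaschCondition} can only apply to the degenerate single-ray subsets of a two-point antipodal boundary supplies exactly the details the paper omits.
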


\subsection{Rays in lines}

\begin{Observation}
Given two different points $A$ and $B$ of a line $l$, $ray[A,B]$ defined as the union of $\{A\}$
and the component of $l\setminus \{A\}$ containing $B$ is a maximal ray emanating from $A$ and containing $B$.
\end{Observation}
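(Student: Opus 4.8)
The plan is to verify the defining conditions of a maximal ray through the order-theoretic characterization in \ref{OrderingOfRays}. Write $c_1$ for the component of $l\setminus\{A\}$ containing $B$ (so $l\setminus\{A\}=c_1\cup c_2$ as in \ref{LineDef}) and set $r:=\{A\}\cup c_1$. By \ref{OrderingOfRays} it suffices to show that $r$ is convex and that the relation $P\leq Q\iff s[A,P]\subseteq s[A,Q]$ is a linear order on $r$ with $A$ as least element and no greatest element; maximality will then be handled separately.

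First I would record an elementary fact, used repeatedly, that among three distinct points at most one can lie between the other two: if $A$ were between $P,Q$ and simultaneously $P$ between $A,Q$, then $s[A,Q]=s[A,P]\cup s[P,Q]$ with $s[A,P]\cap s[P,Q]=\{P\}$ by \ref{BetweennessDef}, while $A\in s[A,P]\cap s[P,Q]$ would force $A=P$. Granting this, convexity of $r$ is quick: for $P\in c_1$ and $X\in s[A,P]$ with $X\neq A$, if $X$ lay in $c_2$ then $A$ (separating $c_1$ from $c_2$) would be between $X$ and $P$, contradicting that $X$ is already between $A$ and $P$; hence $X\in c_1$, and segments between points of the convex set $c_1$ stay inside it. That $A$ is least is clear ($s[A,A]\subseteq s[A,P]$ always), and $A$ lies between no two points of $r$, since any such segment would lie in the convex set $c_1$, which omits $A$.

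The heart of the matter is linearity of $\leq$: for distinct $P,Q\in c_1$ I must produce one of $P\in s[A,Q]$ or $Q\in s[A,P]$. This is the assertion that one of $A,P,Q$ is between the other two; since $s[P,Q]$ lies in the convex set $c_1$ and so avoids $A$, the point $A$ is between neither pair, whence one of $P,Q$ is between $A$ and the other, and \ref{InclusionOfSegmentsProp} upgrades this to an inclusion of segments, i.e.\ comparability. For connected lines this trichotomy is precisely \ref{BetwennessCorollary}, and I expect it to be the main obstacle: the whole weight of the argument rests on knowing that the two-sided component structure of \ref{LineDef} forces a genuine linear order on each side, which is exactly what \ref{BetwennessCorollary} supplies.

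Finally I would rule out a greatest element and then establish maximality. If $D$ were the maximum of $r$, choose $Y$ in the component of $l\setminus\{D\}$ not containing $A$ (nonempty by \ref{LineDef}); then $D$ is between $A$ and $Y$, so $s[A,D]\subseteq s[A,Y]$ by \ref{InclusionOfSegmentsProp} and $D<Y$. Moreover $A$ cannot be between $D$ and $Y$ (at most one betweenness), so $D$ and $Y$ lie in the same component of $l\setminus\{A\}$, namely $c_1$; thus $Y\in r$ strictly exceeds $D$, a contradiction. By \ref{OrderingOfRays}, $r$ is a ray emanating from $A$ and containing $B$. For maximality, suppose $r\subsetneq r'$ with $r'$ a ray from $A$; a point $X\in r'\setminus r$ must lie in $c_2$, whence $A$ is between $B\in r'$ and $X\in r'$, contradicting condition~1 of \ref{RayDef} for $r'$. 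Hence $r$ is maximal.
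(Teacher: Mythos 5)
The paper supplies no argument for this Observation at all --- it is stated as self-evident --- so there is no official proof to compare yours against. Judged on its own merits, your verification is sound and is the natural one: convexity of $r=\{A\}\cup c_1$, linearity of the order $P\leq Q\iff s[A,P]\subseteq s[A,Q]$ with minimum $A$ and no maximum (so that \ref{OrderingOfRays} applies), and then maximality via condition 1 of \ref{RayDef} applied to any would-be larger ray. Each step checks out: the ``at most one point is between the other two'' fact is proved correctly from \ref{BetweennessDef}, the no-maximum argument (separate $l$ at the alleged maximum $D$ and push past it into the far component) is correct, and the closing maximality argument is clean.

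The one point that deserves more than the passing remark you give it is connectedness. Your linearity step stands on the trichotomy of \ref{BetwennessCorollary}, which the paper proves only for \emph{connected} lines, whereas the Observation is stated for arbitrary lines in the sense of \ref{LineDef}. This is not a gap you could have closed by other means: for general lines both the trichotomy and the Observation itself are false. The paper's own pathological example in its final comments on lines --- three disjoint rays with initial points $O_1,O_2,O_3$, where $s[X,Y]:=s[X,O_X]\cup s[Y,O_Y]$ for points on different rays --- is a counterexample. Take $A=O_1$ and $B$ on the second ray; the component of $l\setminus\{A\}$ containing $B$ is the union of the second and third rays, and $\{A\}$ together with that set is not a ray: every closed line segment in this space meets at most two of the three rays, so no segment inside $r$ contains $O_1$ together with points of both other rays, violating condition 2 of \ref{RayDef} (equivalently, $s[O_1,O_2]=\{O_1,O_2\}$ and $s[O_1,O_3]=\{O_1,O_3\}$ are incomparable, so the order of \ref{OrderingOfRays} is not linear). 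So your proof establishes exactly what is true --- the Observation for connected lines --- and correctly localizes where connectedness enters; it would be strengthened by stating openly that ``line'' must here be read as ``connected line,'' rather than leaving that hypothesis buried in the citation of \ref{BetwennessCorollary}.
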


The next result provides another way of interpreting of Axiom 2 of Euclid \ref{Axiom 2 of Euclid}.

\begin{Corollary}\label{MaximalLineCor}
Let $\Pi$ be a set with betweenness such that each closed line segment is connected.
If $\Pi$ is a space with maximal rays and each connected closed line segment is contained in the interior of another connected closed line segment, then each non-trivial line segment $s[A,B]$ is contained in a maximal line denoted by $l(A,B)$.
\end{Corollary}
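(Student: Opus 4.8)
The plan is to build $l(A,B)$ as the union of two antipodal maximal rays through $A$ and then check that it is a line which is maximal under inclusion. First I would produce the rays. Since every closed line segment is connected (so in particular $s[A,B]$ is), Proposition~\ref{MaximalRaysProp} applies and yields the maximal ray $r_1:=ray[A,B]$ emanating from $A$ and containing $s[A,B]$. Because all subsegments of $r_1$ are connected, $r_1$ is a connected maximal ray, so by the remark following Definition~\ref{AntipodalRaysDef} it has an antipodal maximal ray $r_2$ (also connected, again by the hypothesis on segments). I then set $l(A,B):=r_1\cup r_2$, which is convex precisely because $r_1$ and $r_2$ are antipodal.

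The technical heart is the \emph{cut-point property}: $A\in s(X,Y)$ for every $X\in r_1\setminus\{A\}$ and $Y\in r_2\setminus\{A\}$. Fix witnesses $B_0\in r_1$, $C_0\in r_2$ with $A\in s(B_0,C_0)$, as provided by antipodality. Using the linear orders on the rays (Observation~\ref{OrderingOfRays}), I would compare $X$ with $B_0$ inside $r_1$: in either case ($X\in s[A,B_0]$ or $B_0\in s[A,X]$) the points $X,B_0,A,C_0$ are forced into a single linear arrangement with $A$ in the interior, so \ref{SplittingOfClosedSegments} (applied inside $s[B_0,C_0]$, respectively after checking $B_0\in s[X,C_0]$) gives $A\in s(X,C_0)$; the symmetric argument gives $A\in s(B_0,Y)$, and one further comparison upgrades these to $A\in s(X,Y)$ for all cross pairs. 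Once this is established, $r_1\cap r_2=\{A\}$ follows immediately, since a point $Z\ne A$ in the intersection would yield $A\in s(B_0,Z)$ with both $B_0,Z\in r_1$, contradicting condition~1 of Definition~\ref{RayDef}.

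With the cut-point property in hand, $l(A,B)$ carries a natural linear order (run $r_2$ downward to $A$, then $r_1$ upward), and since $r_1\setminus\{A\}$ and $r_2\setminus\{A\}$ are convex, disjoint, cover $l(A,B)\setminus\{A\}$, while $A$ lies strictly between any two points on opposite sides, the separation axiom~(b) of Definition~\ref{LineDef} holds at $A$; at an interior point $P$ of either ray it holds because the order splits the complement into its down-set and up-set. As all closed segments are connected, $l(A,B)$ is in fact a connected line, and it contains $s[A,B]$ by construction.

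Finally, for maximality I would argue by contradiction: if $l(A,B)\subsetneq l'$ for a line $l'$, pick $P\in l'\setminus l(A,B)$. The splitting of $l'\setminus\{A\}$ into two convex pieces $c_1,c_2$ must place $r_1\setminus\{A\}$ and $r_2\setminus\{A\}$ on opposite sides (they are separated by $A$ via the cut-point property), and $P$ lies in one of them, say $c_1\supseteq r_1\setminus\{A\}$. Then $\{A\}\cup c_1$ is convex, has $A$ as a non-interior point, contains every triple of its points inside a common subsegment (by \ref{AddingClosedSegments}, using that $l'$ is connected since its segments are connected), and is not a closed segment because it contains the unbounded ray $r_1$; hence it is a ray emanating from $A$ properly containing $r_1$, contradicting the maximality of $r_1$. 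I expect the cut-point property and this last step to be the main obstacles, the former because it is where the antipodal witness $A\in s(B_0,C_0)$ and condition~1 of Definition~\ref{SpaceWithMaximalRays} must be combined through careful segment splitting, and the latter because it requires the induced-betweenness bookkeeping confirming that the enlarged convex set is genuinely a ray in $\Pi$.
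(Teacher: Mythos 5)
Your construction is, at its core, the same as the paper's: exhibit $l(A,B)$ as the union of two antipodal maximal rays. The paper, however, anchors at an interior point $C\in s(A,B)$ rather than at the endpoint $A$: it takes the maximal rays containing $s[C,A]$ and $s[C,B]$, which are antipodal with no further work because $A$ and $B$ themselves witness antipodality, and then asserts that their union is a line containing every line that contains $s[A,B]$. Your anchoring at $A$ costs an extra step -- the existence of a ray antipodal to $ray[A,B]$, via the remark after Definition \ref{AntipodalRaysDef} and the extension hypothesis -- which the paper's choice of base point avoids entirely. Your maximality argument (a strictly larger line would produce a ray strictly larger than $r_1$ or $r_2$, contradicting \ref{MaximalRaysProp}) is also different from the paper's: the paper claims the stronger fact that \emph{every} line containing $s[A,B]$ is a subset of $l$, which is what makes the notation $l(A,B)$ unambiguous; your version proves the literal statement (existence of a maximal line) but not uniqueness.

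The step I would not accept as written is case (ii) of your cut-point property, where $B_0\in s(A,X)$. You reduce it to ``checking $B_0\in s[X,C_0]$'' and then applying \ref{SplittingOfClosedSegments}. But \ref{SplittingOfClosedSegments} only operates \emph{inside} a segment already known to contain all the points involved, and the betweenness axioms of Definition \ref{BetweennessDef} never force a point \emph{into} a segment -- they only decompose segments. So ``checking $B_0\in s[X,C_0]$'' is not a preliminary verification; it is the entire content of the claim, and it cannot be obtained by splitting. (Your case (i) works precisely because $X\in s[A,B_0]\subset s[B_0,C_0]$ puts all four points in a known segment; splitting propagates betweenness toward $A$, never outward past the witness.) Closing case (ii) genuinely requires condition 1 of Definition \ref{SpaceWithMaximalRays} combined with connectedness: for instance, granting the convexity formulation of antipodality in Definition \ref{AntipodalRaysDef}, one has $s[X,C_0]\subset r_1\cup r_2$; if $A\notin s[X,C_0]$, connectedness of $s[X,C_0]$ produces a ``jump point'' $W\in r_1$ whose segment to $C_0$ enters $r_2\setminus\{A\}$ immediately, and then two segments emanating from a point of $r_2$ just past the jump share interior points with neither containing the other, contradicting condition 1. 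In fairness, the paper's own two-sentence proof never addresses this kernel either (it is hidden in ``add up to a line'' and in the unproved ``equivalently'' clause of \ref{AntipodalRaysDef}), so your proposal is no less complete than the paper's; but the specific reduction you describe would not go through as stated.
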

\begin{proof}
Pick $C\in s(A,B)$ and notice the maximal rays containing $s[C,A]$ and $s[C,B]$, respectively, add up to a line $l$. Every line containing $s[A,B]$  is a subset of $l$.
\end{proof}

\begin{Theorem}\label{LineCharThm2}
Let $l$ be a Pasch space such that every line segment is connected.
$l$ is a line if and only if its boundary at infinity at some point consists of two points.
\end{Theorem}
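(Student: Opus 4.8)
The statement is an ``if and only if'', and the forward implication is essentially already on record. If $l$ is a line in which every segment is connected, then $l$ is a connected line, so Corollary \ref{LinesAsPaschSpaces} applies and shows that the boundary at infinity of $l$ at each point consists of exactly two points; in particular this holds at some point. I would therefore dispose of the direction ``line $\Rightarrow$ two-point boundary'' by this single citation and spend the work on the converse. For the converse, fix a point $A$ with $\partial(l,A)=\{r_1,r_2\}$. Because every line segment of $l$ is connected, every maximal ray emanating from $A$ is connected, so $\partial(l,A)$ is in fact the set of \emph{all} maximal rays emanating from $A$; hence there are exactly two of them. Every point $B\neq A$ determines the connected ray $ray[A,B]$, which by \ref{MaximalRaysProp} (its hypothesis, the nesting condition \ref{SpaceWithMaximalRays}.1, holds since $l$ is a Pasch space) extends to a unique maximal ray equal to $r_1$ or $r_2$; thus $l=r_1\cup r_2$, with $r_1\cap r_2=\{A\}$ by uniqueness of the maximal ray through a point. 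Finally $r_1$ and $r_2$ are antipodal: the connected maximal ray $r_1$ has an antipodal maximal ray (the remark following \ref{AntipodalRaysDef}), and this ray, being a maximal ray from $A$ distinct from $r_1$, can only be $r_2$. In particular there are $B_0\in r_1$ and $C_0\in r_2$ with $A\in s(B_0,C_0)$.

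The clean way to finish is to recognize $l$ as a maximal line. The space $l$ satisfies the hypotheses of \ref{MaximalLineCor}: its segments are connected, it is a space with maximal rays, and by \ref{SpaceWithMaximalRays}.2 every nontrivial connected segment lies in the interior of another. Hence the nontrivial connected segment $s[B_0,C_0]$ is contained in a maximal line $l(B_0,C_0)$; carrying out the construction in the proof of \ref{MaximalLineCor} with the interior point chosen to be our $A\in s(B_0,C_0)$, this maximal line is precisely the union of the two maximal rays from $A$ through $B_0$ and through $C_0$, namely $r_1\cup r_2=l$. Therefore $l=l(B_0,C_0)$ is a line, and since all its segments are connected it is a connected line.

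The main obstacle is the step asserting that the union of the two antipodal rays is genuinely a line, i.e.\ that the separation condition \ref{LineDef}(b) holds at \emph{every} point; above I am offloading this onto \ref{MaximalLineCor}, which already manufactures lines from antipodal maximal rays. If one prefers a self-contained argument, the heart of the matter is to show $A\in s[B,C]$ for all $B\in r_1\setminus\{A\}$ and $C\in r_2\setminus\{A\}$. I would argue by contradiction: convexity of $r_1\cup r_2$ gives $s[B,C]\subseteq r_1\cup r_2$, so if $A\notin s[B,C]$ then $s[B,C]$ is the disjoint union of the nonempty convex sets $s[B,C]\cap r_1$ and $s[B,C]\cap r_2$, and connectedness of $s[B,C]$ forces exactly one of them to be closed, producing a transition endpoint $X\in r_1\setminus\{A\}$ adjacent to $r_2$-points. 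Comparing the segment $s[X,C]$ with $s[A,C]\subseteq r_2$ at their common endpoint $C$ through the nesting axiom \ref{SpaceWithMaximalRays}.1 then forces either $X\in r_2$ (impossible, as $r_1\cap r_2=\{A\}$) or $A\in s[B,C]$ (contradicting the assumption). Guaranteeing that these two segments genuinely share an interior point is the delicate part of this route. Once $A$ is known to separate $r_1\setminus\{A\}$ from $r_2\setminus\{A\}$, the linear order on $l$ obtained by reversing the order of $r_1$ and appending $r_2$ has neither a least nor a greatest element (rays have no maximum), so deleting any point leaves two nonempty convex half-lines, which is exactly \ref{LineDef}(b) at that point.
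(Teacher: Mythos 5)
Your proposal is correct and takes essentially the same route as the paper: the forward direction is exactly the citation of \ref{LinesAsPaschSpaces}, and the converse rests, as in the paper, on showing that the two maximal rays at $A$ are antipodal and that their union (which is all of $l$) is separated by each of its points. The paper compresses the converse into one asserted line (``those two rays must be antipodal and every point separates $l$''), whereas you justify the separation by running the construction of \ref{MaximalLineCor} with $A$ as the chosen interior point --- the same device the paper itself uses to prove the companion result \ref{LineCharThm} --- so your write-up is a fleshed-out version of the paper's argument rather than a different one.
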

\begin{proof}
One direction follows from \ref{LinesAsPaschSpaces}.
If the boundary at infinity consists of two points at some point $A$, then those two rays must be antipodal and every point separates $l$. 
\end{proof}

\begin{Theorem}\label{LineCharThm}
Let $l$ be a Pasch space such that every line segment is connected.
$l$ is a line if and only if every three points of $l$ are contained in the interior of a closed line segment.
\end{Theorem}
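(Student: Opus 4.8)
The plan is to prove both implications by routing through the characterization already established in \ref{LineCharThm2}: since $l$ is a Pasch space in which every segment is connected, its boundary at infinity $\partial(l,A)$ at any point $A$ consists of \emph{all} maximal rays emanating from $A$ (each such ray is connected, its subsegments being segments of $l$). So it suffices to match the condition ``every three points lie in the interior of a closed segment'' with ``$\partial(l,A)$ consists of exactly two points''. Note also that a regular Pasch space has at least three points, since a two-point closed segment is disconnected, so the triples invoked below genuinely exist.

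For the forward direction (if $l$ is a line, then the condition holds), $l$ is a connected line, so I would apply \ref{BetwennessCorollary}: among any three distinct points one lies between the other two, say $B\in s[A,C]$, whence $\{A,B,C\}\subset s[A,C]$. As $s[A,C]$ is connected, condition 2 of \ref{SpaceWithMaximalRays} furnishes a connected segment $s[D,E]$ containing $s[A,C]$ in its interior, so that $\{A,B,C\}\subset s(D,E)$. Triples with repeated points reduce to the same argument applied to one or two points.

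For the backward direction (if the condition holds, then $l$ is a line), fix $A$. The central claim is that for any two points $B,C$ distinct from $A$, the rays $ray[A,B]$ and $ray[A,C]$ are either equal or antipodal. To prove it, apply the hypothesis to $\{A,B,C\}$, obtaining $s[D,E]$ with $A,B,C\in s(D,E)$. Since $A\in s(D,E)$, we have $s[D,E]=s[D,A]\cup s[A,E]$, and $ray[A,D]$, $ray[A,E]$ are antipodal by \ref{AntipodalRaysDef}. Because $B\neq A$ lies in $s(D,A)$ or in $s(A,E)$, it sits on $ray[A,D]$ or on $ray[A,E]$, so by uniqueness of the maximal ray through a point (\ref{MaximalRaysProp}) we get $ray[A,B]\in\{ray[A,D],ray[A,E]\}$, and likewise for $C$; the claim follows. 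Now I fix one point $B_0\neq A$, set $r=ray[A,B_0]$, and take its antipodal maximal ray $r'$ (every connected maximal ray has one). Applying the claim to $B_0$ and a point of an arbitrary maximal ray $\rho$ shows $\rho\in\{r,r'\}$, while $r\neq r'$ because $A$ is the minimum of its own ray and so cannot be strictly between two of its points. Hence $\partial(l,A)=\{r,r'\}$ has exactly two points, and \ref{LineCharThm2} yields that $l$ is a line.

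The main obstacle is the backward direction, and within it the bookkeeping that each point of $s(D,E)$ other than $A$ falls on exactly one of the two antipodal rays $ray[A,D]$, $ray[A,E]$ and pins it down; this is where the splitting $s[D,E]=s[D,A]\cup s[A,E]$ and the uniqueness of maximal rays through a point carry the argument. Once that claim is secured, the rest is a matter of invoking \ref{LineCharThm2} together with the extension property of spaces with maximal rays.
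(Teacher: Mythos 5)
Your proof is correct, but its backward direction is routed differently from the paper's. For the forward direction you and the paper do essentially the same thing: the paper cites the connected-line machinery (\ref{IntersectionOfSegmentsLine}), while you spell it out via \ref{BetwennessCorollary} plus the extension property, condition 2 of \ref{SpaceWithMaximalRays}. For the backward direction the paper picks $A\ne B$, invokes \ref{MaximalLineCor} to produce the maximal line $l(A,B)$ (the union of the two antipodal maximal rays at an interior point of $s[A,B]$), and then asserts that $l(A,B)$ exhausts $l$ by \ref{MaximalRaysProp}, so that $l$ is itself a line. You instead prove from scratch that any two maximal rays emanating from a fixed point $A$ are equal or antipodal --- hence $\partial(l,A)$ consists of exactly two points --- and then quote the previously established characterization \ref{LineCharThm2}. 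The underlying mechanism is identical in both arguments: the hypothesis places any third point inside the interior of a segment whose two halves at $A$ span the two antipodal maximal rays, and uniqueness of the maximal ray through a given point (\ref{MaximalRaysProp}) pins each point onto one of them. What differs is the decomposition. The paper stays inside the maximal-line construction and obtains, as a by-product, that $l=l(A,B)$ for every pair $A\ne B$; your version makes explicit that the two characterizations \ref{LineCharThm2} and \ref{LineCharThm} are equivalent, delegates the separation property of $l$ to \ref{LineCharThm2}, and fills in the bookkeeping that the paper compresses into the single sentence ``$l(A,B)$ must be equal to $l$ by \ref{MaximalRaysProp}.'' Since \ref{LineCharThm2} is proved before \ref{LineCharThm}, your appeal to it involves no circularity, and your reduction of triples with repeated points plus the observation that regularity forces at least three points take care of the degenerate cases that both proofs would otherwise gloss over.
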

\begin{proof}
By \ref{IntersectionOfSegmentsLine} every three points of a line are contained in the interior of a closed line segment. This takes care of one direction of the proof.

To show the other direction notice $l$ is a space with maximal rays,
so pick $A\ne B$ in $l$ and consider the maximal line $l(A,B)$ containing
$s[A,B]$ using \ref{MaximalLineCor}. Notice $l(A,B)$ must be equal to $l$ by \ref{MaximalRaysProp}.
\end{proof}

\subsection{Orienting rays and lines}

Each ray $r$ has two orders inducing its relation of betweenness: one making the initial point $O$ of $r$ its minimum and the other one making $O$ the maximum of $r$.
The first order is defined by $A\leq B$ if and only $s[O,A]\subset s[O,B]$ and the second order is defined by $A\leq B$ if and only $s[O,B]\subset s[O,A]$. We will give preference to the first order and we will call it the \textbf{positive order} on $r$ in analogy to the order on the positive reals.

Orienting a line $l$ amounts to choosing a maximal ray $r$ and declaring every other ray to be either positive (relative to $r$) or negative. A maximal ray $r'$ is \textbf{positive} if 
$r\cap r'$ is a maximal ray emanating from some point. Otherwise, $r'$ is \textbf{negative}. Equivalently, $r'$ is positive relative $r$ if their positive orders agree on the overlap $r\cap r'$ of rays.

Notice one gets two possible ways of orienting the line $l$ and each orientation induces a linear order on $l$ such that $s[A,B]=\{X\in l \mid A\leq X\leq B\}$ or $s[A,B]=\{X\in l \mid B\leq X\leq A\}$. Namely, $A\leq B$ if and only if $A=B$ or $A\ne B$ and $ray[A,B]$ is a positive maximal ray.

\subsection{Dedekind cuts}

Dedekind cuts can be viewed as a method of creating connected lines out of lines that are not connected.

The set of integers $\mathbb{Z}$ with the standard betweenness is a line such that any closed line segment $s[m,n]$ containing at least two points is disconnected in a strong sense: it is the union of two closed line subsegments that are nonempty and disjoint.

From $\mathbb{Z}$ we move to the set of rational numbers $\mathbb{Q}$ which is connected to some degree. Namely, no closed line segment in $\mathbb{Q}$ 
is the union of two closed line subsegments that are nonempty and disjoint.

To create a connected line out of $\mathbb{Q}$ one uses Dedekind cuts \cite{Dede}. We are going to be slightly more general than that.

\begin{Definition}\label{DedekindCut}
Let $l$ be a line such that no closed line segment is the union of two closed line subsegments that are nonempty and disjoint.
Given a maximal ray $r$ in $l$, a \textbf{Dedekind cut} is a pair $(\Sigma_1,\Sigma_2)$
of non-empty, disjoint and convex subsets of $l$ whose union is $l$, $\Sigma_1$ is not a maximal ray at any point but it contains a negative maximal ray.
\end{Definition}

\begin{Example}
For every rational number $q$ the pair $(\Sigma_1,\Sigma_2)$,
where $\Sigma_1=\{x\in \mathbb{Q}\mid x < q\}$ and $\Sigma_2=\{x\in \mathbb{Q}\mid x \ge q\}$, is a Dedekind cut.
\end{Example}

\begin{Example}
$(\Sigma_1,\Sigma_2)$,
where $\Sigma_1=\{x\in \mathbb{Q}\mid x\leq 0 \mbox{ or } x^2 < 2\}$ and $\Sigma_2=\{x\in \mathbb{Q}\mid x^2 > 2 \mbox{ and } x > 0\}$, is a Dedekind cut in which none of the sets is a maximal ray.
\end{Example}

\begin{Proposition}\label{CreatingConnectedLines}
Let $l$ be a line such that no closed line segment is the union of two closed line subsegments that are nonempty and disjoint.
The set of Dedekind cuts is a connected line if the betweenness is induced by the linear order defined as follows: $(\Sigma_1,\Sigma_2)\leq (\Sigma'_1,\Sigma'_2)$ if and only if $\Sigma_1\subset \Sigma'_1$
\end{Proposition}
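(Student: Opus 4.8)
The plan is to verify the two defining conditions of a connected line from Definition \ref{LineDef} for the set $D$ of Dedekind cuts, equipped with the betweenness induced by the stated linear order. First I would confirm that the order is genuinely linear: given two cuts $(\Sigma_1,\Sigma_2)$ and $(\Sigma'_1,\Sigma'_2)$, convexity of $\Sigma_1$ and $\Sigma'_1$ together with the fact that each contains a negative maximal ray forces one of $\Sigma_1\subset\Sigma'_1$ or $\Sigma'_1\subset\Sigma_1$ to hold; the key point is that two convex subsets of $l$ that each contain a common negative ray are comparable by inclusion, since the symmetric difference cannot straddle a point separating $l$. This gives a linear order, hence a betweenness relation, and establishes condition (a) once we exhibit at least two distinct cuts (the two examples preceding the proposition already do this).

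Next I would establish the separation condition (b). Fixing a cut $c=(\Sigma_1,\Sigma_2)\in D$, I would set $c_1=\{d\in D\mid d<c\}$ and $c_2=\{d\in D\mid d>c\}$. Since $D$ is linearly ordered, each of these is convex in the order-betweenness, they are disjoint, their union is $D\setminus\{c\}$, and for any $d_1\in c_1$ and $d_2\in c_2$ we have $d_1<c<d_2$, so $c\in s[d_1,d_2]$. The only thing requiring care is nonemptiness of both sides: I would produce a cut strictly below $c$ and one strictly above $c$ by shifting the boundary within $\Sigma_1$ and $\Sigma_2$ respectively, using that neither $\Sigma_1$ nor $\Sigma_2$ is itself a maximal ray (so each has "room" to spare, yielding a nearby distinct cut).

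The substantive part is connectedness, namely that every closed segment $s[c,c']$ in $D$ is connected in the sense of Definition \ref{ConvConnectednessDef}. I would appeal to the completeness built into Dedekind cuts: given a decomposition of $s[c,c']$ into two disjoint nonempty convex sets $U$ and $V$ with $c\in U$, $c'\in V$, I would form the cut determined by the union of all the $\Sigma_1$-parts of elements of $U$. By construction this union is convex, nonempty, and bounded, and because $l$ has no segment splitting into two disjoint closed subsegments, this union defines a legitimate Dedekind cut $c^*$ lying in $s[c,c']$. The point $c^*$ must belong to exactly one of $U$ or $V$, and one checks that whichever side contains $c^*$ is a closed segment while the other is half-open, so precisely one of $U$, $V$ is a closed line segment.

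The hard part will be this last step: verifying that the supremum cut $c^*$ is itself a Dedekind cut (that its first coordinate is not accidentally a maximal ray) and that the side containing it is genuinely closed rather than half-open. This is exactly where the hypothesis that no segment of $l$ splits into two disjoint closed subsegments is indispensable, so I would be careful to invoke it precisely when ruling out that the union of the $\Sigma_1$-parts coincides with a maximal ray. I would also keep in mind the rational example preceding the proposition, where the cut at $\sqrt2$ has neither coordinate a maximal ray, as the model case guiding the argument.
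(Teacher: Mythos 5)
Your construction is the same as the paper's: take the union $A^*$ of the first coordinates of the elements of $U$, check that $(A^*,l\setminus A^*)$ is a Dedekind cut $c^*$ in $s[c,c']$, and decide which piece is closed according to which piece contains $c^*$ (your case split on the location of $c^*$ is in fact the cleaner way to organize this; the paper instead splits on whether $A^*$ is a maximal ray, a case that, as noted below, never occurs). The linearity and separation steps are fine in outline, with one slip worth fixing: Definition \ref{DedekindCut} forbids only $\Sigma_1$ from being a maximal ray, and $\Sigma_2$ certainly can be one (for the cut at a point $p$ of $l$, $\Sigma_2=\{x\mid x\ge p\}$ is the positive maximal ray at $p$). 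Nonemptiness of the set of cuts above and below a given cut should instead come from the fact that $l$ is a line (both components of $l\setminus\{p\}$ are nonempty for every $p\in l$) together with the hypothesis, which makes the point-cuts $(\{x\mid x<p\},\{x\mid x\ge p\})$ legitimate; the same remark gives two distinct cuts for condition a of Definition \ref{LineDef} (the examples you cite concern $\mathbb{Q}$ only).

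The genuine gap is in the connectedness step, and it comes from misdiagnosing where the hypothesis on $l$ is needed. The step you single out as the hard one --- that $A^*$ is not a maximal ray --- is automatic and uses no hypothesis at all: if $p$ were the maximum of $A^*$, then $p\in\Sigma_1^u$ for some $u\in U$, and since $\Sigma_1^u\subset A^*\subset\{x\mid x\le p\}$, the point $p$ would be the maximum of $\Sigma_1^u$, contradicting that $u$ is a cut. What genuinely requires the hypothesis is the step you wave through with ``one checks'': that the piece \emph{not} containing $c^*$ fails to be a closed segment, which is what ``precisely one'' in Definition \ref{ConvConnectednessDef} demands. If $c^*\in V$ this is easy ($U$ can have no maximum, since a maximum of $U$ would force $c^*\in U$). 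But if $c^*\in U$, so $U=s[c,c^*]$, you must exclude that $V$ has a minimum $e$, i.e.\ that $c^*<e$ are consecutive cuts; being ``half-open'' as a set does not by itself prevent $V$ from coinciding with a closed segment (in $\mathbb{Z}$, $s[0,5)=s[0,4]$). Here is where the hypothesis enters: pick $p\in\Sigma_1^e\setminus A^*$ and then $q\in\Sigma_1^e$ with $q>p$ (possible because $\Sigma_1^e$ has no maximum); by the hypothesis $\{x\mid x<q\}$ has no maximum, so $(\{x\mid x<q\},\{x\mid x\ge q\})$ is a cut lying strictly between $c^*$ and $e$, and it belongs to neither $U$ nor $V$, contradicting $U\cup V=s[c,c']$. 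With this density argument inserted your plan goes through; without it, the proof does not establish connectedness in the sense the paper defines it.
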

\begin{proof}
The only part requiring a non-trivial proof is the connectedness of line segments in the space of Dedekind cuts. Given a convex set $\Sigma$ in $s[(\Sigma_1,\Sigma_2),(\Sigma'_1,\Sigma'_2)]$ containing $(\Sigma_1,\Sigma_2)$ let's add all the first coordinates of elements of $\Sigma$ and obtain $A$. If it is not a maximal ray, then
the point $(A,l\setminus A)$ is a Dedekind cut and the complement of 
$\Sigma$ in $s[(\Sigma_1,\Sigma_2),(\Sigma'_1,\Sigma'_2)]$ is a closed line segment.
If $A$ is a maximal ray, then $\Sigma$ is a closed line segment in $s[(\Sigma_1,\Sigma_2),(\Sigma'_1,\Sigma'_2)]$.
\end{proof}

Notice the connected line constructed in \ref{CreatingConnectedLines} contains an isomorphic copy of $l$. In the particular case of $l=\mathbb{Q}$, the constructed connected line is the set of reals $\mathbb{R}$.

\subsection{Bijections between lines}

\begin{Proposition}\label{BijectionsBetweenSpacesMaxRays}
If $f:\Pi_1\to\Pi_2$ is a bijection between spaces with maximal rays that preserves connected line segments, then $f(l)$ is a connected line in $\Pi_2$ for every connected line $l$ in $\Pi_1$.
\end{Proposition}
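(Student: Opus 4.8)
The plan is to show that the restriction $f|_l \colon l \to f(l)$ is an isomorphism of betweenness spaces and that being a connected line is an intrinsic property invariant under such isomorphisms. I read the hypothesis as saying that whenever $s[A,B]$ is connected in $\Pi_1$ we have $f(s[A,B]) = s[f(A),f(B)]$ and this image is connected in $\Pi_2$. By \ref{BijectionPreservingBetweennessProp} the restriction of $f$ to any connected segment is then a betweenness isomorphism onto its (connected) image, and the corresponding statement holds for $f^{-1}$ on segments of the form $s[f(A),f(B)]$. So once I know that the segments of $l$ are genuine connected segments of the ambient space $\Pi_1$, I can transfer the axioms of \ref{LineDef} one by one.

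The crux, and the step I expect to be the main obstacle, is the compatibility of segments: I must show that a connected line $l \subseteq \Pi_1$ is convex in $\Pi_1$, i.e. $s_{\Pi_1}[A,B] = s_l[A,B]$ for all $A,B \in l$, this common set being connected because $l$ is a connected line. For $A,B,C \in l$ the relation ``$C$ is between $A$ and $B$'' is the same whether computed in $l$ or in $\Pi_1$, so $s_l[A,B] = s_{\Pi_1}[A,B] \cap l$ and the induced orders on $s_l[A,B]$ agree (\ref{OrderingOfRays}); the only thing at issue is whether an ambient point $X \in s_{\Pi_1}[A,B]$ can fail to lie on $l$. I would rule this out using condition 1 of \ref{SpaceWithMaximalRays}: fixing $A$, that condition forces the ambient segments $s_{\Pi_1}[A,Y]$ to be nested whenever they share an interior point, so each maximal ray of $l$ at $A$ sits inside a single ambient maximal ray on which betweenness is linear, and a point $X$ strictly between two points of such a ray must coincide with one of the $l$-points produced by connectedness of the $l$-segments. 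Making this last implication airtight is the delicate part, since connectedness of the $l$-segments alone does not obviously forbid an ambient gap off $l$; one genuinely needs the nestedness coming from the maximal-rays structure.

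Granting convexity, the remainder is bookkeeping. For $A,B \in l$ the segment $s_{\Pi_1}[A,B]$ is connected, so $f(s_{\Pi_1}[A,B]) = s_{\Pi_2}[f(A),f(B)]$ is a connected segment contained in $f(l)$; letting $A,B$ range over $l$ shows that $f(l)$ is convex in $\Pi_2$ and that $s_{f(l)}[f(A),f(B)] = s_{\Pi_2}[f(A),f(B)] = f(s_l[A,B])$. Hence $f|_l$ is a bijection onto $f(l)$ preserving closed segments, that is, an isomorphism $l \to f(l)$ by \ref{BijectionPreservingBetweennessProp}.

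It then remains to verify the axioms of \ref{LineDef} for $f(l)$. It has at least two points since $f$ is injective. For the separation property I write $l \setminus \{A\} = c_1 \sqcup c_2$ as in \ref{LineDef} and push forward to $f(l)\setminus\{f(A)\} = f(c_1)\sqcup f(c_2)$; each $f(c_i)$ is convex in $f(l)$ because $f$ carries the ambient segment between two points of $c_i$, which lies in $c_i$ by convexity of $c_i$ and of $l$, onto the corresponding segment of $f(l)$, and if $f(B)\in f(c_1)$ and $f(C)\in f(c_2)$ then $A\in s_{\Pi_1}[B,C]$ yields $f(A)\in s_{\Pi_2}[f(B),f(C)]$, giving the required betweenness condition. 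Finally every closed segment of $f(l)$ equals $f(s_l[A,B])$, the image of a connected segment, hence is connected by the hypothesis. Therefore $f(l)$ is a connected line in $\Pi_2$, which completes the argument.
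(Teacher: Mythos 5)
The second half of your argument --- granting that segments of $l$ are ambient connected segments, deduce that $f|_l$ preserves closed segments onto $f(l)$, and transfer the axioms of \ref{LineDef} --- is correct and is essentially the paper's proof. The only cosmetic difference is in how the inverse direction is obtained: the paper shows $f^{-1}$ preserves betweenness via the trichotomy of \ref{BetwennessCorollary} (if $C\notin s[A,B]$ then, say, $A\in s[C,B]$; pushing both relations forward produces two distinct middle points among $f(A),f(B),f(C)$, contradicting \ref{BetwennessOfTripleProp}), and then invokes part b. of \ref{BijectionPreservingBetweennessProp}, whereas you invoke part a. segment by segment.

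The genuine problem is the step you yourself flag as the crux, and your suspicion that it is delicate understates the situation: the claim that a connected line $l\subseteq\Pi_1$ must be convex in $\Pi_1$ is false, and condition 1 of \ref{SpaceWithMaximalRays} cannot rescue it. Take $\Pi_1=\RR$ with the standard betweenness (a space with maximal rays in which the nestedness condition holds) and $l=(0,1]\cup(2,3)$ with the induced relation. Every induced segment $s_l[A,B]=s_{\RR}[A,B]\cap l$ is order-isomorphic to a closed real interval (slide the two pieces together), and since the connectedness of Definition \ref{ConvConnectednessDef} is formulated entirely in terms of the betweenness relation, every segment of $l$ is connected: for instance, in the decomposition of $s_l[1/2,5/2]$ into $[1/2,1]$ and $(2,5/2]$, exactly one piece, namely $s_l[1/2,1]$, is a closed segment of $l$, because $(2,5/2]$ has no left endpoint inside $l$; the same happens for every convex decomposition. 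Each point of $l$ also separates it as required in \ref{LineDef}. So $l$ is a connected line in $\Pi_1$, yet $3/2\in s_{\RR}[1,5/2]\setminus l$: there is an ambient gap exactly where you feared one, and intrinsic connectedness of the $l$-segments cannot detect it. Your sentence ``a point $X$ strictly between two points of such a ray must coincide with one of the $l$-points produced by connectedness'' is precisely what this example refutes.

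The way out is not to prove convexity but to build it into the reading of the statement, which is what the paper does silently: lines sitting inside a space are, throughout the paper, the convex ones --- unions of two antipodal maximal rays as produced by \ref{MaximalLineCor}, maximal rays being convex by Definition \ref{RayDef} --- and the paper's own proof opens with ``$f(l)$ is a convex subset of $\Pi_2$'', an assertion that already presupposes that the segments of $l$ are ambient segments, so that the hypothesis on $f$ applies to them. Once convexity of $l$ is taken as part of the meaning of ``connected line in $\Pi_1$'', your crux evaporates and the remainder of your write-up is a complete and correct proof.
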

\begin{proof}
$f(l)$ is a convex subset of $\Pi_2$.
In view of \ref{BijectionPreservingBetweennessProp} it suffices to show 
$f^{-1}:f(l)\to l$ preserves betweenness. Suppose $f(C)$ is between $f(A)$ and $f(B)$ in $f(l)$ but $C$ is not between $A$ and $B$. We may assume $A$ is between
$C$ and $B$. Therefore $f(A)$ is between $f(C)$ and $f(B)$, a contradiction to \ref{BetwennessCorollary}.
\end{proof}

\begin{Proposition}
Let $l_1,l_2$ be two connected lines. If $f:l_1\to l_2$ is a bijection preserving betweenness, then $f$ preserves closed line segments.
\end{Proposition}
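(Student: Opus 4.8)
The plan is to deduce this directly from part b of \ref{BijectionPreservingBetweennessProp}, which says that a bijection whose two directions both preserve betweenness automatically preserves closed line segments. Since $f$ already preserves betweenness by hypothesis, everything reduces to showing that the inverse bijection $f^{-1}:l_2\to l_1$ preserves betweenness as well. This is the same reduction already carried out in the proof of \ref{BijectionsBetweenSpacesMaxRays}, and indeed the present statement is essentially the restriction of that argument to a pair of connected lines.

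To prove $f^{-1}$ preserves betweenness I would argue by contradiction. Suppose $f(C)$ is between $f(A)$ and $f(B)$ while $C$ is \emph{not} between $A$ and $B$. The degenerate situations are harmless: if $A=B$ then $s[f(A),f(B)]$ is a single point forcing $C=A$, and if $C$ coincides with $A$ or $B$ then $C\in s[A,B]$ trivially; so I may assume $A,B,C$ are three distinct points of $l_1$. Now \ref{BetwennessCorollary} applies to the triple $\{A,B,C\}$ in the connected line $l_1$: exactly one of the three is between the other two. Since $C$ is not the middle point, one of $A$, $B$ is, and without loss of generality $A$ is between $C$ and $B$. Because $f$ preserves betweenness, $f(A)$ is then between $f(C)$ and $f(B)$. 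But we also assumed $f(C)$ is between $f(A)$ and $f(B)$, and the three images $f(A),f(B),f(C)$ are distinct by injectivity of $f$. This exhibits two distinct pairs among $\{f(A),f(B),f(C)\}$, each admitting the remaining point as its in-between point, contradicting the uniqueness asserted by \ref{BetwennessCorollary} in the connected line $l_2$.

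The contradiction shows that $C$ is between $A$ and $B$ whenever $f(C)$ lies between $f(A)$ and $f(B)$, i.e. $f^{-1}$ preserves betweenness, and the conclusion follows from \ref{BijectionPreservingBetweennessProp}. The step I expect to require the most care is the clean invocation of \ref{BetwennessCorollary}: it presupposes three \emph{distinct} points, so one must dispose of the degenerate cases first and also note that the images stay distinct (immediate from injectivity of $f$). Once those bookkeeping points are settled, the argument is a direct transcription of the betweenness-preservation reasoning already used for spaces with maximal rays, the only genuine input being that both $l_1$ and $l_2$ are connected lines so that the trichotomy of \ref{BetwennessCorollary} is available on each side.
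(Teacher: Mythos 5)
Your proposal is correct and is essentially the paper's own argument: both handle the degenerate cases, apply the trichotomy of \ref{BetwennessCorollary} on the connected line $l_1$ to make one of $A,B$ the middle point, push this through $f$, and contradict the uniqueness of the middle point in $l_2$. Routing the conclusion through \ref{BijectionPreservingBetweennessProp}(b) rather than stating the segment equality directly is only a cosmetic repackaging of the same proof.
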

\begin{proof}
Suppose $C\in s[f(A),f(B)]\setminus f(s[A,B])$ for some points $A,B$ of $l_1$.
There is $D\in l_1$ with $f(D)=C$. As $D\notin s[A,B]$, we may assume $A$ is between
$D$ and $B$. Therefore $f(A)$ is between $C$ and $f(B)$, a contradiction to \ref{BetwennessCorollary}.
\end{proof}

\begin{Lemma}\label{TranslationLemma}
Let $l$ be a connected line and let $\tau:l\to l$ be a bijection preserving 
betweenness.
If $\tau$ has no fixed points, then for each $A\in l$ $ray[A,\tau(A)]$
is sent by $\tau$ to a proper subset of itself. In particular, $\tau(A)$ is between
$A$ and $\tau^2(A)$.
\end{Lemma}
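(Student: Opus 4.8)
The plan is to exploit the linear order that an orientation induces on the connected line $l$, as described in the subsection on orienting rays and lines. First I would fix an orientation so that $l$ carries a linear order $\leq$ with $s[X,Y]=\{Z\in l\mid X\leq Z\leq Y\}$ whenever $X\leq Y$. Since $\tau$ preserves betweenness and is a bijection, by \ref{BetwennessCorollary} it respects the order on each triple, so $\tau$ is a monotone bijection of $l$; it is therefore either order-preserving or order-reversing. The key observation is that an order-reversing bijection of a connected line must have a fixed point, which would contradict our hypothesis. Indeed, an order-reversing bijection $\tau$ would send the positive maximal ray to the negative one and vice versa, forcing some point to satisfy $\tau(A)=A$ by an application of the Fixed Point Theorem \ref{FixedPointTheorem} (restricted to a suitable connected segment $s[B,\tau(B)]$ on which $\tau$ restricts appropriately). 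Hence $\tau$ must be strictly order-preserving.

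Having established that $\tau$ is order-preserving and fixed-point-free, I would fix $A\in l$ and consider the sign of $\tau(A)$ relative to $A$. Without loss of generality assume $A<\tau(A)$ (the case $\tau(A)<A$ is symmetric, obtained by running the argument with the opposite orientation). The heart of the claim is then that applying the order-preserving map $\tau$ repeatedly keeps moving strictly in the same direction. Concretely, since $\tau$ is order-preserving and $A<\tau(A)$, applying $\tau$ gives $\tau(A)<\tau^2(A)$, so $A<\tau(A)<\tau^2(A)$. This chain of inequalities says exactly that $\tau(A)\in s(A,\tau^2(A))$, i.e. $\tau(A)$ is strictly between $A$ and $\tau^2(A)$, which is the "in particular" conclusion of the lemma.

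For the first assertion, I would show $\tau(ray[A,\tau(A)])$ is a proper subset of $ray[A,\tau(A)]$. Recall $ray[A,\tau(A)]$ is the positive maximal ray at $A$, namely $\{X\in l\mid A\leq X\}$. Its image under the order-preserving bijection $\tau$ is the positive maximal ray at $\tau(A)$, namely $\{X\in l\mid \tau(A)\leq X\}$. Since $A<\tau(A)$ and the order is linear, every $X$ with $\tau(A)\leq X$ also satisfies $A\leq X$, so $\tau(ray[A,\tau(A)])\subset ray[A,\tau(A)]$; the inclusion is proper because $A$ itself lies in the ray but not in its image (as $A<\tau(A)$ means $A$ fails $\tau(A)\leq A$). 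This is the proper-subset claim.

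The main obstacle I anticipate is the dichotomy step: rigorously ruling out that $\tau$ is order-reversing, since the statement as given only assumes $\tau$ is fixed-point-free rather than monotone. The cleanest route is to argue that an order-reversing bijection of a connected line necessarily has a fixed point. One can isolate a connected segment that $\tau$ maps into itself and invoke \ref{FixedPointTheorem}; the delicate part is producing such a $\tau$-invariant connected segment from an order-reversing $\tau$ without a fixed point, so I would want to argue that order-reversal forces the existence of a point $B$ with $B$ and $\tau(B)$ straddling in a way that yields an invariant segment, after which the Fixed Point Theorem supplies the contradiction. Everything after the monotonicity dichotomy reduces to elementary manipulation of the linear order and the identification of maximal rays with order-intervals.
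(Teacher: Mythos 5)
Your route is sound but genuinely different in packaging from the paper's. The paper argues locally: it shows directly that $\tau(ray[A,\tau(A)])$ misses $A$ by supposing $A=\tau(B)$ with $B\in ray[A,\tau(A)]$ and splitting into two cases --- if $B\in s[A,\tau(A)]$ then $\tau^{-1}$ maps $s[A,\tau(A)]$ into itself, and otherwise $\tau(A)\in s[A,B]$ so $\tau$ maps $s[B,\tau(B)]=s[B,A]$ into itself --- and in either case \ref{FixedPointTheorem} produces a forbidden fixed point; the image of the ray, being convex, missing $A$, and containing $\tau(A)$, must then lie inside $ray[A,\tau(A)]$. You instead globalize first: orient $l$, show $\tau$ is monotone, eliminate the order-reversing case, and reduce the claim to order arithmetic. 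What your approach buys is a stronger intermediate statement (a fixed-point-free betweenness-preserving bijection of a connected line preserves orientation), which conceptually explains the lemma; what it costs is the two extra steps you yourself flag as needing care.

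Both of those steps do close, so there is no fatal gap. For the monotonicity dichotomy: by the Proposition immediately preceding \ref{TranslationLemma}, $\tau$ preserves closed line segments, so for $X<Y<Z$ the point $\tau(Y)$ lies strictly between $\tau(X)$ and $\tau(Z)$; the direction of monotonicity agrees on any two triples sharing two points, and arbitrary pairs chain through overlapping triples, so it is global. For the step you call delicate --- an order-reversing bijection must have a fixed point --- the invariant segment you are looking for is simply $s[B,\tau(B)]$ for any $B$, say with $B<\tau(B)$: order reversal gives $\tau^2(B)<\tau(B)$, so $\tau(s[B,\tau(B)])=s[\tau^2(B),\tau(B)]$, and either $\tau^2(B)\ge B$, in which case $\tau$ maps $s[B,\tau(B)]$ into itself, or $\tau^2(B)<B$, in which case $s[B,\tau(B)]\subset\tau(s[B,\tau(B)])$ and hence $\tau^{-1}$ maps $s[B,\tau(B)]$ into itself. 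Either way \ref{FixedPointTheorem} applies (both $\tau$ and $\tau^{-1}$ preserve closed line segments by \ref{BijectionPreservingBetweennessProp}) and yields a fixed point, a contradiction. It is worth noticing that this dichotomy --- an invariant segment for $\tau$ or else for $\tau^{-1}$ --- is exactly the mechanism of the paper's own two cases, so the two proofs ultimately run on the same engine and differ only in whether monotonicity is made explicit.
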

\begin{proof}
$\tau(ray[A,\tau(A)])$ does not contain $A$. Indeed, suppose $A=\tau(B)$
and $B\in ray[A,\tau(A)]$. $B$ cannot be between $A$ and $\tau(A)$, as in that case
$\tau^{-1}$ sends $s[A,\tau(A)]$ to itself and $\tau^{-1}$ has a fixed point.
Therefore $\tau(A)=\tau^2(B)$ is between $A$ and $B$ in which case
$\tau$ sends $s[B,\tau(B)]$ to itself resulting in a fixed point of $\tau$, a contradiction.

Since $\tau(ray[A,\tau(A)])$ is a convex subset of $l$ missing $A$ and containing $\tau(A)$,
it is contained in $ray[A,\tau(A)]$.
\end{proof}

\begin{Corollary}\label{PeriodicityLemma}
Let $l$ be a connected line and let $\tau:l\to l$ be a bijection preserving 
betweenness.
If $\tau$ has no fixed points, then $A\ne \tau^n(A)$ for any point $A$ of the line $l$
and any integer $n\ne 0$.
\end{Corollary}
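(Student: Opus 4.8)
The plan is to use the Translation Lemma \ref{TranslationLemma} to show that the forward orbit of any point is strictly monotone with respect to a linear order on $l$, and then to reduce the case of negative exponents to that of positive exponents.

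First I would fix an orientation of $l$ as in the subsection on orienting lines, obtaining a linear order $\leq$ on $l$ for which $X$ lies between $P$ and $Q$ precisely when $P\leq X\leq Q$ or $Q\leq X\leq P$. Fix a point $A$; since $\tau$ has no fixed points, $\tau(A)\neq A$, and after possibly reversing the orientation I may assume $A<\tau(A)$ (the opposite case is symmetric). I would first record that $A,\tau(A),\tau^2(A)$ are three distinct points: $\tau(A)\neq A$ and $\tau(A)\neq\tau^2(A)$ because $\tau$ is injective and fixed-point-free, while $A=\tau^2(A)$ would force $\tau(A)=A$ via \ref{TranslationLemma}. Hence the betweenness assertion of \ref{TranslationLemma} becomes the strict statement that $\tau(A)$ lies strictly between $A$ and $\tau^2(A)$.

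The key step is an induction showing $\tau^{n}(A)<\tau^{n+1}(A)$ for all $n\geq0$. The base case is $A<\tau(A)$. For the inductive step, applying \ref{TranslationLemma} at the point $\tau^{n-1}(A)$ gives that $\tau^{n}(A)$ lies strictly between $\tau^{n-1}(A)$ and $\tau^{n+1}(A)$; in the order $\leq$ this means either $\tau^{n-1}(A)<\tau^{n}(A)<\tau^{n+1}(A)$ or the reverse chain, and the inductive hypothesis $\tau^{n-1}(A)<\tau^{n}(A)$ eliminates the reverse possibility. Thus the forward orbit $A<\tau(A)<\tau^2(A)<\cdots$ is strictly increasing, so $\tau^{n}(A)\neq A$ for every $n\geq1$.

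Finally I would dispatch negative exponents by a direct reduction: if $\tau^{-m}(A)=A$ for some $m\geq1$, then applying $\tau^{m}$ yields $A=\tau^{m}(A)$, contradicting the positive case. I expect the only delicate point to be the passage from the oriented, order-theoretic picture back to the betweenness language of \ref{TranslationLemma}—specifically, justifying that the chained betweenness relations force genuine monotonicity rather than merely local in-betweenness. The induction handles this because consecutive triples share the common reference point $\tau^{n-1}(A)$, so the orientations of the successive segments are forced to agree; an alternative, order-free route would instead iterate the ray inclusion $\tau(ray[A,\tau(A)])\subsetneq ray[A,\tau(A)]$ supplied by \ref{TranslationLemma}.
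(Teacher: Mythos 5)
Your proof is correct, but it is organized differently from the paper's. The paper's entire proof is one line: apply \ref{TranslationLemma} to get that $\tau(ray[A,\tau(A)])$ is a subset of $ray[A,\tau(A)]$ that misses $A$ (this is exactly what the first paragraph of the proof of \ref{TranslationLemma} establishes), so that $\tau^n(ray[A,\tau(A)])\subset \tau(ray[A,\tau(A)])$ misses $A$ for every $n\ge 1$; since $\tau^n(A)$ lies in that image, $\tau^n(A)\ne A$, with negative exponents left to the same reduction you use. This is precisely the ``order-free route'' you name in your closing sentence but do not carry out. Your main argument instead fixes an orientation, invokes the compatibility of betweenness with the induced linear order (from the subsection on orienting rays and lines, together with \ref{BetwennessCorollary}), and runs an induction showing the orbit $A<\tau(A)<\tau^2(A)<\cdots$ is strictly monotone. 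Both arguments rest solely on \ref{TranslationLemma}, so the key lemma is the same; yours is longer because it must justify strictness (distinctness of $A$, $\tau(A)$, $\tau^2(A)$, and of each consecutive orbit triple) and the passage between betweenness and the order, bookkeeping the nested-ray argument avoids entirely --- the monotonicity you prove is already encoded in the inclusion $\tau(ray[A,\tau(A)])\subsetneq ray[A,\tau(A)]$. What your version buys is a sharper conclusion: the orbit is an explicit strictly increasing sequence, which is slightly more informative than the bare statement $\tau^n(A)\ne A$.
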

\begin{proof}
Apply \ref{TranslationLemma} to see that $\tau^n(ray[A,\tau(A)])$ misses $A$.
\end{proof}

\subsection{Final comments}

The reason we chose \ref{LineDef} as a definition of lines is because it is generalizable to circles and planes. However, \ref{LineDef} works best for connected lines as illustrated by the following:

\begin{Example}
Consider three disjoint rays emanating from points $A$, $B$, and $C$. Define betweenness by extending betweenness of each ray to require that $s[X,Y]$, for $X$ and $Y$ belonging to different rays, is defined
as the union of $s[X,O_X]\cup s[Y,O_Y]$, where $O_X$ is the initial point of the ray containing $X$ and $O_Y$ is the initial point of the ray containing $Y$. Notice we get a line in the sense of \ref{LineDef} which is counterintuitive. 
\end{Example}

If one wants a definition of lines that works correctly for non-connected lines as well, then the condition from \ref{LineCharThm} seems the most useful. Namely, the requirement that every three points are contained in the interior of a closed line segment.

\section{Spaces with rigid motions}\label{Spaces with rigid motions}

In this section we introduce a general framework for discussing congruence and length of line segments. Our approach is motivated by ideas from physics.

\begin{Definition}\label{IsotropicSpaceDef}
An \textbf{isotropic space} is a pair $(\Pi,\mathcal{I})$ consisting of a space $\Pi$ with maximal rays and a subgroup $\mathcal{I}$ of the group of isomorphisms of $\Pi$ satisfying the following condition:\\
For any two maximal rays $r_1$ and $r_2$ in $\Pi$ there is $f\in\mathcal{I}$ such that $f(r_1)= r_2$.
\end{Definition}

The condition above means, in the language of physics, that $\Pi$ is the same in all directions.

Our next task is to define rigidity without using the concept of length.

\begin{Definition}\label{SpaceWithRigidMotionsDef}
A \textbf{space with rigid motions} is an isotropic space $(\Pi,\mathcal{I})$ such that each $f\in\mathcal{I}$ is a \textbf{rigid motion}, i.e. if $f(s[A,B])\subset s[A,B]$ for some $f\in \mathcal{I}$ and some line segment $s[A,B]$, then $f(s[A,B])= s[A,B]$.
\end{Definition}

The condition above mean that each $f$ in $\mathcal{I}$ is \textbf{rigid}, i.e. it cannot contract or expand line segments.
Indeed, if $f(s[A,B])\supset s[A,B]$, then $f^{-1}(s[A,B])\subset s[A,B]$ and $f^{-1}(s[A,B])= s[A,B]$
resulting in $f(s[A,B])= s[A,B]$.

The following is the most interesting case of spaces with rigid motions.
\begin{Proposition}\label{NilpotentCaseOfRM}
Let $(\Pi,\mathcal{I})$ be an isotropic space.
 If $f(s[A,B])\subset s[A,B]$ for some $f\in \mathcal{I}$ and some line segment $s[A,B]$ implies that $f$ restricted to $s[A,B]$ is of finite order (i.e. there is a natural $n$ satisfying $f^n=id$ on $s[A,B]$), then $(\Pi,\mathcal{I})$ is a space with rigid motions.
\end{Proposition}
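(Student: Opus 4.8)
The plan is to show directly that the hypothesis forces the equality $f(s[A,B])=s[A,B]$ whenever $f(s[A,B])\subseteq s[A,B]$, which is precisely the defining condition of a space with rigid motions in \ref{SpaceWithRigidMotionsDef}. The key observation is that finite order of $f$ on $s[A,B]$ upgrades the one-sided containment $f(s[A,B])\subseteq s[A,B]$ into surjectivity of $f$ on $s[A,B]$, and surjectivity is exactly the missing inclusion.

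First I would fix an arbitrary $f\in\mathcal{I}$ and a line segment $s[A,B]$ with $f(s[A,B])\subseteq s[A,B]$, and invoke the standing hypothesis to obtain a natural number $n$ with $f^n=\mathrm{id}$ on $s[A,B]$. Next I would note that the containment iterates: by an easy induction $f^k(s[A,B])\subseteq s[A,B]$ for every $k\ge 0$, so in particular $f^{n-1}$ maps $s[A,B]$ back into itself. Then for an arbitrary $y\in s[A,B]$ the identity $y=f^n(y)=f\bigl(f^{n-1}(y)\bigr)$, together with $f^{n-1}(y)\in s[A,B]$, exhibits $y$ as an element of $f(s[A,B])$. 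This gives the reverse inclusion $s[A,B]\subseteq f(s[A,B])$, and combined with the hypothesis $f(s[A,B])\subseteq s[A,B]$ we conclude $f(s[A,B])=s[A,B]$, so $f$ is a rigid motion.

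There is no serious obstacle here; the entire content is the elementary fact that a self-map of a set whose $n$-th power is the identity must be a bijection of that set. The only point requiring a little care is to confirm that the intermediate iterate $f^{n-1}(y)$ stays inside $s[A,B]$, so that applying $f$ one more time can legitimately be read as membership in the image $f(s[A,B])$; this is exactly what the iterated containment $f^k(s[A,B])\subseteq s[A,B]$ guarantees. It is worth remarking that, unlike the rigidity arguments that appear elsewhere in the paper, this reduction does not call on the Fixed Point Theorem \ref{FixedPointTheorem}.
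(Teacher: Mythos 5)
Your proof is correct and matches the paper's own argument: both iterate the containment $f(s[A,B])\subseteq s[A,B]$ and use $f^n=\mathrm{id}$ on $s[A,B]$ to obtain the reverse inclusion $s[A,B]\subseteq f(s[A,B])$, hence equality. The only difference is cosmetic --- you argue pointwise via $y=f\bigl(f^{n-1}(y)\bigr)$ while the paper phrases the same step at the level of images, writing $s[A,B]=f^n(s[A,B])\subseteq f(s[A,B])$.
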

\begin{proof}
Suppose $f(s[A,B])\subset s[A,B]$ for some $f\in \mathcal{I}$ and some line segment $s[A,B]$. Let $n$ be a natural number satisfying $f^n=id$ on $s[A,B]$.
By applying $f$ to $f(s[A,B])\subset s[A,B]$ repeatedly one gets
$f^k(s[A,B])\subset f(s[A,B])$ for all $k\ge 1$. In particular, for $k=n$,
one gets $s[A,B]=f^n(s[A,B])\subset f(s[A,B])$ resulting in $f(s[A,B])= s[A,B]$.
\end{proof}

\begin{Proposition}\label{RigidMotionsViaRaysProp}
Let $(\Pi,\mathcal{I})$ be an isotropic space.
$(\Pi,\mathcal{I})$ is a space with rigid motions if and only if $f(r)=r$ for some maximal ray $r$ implies $f|r$ is the identity.
\end{Proposition}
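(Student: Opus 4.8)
The plan is to prove both implications by exploiting the intrinsic description of the initial point of a maximal ray together with the observation made right after Definition \ref{SpaceWithRigidMotionsDef} that a rigid motion can neither contract nor expand a line segment.

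For the forward implication I would assume $(\Pi,\mathcal{I})$ is a space with rigid motions and suppose $f\in\mathcal{I}$ satisfies $f(r)=r$ for a maximal ray $r$ emanating from $O$. Since $f$ is an isomorphism and, by \ref{OrderingOfRays}, the initial point $O$ is exactly the minimum of the linear order on $r$, the map $f$ must send $O$ to itself. I would then fix an arbitrary $A\in r$ and note that $f(s[O,A])=s[O,f(A)]$ with $f(A)\in r$. If $f(A)\neq A$, then in the linear order on $r$ either $f(A)>A$, in which case $s[O,A]\subsetneq f(s[O,A])$ and $f$ expands $s[O,A]$, or $f(A)<A$, in which case $f(s[O,A])\subsetneq s[O,A]$ and $f$ contracts it; either outcome contradicts rigidity. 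Hence $f(A)=A$ for every $A\in r$, i.e. $f|r$ is the identity.

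For the converse I would argue the contrapositive: assuming $f(s[A,B])\subsetneq s[A,B]$ for some $f\in\mathcal{I}$, I would exhibit a maximal ray fixed by a power of $f$ that is not fixed pointwise. Writing $f(s[A,B])=s[f(A),f(B)]$, the proper inclusion forces $\{f(A),f(B)\}\neq\{A,B\}$, so at least one endpoint is moved while both remain inside $s[A,B]$. If $f(A)=A$, then $f(ray[A,B])=ray[A,f(B)]=ray[A,B]$, because $f(B)$ lies in the same maximal ray from $A$ as $B$; thus $ray[A,B]$ is fixed, while $f(B)\neq B$ shows $f$ is not the identity on it, contradicting the hypothesis. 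The case $f(B)=B$ is symmetric, using $ray[B,A]$.

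The main obstacle is the remaining case in which both endpoints move, $A<f(A)$ and $f(B)<B$ in the order on $s[A,B]$, so that no endpoint is available to anchor a fixed ray. Here I would iterate: the segments $f^{n}(s[A,B])$ are nested and connected, so by \ref{IntersectionOfDecreasingSegments} their intersection is a closed segment carried onto itself by $f$, whence the Fixed Point Theorem \ref{FixedPointTheorem} yields an interior fixed point $P$. Then $P\neq B$, and since $f(B)$ lies between $P$ and $B$ we obtain $f(ray[P,B])=ray[P,f(B)]=ray[P,B]$ with $f(B)\neq B$, once again contradicting the hypothesis. An orientation-reversing $f$ is reduced to this situation by passing to $f^{2}$, which still properly contracts $s[A,B]$, preserves orientation, and fixes the relevant ray; should $f^{2}$ happen to act trivially on $ray[P,B]$, one falls back on the endpoint cases applied to $f^{2}$. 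I expect this interior-fixed-point step to be the crux, since it is precisely where connectedness of the segments, through \ref{IntersectionOfDecreasingSegments} and \ref{FixedPointTheorem}, is indispensable.
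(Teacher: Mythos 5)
Your proposal is correct and follows essentially the same route as the paper's own proof: the forward direction compares $f(s[O,A])$ with $s[O,A]$ inside the invariant ray exactly as the paper does, and your converse hinges on the same two ideas as the paper's, namely using the Fixed Point Theorem \ref{FixedPointTheorem} to produce a fixed point at which a maximal ray is setwise invariant, and passing to $f^2$ when $f$ swaps the two sides of that fixed point. The only differences are cosmetic: the paper lets \ref{FixedPointTheorem} absorb your nested-intersection step (its General Case already covers proper contractions), treats the endpoint-fixed and interior-fixed-point situations uniformly so your separate endpoint cases are unnecessary, and concludes via \ref{NilpotentCaseOfRM} rather than by direct contradiction.
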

\begin{proof}
If $(\Pi,\mathcal{I})$ is a space with rigid motions and $f(r)=r$ for some maximal ray $r$ emanating from $A$, then for every point $B\in r$
we have either $f(s[A,B])\subset s[A,B]$ or $f^{-1}(s[A,B])\subset s[A,B]$.
Hence $f(s[A,B])=s[A,B]$ or $f^{-1}(s[A,B])=s[A,B]$ resulting in
$f(B)=B$.

Suppose $f(r)=r$ for some maximal ray $r$ implies $f|r$ is the identity
and $f(s[A,B])\subset s[A,B]$ for some points $A\ne B$. By \ref{FixedPointTheorem} there is a fixed point $C\in s[A,B]$ of $f$.
Consider the two antipodal rays $r$ and $a(r)$ emanating from $C$ so that
$s[A,B]\subset r\cup a(r)$. If $f$ preserves them, then $f$ is the identity on $s[A,B]$. If $f$ reverses them, then $f^2$ preserves them and $f^2$ is the identity on $s[A,B]$. By \ref{NilpotentCaseOfRM}, $(\Pi,\mathcal{I})$ is a space with rigid motions.
\end{proof}

\subsection{Congruence}
\begin{Definition}
Let $(\Pi,\mathcal{I})$ be a space with rigid motions.
Two line segments $s[A,B]$ and $s[C,D]$ of $\Pi$ are \textbf{congruent} if there is $\tau\in \mathcal{I}$ such that
$$\tau(s[A,B])= s[C,D].$$
\end{Definition}

Notice congruence is an equivalence relation and if
$s[C,D]$ is a proper subset of $s[A,B]$, then the two line segments are not congruent.

\subsection{Length of line segments}

In this part we introduce the concept of length of line segments in spaces with rigid motions. Our approach is very similar to that of Euclid: we do not consider length as a real-valued function. Instead, it is an equivalence relation with addition, multiplication, and proportions. Also, we have subtraction of a smaller length from a larger length.

\begin{Definition}
Let $(\Pi,\mathcal{I})$ be a space with rigid motions.
Given two line segments $s[A,B]$ and $s[C,D]$ of an Erlangen line $(l,\mathcal{I})$ we say the \textbf{length} of 
$s[A,B]$ is \textbf{at most the length} of $s[C,D]$ (notation: $|AB|\leq |CD|$) if there is $\tau\in \mathcal{I}$ such that
$\tau(s[A,B])\subset s[C,D]$. If, in addition, $s[A,B]$ is not congruent to $s[C,D]$, then we say say the \textbf{length} of 
$s[A,B]$ is \textbf{smaller than the length} of $s[C,D]$ (notation: $|AB| < |CD|$).
\end{Definition}

\begin{Lemma}\label{LengthInequalityTransitivityLine}
Let $(\Pi,\mathcal{I})$ be a space with rigid motions.
If $|AB|\leq |CD|$ and $|CD\leq |EF|$, then $|AB|\leq |EF|$.
\end{Lemma}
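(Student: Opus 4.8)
The plan is to prove transitivity of the length inequality by directly composing the witnessing rigid motions. By definition, $|AB|\leq |CD|$ gives us some $\tau_1\in\mathcal{I}$ with $\tau_1(s[A,B])\subset s[C,D]$, and $|CD|\leq |EF|$ gives us some $\tau_2\in\mathcal{I}$ with $\tau_2(s[C,D])\subset s[E,F]$. The natural candidate to witness $|AB|\leq |EF|$ is the composite $\tau_2\circ\tau_1$, which lies in $\mathcal{I}$ because $\mathcal{I}$ is a subgroup of the group of isomorphisms of $\Pi$.

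First I would verify the chain of inclusions. Applying $\tau_2$ to the inclusion $\tau_1(s[A,B])\subset s[C,D]$ yields $\tau_2(\tau_1(s[A,B]))\subset \tau_2(s[C,D])$, since any function respects inclusions of subsets of its domain. Combining this with $\tau_2(s[C,D])\subset s[E,F]$ gives
\[
(\tau_2\circ\tau_1)(s[A,B])=\tau_2(\tau_1(s[A,B]))\subset \tau_2(s[C,D])\subset s[E,F].
\]
Since $\tau_2\circ\tau_1\in\mathcal{I}$ and $(\tau_2\circ\tau_1)(s[A,B])\subset s[E,F]$, the definition of the length inequality is satisfied, so $|AB|\leq |EF|$.

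I do not anticipate any genuine obstacle here; the proof is essentially the observation that the defining relation for $\leq$ is closed under composition of the maps in $\mathcal{I}$, and that $\mathcal{I}$ being a group supplies exactly the closure needed. The only point requiring mild care is that one must use that each $\tau\in\mathcal{I}$ is an isomorphism (in particular preserves closed line segments), so that $\tau_2(s[C,D])$ is itself a closed line segment and the intermediate object behaves as expected; but this is built into the definition of $\mathcal{I}$ as a subgroup of isomorphisms, so no separate argument is needed. There is a minor typographical issue in the statement, where $|CD\leq|EF|$ appears to be missing a closing bar and should read $|CD|\leq|EF|$, but this does not affect the mathematical content.
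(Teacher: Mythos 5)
Your proof is correct and is essentially identical to the paper's own argument: both compose the two witnessing isomorphisms $\tau_2\circ\tau_1\in\mathcal{I}$ and observe the resulting inclusion $(\tau_2\circ\tau_1)(s[A,B])\subset s[E,F]$. Nothing further is needed.
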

\begin{proof}
Pick $\tau_1, \tau_2\in \mathcal{I}$ so that $\tau_1(s[A,B])\subset s[C,D]$
and $\tau_2(s[C,D])\subset s[E,F]$. Put $\tau=\tau_2\circ \tau_1$ and observe
$\tau(s[A,B])\subset s[E,F]$.
\end{proof}

The following lemma implies congruence of line segments is the same as equality of their lengths.
\begin{Lemma}
Let $(\Pi,\mathcal{I})$ be a space with rigid motions.
If $|AB|\leq |CD|$ and $|CD|\leq |AB|$, then the line segments $s[A,B]$ and $s[C,D]$ are congruent.
\end{Lemma}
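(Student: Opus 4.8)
The plan is to turn the two length inequalities into a single rigid self-map of $s[A,B]$ and then invoke the rigidity axiom. From $|AB|\le |CD|$ I would extract $\tau_1\in\mathcal{I}$ with $\tau_1(s[A,B])\subset s[C,D]$, and from $|CD|\le|AB|$ extract $\tau_2\in\mathcal{I}$ with $\tau_2(s[C,D])\subset s[A,B]$. Since $\mathcal{I}$ is a group, $\tau:=\tau_2\circ\tau_1\in\mathcal{I}$, and composing the two inclusions yields
$$\tau(s[A,B])=\tau_2(\tau_1(s[A,B]))\subset\tau_2(s[C,D])\subset s[A,B].$$

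The definition of a space with rigid motions (\ref{SpaceWithRigidMotionsDef}) now applies verbatim: an element of $\mathcal{I}$ carrying $s[A,B]$ into itself must carry it onto itself, so $\tau(s[A,B])=s[A,B]$. The key step is to feed this back into the displayed chain of inclusions. Since its leftmost term $\tau_2(\tau_1(s[A,B]))$ has just been shown to equal all of $s[A,B]$, the chain
$$s[A,B]=\tau_2(\tau_1(s[A,B]))\subset\tau_2(s[C,D])\subset s[A,B]$$
is squeezed to a chain of equalities, forcing $\tau_2(s[C,D])=s[A,B]$.

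This is exactly a congruence: $\tau_2$ witnesses that $s[C,D]$ and $s[A,B]$ are congruent, and since $\mathcal{I}$ is a group, $\tau_2^{-1}\in\mathcal{I}$ gives $\tau_2^{-1}(s[A,B])=s[C,D]$, so the two segments are congruent. I do not anticipate a genuine obstacle; the whole argument is a set-theoretic squeeze resting on one application of \ref{SpaceWithRigidMotionsDef}. The only subtlety is orientation: one must apply the rigidity axiom to the composite in the direction that upgrades a self-inclusion to a self-equality, and must notice that once the outer ends of the inclusion chain coincide the middle term is pinned between them. In particular, neither connectedness nor the Fixed Point Theorem (\ref{FixedPointTheorem}) is needed here, unlike in the arguments producing the rigid motions themselves.
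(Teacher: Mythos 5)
Your proof is correct and follows essentially the same route as the paper: compose the two witnessing isomorphisms, apply the rigidity axiom (\ref{SpaceWithRigidMotionsDef}) to the resulting self-map of $s[A,B]$, and extract the congruence. The only cosmetic difference is that the paper applies rigidity to both compositions $\rho\circ\tau$ and $\tau\circ\rho$, whereas you observe (correctly) that one application plus the set-theoretic squeeze already pins down $\tau_2(s[C,D])=s[A,B]$.
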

\begin{proof}
Pick isomorphisms $\tau, \rho$ in $\mathcal{I}$ such that
$\tau(s[A,B])\subset s[C,D]$ and $\rho(s[C,D])\subset s[A,B]$. Let
$\phi=\rho\circ \tau$. Notice $\phi(s[A,B])\subset s[A,B]$, hence $s[A,B]=\phi(s[A,B])$. Similarly, for $\psi=\tau\circ \rho$, one has
$s[C,D]=\psi(s[C,D])$ resulting in $\tau(s[A,B])= s[C,D]$.
\end{proof}

\begin{Lemma}
Let $(\Pi,\mathcal{I})$ be a space with rigid motions.
Given two connected line segments $s[A,B]$ and $s[C,D]$ of $\Pi$ one has
$|AB|\leq |CD|$ or $|CD|\leq |AB|$.
\end{Lemma}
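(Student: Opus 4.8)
The plan is to establish a trichotomy-style comparison between the two connected segments by using the isotropy of $\mathcal{I}$ to move one segment so that it shares an endpoint and an initial direction with the other, and then to compare them using the linear order on a maximal ray. The key observation is that once both segments sit inside a common maximal ray $r$ emanating from a single point, their containment is decided purely by the order structure: either the image of $s[A,B]$ is contained in $s[C,D]$ or vice versa, since on a ray the function $X \mapsto s[O,X]$ is linearly ordered by inclusion (see the ordering in \ref{OrderingOfRays}).

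First I would pick any maximal ray $r_1$ emanating from $A$ and containing $B$, and any maximal ray $r_2$ emanating from $C$ and containing $D$; these exist because $\Pi$ is a space with maximal rays and the segments are connected. Using the isotropy condition in \ref{IsotropicSpaceDef}, I would choose $f \in \mathcal{I}$ with $f(r_1) = r_2$. Then $f(A) = C$ (the initial point maps to the initial point, as these are the unique points not strictly between others on the respective rays), and $f(s[A,B])$ is a closed line segment $s[C, B']$ where $B' = f(B) \in r_2$. Now both $s[C,B']$ and $s[C,D]$ emanate from $C$ inside the same ray $r_2$, so by the linear order on $r_2$ from \ref{OrderingOfRays} either $s[C,B'] \subset s[C,D]$ or $s[C,D] \subset s[C,B']$.

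In the first case, $f(s[A,B]) = s[C,B'] \subset s[C,D]$ directly gives $|AB| \le |CD|$. In the second case, $s[C,D] \subset s[C,B'] = f(s[A,B])$, so applying $f^{-1} \in \mathcal{I}$ yields $f^{-1}(s[C,D]) \subset s[A,B]$, which gives $|CD| \le |AB|$. Either way one of the two desired inequalities holds.

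The main obstacle I anticipate is justifying cleanly that $f(A) = C$ and hence that $f(s[A,B])$ really is a segment of $r_2$ starting at $C$, rather than some segment floating elsewhere in $r_2$; this rests on the fact that an isomorphism carrying $r_1$ onto $r_2$ must carry the unique initial point of $r_1$ to the unique initial point of $r_2$, which follows from the characterization of the initial point as the only point not strictly between two others on the ray. A secondary subtlety is confirming that the connectedness hypothesis is what guarantees the maximal rays through $B$ and $D$ exist and behave well; but given the earlier development this is immediate, and I expect the whole argument to be short once the reduction to a common ray is made.
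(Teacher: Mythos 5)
Your proof is correct and follows essentially the same route as the paper: both arguments pick an isomorphism in $\mathcal{I}$ carrying $ray[A,B]$ onto $ray[C,D]$ and then conclude by the linear ordering of segments with a common endpoint inside a single ray, with your second case (applying $f^{-1}$) matching the paper's observation that the image of $s[A,B]$ containing $s[C,D]$ gives $|CD|\leq |AB|$. The only difference is cosmetic: the paper disposes of the degenerate cases $A=B$ or $C=D$ explicitly at the start, while you fold them into the general argument.
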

\begin{proof}
It is clearly so if $A=B$ or $C=D$, so assume $A\ne B$ and $C\ne D$.
Pick an isomorphism $\rho$ sending the ray $ray[A,B]$ onto the ray $ray[C,D]$.
Either the image of $s[A,B]$ is contained in $s[C,D]$ and $|AB|\leq |CD|$
or the image of $s[A,B]$ contains $s[C,D]$ and $|AB|\ge |CD|$.
\end{proof}

\subsection{Algebra of lengths of line segments}

\begin{Definition}\label{SumOfLengthsDef}
Let $(\Pi,\mathcal{I})$ be a space with rigid motions. The length of a line segment $s[A,B]$
is equal to the \textbf{sum of lengths} $|CD|$ and $|EF|$ if there is a point $M$ in $s[A,B]$
such that $|AM|=|CD|$ and $|MB|=|EF|$.
\end{Definition}

\begin{Lemma}
Let $(\Pi,\mathcal{I})$ be a space with rigid motions.
If the length of a connected line segment $s[A,B]$
is equal to the sum of lengths $|CD|$ and $|EF|$ and the length of a connected line segment $s[A',B']$
is equal to the sum of lengths $|EF|$ and $|CD|$, then $|AB|=|A'B'|$.
\end{Lemma}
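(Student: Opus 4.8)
The plan is to prove directly that $s[A,B]$ is congruent to $s[A',B']$, which by the preceding lemma is exactly the assertion $|AB|=|A'B'|$. Unwinding the definition of sum of lengths, the hypotheses give a point $M\in s[A,B]$ with $|AM|=|CD|$ and $|MB|=|EF|$, and a point $M'\in s[A',B']$ with $|A'M'|=|EF|$ and $|M'B'|=|CD|$. The crucial observation is that the two split patterns agree once one reverses orientation: reading the second segment from the endpoint $B'$, i.e.\ as $s[B',A']$, the piece $s[B',M']$ plays the role of $|CD|$ and the piece $s[M',A']$ plays the role of $|EF|$, so $s[A,B]$ and $s[B',A']$ carry the \emph{same} split pattern ($|CD|$ then $|EF|$) relative to their initial endpoints $A$ and $B'$. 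Since $s[A',B']=s[B',A']$ as sets, it therefore suffices to produce a single $\tau\in\mathcal{I}$ with $\tau(s[A,B])=s[B',A']$.

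First I would isolate the rigidity fact used twice below: if two closed line segments $s[P,X]$ and $s[P,Y]$ lie on a common ray emanating from $P$ and are congruent, then $X=Y$. Indeed, the order along the ray is linear by \ref{OrderingOfRays}, so one of these segments is contained in the other; if they were distinct, one would be a \emph{proper} subsegment of the other, and proper subsegments are never congruent (the remark following the definition of congruence, which rests on rigidity). Connectedness of $s[A,B]$ and $s[A',B']$ is what allows $ray[A,B]$ and $ray[B',A']$ to be defined at all. Now invoke isotropy (\ref{IsotropicSpaceDef}) to choose $\tau\in\mathcal{I}$ with $\tau(ray[A,B])=ray[B',A']$. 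As $\tau$ restricts to an isomorphism of rays and isomorphisms carry initial point to initial point, $\tau(A)=B'$ and $\tau$ respects the positive orders. Then $\tau(s[A,M])=s[B',\tau(M)]$ is congruent to $s[A,M]\cong s[C,D]\cong s[B',M']$, and both $s[B',\tau(M)]$ and $s[B',M']$ emanate from $B'$ along $ray[B',A']$, so the rigidity fact gives $\tau(M)=M'$. Likewise $\tau(s[M,B])=s[M',\tau(B)]$ is congruent to $s[M,B]\cong s[E,F]\cong s[M',A']$, both segments emanating from $M'$ toward $A'$ (because $\tau$ preserves the order), whence $\tau(B)=A'$. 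Consequently $\tau(s[A,B])=s[B',A']=s[A',B']$, establishing congruence.

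I expect the only real care to lie in the ordering bookkeeping of the second application of the rigidity fact, namely verifying that $\tau(B)$ and $A'$ lie on the \emph{same} side of $M'$ so that $s[M',\tau(B)]$ and $s[M',A']$ are genuinely nested subsegments of one ray. This is forced by the fact that $\tau$ sends initial point to initial point and hence preserves the positive order throughout, so $B'=\tau(A)<\tau(M)<\tau(B)$ matches $B'<M'<A'$. The conceptual content is simply that reversing the orientation of $s[A',B']$ interchanges the two summands, which is precisely what makes commutativity of length addition emerge.
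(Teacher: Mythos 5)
Your proof is correct and takes essentially the same route as the paper's own argument: the paper likewise picks $M$ and $M'$ with $|AM|=|B'M'|=|CD|$ and $|BM|=|A'M'|=|EF|$, chooses $\tau\in\mathcal{I}$ sending $ray[A,B]$ onto $ray[B',A']$, and concludes $\tau(M)=M'$ and then $\tau(B)=A'$ from the fact that no segment is congruent to a proper subsegment of itself. Your extra bookkeeping (the explicit rigidity fact for nested segments on a common ray, and the order-preservation check) just spells out what the paper's terse "by the same reason" leaves implicit.
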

\begin{proof}
Pick $M\in s[A,B]$ and $M'\in s[A',B']$ satisfying
$|AM|=|B'M'|=|CD|$ and $|BM|=|A'M'|=|EF|$. Choose the isomorphism
$\tau$ sending $ray[A,B]$ onto $ray[B',A']$. Notice $\tau(M)=M'$ as
$|B'M'|=|B'\tau(M)|$. By the same reason $\tau(B)=A'$ resulting in $|AB|=|A'B'|$.
\end{proof}

\begin{Lemma}\label{AddingLengthsLemma}
Let $(\Pi,\mathcal{I})$ be a space with rigid motions.
If $|AB|\leq |CD|$, $|A'B'|\leq |C'D'|$, and $|AB|+|A'B'|$ exists for connected line segments, then $|AB|+|A'B'|\leq |CD|+|C'D'|$.
If, in addition, $|AB|+|A'B'|=|CD|+|C'D'|$, then $|AB|= |CD|$ and $|A'B'|= |C'D'|$
\end{Lemma}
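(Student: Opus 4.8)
The plan is to realize both sums as concrete connected line segments and then compare them by transporting one onto the other with a single element of $\mathcal{I}$. By \ref{SumOfLengthsDef} together with the preceding lemma (which shows such a sum is well defined up to congruence), I may fix a connected segment $s[P,Q]$ with a point $M$ satisfying $|PM|=|AB|$ and $|MQ|=|A'B'|$, so that $|PQ|=|AB|+|A'B'|$, and likewise a connected segment $s[R,S]$ with a point $N$ satisfying $|RN|=|CD|$, $|NS|=|C'D'|$, and $|RS|=|CD|+|C'D'|$. The goal then becomes $|PQ|\le|RS|$, with equality forcing $|PM|=|RN|$ and $|MQ|=|NS|$.

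The key technical device is an order--length dictionary along a single ray: if $X,Y$ lie on a maximal ray emanating from a point $O$, then $|OX|\le|OY|$ if and only if $X\in s[O,Y]$. One direction is immediate, since $s[O,X]\subset s[O,Y]$ is witnessed by the identity; for the converse I use that the ray is linearly ordered (\ref{OrderingOfRays}), so if $X\notin s[O,Y]$ then $s[O,Y]\subsetneq s[O,X]$, and any $\tau\in\mathcal{I}$ with $\tau(s[O,X])\subset s[O,Y]$ would give $\tau(s[O,X])\subsetneq s[O,X]$, contradicting rigidity (\ref{SpaceWithRigidMotionsDef}); equivalently, a proper subsegment is never congruent to the whole.

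Next I use isotropy (\ref{IsotropicSpaceDef}) to pick $\rho\in\mathcal{I}$ with $\rho(ray[P,Q])=ray[R,S]$. Since $\rho$ preserves betweenness it sends the initial point $P$ to $R$, and since it preserves closed line segments it preserves lengths, so $\rho(M),\rho(Q)$ lie on $ray[R,S]$ with $|R\rho(M)|=|PM|$ and $|\rho(M)\rho(Q)|=|MQ|$. From $|PM|=|AB|\le|CD|=|RN|$ the dictionary gives $\rho(M)\in s[R,N]$, so that $\rho(M)\le N\le S$ on the ray; then $s[N,S]\subset s[\rho(M),S]$ yields $|NS|\le|\rho(M)S|$, and chaining $|\rho(M)\rho(Q)|=|MQ|\le|NS|\le|\rho(M)S|$ through transitivity (\ref{LengthInequalityTransitivityLine}) and applying the dictionary on the ray from $\rho(M)$ gives $\rho(Q)\in s[\rho(M),S]\subset s[R,S]$. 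Hence $\rho(s[P,Q])=s[R,\rho(Q)]\subset s[R,S]$, that is, $|PQ|\le|RS|$.

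For the equality statement I argue contrapositively. If $|AB|<|CD|$ then $|R\rho(M)|<|RN|$, so $\rho(M)$ sits strictly below $N$ and $s[N,S]\subsetneq s[\rho(M),S]$, whence $|\rho(M)\rho(Q)|\le|NS|<|\rho(M)S|$; since $\rho(Q)\in s[\rho(M),S]$, the dictionary forces $\rho(Q)\ne S$, so $s[R,\rho(Q)]\subsetneq s[R,S]$ and $|PQ|<|RS|$. Thus equality of the sums forces $|AB|=|CD|$, after which $\rho(M)=N$ and (using $|PQ|=|RS|$) $\rho(Q)=S$, giving $|MQ|=|\rho(M)\rho(Q)|=|NS|$, i.e. $|A'B'|=|C'D'|$. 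I expect the main obstacle to be precisely the order--length dictionary: everything hinges on converting length comparisons into the ray's intrinsic order, the only leverage being rigidity together with the non-congruence of proper subsegments, and the care lies in tracking which points lie on which ray so that all comparisons really take place on the common ray after transport by $\rho$.
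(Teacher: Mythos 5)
Your proof is correct and is essentially the paper's own argument: the paper disposes of this lemma with "similar to the one above," meaning the same device you use — realize each sum as a connected segment with its decomposition point, transport one onto the other by an isomorphism carrying $ray[P,Q]$ onto $ray[R,S]$, and locate the image points by comparing their lengths from the common initial point. Your only addition is to spell out the order--length dictionary along a ray (via rigidity and non-congruence of proper subsegments), which the paper uses implicitly when it writes "$\tau(M)=M'$ as $|B'M'|=|B'\tau(M)|$."
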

\begin{proof}
Similar to the one above.
\end{proof}

\begin{Definition}\label{DifferenceOfLengthsDef}
Let $(\Pi,\mathcal{I})$ be a space with rigid motions. The length of a line segment $s[A,B]$
is equal to the \textbf{difference of lengths} $|CD|$ and $|EF|$ if $|AB|+|EF|=|CD|$.
\end{Definition}

\begin{Lemma}\label{ExistenceUniquenessOfDifference}
Let $(\Pi,\mathcal{I})$ be a space with rigid motions.
For every two connected lengths $|AB|\ge |CD|$ there is a line segment whose length equals the difference of lengths and its length is unique.
\end{Lemma}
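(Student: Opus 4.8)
The plan is to realize the difference concretely as a subsegment of $s[A,B]$, obtained by laying off a congruent copy of $s[C,D]$ from the endpoint $B$, and then to read off uniqueness from the cancellation property already packaged in \ref{AddingLengthsLemma}. So the work splits cleanly into an existence construction and a short uniqueness argument.

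For existence I would first invoke isotropy (\ref{IsotropicSpaceDef}) to choose $\tau\in\mathcal{I}$ carrying the maximal ray $ray[C,D]$ onto the maximal ray $ray[B,A]$. Since $\tau$ is an isomorphism it preserves the linear order on a ray described in \ref{OrderingOfRays}, and in particular sends the minimum (initial point) $C$ to the minimum $B$; hence $\tau(s[C,D])=s[B,M]$ where $M:=\tau(D)\in ray[B,A]$, so that $|MB|=|CD|$ by construction. The decisive step is to check $M\in s[A,B]$. Because $M$ and $A$ both lie on $ray[B,A]$, the order of \ref{OrderingOfRays} makes them comparable, so either $s[B,M]\subseteq s[B,A]$ or $s[B,A]\subsetneq s[B,M]$. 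The latter is impossible: a proper subsegment is never congruent to the whole, so it would force $|BA|<|BM|=|CD|$, contradicting the hypothesis $|CD|\leq|AB|$. Thus $s[B,M]\subseteq s[B,A]$ and $M\in s[A,B]$. With $M\in s[A,B]$ and $|MB|=|CD|$, Definition \ref{SumOfLengthsDef} shows that $|AB|$ is the sum of $|AM|$ and $|CD|$, i.e. $|AM|+|CD|=|AB|$; and the segment $s[A,M]$ is connected by \ref{ConnectednessOfSubsegments}. By \ref{DifferenceOfLengthsDef} this $s[A,M]$ is exactly a connected segment whose length equals the difference of $|AB|$ and $|CD|$.

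For uniqueness, suppose two connected segments $s[X,Y]$ and $s[X',Y']$ both realize the difference, so that $|XY|+|CD|=|AB|=|X'Y'|+|CD|$. By the comparability of connected lengths established earlier in the Length subsection we may assume $|XY|\leq|X'Y'|$, and trivially $|CD|\leq|CD|$. Applying the ``in addition'' clause of \ref{AddingLengthsLemma} to the equality $|XY|+|CD|=|X'Y'|+|CD|$ then yields $|XY|=|X'Y'|$, that is, the two difference segments are congruent. This is precisely the asserted uniqueness of the difference.

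The hard part — and the only genuinely geometric point — is the verification in the existence step that the aligned copy $s[B,M]$ of $s[C,D]$ does not overshoot $s[A,B]$ along the ray $ray[B,A]$; this is exactly where the hypothesis $|CD|\leq|AB|$ is consumed, together with the fact that a proper subsegment is never congruent to the whole. Everything else is bookkeeping with the linear order on rays from \ref{OrderingOfRays}, the additive definitions \ref{SumOfLengthsDef} and \ref{DifferenceOfLengthsDef}, and the cancellation law \ref{AddingLengthsLemma}.
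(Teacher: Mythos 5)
Your proof is correct and takes essentially the same approach as the paper: the paper's one-line proof sends $ray[C,D]$ onto $ray[A,B]$ by some $\tau\in\mathcal{I}$ and declares $|\tau(D)B|$ to be the difference, which is just the mirror image of your construction laying the congruent copy off from $B$ instead of $A$. The extra details you supply --- checking that the transported endpoint actually lands inside $s[A,B]$ (using that a proper subsegment is never congruent to the whole), and deriving uniqueness from the cancellation clause of \ref{AddingLengthsLemma} --- are precisely what the paper leaves implicit.
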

\begin{proof}
Send $ray[C,D]$ by $\tau\in \mathcal{I}$ onto $ray[A,B]$.
$|\tau(D)B|$ is the desired difference of lengths.
\end{proof}

\subsection{Dividing of lengths of line segments}

\begin{Definition}
Let $(\Pi,\mathcal{I})$ be a space with rigid motions.
Given two line segments $s[A,B]$ and $s[C,D]$ of an Erlangen line we say the \textbf{length} of 
$s[A,B]$ is \textbf{twice the length} of $s[C,D]$ (notation: $|AB|=2\cdot |CD|$) if
$s[A,B]$ contains a point $M$ such that $|AM|=|MB|=|CD|$.
\end{Definition}

\begin{Lemma}\label{MidpointLemma}
Let $(\Pi,\mathcal{I})$ be a space with rigid motions. For every connected line segment $s[A,B]$ there
is exactly one midpoint of $s[A,B]$, i.e. a point $M\in s[A,B]$ satisfying
$|AM|=|MB|$.
\end{Lemma}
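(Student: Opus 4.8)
The plan is to realize the midpoint as the fixed point of a rigid motion that interchanges the endpoints $A$ and $B$, and then to obtain uniqueness from the strict monotonicity of length under proper inclusion of segments. We may assume $A\ne B$, since otherwise $s[A,B]=\{A\}$ and $M=A$ is trivially the unique midpoint.

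First I would invoke isotropy (\ref{IsotropicSpaceDef}) to choose $f\in\mathcal{I}$ carrying the maximal ray $ray[A,B]$ onto the maximal ray $ray[B,A]$. Since an isomorphism sends the initial point of a ray to the initial point of its image, this already forces $f(A)=B$. The crux of the argument — and the step I expect to be the main obstacle — is to upgrade this to a genuine reflection, i.e. to show $f(B)=A$ and $f(s[A,B])=s[A,B]$. Here the rigidity hypothesis (\ref{SpaceWithRigidMotionsDef}) does the essential work. Because $s[A,B]\subset ray[A,B]$, the image $f(s[A,B])=s[f(A),f(B)]=s[B,f(B)]$ lies in $ray[B,A]$, and comparing it with $s[A,B]=s[B,A]$ along the order on $ray[B,A]$ having $B$ as least element splits into two cases. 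If $f(B)$ does not lie beyond $A$, then $f(s[A,B])\subseteq s[A,B]$, so rigidity upgrades this inclusion to an equality and forces $f(B)=A$. If instead $f(B)$ lay strictly beyond $A$, then $f(s[A,B])\supsetneq s[A,B]$, whence $f^{-1}(s[A,B])\subsetneq s[A,B]$; applying rigidity to $f^{-1}\in\mathcal{I}$ would force this proper inclusion to be an equality, a contradiction. Thus the second case is impossible, and $f$ restricts to a one-to-one self-map of $s[A,B]$ that interchanges $A$ and $B$.

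With this reflection in hand, existence is immediate. The restriction $f|_{s[A,B]}$ is a one-to-one self-map of $s[A,B]$ preserving closed line segments, and $s[A,B]$ is connected by hypothesis, so the Fixed Point Theorem (\ref{FixedPointTheorem}) produces $M\in s[A,B]$ with $f(M)=M$. Then $f(s[A,M])=s[f(A),f(M)]=s[B,M]=s[M,B]$, so $f\in\mathcal{I}$ carries $s[A,M]$ onto $s[M,B]$; hence $|AM|=|MB|$ and $M$ is a midpoint.

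For uniqueness I would argue from monotonicity of length. Any midpoint must be interior, for $M=A$ would give $|AB|=|MB|=|AM|$ with $s[A,M]=\{A\}$, forcing $A=B$. Suppose $M$ and $M'$ were distinct midpoints; using the linear order on $s[A,B]$ with $A$ as least element, we may assume $M<M'$. Then $s[A,M]\subsetneq s[A,M']$ and $s[M',B]\subsetneq s[M,B]$, and since a segment is never congruent to a proper subsegment (the remark following the definition of congruence), these proper inclusions yield $|AM|<|AM'|$ and $|M'B|<|MB|$. Chaining the two midpoint equalities then gives $|AM|<|AM'|=|M'B|<|MB|=|AM|$, a contradiction. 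Hence $M=M'$, which completes the proof.
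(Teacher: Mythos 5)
Your proof is correct and follows essentially the same route as the paper: by isotropy pick an isomorphism carrying $ray[A,B]$ onto $ray[B,A]$, show it maps $s[A,B]$ onto itself, and extract the midpoint as a fixed point via the Fixed Point Theorem \ref{FixedPointTheorem}. In fact you carefully justify a step the paper merely asserts, namely that the chosen isomorphism sends $s[A,B]$ onto itself (i.e.\ $f(B)=A$), using rigidity applied to $f$ and $f^{-1}$. The only divergence is in uniqueness: the paper invokes the cancellation property of sums of lengths (\ref{AddingLengthsLemma}), whereas you use the fact that a segment is never congruent to a proper subsegment; both are immediate consequences of rigidity, so the difference is cosmetic.
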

\begin{proof}
Obvious if $A=B$. First, let's concentrate on the existence of $M$.  
Pick an isomorphism $\rho$ sending $ray[A,B]$ onto $ray[B,A]$. It sends $s[A,B]$ onto itself, so it has a fixed point $M$ which must be in $s(A,B)$.

Uniqueness of $M$ follows from \ref{AddingLengthsLemma}.
\end{proof}

\begin{Definition}
Let $(\Pi,\mathcal{I})$ be a space with rigid motions.
Let $n > 0$ be an integer.
Given two line segments $s[A,B]$ and $s[C,D]$ of an Erlangen line we say the \textbf{length} of 
$s[A,B]$ is \textbf{$n$ times the length} of $s[C,D]$ (notation: $|AB|=n\cdot |CD|$) if
$s[A,B]$ contains points $C_0,\ldots, C_n$ such that $C_0=A$, $C_n=B$,
$|C_iC_{i+1}|=|CD|$ for all $0\leq i\leq n-1$, and the line segments $s[C_i,C_{i+1}]$,
$s[C_j,C_{j+1}]$ have at most one common point if $i\ne j$.
\end{Definition}

\begin{Lemma}\label{DividingLemma}
Let $(\Pi,\mathcal{I})$ be a space with rigid motions and let $n > 1$ be a natural number.
For each connected length $L$ of line segments there is a unique length $L_n$ of line segments
such that $n\cdot L_n=L$.
\end{Lemma}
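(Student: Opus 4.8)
The plan is to separate uniqueness, which is immediate, from existence, which carries all the content. Throughout assume $A\neq B$ (the case $A=B$ gives $L_n=0$) and represent $L$ by a connected segment $s[A,B]$, oriented as in the subsection on orienting lines so that it becomes linearly ordered from $A$ to $B$; recall from \ref{SumOfLengthsDef} that integer multiples of lengths are well defined on congruence classes, and from \ref{AddingLengthsLemma} that addition of lengths is monotone with equality of sums forcing equality of summands. For uniqueness, suppose $n\cdot L_n=n\cdot L_n'=L$. Writing each multiple as a sum of $n$ equal summands and using the strictness clause of \ref{AddingLengthsLemma} (grouping the $n$ summands), the equality of the two sums forces $L_n=L_n'$.

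For existence I would run a Dedekind-style cut inside the connected segment $s[A,B]$. Call $X\in s[A,B]$ \emph{fitting} if $n$ consecutive congruent copies of $s[A,X]$ can be placed inside $s[A,B]$, that is, there are points $A=X_0<X_1<\cdots<X_n\le B$ with $|X_{i-1}X_i|=|AX|$ and consecutive copies meeting only at endpoints; laying off the next copy is possible whenever the remaining room exceeds the copy length, using an $f\in\mathcal I$ that flips $ray[X_i,A]$ onto $ray[X_i,B]$ (isotropy, \ref{IsotropicSpaceDef}). Let $U$ be the set of fitting points and $V=s[A,B]\setminus U$. Monotonicity of addition shows $U$ is convex, downward closed and contains $A$, while $V$ is convex, upward closed and contains $B$ (since $n>1$ prevents $B$ from being fitting). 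By connectedness of $s[A,B]$ (\ref{ConvConnectednessDef}) exactly one of $U,V$ is a closed segment, which produces a cut point $C$, namely $\max U$ or $\min V$. I claim $n\cdot|AC|=|AB|$, so that $L_n:=|AC|$ works and $X_1,\dots,X_{n-1}$ are the required division points.

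The main obstacle is proving this exactness, that is, excluding a strict inequality at the cut. The crucial device is that I only need division by powers of $2$, available from the Midpoint Lemma \ref{MidpointLemma}, and not by $n$ itself, so there is no circularity. Suppose first $C\in U$ and $D:=X_n(C)$ satisfies $|DB|>0$. Bisecting $s[D,B]$ repeatedly $k$ times with $2^k\ge n$ yields a positive length $\epsilon$ with $2^k\epsilon=|DB|$, hence $n\epsilon\le|DB|$; taking $C''>C$ with $|CC''|=\epsilon$ grows each of the $n$ copies by $\epsilon$, pushing the final endpoint exactly $n\epsilon$ beyond $D$, so it stays $\le B$ and $C''$ is fitting, contradicting $C=\max U$. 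Thus $|DB|=0$ and $n\cdot|AC|=|AB|$.

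It remains to rule out the other case, in which $V$ is the closed piece and $C=\min V$. Let $m<n$ be the largest number of copies of $s[A,C]$ that fit in $s[A,B]$; then the $(m+1)$st copy overflows, so $\eta:=|AC|-|X_m(C)B|>0$ is a genuine length (\ref{ExistenceUniquenessOfDifference}). Choosing $\epsilon>0$ with $n\epsilon<\eta$ by the same bisection, and taking $C'<C$ with $|C'C|=\epsilon$, the first $m$ copies of $s[A,C']$ still fit, while the gap before the $(m+1)$st copy becomes $\eta-(m+1)\epsilon\ge\eta-n\epsilon>0$, so that copy still overflows; hence at most $m<n$ copies of $s[A,C']$ fit and $C'\in V$, contradicting $C=\min V$. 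Therefore the cut is exact, $L_n=|AC|$ is the desired length, and it is unique by the first paragraph. I expect the delicate bookkeeping in this final exactness step, keeping the small length $\epsilon$ independent of $n$ via repeated bisection, to be the only real difficulty; the rest is monotonicity and the algebra of lengths already established.
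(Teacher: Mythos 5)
Your proof is correct and takes essentially the same route as the paper's: uniqueness via \ref{AddingLengthsLemma}, and existence via a connectedness (Dedekind-cut) argument inside $s[A,B]$ in which the Midpoint Lemma \ref{MidpointLemma} supplies, by repeated bisection, a length $\epsilon$ with $n\cdot\epsilon$ below any prescribed length, and distributivity of integer multiplication over addition does the arithmetic. The paper packages this as showing that neither strict-inequality set $S_1=\{X\mid n\cdot |AX| < |AB|\}$ nor $S_2=\{X\mid n\cdot |AX| > |AB|\}$ is a closed segment, so their union misses a point, whereas you locate the cut point of the fitting/non-fitting partition and prove exactness there; the underlying perturbation arguments are identical.
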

\begin{proof}
Uniqueness of $L_n$ follows from \ref{AddingLengthsLemma}. Existence of $L_n$ for $n$ being a power of two follows from \ref{MidpointLemma}.
Given $A\ne B$ consider two subsets of $s[A,B]$:
$S_1=\{X\in s[A,B]\mid n\cdot |AX| < |AB|\}$ and 
$S_2=\{X\in s[A,B]\mid n\cdot |AX| > |AB|\}$. Both are convex, non-empty and neither can be a closed line segment (see below). Therefore their union is not the whole $s[A,B]$ and the point $X$ outside of that union satisfies $n\cdot |AX|=|AB|$.

$S_1$ is not empty because we can pick an integer $k$ with $m=2^k > n$.
Now, the point $X\in s[A,B]$ satisfying $m\cdot |AX|=|AB|$ belongs to $S_1$.
For the same reason $S_1$ is not a closed interval: if $n\cdot |AX| < |AB|$
we can find $Y\in s[X,B]$ satisfying $m\cdot |XY|=|XB|$ and observe
$n\cdot |AY| < |AB|$. The last observation hinges on the distributivity of
multiplication with respect to addition: $n\cdot(|XY|+|YZ|)=n\cdot |XY|+n\cdot |YZ|$ if $Y\in s[X,Z]$, which is easy to prove.
\end{proof}

\section{Erlangen lines}

\begin{Definition}\label{ErlangenLineDef}
An \textbf{Erlangen line} is a pair $(l,\mathcal{I})$ 
 satisfying the following conditions:\\
a. $l$ is a connected line,\\
b. $\mathcal{I}$ is a subgroup of the group of isomorphisms of $l$ satisfying Condition \ref{Homogeneity and rigidity of the line}.
\end{Definition}

\begin{Condition}[Homogeneity and rigidity of the Erlangen line]\label{Homogeneity and rigidity of the line}
For every ordered pair of maximal rays in $l$ there is exactly one isomorphism in $\mathcal{I}$ sending the first ray onto the other.
\end{Condition}

Condition \ref{Homogeneity and rigidity of the line} has two interpretations: it corresponds to the basic tenet of physics that independent observers should arrive at the same measurement. Namely, the group $\mathcal{I}$ is used to make measurements. From the point of view of geometry Condition
\ref{Homogeneity and rigidity of the line} means uniqueness of congruence for maximal rays.

\begin{Example}[The real line]\label{The real line}
Let $l$ be the set of reals with the standard relation of betweenness and let $\mathcal{I}$ be the set of all functions of the form $f(x)=m\cdot x+b$, where $m=\pm 1$.

The pair $(l,\mathcal{I})$ is an Erlangen line.
\end{Example}

Our next three examples of Erlangen lines will help us understand three models of hyperbolic geometry later on.

\begin{Example}[The hyperbolic line I]\label{The hyperbolic line I}
Let $l=(0,\infty)$ with the standard relation of betweenness and let $\mathcal{I}$ be the set of all functions of the form $f(x)=c\cdot x^k$, where $c > 0$ and $k=\pm 1$.
\end{Example}

\begin{Example}[The hyperbolic line II]\label{The hyperbolic line II}
Let $l=(-1,1)$ with the standard relation of betweenness and let $\mathcal{I}$ be the set of all bijections $f: (-1,1) \rightarrow(-1,1)$ that can be expressed as
 $f(x)=\frac{a\cdot x+b}{c\cdot x+d}$ for some real numbers $a$, $b$, $c$, and $d$. Equivalently, $f$ is the restriction of a rational function such that $f(0)\in (-1,1)$ and either $f(1)=1$ and $f(-1)=-1$ or $f(1)=-1$ and $f(-1)=1$.

Indeed, it suffices to show two facts since $\mathcal{I}$ is obviously a subgroup of bijections of $(-1,1)$:\\
1. for every $w\in l$ there is a unique $f\in\mathcal{I}$ such that $f(0)=w$ and $f(1)=1$.\\
2. for every $w\in l$ there is a unique $f\in\mathcal{I}$ such that $f(0)=w$ and $f(1)=-1$.\\

Fact 2. follows from 1.: given $f, g\in \mathcal{I}$ satisfying
$f(0)=w$, $f(1)=-1$, $g(0)=w$, and $g(1)=-1$, the functions $x\to f(-x)$ and $x\to g(-x)$
fix $1$ and send $0$ to $w$, hence must be equal.

To show Fact 1. observe that $f(x)=\frac{a\cdot x+b}{c\cdot x+d}$ must satisfy
$a+b=c+d$, $-a+b=-(-c+d)$, and $w=b/d$. Therefore $b=wd$, $b=c$, and $a=d$
resulting in $f(x)=\frac{d\cdot x+wd}{wd\cdot x+d}= \frac{x+w}{w\cdot x+1}$.
\end{Example}

\begin{Example}[The hyperbolic line III]\label{The hyperbolic line III}
Let $l$ be the the upper part of the hyperbola $\{(t,x)\mid t^2-x^2=1\}$ (i.e. $t > 0$) and let $\mathcal{I}$ be the orthochronous group $O^+(1,1)$
of Lorentz transformations. It is the group of linear transformations of the Descartes plane $\mathbb{R}^2$ preserving the quadratic form $t^2-x^2$ and preserving the orientation of $t$.

The betweenness of $l$ determined by the order on $x$. 

The element of $\mathcal{I}$ are related to the well-known \textbf{boosts} in the $x$-direction from special relativity. Such boosts are given by the formulae (we are using a unit system in which the speed of light is $1$):
$$t'=\lambda(t-v\cdot x), x'=\lambda(x-v\cdot t)$$
where $(t',x')=h(t,x)$, $v$ is a real constant of absolute value less than $1$,
and $\lambda=\frac{1}{\sqrt{1-v^2}}$.

Analyzing linear transformations $h(t,x)=(a\cdot t+ b\cdot x, c\cdot x+d\cdot t)$
preserving the quadratic form $t^2-x^2$ and preserving the orientation of $t$
leads to $a > 0$, $c=\pm a$, $d=\pm b$, and $a^2-b^2=1$. Thus, $\mathcal{I}$
consists of linear transformations
$$ h(t,x)=(a\cdot t+ b\cdot x, \pm (a\cdot x+ b\cdot t))$$
where $a > 0$ and $a^2-b^2=1$.

To show $(l,\mathcal{I})$ is an Erlangen line it suffices to prove the following two facts:\\
1. for every $w\in l$ there is a unique $f\in\mathcal{I}$ such that $f(1,0)=w$ and $f$ preserves the orientation of $l$.\\
2. for every $w\in l$ there is a unique $f\in\mathcal{I}$ such that $f(1,0)=w$ and $f$ reverses the orientation of $l$.

If $w=(a,b)$, then $f$ in 1. must be $f(t,x)=(a\cdot t+ b\cdot x, a\cdot x+b\cdot t)$
and $f$ in 2. must be $f(t,x)=(a\cdot t- b\cdot x, -a\cdot x+b\cdot t)$.

Notice that boosts from special relativity correspond to our translations (see \ref{TranslationDefLine} later on).

\end{Example}

\begin{Proposition}\label{InvolutionAndIdLines}
Let $(l,\mathcal{I})$ be an Erlangen line.
An isomorphism $\tau$ in $\mathcal{I}$ that has a fixed point is an \textbf{involution}, i.e. $\tau^2$ is the identity. 
An isomorphism in $\mathcal{I}$ that has two fixed points is the identity. 
\end{Proposition}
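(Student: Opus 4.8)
The plan is to exploit Condition \ref{Homogeneity and rigidity of the line} in the following sharpened form, which I would record first as the central observation: if some $\tau\in\mathcal{I}$ maps a maximal ray $r$ of $l$ onto itself, then $\tau$ is the identity. Indeed, the identity map belongs to $\mathcal{I}$ and sends $r$ to $r$, so by the uniqueness clause of Condition \ref{Homogeneity and rigidity of the line} applied to the ordered pair $(r,r)$, $\tau$ must coincide with the identity. Everything else then reduces to producing, from the fixed-point hypotheses, a maximal ray that $\tau$ fixes setwise. I would also recall the basic geometry of a connected line from \ref{LineDef}: for each point $A$ the complement $l\setminus\{A\}$ splits into exactly two convex components, giving exactly two (antipodal) maximal rays emanating from $A$; and since $\tau$ is an isomorphism, both $\tau$ and $\tau^{-1}$ preserve betweenness by \ref{BijectionPreservingBetweennessProp}, so $\tau$ carries maximal rays to maximal rays, sending a ray emanating from $A$ to one emanating from $\tau(A)$.

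For the involution statement, suppose $\tau(A)=A$. Then $\tau$ permutes the two maximal rays emanating from $A$, so either it fixes each of them setwise or it interchanges them. In the first case the central observation gives $\tau=\mathrm{id}$, which is trivially an involution. In the second case $\tau^2$ fixes both rays setwise, whence $\tau^2=\mathrm{id}$ by the central observation. Either way $\tau$ is an involution.

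For the second statement, suppose $\tau(A)=A$ and $\tau(B)=B$ with $A\ne B$. The ray $ray[A,B]$ is the unique maximal ray emanating from $A$ that contains $B$, namely the union of $\{A\}$ with the component of $l\setminus\{A\}$ containing $B$. Since $\tau$ fixes both $A$ and $B$ and carries maximal rays at $A$ to maximal rays at $A$, it must send $ray[A,B]$ to a maximal ray at $A$ containing $B$, i.e. to $ray[A,B]$ itself. The central observation then forces $\tau=\mathrm{id}$.

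The only delicate point, and the step I would be most careful about, is the claim that an isomorphism fixing $A$ genuinely permutes the two maximal rays at $A$ rather than mixing their points; this rests on the fact that $\tau$ restricts to a betweenness-preserving bijection of $l\setminus\{A\}$ that carries each convex component onto a convex component, together with the fact that there are exactly two such components. Once that structural fact is secured, the remainder is purely bookkeeping with Condition \ref{Homogeneity and rigidity of the line}.
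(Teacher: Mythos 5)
Your proof is correct and follows essentially the same route as the paper's: both arguments reduce everything to the uniqueness clause of Condition \ref{Homogeneity and rigidity of the line} (comparing $\tau$ or $\tau^2$ with the identity on a maximal ray it preserves), with the two-fixed-point case handled via $ray[A,B]$. Your explicit statement of the ``central observation'' and the remark about $\tau$ permuting the two rays at a fixed point merely spell out steps the paper leaves implicit.
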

\begin{proof}
If $\tau$ has a fixed point $A$, then it either preserves the rays emanating from $A$ (in which case $\tau=id$ as $id$ also preserves both rays emanating from $A$) or it permutes the rays. In the latter case $\tau^2$ preserves both rays emanating from $A$ and $\tau^2=id$.

If $\tau$ has two fixed points $A$ and $B$, then $\tau$ preserves $ray[A,B]$, so $\tau=id$.
\end{proof}

\begin{Corollary}\label{ErlangenLinesViaRigidMotionsCor}
a. Every Erlangen line $(l,\mathcal{I})$ is a space with rigid motions.\\
b. Every space with rigid motions $(l,\mathcal{I})$ is an Erlangen line if $l$ is a connected line.
\end{Corollary}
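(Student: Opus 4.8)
\textbf{Proof plan for Corollary \ref{ErlangenLinesViaRigidMotionsCor}.}

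The plan is to establish both directions by reducing to the characterization of rigid motions via maximal rays given in Proposition \ref{RigidMotionsViaRaysProp}. For part a., I would start with an Erlangen line $(l,\mathcal{I})$ and first observe that it is an isotropic space in the sense of \ref{IsotropicSpaceDef}: the isotropy condition (for any two maximal rays there is $f\in\mathcal{I}$ carrying one to the other) is immediate from Condition \ref{Homogeneity and rigidity of the line}, which asserts the much stronger fact that such an $f$ exists and is unique. To conclude it is a space with rigid motions, I would invoke \ref{RigidMotionsViaRaysProp}: it suffices to show that if $f\in\mathcal{I}$ satisfies $f(r)=r$ for some maximal ray $r$, then $f|r$ is the identity. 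But $f(r)=r$ together with $f(r)=r$ (the identity also fixes $r$) means two elements of $\mathcal{I}$, namely $f$ and $\mathrm{id}$, send the ray $r$ to itself; by the uniqueness clause of Condition \ref{Homogeneity and rigidity of the line}, $f=\mathrm{id}$ on $l$, so in particular $f|r$ is the identity.

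For part b., I would begin with a space with rigid motions $(l,\mathcal{I})$ in which $l$ is a connected line, and verify Condition \ref{Homogeneity and rigidity of the line}. Existence of an isomorphism carrying a prescribed maximal ray to another is exactly the isotropy condition built into \ref{SpaceWithRigidMotionsDef}, so the only thing to prove is uniqueness. Suppose $f,g\in\mathcal{I}$ both send a maximal ray $r_1$ onto a maximal ray $r_2$. Then $g^{-1}f$ sends $r_1$ to itself, i.e. $(g^{-1}f)(r_1)=r_1$, and since $(l,\mathcal{I})$ is a space with rigid motions, \ref{RigidMotionsViaRaysProp} forces $g^{-1}f$ to be the identity on $r_1$. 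The remaining task is to upgrade this from an identity on the single ray $r_1$ to an identity on all of $l$, which is where the connectedness of the line and the structure of lines as Pasch spaces enter.

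The main obstacle I anticipate is precisely this upgrade step: showing that an isomorphism of a connected line fixing one maximal ray pointwise must be the identity on the whole line. Here I would use the fact that $r_1$ emanates from some point $O$ and fixes a second point $A\ne O$ of $l$; since $g^{-1}f$ fixes both $O$ and $A$, it preserves the antipodal ray $a(r_1)$ as well (a connected maximal ray has an antipodal one by the remark following \ref{AntipodalRaysDef}), and it fixes it pointwise by the same ray argument of \ref{RigidMotionsViaRaysProp} applied to $a(r_1)$. Because $l=r_1\cup a(r_1)$ for a line (each point separates $l$ into exactly two components forming antipodal rays, via \ref{LineDef} and \ref{LinesAsPaschSpaces}), fixing both rays pointwise yields $g^{-1}f=\mathrm{id}$, hence $f=g$. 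The delicate part is marshalling the ray/line facts cleanly so that the two fixed rays genuinely exhaust $l$; once that is in place, uniqueness follows and both directions are complete.
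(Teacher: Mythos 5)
Your proposal is correct, and it diverges from the paper in an instructive way on part a while essentially matching it on part b. For part a the paper argues straight from the definition: given $f(s[A,B])\subset s[A,B]$, the Fixed Point Theorem \ref{FixedPointTheorem} produces a fixed point, \ref{InvolutionAndIdLines} then gives $f^2=\mathrm{id}$, and \ref{NilpotentCaseOfRM} concludes. You instead route everything through \ref{RigidMotionsViaRaysProp} and discharge its hypothesis by comparing $f$ with the identity map under the uniqueness clause of Condition \ref{Homogeneity and rigidity of the line}; this is shorter at the point of use (the fixed-point machinery is hidden inside the proof of \ref{RigidMotionsViaRaysProp}), and both arguments ultimately rest on the same two pillars, namely the Fixed Point Theorem and uniqueness of congruence of rays. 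For part b your skeleton is the paper's: existence of the required isomorphism comes from isotropy, and uniqueness reduces to showing that $h\in\mathcal{I}$ with $h(r)=r$ for a maximal ray $r$ must be the identity. Here your writeup is actually more complete than the paper's: the paper's proof verifies $h(B)=B$ for every $B\in r$ (by applying rigidity to the segments $s[A,B]$, $A$ the initial point of $r$) and stops there, leaving implicit the passage from ``identity on $r$'' to ``identity on all of $l$.'' You supply exactly that step: $h$ fixes $r$ pointwise, hence, being a bijection of $l$ preserving $r$, it preserves the antipodal ray $a(r)$, fixes $a(r)$ pointwise by the same ray argument, and since every point of a line lies on $r\cup a(r)$, $h=\mathrm{id}$. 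That is the right way to close the loop, so your proof is, if anything, slightly more careful than the paper's own.
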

\begin{proof}
a. Suppose $f(s[A,B])\subset s[A,B]$ for some $f\in \mathcal{I}$. By \ref{FixedPointTheorem} $f$ has a fixed point and by \ref{InvolutionAndIdLines}
$f^2=id$. By \ref{NilpotentCaseOfRM} $(l,\mathcal{I})$ is a space with rigid motions.\\
b. It suffices to show $f=id$ if $f(r)= r$ for some $f\in \mathcal{I}$ and some 
maximal ray $r$. Let $A$ be the initial point of $r$ and let $B\in r$.
Either $f(s[A,B])\subset s[A,B]$ or $s[A,B]\subset f(s[A,B])$. In the former case $f(s[A,B])= s[A,B]$ which implies $B=f(B)$ and in the latter case
$f^{-1}(s[A,B])\subset s[A,B]$ which implies $B=f(B)$ as well.
\end{proof}

\begin{Definition}\label{SymmetryDefLine}
Given an Erlangen line $(l,\mathcal{I})$ and a point $A$ of $l$, the \textbf{reflection} $i_A$ in $A$
is the isomorphism in $\mathcal{I}$ sending each maximal ray emanating from $A$ to the other maximal ray emanating from $A$.
\end{Definition}

\begin{Definition}\label{TranslationDefLine}
Given an Erlangen line $l$ and two different points $A, B$ of $l$, the \textbf{translation} 
$\tau_{AB}$ from $A$ to $B$ is the isomorphism in $\mathcal{I}$ sending the ray $ray[A,B]$ to the maximal ray emanating from $B$ that does not contain $A$.
If $A=B$, we define $\tau_{AB}$ as the identity function.
\end{Definition}

\begin{Proposition}\label{CharOfSymmetriesLine}
Let $(l,\mathcal{I})$ be an Erlangen line and let $\tau$ be a non-trivial isomorphism in $\mathcal{I}$. The following conditions are equivalent:\\
a. $\tau$ is the reflection in a point,\\
b. $\tau$ has a fixed point,\\
c. for every maximal ray $r$ of $l$ neither $r$ is contained in $\tau(r)$ nor
$\tau(r)$ is contained in $r$,\\
d. there is a maximal ray $r_0$ of $l$ such that neither $r_0$ is contained in $\tau(r_0)$ nor
$\tau(r_0)$ is contained in $r_0$.
\end{Proposition}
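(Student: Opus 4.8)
The plan is to prove a$\iff$b directly and then to close the cycle b$\implies$c$\implies$d$\implies$b; together these yield all four equivalences. The whole argument rests on the dichotomy that a non-trivial $\tau\in\mathcal{I}$ either has a fixed point (and is then a reflection) or has none (and is then a translation in the sense of \ref{TranslationLemma}). Throughout I would freely use that each point of the connected line $l$ is the initial point of exactly two antipodal maximal rays, and that an isomorphism, being a bijection, preserves inclusions.

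For a$\implies$b I would simply note that the reflection $i_A$ fixes its center $A$. For b$\implies$a, suppose $\tau$ fixes a point $A$. Then $\tau$ permutes the two maximal rays emanating from $A$, and it cannot fix either of them setwise: by Condition \ref{Homogeneity and rigidity of the line} the only element of $\mathcal{I}$ carrying a given maximal ray onto itself is $id$, while $\tau\neq id$. Hence $\tau$ interchanges the two rays at $A$, which is exactly $i_A$ by \ref{SymmetryDefLine}. In particular $\tau$ is an involution (by \ref{InvolutionAndIdLines}, or because $i_A^2$ fixes each ray at $A$ setwise and so equals $id$).

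For b$\implies$c I would argue by contradiction using the involution property. Suppose some maximal ray $r$ satisfies $\tau(r)\subset r$ or $r\subset\tau(r)$. Applying $\tau$ once more to either inclusion and using $\tau^2=id$ collapses it to the equality $\tau(r)=r$; but then Condition \ref{Homogeneity and rigidity of the line} forces $\tau=id$, contradicting non-triviality. Thus neither containment can hold for any $r$, which is precisely c. The step c$\implies$d is immediate, since $l$ possesses maximal rays (any two distinct points determine one), so the universal statement c yields the existential statement d.

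The substantive implication is d$\implies$b, which I would establish contrapositively: if $\tau$ has no fixed point, then every maximal ray $r$ satisfies $r\subset\tau(r)$ or $\tau(r)\subset r$, so no ray can witness d. Writing $P$ for the initial point of $r$, we have $\tau(P)\neq P$, and $r$ is one of the two antipodal maximal rays at $P$. If $r=ray[P,\tau(P)]$, then \ref{TranslationLemma} gives $\tau(r)\subsetneq r$ outright. The remaining case, $r=a(ray[P,\tau(P)])$, is where I expect the only real difficulty. Here I would first show that $P$ is strictly between $\tau^{-1}(P)$ and $\tau(P)$: apply the last assertion of \ref{TranslationLemma} to the point $\tau^{-1}(P)$, the three points being distinct by \ref{PeriodicityLemma}. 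This identifies the antipodal ray as exactly $ray[P,\tau^{-1}(P)]$, and applying \ref{TranslationLemma} now to the fixed-point-free map $\tau^{-1}$ gives $\tau^{-1}(r)\subsetneq r$, that is, $r\subset\tau(r)$. The crux is recognizing that the ray opposite to $ray[P,\tau(P)]$ must be treated with $\tau^{-1}$ rather than $\tau$; once that is seen, both cases reduce to a single application of \ref{TranslationLemma}.
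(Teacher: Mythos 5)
Your proof is correct and takes essentially the same approach as the paper: the involution argument via Condition \ref{Homogeneity and rigidity of the line} for b$\implies$c, and, for the substantive direction, the identical double application of \ref{TranslationLemma} (to $\tau$ and then to $\tau^{-1}$) after identifying the antipodal ray at $P$ as $ray[P,\tau^{-1}(P)]$ through the betweenness of $P$ in $s(\tau^{-1}(P),\tau(P))$. The only differences are cosmetic: you close the cycle through b rather than a, and you cite \ref{PeriodicityLemma} explicitly for the distinctness of the three points, which the paper leaves implicit.
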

\begin{proof}
a.$\implies$b. is obvious.\\
b.$\implies$c. By \ref{InvolutionAndIdLines} $\tau$ is an involution.
If $r\subset \tau(r)$ or $\tau(r)\subset r$ for some ray $r$, then applying $\tau$ to
those inclusions one gets $\tau(r)\subset r$ or $r\subset \tau(r)$ resulting in
$r=\tau(r)$. Therefore $\tau=id$, a contradiction.\\
c.$\implies$d. is obvious.\\
d.$\implies$a. Let $A$ be the initial point of $r_0$. Suppose $\tau$ has no fixed points. By \ref{TranslationLemma} for each $A\in l$ the ray $ray[A,\tau(A)]$
is sent by $\tau$ to a proper subset of itself. In particular, $\tau(A)$ is between
$A$ and $\tau^2(A)$. Therefore $A$ is between $\tau^{-1}(A)$ and $\tau(A)$ (by applying $\tau^{-1}$) and $r_0$ has to be equal to $ray[A,\tau^{-1}(A)]$.
Applying \ref{TranslationLemma} to $\tau^{-1}$ we see $\tau^{-1}(r_0)\subset r_0$.
Hence $r_0\subset \tau(r_0)$, a contradiction.

Thus $\tau$ has a fixed point, say $B$, and $\tau$ must be the reflection in $B$
as $\tau\ne id$.
\end{proof}

\begin{Proposition}\label{CharOfTanslationsLine}
Let $(l,\mathcal{I})$ be an Erlangen line and let $\tau$ be a non-trivial isomorphism in $\mathcal{I}$. The following conditions are equivalent:\\
a. $\tau$ has no fixed points,\\
b. $\tau$ is the translation from $A$ to $\tau(A)$ for each $A$ in $l$,\\
c. $\tau$ is a translation,\\
d. for every maximal ray $r$ of $l$ either $r$ is contained in $\tau(r)$ or
$\tau(r)$ is contained in $r$,\\
e. there is a maximal ray $r_0$ of $l$ such that either $r_0$ is contained in $\tau(r_0)$ or
$\tau(r_0)$ is contained in $r_0$.
\end{Proposition}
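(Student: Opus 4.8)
The plan is to exploit the fact that this proposition is essentially the logical complement of \ref{CharOfSymmetriesLine}: for a non-trivial isomorphism of an Erlangen line, ``being a translation'' should mean exactly ``not being a reflection''. The ray-containment conditions here are the literal negations of those in \ref{CharOfSymmetriesLine}, so a large portion of the equivalences will follow by pure propositional logic, leaving only one geometric step to carry out by hand.

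First I would dispose of $a \iff d \iff e$ formally. Condition a (no fixed point) is the negation of the fixed-point condition of \ref{CharOfSymmetriesLine}. Condition e, which asserts \emph{some} maximal ray satisfies a containment, is the negation of the condition of \ref{CharOfSymmetriesLine} that \emph{every} maximal ray satisfies neither containment. Condition d, which asserts every maximal ray satisfies a containment, is the negation of the condition of \ref{CharOfSymmetriesLine} that \emph{some} maximal ray satisfies neither containment. Since $\tau$ is non-trivial, \ref{CharOfSymmetriesLine} tells us those three source conditions are pairwise equivalent, so negating them yields $a \iff e \iff d$ immediately, with no extra argument.

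The substantive step is $a \implies b$, where \ref{TranslationLemma} does the work. Assuming $\tau$ has no fixed point, fix any $A$ and set $r = ray[A,\tau(A)]$. Its image $\tau(r)$ is a maximal ray emanating from $\tau(A)$ and containing $\tau^2(A)$, hence $\tau(r) = ray[\tau(A),\tau^2(A)]$. By \ref{TranslationLemma}, $\tau(A)$ lies strictly between $A$ and $\tau^2(A)$, so $ray[\tau(A),\tau^2(A)]$ is precisely the maximal ray from $\tau(A)$ that does not contain $A$. This is exactly the defining property of $\tau_{A\tau(A)}$ in \ref{TranslationDefLine}, and by the uniqueness clause of Condition \ref{Homogeneity and rigidity of the line} we conclude $\tau = \tau_{A\tau(A)}$; since $A$ was arbitrary, b holds. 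To close the cycle, $b \implies c$ is immediate (a translation from $A$ to $\tau(A)$ is a translation, and $A \ne \tau(A)$ by a), and for $c \implies e$ I would argue directly: writing $\tau = \tau_{AB}$ with $A \ne B$ and taking $r_0 = ray[A,B]$, the ray $\tau(r_0)$ is by \ref{TranslationDefLine} the maximal ray from $B$ away from $A$, whose points all lie on the $B$-side of $A$, giving $\tau(r_0) \subsetneq r_0$ and hence e. The chain $a \implies b \implies c \implies e \implies a$ together with $a \iff d$ completes the proof.

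I expect the only real obstacle to lie in the bookkeeping of $a \implies b$: one must correctly recognize $\tau(ray[A,\tau(A)])$ as the ray from $\tau(A)$ pointing \emph{away} from $A$ — this is precisely where the properness in \ref{TranslationLemma} (that $\tau(A)$ is \emph{strictly} between $A$ and $\tau^2(A)$) is indispensable — and then invoke uniqueness of the translation rather than attempting to reconstruct it. Everything else is either formal negation of \ref{CharOfSymmetriesLine} or a one-line containment check.
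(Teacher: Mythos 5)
Your proposal is correct, and most of it coincides with the paper's own proof: the paper likewise gets $a \iff d \iff e$ by observing these are the negations of the conditions in \ref{CharOfSymmetriesLine}, proves $a \implies b$ by exactly your use of \ref{TranslationLemma} (that $ray[A,\tau(A)]$ is sent to a proper subset of itself, forcing $\tau$ to agree with the defining property of $\tau_{A\tau(A)}$ and hence equal it by uniqueness), and dismisses $b \implies c$ as obvious. The one place you diverge is in closing the cycle: the paper proves $c \implies a$ by contradiction, noting that if $\tau = \tau_{AB}$ had a fixed point then \ref{InvolutionAndIdLines} would make $\tau$ an involution, whence $\tau(B) = \tau^2(A) = A$, contradicting that $\tau$ sends $ray[A,B]$ to the ray at $B$ not containing $A$. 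You instead prove $c \implies e$ directly, checking that the maximal ray at $B$ away from $A$ is a proper subset of $ray[A,B]$. Both are sound; the paper's step is shorter because the involution lemma does the work in one line, while yours avoids \ref{InvolutionAndIdLines} altogether at the cost of a small geometric verification (that a point of $l$ lying beyond $B$ relative to $A$ still lies in $ray[A,B]$, which rests on the trichotomy of \ref{BetwennessCorollary} for connected lines). Either way the cycle $a \implies b \implies c \implies (a \text{ or } e)$ together with $a \iff d \iff e$ gives all five equivalences.
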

\begin{proof}
a., d., and e. are equivalent by \ref{CharOfSymmetriesLine}.\\
a.$\implies$b. follows from
\ref{TranslationLemma} which says that for each $A\in l$ $ray[A,\tau(A)]$
is sent by $\tau$ to a proper subset of itself. \\
b.$\implies$c. is obvious.\\
c.$\implies$a. Suppose $\tau=\tau_{AB}$ and $\tau$ has a fixed point $C$.
Since $\tau$ is an involution by \ref{InvolutionAndIdLines}, $\tau(B)=A$
contradicting the fact $\tau$ sends $ray[A,B]$ to the ray at $B$ not containing $A$.

\end{proof}

\begin{Corollary}\label{TranslationsOfLineSubgroup}
Let $(l,\mathcal{I})$ be an Erlangen line.
Translations form a subgroup of $\mathcal{I}$.
\end{Corollary}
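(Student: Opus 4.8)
The plan is to identify the set of translations with the kernel of an orientation homomorphism on $\mathcal{I}$, thereby exhibiting it as a subgroup for free. Recall that by \ref{CharOfTanslationsLine} the translations are exactly the identity together with the fixed-point-free isomorphisms, while by \ref{CharOfSymmetriesLine} and \ref{InvolutionAndIdLines} the reflections are exactly the isomorphisms possessing a fixed point (and these are involutions). Hence every non-trivial element of $\mathcal{I}$ is either a translation or a reflection, and these two classes are disjoint. I would prove that this dichotomy coincides with the splitting of $\mathcal{I}$ into orientation-preserving and orientation-reversing maps.

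First I would fix an orientation of $l$ as in the subsection on orienting rays and lines, obtaining a linear order $\leq$ for which $s[A,B]=\{X : A\leq X\leq B\}$ (or its reverse). Every $f\in\mathcal{I}$ preserves closed line segments, hence betweenness, and so preserves for each triple the information of which point is the middle one; by \ref{BetwennessCorollary} this forces $f$ to be monotone, i.e. either order-preserving or order-reversing. This yields a homomorphism $\epsilon\colon\mathcal{I}\to\{\pm1\}$ whose kernel $G^{+}$ consists of the orientation-preserving isomorphisms, and $G^{+}$ is automatically a subgroup. It then remains only to show that $G^{+}$ equals the set of translations.

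Next I would match the two descriptions. A reflection interchanges the two maximal rays at its fixed point, so it reverses orientation; thus every reflection lies in $\mathcal{I}\setminus G^{+}$. Conversely, a non-trivial $f\in G^{+}$ can have no fixed point $A$, since an orientation-preserving map fixing $A$ would preserve both rays emanating from $A$, and then \ref{InvolutionAndIdLines} (equivalently \ref{RigidMotionsViaRaysProp}) would force $f=\mathrm{id}$; hence such an $f$ is fixed-point-free and is therefore a translation by \ref{CharOfTanslationsLine}. Combining these two observations with the translation/reflection dichotomy gives $G^{+}=\{\mathrm{id}\}\cup\{\text{translations}\}=\text{translations}$, and the proof is complete.

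The step I expect to be the main obstacle is the remaining direction: that every orientation-reversing $\rho\in\mathcal{I}$ is in fact a reflection, equivalently that $\rho$ has a fixed point, for this is what forces the fixed-point-free translations to preserve orientation. Here I would lean on the Fixed Point Theorem \ref{FixedPointTheorem}. Choose $A$ with $\rho(A)\neq A$, say $A<B:=\rho(A)$; because $\rho$ reverses the order we get $\rho^{2}(A)<B$, so $\rho(s[A,B])=s[\rho^{2}(A),B]$. If $\rho^{2}(A)\geq A$, then this image lies inside $s[A,B]$, and \ref{FixedPointTheorem} applied to $\rho$ restricted to the connected segment $s[A,B]$ produces a fixed point; if instead $\rho^{2}(A)<A$, then $s[A,B]\subset\rho(s[A,B])$, so $\rho^{-1}$ maps $s[A,B]$ into itself and \ref{FixedPointTheorem} applied to $\rho^{-1}$ yields a fixed point of $\rho$. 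In either case $\rho$ has a fixed point, which finishes the identification $G^{+}=\text{translations}$ and hence the corollary.
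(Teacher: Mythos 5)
Your proof is correct, but it takes a genuinely different route from the paper's. The paper argues closure directly from \ref{CharOfTanslationsLine}: a non-trivial element of $\mathcal{I}$ is a translation if and only if it is fixed-point-free, and a fixed-point-free element equals $\tau_{X\tau(X)}$ for every $X$; hence if $\tau_1\circ\tau_2$ had a fixed point $A$, then with $B=\tau_2(A)$ one gets $\tau_2=\tau_{AB}$ and $\tau_1=\tau_{BA}=\tau_{AB}^{-1}$, so $\tau_1\circ\tau_2=\mathrm{id}$, which is a translation anyway; inverses and the identity are immediate from the same characterization. You instead construct the orientation homomorphism $\epsilon\colon\mathcal{I}\to\{\pm1\}$ and identify the translations with $\ker\epsilon$, which costs two extra steps: (i) that betweenness-preserving bijections of a connected line are monotone (your appeal to \ref{BetwennessCorollary} is right in spirit, but strictly it needs the short induction on finite subsets showing that preservation of middle points forces a single global orientation), and (ii) that every orientation-reversing element has a fixed point, which you prove soundly by the two-case argument $\rho^2(A)\gtrless A$ with the Fixed Point Theorem \ref{FixedPointTheorem}, passing to $\rho^{-1}$ in the second case; note this in effect re-proves the implication d.$\implies$a. of \ref{CharOfSymmetriesLine}, which the paper derives from \ref{TranslationLemma}. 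What your approach buys: the translations are exhibited as the kernel of a homomorphism, so you get for free that they form a \emph{normal} subgroup of index two in $\mathcal{I}$, and the translation/reflection dichotomy is identified with orientation-preserving/orientation-reversing, structural facts the paper leaves implicit and which foreshadow \ref{TranslationsOfLineAbSubgroup} and the Grothendieck group theorem. What the paper's approach buys: brevity, and no reliance on orienting the line or on a second invocation of the Fixed Point Theorem.
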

\begin{proof}
Consider two non-trivial translations $\tau_1$ and $\tau_2$. We need to show $\tau_1\circ\tau_2$ is a translation. 
For $\tau_1\circ\tau_2$ not to be a translation it is necessary that it has a fixed point $A$. Let $B=\tau_2(A)$. Notice $\tau_1(B)=A$. By \ref{CharOfTanslationsLine},
$\tau_1=\tau_{BA}$ and $\tau_2=\tau_{AB}$. Therefore $\tau_1\circ\tau_2$ is the identity and is indeed a translation contrary to our assumption.
\end{proof}

\section{Length of line segments in Erlangen lines}\label{LengthOfLineSegmentsSec}

In this part we expand on Section \ref{Spaces with rigid motions} and analyze the concept of length of line segments in Erlangen lines. From a big picture point of view, lengths of non-trivial line segments form a line and lengths of line segments form a ray.

\begin{Lemma}
Let $(l,\mathcal{I})$ be an Erlangen line.
For every two lengths there is a line segment whose length equals the sum of lengths.
\end{Lemma}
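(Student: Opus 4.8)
The plan is to realize the sum by laying the two given segments end to end along a single maximal ray of $l$. First I would represent the two given lengths by connected line segments $s[C,D]$ and $s[E,F]$ (every line segment in a connected line is connected). Fix an arbitrary point $A\in l$ and a maximal ray $r$ emanating from $A$. By Condition \ref{Homogeneity and rigidity of the line} there is a unique $\tau_1\in\mathcal{I}$ carrying $ray[C,D]$ onto $r$; since an isomorphism preserves the initial point of a ray (the unique point of the ray not lying strictly between two others, cf. \ref{RayDef}), we get $\tau_1(C)=A$, and setting $M:=\tau_1(D)$ gives $\tau_1(s[C,D])=s[A,M]$, so $|AM|=|CD|$.

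Next I would attach the second segment at $M$ in the direction away from $A$. Let $r'$ be the maximal ray emanating from $M$ that does not contain $A$; this is the ray antipodal to the one through $A$. Again by Condition \ref{Homogeneity and rigidity of the line} choose $\tau_2\in\mathcal{I}$ with $\tau_2(ray[E,F])=r'$, so that $\tau_2(E)=M$, and put $B:=\tau_2(F)$. Then $\tau_2(s[E,F])=s[M,B]$, whence $|MB|=|EF|$.

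It remains to check that $M$ is between $A$ and $B$. Since $A$ lies in one of the two components of $l\setminus\{M\}$ and $B\in r'\setminus\{M\}$ lies in the other, the defining property of a line \ref{LineDef}(b) yields $M\in s[A,B]$. By the betweenness axiom \ref{BetweennessDef}(c), $s[A,B]=s[A,M]\cup s[M,B]$ with the two pieces meeting only in $M$. Therefore $M$ witnesses, via Definition \ref{SumOfLengthsDef}, that $|AB|=|CD|+|EF|$, producing the desired segment.

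The only delicate point is orientation: one must place the second segment on the ray emanating from $M$ opposite to $A$, for otherwise $\tau_2$ could fold $s[E,F]$ back over $s[A,M]$ and $M$ would fail to be between $A$ and $B$. Using the antipodal ray $r'$, which exists and is determined in a connected line, removes this ambiguity, and no further estimates or limiting arguments are needed.
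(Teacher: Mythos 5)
Your proof is correct and takes essentially the same approach as the paper: use elements of $\mathcal{I}$ to lay the two segments end to end at a common point $M$ and then invoke the definition of the sum of lengths. The only difference is cosmetic --- you map both segments onto prescribed maximal rays (the second onto the ray at $M$ pointing away from $A$), exploiting the Homogeneity and Rigidity Condition to fix the orientation in advance, whereas the paper keeps the first segment in place, translates the second to its endpoint, and falls back on the reflection $i_A$ when the translated segment lands on the wrong side.
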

\begin{proof}
Given four points $A, B, C, D$ of $l$ apply the translation $\tau_{CA}$. If $E:=\tau_{CA}(D)$ is on the other side of $A$ than $B$, then $|BE|$ equals the sum of $|AB|$ and $|CD|$.
Otherwise apply the reflection $i_A$ to $E$ obtaining $F$ with
$|BF|$ equal to the sum of $|AB|$ and $|CD|$.
\end{proof}

\begin{Proposition}\label{TranslationSizeProp}
Let $(l,\mathcal{I})$ be an Erlangen line. For any translation $\tau$ and any two points $A$ and $B$ of $l$ the lengths $|A\tau(A)|$ and $|B\tau(B)|$ are equal.
\end{Proposition}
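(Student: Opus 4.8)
The plan is to reduce the statement to the commutativity of translations. Since every element of $\mathcal I$ preserves closed line segments, any $\rho\in\mathcal I$ with $\rho(A)=B$ and $\rho(\tau(A))=\tau(B)$ would carry $s[A,\tau(A)]$ onto $s[B,\tau(B)]$, exhibiting a congruence and hence forcing $|A\tau(A)|=|B\tau(B)|$. The natural candidate is $\rho=\tau_{AB}$ (Definition \ref{TranslationDefLine}), which satisfies $\rho(A)=B$; then $\rho(\tau(A))=\tau(\rho(A))=\tau(B)$ precisely when $\rho$ and $\tau$ commute. Thus everything comes down to showing that any two translations of an Erlangen line commute. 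The cases $A=B$ or $\tau=\mathrm{id}$ are trivial, and a non-trivial translation has no fixed points by \ref{CharOfTanslationsLine}, so $\tau(A)\ne A$ and $s[A,\tau(A)]$ is a genuine segment.

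The linchpin is the identity $i_P\,\tau\,i_P=\tau^{-1}$ for every reflection $i_P$ (Definition \ref{SymmetryDefLine}) and every translation $\tau$. To prove it, I first note by \ref{TranslationLemma} that $P$ lies between $\tau^{-1}(P)$ and $\tau(P)$; and since $\tau\in\mathcal I$ carries $s[\tau^{-1}(P),P]$ onto $s[P,\tau(P)]$, these two segments are congruent, so $|\tau^{-1}(P)\,P|=|P\,\tau(P)|$. Hence $P$ is the midpoint of $s[\tau^{-1}(P),\tau(P)]$, and the reflection $i_P$ — which fixes $P$, interchanges the two rays at $P$, and preserves length — must interchange the two endpoints, giving $i_P(\tau(P))=\tau^{-1}(P)$. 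Consequently $i_P\tau i_P$ and $\tau^{-1}$ agree at $P$; as both are translations (a conjugate of a translation is fixed-point-free, hence a translation by \ref{CharOfTanslationsLine}) and a translation is determined by the image of a single point (again \ref{CharOfTanslationsLine}), I conclude $i_P\tau i_P=\tau^{-1}$.

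With this identity in hand, commutativity follows formally. Every non-trivial translation $\sigma$ can be written as a product $\sigma=i_2 i_1$ of two reflections: fix any reflection $i_1$ and observe that $i_2:=\sigma i_1$ cannot be a translation, for otherwise $i_1=\sigma^{-1}i_2$ would be a product of translations, hence a translation by \ref{TranslationsOfLineSubgroup}, contradicting that reflections have fixed points; since $i_2\ne\mathrm{id}$, it is a reflection and $i_2 i_1=\sigma$. Applying the linchpin to both reflections gives $i_1\tau i_1=\tau^{-1}=i_2\tau i_2$, whence $(i_2 i_1)\tau(i_2 i_1)^{-1}=\tau$, i.e. $\sigma\tau=\tau\sigma$. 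Finally, taking $\sigma=\tau_{AB}$ yields $\tau_{AB}(\tau(A))=\tau(\tau_{AB}(A))=\tau(B)$, so $\tau_{AB}$ maps $s[A,\tau(A)]$ onto $s[B,\tau(B)]$ and the two lengths are equal.

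The main obstacle is exactly the commutativity of translations, and within it the identity $i_P\tau i_P=\tau^{-1}$; the delicate point there is justifying that $i_P$ interchanges $\tau^{-1}(P)$ and $\tau(P)$. This rests on the uniqueness of the midpoint (\ref{MidpointLemma}) — equivalently, on the fact that a point of a ray is determined by its distance from the initial point (a proper subsegment cannot be congruent to the whole), so that the length-preserving reflection has no choice but to swap the two endpoints.
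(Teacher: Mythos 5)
Your proof is correct, but it takes a genuinely different route from the paper, and in fact reverses the paper's logical order. The paper's own proof stays entirely inside the length calculus of Section \ref{Spaces with rigid motions}: since $\tau\in\mathcal{I}$, it gives $|AB|=|\tau(A)\tau(B)|$ directly, then splits into the two cases of $s[A,\tau(A)]$ and $s[B,\tau(B)]$ overlapping in an interior point or not, and in each case writes $|A\tau(A)|$ and $|B\tau(B)|$ as the same sum, respectively the same difference, of $|AB|$ and $|\tau(A)B|$, concluding by \ref{ExistenceUniquenessOfDifference}. You instead prove the group-theoretic facts first: conjugation by any reflection inverts any translation (via the midpoint argument showing $i_P(\tau(P))=\tau^{-1}(P)$), every non-trivial translation is a product of two reflections, hence translations commute; then $\tau_{AB}$ conjugates $\tau$ to itself and carries $s[A,\tau(A)]$ onto $s[B,\tau(B)]$. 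Crucially, your commutativity argument is not circular: it uses only \ref{TranslationLemma}, \ref{CharOfSymmetriesLine}, \ref{CharOfTanslationsLine}, \ref{TranslationsOfLineSubgroup}, \ref{InvolutionAndIdLines}, \ref{MidpointLemma} and the uniqueness in Condition \ref{Homogeneity and rigidity of the line}, all of which precede and are independent of \ref{TranslationSizeProp}; whereas the paper derives commutativity (\ref{TranslationsOfLineAbSubgroup}) only afterwards, as a consequence of this very proposition. So your route costs more work up front but makes \ref{TranslationsOfLineAbSubgroup} an immediate corollary, while the paper's route is shorter and more elementary, using only the arithmetic of lengths. Two trivial gaps to patch: justify $i_2\ne\mathrm{id}$ (otherwise $\sigma=i_1$ would be a translation with a fixed point), and state explicitly the dichotomy you use, namely that every non-trivial element of $\mathcal{I}$ is either a reflection or a translation according to whether it has a fixed point, which is exactly the content of \ref{CharOfSymmetriesLine} and \ref{CharOfTanslationsLine}.
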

\begin{proof}
It is clearly so if $\tau=id$ or $A=B$, hence consider the case of $\tau\ne id$ and $A\ne B$.
If $s[A,\tau(A)]$ and $s[B,\tau(B)]$ do not overlap on any interior point, we may assume
$s[B,\tau(B)]\subset ray[A,\tau(A)]$.  Since $|AB|=|\tau(A)\tau(B)|$
and $|A\tau(A)|=|AB|-|\tau(A)B|$, $|B\tau(B)|=|\tau(A)\tau(B)|-|\tau(A)B|$, we get
$|A\tau(A)|=|B\tau(B)|$ by
\ref{ExistenceUniquenessOfDifference}.

Suppose $s[A,\tau(A)]$ and $s[B,\tau(B)]$ do overlap on an interior point. Now, we may assume
$B\in s[A,\tau(A)]$. Since $|AB|=|\tau(A)\tau(B)|$
and $|A\tau(A)|=|AB|+|\tau(A)B|$, $|B\tau(B)|=|\tau(A)\tau(B)|+|\tau(A)B|$, we get
$|A\tau(A)|=|B\tau(B)|$.
\end{proof}

\begin{Corollary}\label{TranslationsOfLineAbSubgroup}
Let $(l,\mathcal{I})$ be an Erlangen line.
Translations form an Abelian subgroup of $\mathcal{I}$.
\end{Corollary}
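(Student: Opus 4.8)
The plan is to exploit the rigidity condition \ref{Homogeneity and rigidity of the line}, which pins down an element of $\mathcal{I}$ from very little data, and to reduce commutativity to a statement about where two translations send a single base point. First I would record the consequence of \ref{TranslationsOfLineSubgroup} together with \ref{CharOfTanslationsLine} that the only translation possessing a fixed point is the identity; hence if two translations agree at one point $O$, their quotient is a translation fixing $O$ and is therefore trivial, so a translation is completely determined by the image of $O$. Thus, to prove $\tau_1\circ\tau_2=\tau_2\circ\tau_1$ it suffices to show $\tau_1(\tau_2(O))=\tau_2(\tau_1(O))$ for one point $O$. I would also note that whether two elements commute is unaffected by replacing either factor with its inverse, and that the identity case is trivial, so I may assume both $\tau_1$ and $\tau_2$ are non-trivial.

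Next I would fix an orientation of $l$ and the induced linear order (see the subsection on orienting rays and lines) and show that each non-trivial translation moves every point strictly in one and the same direction. Order-preservation holds because an orientation-reversing isomorphism is a reflection by \ref{CharOfSymmetriesLine} and hence has a fixed point, contradicting \ref{CharOfTanslationsLine}; and an order-preserving map pushing some point forward while pushing another backward would, by monotonicity, carry a closed segment into itself and acquire a fixed point via the Fixed Point Theorem \ref{FixedPointTheorem}, the same contradiction. After replacing $\tau_1,\tau_2$ by their inverses if necessary, I may therefore assume both are \emph{positive}, meaning $\tau(X)>X$ for all $X$. By \ref{TranslationSizeProp} each $\tau_i$ then has a well-defined translation length $a_i:=|X\tau_i(X)|$, independent of $X$.

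The heart of the argument is a short length computation. Writing $P=\tau_2(O)$, positivity gives $O<P<\tau_1(P)$ with $|OP|=a_2$ and $|P\tau_1(P)|=a_1$, so by \ref{SumOfLengthsDef} the segment $s[O,\tau_1\tau_2(O)]$ has length equal to the sum of $a_2$ and $a_1$; symmetrically, with $Q=\tau_1(O)$ the segment $s[O,\tau_2\tau_1(O)]$ has length equal to the sum of $a_1$ and $a_2$. The lemma on commutativity of the sum of lengths following \ref{SumOfLengthsDef} shows these two lengths are equal, and both endpoints lie on the positive side of $O$. The final ingredient is uniqueness: two points $X,X'>O$ with $|OX|=|OX'|$ must coincide, for otherwise one of $s[O,X]$, $s[O,X']$ is a proper subsegment of the other and hence, being a proper subset, is strictly shorter. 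Therefore $\tau_1\tau_2(O)=\tau_2\tau_1(O)$, and by the first paragraph the two translations are equal.

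I expect the main obstacle to be the bookkeeping that makes the length arithmetic rigorous, namely verifying the consistent global direction of a translation and confirming that the point at a prescribed length on a prescribed side of $O$ is unique. Once these are in place the final comparison is immediate, since the two composites are translations sending $O$ to the same point.
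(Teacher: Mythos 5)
Your proof is correct and follows essentially the same route as the paper's: both arguments reduce commutativity to comparing $\tau_1\tau_2(O)$ with $\tau_2\tau_1(O)$, compute that each lies at length $|O\tau_1(O)|+|O\tau_2(O)|$ from $O$ on the same side using \ref{TranslationSizeProp} and additivity of lengths, and conclude equality of the two composites from the fact that a translation with a fixed point is trivial. Your version is somewhat more careful than the paper's (you justify the consistent direction of a translation via \ref{CharOfSymmetriesLine} and \ref{FixedPointTheorem}, and you spell out the uniqueness of the point at a prescribed length, both of which the paper glosses over with ``Similarly'' and a case switch to $\tau_2^{-1}$), but the underlying idea is the same.
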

\begin{proof}
By \ref{TranslationsOfLineSubgroup} translations form a subgroup of $\mathcal{I}$.
Suppose $A\in l$ and $\tau_1, \tau_2$ are two non-trivial translations.
Pick $A\in l$ and assume $\tau_2(\tau_1(A))$ is on the other side of $\tau_1(A)$ than $A$.
The length of $s[A,\tau_2(\tau_1(A))]$ equals the sum of $|A\tau_1(A)|+|A\tau_2(A)|$
by \ref{TranslationSizeProp}.
Similarly, the length of $s[A,\tau_1(\tau_2(A))]$ equals the sum of $|A\tau_1(A)|+|A\tau_2(A)|$ resulting in $\tau_2(\tau_1(A))=\tau_1(\tau_2(A))$ and $\tau_2\circ\tau_1=\tau_1\circ \tau_2$ due to existence of a fixed point of the commutator of the two translations.

If $\tau_2(\tau_1(A))$ in the same side of $\tau_1(A)$ as $A$, we switch to $\tau_2^{-1}$ and obtain its commutativity with $\tau_1$.
\end{proof}

\begin{Theorem}
Let $(l,\mathcal{I})$ be an Erlangen line. The Grothendieck group of the monoid of lengths of $l$ is isomorphic to the group of translations.
\end{Theorem}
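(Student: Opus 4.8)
The plan is to realize both sides explicitly and then write down the obvious comparison map. Write $M$ for the commutative monoid of lengths of $l$, with addition as in \ref{SumOfLengthsDef} (total on an Erlangen line by the first lemma of this section, with the trivial segment as identity), and write $T$ for the group of translations, which is abelian by \ref{TranslationsOfLineAbSubgroup}. First I would record that $M$ is cancellative: lengths of connected line segments are totally ordered, and given $a+c=b+c$ one may assume $a\le b$, whereupon \ref{AddingLengthsLemma} forces $a=b$. Consequently the canonical map $M\to K(M)$ into the Grothendieck group is injective, every element of $K(M)$ has the form $[a]-[b]$ with $a,b\in M$, and $[a]-[b]=[a']-[b']$ precisely when $a+b'=a'+b$.

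Next I would orient $l$ (choose a positive maximal ray as in the discussion of orienting lines) and build the bridge between lengths and translations. Given a length $a$ and a point $A$ there is a unique point $B$ on the positive ray from $A$ with $|AB|=a$, and I set $\Theta(a):=\tau_{AB}$, the positive translation from $A$ to $B$. By \ref{TranslationSizeProp} the displacement length $|A\tau_{AB}(A)|$ is independent of the base point $A$, so $\Theta(a)$ is well defined, with $\Theta(0)=\mathrm{id}$. The key point is that $\Theta$ is a monoid homomorphism into $T$: composing two positive translations again yields a positive translation (using \ref{TranslationsOfLineSubgroup}) whose displacement is the sum of the two displacements, which is exactly the additivity computation carried out in the proof of \ref{TranslationsOfLineAbSubgroup}. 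By the universal property of the Grothendieck group, $\Theta$ extends uniquely to a group homomorphism $\tilde\Theta\colon K(M)\to T$ satisfying $\tilde\Theta([a]-[b])=\Theta(a)\circ\Theta(b)^{-1}$.

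It remains to check that $\tilde\Theta$ is a bijection. For surjectivity I would use the orientation to classify each nontrivial translation via \ref{CharOfTanslationsLine}: a positive translation $\tau$ realizes the length $a:=|A\tau(A)|$, so $\tau=\Theta(a)=\tilde\Theta([a])$, a negative translation is the inverse of a positive one and hence lies in the image, and the identity is $\tilde\Theta(0)$. For injectivity, $\tilde\Theta([a]-[b])=\mathrm{id}$ gives $\Theta(a)=\Theta(b)$, and since the displacement length recovers $a$ from $\Theta(a)$ this forces $a=b$, so the kernel is trivial. I expect the main obstacle to be the bookkeeping around orientation and signs: one must verify that $\Theta$ respects addition in every configuration (both translations positive, or pointing in opposite directions), which is where the separate overlap cases of \ref{TranslationSizeProp} and the subtraction of lengths in \ref{ExistenceUniquenessOfDifference} enter, and one must confirm that the assignment length $\mapsto$ positive translation is genuinely a bijection onto the positive translations rather than merely a surjection of monoids.
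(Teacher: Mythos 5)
Your proposal is correct and follows essentially the same route as the paper: orient the line, match each length to the positive translation with that displacement (well defined by \ref{TranslationSizeProp}), and send a formal difference of lengths to the composite of a positive translation with the inverse of a positive translation, which is exactly the paper's assignment $(|AB|,|CD|)\mapsto \tau_{CD}\circ\tau_{AB}$ under its orientation convention. The only difference is presentational --- you invoke the universal property of the Grothendieck group and verify cancellativity, the homomorphism property, injectivity, and surjectivity explicitly, details the paper leaves unstated.
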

\begin{proof}
Given a commutative monoid, i.e. a set $S$ with commutative and associative addition, one creates the Grothendieck group $G(S)$ of $S$ (see \cite{Grot}) as equivalence sets of ordered pairs $(m,n)$ of elements of $S$. Namely, $(m,n)\equiv (m',n')$ if $m+n'=m'+n$. Notice all pairs $(m,m)$ are equivalent and form the neutral element of $G(S)$ if the addition in $G(S)$ is defined via $(m,n)+(k,p)=(m+k,n+p)$. 

Pick a maximal ray $r$ in $l$. Orient the line by requiring that $A < B$ if and only if $ray[A,B]$ is a positive maximal ray.
Given two closed line segments label their endpoints $A$, $B$, $C$, and $D$ so that
$A\leq B$ and $D\leq C$.
Assign $\tau_{CD}\circ \tau_{AB}$ to the pair $(|AB|,|CD|)$. That creates an isomorphism from the group of translations onto the Grothendieck group of the monoid of lengths of $l$.
\end{proof}

\begin{Proposition}\label{DividingLengthViaTranslationsProp}
Let $n > 0$ be an integer and let $s[A,B]$, $s[C,D]$ be two line segments of an Erlangen line.
The length of $s[A,B]$ is $n$ times the length of $s[C,D]$ if and only if
$\tau_{CD}^n=\tau_{AB}$ or $\tau_{CD}^{-n}=\tau_{AB}$.
\end{Proposition}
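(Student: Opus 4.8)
The plan is to convert the metric condition $|AB|=n\cdot|CD|$ into an equation among translations by observing that "$n$ equal consecutive steps" is nothing but the orbit of a single translation iterated $n$ times. Two preliminary facts drive the argument. First, any fixed-point-free isomorphism $\sigma\in\mathcal{I}$ with $\sigma(A)=B$ must equal $\tau_{AB}$: by \ref{CharOfTanslationsLine} a fixed-point-free isomorphism is the translation from $A$ to $\sigma(A)$, and that is $\tau_{AB}$ by \ref{TranslationDefLine}. In particular, whenever $\tau$ is a translation with $\tau^n(A)=B$, the power $\tau^n$ is again a translation (translations form a group, \ref{TranslationsOfLineAbSubgroup}) carrying $A$ to $B$, hence $\tau^n=\tau_{AB}$. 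Second, a translation has a well-defined displacement length $|X\tau(X)|$ that is independent of $X$ by \ref{TranslationSizeProp}; after orienting the line, for each length there is one "positive" and one "negative" translation realizing it, and these are mutually inverse (uniqueness follows because two positive translations of the same length agree on a single point $A$ and hence, by the first fact, everywhere). Consequently, when $C\ne D$ the translations $\tau_{CD}$ and $\tau_{CD}^{-1}$ are exactly the two translations of length $|CD|$, so any translation of length $|CD|$ equals $\tau_{CD}$ or $\tau_{CD}^{-1}$.

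For the forward direction I would assume $|AB|=n\cdot|CD|$ and take subdivision points $A=C_0,C_1,\dots,C_n=B$ with $|C_iC_{i+1}|=|CD|$ and consecutive segments meeting in at most one point. The non-overlap clause, together with the linear order on the connected line, forces the $C_i$ to run monotonically along $s[A,B]$. Setting $\tau:=\tau_{AC_1}$, a translation with $\tau(C_0)=C_1$, I would show inductively that $\tau(C_i)=C_{i+1}$: by \ref{TranslationSizeProp} the point $\tau(C_i)$ lies at distance $|CD|$ from $C_i$, and by \ref{TranslationLemma} it lies beyond $C_i$ in the forward direction, so it coincides with $C_{i+1}$. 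Thus $\tau^n(A)=B$. Since $|AC_1|=|CD|$, the second preliminary gives $\tau=\tau_{CD}^{\pm1}$, and the first preliminary gives $\tau^n=\tau_{AB}$, whence $\tau_{CD}^n=\tau_{AB}$ or $\tau_{CD}^{-n}=\tau_{AB}$.

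For the backward direction I would assume $\tau_{CD}^n=\tau_{AB}$ (the case $\tau_{CD}^{-n}=\tau_{AB}$ being symmetric upon replacing $\tau_{CD}$ by its inverse) and put $\tau:=\tau_{CD}$, $C_i:=\tau^i(A)$, so that $C_0=A$ and $C_n=\tau_{AB}(A)=B$. Each gap satisfies $|C_iC_{i+1}|=|C_i\tau(C_i)|=|CD|$ by \ref{TranslationSizeProp}. By \ref{TranslationLemma} each $C_i$ lies strictly between $C_{i-1}$ and $C_{i+1}$, and by \ref{PeriodicityLemma} the $C_i$ are pairwise distinct; together these force the listing $C_0,C_1,\dots,C_n$ to be monotone, so all $C_i$ lie in $s[A,B]$ and consecutive segments meet only at their shared endpoint. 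That is exactly the definition of $|AB|=n\cdot|CD|$.

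The hard part is the inductive claim $\tau(C_i)=C_{i+1}$ in the forward direction, namely that the single translation fixed by the first step $A\mapsto C_1$ carries every subdivision point to the next. This rests on the principle that a translation is pinned down by one displacement, its length together with its direction: \ref{TranslationSizeProp} guarantees the length is constant along an orbit, \ref{TranslationLemma} fixes the direction as forward at every point, and the uniqueness in Condition \ref{Homogeneity and rigidity of the line} then lands $C_i$ precisely on $C_{i+1}$ without overshooting or reversing. Everything else is bookkeeping inside the abelian group of translations.
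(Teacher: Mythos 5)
Your proof is correct and takes essentially the same route as the paper's: the backward direction via the orbit $C_i=\tau_{CD}^i(A)$ together with \ref{TranslationSizeProp}, and the forward direction by showing that the translation carrying each subdivision point to the next is one and the same, hence of length $|CD|$ and therefore equal to $\tau_{CD}$ or $\tau_{CD}^{-1}$. You merely make explicit (via \ref{TranslationLemma}, \ref{PeriodicityLemma}, and the non-congruence of a segment with a proper subsegment) the monotonicity and uniqueness steps that the paper leaves as ``notice.''
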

\begin{proof}
If $\tau_{CD}^{-n}=\tau_{AB}$, then $\tau_{DC}^{n}=\tau_{AB}$,
so assume $\tau_{CD}^n=\tau_{AB}$. Put $C_i= \tau_{CD}^i(A)$ for $0\leq i\leq n$.
Using \ref{TranslationSizeProp} observe $|C_iC_{i+1}|=|CD|$ for all $0\leq i\leq n-1$, and the line segments $s[C_i,C_{i+1}]$,
$s[C_j,C_{j+1}]$ have at most one common point if $i\ne j$.

Conversely, assume $s[A,B]$ contains points $C_0,\ldots, C_n$ such that $C_0=A$, $C_n=B$,
$|C_iC_{i+1}|=|CD|$ for all $0\leq i\leq n-1$, and the line segments $s[C_i,C_{i+1}]$,
$s[C_j,C_{j+1}]$ have at most one common point if $i\ne j$.
Notice the translation from $C_i$ to $C_{i+1}$ equals the translation from $C_j$ to $C_{j+1}$ for all $i,j$ and those translations equal either $\tau_{CD}$ or $\tau_{DC}$.
\end{proof}

\begin{Lemma}[Archimedes' Axiom]\label{Archimedes' Axiom}
Given points $A$, $B$, $C$, and $D$ of an Erlangen line such that $C\ne D$ there
is a natural number $n$ satisfying $n\cdot |CD| > |AB|$.
\end{Lemma}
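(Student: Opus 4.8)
The plan is to recast the statement as a dynamical fact about the translation $\tau:=\tau_{CD}$ and to show that its forward orbit from any basepoint cannot remain bounded. First I would set up the dictionary between iterated translations and multiples of lengths. Since $C\ne D$, the translation $\tau$ is non-trivial, hence fixed-point free by \ref{CharOfTanslationsLine}; moreover $\tau^n$ is fixed-point free for every $n\ne 0$ by \ref{PeriodicityLemma}, so by \ref{CharOfTanslationsLine} it is again a translation, namely the translation from any $O$ to $\tau^n(O)$. Applying \ref{DividingLengthViaTranslationsProp} to the segments $s[O,\tau^n(O)]$ and $s[C,D]$ then yields $|O\,\tau^n(O)|=n\cdot|CD|$, so it suffices to find $n$ with $|O\,\tau^n(O)|>|AB|$.

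Next I would fix the geometry. Let $O$ be any point, let $r=ray[O,\tau(O)]$, and use Condition \ref{Homogeneity and rigidity of the line} to choose the isomorphism carrying $ray[A,B]$ onto $r$; it sends the initial point $A$ to $O$ and $s[A,B]$ to a congruent segment $s[O,B']$ with $B'\in r$, so $|OB'|=|AB|$. Arguing by contradiction, I would assume $n\cdot|CD|\le|AB|$ for every $n$; comparing the lengths of subsegments of $r$ issuing from $O$, this forces $P_n:=\tau^n(O)\le B'$ for all $n$ in the order on $r$. By \ref{TranslationLemma} the orbit satisfies $P_{n+1}\in s(P_n,P_{n+2})$, so $P_0<P_1<P_2<\cdots$ is strictly increasing and bounded above by $B'$.

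The heart of the proof is to extract a limit of the orbit and show it is fixed. The segments $s[P_n,B']$ are connected subsegments of $s[O,B']$ by \ref{ConnectednessOfSubsegments} and are nested decreasingly, so by \ref{IntersectionOfDecreasingSegments} their intersection is a non-empty closed segment $s[L,B']$; one checks that its endpoint $L$ is exactly the least upper bound of $\{P_n\}$ in $l$. Because $\tau$ preserves betweenness, has no fixed point, and satisfies $O<\tau(O)$, it is a global order-automorphism of $(l,\le)$: a betweenness-preserving bijection of a line is monotone, and an order-reversing one would necessarily have a fixed point. An order-automorphism carries least upper bounds to least upper bounds, so $\tau(L)$ is the least upper bound of $\{\tau(P_n)\}=\{P_{n+1}\}$, which equals the least upper bound of $\{P_n\}$ since dropping the minimum $P_0$ does not change the supremum. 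Hence $\tau(L)=L$, making $L$ a fixed point of the non-trivial translation $\tau$, contradicting \ref{CharOfTanslationsLine}.

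I expect the main obstacle to be this last paragraph: passing from a bounded monotone orbit to an honest fixed point. This is precisely where connectedness of the line is indispensable, and it has to be invoked in the packaged form of \ref{IntersectionOfDecreasingSegments}. The supporting fact that $\tau$ is order-preserving (so that it commutes with taking suprema) is routine but must be stated with care, since it is exactly the absence of fixed points that rules out the order-reversing alternative.
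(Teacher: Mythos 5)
Your proof is correct, but it finishes by a genuinely different mechanism than the paper's. Both arguments ultimately track the same sequence of points (the multiples of $|CD|$ laid off from a basepoint), yet the paper never leaves the segment $s[A,B]$: it forms the convex set $\Sigma$, the union of all initial segments $s[A,X]$ with $|AX|=k\cdot|CD|$, invokes the convex-decomposition definition of connectedness to conclude that either $\Sigma=s[A,E]$ or its complement is $s[E,B]$, and in either case lays off one more copy of $|CD|$ past the cut point $E$, which must overshoot $B$ --- producing the desired $n$ directly, with no appeal to translations at all. You instead convert the statement into dynamics: $n\cdot|CD|=|O\,\tau^n(O)|$ via \ref{PeriodicityLemma}, \ref{CharOfTanslationsLine} and \ref{DividingLengthViaTranslationsProp}, then show a bounded increasing orbit would have a supremum (packaged completeness, via \ref{IntersectionOfDecreasingSegments}), and that this supremum would be a fixed point of the fixed-point-free translation $\tau_{CD}$, since a fixed-point-free element of $\mathcal{I}$ is an order-automorphism (this does follow from \ref{CharOfTanslationsLine}(d), as you suggest) and order-automorphisms commute with suprema. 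What each approach buys: the paper's is shorter and self-contained at the level of length algebra plus the definition of connectedness; yours isolates a reusable dynamical principle --- a fixed-point-free order-automorphism of a connected line has unbounded forward orbits --- which is the standard argument in the theory of group actions on ordered sets and makes the role of each hypothesis (rigidity versus completeness) transparent. One trivial patch: you build $ray[A,B]$ before assuming the contradiction hypothesis, so the degenerate case $A=B$ should be dispatched first (take $n=1$; a one-point segment cannot be congruent to $s[C,D]$ with $C\ne D$), or note that $|CD|\leq|AB|$ under the contradiction hypothesis already forces $A\ne B$.
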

\begin{proof}
$n=1$ works if $|CD| > |AB|$, so consider the case of $|CD|\leq |AB|$. On the line segment $s[A,B]$ look at all points $X$ satisfying $|AX|=k\cdot |CD|$ for some natural $k$ and add all line segments $s[A,X]$ together. It is a non-empty convex subset $\Sigma$ of $s[A,B]$ and either $\Sigma=s[A,E]$ for some $E$ or its complement
is of the form $s[E,B]$ for some $E$. In the first case extending $s[A,E]$ towards $B$ by adding a line segment of length $s[C,D]$ creates a line segment of length $n\cdot |CD| > |AB|$ for some $n$. In the second case one looks at the point $F$ between $A$ and $E$ satisfying $|FE|=|CD|$. There is $X\in s[F,E)$ with $|AX|=k\cdot |CD|$ and adding a line segment of length $s[C,D]$ to $s[A,X]$ creates a line segment of length $(k+1)\cdot |CD| > |AB|$.
\end{proof}

Using \ref{DividingLemma} one can define multiplication of length of line segments by rational numbers:
$\frac{m}{n}|AB|$ is the length $L$ such that $n\cdot L=m\cdot |AB|$.
One can then extend it to multiplication by all positive real numbers $w$ as follows:
pick a natural number $n > w$, then pick a line segment $s[C,D]$ with $|CD| > n\cdot |AB|$
(see \ref{Archimedes' Axiom}),
and consider  two subsets of $s[C,D]$:
$S_1=\{X\in s[C,D]\mid t\cdot |CX| < |CD| \mbox{ for all rationals } t < w\}$ and 
$S_2=\{X\in s[A,B]\mid t\cdot |AX| > |CD| \mbox{ for all rationals } t > w\}$. Both are convex, non-empty and neither can be a closed line segment. Therefore their union is not the whole $s[C,D]$ and the point $X$ outside of that union is declared as the one that satisfies $w\cdot |AX|=|AB|$. 

The following is an extension of Archimedes' Axiom \ref{Archimedes' Axiom}:
\begin{Proposition}\label{RatiosOfLengthsLine}
Given points $A$, $B$, $C$, and $D$ of an Erlangen plane such that $C\ne D$ there
is a real number $t$ satisfying $t\cdot |CD|= |AB|$.
\end{Proposition}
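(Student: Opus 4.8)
The plan is to obtain the required real number $t$ as the ratio $|AB|/|CD|$, realized as a Dedekind cut of the rationals exactly in the spirit of the construction of $\mathbb{R}$ in \ref{CreatingConnectedLines}. If $A=B$ then $t=0$ works, so assume $A\ne B$, in which case $|AB|$ and $|CD|$ are both positive. Using multiplication of a length by a positive rational as defined through \ref{DividingLemma} (namely $\tfrac{m}{n}|CD|$ is the unique length whose $n$-fold sum equals $m\cdot|CD|$), I would set
\[
L=\{q\in\mathbb{Q}\mid q\le 0\}\cup\{q\in\mathbb{Q}_{>0}\mid q\cdot|CD|\le |AB|\},\qquad U=\mathbb{Q}\setminus L.
\]

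First I would check that $(L,U)$ is a genuine cut. It is non-empty and bounded above because Archimedes' Axiom \ref{Archimedes' Axiom} supplies a natural number $n$ with $n\cdot|CD|>|AB|$, so every rational $q\ge n$ lies in $U$; and $L$ is downward closed among the non-negative rationals by the monotonicity of multiplication, which is the content of \ref{AddingLengthsLemma}: $q'<q$ forces $q'\cdot|CD|\le q\cdot|CD|\le|AB|$. Thus $(L,U)$ determines a unique non-negative real number $t=\sup L$.

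Next I would verify $t\cdot|CD|=|AB|$, where $t\cdot|CD|$ is the product defined just before the statement. By construction of $t$, every rational $q<t$ lies in $L$, so $q\cdot|CD|\le|AB|$, and every rational $q>t$ lies in $U$, so $q\cdot|CD|>|AB|$. Hence $|AB|$ is squeezed between the two families $\{q\cdot|CD|\mid q\in\mathbb{Q},\ q<t\}$ and $\{q\cdot|CD|\mid q\in\mathbb{Q},\ q>t\}$ of rational multiples of $|CD|$, which is precisely the characterization of the length $t\cdot|CD|$ furnished by the real-multiplication construction.

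The main obstacle is to show that this squeezing pins down $|AB|$ \emph{uniquely}, i.e.\ that no length other than $t\cdot|CD|$ can lie between the lower and upper families; equivalently, that rational multiples of $|CD|$ approximate an arbitrary length arbitrarily well. I would prove this density statement from \ref{DividingLemma} and \ref{Archimedes' Axiom}: given any positive length $\delta$, Archimedes applied to the pair $\delta,|CD|$ yields $m$ with $m\cdot\delta>|CD|$, whence $\tfrac{1}{m}|CD|<\delta$ by \ref{DividingLemma} and monotonicity. Consecutive multiples $k\cdot\tfrac1m|CD|$ then differ by less than $\delta$, and since they eventually exceed any fixed length (Archimedes again), one of them must fall strictly between any two lengths differing by $\delta$. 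Feeding this density into the cut, and invoking the connectedness of the line of lengths that underlies the real-multiplication construction, forces $t\cdot|CD|=|AB|$ and completes the proof.
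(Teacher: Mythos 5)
Your proposal is correct and takes essentially the same route as the paper: the paper's own proof also defines $t$ as the cut point (supremum) of the set of rationals $q\ge 0$ satisfying $q\cdot|CD|\le|AB|$, using Archimedes' Axiom \ref{Archimedes' Axiom} to bound this set inside $[0,n]$. The only difference is that you explicitly verify, via density of the rational multiples of $|CD|$, that this $t$ satisfies $t\cdot|CD|=|AB|$ in the sense of the real-multiplication construction preceding the proposition --- a verification the paper leaves implicit.
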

\begin{proof}
Consider all rational numbers $q\ge 0$ satisfying $q\cdot |CD|\leq |AB|$. They form a convex subset $\Sigma$ of $[0,n]$ for some natural number $n$ such that $n\cdot |CD| > |AB|$. The real number $t$ such that $\Sigma=[0,t]$ or the complement of $\Sigma$ in $[0,n]$ is equal to $[t,n]$ is the number we were looking for.
\end{proof}

\section{Isometries and isomorphisms of Erlangen lines}

\begin{Definition}\label{IsometryLineDef}
Suppose $(l,\mathcal{I})$ is an Erlangen line. A function $f:l\to l$ is an \textbf{isometry}
if it preserves the length of closed line segments, i.e. $|AB|=|f(A)f(B)|$ for all $A,B\in l$.
\end{Definition}

Notice isometries preserve betweenness.

\begin{Theorem}\label{IsometriesAreIsomorphismsLine}
Let $(l,\mathcal{I})$ be an Erlangen line. The following conditions are equivalent
for any function $f:l\to l$:\\
a. $f\in \mathcal{I}$,\\
b. $f:l\to l$ is an isometry.
\end{Theorem}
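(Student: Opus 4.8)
The plan is to prove the two implications separately. The direction $a.\implies b.$ is immediate: if $f\in\mathcal{I}$ then $f$ preserves closed line segments, so $f(s[A,B])=s[f(A),f(B)]$ for all $A,B\in l$, and hence $f$ itself exhibits $s[A,B]$ as congruent to $s[f(A),f(B)]$; since equality of lengths is exactly congruence, $|AB|=|f(A)f(B)|$ and $f$ is an isometry. The substantive content is $b.\implies a.$, which will hinge on the uniqueness built into Condition~\ref{Homogeneity and rigidity of the line} together with the strict monotonicity of length along a ray.

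For $b.\implies a.$, let $f$ be an isometry. First I would record two elementary facts: $f$ preserves betweenness (noted just after \ref{IsometryLineDef}) and $f$ is injective, since $f(A)=f(B)$ would give $|AB|=|f(A)f(B)|=0$ and thus $A=B$. Next, fix a point $A$ and a maximal ray $r$ emanating from $A$. For any two points of $r\setminus\{A\}$ one lies between $A$ and the other, so, applying $f$, their images lie on the same side of $f(A)$; therefore $f(r)$ is contained in a single maximal ray $r'$ emanating from $f(A)$. By Condition~\ref{Homogeneity and rigidity of the line} there is then a unique $g\in\mathcal{I}$ with $g(r)=r'$, and comparison of initial points gives $g(A)=f(A)$.

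It remains to prove $f=g$, which will establish $f\in\mathcal{I}$. By the direction $a.\implies b.$ already shown, $g$ is also an isometry. For $X\in r$ both $f(X)$ and $g(X)$ lie in $r'$ at length $|AX|$ from the common initial point $f(A)=g(A)$; because a proper subsegment of a segment has strictly smaller length, the length from the initial point is strictly monotone along $r'$, forcing $f(X)=g(X)$. In particular $f$ and $g$ agree at $A$ and at some $B\in r\setminus\{A\}$. Now take an arbitrary $Y\in l$ distinct from $A$ and $B$ (the coincident cases being trivial). By \ref{BetwennessCorollary} exactly one of $A,B,Y$ lies between the other two, and since both isometries preserve betweenness and agree on $A$ and $B$, the images $f(Y)$ and $g(Y)$ bear the same betweenness relation to $f(A)=g(A)$ and $f(B)=g(B)$. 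This relation determines the maximal ray at $f(A)$ on which the image must lie, while the isometry condition fixes its length $|AY|$ from $f(A)$; strict monotonicity of length along that ray then yields $f(Y)=g(Y)$. Hence $f=g\in\mathcal{I}$.

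The step I expect to be the main obstacle is this last uniqueness argument: one must verify that an isometry of a connected line is pinned down by its values at two distinct points. The delicate part is not any single idea but the bookkeeping of the three betweenness cases for $\{A,B,Y\}$ recorded by \ref{BetwennessCorollary} — checking in each case that the betweenness type selects the correct maximal ray at $f(A)$ and that the preserved length $|AY|$ then determines the image uniquely by monotonicity.
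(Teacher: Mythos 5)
Your proof is correct and is essentially the paper's own argument in different packaging: the paper normalizes $f$ by composing with a translation and a reflection (if necessary) so that it has two fixed points and then shows such an isometry is the identity, using exactly your ingredients (betweenness preservation plus the fact that a point on a given side of a fixed point is pinned down by its length from that point, since proper subsegments are not congruent). Your version, which instead produces the comparison map $g\in\mathcal{I}$ via Condition~\ref{Homogeneity and rigidity of the line} and proves $f=g$ by the two-point rigidity argument, is the same idea carried out more explicitly, so no further comment is needed.
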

\begin{proof}
a.$\implies$b. follows from the definition of congruency of line segments.\\
b.$\implies$a.
By composing $f$ with a translation and a reflection (if necessary) we may assume $f$
has two different fixed points $A$ and $B$. Suppose $C\in s(A,B)$. Notice $f(C)\in s(A,B)$ as otherwise either $|AC|\ne |Af(C)|$ or $|BC|\ne |Bf(C)|$. In that case $C=f(C)$.
Similarly, one can show $f(C)=C$ for points outside of $s[A,B]$.
\end{proof}

Since Erlangen lines are pairs of structures on a set, we need to define their isomorphisms accordingly.

\begin{Definition}\label{IsomorphismOfErlangenLinesDef}
Let $(l_1,\mathcal{I}_1)$ and $(l_1,\mathcal{I}_1)$ be two Erlangen lines.
An \textbf{isomorphism of Erlangen lines} is a bijection $f:l_1\to l_2$ preserving betweenness such that any bijection $g:l_2\to l_2$ belongs to $\mathcal{I}_2$
if and only if $f^{-1}\circ g\circ f$ belongs to $\mathcal{I}_1$.
\end{Definition}

\begin{Example}\label{IsoBetweenHypLines}
The function $f(x):=\frac{x+1}{1-x}$ from Hyperbolic line II \ref{The hyperbolic line II} 
to Hyperbolic line I \ref{The hyperbolic line I} is an isomorphism.

Indeed, $f^{-1}\circ g\circ f$ is a rational function for any rational function $g$
and $f^{-1}\circ g\circ f$ preserves/reverses $1$, $-1$ if and only if $g$ preserves/reverses $0$, $\infty$. The only rational functions $g$ on $(0,\infty)$ that preserve $0$, $\infty$ are dilations $x\to c\cdot x$ for some $c > 0$.
The only rational functions $g$ on $(0,\infty)$ that switch $0$ and $\infty$ are of the form $x\to c\cdot x^{-1}$ for some $c > 0$.
\end{Example}

\begin{Theorem}\label{CharOfIsoOfErlangenLines}
Let $l_1, l_2$ be two Erlangen lines and let $f:l_1\to l_2$ be a bijection preserving betweenness.
The following conditions are equivalent:\\
1. $t\cdot |AB|=|CD|$ in $l_1$ implies $t\cdot |f(A)f(B)|=|f(C)f(D)|$ in $l_2$ for all $A,B,C,D\in l_1$ and all real numbers $t\ge 0$,\\
2. $t\cdot |AB|=|CD|$ in $l_1$ implies $t\cdot |f(A)f(B)|=|f(C)f(D)|$ in $l_2$ for all $A,B,C,D\in l_1$ and all rational numbers $t\ge 0$,\\
3. $n\cdot |AB|=|CD|$ in $l_1$ implies $n\cdot |f(A)f(B)|=|f(C)f(D)|$ in $l_2$ for all $A,B,C,D\in l_1$ and all natural numbers $n$,\\
4. $|AB|=|CD|$ in $l_1$ implies $|f(A)f(B)|=|f(C)f(D)|$ in $l_2$ for all $A,B,C,D\in l_1$,\\
5. $f(M)$ is the midpoint of $f(A)$ and $f(B)$ in $l_2$ if $M$ is the midpoint of $A$ and $B$ in $l_1$,\\
6. $\tau_{f(X)(Y)}=f\circ \tau_{XY}\circ f^{-1}$ and
$ i_{f(X)}=f\circ i_X\circ f^{-1}$
for all points $X, Y$ of $l_1$,\\
7. $f$ is an isomorphism of Erlangen lines.
\end{Theorem}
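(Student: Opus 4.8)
\emph{Plan of proof.} The implications $1\Rightarrow2\Rightarrow3\Rightarrow4$ are immediate, because each condition is obtained from the previous one by restricting the admissible multipliers (reals $\supset$ rationals $\supset$ naturals, and $4$ is the case $t=1$). The plan is therefore to close the circle by establishing $4\Rightarrow5\Rightarrow6$, the equivalence $6\Leftrightarrow7$, and the substantive implication $6\Rightarrow1$. Throughout I will use that, being a betweenness-preserving bijection between connected lines, $f$ automatically preserves closed line segments, so that $f(s[A,B])=s[f(A),f(B)]$.

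For $4\Rightarrow5$: if $M$ is the midpoint of $s[A,B]$ then $|AM|=|MB|$, so $|f(A)f(M)|=|f(M)f(B)|$ by $4$; since $f$ preserves betweenness, $f(M)\in s[f(A),f(B)]$, hence $f(M)$ is a midpoint, and it is \emph{the} midpoint by uniqueness (\ref{MidpointLemma}). For $5\Rightarrow6$ I would use two observations: the reflection $i_X$ is characterized by $i_X(Y)=Y'$ iff $X$ is the midpoint of $s[Y,Y']$, and every translation factors as $\tau_{XY}=i_M\circ i_X$, where $M$ is the midpoint of $s[X,Y]$ (the composite is fixed-point free by uniqueness of midpoints, sends $X$ to $i_M(X)=Y$, hence equals $\tau_{XY}$ by \ref{CharOfTanslationsLine}). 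Condition $5$ then gives $f\circ i_X\circ f^{-1}=i_{f(X)}$ at once, and conjugating the factorization yields $f\circ\tau_{XY}\circ f^{-1}=i_{f(M)}\circ i_{f(X)}=\tau_{f(X)f(Y)}$, since $f(M)$ is the midpoint of $s[f(X),f(Y)]$. The equivalence $6\Leftrightarrow7$ rests on the fact that every non-trivial element of $\mathcal{I}$ is either a reflection (a fixed point is present, \ref{CharOfSymmetriesLine}) or a translation (no fixed point, \ref{CharOfTanslationsLine}). Granting $6$, conjugation by $f$ carries each such generator of $\mathcal{I}_1$ into $\mathcal{I}_2$ and, by surjectivity of $f$, onto all of $\mathcal{I}_2$, so $f\mathcal{I}_1f^{-1}=\mathcal{I}_2$, which is exactly \ref{IsomorphismOfErlangenLinesDef}; conversely $7$ forces $f$ to send fixed-point-free isomorphisms to fixed-point-free ones and involutions with a fixed point to involutions with a fixed point, which recovers $6$ after tracking the relevant points through $f$.

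The heart of the argument is $6\Rightarrow1$. First I would handle integer multiples: by \ref{DividingLengthViaTranslationsProp}, $n\cdot|CD|=|AB|$ is equivalent to $\tau_{CD}^{\,n}=\tau_{AB}^{\pm1}$, and since $6$ makes $\tau\mapsto f\circ\tau\circ f^{-1}$ a group isomorphism of the abelian translation groups (\ref{TranslationsOfLineAbSubgroup}) sending $\tau_{CD}\mapsto\tau_{f(C)f(D)}$, the relation persists, which is condition $3$. For a rational $t=m/n$ I would realize the common length $n\cdot|AB|=m\cdot|CD|$ by a single segment $s[P,Q]$ and apply the integer case twice to get $n\cdot|f(A)f(B)|=m\cdot|f(C)f(D)|$. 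Order of lengths is preserved because $|AB|\le|CD|$ means some $\sigma\in\mathcal{I}_1$ satisfies $\sigma(s[A,B])\subset s[C,D]$, and applying $f$ to this inclusion (using segment preservation and $f\mathcal{I}_1f^{-1}=\mathcal{I}_2$) gives $|f(A)f(B)|\le|f(C)f(D)|$. Finally a real multiplier is pinned down by the rational cuts $q<t$ and $q>t$ exactly as in the construction preceding \ref{RatiosOfLengthsLine}; preservation of rational multiples together with preservation of order squeezes $t\cdot|f(C)f(D)|=|f(A)f(B)|$, with \ref{RatiosOfLengthsLine} guaranteeing existence and uniqueness of the real ratio.

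I expect the passage from rational to real multipliers to be the main obstacle: it requires knowing that the translation groups are Archimedean ordered abelian groups (via \ref{Archimedes' Axiom} and \ref{TranslationsOfLineAbSubgroup}) and that $f$ preserves both the order and the rational scaling, so that the Dedekind-type cut defining $t\cdot|CD|$ is transported faithfully. The reflection–translation factorization in $5\Rightarrow6$ and the order-preservation step are the other places where care is needed, but they become routine once the characterizations \ref{CharOfSymmetriesLine} and \ref{CharOfTanslationsLine} are in hand.
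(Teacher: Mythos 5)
Your proposal is correct, but it closes the equivalences along a genuinely different route from the paper. The paper's structure is the cycle $1\Rightarrow2\Rightarrow3\Rightarrow4\Rightarrow5\Rightarrow1$ together with a separate loop $4\Rightarrow7\Rightarrow6\Rightarrow4$; its analytic heart is $5\Rightarrow1$, proved by explicitly coordinatizing $l_1$: take $A(n)=\tau^n(A(0))$ for a fixed translation $\tau$, fill in dyadic rationals by repeated midpoints, and extend to all reals by connectedness, so that midpoint preservation alone forces $f(A(t))=B(t)$ and hence condition 1; its passage to condition 7 is then a one-liner from condition 4 via \ref{IsometriesAreIsomorphismsLine}, since condition 4 says exactly that conjugation by $f$ carries isometries to isometries. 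You instead run $4\Rightarrow5\Rightarrow6$, $6\Leftrightarrow7$, and put the analytic weight on $6\Rightarrow1$, exploiting the translation group: integer ratios through \ref{DividingLengthViaTranslationsProp} and the homomorphism property of conjugation, rationals by realizing a common length, order preservation from the definition of $\leq$, and reals by an Archimedean squeeze. Your $5\Rightarrow6$ rests on the factorization $\tau_{XY}=i_M\circ i_X$ (with $M$ the midpoint of $s[X,Y]$), a fact the paper proves only for planes (\ref{TranslationsAreTwoSymmetries}) but which does hold for lines by the fixed-point criteria \ref{CharOfSymmetriesLine} and \ref{CharOfTanslationsLine}, exactly as you argue. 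What the paper's route buys is self-containedness: the coordinatization built in $5\Rightarrow1$ is reused almost verbatim in \ref{IsoOfErlangenLines}, and no properties of real scalar multiplication beyond the construction itself are invoked. What your route buys is that the group-theoretic content is explicit and no coordinatization needs to be rebuilt; the price is that you must supply strict monotonicity of $t\mapsto t\cdot L$ (equivalently, uniqueness of the real ratio, which \ref{RatiosOfLengthsLine} asserts only as existence), derived from \ref{Archimedes' Axiom} as you indicate --- a fillable gap, not an error, and no worse than the informality of the paper's own treatment of real multiples.
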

\begin{proof}

1.$\implies$2.,  2.$\implies$3., 3.$\implies$4., and 4.$\implies$5. are obvious.

5.$\implies$1. Pick a non-trivial translation $\tau$ and $A(0)\in l_1$. Define $A(n)$ as $\tau^n(A(0))$ for all $n\in \mathbb{Z}$. Extend the definition of points $A(n)$ from integers to rational numbers $q$ of the form $q=\frac{m}{2}$, $m\in \mathbb{Z}$, by requiring $A(q)$ is the midpoint of $A(n)$ and $A(n+1)$ if $q$ is not an integer and $n < q < n+1$.
The next step is constructing, by induction, points $A(q)$ for all rational numbers
in set $\mathcal{D}$ which consists of rationals $q$ 
of the form $q=\frac{m}{2^k}$, $m\in \mathbb{Z}$ and $k\in \mathbb{N}$. This is done in a similar manner as above. Notice $|A(q)A(p)|=|p-q|\cdot |A(0)A(1)|$ and $|B(q)B(p)|=|p-q|\cdot |B(0)B(1)|$
if $B(q)$ is defined as $f(A(q))$. 

The final step is defining $A(t)$ and $B(t)$ for all other real numbers $t$. This is done using connectedness: we choose $n\in\mathbb{Z}$ with $n < t < n+1$
and consider the union $\Sigma$ of all closed line segments $s[A(n),A(q)]$, $q\in \mathcal{D}$
and $n \leq q < t$. $A(t)$ is the point $X$ such that either $\Sigma=s[A(n),X]$
or the complement of $\Sigma$ in $s[A(n),A(n+1)]$ is $s[X,A(n+1)]$.
$B(t)$ is constructed in analogous fashion. Notice $|A(t)A(w)|=|t-w|\cdot |A(0)A(1)$
and $|B(t)B(w)|=|t-w|\cdot |B(0)B(1)$ for all real numbers $t$ and $w$. Also, $f(A(t))=B(t)$ for all real $t$. Since every point in $l_1$ is of the form $A(t)$, Condition 1. follows.

4.$\implies$7. Given any $g\in \mathcal{I}_1$, the function $f\circ g\circ f^{-1}$
is an isometry of $l_2$, hence it belongs to $\mathcal{I}_2$.
Conversely, if $f\circ g\circ f^{-1}$
is an isometry of $l_2$ for some $g\in \mathcal{I}_1$, then $g$ is an isometry of $l_1$ and must belong to $\mathcal{I}_1$ by \ref{IsometriesAreIsomorphismsLine}.

7.$\implies$6. Condition 6. amounts to saying $f\circ g\circ f^{-1}$
is a translation (reflection) of $l_2$ if $g$ is a translation (reflection) of $l_1$.
That is the same as saying $f\circ g\circ f^{-1}$
has no fixed points (has a fixed point) in $l_2$ if $g$ has no fixed points (has a fixed point) in $l_1$. That is certainly true.

6.$\implies$4. If $|AB|=|CD|$, we may assume $\tau_{AB}=\tau_{CD}$. Hence
$\tau_{f(A)f(B)}=\tau_{f(C)f(D)}$ and $|f(A)f(B)|=|f(C)f(D)|$. 
\end{proof}

\begin{Theorem}\label{IsoOfErlangenLines}
Let $l_1, l_2$ be two Erlangen lines. Given two different points $A_0$ and $A_1$ of $l_1$ and two different points $B_0$ and $B_1$ of $l_2$
there is a unique bijection $f:l_1\to l_2$ preserving betweenness such that
$f(A_0)=B_0$, $f(A_1)=B_1$, and 
$|AB|=|CD|$ in $l_1$ implies $|f(A)f(B)|=|f(C)f(D)|$ in $l_2$. In particular, lines $l_1$ and $l_2$ are isomorphic.
\end{Theorem}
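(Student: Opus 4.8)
The plan is to equip both lines with a common real coordinate anchored at the prescribed points and then to match coordinates. Orient $l_1$ so that $A_1$ lies on the positive side of $A_0$ and orient $l_2$ so that $B_1$ lies on the positive side of $B_0$. I would then build order isomorphisms $\alpha\colon\mathbb{R}\to l_1$ and $\beta\colon\mathbb{R}\to l_2$ with $\alpha(0)=A_0$, $\alpha(1)=A_1$, $\beta(0)=B_0$, $\beta(1)=B_1$, satisfying the metric normalizations $|\alpha(s)\alpha(t)|=|s-t|\cdot|A_0A_1|$ and $|\beta(s)\beta(t)|=|s-t|\cdot|B_0B_1|$ for all reals $s,t$. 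Once these are in hand I would set $f:=\beta\circ\alpha^{-1}$. This reduces the existence statement to constructing a single isometric parametrization of an Erlangen line.

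The construction of $\alpha$ is exactly the one carried out inside the proof of \ref{CharOfIsoOfErlangenLines} for the implication $5.\implies 1.$: using the translation $\tau_{A_0A_1}$ one sets $\alpha(n):=\tau_{A_0A_1}^{\,n}(A_0)$ for $n\in\mathbb{Z}$, extends $\alpha$ to dyadic rationals by repeatedly taking midpoints (which exist and are unique by \ref{MidpointLemma}), and finally fills in the remaining real parameters using connectedness of $l_1$, as in that proof. The normalization $|\alpha(s)\alpha(t)|=|s-t|\cdot|A_0A_1|$ holds on the integers by \ref{TranslationSizeProp}, spreads to the dyadics through the midpoint construction, and passes to all reals by the continuity that connectedness provides. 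The one genuinely delicate point I expect is surjectivity of $\alpha$, that is, that every point of $l_1$ has the form $\alpha(t)$: here Archimedes' Axiom \ref{Archimedes' Axiom} is needed to guarantee that the parametrization reaches arbitrarily far out along each ray, while connectedness guarantees that no point is skipped. The identical construction with $\tau_{B_0B_1}$ and $B_0$ produces $\beta$.

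With $f=\beta\circ\alpha^{-1}$ the remaining verifications are routine. Both $\alpha$ and $\beta$ are order isomorphisms, so $f$ is a betweenness-preserving bijection with $f(A_0)=B_0$ and $f(A_1)=B_1$. For the length condition, write $A=\alpha(a)$, $B=\alpha(b)$, $C=\alpha(c)$, $D=\alpha(d)$; since equality of lengths is the same as congruence and since multiplication of the fixed nonzero length $|A_0A_1|$ by reals is injective, the hypothesis $|AB|=|CD|$ is equivalent to $|a-b|=|c-d|$, which in turn gives $|f(A)f(B)|=|\beta(a)\beta(b)|=|a-b|\cdot|B_0B_1|=|c-d|\cdot|B_0B_1|=|f(C)f(D)|$. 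Thus $f$ satisfies condition 4 of \ref{CharOfIsoOfErlangenLines}, hence condition 7, so $f$ is an isomorphism of Erlangen lines and in particular $l_1$ and $l_2$ are isomorphic.

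Finally, for uniqueness, suppose $f$ is any bijection satisfying the conclusion. Being congruence-preserving (condition 4 of \ref{CharOfIsoOfErlangenLines}), $f$ preserves midpoints (condition 5); together with $f(A_0)=B_0=\beta(0)$ and $f(A_1)=B_1=\beta(1)$, an induction along the dyadic midpoint construction yields $f(\alpha(q))=\beta(q)$ for every dyadic rational $q$. Betweenness-preservation together with connectedness of $l_2$ then extends this to $f(\alpha(t))=\beta(t)$ for all real $t$, and since $\alpha$ is surjective this forces $f=\beta\circ\alpha^{-1}$. Hence the map constructed above is the only one with the required properties, which is exactly the asserted uniqueness.
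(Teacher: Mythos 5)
Your proof is correct, and at its core it is the same coordinate-matching argument as the paper's: both anchor a real parameter at the prescribed points and send each point of $l_1$ to the point of $l_2$ carrying the same parameter. The execution differs in two places. For existence, the paper gets the parameter in one step by citing \ref{RatiosOfLengthsLine}: for $X\in ray[A_0,A_1]$ it takes the real $t_X$ with $|A_0X|=t_X\cdot|A_0A_1|$ and declares $f(X)$ to be the point $Y\in ray[B_0,B_1]$ with $|B_0Y|=t_X\cdot|B_0B_1|$ (symmetrically on the opposite rays); you instead rebuild the parametrizations $\alpha,\beta$ from scratch (translation powers, midpoints, completion), which amounts to re-proving \ref{RatiosOfLengthsLine} by repeating the construction inside the proof of \ref{CharOfIsoOfErlangenLines} --- correct, but duplicated work, though your explicit remark that surjectivity of $\alpha$ rests on Archimedes' Axiom \ref{Archimedes' Axiom} makes precise a point the paper leaves implicit there. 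For uniqueness the routes genuinely diverge: the paper composes $f_2\circ f_1^{-1}$ to obtain a congruence-preserving self-bijection of $l_2$ fixing $B_0$ and $B_1$ and kills it in two lines via \ref{IsometriesAreIsomorphismsLine} (together with the fact, from \ref{InvolutionAndIdLines}, that an isomorphism with two fixed points is the identity), whereas you run a dyadic-induction-plus-density argument. Both are valid; the paper's uniqueness is slicker because it reuses the isometry characterization, while yours is more elementary and self-contained at the cost of redoing the density bookkeeping.
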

\begin{proof}
Given a point $X\in ray[A_0,A_1]$ consider the real number $t_X$ such that $|A_0X|=t_X\cdot |A_0A_1|$ (see \ref{RatiosOfLengthsLine}). Find $Y\in ray[B_0,B_1]$ so that $|B_0Y|=t_X\cdot |B_0B_1|$ and declare $f(X)=Y$.

Given a point $X\in ray[A_1,A_0]$ consider the real number $t_X$ such that $|A_1X|=t_X\cdot |A_0A_1|$ (see \ref{RatiosOfLengthsLine}). Find $Y\in ray[B_1,B_0]$ so that $|B_1Y|=t_X\cdot |B_0B_1|$ and declare $f(X)=Y$.

If two bijections $f_1,f_2:l_1\to l_2$ preserve betweenness,
$f_i(A_0)=B_0$, $f_i(A_1)=B_1$, and 
$|AB|=|CD|$ in $l_1$ implies $|f_i(A)f(B)|=|f_i(C)f_i(D)|$ in $l_2$ and $i=1,2$,
then $f_2\circ f_1^{-1}$ must be the identity by \ref{IsometriesAreIsomorphismsLine} as it fixes two points.
\end{proof}

\section{Length functions}

\begin{Definition}\label{LengthFunctionDef}
Given an Erlangen line $(l,\mathcal{I})$ by a \textbf{length function}
$\mu$ we mean a function from the set of line segments of $l$ to the set of non-negative reals such that the following conditions hold:\\
1. $\mu(s[A,B])=0$ if and only if $s[A,B]$ contains at most one point,\\
2. if $s[A,B]$ is the union of two line segments $s[C,D]$ and $s[E,F]$, then 
$$\mu(s[A,B])=\mu(s[C,D])+\mu(s[E,F])-\mu(s[C,D]\cap s[E,F])$$
3. $\mu(s[f(A),f(B)])=\mu(s[A,B])$ for all $f\in \mathcal{I}$ and all $A,B\in l$.
\end{Definition}

\begin{Example} 
In the case of the real line \ref{The real line}
the function $\mu(s[t,w]):=|t-w|$ is a length function.
\end{Example}

\begin{Example}
In the case of the hyperbolic line I \ref{The hyperbolic line I}
the function $\mu(s[t,w]):=|\ln(t/w)|$ is a length function since $f(x)=\ln(x)$
is an isomorphism from the hyperbolic line I to the real line.
Indeed, $f^{-1}\circ g\circ f(x)$ for $g(t)=m\cdot t+b$ amounts
to $x\to x^m\cdot e^b$.
\end{Example}

\begin{Example}
In the case of the hyperbolic line II \ref{The hyperbolic line II} one can find a length function by using the isomorphism $f(x):=\frac{x+1}{1-x}$ from that line to the 
Hyperbolic line I (see \ref{IsoBetweenHypLines}).
\end{Example}

\begin{Proposition}\label{RelationBetweenLengthFunctionsProp}
Let $(l,\mathcal{I})$ be an Erlangen line.
Given two different length functions $\mu$ and $\lambda$ there is a constant $c > 0$ such that $\lambda=c\cdot \mu$.
\end{Proposition}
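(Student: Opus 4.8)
The plan is to fix a reference segment $s[A_0,A_1]$ with $A_0\ne A_1$, set
$c:=\lambda(s[A_0,A_1])/\mu(s[A_0,A_1])$ (a positive real by condition~1 of \ref{LengthFunctionDef}), and prove that every length function is determined, up to this single scalar, by its value on $s[A_0,A_1]$. The heart of the argument is to show that for any segment $s[A,B]$ one has $\mu(s[A,B])=t\cdot\mu(s[A_0,A_1])$, where $t$ is the real number with $t\cdot|A_0A_1|=|AB|$ supplied by \ref{RatiosOfLengthsLine}. The identical formula for $\lambda$ with the \emph{same} $t$ then yields $\lambda(s[A,B])=c\cdot\mu(s[A,B])$ on non-degenerate segments, while on degenerate ones both sides vanish by condition~1.

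First I would record two elementary properties of any length function. Congruent segments receive equal values: if $f(s[A,B])=s[C,D]$ for some $f\in\mathcal{I}$, then condition~3 gives $\mu(s[A,B])=\mu(s[C,D])$, so $\mu$ depends only on the length $|AB|$. For monotonicity, suppose $s[A,B]\subset s[C,D]$; ordering the endpoints as $C\le A\le B\le D$ and applying additivity (condition~2) twice, with single-point intersections contributing $0$ by condition~1, gives $\mu(s[C,D])=\mu(s[C,A])+\mu(s[A,B])+\mu(s[B,D])\ge\mu(s[A,B])$. Combined with invariance this upgrades to: $|AB|\le|CD|$ implies $\mu(s[A,B])\le\mu(s[C,D])$, since $|AB|\le|CD|$ provides $\tau\in\mathcal{I}$ with $\tau(s[A,B])\subset s[C,D]$.

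Next I would establish integer and rational homogeneity. If $|AB|=n\cdot|CD|$, then $s[A,B]$ is subdivided by points $C_0=A,\ldots,C_n=B$ with $|C_iC_{i+1}|=|CD|$ meeting only at endpoints; iterating additivity and using that the congruent pieces share a common $\mu$-value gives $\mu(s[A,B])=n\cdot\mu(s[C,D])$. From $n\cdot|AB|=m\cdot|CD|$ one then deduces $\mu(s[A,B])=\tfrac{m}{n}\,\mu(s[C,D])$, i.e. $\mu(q\cdot L)=q\cdot\mu(L)$ for every nonnegative rational $q$ and every length $L$.

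Finally, I would pass from rational to real scaling by a squeeze. For the $t$ of \ref{RatiosOfLengthsLine} and rationals $q<t<q'$ one has $q\cdot|A_0A_1|<|AB|<q'\cdot|A_0A_1|$, so monotonicity together with rational homogeneity gives $q\cdot\mu(s[A_0,A_1])\le\mu(s[A,B])\le q'\cdot\mu(s[A_0,A_1])$; letting $q\uparrow t$ and $q'\downarrow t$ forces $\mu(s[A,B])=t\cdot\mu(s[A_0,A_1])$. The same computation applies verbatim to $\lambda$ with the same $t$, and dividing the two identities completes the proof. I expect the main obstacle to be this last step: one must check that strict comparison of lengths survives multiplication by rationals (so that $q<t$ really yields $q\cdot|A_0A_1|<|AB|$, which feeds the squeeze) and that no degenerate overlap spoils the additive bookkeeping — the remainder is routine manipulation of conditions~1--3 of \ref{LengthFunctionDef}.
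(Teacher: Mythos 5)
Your proof is correct, but it takes a genuinely different route from the paper. The paper's own proof is a one-liner: since every Erlangen line is isomorphic to the real line by \ref{IsoOfErlangenLines}, it suffices to verify the statement for the real line, and that verification is left to the reader. You instead argue intrinsically on an abstract Erlangen line: congruence invariance (condition 3 of \ref{LengthFunctionDef}) shows $\mu$ depends only on the length $|AB|$; additivity (condition 2) together with condition 1 gives monotonicity and integer homogeneity; \ref{DividingLemma} upgrades this to rational homogeneity; and the squeeze via \ref{RatiosOfLengthsLine} handles arbitrary real ratios. This is essentially the computation the paper delegates to the reader, but carried out without transporting to $\mathbb{R}$, so your argument is self-contained and avoids invoking the classification theorem \ref{IsoOfErlangenLines}, which is one of the deeper results of the paper; the price is that you must handle the bookkeeping with the paper's axiomatic length machinery rather than with explicit real numbers. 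Two small remarks. First, the strictness you flag as the main obstacle is a non-issue: the construction of $t$ in \ref{RatiosOfLengthsLine} gives $q\cdot|A_0A_1|\le |AB|$ for rationals $q<t$ and, using total comparability of lengths, $|AB|\le q'\cdot|A_0A_1|$ for rationals $q'>t$, and weak inequalities suffice for the squeeze. Second, in the integer-homogeneity step you should note that the pairwise-intersection condition in the definition of $n\cdot|CD|$ forces the points $C_0,\dots,C_n$ to occur in their natural order along $s[A,B]$ (otherwise two of the pieces would overlap in a non-degenerate segment), so that \ref{SplittingOfClosedSegments} applies and the union of the consecutive pieces is all of $s[A,B]$; only then does iterated additivity give $\mu(s[A,B])=n\cdot\mu(s[C,D])$.
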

\begin{proof}
Since every Erlangen line is isomorphic to the real line (see \ref{IsoOfErlangenLines}), it suffices to show \ref{RelationBetweenLengthFunctionsProp} for the real line and that is left to the reader.
\end{proof}

\begin{Example}
The hyperbolic line III \ref{The hyperbolic line III}
is isomorphic to the real line via the function $f(t,x)=arcsinh(x)$. Therefore
$\mu(s[(t_1,x_1),(t_2,x_2)])$ defined as $$|arcsinh(x_1)-arcsinh(x_2)|$$
is a length function of the hyperbolic line III.

Recall that $\mathcal{I}$
consists of linear transformations
$$ g(t,x)=(a\cdot t+ b\cdot x, \pm (a\cdot x+ b\cdot t))$$
where $a > 0$ and $a^2-b^2=1$. To see the form of $f\circ g\circ f^{-1}$
we notice $a=\cosh(c)$, $b=\sinh(c)$ for some $c$, and $f^{-1}(s)=(\cosh(s),\sinh(s))$.
Therefore
$$g\circ f^{-1}(s)=g(\cosh(s),\sinh(s))=$$
$$(\cosh(c)\cdot \cosh(s)+ \sinh(c)\cdot \sinh(s), \pm (\cosh(c)\cdot \sinh(s)+ \sinh(c))\cdot \cosh(s)))=$$
$$(\cosh(\pm (s+c)),\sinh(\pm (s+ c)))$$
resulting in $f\circ g\circ f^{-1}(s)=\pm (s+ c)$.
\end{Example}

\section{Euclidean lines}\label{Euclidean lines}

In this section we outline how the ideas of Euclid lead to Erlangen lines.

The Elements of Euclid include the following five "common notions":
\par\noindent
    1. Things that are equal to the same thing are also equal to one another (\textbf{Transitive property of equality}).\\
2.
    If equals are added to equals, then the wholes are equal (\textbf{Addition property of equality}).\\
3.
    If equals are subtracted from equals, then the remainders are equal (\textbf{Subtraction property of equality}).\\
4.
    Things that coincide with one another are equal to one another (\textbf{Reflexive Property}).\\
5.
    The whole is greater than the part.

The above common notions are usually applied to length of line segments and measures of angles. Based on the common notions we outline how to create Erlangen lines using Euclid's concepts.

\begin{Definition}
A \textbf{Euclidean line} is a line $l$ with the notion of congruency of line segments and a linear order on equivalence classes of the congruency satisfying the conditions below.
The congruence of line segments is indicated by $s[A,B]\equiv s[C,D]$
and the equivalence class of $s[A,B]$ is denoted by $|AB|$:\\
a. If $C\in s[A,B]$, $C'\in s[A',B']$ and $|AC|=|A'C'|$, $|CB|=|C'B'|$, then $|AB|=|A'B'|$, \\
b. If $C\in s[A,B]$, $C'\in s[A',B']$ and $|AC|=|A'C'|$, $|AB|=|A'B'|$, then $|CB|=|C'B'|$, \\
c. If If $C\in s[A,B)$, then $|AC| < |AB|$,\\
d. If $A\ne B$, then for each maximal ray emanating from a point $C$ there is a point $X$ satisfying $|AB|=|CX|$.
\end{Definition}
Notice that a. corresponds to Common Notion 2. b. corresponds to Common Notion 3.
c. corresponds to Common Notion 5. and d. corresponds to the intersection of the line $l$ and the circle centered at $C$ with radius $|AB|$ which Euclid simply constructed using a straightedge and compass.

Obviously, the results of the previous section say that every Erlangen line is a Euclidean line. The point of this section is to show the converse.

\begin{Theorem}
For every Euclidean line $l$ there is a unique group $\mathcal{I}$ of isomorphisms of $l$
such that $(l,\mathcal{I})$ is an Erlangen line and $s[A,B]\equiv s[C,D]$
means exactly that there is $f\in \mathcal{I}$ sending $s[A,B]$ onto $s[C,D]$.
\end{Theorem}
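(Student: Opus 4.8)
The plan is to let $\mathcal{I}$ be the set of all isometries of $l$ in the sense of \ref{IsometryLineDef}, i.e. the betweenness-preserving bijections $f\colon l\to l$ with $|f(A)f(B)|=|AB|$ for all $A,B$, and to show this single choice does everything the theorem asks. The engine of the whole argument is a \emph{ruler}: fixing a point $A$ and a maximal ray $r$ emanating from $A$, the assignment $X\mapsto |AX|$ is injective by condition (c) (a proper subsegment is strictly shorter) and surjective onto the linearly ordered set of congruence classes of lengths by condition (d) (every length is realized on $r$); hence it is an order isomorphism of $r$ onto the lengths. Conditions (a) and (b) make this assignment additive, so for $X\le X'$ on $r$ one has $|AX'|=|AX|+|XX'|$, and betweenness on a connected line is recovered from lengths by the identity $|AC|+|CB|=|AB|$ (with strict inequality otherwise, again by (c)). I first record that $\mathcal{I}$ is a group of isomorphisms: isometries compose and invert to isometries, they preserve betweenness by the additive characterization just mentioned, and a betweenness-preserving bijection of a connected line preserves closed line segments, so each element of $\mathcal{I}$ is an isomorphism of $l$.

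Next I would verify Condition \ref{Homogeneity and rigidity of the line}. For existence, given maximal rays $r_1$ (from $A$) and $r_2$ (from $B$), I compose the two rulers $\rho_1\colon r_1\to\Lambda$, $X\mapsto|AX|$, and $\rho_2\colon r_2\to\Lambda$, $Y\mapsto|BY|$, and set $f:=\rho_2^{-1}\circ\rho_1$ on $r_1$, extending by the analogous map of the antipodal ray $a(r_1)$ onto $a(r_2)$. Additivity of the rulers (conditions (a),(b)) shows $|f(X)f(X')|=|XX'|$ for $X,X'$ on the same ray, and the decomposition $|XX'|=|XA|+|AX'|$ through the fixed point $f(A)=B$ handles pairs on opposite sides; thus $f\in\mathcal{I}$ and $f(r_1)=r_2$. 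For uniqueness, if $f,g\in\mathcal{I}$ both send $r_1$ to $r_2$ then $h:=g^{-1}f$ fixes $r_1$ setwise, hence fixes its apex $A$, and for each $X\in r_1$ the points $X$ and $h(X)$ lie on $r_1$ with $|Ah(X)|=|AX|$, so $h(X)=X$ by injectivity of the ruler (condition (c)); therefore $h=\mathrm{id}$ and $f=g$. Together with $l$ being a connected line this makes $(l,\mathcal{I})$ an Erlangen line in the sense of \ref{ErlangenLineDef}.

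The congruence clause is then short. If $f\in\mathcal{I}$ carries $s[A,B]$ onto $s[C,D]$ it maps endpoints to endpoints and preserves length, so $|CD|=|f(A)f(B)|=|AB|$, i.e. $s[A,B]\equiv s[C,D]$. Conversely, if $|AB|=|CD|$, the homogeneity just proved supplies $f\in\mathcal{I}$ with $f(ray[A,B])=ray[C,D]$ and $f(A)=C$; then $f(B)$ and $D$ are both the unique point of $ray[C,D]$ at length $|AB|=|CD|$ from $C$, so $f(B)=D$ and $f(s[A,B])=s[C,D]$. For uniqueness of the group, suppose $(l,\mathcal{I}')$ is any Erlangen line whose congruence is orbit-equivalence under $\mathcal{I}'$. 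Each $g\in\mathcal{I}'$ sends every $s[A,B]$ to the congruent segment $s[g(A),g(B)]$, hence preserves length and lies in $\mathcal{I}$, so $\mathcal{I}'\subseteq\mathcal{I}$; and given $f\in\mathcal{I}$ and any maximal ray $r$, homogeneity of $\mathcal{I}'$ produces $g\in\mathcal{I}'\subseteq\mathcal{I}$ with $g(r)=f(r)$, whence $f=g\in\mathcal{I}'$ by the rigidity already established for $\mathcal{I}$. Thus $\mathcal{I}=\mathcal{I}'$.

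I expect the main obstacle to be the verification of Condition \ref{Homogeneity and rigidity of the line}, and in particular making the ruler rigorous: everything hinges on showing the length-assignment on a ray is a genuine order isomorphism onto the monoid of lengths (injectivity from (c), surjectivity from (d)) and is additive (from (a),(b)), since this is what simultaneously produces the isometry between two prescribed rays and forces its uniqueness. A secondary point to confirm at the outset is that $l$ is indeed a connected line, so that betweenness is faithfully encoded by the additive length relation and isometries are automatically isomorphisms; this is where the completeness built into the order on congruence classes, together with (d), must be invoked.
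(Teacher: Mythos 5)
Your proposal is correct and takes essentially the same route as the paper: the paper's entire proof consists of constructing, for a pair of maximal rays $r_1$, $r_2$ with apexes $A_1$, $A_2$, the map sending $X\in r_1$ to the unique point of $r_2$ with $|A_2f(X)|=|A_1X|$ (extended antipodally), with everything else ``left as an exercise,'' and your ruler composition $\rho_2^{-1}\circ\rho_1$ is exactly this map, your write-up merely supplying the omitted verifications (group structure, rigidity via injectivity of the ruler from condition (c), the congruence clause, and uniqueness of the group). One caveat: the connectedness of $l$ that you flag at the end cannot actually be derived from conditions (a)--(d) --- there is no completeness built into the order on congruence classes, and $\mathbb{Q}$ with its standard congruence satisfies all four conditions while failing to be a connected line --- but this defect sits in the paper's definition of a Euclidean line and is equally present in (indeed, silently ignored by) the paper's own proof.
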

\begin{proof}
Given two maximal rays $r_1$ and $r_2$ in $l$ emanating from $A_1$ and $A_2$, respectively, define $f:l\to l$ as follows:\\
1. If $X\in r_1$, then $f(X)$ is the unique point on $r_2$ satisfying $|A_1X|=|A_2f(X)|$.\\
2. If $X\notin r_1$, then $f(X)$ is the unique point on the opposite maximal ray to $r_2$ satisfying $|A_1X|=|A_2f(X)|$.\\
The remainder of the proof is left as an exercise.
\end{proof}

\section{Circles}

Euclid understood angles intuitively. Since Euclid started from the concept of the length, he thought in terms of circles (defined as having a center and a radius) and angles were essentially reduced to circles as ancient Greeks preferred to think in terms of objects that are finite in some sense.  In our approach we see angles as line segments in the boundary at infinity of Pasch spaces and we see circles as the simplest boundaries at infinity that do not consist of finitely many points. Also, circles and lines are Pasch spaces whose boundaries at infinity consist of two points.

\begin{Definition}\label{CircleDef}
A \textbf{circle} is a set $c$ with a relation of betweenness satisfying the following conditions:\\
a. $c$ contains at least three points,\\
b. each point $A$ of $c$ has the antipodal point $a(A)$ such that the line segment
$s[A,a(A)]$ contains exactly two points,\\
c. each closed line segment $s[A,B]$ is connected if $A$ is not antipodal to $B$,\\
d. for each $A\in c$ the complement $c\setminus\{A,a(A)\}$ can be expressed as the union of two disjoint non-empty convex sets $l_1$ and $l_2$ that are lines when inheriting betweenness from $c$.
\end{Definition}

\begin{Example}[The circle of angles]
Reals modulo $360$ form a circle. Two different (mod $360$) numbers are antipodal if their sum equals $0$ mod $180$. If two classes of numbers mod $360$ are not antipodal, we pick their representatives $x$ and $y$ in the interval $[0,360)$. Assume $x < y$. If $|x-y| < 180$,
then the line segment $s[x,y]$ is defined as $\{t \mid x\leq t\leq y\}$. Otherwise,
$s[x,y]$ is defined as the union of $\{t \mid 0\leq t\leq x\}$ and $\{t \mid y\leq t\leq 360\}$.
\end{Example}

\begin{Example}
$\mathbb{R}\cup\{\infty\}$ is a circle if the antipodal of $0$ is $\infty$ and the antipodal of $x\ne 0$ is $\frac{-1}{x}$. $s[\infty,x]$ for $x > 0$
is defined as $[x,\infty)\cup\{\infty\}$ and $s[\infty,x]$ for $x < 0$
is defined as $(-\infty,x]\cup\{\infty\}$. If $x,y$ are reals and $[x,y]$ does not contain a pair of antipodal points, then $s[x,y]$ is defined as $[x,y]$.
Otherwise, $s[x,y]$ is defined as $s[x,\infty]\cup s[\infty,y]$. 
\end{Example}

\begin{Proposition}
Each point of a circle is inside a closed connected segment.
\end{Proposition}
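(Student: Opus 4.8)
The plan is to produce, for each point $A$ of a circle $c$, a connected closed line segment $s[B,D]$ with $A\in s(B,D)$. The mechanism is to first locate $A$ on a line sitting inside $c$ (using condition (d) of \ref{CircleDef}), then straddle $A$ by two points of that line (using condition (b) of \ref{LineDef}), and finally certify connectedness via condition (c) of \ref{CircleDef}.

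First I would fix an auxiliary point $P$. Condition (a) of \ref{CircleDef} provides at least three points, and since the antipodal map is an involution (uniqueness of the antipodal point together with the symmetry of $s[A,a(A)]$ yields $a(a(A))=A$), I can choose $P$ with $P\neq A$ and $P\neq a(A)$. Then $A\neq a(P)$ as well, since $A=a(P)$ would force $P=a(A)$. Hence $A$ lies in $c\setminus\{P,a(P)\}$, and condition (d) of \ref{CircleDef} expresses this complement as a union of two convex sets that are lines; let $l_1$ be the one containing $A$.

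Working inside $l_1$, I would apply condition (b) of \ref{LineDef} to the point $A$: the set $l_1\setminus\{A\}$ decomposes into two non-empty convex pieces $c_1,c_2$ with $A\in s[B,D]$ whenever $B\in c_1$ and $D\in c_2$. Because $l_1$ is convex in $c$ and carries the inherited betweenness, the segment $s[B,D]$ computed in $l_1$ coincides with the one computed in $c$. Selecting $B\in c_1$ and $D\in c_2$ then yields $A\in s[B,D]$ with $A$ distinct from both endpoints, i.e. $A\in s(B,D)$.

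The final step, and the only one needing a moment of care, is the connectedness of $s[B,D]$. The key point is that $B$ and $D$ cannot be antipodal: if they were, condition (b) of \ref{CircleDef} would make $s[B,D]$ a two-point set, contradicting $A\in s(B,D)$. Since $B$ is therefore not antipodal to $D$, condition (c) of \ref{CircleDef} guarantees that $s[B,D]$ is connected, so $A$ is indeed inside a closed connected segment. I expect this antipodality exclusion to be the crux of the argument; everything else is a routine assembly of the circle and line axioms.
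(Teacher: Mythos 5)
Your proof is correct and follows essentially the same route as the paper: choose an auxiliary point $P\neq A,a(A)$, place $A$ in a component line $l_1$ of $c\setminus\{P,a(P)\}$ via condition (d) of \ref{CircleDef}, and straddle $A$ using the separation property of lines. The only difference is that you make explicit the final step (the endpoints cannot be antipodal since $s[B,D]$ has an interior point, so condition (c) gives connectedness), which the paper's proof leaves implicit.
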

\begin{proof}
Given a point $A$ of $c$ there is a point $B$ different from both $A$ and $a(A)$. $A$ is in one component $l_1$ of $c\setminus\{B,a(B)\}$ which is a line,
hence $A$ is inside a closed line segment contained in $l_1$.
\end{proof}

Notice circles have two maximal rays emanating from each point.

\begin{Proposition}\label{CircleIsMaxRaysSpace}
If $c$ is a circle, then it is a space with maximal rays
and for each $A\in c$ there are two maximal rays emanating from $A$;
the union of $A$ and each of the components of 
$c\setminus\{A,a(A)\}$. 
\end{Proposition}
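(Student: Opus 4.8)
The plan is to fix a point $A\in c$, let $a(A)$ be its antipodal point, and use condition (d) of \ref{CircleDef} to write $c\setminus\{A,a(A)\}=l_1\sqcup l_2$ as a disjoint union of two non-empty convex lines. I set $R_i:=\{A\}\cup l_i$ for $i=1,2$; these are the two candidate maximal rays. I must then verify the two clauses of \ref{SpaceWithMaximalRays} and show that $R_1,R_2$ are exactly the maximal rays emanating from $A$. Everything reduces to one central claim, which I will call the Localization Lemma: if $B\in l_1$ (so $A$ and $B$ are non-antipodal), then $s[A,B]\subseteq R_1$ and $a(A)\notin s[A,B]$, and symmetrically for $l_2$. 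Equivalently, the pole pair $\{A,a(A)\}$ separates $l_1$ from $l_2$: every connected segment joining a point of $l_1$ to a point of $l_2$ meets $\{A,a(A)\}$.

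Granting the Localization Lemma, the rest is routine. First, $R_1$ is a ray in the sense of \ref{RayDef}: it is convex because $s[P,Q]\subseteq l_1\subseteq R_1$ for $P,Q\in l_1$ (convexity of $l_1$) while $s[A,Q]\subseteq R_1$ is the Lemma; $A$ is not strictly between two points of $R_1$, since any two other points lie in the convex set $l_1$ whose joining segment avoids $A$; every three points of $R_1$ lie in a common subsegment of $R_1$ using that $l_1$ is a connected line together with the nesting of co-initial segments; and $R_1$ is not a closed segment because $l_1$, being a line, has no endpoints. For clause~1 of \ref{SpaceWithMaximalRays}, if $s[A,B]$ and $s[A,C]$ share an interior point then $B,C$ lie in the same half (if they were in different halves the two segments would meet only in $A$ by the Lemma, and $s[A,a(A)]$ has no interior point at all), so both segments sit inside the linearly ordered ray $R_i$ (using \ref{OrderingOfRays}) and one contains the other. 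For clause~2, given a connected $s[A,B]$ with $B\in l_1$, I extend past $B$ to some $D\in l_1$ with $B\in s(A,D)$ --- possible since the ray $R_1$ has no maximum --- and past $A$ to a point $C\in l_2$ close to $A$ with $A\in s(C,B)$ and $C$ non-antipodal to $D$; then $s[C,D]$ is a connected segment containing $s[A,B]$ in its interior.

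To finish I show $R_1,R_2$ are the only maximal rays from $A$. Any ray $r$ from $A$ that contains a point $P\in l_1$ and a point $Q\in l_2$ would contain $s[P,Q]$, which meets $\{A,a(A)\}$; if that segment passes through $A$ then $A$ is strictly between two points of $r$, contradicting clause~1 of \ref{RayDef}; if it contains $a(A)$, then picking any $P\in r\cap l_1$ and using clause~2 of \ref{RayDef} the three points $A,a(A),P$ lie in a common subsegment of $r$, and \ref{BetwennessOfTripleProp} forces one of them to lie between the other two: $P$ between $A,a(A)$ is impossible as $s(A,a(A))=\emptyset$, $a(A)$ between $A,P$ is impossible by the Lemma, and $A$ between $a(A),P$ contradicts that $A$ is the initial point of the ray. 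Hence $r\subseteq R_1$ or $r\subseteq R_2$; since each $R_i$ is itself a ray, maximality gives exactly the two rays $R_1=\{A\}\cup l_1$ and $R_2=\{A\}\cup l_2$.

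The main obstacle is the Localization (separation) Lemma, and this is where I would spend the real effort. The natural attack is by connectedness: $s[A,B]$ is connected by (c) of \ref{CircleDef}, and the pieces $s[A,B]\cap l_1$ and $s[A,B]\cap l_2$ are genuinely convex, being intersections of convex sets. The delicate point is that connectedness alone does not forbid a clean cut between the two pieces, so I must actually use that $l_1$ and $l_2$ are the two components of $c\setminus\{A,a(A)\}$: the separating role of the poles has to be extracted from the line-separation axiom (b) of \ref{LineDef} applied at points of $l_1$, together with the fact from (b) of \ref{CircleDef} that antipodal segments have only two points. Concretely, I would argue that if $s[A,B]$ left $R_1$ there would be a first exit point into $l_2\cup\{a(A)\}$, analyze the segment joining a point of $l_1$ just before the exit to a point of $l_2$ just after it, and show that such a segment is forced to contain a pole, contradicting that the exit was clean; handling the pole $a(A)$ symmetrically to $A$ throughout is the part that requires the most care.
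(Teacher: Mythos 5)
Your overall architecture is sound, and you have correctly located the crux: everything reduces to your Localization Lemma, which Definition \ref{CircleDef}(d) does not hand you for free, because --- unlike \ref{LineDef}(b) and \ref{PlaneDef}(b) --- it contains no separation clause (it only asks that the two pieces be disjoint convex lines, not that segments joining them meet $\{A,a(A)\}$). The genuine gap is that you never prove this lemma, and the attack you sketch for it is circular. Trace it through: if $B\in l_1$ and $s[A,B]\not\subset\{A\}\cup l_1$, then (after the easy observation that $a(A)\notin s[A,B]$, which follows from connectedness plus $s[A,a(A)]=\{A,a(A)\}$) the sets $I_j=s[A,B]\cap l_j$ are order-intervals of $s[A,B]$; since $B$ is the maximum and lies in $I_1$, the excursion $I_2$ is an \emph{initial} interval sitting immediately after $A$, so there is no ``point of $l_1$ just before the exit'' at all --- the only transition to analyze is the re-entry from $I_2$ into $I_1$ (and similarly, for a general crossing segment $s[P,Q]$ with $P\in l_1$, $Q\in l_2$, there is exactly one such transition). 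Connectedness of the segment tells you precisely that this transition is a clean cut: one of the two pieces is a closed segment, the other half-open, which is fully compatible with Definition \ref{ConvConnectednessDef}, as you yourself observe. At that point your plan says: take a point on each side of the cut and ``show that the segment joining them is forced to contain a pole.'' But that segment is a subsegment of the original one, hence avoids both poles by hypothesis; what you are asked to show is exactly the Localization Lemma for this shorter crossing segment. You have reduced the claim to itself, with no induction, minimality, or new structure making the reduced instance any easier, and the ingredients you name (the separation axiom \emph{inside} the line $l_1$, the two-point antipodal segments) are never shown to produce the contradiction.

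For comparison: the paper does not prove this lemma either --- its proof simply ``chooses the component of $c\setminus\{A,a(A)\}$ containing $s(A,B]$,'' i.e., it reads the convex pieces in \ref{CircleDef}(d) as honest components in which a pole-avoiding connected segment must lie, and likewise localizes $s[E,B]$ and $s[E,C]$ into a single component in the nesting step. So you deserve credit for noticing that this step has real content rather than being definitional; but noticing it and then deferring it to a circular sketch leaves the heart of the proposition unproved. If the lemma is granted (equivalently, if \ref{CircleDef}(d) is read or amended to carry the same separation clause as \ref{LineDef}(b) and \ref{PlaneDef}(b)), the remainder of your write-up is correct and in some respects more complete than the paper's: you verify that $\{A\}\cup l_i$ are rays, check both clauses of \ref{SpaceWithMaximalRays}, and prove these are the \emph{only} maximal rays at $A$, which the paper leaves implicit. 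Note, however, that the paper's proof also verifies the Pasch Condition for $c$, which is used immediately afterwards; your proposal omits this, though it lies strictly beyond the quoted statement.
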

\begin{proof} 
According to Definition \ref{SpaceWithMaximalRays}
we need to show that each connected closed line segment is contained in the interior of another connected closed line segment, and if connected line segments $s[A,B]$ and $s[A,C]$ have a common interior point, then one of them is contained in the other.

Given a connected line segment $s[A,B]$ choose the component of $c\setminus\{A,a(A)\}$ containing $s(A,B]$ then choose $C\in s(B,a(A))$. Notice $s(A,B]\subset s(A,C)$.
Now, for any point $D\in s(A,a(C))$ one has $s[A,B)\subset s(D,B)$ resulting in
$s[A,B]\subset s(C,D)$.

Suppose connected line segments $s[A,B]$ and $s[A,C]$ have a common interior point
$D$. Pick $E\in s(A,D)$, $G\in s(E,D)$ and notice $s[E,B]$ and $s[E,C]$ have $G$ as a common interior point. As they both lie in a line (one of the components of $c\setminus\{A,a(A)\}$), one of them is contained in the other.

To show $c$ is a Pasch space we need (see \ref{PaschCondition}) to show that
if there are two maximal rays $r_1,r_2$ emanating from $A$ and points $B,C\in r_1$ and $D\in c$, with $r_2$ intersecting $s[B,D]$, then $r_2$ also intersects $s[C,D]$. It is clearly so if $r_1=r_2$. If $r_1\ne r_2$, then they are antipodal and $D$ must be in $r_2$, so $r_2$ does intersect $s[C,D]$.
\end{proof}

\begin{Proposition}\label{AntipodalMapOnCirclePreservesSegments}
For every circle $c$ the antipodal map $a:c\to c$ preserves closed line segments.
\end{Proposition}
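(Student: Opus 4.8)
The plan is to prove the stronger statement that $a(s[A,B])=s[a(A),a(B)]$ for all $A,B\in c$, which is exactly the assertion that $a$ preserves closed line segments. First I would record that $a$ is an involution: since $s[A,a(A)]$ has exactly two points, $A$ is an antipodal point of $a(A)$, so by the uniqueness built into \ref{CircleDef}(b) we get $a(a(A))=A$; hence $a$ is a bijection equal to its own inverse. (In particular, once betweenness preservation is known, \ref{BijectionPreservingBetweennessProp}(b) would finish the job, but I will instead get equality of segments directly.) The cases $A=B$ and $a(B)=A$ are immediate from \ref{CircleDef}(b), so I fix distinct non-antipodal $A,B$, write $m=a(A)$, and let $l_1,l_2$ be the two convex components of $c\setminus\{A,m\}$ furnished by \ref{CircleDef}(d); without loss of generality $B\in l_1$, so $s[A,B]$ lies in the maximal ray $\{A\}\cup l_1$ of \ref{CircleIsMaxRaysSpace}.

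The first real step is to show that $a$ interchanges $l_1$ and $l_2$, and that each $l_i$ is a connected line containing no pair of antipodal points. Here is the argument I would use: take $P\in l_1$ and suppose $a(P)\in l_1$ as well. Since $\{A\}\cup l_1$ is linearly ordered by $Q\le_A R\iff s[A,Q]\subseteq s[A,R]$ (see \ref{OrderingOfRays}), I may assume $s[A,P]\subseteq s[A,a(P)]$, so $P\in s(A,a(P))$; as $A$ and $a(P)$ are non-antipodal, $s[A,a(P)]$ is connected by \ref{CircleDef}(c). But $s[P,a(P)]\subseteq s[A,a(P)]$ by \ref{InclusionOfSegmentsProp} and equals the two-point set $\{P,a(P)\}$, so the open subsegment $s(P,a(P))$ is empty, contradicting the fact (the lemma following \ref{ConvConnectednessDef}) that distinct points of a connected segment bound a non-empty open subsegment. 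Hence $a(P)\in l_2$, and since $a$ is an involution fixing $\{A,m\}$ setwise, $a(l_1)=l_2$ and $a(l_2)=l_1$. The same contradiction shows no $l_i$ contains an antipodal pair, so every subsegment of $l_i$ joins non-antipodal points and is therefore connected; thus $l_1$ and $l_2$ are connected lines.

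The crux, and what I expect to be the main obstacle, is to show that $a$ carries the order $\le_A$ on $l_1$ to the order $\le_m$ on $l_2$ (where $\le_m$ is the positive ray order on $\{m\}\cup l_2$), i.e. that $a\colon(l_1,\le_A)\to(l_2,\le_m)$ is order preserving; the difficulty is that the naive transfer of betweenness through $a$ is precisely what we are trying to prove, so the argument must be made intrinsically. The device I would use is separation at a \emph{moving} point. Suppose $X<_A Y$ in $l_1$ but, for contradiction, $a(Y)<_m a(X)$, that is $a(Y)\in s(m,a(X))$. Consider the two components $K_1,K_2$ of $c\setminus\{Y,a(Y)\}$ given by \ref{CircleDef}(d). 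Because $X\in s(A,Y)$, the segment $s[A,Y]$ lies in a single maximal ray emanating from $Y$ (\ref{CircleIsMaxRaysSpace}), whence $A$ and $X$ lie in one component, say $K_1$; applying the swap established above at the point $Y$ gives $m=a(A)\in K_2$ and $a(X)\in K_2$. Since $m$ and $a(X)$ are non-antipodal (as $X\neq m$) and $K_2$ is convex, $s[m,a(X)]\subseteq K_2\subseteq c\setminus\{Y,a(Y)\}$; but then $a(Y)\in s(m,a(X))$ would place $a(Y)$ inside $c\setminus\{Y,a(Y)\}$, an absurdity. Hence $a(X)<_m a(Y)$, and $a$ is order preserving.

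Finally I would assemble the pieces. Writing $s[A,B]=\{A\}\cup\{Z\in l_1\mid Z\le_A B\}$ as a down-set for $\le_A$ and using that $a$ sends $A$ to $m$ and is an order isomorphism onto $(l_2,\le_m)$, its image is $\{m\}\cup\{W\in l_2\mid W\le_m a(B)\}=s[m,a(B)]=s[a(A),a(B)]$. The case $B\in l_2$ is symmetric (the swap and the order argument apply verbatim with $l_1,l_2$ exchanged), and the antipodal and degenerate cases were disposed of at the start, so $a(s[A,B])=s[a(A),a(B)]$ in all cases, as required.
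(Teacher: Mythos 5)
Your proof is correct, and it is organized differently from the paper's, though both rest on the same two elementary facts: that $a$ is an involution, and that a connected segment can never contain an antipodal pair (since $s[P,a(P)]$ has exactly two points while distinct points of a connected segment bound a non-empty open subsegment). The paper proves directly that $a$ preserves betweenness for an arbitrary triple: given $C\in s(A,B)$, it separates $c$ at $\{B,a(B)\}$, asserts (without proof) the swap property that puts $a(C)$ in the component opposite to $C$, rules out $a(C)\in s[C,a(A)]$ by the no-antipodal-pair-in-a-connected-segment fact, and concludes $a(C)\in s[a(A),a(B)]$; it then upgrades betweenness preservation to segment preservation by the double inclusion $a(s[A,B])\subseteq s[a(A),a(B)]$, $a(s[a(A),a(B)])\subseteq s[A,B]$, applying the involution (the same bootstrap as in \ref{BijectionPreservingBetweennessProp}). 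You instead prove that $a$ restricts to an order isomorphism from the maximal ray $\{A\}\cup l_1$ (with the positive ray order $\le_A$) onto $\{a(A)\}\cup l_2$ (with $\le_{a(A)}$) and read off $a(s[A,B])=s[a(A),a(B)]$ from the down-set description of segments in rays; your key separation-at-a-moving-point argument (components of $c\setminus\{Y,a(Y)\}$, convexity of $K_2$) replaces the paper's direct betweenness computation, and your Step 4 supplies a full proof of exactly the swap property the paper takes for granted. What your route buys is self-containedness — every separation fact is justified via \ref{CircleIsMaxRaysSpace}, \ref{OrderingOfRays}, and convexity of maximal rays — plus the slightly stronger packaged statement that $a$ is a ray-order isomorphism; what the paper's route buys is brevity, since betweenness preservation plus the involution trick finishes in two lines. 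One small caveat: the uniqueness of the antipodal point, which you attribute to \ref{CircleDef}(b), is signalled there only by the definite article; if one is strict it should be derived (any $P\ne a(A)$ has $s[A,P]$ connected by \ref{CircleDef}(c), hence $|s[A,P]|\ge 3$ when $P \ne A$), an argument entirely in the spirit of your Step 4, so this is cosmetic rather than a gap.
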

\begin{proof}
First, let us show that $a$ preserves betweenness. Suppose $C$ is between $A$ and $B$. Clearly, if $C=A$ or $C=B$, then $a(C)$ is between $a(A)$ and $a(B)$, so assume $C\in s(A,B)$, in particular $a(B)\ne A$. Let $l_1$ be the component of $c\setminus\{B,a(B)\}$ containing $C$. $l_1$ contains $s[B,A]$, hence it does not contain $a(C)$. $a(C)$ cannot be in $s[C,a(A)]$, so the only possibility is that $a(C)$ is in $s[a(A),a(B)]$.

$a$ clearly preserves closed line segments whose endpoints are antipodal, so assume $s[A,B]$ is a connected line segment. As $a$ preserves betweenness,
$a(s[A,B])\subset s[a(A),a(B)]$ and $a(s[a(A),a(B)])\subset s[A,B]$. Apply $a$ to the first inclusion and conclude $a(s[A,B])=s[a(A),a(B)]$.
\end{proof}

\begin{Corollary}
Every circle is a spherical Pasch space with boundary at infinity consisting of two points.
\end{Corollary}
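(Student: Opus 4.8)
The plan is to verify, clause by clause, the definition of a spherical Pasch space (Definition \ref{PaschSpacesDef}) for an arbitrary circle $c$, harvesting the structural facts already established for circles. That $c$ is a Pasch space is exactly the content of Proposition \ref{CircleIsMaxRaysSpace}, and that the antipodal map $a$ preserves closed line segments is Proposition \ref{AntipodalMapOnCirclePreservesSegments}. Since $s[A,a(A)]$ has exactly two points (clause b of Definition \ref{CircleDef}), $a$ is fixed-point free, and the identity $s[a(A),A]=s[A,a(A)]$ forces $a(a(A))=A$, so $a$ is an involution. Clauses 1 and 2 in the definition of a spherical Pasch space are then literally clauses c and b of Definition \ref{CircleDef}. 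Thus the only substantive work is clause 3 together with the assertion about the boundary at infinity, and both of these reduce to a single claim.

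The central step is the claim that neither component $l_1,l_2$ of $c\setminus\{A,a(A)\}$ contains a pair of antipodal points. To prove it I would suppose $P,a(P)\in l_1$. As $P$ and $a(P)$ are antipodal, $s[P,a(P)]=\{P,a(P)\}$ and hence $s(P,a(P))=\emptyset$. On the other hand, using the linear order on the ray $\{A\}\cup l_1$ (Observation \ref{OrderingOfRays}), in which $A$ is the minimum while $l_1$ has neither a least nor a greatest element, I can choose $X,Y\in l_1$ with $X<P\le a(P)<Y$ and with $X,Y$ non-antipodal (only one point, $a(X)$, must be avoided). Then $s[X,Y]$ is connected by clause c of Definition \ref{CircleDef}, and as the order-interval from $X$ to $Y$ it contains both $P$ and $a(P)$; the lemma following Definition \ref{ConvConnectednessDef} now yields $s(P,a(P))\ne\emptyset$, a contradiction.

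With this claim the remainder falls into place. Because $a$ is a segment-preserving involution fixing the pair $\{A,a(A)\}$, it permutes the two components; if $a(l_1)=l_1$ then $l_1$ would contain the antipodal pair $\{P,a(P)\}$ for any $P\in l_1$, contradicting the claim, so $a$ interchanges $l_1$ and $l_2$. Consequently each maximal ray $\{A\}\cup l_i$ contains no antipodal pair, so every one of its closed subsegments joins non-antipodal endpoints and is therefore connected; hence both maximal rays at $A$ (there are exactly two, by Proposition \ref{CircleIsMaxRaysSpace}) are connected, which is precisely the statement that $\partial(c,A)$ consists of two points. For clause 3, suppose $r_1\cup r_2$ contains an antipodal pair with $r_1,r_2$ emanating from $A$; the case $r_1=r_2$ contradicts the claim, so $r_1\ne r_2$ are the two distinct rays at $A$, and it remains to see they are antipodal. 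Each being a connected maximal ray, the remark following Definition \ref{AntipodalRaysDef} guarantees it has an antipodal maximal ray, which emanates from $A$ and cannot be the ray itself, hence is the other ray; thus $r_1$ and $r_2$ are antipodal.

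The main obstacle is exactly the central claim, i.e. excluding antipodal pairs inside a single component. The delicate point is that Definition \ref{CircleDef} guarantees only that $c\setminus\{A,a(A)\}$ decomposes into two convex lines, not that these lines are connected, so the absence of antipodal pairs cannot be read off directly; the argument above must extract it from the interaction between the two-point antipodal segments and the connectedness of all non-antipodal segments. Once this is in hand, clause 3 and the two-point boundary are immediate and the remaining clauses are citations.
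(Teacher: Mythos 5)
Your proof is correct and follows the same route as the paper's, whose entire argument is the two citations you open with: Proposition \ref{CircleIsMaxRaysSpace} (circle is a Pasch space with exactly two maximal rays at each point) and Proposition \ref{AntipodalMapOnCirclePreservesSegments} (the antipodal map preserves segments), with clauses 1 and 2 of Definition \ref{PaschSpacesDef} read off from clauses c and b of Definition \ref{CircleDef} and everything else left to the reader. Your central claim --- that no component of $c\setminus\{A,a(A)\}$ contains an antipodal pair --- is precisely the substantive point the paper leaves implicit, and your derivation of clause 3 and of the two-point boundary from it is sound. One small inaccuracy: Observation \ref{OrderingOfRays} does not assert that $l_1$ has no least element, and for general rays this is false (the non-negative integers are a ray whose complement of the initial point has a minimum); in a circle it does hold because $s[A,P]$ is connected for every $P\in l_1$, so $s(A,P)\ne\emptyset$, but you can avoid the issue entirely by taking $X=P$, since $a(X)=a(P)<Y$ already guarantees that $X$ and $Y$ are non-antipodal.
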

\begin{proof}
Use \ref{CircleIsMaxRaysSpace} and \ref{AntipodalMapOnCirclePreservesSegments}.
\end{proof}

\begin{Proposition}\label{CircleasPaschChar}
A spherical Pasch space $\Pi$, whose boundary at infinity at some point $A$ consists of two points, is a circle.
\end{Proposition}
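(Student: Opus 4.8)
The plan is to verify, one by one, the four defining properties (a)--(d) of \ref{CircleDef} for $\Pi$, taking as given that $\partial(\Pi,A)$ consists of exactly two connected maximal rays $r_1\ne r_2$ emanating from $A$. Properties (a)--(c) are essentially free: (b) is the very data of a spherical Pasch space, since the antipodal map $a$ together with condition~2 of \ref{PaschSpacesDef} gives $s[B,a(B)]=\{B,a(B)\}$; (c) is condition~1 of \ref{PaschSpacesDef}; and (a) holds because a ray is never a closed segment and hence is infinite. All the work is in (d).

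First I would record the structural skeleton. Since $\Pi$ is a space with maximal rays, $ray[A,Z]$ is the unique connected maximal ray from $A$ through any $Z\ne A,a(A)$ (here $s[A,Z]$ is connected by (c)), so every such $Z$ lies in $r_1\cup r_2$ by \ref{MaximalRaysProp}. No connected maximal ray contains an antipodal pair $\{X,a(X)\}$: otherwise $s[X,a(X)]=\{X,a(X)\}$ would be a two--point subsegment of a connected segment of the ray, contradicting the fact that distinct points of a connected segment have a point strictly between them; in particular $a(A)\notin r_1\cup r_2$, so $r_1\cup r_2=\Pi\setminus\{a(A)\}$ and $r_1\cap r_2=\{A\}$. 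Setting $l_i:=r_i\setminus\{A\}$, each $l_i$ is convex (as $A$ is not between two points of $r_i$) and, via the linear order on the ray (\ref{OrderingOfRays}), satisfies \ref{LineDef} directly, so $l_1,l_2$ are lines and $\Pi=l_1\sqcup l_2\sqcup\{A,a(A)\}$. Because $a$ preserves segments and no $l_i$ has an antipodal pair, $a(l_1)=l_2$. This already gives (d) at the point $A$ itself.

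The crux is a separation lemma: for $P\in l_1$ and $Q\in l_2$ with $P$ not antipodal to $Q$, the connected segment $s[P,Q]$ must contain $A$ or $a(A)$. I would prove it by observing that each $l_i$ is open in $\Pi$: given $M\in l_i$, axiom (b) of \ref{LineDef} produces $D,E\in l_i$ on opposite sides of $M$ with $M\in s(D,E)$, and $s[D,E]\subset l_i$ by convexity, so $l_i$ is a neighborhood of each of its points. Hence, if $s[P,Q]$ avoided $\{A,a(A)\}$ it would lie in $l_1\cup l_2$ and split as the disjoint union of the nonempty sets $s[P,Q]\cap l_1$ and $s[P,Q]\cap l_2$, both open in $s[P,Q]$, contradicting connectedness via \ref{ConnectednessConvexityThm}.

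With the separation lemma in hand I would finish (d) for an arbitrary point $B$, say $B\in l_1$ (so $a(B)\in l_2$). The two candidate pieces are the connected maximal rays $ray[B,A]$ and $ray[B,a(A)]$, and the key claim is that these are the only connected maximal rays from $B$. Indeed, if $\rho$ is a connected maximal ray from $B$ missing both $A$ and $a(A)$, then $\rho\subset l_1\cup l_2$; convexity of $\rho$ together with the separation lemma forces $\rho\subset l_1$ (a point of $\rho\cap l_2$ would place $A$ or $a(A)$ on a subsegment of $\rho$). But then $\rho$ is one of the two sub-rays of the line $l_1$ at $B$, and the computation $s[B,a(A)]=\{X\in l_1:X\ge B\}\cup\{a(A)\}$ (obtained from the separation lemma and from $s(A,a(A))=\emptyset$, which excludes $A$ and hence all of $l_2$ from $s[B,a(A)]$ by \ref{InclusionOfSegmentsProp}) shows $\rho\subsetneq ray[B,a(A)]$, contradicting maximality. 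Since $ray[B,A]\ne ray[B,a(A)]$ (a single connected ray cannot contain the antipodal pair $\{A,a(A)\}$), the connected maximal rays from $B$ are exactly these two; their union is $\Pi\setminus\{a(B)\}$ and they meet only in $B$. Therefore $ray[B,A]\setminus\{B\}$ and $ray[B,a(A)]\setminus\{B\}$ are disjoint convex lines whose union is $\Pi\setminus\{B,a(B)\}$, which is precisely (d). The main obstacle is the separation lemma; once the openness of $l_1,l_2$ is isolated, the remainder is bookkeeping with rays and the connectedness criterion \ref{ConnectednessConvexityThm}.
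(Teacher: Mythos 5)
Your overall architecture (reduce everything to a separation lemma, then identify the two maximal rays at an arbitrary point $B$) parallels the paper's proof, but your proof of the separation lemma --- which you correctly identify as the crux --- has a genuine gap. You claim that if $s[P,Q]$ avoided $\{A,a(A)\}$, then $U=s[P,Q]\cap l_1$ and $V=s[P,Q]\cap l_2$ would both be open in $s[P,Q]$, deducing this from the fact that each $M\in l_1$ admits $D,E\in l_1$ with $M\in s(D,E)\subset l_1$. But openness in the sense of \ref{OpenDef} demands an open subsegment around $M$ that is contained in $U$, hence in $s[P,Q]$, and your $s(D,E)$ need not lie in $s[P,Q]$; the paper defines neighborhoods only inside closed segments, so there is no ``subspace topology'' principle you can invoke to pass from ambient segment-openness of $l_1$ to relative openness of $s[P,Q]\cap l_1$. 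That inference is in fact false in spaces with maximal rays: in the Euclidean plane, the open horizontal segment $s((-1,0),(1,0))$ is ``open'' in your ambient sense, yet its trace on the vertical segment $s[(0,0),(0,1)]$ is the single endpoint $\{(0,0)\}$, which is not a neighborhood of $(0,0)$ there. The same transversality is exactly the danger in your situation: since $s[P,Q]$ is connected and $U$, $V$ are disjoint nonempty convex sets covering it, Definition \ref{ConvConnectednessDef} forces one of them, say $U=s[P,W]$, to be a closed segment; the points of $V$ approach the cut point $W$ along $s(W,Q]\subset l_2$, i.e.\ along a maximal ray at $W$ \emph{transverse} to the line $l_1$ (one checks $s[W,Z]\cap s[P,W]=\{W\}$ for $Z\in s(W,Q)$), and then your line-neighborhood of $W$ meets $s[P,Q]$ only on the $P$-side, so $U$ is not open at $W$. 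Nothing you have established excludes such a third maximal ray at $W$ --- ruling it out is precisely the content of the lemma --- so the openness claim presupposes what is to be proved. Symptomatically, your argument for the lemma never uses the two-ray hypothesis at $A$ beyond the set-theoretic decomposition $\Pi=l_1\cup l_2\cup\{A,a(A)\}$, nor the Pasch Condition, nor condition 3 of \ref{PaschSpacesDef}.

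For comparison, the paper attacks this step through the ray structure at $A$ itself: assuming $A,a(A)\notin s[B,C]$, it picks $D\in s(B,C)$ and derives a contradiction from the fact that the maximal ray $ray[A,D]$ must be one of the two rays $r_1$, $r_2$; that is, it constrains the interior points of the offending segment by the hypothesis, rather than by a topological openness argument. A secondary soft spot in your last paragraph: the identity $s[B,a(A)]=\{X\in l_1: X\ge B\}\cup\{a(A)\}$ is justified only in the inclusion $\subset$ by what you cite (exclusion of $A$ and of $l_2$); the inclusion $\supset$ --- that the segment to $a(A)$ sweeps up the entire upper half of $l_1$ --- needs a separate argument. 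Unlike the main gap, this one is fillable with the tools at hand (apply condition 1 of \ref{SpaceWithMaximalRays} at $B$ to the segments $s[B,a(A)]$ and $s[B,X]$, which share interior points, and note that neither $a(A)\in s[B,X]$ nor is the alternative consistent with $X\notin s[B,a(A)]$), but as written it is also incomplete.
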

\begin{proof}
Consider the two maximal rays $r_1$ and $r_2$ emanating from $A$. Obviously, $l_1:=r_1\setminus \{A\}$ and $l_2:=r_2\setminus \{A\}$ are lines, hence are convex. Suppose $B\in l_1$, $C\in l_2$, and $A\notin s[B,C]$. 
In that case $a(A)$ must be in $s[B,C]$ as otherwise we pick $D\in s(B,C)$
and look at $ray[A,D]$. It contains both $B$ and $C$, hence must be different from two given rays, a contradiction.

For a similar reason, both $\{a(A)\}\cup l_1$ and $\{a(A)\}\cup l_2$ must be the only maximal rays emanating from $a(A)$.

Given any other point of $\Pi$, say $B\in l_1$, the two maximal rays emanating from $B$ are $s[B,A]\cup s[A,a(B))$ and $s[B,a(A)]\cup s[a(A),a(B))$. The above proof shows $\{B,a(B)\}$ separates $\Pi$ into two components of convexity, each being a line. Thus, $\Pi$ is a circle.
\end{proof}

\begin{Corollary}\label{SubCirclesOfPaschSpaces}
Let $\Pi$ be a spherical Pasch space. If $B\ne a(A)$, then 
$$c:=s[A,B]\cup s[B,a(A)]\cup s[a(A),a(B)] \cup s[a(B),A]$$
is a circle.
\end{Corollary}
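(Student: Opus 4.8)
The plan is to equip $c$ with the betweenness inherited from $\Pi$ and with the restriction of the antipodal map $a$, and then to verify the four conditions of Definition \ref{CircleDef}; alternatively, once the key convexity step below is in hand, one checks that $c$ is a spherical Pasch space whose boundary at infinity at $A$ has two points and invokes \ref{CircleasPaschChar}. Throughout I assume $A\ne B$ (if $A=B$ then $c=s[A,a(A)]=\{A,a(A)\}$ is degenerate); together with $B\ne a(A)$ and the absence of fixed points of $a$, this makes $A,B,a(A),a(B)$ pairwise distinct.

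The first and easiest step is the symmetry $a(c)=c$: since $a$ preserves closed line segments and $a^2=\mathrm{id}$, it permutes the four defining segments, $a(s[A,B])=s[a(A),a(B)]$ and $a(s[B,a(A)])=s[a(B),A]$. Thus $a$ restricts to a fixed-point-free involution of $c$, and for each $P\in c$ condition~2 of \ref{PaschSpacesDef} gives $s[P,a(P)]=\{P,a(P)\}$; this yields condition~(b) of \ref{CircleDef} at once, while condition~(a) is immediate because $s[A,B]$ is connected and $A\ne B$.

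The main obstacle is to show that $c$ is convex in $\Pi$, for only then do the inherited segments agree with the ambient $s[P,Q]$, making condition~(c) a direct consequence of condition~1 of \ref{PaschSpacesDef}. I would first pin down the two maximal rays at $A$. Because $ray[A,B]\cup ray[A,a(B)]$ contains the antipodal pair $\{B,a(B)\}$, condition~3 of \ref{PaschSpacesDef} forces these two rays to be antipodal (Definition \ref{AntipodalRaysDef}); the same condition applied to a single ray shows that no maximal ray contains an antipodal pair, so $a(A)\notin ray[A,B]$. A Pasch argument then identifies the rays exactly, $ray[A,B]=s[A,B]\cup s[B,a(A))$ and $ray[A,a(B)]=s[A,a(B)]\cup s[a(B),a(A))$, whence $c=ray[A,B]\cup ray[A,a(B)]\cup\{a(A)\}$. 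Granting this, convexity reduces to a short case analysis: for $P,Q$ on a common ray the segment stays there by convexity of rays and \ref{InclusionOfSegmentsProp}, while for $P,Q$ on opposite rays the Pasch Condition \ref{PaschCondition} shows that $s[P,Q]$ meets $c$ at one of the vertices $A$ or $a(A)$ and is therefore covered by the four defining segments. The genuinely delicate point — and where I expect the real work to lie — is exactly this book-keeping: verifying that the short segment between two points of $c$ always passes through the correct marked point and never leaves $c$, which is precisely what the Pasch Condition is designed to control.

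With convexity established the rest is routine. For condition~(d), fix $A$ and write $c\setminus\{A,a(A)\}=l_1\sqcup l_2$ with $l_1=(s[A,B]\cup s[B,a(A)])\setminus\{A,a(A)\}$ and $l_2=a(l_1)$; both are non-empty, convex, and contain no antipodal pair, so each is an open arc carrying the linear order of the ray through it, hence a line in the sense of \ref{LineDef}, exactly as in the proof of \ref{CircleasPaschChar}. This finishes the verification of (a)–(d) and shows $c$ is a circle. The same convexity also makes $c$, with the inherited betweenness and $a|_c$, a spherical Pasch space — the maximal-ray axioms, the Pasch Condition, and conditions~1–3 of \ref{PaschSpacesDef} are all inherited because every relevant segment lies in $c$ — with two-point boundary $\{ray[A,B],ray[A,a(B)]\}$ at $A$, so \ref{CircleasPaschChar} provides an alternative conclusion.
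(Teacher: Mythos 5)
Your proposal is correct and follows exactly the paper's own route: show $a(c)=c$, prove $c$ is convex using the fact that $ray[C,X]$ and $ray[C,a(X)]$ are antipodal (condition 3 of \ref{PaschSpacesDef}), and conclude that $c$ is a spherical Pasch space whose boundary at infinity at $A$ consists of the two antipodal rays $ray[A,B]$ and $ray[A,a(B)]$, so that \ref{CircleasPaschChar} applies. The paper's proof is only a three-line sketch of this same argument, so your version (which also handles the degenerate case $A=B$ and offers the direct verification of \ref{CircleDef} as a variant) is essentially the paper's proof with the bookkeeping made explicit.
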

\begin{proof}
Notice $a(c)=c$ and $c$ is convex as $ray[C,X]$ is antipodal to $ray[C,a(X)]$
for all $C\ne X$. Therefore $c$ is a Pasch space with boundary at infinity consisting of two points, hence $c$ is a circle by \ref{CircleasPaschChar}.
\end{proof}

\section{Erlangen circles}

\begin{Definition}
An \textbf{Erlangen circle} is a pair $(c,\mathcal{I})$ consisting of a circle $c$ and a subgroup $\mathcal{I}$ of isomorphisms of $c$
such that Condition \ref{Homogeneity and rigidity of the circle} is satisfied.
\end{Definition}

\begin{Condition}[Homogeneity and rigidity of the Erlangen circle]\label{Homogeneity and rigidity of the circle}
For every ordered pair of maximal rays in $c$ there is exactly one isomorphism in $\mathcal{I}$ sending the first ray onto the other.
\end{Condition}

\begin{Example}[The hyperbolic circle]
$c=\mathbb{R}\cup\{\infty\}$, $\mathcal{I}$ is the set of all non-constant rational functions $f(x)$ such that
$f(\frac{-1}{x})=\frac{-1}{f(x)}$ for all $x\in l$.
\end{Example}

\begin{Example}[The circle of angles]
Reals mod 360 with isomorphisms in the form $f(x)=m\cdot x+b$, where $m=\pm 1$.
\end{Example}

Notice that we do not assume the antipodal map $a$ belongs to $\mathcal{I}$. However, we plan to prove it later on (see \ref{AntipodalMapIsInI}). At this moment notice that $a$ commutes with all elements of $\mathcal{I}$.

\begin{Lemma}\label{AntipodalMapCommutes}
If $f\in \mathcal{I}$, then $f\circ a=a\circ f$, where $a$ is the antipodal map of the circle $c$.
\end{Lemma}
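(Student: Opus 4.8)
The plan is to show that the antipodal map $a$ is determined intrinsically by the betweenness relation on $c$, and then to exploit the fact that every $f\in\mathcal{I}$, being an isomorphism of $c$, preserves closed line segments. Once the antipodal point of $A$ is characterized purely in terms of segments, commutativity of $f$ with $a$ will follow formally by transporting that characterization from $A$ to $f(A)$.

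First I would establish the key characterization: for each $A\in c$, the point $a(A)$ is the \emph{unique} point $B\neq A$ for which $s[A,B]$ consists of exactly two points. By Definition \ref{CircleDef}b the antipodal point has this property, so the content lies in uniqueness. Suppose $B\neq A$ satisfies $s[A,B]=\{A,B\}$, i.e. $s(A,B)=\emptyset$. If $B$ were not antipodal to $A$, then $s[A,B]$ would be connected by Definition \ref{CircleDef}c; but a connected closed segment with two distinct endpoints has non-empty interior (the open segment between any two distinct points of a connected segment is non-empty), contradicting $s(A,B)=\emptyset$. Hence $B=a(A)$. This step, pinning down $a(A)$ by a segment condition, is the main obstacle, since everything afterward is purely formal.

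With the characterization in hand, I would fix $f\in\mathcal{I}$ and $A\in c$. Since $f$ is an isomorphism of $c$ it preserves closed line segments, so
$$f(s[A,a(A)])=s[f(A),f(a(A))].$$
Because $s[A,a(A)]=\{A,a(A)\}$ has exactly two points and $f$ is a bijection, $s[f(A),f(a(A))]=\{f(A),f(a(A))\}$ also has exactly two points, with $f(a(A))\neq f(A)$. By the uniqueness characterization applied at the point $f(A)$, the only point $B\neq f(A)$ with $s[f(A),B]$ consisting of two points is $a(f(A))$; therefore $f(a(A))=a(f(A))$. As $A$ was arbitrary, this yields $f\circ a=a\circ f$.

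An equivalent route, should the segment-counting argument prove inconvenient, is to characterize $a(A)$ as the unique point of $c$ lying on neither maximal ray emanating from $A$ (using \ref{CircleIsMaxRaysSpace}, where the two maximal rays at $A$ are $\{A\}$ together with the two components of $c\setminus\{A,a(A)\}$). Since $f$ carries the two maximal rays at $A$ onto the two maximal rays at $f(A)$ and is a bijection of $c$, it must carry the one remaining point $a(A)$ to the one remaining point $a(f(A))$, giving the same conclusion.
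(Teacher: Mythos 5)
Your proposal is correct and follows essentially the same route as the paper's proof: the paper also argues that $f(s[X,a(X)])=s[f(X),f(a(X))]$ must have exactly two points and hence the second point must be $a(f(X))$. The only difference is that you spell out the uniqueness step (a non-antipodal pair spans a connected segment, which has non-empty interior by the paper's lemma on connected segments), a detail the paper leaves implicit with the phrase ``so the other must be $a(f(X))$.''
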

\begin{proof}
Since $f$ preserves closed line segments, $f(s[X,a(X)])$ has to have exactly two points for any $X\in c$. One of them is $f(X)$, so the other must be $a(f(X))$ which means $f(a(X))=a(f(X))$.
\end{proof}

\begin{Corollary}\label{FixedPointsOnCircle}
 Let $(c,\mathcal{I})$ be an Erlangen circle. An isomorphism in $\mathcal{I}$ that has a fixed point is an involution. 
An isomorphism in $\mathcal{I}$ that has three fixed points is the identity. 
\end{Corollary}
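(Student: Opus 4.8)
The plan is to follow the template set by \ref{InvolutionAndIdLines} for Erlangen lines, substituting the ray structure of a line by the analogous structure on a circle provided by \ref{CircleIsMaxRaysSpace}: each point $A$ of $c$ has exactly two maximal rays emanating from it, namely the unions of $\{A\}$ with the two components $l_1$, $l_2$ of $c\setminus\{A,a(A)\}$. Condition \ref{Homogeneity and rigidity of the circle}, which asserts a unique isomorphism in $\mathcal{I}$ carries any prescribed maximal ray onto any other, is the engine of both claims.

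For the first claim, suppose $\tau\in\mathcal{I}$ fixes a point $A$. Since $\tau$ preserves closed line segments and $\tau(A)=A$, it carries maximal rays at $A$ to maximal rays at $A$, hence permutes the two rays emanating from $A$. First I would handle the case where $\tau$ preserves both rays: then $id$ and $\tau$ both send a given ray to itself, so uniqueness in \ref{Homogeneity and rigidity of the circle} forces $\tau=id$. In the remaining case $\tau$ swaps the two rays, so $\tau^2$ preserves both, and the same uniqueness gives $\tau^2=id$. In either case $\tau$ is an involution.

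For the second claim I would argue by contradiction, exploiting the fact that on a circle a single fixed point automatically produces a second one. By \ref{AntipodalMapCommutes} we have $\tau(a(A))=a(\tau(A))=a(A)$, so $a(A)$ is fixed whenever $A$ is. Now assume $\tau\ne id$ has a fixed point $A$; by the dichotomy above $\tau$ must swap the two maximal rays at $A$, hence $\tau(l_1)=l_2$ and $\tau(l_2)=l_1$. Because $l_1$ and $l_2$ are disjoint, no point of $l_1\cup l_2$ can be fixed, so the fixed-point set of $\tau$ is contained in the antipodal pair $\{A,a(A)\}$ and thus has at most two elements. Consequently an isomorphism with three fixed points cannot swap the rays and must equal $id$.

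The hard part is conceptual rather than computational: it is recognizing the asymmetry with the line case. On a line two fixed points already force the identity, whereas on a circle one fixed point drags along its antipode, so the correct threshold is three. The key observation confining all fixed points to $\{A,a(A)\}$ is the combination of the component decomposition $c\setminus\{A,a(A)\}=l_1\sqcup l_2$ from \ref{CircleIsMaxRaysSpace} with the ray-swapping dichotomy; the commuting relation \ref{AntipodalMapCommutes} is what guarantees this pair is genuinely fixed and explains precisely why two fixed points do not suffice.
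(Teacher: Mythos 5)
Your proof is correct. The first claim is argued exactly as the paper does (the paper's proof of \ref{FixedPointsOnCircle} is literally ``same as in \ref{InvolutionAndIdLines}''): a fixed point gives the preserve-or-swap dichotomy on the two maximal rays emanating from it, and uniqueness in Condition \ref{Homogeneity and rigidity of the circle} then yields $\tau=id$ or $\tau^2=id$. For the second claim your route differs slightly in packaging. The paper's template, transplanted from the line case, would pick two non-antipodal points among the three fixed points (such a pair exists because antipodes are unique) and observe that $\tau$ must preserve $ray[A,B]$, forcing $\tau=id$ by rigidity. You instead bound the fixed-point set: a non-identity isomorphism fixing $A$ must swap the rays at $A$, hence can fix nothing in $l_1\cup l_2$, so its fixed points lie in $\{A,a(A)\}$ and number at most two. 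The two arguments rest on the same ingredients -- the decomposition of $c\setminus\{A,a(A)\}$ into the two components from \ref{CircleIsMaxRaysSpace} and Condition \ref{Homogeneity and rigidity of the circle} -- but yours has the added merit of making explicit why the correct threshold is three rather than two: by \ref{AntipodalMapCommutes} the antipode of a fixed point is automatically fixed, so reflections genuinely realize the bound of two fixed points.
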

\begin{proof}
Same as in \ref{InvolutionAndIdLines}.
\end{proof}

\begin{Corollary}\label{ErlangenCirclesAreRM}
If $c,\mathcal{I})$ is an Erlangen circle, then $(c,\mathcal{I})$ is a space with rigid motions.
\end{Corollary}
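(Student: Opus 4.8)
The plan is to reduce the statement to the criterion \ref{NilpotentCaseOfRM}, exactly as was done for Erlangen lines in \ref{ErlangenLinesViaRigidMotionsCor}. First I would check that $(c,\mathcal{I})$ is an isotropic space in the sense of \ref{IsotropicSpaceDef}: by \ref{CircleIsMaxRaysSpace} the circle $c$ is a space with maximal rays, and the existence half of Condition \ref{Homogeneity and rigidity of the circle} furnishes, for any two maximal rays, an isomorphism in $\mathcal{I}$ carrying one onto the other. Hence the hypotheses of \ref{NilpotentCaseOfRM} are available, and it suffices to show that whenever $f(s[A,B])\subset s[A,B]$ for some $f\in\mathcal{I}$ and some closed line segment $s[A,B]$ of $c$, the restriction $f|s[A,B]$ is of finite order.

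To verify the finite-order condition I would split into two cases according to the position of $A$ and $B$. If $A$ and $B$ are antipodal, then by clause b of \ref{CircleDef} the segment $s[A,B]$ has exactly two points, and an injective self-map of a two-point set is a permutation, so $f|s[A,B]$ has order at most $2$. If $A$ and $B$ are not antipodal, then by clause c of \ref{CircleDef} the segment $s[A,B]$ is connected. Since $f$ is a global isomorphism of $c$ it preserves closed line segments, so $f(s[C,D])=s[f(C),f(D)]$ for all $C,D\in s[A,B]$, and because $f(s[A,B])\subset s[A,B]$ the map $f$ restricts to a one-to-one self-map of $s[A,B]$ of exactly the type required by \ref{FixedPointTheorem}. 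The Fixed Point Theorem then produces a fixed point of $f$ in $s[A,B]$, and \ref{FixedPointsOnCircle} upgrades this to $f^2=id$; in particular $f|s[A,B]$ is again of finite order.

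With the finite-order hypothesis established in both cases, \ref{NilpotentCaseOfRM} immediately yields that $(c,\mathcal{I})$ is a space with rigid motions. I do not expect any serious obstacle here: the argument is a near-verbatim transcription of the proof of part a of \ref{ErlangenLinesViaRigidMotionsCor}, the only genuinely new feature being the antipodal case, where connectedness fails and the Fixed Point Theorem is unavailable. The mild point to get right is therefore the bookkeeping around two-point segments, which is handled by the trivial observation that an injection of a finite set into itself is a bijection; everything else follows from the cited results about circles and their fixed-point behaviour.
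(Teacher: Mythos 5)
Your proposal is correct and follows essentially the same route as the paper's own proof: reduce to \ref{NilpotentCaseOfRM}, dispose of the antipodal (two-point) case trivially, and in the connected case combine the Fixed Point Theorem \ref{FixedPointTheorem} with \ref{FixedPointsOnCircle} to get $f^2=id$. The only difference is that you spell out the isotropy hypothesis of \ref{NilpotentCaseOfRM} and the finite-permutation argument explicitly, which the paper leaves implicit.
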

\begin{proof}
Let $A\ne B$ be two points of $c$.
Suppose $f(s[A,B])\subset s[A,B]$ for some $f\in \mathcal{I}$. In order to apply \ref{NilpotentCaseOfRM} we need to show $f$ restricted to $s[A,B]$ is of finite order.
It is clearly so if $A$ and $B$ are antipodal. Therefore assume
$s[A,B]$ is connected.
By \ref{FixedPointTheorem} $f$ has a fixed point
and by \ref{FixedPointsOnCircle}
$f^2=id$. 
By \ref{NilpotentCaseOfRM} $(c,\mathcal{I})$ is a space with rigid motions.
\end{proof}

\begin{Definition}\label{SymmetryCircleDef}
Let $(c,\mathcal{I})$ be an Erlangen circle and let $A\in c$. The \textbf{reflection} $i_A$ of $c$ in $A$ is the element of $\mathcal{I}$ that switches the two maximal rays at $A$.
\end{Definition}

\begin{Definition}\label{TranslationCircleDef}
Let $(c,\mathcal{I})$ be an Erlangen circle and let $A,B\in c$. The \textbf{translation} $\tau_{A,B}$ of $c$ from $A$ to $B$ is the element of $\mathcal{I}$ defined as follows:\\
1. If $A=B$, then $\tau_{A,B}=id$,\\
2. If $A$ and $B$ are antipodal, then $\tau_{A,B}$ sends each maximal ray $r$ at $A$
to the maximal ray at $B$ disjoint with $r$,\\
3. If $A$ and $B$ are not antipodal, then $\tau_{A,B}$ sends $ray[A,B]$ to the ray at $B$ not containing $A$.
\end{Definition}

Axiom 4 of Euclid says that all right angles are congruent. For Erlangen circles right angles can be defined very precisely.

\begin{Definition}\label{RightAngleDef}
Let $(c,\mathcal{I})$ be an Erlangen circle and let $A\in c$. A point $B$ forms a \textbf{right angle} with $A$ if the reflection $i_A$ in $A$ sends $B$ to its antipodal point $a(B)$.
\end{Definition}

\begin{Proposition}
Let $(c,\mathcal{I})$ be an Erlangen circle and let $A\in c$.
There are exactly two points in $c$ that form a right angle with $A$ and they are antipodal.
\end{Proposition}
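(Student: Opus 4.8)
The plan is to translate the right-angle condition into a fixed-point count for a single involution of the circle. Set $g := a\circ i_A$, where $a$ is the antipodal map and $i_A$ is the reflection in $A$ from \ref{SymmetryCircleDef}. By \ref{AntipodalMapCommutes} the maps $a$ and $i_A$ commute, and both preserve closed line segments ($a$ by \ref{AntipodalMapOnCirclePreservesSegments}, $i_A$ because $i_A\in\mathcal{I}$), so $g$ is an isomorphism of $c$. Moreover $g^2 = a\circ i_A\circ a\circ i_A = a^2\circ i_A^2 = \mathrm{id}$, so $g$ is an involution. Since $a$ is its own inverse, the defining condition $i_A(B)=a(B)$ of \ref{RightAngleDef} is equivalent to $g(B)=B$; thus I must show $g$ has exactly two fixed points and that they are antipodal.

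First I would locate the fixed points relative to the decomposition $c\setminus\{A,a(A)\}=l_1\cup l_2$ into its two component lines. Because $g(A)=a(i_A(A))=a(A)$ and $g(a(A))=A$, neither $A$ nor $a(A)$ is fixed. Both $i_A$ and $a$ interchange $l_1$ and $l_2$: for $i_A$ this is its definition, while for $a$ one checks, using the description of the maximal rays at a point of $l_1$, that the antipode of a point of $l_1$ is separated from it by both $A$ and $a(A)$ and hence lies in $l_2$. Consequently $g=a\circ i_A$ carries each $l_i$ onto itself. Since no two points of $l_i$ are antipodal, every segment inside $l_i$ is connected, so $l_i$ is a connected line and $g|_{l_i}$ is a betweenness-preserving involution of it.

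The heart of the argument is that a non-identity involution of a connected line has exactly one fixed point. Existence follows from \ref{PeriodicityLemma}: if $g|_{l_i}$ had no fixed point, then $g^2$ would move every point, contradicting $g^2=\mathrm{id}$. For uniqueness, two fixed points $P\neq Q$ would force $g$ to fix $s[P,Q]$ pointwise (an order-preserving involution fixing both endpoints is the identity on the segment, by a short monotonicity argument) and then, ray by ray from $P$ and $Q$, to be the identity on all of $l_i$. I would rule out $g|_{l_i}=\mathrm{id}$ directly: it would make $g$ fix some $D\in s(B,A)\subset l_i$, yet $g(D)\in g(s[B,A])=s[B,a(A)]$, and $s[B,A]\cap s[B,a(A)]=\{B\}$ because these are subsegments of the two distinct maximal rays at $B$, forcing $D=B$, a contradiction. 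Hence $g|_{l_1}$ and $g|_{l_2}$ each contribute exactly one fixed point $P_1\in l_1$ and $P_2\in l_2$, giving precisely two right-angle points.

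Finally, to see that $P_1$ and $P_2$ are antipodal, I would verify that $a(P_1)$ is again fixed by $g$: rewriting $g(P_1)=P_1$ as $i_A(P_1)=a(P_1)$ and using the commutation of $a$ and $i_A$, one computes $g(a(P_1))=a(P_1)$. Since $a$ sends $l_1$ to $l_2$ and $a(P_1)\neq P_1$, the point $a(P_1)$ is the unique fixed point lying in $l_2$, namely $P_2$; thus $a(P_1)=P_2$ and the two right-angle points are antipodal. The main obstacle I anticipate is the clean handling of the connected-line involution in the third paragraph, in particular excluding the degenerate case $g|_{l_i}=\mathrm{id}$ and assembling the global pointwise-identity conclusion from a single fixed segment; the remaining steps are bookkeeping with the antipodal map and the component decomposition.
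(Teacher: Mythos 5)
Your proof is correct and follows essentially the same route as the paper's: both reduce the statement to counting fixed points of the involution $a\circ i_A$, obtain existence of a fixed point on each component of $c\setminus\{A,a(A)\}$ from the no-fixed-point lemma for betweenness-preserving maps of connected lines (\ref{TranslationLemma}, of which your \ref{PeriodicityLemma} is the corollary), and derive antipodality of the two fixed points from the commutation relation \ref{AntipodalMapCommutes}. The only divergence is in the uniqueness step, where the paper argues directly that $i_A$ interchanges the arcs $s[A,B]\leftrightarrow s[A,a(B)]$ and $s[a(A),B]\leftrightarrow s[a(A),a(B)]$ determined by a fixed point $B$, whereas you invoke the equally elementary fact that a non-identity, betweenness-preserving involution of a connected line has at most one fixed point, applied on each component after ruling out that $a\circ i_A$ restricts to the identity there.
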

\begin{proof}
Consider the composition $a\circ i_A$ of the reflection in $A$ and the antipodal map $a$. Notice that it flips the rays on $l$, a component of $c\setminus \{A,a(A)\}$.
Therefore $a\circ i_A$ has a fixed point in $l$ by \ref{TranslationLemma} and its antipodal point is a fixed point of $a\circ i_A$ as well by \ref{AntipodalMapCommutes}. There cannot be any more fixed points of $a\circ i_A$
as $i_A$ flips $s[A,B]$ with $s[A,a(B)]$ and it flips $s[a(A),B]$ with $s[a(A),a(B)]$.
\end{proof}

\subsection{Congruence}
Since Erlangen circles are spaces with rigid motions by \ref{ErlangenCirclesAreRM} we have the notion of congruence for line segments.

\begin{Corollary}\label{AntipodalSegmentsAreCongruent}
If $A$ and $B$ are two points of an Erlangen circle $c$ with the antipodal map $a$, then
the line segments $s[A,a(A)]$ and $s[B,a(B)]$ are congruent.
\end{Corollary}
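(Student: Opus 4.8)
The plan is to exploit the homogeneity condition defining an Erlangen circle together with the commutation of the antipodal map with every element of $\mathcal{I}$ established in \ref{AntipodalMapCommutes}. First I would note that by property b. of \ref{CircleDef} the two segments in question are just two-point sets: $s[A,a(A)]=\{A,a(A)\}$ and $s[B,a(B)]=\{B,a(B)\}$. So proving congruence reduces to producing a single $\tau\in\mathcal{I}$ with $\tau(\{A,a(A)\})=\{B,a(B)\}$, since any element of $\mathcal{I}$ preserves closed line segments and would therefore automatically carry $s[A,a(A)]$ onto $s[B,a(B)]$.

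Next I would invoke \ref{CircleIsMaxRaysSpace}, which says that exactly two maximal rays emanate from each point of a circle. Choose any maximal ray $r_1$ emanating from $A$ and any maximal ray $r_2$ emanating from $B$. Condition \ref{Homogeneity and rigidity of the circle} then furnishes a (unique) $\tau\in\mathcal{I}$ with $\tau(r_1)=r_2$. Because $\tau$ is an isomorphism it preserves the betweenness structure of the rays, and by \ref{OrderingOfRays} each ray has a unique initial point, namely its minimum in the induced linear order; hence $\tau$ carries the initial point of $r_1$ to the initial point of $r_2$, that is, $\tau(A)=B$.

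The final step is to pin down the image of $a(A)$. Here \ref{AntipodalMapCommutes} gives $\tau\circ a=a\circ\tau$, so
$$\tau(a(A))=a(\tau(A))=a(B).$$
Therefore $\tau(\{A,a(A)\})=\{B,a(B)\}$, and since $\tau$ preserves closed line segments we obtain $\tau(s[A,a(A)])=s[\tau(A),\tau(a(A))]=s[B,a(B)]$, which is exactly the required congruence.

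There is essentially no serious obstacle: the argument is a direct assembly of the homogeneity axiom and the commutation lemma. The only point demanding a moment's care is the claim $\tau(A)=B$, which rests on the fact that an isomorphism between rays must match their initial points; this becomes immediate once one appeals to the order characterization of rays in \ref{OrderingOfRays}.
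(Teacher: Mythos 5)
Your proof is correct and follows essentially the same route as the paper: homogeneity (Condition \ref{Homogeneity and rigidity of the circle}) supplies an isomorphism carrying a maximal ray at $A$ onto one at $B$, which must send initial point to initial point so $\tau(A)=B$, and commutation with the antipodal map forces $\tau(a(A))=a(B)$. The only cosmetic difference is that you cite Lemma \ref{AntipodalMapCommutes} and choose arbitrary rays, whereas the paper first disposes of the cases $A=B$ and $A=a(B)$, then uses the specific rays $ray[A,B]$ and $ray[B,A]$ and re-derives the commutation inline via the two-point-segment argument; your uniform choice of rays avoids that case split.
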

\begin{proof}
It is clearly so if $A=B$ or $A=a(B)$. Otherwise, let $r_1$ be the ray emanating from $A$ and containing $B$ and let $r_2$ to be the ray emanating from $B$ and containing $A$.
Pick an isomorphism $\rho$ sending $r_1$ onto $r_2$. Obviously, $\rho(A)=B$.
Also $\rho(a(A))$ must be a point such that $s[\rho(A),\rho(a(A))]$ contains only two points as it is equal to $\rho(s[A,a(A)]$. Therefore, $\rho(a(A))=a(B)$.
\end{proof}

\begin{Proposition}\label{AntipodalMapIsAnIsometry}
If $c$ is an Erlangen circle with the antipodal map $a$, then $s[a(A),a(B)]$ is congruent to $s[A,B]$ for any points $A,B\in c$.
\end{Proposition}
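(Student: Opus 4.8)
The plan is to exhibit, for each pair $A,B$, a single element of $\mathcal{I}$ carrying $s[A,B]$ onto $s[a(A),a(B)]$; the useful choice turns out to be the reflection in the midpoint of $s[A,a(B)]$ rather than the more obvious midpoint of $s[A,B]$. First I would dispose of the degenerate cases. If $A=B$, the two segments are the singletons $\{A\}$ and $\{a(A)\}$, which are congruent via the translation $\tau_{A,a(A)}$ of \ref{TranslationCircleDef}. If $A$ and $B$ are antipodal, then $a(B)=A$ and $a(A)=B$, so $s[a(A),a(B)]=s[B,A]=s[A,B]$ and there is nothing to prove.

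So assume $A\neq B$ and that $A,B$ are non-antipodal. Then $A$ and $a(B)$ are also distinct and non-antipodal (they are antipodal exactly when $A=B$, and equal exactly when $A,B$ are antipodal), so by part (c) of \ref{CircleDef} the segment $s[A,a(B)]$ is connected. Since an Erlangen circle is a space with rigid motions by \ref{ErlangenCirclesAreRM}, \ref{MidpointLemma} furnishes a unique midpoint $N\in s(A,a(B))$ with $|AN|=|N a(B)|$. Let $i_N\in\mathcal{I}$ be the reflection in $N$ of \ref{SymmetryCircleDef}.

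The key computation is that $i_N$ sends the endpoints of $s[A,B]$ to those of $s[a(A),a(B)]$. Because $N$ lies in the interior of $s[A,a(B)]$, the points $A$ and $a(B)$ lie on the two different maximal rays emanating from $N$; since $i_N$ interchanges these rays and is an isometry, it carries $A$ to the unique point of $ray[N,a(B)]$ with $|N\,i_N(A)|=|NA|=|N a(B)|$, namely $a(B)$. Thus $i_N(A)=a(B)$. Now I would invoke \ref{AntipodalMapCommutes}, which gives $i_N\circ a=a\circ i_N$, so that $i_N(a(A))=a(i_N(A))=a(a(B))=B$; as $i_N$ is an involution this yields $i_N(B)=a(A)$. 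Therefore $i_N(s[A,B])=s[i_N(A),i_N(B)]=s[a(B),a(A)]=s[a(A),a(B)]$, exhibiting the desired congruence.

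The main obstacle is identifying the correct reflection: the naive reflection in the midpoint of $s[A,B]$ merely swaps $A$ and $B$, whereas the reflection in the midpoint of $s[A,a(B)]$ is forced to pair $A$ with $a(B)$ and then, through the commutation relation \ref{AntipodalMapCommutes}, automatically pairs $B$ with $a(A)$. A secondary point to verify carefully is that the midpoint $N$ genuinely exists and is interior, which is precisely why the non-antipodal, non-degenerate case must be isolated first and handled via connectedness of $s[A,a(B)]$. Notice also that this argument never presumes $a\in\mathcal{I}$, so it is consistent with that fact being established only later in \ref{AntipodalMapIsInI}.
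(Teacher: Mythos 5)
Your proof is correct, but it follows a genuinely different route from the paper's. The paper argues by contradiction: assuming $s[a(A),a(B)]$ is not congruent to $s[A,B]$, it picks $f\in\mathcal{I}$ sending $ray[a(A),a(B)]$ onto $ray[A,B]$, enlarges $\mathcal{I}$ to the group $\mathcal{J}=\mathcal{I}\cup a\circ\mathcal{I}$, observes that $f\circ a$ then maps $s[A,B]$ onto a segment properly nested with $s[A,B]$, and derives the contradiction by showing $(c,\mathcal{J})$ is itself a space with rigid motions (any offending $g=a\circ f\in\mathcal{J}$ has a fixed point by \ref{FixedPointTheorem}, satisfies $g^2=f^2$ by \ref{AntipodalMapCommutes}, hence $g^4=id$ by \ref{FixedPointsOnCircle}, and the finite-order argument of \ref{NilpotentCaseOfRM} applies). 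You instead exhibit the congruence explicitly: the reflection $i_N$ in the midpoint $N$ of $s[A,a(B)]$, which swaps $A$ with $a(B)$ and (via \ref{AntipodalMapCommutes} and the involution property) $B$ with $a(A)$. Your approach is constructive, never leaves $\mathcal{I}$, and avoids both the contradiction and the group-enlargement device; its only implicit steps --- that a point at a prescribed distance from $N$ along a fixed maximal ray is unique (which follows from the remark in Section \ref{Spaces with rigid motions} that a segment is never congruent to a proper subsegment), and that neither $A$ nor $a(B)$ is antipodal to $N$ (immediate, since both lie in the connected segment $s[A,a(B)]$) --- are easily discharged within the paper's framework. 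What the paper's less explicit argument buys is the reusable device $\mathcal{J}=\mathcal{I}\cup a\circ\mathcal{I}$ being a group of rigid motions, which is the engine behind the stronger subsequent result \ref{AntipodalMapIsInI} that $a$ itself belongs to $\mathcal{I}$; your reflection $i_N$ depends on the pair $A,B$ and does not by itself yield that corollary.
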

\begin{proof}
Suppose $s[a(A),a(B)]$ is not congruent to $s[A,B]$ for some points $A,B\in c$.
In view of
\ref{AntipodalSegmentsAreCongruent} points $A$ and $B$ are not antipodal.
Since we can send $ray[a(A),a(B)]$ onto $ray[A,B]$ via an isomorphism $f$, $f(B)$ cannot be $a(B)$. Enlarge $\mathcal{I}$ to the group $\mathcal{J}$ of isomorphisms of $c$ by adding isomorphisms of the form $a\circ h$,
$h\in \mathcal{I}$. Notice that in the congruence induced by $\mathcal{J}$ we arrived at a line segment congruent to its proper subset: either $(f\circ a)(s[A,B])$ is a proper subset of $s[A,B]$ or vice versa. If we show $(c,\mathcal{J})$ is a space with rigid motions, we arrive at a contradiction.

Suppose we have $g\in \mathcal{J}$ such that $g(s[A,B])$ is a proper subset of $s[A,B]$. That can only happen if $s[A,B]$ is not finite, i.e. $s[A,B]$ is connected and $A\ne B$.
Therefore $g$ has a fixed point and $g=a\circ f$ for some $f\in \mathcal{I}$.
Since $g^2=f^2$, $g^2$ is an involution by \ref{FixedPointsOnCircle}. Thus $g^4=id$
and $g$ is of finite order on $s[A,B]$.

\end{proof}

The following is a strengthening of \ref{AntipodalMapIsAnIsometry}.
\begin{Corollary}\label{AntipodalMapIsInI}
If $(c,\mathcal{I})$ is an Erlangen circle, then the antipodal map $a$ belongs to $\mathcal{I}$.
\end{Corollary}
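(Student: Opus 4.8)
The plan is to exhibit $a$ as an element of $\mathcal{I}$ by producing an explicit candidate in $\mathcal{I}$ and then showing that it coincides with $a$. Fix a point $A$ and a maximal ray $r$ emanating from $A$. Since $a$ preserves closed line segments (\ref{AntipodalMapOnCirclePreservesSegments}) it is an isomorphism of $c$, so $a(r)$ is again a maximal ray, now emanating from $a(A)$. Condition \ref{Homogeneity and rigidity of the circle} then furnishes a unique $f\in\mathcal{I}$ with $f(r)=a(r)$, and everything reduces to proving $f=a$, for then $a=f\in\mathcal{I}$.

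First I would record two auxiliary facts. On one hand, every element of $\mathcal{I}$ is an isometry: any $g\in\mathcal{I}$ carries $s[P,Q]$ to the congruent segment $s[g(P),g(Q)]$, so $|PQ|=|g(P)g(Q)|$; in particular $f$ is an isometry, and $a$ is an isometry by \ref{AntipodalMapIsAnIsometry}. On the other hand, on any maximal ray a point is determined by its distance to the initial point: if $X,Y$ lie on a ray with initial point $V$ and $|VX|=|VY|$ but $X\neq Y$, then by the linear order of \ref{OrderingOfRays} one of $s[V,X]$, $s[V,Y]$ is a proper subsegment of the other, and a segment is never congruent to a proper subsegment in a space with rigid motions, while $(c,\mathcal{I})$ is such a space by \ref{ErlangenCirclesAreRM}; this contradiction forces $X=Y$.

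With these in hand I would check $f=a$ first on $r$ and then on the rest of $c$. Write $r=\{A\}\cup l_1$, where $l_1$ is one of the two components of $c\setminus\{A,a(A)\}$, and let $l_2$ be the other. Comparing initial points gives $f(A)=a(A)$, and applying $f$ to the two-point segment $s[A,a(A)]$ forces $f(a(A))=a(f(A))=A$; hence $f(l_1)=a(l_1)$. For $B\in l_1$ the points $f(B)$ and $a(B)$ both lie on the maximal ray $\{a(A)\}\cup a(l_1)$ emanating from $a(A)$, and $|a(A)f(B)|=|f(A)f(B)|=|AB|=|a(A)a(B)|$, so they coincide by the uniqueness fact. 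Thus $f=a$ on all of $c\setminus l_2$. Taking complements gives $f(l_2)=a(l_2)$, and the identical distance computation from $a(A)$ shows $f(C)=a(C)$ for every $C\in l_2$. Therefore $f=a$ on $c$, and $a\in\mathcal{I}$.

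The genuinely delicate point is upgrading the set-level identity $f(r)=a(r)$ to the pointwise identity $f=a$; this is exactly where both isometry properties and the ``no segment is congruent to a proper subsegment'' rigidity are needed simultaneously. The only other step requiring care is the passage to the far component $l_2$, which is handled cleanly by noting that $f$ and $a$ already agree on the complement $c\setminus l_2$ and hence map $l_2$ onto the same set before the distance argument is repeated.
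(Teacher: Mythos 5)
Your proof is correct and takes essentially the same route as the paper: both arguments produce the unique $f\in\mathcal{I}$ carrying a maximal ray at $A$ onto the disjoint maximal ray at $a(A)$ (the translation $\tau_{A,a(A)}$) and then force $f=a$ pointwise by combining \ref{AntipodalMapIsAnIsometry} with the fact that in a space with rigid motions no line segment is congruent to a proper subsegment. Your write-up is just a more explicit organization of the paper's two-case contradiction, routed through the observation that a point on a maximal ray is determined by the congruence class of its segment to the initial point.
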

\begin{proof}
Pick $A$ in $c$ and consider $f\in \mathcal{I}$ sending $A$ to $a(A)$ and sending each maximal ray $r$ at $A$ to the maximal ray at $a(A)$ that is disjoint with $r$. We need to show $f=a$.
Suppose $f(B)\ne a(B)$ for some $B\in c$. Either $f(B)\in s(a(B),a(A))$
or $f(B)\in s(A,a(B))$. In both cases we arrive at a contradiction in the form of detecting a line segment congruent to its proper subset. 
\end{proof}

\begin{Proposition}[Axiom 4 of Euclid]\label{Axiom 4 of Euclid}
All right angles in an Erlangen circle are congruent.
\end{Proposition}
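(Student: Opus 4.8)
The plan is to interpret a right angle as the connected arc $s[A,B]$ for which $B$ forms a right angle with $A$ in the sense of Definition \ref{RightAngleDef} (so $i_A(B)=a(B)$, and $s[A,B]$ is connected because $B\ne a(A)$), and to prove that any two such arcs are congruent by exhibiting an explicit element of $\mathcal I$ carrying one onto the other. Since $(c,\mathcal I)$ is a space with rigid motions by \ref{ErlangenCirclesAreRM}, congruence of line segments is available, so it suffices to find $\tau\in\mathcal I$ with $\tau(s[A,B])=s[C,D]$ whenever $s[A,B]$ and $s[C,D]$ are right angles.

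First I would record two naturality facts. For every $\tau\in\mathcal I$ one has $\tau\circ a=a\circ\tau$ by \ref{AntipodalMapCommutes}, and $\tau\circ i_A\circ\tau^{-1}=i_{\tau(A)}$. The latter holds because $\tau\circ i_A\circ\tau^{-1}$ lies in $\mathcal I$, fixes $\tau(A)$, and switches the two maximal rays at $\tau(A)$ (as $i_A$ switches the two maximal rays at $A$ and $\tau$ carries maximal rays at $A$ bijectively onto maximal rays at $\tau(A)$); hence it equals $i_{\tau(A)}$ by the uniqueness built into Condition \ref{Homogeneity and rigidity of the circle} and Definition \ref{SymmetryCircleDef}. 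Combining the two identities, if $B$ forms a right angle with $A$ then applying $\tau$ to $i_A(B)=a(B)$ gives $i_{\tau(A)}(\tau(B))=a(\tau(B))$, i.e. $\tau(B)$ forms a right angle with $\tau(A)$. Thus the right-angle relation is $\mathcal I$-invariant.

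Next I would observe that $B$ forming a right angle with $A$ forces $B\ne A$ and $B\ne a(A)$ (otherwise $i_A(B)=B$ would give $a(B)=B$), so $ray[A,B]$ is a genuine maximal ray, and likewise $ray[C,D]$. We already know that exactly two points form a right angle with $C$, namely $D$ and $a(D)$, and that they are antipodal; moreover they lie in different maximal rays at $C$, since the orientation-reversing involution $a\circ i_C$ has exactly one fixed point in each component of $c\setminus\{C,a(C)\}$. In particular $a(D)\notin ray[C,D]$. Now I would use Condition \ref{Homogeneity and rigidity of the circle} to take the unique $\tau\in\mathcal I$ with $\tau(ray[A,B])=ray[C,D]$, so that $\tau(A)=C$. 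By invariance $\tau(B)$ forms a right angle with $C$, hence $\tau(B)\in\{D,a(D)\}$, while $\tau(B)\in ray[C,D]$ rules out $a(D)$ and forces $\tau(B)=D$. Since $\tau$ preserves closed line segments, $\tau(s[A,B])=s[C,D]$, which is exactly congruence of the two right angles.

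The hard part is not the final counting argument but the two naturality identities underpinning it, above all $\tau\circ i_A\circ\tau^{-1}=i_{\tau(A)}$ and the claim that the two right-angle vertices of a point occupy different maximal rays. Both rest on the uniqueness clause of Condition \ref{Homogeneity and rigidity of the circle} and on the behaviour of $a\circ i_C$ already analyzed when proving that each point has exactly two right-angle partners; once these are in hand, the rest is a formal manipulation.
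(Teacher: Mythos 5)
Your proof is correct, and its core coincides with the paper's: both arguments rest on the conjugation identity $\tau\circ i_A\circ\tau^{-1}=i_{\tau(A)}$ (justified, as you do, via the uniqueness clause of Condition \ref{Homogeneity and rigidity of the circle}), and on the fact that the right-angle partners of $C$ are exactly $D$ and $a(D)$. Where you diverge is the endgame. The paper takes an \emph{arbitrary} $\rho\in\mathcal{I}$ with $\rho(A)=C$, deduces $\rho(B)\in\{D,a(D)\}$, and disposes of the bad case $\rho(B)=a(D)$ by replacing $\rho$ with $i_C\circ\rho$ (since $i_C(a(D))=a(i_C(D))=a(a(D))=D$). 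You instead foreclose the case split by choosing the \emph{unique} $\tau$ sending $ray[A,B]$ onto $ray[C,D]$, and then ruling out $a(D)$ with the extra geometric fact that $D$ and $a(D)$ lie in different maximal rays at $C$ -- a fact the paper's proof never needs, but which is indeed available from the proof of the preceding proposition (the involution $a\circ i_C$ has exactly one fixed point in each component of $c\setminus\{C,a(C)\}$). The trade-off: the paper's correction trick is shorter and purely algebraic, while your route gives slightly more, namely that the congruence is realized by the canonical ray-matching isomorphism (and is in that sense unique), and along the way you establish the $\mathcal{I}$-invariance of the right-angle relation, which the paper only records afterwards as \ref{PreservingRightAngles}.
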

\begin{proof}
Suppose pairs $A$, $B$ and $C$, $D$ form right angles. Choose an isomorphism $\rho$ sending $A$ to $C$. Put $E=\rho(B)$. Notice $\rho\circ i_A\circ \rho^{-1}$ is an involution fixing $C$ and not equal identity, hence equal to $i_C$. Equivalently, $ \rho\circ i_A= i_C\circ \rho$. Since $i_C(D)=a(D)$
and $i_A(B)=a(B)$, 
$$i_C(E)= i_C\circ \rho(B)= \rho\circ i_A (B)=\rho(a(B))=a(\rho(B))=a(E)$$
That means either $E=D$ or $E=a(D)$. If $E=D$, then congruence of $s[A,B]$
with $s[C,D]$ is established by $\rho$. If $E=a(D)$, then congruence of $s[A,B]$
with $s[C,D]$ is established by $i_C\circ \rho$. 
\end{proof}

\begin{Proposition}\label{PreservingRightAngles}
Any isomorphism $f\in \mathcal{I}$ of an Erlangen circle sends right angles to right angles.
\end{Proposition}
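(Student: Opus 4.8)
The plan is to reduce the statement to two facts already in hand: the conjugation formula for reflections and Lemma \ref{AntipodalMapCommutes}. Recall from \ref{RightAngleDef} that $B$ forms a right angle with $A$ precisely when $i_A(B)=a(B)$. Hence what must be shown is that $i_{f(A)}(f(B))=a(f(B))$ whenever $i_A(B)=a(B)$, for every $f\in\mathcal{I}$.

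First I would establish the conjugation identity $f\circ i_A\circ f^{-1}=i_{f(A)}$ for all $f\in\mathcal{I}$ and all $A\in c$; this is exactly the step used inside the proof of \ref{Axiom 4 of Euclid}. Set $g:=f\circ i_A\circ f^{-1}$ and note that $g\in\mathcal{I}$, that $g$ fixes $f(A)$, that $g^2=f\circ i_A^2\circ f^{-1}=id$ so $g$ is an involution, and that $g\ne id$ since $i_A\ne id$. Because $g\in\mathcal{I}$ it commutes with the antipodal map $a$ by \ref{AntipodalMapCommutes}, so $g$ also fixes $a(f(A))=a(g(f(A)))=g(a(f(A)))$. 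A non-trivial isomorphism fixing $f(A)$ must permute the two maximal rays emanating from $f(A)$; it cannot fix both, for by \ref{ErlangenCirclesAreRM} and \ref{RigidMotionsViaRaysProp} fixing a maximal ray forces the identity on that ray, so fixing both rays would make $g$ the identity on $c\setminus\{a(f(A))\}$, and combined with $g$ fixing $a(f(A))$ this would give $g=id$. Therefore $g$ swaps the two maximal rays at $f(A)$ and so equals the reflection $i_{f(A)}$ by \ref{SymmetryCircleDef}.

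With the conjugation identity available, the conclusion is a one-line computation:
$$i_{f(A)}(f(B))=\big(f\circ i_A\circ f^{-1}\big)(f(B))=f\big(i_A(B)\big)=f\big(a(B)\big)=a\big(f(B)\big),$$
where the third equality is the hypothesis $i_A(B)=a(B)$ and the last is Lemma \ref{AntipodalMapCommutes}. Thus $f(B)$ forms a right angle with $f(A)$, which is the assertion.

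I do not expect a genuine obstacle here; the whole argument is formal once the conjugation identity is in place. The only point deserving care is the verification that a non-identity involution fixing a point of the circle is the reflection at that point, which is precisely where the rigid-motion structure (\ref{ErlangenCirclesAreRM}, \ref{RigidMotionsViaRaysProp}) and the fact that a circle has exactly two maximal rays at each point genuinely enter.
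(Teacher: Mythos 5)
Your proof is correct and takes essentially the same route as the paper's: the paper also establishes the conjugation identity $f\circ i_A\circ f^{-1}=i_{f(A)}$ (borrowing the argument from the proof of \ref{Axiom 4 of Euclid}) and then concludes with the same one-line computation using \ref{AntipodalMapCommutes}. Your more detailed verification that a non-identity involution fixing a point must swap the two maximal rays and hence equal the reflection is simply an expansion of the paper's ``involution fixing $C$ and not equal identity, hence equal to $i_C$'' step.
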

\begin{proof}
Suppose $A,B\in c$ form a right angle. Put $C=f(A)$ and $D=f(B)$. As above
$f\circ i_A\circ f^{-1}=i_C$ and $i_C(D)=f\circ i_A(B)=f(a(B))=a(f(B))=a(D)$.
\end{proof}

\begin{Corollary}
If $A$ and $B$ form a right angle in an Erlangen circle, then $B$ and $A$ also form a right angle.
\end{Corollary}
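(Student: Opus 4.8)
The plan is to show that the two reflections are related by the antipodal map, namely $i_B = a\circ i_A$, from which the claim is immediate: evaluating at $A$ and using $i_A(A)=A$ gives $i_B(A)=a(i_A(A))=a(A)$, which is exactly the statement that $B$ and $A$ form a right angle in the sense of \ref{RightAngleDef}.

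First I would form the candidate map $g:=a\circ i_A$. By \ref{AntipodalMapIsInI} the antipodal map $a$ lies in $\mathcal{I}$, so $g\in\mathcal{I}$, and by \ref{AntipodalMapCommutes} it equals $i_A\circ a$ as well. The hypothesis that $A$ and $B$ form a right angle says $i_A(B)=a(B)$, so $g(B)=a(i_A(B))=a(a(B))=B$; thus $g$ fixes $B$. Moreover $g$ is not the identity, since $g=\mathrm{id}$ would force $i_A=a$, contradicting the fact that $i_A$ fixes $A$ while $a$ has no fixed points.

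Next I would identify $g$ with the reflection $i_B$. Having a fixed point, $g$ is an involution by \ref{FixedPointsOnCircle}, and by \ref{CircleIsMaxRaysSpace} there are exactly two maximal rays at $B$, which $g$ either fixes setwise or interchanges. This is the step I expect to be the only real obstacle: I must rule out that $g$ fixes each ray at $B$. If it did, then $g(r)=r$ for a maximal ray $r$ at $B$, and since $(c,\mathcal{I})$ is a space with rigid motions (\ref{ErlangenCirclesAreRM}), \ref{RigidMotionsViaRaysProp} would give $g|r=\mathrm{id}$; then $g$ would have at least three fixed points and hence be the identity by \ref{FixedPointsOnCircle}, a contradiction. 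Therefore $g$ interchanges the two maximal rays at $B$, and by the defining property of reflections (\ref{SymmetryCircleDef}) together with the uniqueness guaranteed by Condition \ref{Homogeneity and rigidity of the circle}, $g=i_B$. This yields $i_B=a\circ i_A$ and completes the argument.

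An alternative route, should the identification of $g$ prove awkward, is to observe that the right-angle hypothesis forces $A$ and $B$ to be distinct and non-antipodal (otherwise $i_A(B)=a(B)$ fails, since $i_A$ fixes both $A$ and $a(A)$ whereas $a$ has no fixed point), so that $s[A,B]$ is connected and has a unique midpoint $M$ by \ref{MidpointLemma}; the reflection $i_M$ then swaps $A$ and $B$, and \ref{PreservingRightAngles} applied to $f=i_M$ carries the right angle $(A,B)$ to the right angle $(B,A)$.
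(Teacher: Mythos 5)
Your proposal is correct, and both of your routes are genuinely different from the paper's own argument. The paper proves the corollary by contradiction inside the circle: it sets $C=i_B(a(A))$, assumes $C\ne A$ with (without loss of generality) $C\in s(A,B]$, and then exhibits a line segment, $s[a(B),C]$, congruent to a proper subsegment of itself, contradicting the rigid-motion property (incidentally, the label the paper cites for that non-congruence principle is undefined, and the subsegment it names appears to be off by a typo; the intended subsegment is $i_A(s[B,A])=s[a(B),A]$, congruent to $s[a(B),C]$ via $i_A\circ a\circ i_B$). Your main route instead proves the operator identity $i_B=a\circ i_A$: you check that $g=a\circ i_A$ lies in $\mathcal{I}$ (via \ref{AntipodalMapIsInI} and \ref{AntipodalMapCommutes}), fixes $B$, is not the identity, cannot preserve the two maximal rays at $B$ (by \ref{RigidMotionsViaRaysProp} together with \ref{ErlangenCirclesAreRM} and the three-fixed-points clause of \ref{FixedPointsOnCircle}), hence swaps them and coincides with $i_B$ by the uniqueness in Condition \ref{Homogeneity and rigidity of the circle}. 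This group-theoretic argument yields more than the statement itself: the identity $i_B\circ i_A=a$ is exactly the sort of algebraic reformulation the paper only reaches afterwards in \ref{RightAngleAlgebraically}. Your alternative route, swapping $A$ and $B$ by the reflection $i_M$ in the midpoint and transporting the right angle by \ref{PreservingRightAngles}, is the most economical of the three, since \ref{PreservingRightAngles} is proved immediately before the corollary; the only step you leave implicit is that $i_M(A)=B$, which follows because $i_M$ swaps the rays at $M$ and no segment is congruent to a proper subsegment (or directly from the proof of \ref{MidpointLemma}, where the isomorphism sending $ray[A,B]$ onto $ray[B,A]$ fixes $M$ and hence equals $i_M$). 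All the results you invoke precede the corollary in the paper, so there is no circularity in either route.
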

\begin{proof}
Let $C=i_B(a(A))$. If $C\ne A$, we may assume $C\in s(A,B]$.
Now, $s[a(B),C]$ is congruent to its proper subsegment, namely $i_A(s[B,C])$, a contradiction by
\ref{NonCongruenceOfProperSegmentsCircle}.
\end{proof}

\begin{Proposition}\label{RightAngleAlgebraically}
Points $A$ and $B$ of an Erlangen circle form a right angle if and only if the square of the translation $\tau_{AB}$ from $A$ to $B$ equals the antipodal map $a$.
\end{Proposition}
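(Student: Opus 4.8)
The plan is to route everything through the identity
$$\tau_{AB}^2=i_B\circ i_A,$$
valid whenever $A\neq B$ are non-antipodal (the only case the equivalence will need), and then to read off both implications from the description of right angles by reflections. First I would prove this identity. Both sides belong to $\mathcal{I}$, so by \ref{FixedPointsOnCircle} it suffices to check that they agree at three distinct points. At $A$ we have $i_B\circ i_A(A)=i_B(A)$; the point $i_B(A)$ lies on the maximal ray at $B$ not containing $A$ (since $i_B$ interchanges the two rays at $B$) and satisfies $|B\,i_B(A)|=|BA|$ (as $i_B\in\mathcal{I}$ fixes $B$), while $\tau_{AB}^2(A)=\tau_{AB}(B)$ lies on that same ray with $|B\,\tau_{AB}(B)|=|\tau_{AB}(A)\,\tau_{AB}(B)|=|AB|$; since distinct points of a ray sit at distinct distances from its initial point, $i_B(A)=\tau_{AB}^2(A)$. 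At $a(A)$ I would use that $i_A$ fixes $a(A)$, that every isomorphism commutes with $a$ by \ref{AntipodalMapCommutes}, and the previous equality, to get $i_B\circ i_A(a(A))=i_B(a(A))=a(i_B(A))=a(\tau_{AB}^2(A))=\tau_{AB}^2(a(A))$.

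Consequently $g:=\tau_{AB}^{-2}\circ i_B\circ i_A\in\mathcal{I}$ fixes $A$ and $a(A)$, so by \ref{FixedPointsOnCircle} it is either the identity or the reflection $i_A$. The crux is to rule out $g=i_A$, that is, $\tau_{AB}^2=i_B$. If that held, $\tau_{AB}$ would commute with $i_B$ and hence preserve $\mathrm{Fix}(i_B)=\{B,a(B)\}$; being fixed-point free it would interchange $B$ and $a(B)$. Writing $l_1,l_2$ for the two components of $c\setminus\{B,a(B)\}$, the map $\tau_{AB}$ then induces a permutation of $\{l_1,l_2\}$, so $\tau_{AB}^2$ induces the trivial permutation and preserves each $l_i$; but $i_B$ interchanges $l_1$ and $l_2$, a contradiction. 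Hence $g=\mathrm{id}$ and the identity is established.

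Granting the identity, the two directions are quick once one observes that each hypothesis forces $A,B$ to be non-antipodal: a right angle $i_A(B)=a(B)$ fails when $B=a(A)$ (it would give $a(A)=A$), and when $B=a(A)$ one has $\tau_{AB}=a$, so $\tau_{AB}^2=\mathrm{id}\neq a$. If $A,B$ form a right angle, then by the symmetry of right angles proved in the preceding Corollary one also has $i_B(A)=a(A)$; then $a\circ i_B\circ i_A$ fixes the three distinct points $A$, $B$, $a(A)$, hence is the identity by \ref{FixedPointsOnCircle}, giving $i_B\circ i_A=a$ and therefore $\tau_{AB}^2=a$. Conversely, if $\tau_{AB}^2=a$ then the identity yields $i_B\circ i_A=a$, so $i_A=a\circ i_B$; evaluating at $B$, where $i_B$ acts trivially, gives $i_A(B)=a(B)$, i.e. $A,B$ form a right angle.

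The main obstacle is the exclusion of $\tau_{AB}^2=i_B$ in the proof of the identity: fixing $A$ and $a(A)$ does not determine an element of $\mathcal{I}$, and the tie between $\mathrm{id}$ and $i_A$ is broken only by the orientation-type bookkeeping on the two semicircles at $B$.
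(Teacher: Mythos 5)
Your proof is correct, but it takes a genuinely different route from the paper's. The paper argues geometrically: for the forward direction it applies $\tau_{AB}$ to the right angle and uses \ref{PreservingRightAngles} together with the fact that exactly two antipodal points form a right angle with $B$, concluding $\tau_{AB}(B)=a(A)$ and hence that $\tau_{AB}^2$ is the translation from $A$ to $a(A)$, which is $a$; for the converse it picks the point $M$ on $ray[A,B]$ forming a right angle with $A$ and forces $M=B$ by the impossibility of a line segment being congruent to a proper subsegment. You instead route everything through the algebraic identity $\tau_{AB}^2=i_B\circ i_A$ (for non-antipodal $A\neq B$), a circle analogue of \ref{TranslationsAreTwoSymmetries}, after which both implications are short computations with reflections, commutation with $a$ (\ref{AntipodalMapCommutes}), the right-angle symmetry corollary, and three-point rigidity (\ref{FixedPointsOnCircle}). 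Your identity is of independent interest and makes the two directions pleasantly symmetric; the cost is the tie-breaking step ruling out $\tau_{AB}^2=i_B$, which is where your argument needs care: the claim that $\tau_{AB}$ permutes the two components of $c\setminus\{B,a(B)\}$ tacitly uses that isomorphisms carry maximal rays to maximal rays (so $\tau_{AB}$, swapping $B$ and $a(B)$, carries the rays at $B$ to the rays at $a(B)$, whose complements of base points are the same two components), and the appeal to fixed-point freeness of $\tau_{AB}$ can be replaced by the trivial observation $\tau_{AB}(B)\neq B$, since fixed-point freeness of translations is only proved later in \ref{StructureOfIsoCircle}. Similarly, the dichotomy \emph{an element of $\mathcal{I}$ fixing $A$ is $id$ or $i_A$} is, strictly speaking, the uniqueness clause of Condition \ref{Homogeneity and rigidity of the circle} (exactly as used in the proofs of \ref{InvolutionAndIdLines} and \ref{FixedPointsOnCircle}) rather than the literal statement of \ref{FixedPointsOnCircle}. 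These are fixable glosses, not gaps; the paper's proof is shorter only because the right-angle machinery it leans on is already in place.
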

\begin{proof}
If $A$ and $B$ form a right angle, then $B$ and $\tau_{AB}(B)$ form a right angle and $\tau_{AB}(B)=a(A)$ as $\tau_{AB}(B)=A$ is not possible.
Therefore $\tau_{AB}^2$ is the translation from $A$ to $a(A)$ which is $a$.

Suppose $\tau_{AB}^2=a$. Hence $\tau_{AB}(B)=\tau_{AB}^2(A)=a(A)$
and $s[A,B]$ is congruent to $s[B,a(A)]$.
Choose $M$ on $ray[A,B]$ forming a right angle with $A$.
$M$ cannot be in $s(A,B)$ as in that case $i_M$ sends $s[a(A),B]$ to a proper subset of $s[A,B]$ which is congruent to $s[a(A),B]$. For a similar reason $M$ cannot be in $s(a(A),B)$. Concluding: $M=B$ and $A$ forms a right angle with $B$.
\end{proof}

\subsection{The structure of isomorphisms of Erlangen circles}

\begin{Proposition}\label{StructureOfIsoCircle}
Let $(c,\mathcal{I})$ be an Erlangen circle and let $f\in \mathcal{I}$ be not equal to the identity.\\
1. $f$ is a reflection if and only if it has a fixed point. \\
2. $f$ is either a reflection or a translation but not both.\\
3. $f$ is a translation if and only if it has no fixed points.
\end{Proposition}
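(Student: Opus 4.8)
The plan is to make the fixed-point characterization of statement 1 the foundation and derive everything else from it together with the Fixed Point Theorem. I would first prove the direction ``fixed point $\implies$ reflection.'' If $f$ fixes a point $A$, then $f$ is an involution by \ref{FixedPointsOnCircle}, and $f$ either preserves both maximal rays emanating from $A$ or interchanges them. In the first case \ref{ErlangenCirclesAreRM} together with \ref{RigidMotionsViaRaysProp} forces $f$ to be the identity on each such ray; since the two maximal rays at $A$ cover $c\setminus\{a(A)\}$ and $f(a(A))=a(A)$ by \ref{AntipodalMapCommutes}, this gives $f=\mathrm{id}$, contradicting $f\ne\mathrm{id}$. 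Hence $f$ interchanges the two rays at $A$, and by the uniqueness in Condition \ref{Homogeneity and rigidity of the circle} it equals $i_A$. The reverse implication is immediate from Definition \ref{SymmetryCircleDef}, settling statement 1.

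Next I would show that translations are fixed-point-free, which yields both the ``not both'' half of statement 2 and the forward implication of statement 3. For antipodal $A,B$ one checks $\tau_{A,a(A)}=a$: the antipodal map sends each ray at $A$ to the disjoint ray at $a(A)$, and $a\in\mathcal{I}$ by \ref{AntipodalMapIsInI}, so uniqueness identifies the two; thus $\tau_{A,a(A)}$ has no fixed point. For non-antipodal $A,B$, a fixed point of $\tau_{A,B}$ would make it an involution by \ref{FixedPointsOnCircle}, giving $\tau_{A,B}(B)=\tau_{A,B}^2(A)=A$; but $B\in ray[A,B]$ forces $\tau_{A,B}(B)$ into the ray at $B$ not containing $A$, which excludes $A$, a contradiction. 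Combined with statement 1, this shows reflections and translations are disjoint classes.

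The substantive step is the converse of statement 3: a fixed-point-free $f\ne\mathrm{id}$ is a translation. Put $B:=f(A)$ and $h:=\tau_{A,B}^{-1}\circ f$. Since $\tau_{A,B}(A)=B$, the map $h$ fixes $A$, so by statement 1 either $h=\mathrm{id}$ (giving $f=\tau_{A,B}$, done) or $h=i_A$, i.e.\ $f=\tau_{A,B}\circ i_A$. I would rule out the second alternative by proving that $\tau_{A,B}\circ i_A$ interchanges $A$ and $B$. Indeed $f(A)=\tau_{A,B}(A)=B$, and the key computation is $f(B)=\tau_{A,B}(i_A(B))=A$: this rests on the identity $\tau_{A,B}^{-1}=\tau_{B,A}$ (read off from the ray-mapping definition and the uniqueness in Condition \ref{Homogeneity and rigidity of the circle}) together with the fact that $\tau_{B,A}(A)$ is the unique point of the ray at $A$ opposite $ray[A,B]$ lying at distance $|AB|$ from $A$, which is exactly $i_A(B)$. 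Once $f$ interchanges $A$ and $B$ it maps the connected segment $s[A,B]$ onto itself, so the Fixed Point Theorem \ref{FixedPointTheorem} produces a fixed point of $f$, contradicting fixed-point-freeness. Hence $f=\tau_{A,B}$, which completes statement 3 and, with statement 1, the ``either/or'' of statement 2.

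The main obstacle I anticipate is exactly this last computation $f(B)=A$. The delicate point is that a translation theory for circles analogous to \ref{TranslationSizeProp} for lines has not yet been developed, so I must extract $\tau_{B,A}(A)=i_A(B)$ purely from the length preservation enjoyed by elements of $\mathcal{I}$ and from the uniqueness of the point at a prescribed distance along a maximal ray, rather than from any additive ``displacement'' principle. Everything else reduces cleanly to the fixed-point characterization of statement 1 and to the Fixed Point Theorem.
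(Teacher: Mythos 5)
Your proofs of statement 1 and of the forward direction of statement 3 are correct, and your overall strategy --- derive everything from the fixed-point dichotomy of statement 1 and use the Fixed Point Theorem to kill the ``reversing'' alternative --- is essentially the paper's strategy, just packaged through the composition $h:=\tau_{A,B}^{-1}\circ f$ instead of tracking the image of $ray[A,B]$ directly, as the paper does. For non-antipodal $A$ and $B=f(A)$ your key computation $\tau_{B,A}(A)=i_A(B)$ is also sound, resting as you say on uniqueness of the point at a prescribed distance along a maximal ray.

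There is, however, a genuine gap: the case where the chosen base point satisfies $f(A)=a(A)$. When $B=a(A)$, no maximal ray emanating from $A$ contains $B$ (by \ref{CircleIsMaxRaysSpace} the maximal rays at $A$ are $\{A\}$ together with the components of $c\setminus\{A,a(A)\}$), so $ray[A,B]$, the clause-3 description of $\tau_{A,B}$, and ``the ray at $A$ opposite $ray[A,B]$'' are all undefined; $\tau_{A,a(A)}$ still exists (clause 2 of \ref{TranslationCircleDef}) and equals $a$, but your decisive step then collapses. In the branch $h=i_A$, i.e. $f=a\circ i_A$, you show $f$ swaps $A$ and $B$ and invoke the Fixed Point Theorem on ``the connected segment $s[A,B]$'' --- yet $s[A,a(A)]$ consists of exactly two points (condition b of \ref{CircleDef}), is not connected, and \ref{FixedPointTheorem} does not apply; indeed $a\circ i_A$ swaps $A$ and $a(A)$ while having no fixed point in $\{A,a(A)\}$. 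Its fixed points are the right-angle points, whose existence is exactly the unlabeled proposition following \ref{RightAngleDef} and requires a separate argument (applying \ref{TranslationLemma} on a component of $c\setminus\{A,a(A)\}$). The repair is easy: either rule out $f=a\circ i_A$ by citing that proposition, or first observe that if $f(X)=a(X)$ for every $X$ then $f=a=\tau_{X,a(X)}$ is a translation, and otherwise choose the base point $A$ so that $f(A)\ne a(A)$, after which your non-antipodal argument goes through. (The paper's own proof of part 2 is also written only for the non-antipodal case, so you are in good company, but as submitted your argument does not cover it.)
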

\begin{proof}
1. Obviously, reflections have fixed points. Suppose $f(A)=A$. In this case the two maximal rays emanating from $A$ are either sent to themselves and $f=id$ (which is not possible) or the rays are swapped which means $f=i_A$.\\
2. If $f$ has a fixed point, it is a reflection by 1. Suppose $f$ has no fixed points
and $B=f(A)$ for some $A\in c$. The ray $ray[A,B]$ cannot be sent onto the ray $ray[B,A]$
as in that case $f(B)=A$ (if $f(B)\ne A$, then one detects a segment congruent to its proper subset) and there is a fixed point of $f$ in $s[A,B]$, a contradiction.
Therefore $f$ is a translation.\\
3. If $f=\tau_{AB}$ and has a fixed point $C$, then $f=i_C$ by 1. and $A$ belongs to the other maximal ray at $C$ than $B$. In that case $s[A,B]$ contains a fixed point ($C$ or $a(C)$) which is a contradiction as the ray $ray[A,B]$ is sent by $f$ to the maximal ray at $B$ not containing $s(A,B)$ where a fixed point is.

If $f$ has no fixed points, it must be a translation by 2.
\end{proof}

\begin{Corollary}
Let $(c,\mathcal{I})$ be an Erlangen circle and let $f\in \mathcal{I}$.
If $f^2$ is the antipodal map $a$, then $f$ is a translation and $f(A)$ forms a right angle with $A$ for all $A\in C$.
\end{Corollary}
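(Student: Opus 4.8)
The plan is to split the claim into its two assertions—that $f$ is a translation, and that $A$ and $f(A)$ form a right angle for every $A\in c$—and to reduce both to facts already established for Erlangen circles, chiefly \ref{StructureOfIsoCircle} and \ref{RightAngleAlgebraically}.

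First I would dispose of the degenerate possibility. Since $a(B)\ne B$ for all $B$, the antipodal map is not the identity, so the hypothesis $f^2=a$ forces $f\ne id$. Next I would show $f$ has no fixed points: if $f$ fixed a point, then by part 1 of \ref{StructureOfIsoCircle} it would be a reflection, hence an involution by \ref{FixedPointsOnCircle}, giving $f^2=id\ne a$, a contradiction. Thus $f$ is fixed-point free, and part 3 of \ref{StructureOfIsoCircle} identifies $f$ as a translation, settling the first assertion.

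The heart of the argument is to identify $f$ with the canonical translation $\tau_{A,f(A)}$ of Definition \ref{TranslationCircleDef} for an arbitrary point $A$. Writing $B=f(A)$, I would first check that $A$ and $B$ are distinct and non-antipodal: distinctness is fixed-point freeness, while if $B=a(A)$ then $f(a(A))=f(B)=f^2(A)=a(A)$, making $a(A)$ a fixed point of $f$—impossible. Hence $A,B$ are distinct and non-antipodal, so $\tau_{AB}$ is governed by clause 3 of \ref{TranslationCircleDef}, sending $ray[A,B]$ to the maximal ray at $B$ not containing $A$. By the reasoning inside \ref{StructureOfIsoCircle}, a fixed-point-free isomorphism cannot send $ray[A,B]$ onto $ray[B,A]$, so $f$ too must carry $ray[A,B]$ to the ray at $B$ avoiding $A$. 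As $f$ and $\tau_{AB}$ then send the same ray to the same ray, the uniqueness in Condition \ref{Homogeneity and rigidity of the circle} yields $f=\tau_{A,f(A)}$.

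With this identification in hand the conclusion is immediate: $\tau_{A,f(A)}^2=f^2=a$, and \ref{RightAngleAlgebraically} says exactly that $A$ and $f(A)$ form a right angle. I expect the main obstacle to be precisely the identification $f=\tau_{A,f(A)}$—specifically ruling out the antipodal case $f(A)=a(A)$ and verifying that $f$ acts on $ray[A,f(A)]$ the same way the canonical translation does. Once the ray behavior is matched, uniqueness from the homogeneity and rigidity condition finishes the identification, and the right-angle statement then follows with no further work.
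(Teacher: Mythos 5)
Your proposal is correct and follows essentially the same route as the paper's proof: show $f$ is fixed-point free, conclude via \ref{StructureOfIsoCircle} that $f$ is the translation $\tau_{A,f(A)}$ for every $A$, and then apply \ref{RightAngleAlgebraically} to $f^2=a$. The extra care you take—ruling out $f(A)=a(A)$ and invoking the uniqueness in Condition \ref{Homogeneity and rigidity of the circle} to pin down $f=\tau_{A,f(A)}$—merely spells out details the paper leaves implicit in its citation of \ref{StructureOfIsoCircle}.
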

\begin{proof}
Notice $f$ has no fixed points, so it is a translation from $A$ to $f(A)$ for all $A\in c$ by
\ref{StructureOfIsoCircle}. Using
\ref{RightAngleAlgebraically} one gets $f(A)$ forms a right angle with $A$ for all $A\in C$.
\end{proof}

\section{Measure of line segments in Erlangen circles}

This section is very similar to Section \ref{LengthOfLineSegmentsSec}
and part of Section \ref{Spaces with rigid motions}. However, there are some differences worth pointing out.
Since our basic example of an Erlangen circle is the circle of rays (see \ref{CircleOfAnglesErlangenPlane}), we will use the term of the measure of a line segment in an Erlangen circle instead of the length of it.

Traditionally, students are taught that using degrees to measure angles is interchangeable with using radians. It is actually only true in Euclidean geometries as radians refer to the circumference of circles.

Also, we are not considering angles of $270$ degrees or $-90$ degrees. Those arise only when the circle has a base point and is oriented.

\begin{Definition}
Given two line segments $s[A,B]$ and $s[C,D]$ of an Erlangen circle we say the \textbf{measure} of 
$s[A,B]$ is at most the \textbf{measure} of $s[C,D]$ (notation: $|AB|\leq |CD|$) if there is an isometry $\tau$ such that
$\tau(s[A,B])\subset s[C,D]$ or $C$ is antipodal to $D$. If, in addition, $s[A,B]$ is not congruent to $s[C,D]$, then we say the \textbf{measure} of 
$s[A,B]$ is smaller than the \textbf{measure} of $s[C,D]$ (notation: $|AB| < |CD|$).
\end{Definition}

\begin{Lemma}\label{LengthInequalityTransitivityCircle}
If $|AB|\leq |CD|$ and $|CD\leq |EF|$, then $|AB|\leq |EF|$.
\end{Lemma}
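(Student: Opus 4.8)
The plan is to follow the same composition-of-isometries idea used in the line case \ref{LengthInequalityTransitivityLine}, but with extra care for the antipodal clause in the circle definition of $|AB|\le|CD|$. Recall that $|AB|\le|CD|$ holds when \emph{either} some isometry $\tau$ satisfies $\tau(s[A,B])\subset s[C,D]$ \emph{or} $C$ is antipodal to $D$. The antipodal alternative records the fact that a segment between antipodal points has the maximal possible measure (a straight angle), so the proof naturally splits according to whether the ``largest'' pair $E,F$ is antipodal.

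First I would dispose of the easy case: if $E$ is antipodal to $F$, then $|AB|\le|EF|$ holds immediately by the antipodal clause, with nothing to prove. So the substance is the case in which $E$ is \emph{not} antipodal to $F$; here $s[E,F]$ is connected by part (c) of \ref{CircleDef}, and the antipodal alternative in the definition of $|CD|\le|EF|$ is unavailable, so the hypothesis $|CD|\le|EF|$ must be witnessed by an honest isometry $\tau_2$ with $\tau_2(s[C,D])\subset s[E,F]$.

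The key intermediate step, and the only place requiring a genuine observation, is to rule out the possibility that $C$ is antipodal to $D$. The plan is to show that a \emph{connected} line segment can never contain a pair of antipodal points: if $P$ and $a(P)$ both lay in a connected segment $s[E,F]$, then the Lemma following \ref{ConvConnectednessDef} would force $s(P,a(P))\ne\emptyset$, whereas part (b) of \ref{CircleDef} gives $s[P,a(P)]=\{P,a(P)\}$ and hence $s(P,a(P))=\emptyset$ --- a contradiction. Granting this, if $C$ were antipodal to $D$ then $\tau_2(s[C,D])$ would be a two-point segment whose endpoints are again antipodal (a two-point segment forces antipodal endpoints, by the contrapositive of (c)), sitting inside the connected segment $s[E,F]$, which is impossible. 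Therefore $C$ is not antipodal to $D$, and the hypothesis $|AB|\le|CD|$ must likewise be witnessed by an isometry $\tau_1$ with $\tau_1(s[A,B])\subset s[C,D]$.

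Finally I would compose: $\tau_2\circ\tau_1$ is an isometry and $\tau_2\circ\tau_1(s[A,B])\subset\tau_2(s[C,D])\subset s[E,F]$, giving $|AB|\le|EF|$. The main obstacle is purely bookkeeping around the antipodal clause --- the one real lemma needed (no antipodal pair inside a connected segment) is short, so I expect no serious difficulty beyond making sure each application of the definition invokes the correct alternative.
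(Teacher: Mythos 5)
Your proof is correct and rests on the same core idea as the paper's, whose entire proof is "Similar to \ref{LengthInequalityTransitivityLine}," i.e., composing the two witnessing isometries. Your extra case analysis around the antipodal clause — disposing of the case $E=a(F)$ first, and ruling out $C=a(D)$ via the observation that a connected segment cannot contain an antipodal pair (the Lemma following \ref{ConvConnectednessDef} versus clause b. of \ref{CircleDef}) — is precisely the bookkeeping the paper's one-line proof leaves implicit, and you carry it out correctly.
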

\begin{proof}
Similar to \ref{LengthInequalityTransitivityLine}.
\end{proof}

The following lemma implies that congruence of line segments is the same as equality of their measures.
\begin{Lemma}
If $|AB|\leq |CD|$ and $|CD|\leq |AB|$ in an Erlangen circle, then the line segments $s[A,B]$ and $s[C,D]$ are congruent.
\end{Lemma}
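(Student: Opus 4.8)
The plan is to mirror the argument already used for spaces with rigid motions (the analogous lemma in Section \ref{Spaces with rigid motions}), adapting it to the two features special to circles: the antipodal clause in the definition of measure, and the fact that a segment between antipodal points consists of only two points. I would first split into cases according to whether the endpoints are antipodal, and I expect the whole difficulty to lie in showing that these cases reduce to just two genuine possibilities.

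The key structural fact I would establish first is that a \emph{connected} segment of a circle cannot contain an antipodal pair. Indeed, if $P$ and $a(P)$ both belonged to a connected segment $s[A,B]$, then $s[P,a(P)]\subset s[A,B]$ by \ref{InclusionOfSegmentsProp}, and $s[P,a(P)]$ would be connected by \ref{ConnectednessOfSubsegments}; but $s[P,a(P)]=\{P,a(P)\}$ has exactly two points and splits as $\{P\}\cup\{a(P)\}$ into two closed line segments, so it is not connected. Using this I would argue that the two given inequalities force the endpoints of $s[A,B]$ to be antipodal exactly when those of $s[C,D]$ are. Suppose, for instance, that $a(C)=D$ while $A$ and $B$ are not antipodal. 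Then $|CD|\le|AB|$ cannot hold via the antipodal clause, so it supplies $\rho\in\mathcal I$ with $\rho(s[C,D])\subset s[A,B]$; but $\rho$ preserves closed segments and commutes with $a$ (\ref{AntipodalMapCommutes}), hence $\rho(s[C,D])$ is again a two-point antipodal pair lying inside the connected segment $s[A,B]$, contradicting the structural fact. Thus either both pairs of endpoints are antipodal, or neither is.

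In the first case, $s[A,B]=s[A,a(A)]$ and $s[C,D]=s[C,a(C)]$ are congruent directly by \ref{AntipodalSegmentsAreCongruent}. In the second case the definition of measure yields genuine isometries $\tau,\rho\in\mathcal I$ with $\tau(s[A,B])\subset s[C,D]$ and $\rho(s[C,D])\subset s[A,B]$, and I would then repeat the Erlangen-line computation: put $\phi=\rho\circ\tau$, observe $\phi(s[A,B])\subset s[A,B]$, and use that $(c,\mathcal I)$ is a space with rigid motions (\ref{ErlangenCirclesAreRM}) to get $\phi(s[A,B])=s[A,B]$. The chain $s[A,B]=\rho(\tau(s[A,B]))\subset\rho(s[C,D])\subset s[A,B]$ then collapses, giving $\rho(s[C,D])=s[A,B]$ and hence the desired congruence.

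The one genuine obstacle, and the reason this is not a word-for-word copy of the line lemma, is controlling the antipodal clause: one must rule out its asymmetric use, i.e. confirm that a maximal (antipodal) measure can be dominated only by another maximal measure. The connectedness fact above is exactly what supplies this, after which the rigid-motions mechanism closes the argument. Degenerate segments ($A=B$ or $C=D$) are not antipodal and land in the rigid-motions case, where the inclusions automatically force the partner segment to be a single point as well.
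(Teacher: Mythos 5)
Your proof is correct, and its skeleton is the same as the paper's: split according to whether endpoints are antipodal, settle the non-antipodal case by composing the two isometries and invoking the rigid-motions property (\ref{ErlangenCirclesAreRM}), and settle the antipodal case via \ref{AntipodalSegmentsAreCongruent}. Where you genuinely differ --- to your advantage --- is in how the antipodal case is closed. The paper assumes $A=a(B)$ and then ``picks $\rho$ with $\rho(s[C,D])\subset s[A,B]$''; but under the definition of measure, the inequality $|CD|\leq|CD'|$ of the form $|CD|\leq|AB|$ holds \emph{vacuously} through the antipodal clause exactly when $A$ and $B$ are antipodal, so the existence of such a $\rho$ is not actually supplied by the hypothesis. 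You instead extract an isometry from the inequality whose antipodal clause genuinely fails, and you rule out the mixed case (one pair antipodal, the other not) with your structural fact that a connected segment of a circle contains no antipodal pair --- a fact you derive correctly from \ref{InclusionOfSegmentsProp}, \ref{ConnectednessOfSubsegments}, and the two-point description of $s[P,a(P)]$, together with \ref{AntipodalMapCommutes} to see that the image of an antipodal pair is again an antipodal pair. You also treat the degenerate segments $A=B$ or $C=D$ explicitly, which the paper's inference ``$s[C,D]$ cannot be connected since $\rho(s[C,D])$ is a point'' silently skips (a one-point segment \emph{is} connected, so that case needs the separate argument you give). In short: same mechanism, but your case analysis repairs the small gaps in the paper's own treatment of the antipodal case, at the cost of one extra lemma-sized observation.
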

\begin{proof} In case of both line segments $s[A,B]$ and $s[C,D]$ being connected the proof is the same as for Erlangen lines.

Assume $s[A,B]$ is not connected, i.e. $A$ is antipodal to $B$.
Pick isomorphism $\rho$ such that $\rho(s[C,D])\subset s[A,B]$. Notice $s[C,D]$ cannot be connected as in such case $\rho(s[C,D])$ is a point. Therefore $C$ is antipodal to $D$ and $s[A,B]$ is congruent to $s[C,D]$.
\end{proof}

\begin{Lemma}
Given two line segments $s[A,B]$ and $s[C,D]$ of an Erlangen circle one has
$|AB|\leq |CD|$ or $|CD|\leq |AB|$.
\end{Lemma}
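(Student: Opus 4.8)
The plan is to reduce to two situations according to whether the segments have antipodal endpoints, exploiting the deliberately asymmetric definition of $\leq$ for measures on an Erlangen circle.

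First I would dispose of the antipodal cases directly from the definition. If $C$ is antipodal to $D$, then $|AB|\leq |CD|$ holds by fiat (the definition grants it whenever $C$ is antipodal to $D$), so the dichotomy is already satisfied; symmetrically, if $A$ is antipodal to $B$, then reading the definition with the roles of the two segments reversed gives $|CD|\leq |AB|$, and we are again done. Intuitively the antipodal segment $s[C,D]$ with $a(D)=C$ plays the role of the largest measure, the straight angle, so every measure is at most it. Thus it remains to treat the case in which neither pair of endpoints is antipodal, i.e. both $s[A,B]$ and $s[C,D]$ are connected.

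In this remaining case I would mimic the argument used for Erlangen lines. Using the homogeneity Condition \ref{Homogeneity and rigidity of the circle}, pick the isomorphism $\rho\in\mathcal{I}$ sending the maximal ray $ray[A,B]$ onto the maximal ray $ray[C,D]$; then $\rho(A)=C$. Since $\rho$ preserves closed line segments, $\rho(s[A,B])$ is a connected subsegment of $ray[C,D]$ with initial point $C$, and $s[C,D]$ is likewise a connected subsegment of $ray[C,D]$ with initial point $C$. By the linearity of the order on a ray (Observation \ref{OrderingOfRays}), two such nested subsegments are comparable: either $\rho(s[A,B])\subset s[C,D]$, giving $|AB|\leq |CD|$, or $s[C,D]\subset \rho(s[A,B])$, in which case $\rho^{-1}(s[C,D])\subset s[A,B]$ with $\rho^{-1}\in\mathcal{I}$, giving $|CD|\leq |AB|$.

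The only genuinely new point compared with the line case is the bookkeeping forced by the asymmetric definition of $\leq$: I have to be sure that the antipodal endpoints really do land one on each side of the dichotomy, the clause ``$C$ antipodal to $D$'' feeding $|AB|\leq |CD|$ while ``$A$ antipodal to $B$'' feeds $|CD|\leq |AB|$, so that no configuration is left uncovered. The main obstacle is therefore not computational but the care of the case split; once both segments are connected, the comparability of nested subsegments of a single ray is immediate, and the elements of $\mathcal{I}$ serve as the required isometries because they preserve congruence and hence measure.
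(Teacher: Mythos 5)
Your proof is correct and is essentially the paper's own argument: the paper proves this lemma by deferring to the corresponding lemma for spaces with rigid motions, whose proof is exactly your connected case (pick $\rho\in\mathcal{I}$ sending $ray[A,B]$ onto $ray[C,D]$ and use the linear order on that ray to see that $\rho(s[A,B])$ and $s[C,D]$ are nested), and your explicit disposal of the antipodal cases via the asymmetric clause in the definition of measure is precisely the bookkeeping that the paper's terse reference leaves implicit, since the rigid-motions lemma covers only connected segments. One small point: in your third case the degenerate situations $A=B$ or $C=D$ slip through (there $ray[A,B]$ or $ray[C,D]$ is undefined), so, as in the first line of the rigid-motions proof, dismiss them first, e.g.\ any $\rho\in\mathcal{I}$ taking $A$ to $C$ gives $\rho(s[A,A])=\{C\}\subset s[C,D]$.
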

\begin{proof}
Similar to that for spaces with rigid motions in Section \ref{Spaces with rigid motions}.
\end{proof}

\subsection{Algebra of measures of line segments}

\begin{Definition}\label{SumOfLengthsDef}
Let $c$ be an Erlangen circle with the antipodal map $a$. The measure of a line segment $s[A,B]$
is equal to the sum of measures $|CD|$ and $|EF|$ if one of the following conditions is satisfied:\\
a. there is a point $M$ in $s[A,B]$
such that $|AM|=|CD|$ and $|MB|=|EF|$,\\
b. $A=a(B)$ and there is a point $M$
such that $|AM|=|CD|$ and $|MB|=|EF|$.
\end{Definition}

\begin{Lemma}
For every two line segments whose measures are at most the measure of a right angle there is a line segment whose measure equals the sum of measures.
\end{Lemma}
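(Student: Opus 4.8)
The plan is to realize the two given segments as consecutive pieces of a single arc spanning at most a half-circle, so that the resulting segment (or antipodal pair) has measure equal to the sum. Write the two given segments as $s[A,B]$ and $s[C,D]$, with the hypothesis being that $|AB|$ and $|CD|$ are each at most the measure of a right angle.

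First I would fix a point $O$ on the circle $c$ and one of the two maximal rays $r$ emanating from $O$. By the Proposition asserting that exactly two antipodal points form a right angle with $O$, there is a point $R\in r$ forming a right angle with $O$, and $|OR|$ is the measure of a right angle by \ref{Axiom 4 of Euclid}. Using Condition \ref{Homogeneity and rigidity of the circle} I lay off $s[A,B]$ along $r$: sending $ray[A,B]$ onto $r$ produces a point $P$ with $|OP|=|AB|$. Since $|OP|=|AB|\le |OR|$ and $P,R$ lie on the common ray $r$, I claim $P\in s[O,R]$. Indeed, were $P\notin s[O,R]$ then $R\in s[O,P]$, so $s[O,R]\subset s[O,P]$ would give $|OR|\le |OP|$; together with $|OP|\le |OR|$ and the lemma identifying equal measure with congruence, this would make $s[O,R]$ congruent to the possibly larger $s[O,P]$, forcing $P=R$ since a segment is not congruent to a proper subsegment. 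Hence $P\in s[O,R]$ in all cases.

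Next I would lay off $s[C,D]$ from $P$ in the direction of $r$ pointing away from $O$, obtaining $Q$ with $|PQ|=|CD|$ on the maximal ray at $P$ that runs along $r$ toward $a(O)$. The point to verify is that this second segment fits, i.e. $|CD|\le |P,a(O)|$. Since $P\in s[O,R]$ one has $R\in s[P,a(O)]$, hence $s[R,a(O)]\subset s[P,a(O)]$ and $|R,a(O)|\le |P,a(O)|$. Moreover the reflection $i_R$ in $R$ lies in $\mathcal{I}$, fixes $R$, and sends $O$ to $a(O)$ because $O$ and $R$ form a right angle (right angles being symmetric), so $i_R$ carries $s[R,O]$ onto $s[R,a(O)]$ and therefore $|R,a(O)|=|OR|$. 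Combining, $|CD|\le |OR|=|R,a(O)|\le |P,a(O)|$, so $Q$ may be chosen inside $s[P,a(O)]\subset r$, lying beyond $P$. Consequently $P\in s[O,Q]$ and $s[O,Q]=s[O,P]\cup s[P,Q]$ with $|OP|=|AB|$ and $|PQ|=|CD|$.

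It then remains to read off the sum from Definition \ref{SumOfLengthsDef}. If $Q\ne a(O)$, the segment $s[O,Q]$ is connected and contains the point $P$ with $|OP|=|AB|$ and $|PQ|=|CD|$, so by part a. its measure is the sum of $|AB|$ and $|CD|$. If $Q=a(O)$ (which occurs precisely when both segments are right angles), then $O=a(Q)$ and $P$ witnesses part b., again yielding a segment of measure equal to the sum. I expect the main obstacle to be exactly the verification that the second segment does not overshoot $a(O)$; this is where the bound by a right angle is essential, entering through the identity $|R,a(O)|=|OR|$ derived from the reflection $i_R$ and the symmetry of right angles, with the remaining steps (laying off via homogeneity and monotonicity of measure along a ray) being routine.
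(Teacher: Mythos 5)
Your argument is correct at the level of rigor this paper operates at, but it takes a genuinely different route from the paper's proof, which is much shorter: the paper takes copies of the two given segments that are congruent to them and share a common endpoint $A$, flips one copy by the reflection $i_A$ in case the copies have a common interior point, and observes that the union of the two copies, which now leave $A$ in opposite directions, is a line segment whose measure is the sum, $A$ itself serving as the witness point $M$ in the definition of the sum (case b. covering the antipodal situation). There the right-angle hypothesis is used only tacitly, to know that the far endpoints of the two oppositely directed copies do not pass the antipodal point, so that their union is again a segment. You instead lay the copies end to end along one ray, $s[O,P]$ then $s[P,Q]$, with the junction $P$ as witness, and you make the hypothesis do explicit, quantitative work through the chain $|CD|\le |OR|=|R\,a(O)|\le |P\,a(O)|$, the middle equality coming from the reflection $i_R$ and the symmetry of right angles. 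That is the merit of your version: it exhibits exactly where the bound by a right angle is needed, which the paper leaves implicit. The price is extra betweenness bookkeeping on the circle, parts of which you assert rather than prove: that a right-angle point $R$ can be found on the prescribed ray $r$ (true, because $i_O$ swaps the two maximal rays at $O$ and carries each right-angle point to the other, so the antipodal pair of right-angle points splits one per ray), that $P\in s[O,R]$ forces $R\in s[P,a(O)]$, and that $Q\in s[P,a(O)]$ with $Q\ne a(O)$ gives $P\in s[O,Q]$ (both true, but each needs an argument in this axiomatic setting). One statement is wrong as written: $s[P,a(O)]\subset r$ cannot hold, since $a(O)\notin r$ by the definition of the maximal rays at $O$; what you need, and what is true, is that $s[P,a(O)]$ lies in the maximal ray at $P$ not containing $O$. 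These are reparable gaps of the same kind the paper itself glosses over, so your proof stands as a valid, more explicit alternative to the paper's flip-at-a-common-endpoint argument.
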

\begin{proof}
Find line segments congruent to the given two line segments so that they have the common endpoint $A$. If they have a common interior point, flip one of them via $i_A$. The union of the two line segments has the measure equal to the sum of measures of the original line line segments.
\end{proof}

\begin{Lemma}
If the measure of a line segment $s[A,B]$
is equal to the sum of measures $|CD|$ and $|EF|$ and the measure of a line segment $s[A',B']$
is equal to the sum of measures $|CD|$ and $|EF|$, then $|AB|=|A'B'|$.
\end{Lemma}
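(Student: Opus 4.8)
The plan is to construct an isometry $\tau\in\mathcal{I}$ carrying $s[A,B]$ onto $s[A',B']$; since congruent segments have equal measure, this yields $|AB|=|A'B'|$. The construction mirrors the argument used for the analogous addition lemma on Erlangen lines, the only genuinely new feature being the behaviour of the two clauses of Definition \ref{SumOfLengthsDef} at the straight angle.

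Before the main argument I would record monotonicity of measure along a maximal ray. Every Erlangen circle is a space with rigid motions by \ref{ErlangenCirclesAreRM}, so a proper subsegment of a segment is never congruent to it; combined with the fact that $s[O,X]\subset s[O,Y]$ gives $|OX|\leq|OY|$ via the identity map, this shows that if $X\in s[O,Y)$ and $X\neq Y$ then $|OX|<|OY|$. Since a maximal ray $r$ emanating from $O$ is linearly ordered by \ref{OrderingOfRays}, it follows that any measure strictly below the straight angle is attained at no more than one point of $r$; this uniqueness is the tool that pins down the image of a point under $\tau$.

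The core case is when both segments realize the sum through clause a.\ of Definition \ref{SumOfLengthsDef}, so there are $M\in s[A,B]$ and $M'\in s[A',B']$ with $|AM|=|A'M'|=|CD|$ and $|MB|=|M'B'|=|EF|$; here $s[A,B]$ is connected and both $|CD|$ and $|EF|$ are below the straight angle. Using Condition \ref{Homogeneity and rigidity of the circle} I would pick $\tau\in\mathcal{I}$ with $\tau(A)=A'$ and $\tau(ray[A,B])=ray[A',B']$. Then $\tau(M)$ and $M'$ both lie on $ray[A',B']$ at measure $|CD|$ from $A'$, so $\tau(M)=M'$ by the uniqueness above; applying the same reasoning to the ray from $M'$ in the direction of $B'$ gives $\tau(B)=B'$. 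Hence $\tau(s[A,B])=s[A',B']$ and the two measures agree.

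It remains to treat clause b., where a segment is antipodal, $A=a(B)$, and its measure is the maximal (straight) angle. If both $s[A,B]$ and $s[A',B']$ are antipodal, the conclusion is immediate from \ref{AntipodalSegmentsAreCongruent}, which makes any two antipodal segments congruent. The step I expect to be the main obstacle is excluding the mixed possibility, where one of the two segments is connected (clause a.) while the other is antipodal and realizes the same pair $(|CD|,|EF|)$ via supplementarity, $|A'M'|=|CD|$ and $|M'a(A')|=|EF|$. The plan here is to show that supplementarity forces the point reached by continuing a measure $|EF|$ beyond $M'$, in the direction away from $A'$, to be exactly the antipode $a(A')$, so that any connected realization of the same pair would terminate at $a(A')$ and hence be antipodal itself---contradicting its connectedness. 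Handling this boundary interaction between the two clauses carefully is where the real work lies.
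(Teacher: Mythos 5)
Your proposal is correct, and its core coincides with the paper's intended argument: the paper's proof of this lemma is a one-line reference to the corresponding addition lemma in Section \ref{Spaces with rigid motions}, whose proof picks the intermediate points $M$, $M'$, sends $ray[A,B]$ onto the appropriate maximal ray by an element of $\mathcal{I}$, and pins down the images of $M$ and then of $B$ by uniqueness of a prescribed measure along a ray---exactly your core case (the rigid-motions version states the two sums in opposite orders, hence uses $ray[A,B]\mapsto ray[B',A']$; with the identical order here your choice $ray[A,B]\mapsto ray[A',B']$ is the right adaptation). Where you differ is in scope: the paper's reference silently covers only that case, while you take the antipodal clause of the definition of sums seriously, disposing of the doubly-antipodal case via \ref{AntipodalSegmentsAreCongruent} and isolating the mixed case. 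Your plan for the mixed case does close, and with less work than you fear. Choose $\tau\in\mathcal{I}$ with $\tau(ray[A,M])=ray[A',M']$; then $\tau(A)=A'$ and $\tau(M)=M'$ by your uniqueness observation. Since $A$ and $B$ lie on different maximal rays at $M$ (a ray's initial point is never between two other points of the ray), $\tau(B)$ lies on the maximal ray at $M'$ not containing $A'$; and $a(A')$ lies on that same ray, because from any point of a circle a point and its antipode sit on opposite maximal rays---this is the ray structure displayed explicitly in the proof of \ref{CircleasPaschChar}. As $|M'\tau(B)|=|EF|=|M'a(A')|$ is below the straight angle, uniqueness gives $\tau(B)=a(A')$, and \ref{AntipodalMapCommutes} then yields $B=\tau^{-1}(a(A'))=a(\tau^{-1}(A'))=a(A)$, contradicting connectedness of $s[A,B]$. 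So there is no genuine gap: the ``real work'' you flag amounts to one application of \ref{AntipodalMapCommutes} on top of the uniqueness and rigidity facts (\ref{ErlangenCirclesAreRM}) you already established, and your treatment is in fact more complete than the paper's own proof-by-reference.
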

\begin{proof}
Similar to that for spaces with rigid motions in Section \ref{Spaces with rigid motions}.
\end{proof}

\begin{Lemma}
Suppose both $|AB|+|A'B'|$ and $|CD|+|C'D'|$ exist.
If $|AB|\leq |CD|$ and $|A'B'|\leq |C'D'|$, then $|AB|+|A'B'|\leq |CD|+|C'D'|$.
\end{Lemma}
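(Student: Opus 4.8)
The plan is to adapt the isometry-placement argument used for Erlangen lines (the proof of \ref{AddingLengthsLemma}), while paying attention to the two ways a sum of measures can be realized on a circle (conditions a and b of the definition of sum of measures) and to the fact that measures on a circle are bounded above by the measure of an antipodal pair. Throughout I would work inside a single connected arc, where the positive order on a ray behaves exactly as on a line, so that the line-style reasoning transfers almost verbatim; recall that $(c,\mathcal{I})$ is a space with rigid motions by \ref{ErlangenCirclesAreRM}, so measure comparisons are governed by honest isometries.

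First I would dispose of the case in which $|CD|+|C'D'|$ is realized by an antipodal pair, i.e. the right-hand sum equals the maximal (straight-angle) measure $|RS|$ with $R=a(S)$. Then $|AB|+|A'B'|\leq|CD|+|C'D'|$ holds at once, since by definition $|XY|\leq|RS|$ for every segment $s[X,Y]$ whenever $R$ is antipodal to $S$.

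The generic case is when $|CD|+|C'D'|=|RS|$ with $s[R,S]$ connected, so there is $N\in s[R,S]$ with $|RN|=|CD|$ and $|NS|=|C'D'|$, and, if the left-hand sum is realized by condition a, likewise $|AB|+|A'B'|=|PQ|$ with $M\in s[P,Q]$, $|PM|=|AB|$, and $|MQ|=|A'B'|$. Here I would pick $\tau\in\mathcal{I}$ carrying $ray[P,Q]$ onto $ray[R,S]$ with $\tau(P)=R$. Writing $M_1=\tau(M)$, from $|RM_1|=|PM|=|AB|\leq|CD|=|RN|$ and the linear order on the ray (equivalently \ref{ExistenceUniquenessOfDifference}) one gets $M_1\in s[R,N]$, hence $M_1\leq N\leq S$ in the arc. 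Consequently $|M_1S|=|M_1N|+|NS|\geq|NS|\geq|A'B'|=|M_1\tau(Q)|$, and since $\tau(Q)$ and $S$ lie on the same side of $M_1$ (because $\tau$ respects the order from the base point $R$), this forces $\tau(Q)\in s[M_1,S]\subset s[R,S]$. Therefore $\tau(s[P,Q])=s[R,\tau(Q)]\subset s[R,S]$, which is exactly $|PQ|\leq|RS|$, i.e. the desired inequality; transitivity (\ref{LengthInequalityTransitivityCircle}) is not even needed.

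The step I expect to be the only genuinely circle-specific obstacle is the remaining sub-case of the generic setting, where $|AB|+|A'B'|$ is realized not by an interior point but by condition b, so that it equals a straight angle coming from a configuration $X,M,a(X)$ with $|XM|=|AB|$ and $|M\,a(X)|=|A'B'|$. Rather than treat it, I would rule it out: transporting this configuration by $\tau\in\mathcal{I}$ with $\tau(X)=R$, and using $a\in\mathcal{I}$ (\ref{AntipodalMapIsInI}) so that $\tau(a(X))=a(R)$, the image midpoint $M_1$ again lands in $s[R,N]$, and additivity along the arc gives $|A'B'|=|M_1\,a(R)|=|M_1N|+|NS|+|S\,a(R)|\geq|NS|+|S\,a(R)|>|NS|=|C'D'|$, the strict step using $S\neq a(R)$ (connectedness of $s[R,S]$). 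This contradicts $|A'B'|\leq|C'D'|$, so this sub-case cannot occur and the proof is complete. The delicate point is thus correctly bookkeeping the antipodal endpoint $a(R)$ and invoking $a\in\mathcal{I}$ to transport it, which is precisely the feature that distinguishes the circle from the line.
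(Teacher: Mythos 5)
Your proof is correct, and its core is exactly the argument the paper intends: the paper's ``proof'' of this lemma is literally the cross-reference ``Same as for spaces with rigid motions in Section \ref{Spaces with rigid motions}'', i.e.\ the transport-by-isometry argument of \ref{AddingLengthsLemma}, which is precisely your generic case (carry $ray[P,Q]$ onto $ray[R,S]$ by the unique isometry with $\tau(P)=R$, then compare along the linear order of the ray, using rigidity to exclude a segment congruent to a proper subsegment). What you add, and what the paper's citation silently ignores, is the circle-specific case analysis: the right-hand sum being realized by an antipodal pair (trivial, since $|XY|\leq|RS|$ holds by definition when $R=a(S)$), and the left-hand sum being realized by condition b, which you correctly rule out by contradiction --- this step is genuinely necessary, since in that configuration the conclusion would be false, so it must be incompatible with the hypotheses. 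Two small soft spots, neither fatal: first, the ``additivity along the arc'' chain $|M_1\,a(R)|=|M_1N|+|NS|+|S\,a(R)|$ presupposes the ordering $M_1\leq N\leq S\leq a(R)$ on $ray[M_1,a(R)]$, which needs the observation that $R$ and $a(R)$ lie in different components of $c\setminus\{M_1,a(M_1)\}$ (equivalently, that no maximal ray contains an antipodal pair --- condition 3 of \ref{PaschSpacesDef} together with \ref{AntipodalRaysDef}); you assert this ordering rather than prove it, though it is true and routine in the paper's framework. Second, the identity $\tau(a(X))=a(\tau(X))$ is \ref{AntipodalMapCommutes} and does not actually require $a\in\mathcal{I}$ (\ref{AntipodalMapIsInI}), which you cite; and the degenerate placements ($M=X$, $M=a(X)$, $N=R$, or condition a holding with $P=a(Q)$) are left untreated, though each collapses to the same contradiction that a connected segment contains no antipodal pair. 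On balance your write-up is more complete than the paper's own.
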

\begin{proof}
Same as for spaces with rigid motions in Section \ref{Spaces with rigid motions}.
\end{proof}

\begin{Definition}\label{DifferenceOfLengthsDef}
Let $c$ be an Erlangen circle. The measure of a line segment $s[A,B]$
is equal to the \textbf{difference of measures} $|CD|$ and $|EF|$ if $|AB|+|EF|=|CD|$.
\end{Definition}

\begin{Lemma}
For every two measures $|AB|\ge |CD|$ there is a line segment whose measure equals the difference of measures.
\end{Lemma}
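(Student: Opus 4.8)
The plan is to imitate the Erlangen line argument of \ref{ExistenceUniquenessOfDifference}: realize the smaller measure $|CD|$ as a subsegment of $s[A,B]$ anchored at $B$, and then read off the difference as the complementary measure $|AM|$. First I would dispose of the degenerate cases. If $C=D$ then $|CD|$ is the zero measure and $s[A,B]$ itself is the difference; if $A=B$ then $|AB|\ge|CD|$ forces $|CD|$ to be zero as well. So from now on assume $A\ne B$ and $C\ne D$.

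The construction splits according to whether $A$ and $B$ are antipodal, so I would first record a dichotomy. A connected line segment can never carry the maximal (antipodal) measure, since an isometry cannot map an infinite connected segment onto a two-point segment; hence if $A,B$ are not antipodal then $|AB|$ is strictly below the maximal measure, and $|AB|\ge|CD|$ forces $C,D$ to be non-antipodal as well. This keeps the two cases cleanly separated.

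In the non-antipodal case I would invoke Condition \ref{Homogeneity and rigidity of the circle} to obtain the unique $\tau\in\mathcal{I}$ sending $ray[C,D]$ onto $ray[B,A]$. Since an isomorphism of maximal rays carries initial point to initial point, $\tau(C)=B$, and since elements of $\mathcal{I}$ preserve congruence and hence measure, the point $M:=\tau(D)$ lies on $ray[B,A]$ with $|BM|=|CD|$. The key step is to place $M$ inside $s[A,B]$: along the connected ray $ray[B,A]$ the measure from $B$ is additive and strictly increasing (exactly as for Erlangen lines, using the circle analog of \ref{AddingLengthsLemma}), so $|BM|=|CD|\le|AB|=|BA|$ forces $M\in s[B,A]=s[A,B]$. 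Now Definition \ref{SumOfLengthsDef}(a) applied at $M$ gives $|AB|=|AM|+|MB|=|AM|+|CD|$, so $s[A,M]$ realizes the difference.

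In the antipodal case $A=a(B)$ the segment $s[A,B]$ consists of two points and carries the maximal measure, so I cannot anchor $M$ inside it; instead I would use the second clause of the sum definition. I pick any maximal ray $r$ at $B$, let $\tau\in\mathcal{I}$ send $ray[C,D]$ onto $r$, and set $M:=\tau(D)$, so that $|BM|=|CD|$. Because $A=a(B)$, Definition \ref{SumOfLengthsDef}(b) applies directly and yields $|AB|=|AM|+|MB|=|AM|+|CD|$, so again $s[A,M]$ is the desired difference. The only non-formal point in the whole argument is the monotonicity used to land $M$ in $s[A,B]$ in the connected case; the hard part is really just this placement, and the antipodal case sidesteps it entirely through clause (b) of the sum definition.
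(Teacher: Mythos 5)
Your proposal is essentially the paper's own argument: the paper's proof of this lemma is literally ``same as for spaces with rigid motions,'' i.e.\ the proof of \ref{ExistenceUniquenessOfDifference} (send $ray[C,D]$ by an isometry onto a ray at an endpoint of $s[A,B]$ and read off the complementary measure). Anchoring at $B$ rather than $A$ is an immaterial mirror-image of that, and your placement argument for $M$ (if $M$ lay beyond $A$ on $ray[B,A]$ then $s[B,A]$ would be a proper subsegment of $s[B,M]$, giving $|BA|<|BM|=|CD|\leq|AB|$, a contradiction via transitivity and the congruence lemma) is the right justification. In fact you are more careful than the paper: the cited Erlangen-line proof only covers connected segments, and your explicit use of clause (b) of \ref{SumOfLengthsDef} for the case $A=a(B)$, together with the dichotomy showing that a connected $s[A,B]$ can never dominate the antipodal measure, is exactly what is needed to make the ``same as'' proof legitimate on a circle.

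There is one small uncovered sub-case: when $C$ is antipodal to $D$. By your own dichotomy this can only happen when $A=a(B)$ as well, but then your antipodal-case construction breaks down, because $ray[C,D]$ does not exist (no maximal ray emanating from $C$ contains $a(C)$), so there is no $\tau$ to apply. The fix is one line: in this situation $s[A,B]$ and $s[C,D]$ are congruent by \ref{AntipodalSegmentsAreCongruent}, so the difference is the zero measure, realized by a degenerate segment $s[A,A]$ (clause (b) with $M=A$ gives $|AA|+|CD|=|AB|$). With that sentence added, your proof is complete.
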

\begin{proof}
Same as for spaces with rigid motions in Section \ref{Spaces with rigid motions}.
\end{proof}

\begin{Corollary}
Translations form an Abelian subgroup of isomorphism of any Erlangen circle.
\end{Corollary}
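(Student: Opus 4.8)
The plan is to parallel the two Erlangen-line results \ref{TranslationsOfLineSubgroup} and \ref{TranslationsOfLineAbSubgroup}, replacing the line characterizations of translations and reflections with their circle counterparts \ref{StructureOfIsoCircle}, and replacing \ref{TranslationSizeProp} with its analog for the measure on the circle. Throughout I will use that by \ref{AntipodalMapIsInI} the antipodal map $a$ lies in $\mathcal{I}$, that it commutes with every element of $\mathcal{I}$ by \ref{AntipodalMapCommutes}, and that, since $a$ has no fixed point, it is itself a translation by \ref{StructureOfIsoCircle}.

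First the subgroup claim. The identity is a translation by \ref{TranslationCircleDef}, and if $\tau$ has no fixed point then neither does $\tau^{-1}$, so the inverse of a translation is a translation by \ref{StructureOfIsoCircle}. For closure I take nontrivial translations $\tau_1,\tau_2$ and show that $\tau_1\circ\tau_2$ either is the identity or has no fixed point; in both cases it is a translation by \ref{StructureOfIsoCircle}. Suppose instead $\tau_1\circ\tau_2$ fixes a point $A$ and is not the identity; then it is a reflection by \ref{StructureOfIsoCircle}. Setting $B:=\tau_2(A)$ gives $\tau_1(B)=A$ with $A\neq B$ (otherwise $\tau_2$ fixes $A$). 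Exactly as in \ref{TranslationsOfLineSubgroup}, the fact that a translation is the translation from $X$ to its image for every point $X$ then forces $\tau_2=\tau_{A,B}$ and $\tau_1=\tau_{B,A}$, so $\tau_1\circ\tau_2=\mathrm{id}$, a contradiction.

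For commutativity I follow \ref{TranslationsOfLineAbSubgroup}. The key input is the circle analog of \ref{TranslationSizeProp}: since every translation lies in $\mathcal{I}$ and hence is an isometry, the measure $|X\tau(X)|$ is independent of $X$. Given nontrivial translations $\tau_1,\tau_2$ and a point $A$, I compare $\tau_2(\tau_1(A))$ with $\tau_1(\tau_2(A))$ by computing the measures of $s[A,\tau_2\tau_1(A)]$ and of $s[A,\tau_1\tau_2(A)]$ through the addition of measures \ref{SumOfLengthsDef}; both equal the same combination of $|A\tau_1(A)|$ and $|A\tau_2(A)|$, so the two images coincide. Hence the commutator $\tau_1\tau_2\tau_1^{-1}\tau_2^{-1}$ fixes a point, and being a product of translations it is a translation by the subgroup part; a translation with a fixed point is the identity by \ref{StructureOfIsoCircle}, so $\tau_1$ and $\tau_2$ commute.

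The main obstacle is that, unlike on a line, the measure on a circle is bounded by the measure of a right angle and its addition in \ref{SumOfLengthsDef} is only partial, so the displacement comparison can leave the regime where measures add directly. The remedy is to route the large-displacement cases through the antipodal structure: because $a$ is a central translation with $a^2=\mathrm{id}$ by \ref{AntipodalMapCommutes} and \ref{AntipodalMapIsInI}, one may, after composing with $a$ if necessary, reduce to displacements not exceeding a right angle, carry out the additive comparison there, and then transport the conclusion back using centrality of $a$. A secondary point needing care is the circle version of the statement that a translation is the translation from each point to its image, which I would extract from \ref{StructureOfIsoCircle} together with the fact that a fixed maximal ray forces a fixed point.
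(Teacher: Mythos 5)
Your proposal is correct and is essentially the paper's own proof: the paper disposes of this corollary with the single line ``Same as for Erlangen lines,'' i.e., precisely your plan of rerunning \ref{TranslationsOfLineSubgroup} and \ref{TranslationsOfLineAbSubgroup} with \ref{StructureOfIsoCircle} replacing \ref{CharOfTanslationsLine} and a circle analog of \ref{TranslationSizeProp} replacing the line version, while your antipodal-map reduction for the bounded, partially defined addition of measures supplies care that the paper leaves entirely implicit. One caution: constancy of the displacement $|X\tau(X)|$ is not a consequence of $\tau$ being an isometry alone (reflections also preserve measures yet have variable displacement), so the circle analog of \ref{TranslationSizeProp} must be obtained by redoing that proposition's additive argument on the circle, subject to the same antipodal reduction you already describe, rather than cited as a formal consequence of membership in $\mathcal{I}$.
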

\begin{proof}
Same as for Erlangen lines.
\end{proof}

\subsection{Dividing of measures of line segments}

\begin{Definition}
Given two line segments $s[A,B]$ and $s[C,D]$ of an Erlangen circle we say the \textbf{measure} of 
$s[A,B]$ is \textbf{twice the measure} of $s[C,D]$ (notation: $|AB|=2\cdot |CD|$) if
there is a point $M$ such that $|AM|=|MB|=|CD|$.
$M$ is called a \textbf{midpoint} between $A$ and $B$.
\end{Definition}

\begin{Lemma}\label{MidpointLemmaCircles}
Let $c$ be an Erlangen circle. \\
a. For every two non-antipodal points $A$ and $B$ of $c$ there
is exactly one midpoint of $s[A,B]$, i.e. a point $M\in s[A,B]$ satisfying
$|AM|=|MB|$.\\
b. For every two antipodal points $A$ and $B$ of $c$ there
are exactly two midpoints of $s[A,B]$, i.e. points $M\in c$ satisfying
$|AM|=|MB|$.
\end{Lemma}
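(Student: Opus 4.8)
The plan is to handle the two cases separately, but in each to manufacture, via Condition \ref{Homogeneity and rigidity of the circle}, an involution in $\mathcal{I}$ that exchanges the endpoints while preserving a connected segment set-wise, and then to extract the midpoint as its fixed point using the Fixed Point Theorem \ref{FixedPointTheorem}. Throughout I will use that Erlangen circles are spaces with rigid motions (\ref{ErlangenCirclesAreRM}) and that $a$ preserves closed line segments (\ref{AntipodalMapOnCirclePreservesSegments}) and commutes with every $f\in\mathcal{I}$ (\ref{AntipodalMapCommutes}).

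For part (a), $A$ and $B$ are non-antipodal, so $s[A,B]$ is connected by \ref{CircleDef}, and this is the exact analogue of \ref{MidpointLemma}. I would let $\rho$ be the unique isomorphism in $\mathcal{I}$ sending $ray[A,B]$ onto $ray[B,A]$ (these are maximal rays by \ref{CircleIsMaxRaysSpace}); then $\rho(A)=B$. The crucial point is that $\rho$ cannot be a translation: by the argument inside the proof of \ref{StructureOfIsoCircle}, a fixed-point-free isomorphism can never carry $ray[A,B]$ to $ray[B,A]$, since that would force $\rho(B)=A$ by rigidity and hence produce a fixed point in $s[A,B]$. Thus $\rho$ is a reflection, $\rho(B)=A$, and $\rho(s[A,B])=s[A,B]$. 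Applying \ref{FixedPointTheorem} gives a fixed point $M$, necessarily in $s(A,B)$, and $\rho(s[A,M])=s[B,M]$ yields $|AM|=|MB|$. Uniqueness follows exactly as in \ref{MidpointLemma}: any midpoint $M'$ satisfies $|AB|=2\cdot|AM'|$, so $|AM'|=|AM|$ by \ref{DividingLemma}, and two points of $ray[A,B]$ at equal distance from $A$ coincide by the length additivity of \ref{AddingLengthsLemma}.

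For part (b) write $B=a(A)$, so $s[A,B]=\{A,a(A)\}$ and a midpoint must be sought off this segment. Decompose $c\setminus\{A,a(A)\}=l_1\sqcup l_2$ into its two convex arcs as in \ref{CircleDef}; by \ref{CircleIsMaxRaysSpace} the sets $r=\{A\}\cup l_1$ and $r'=\{a(A)\}\cup l_1$ are maximal rays at $A$ and at $a(A)$. I would take the unique $\sigma\in\mathcal{I}$ with $\sigma(r)=r'$, so $\sigma(A)=a(A)$ and hence $\sigma(a(A))=A$ by \ref{AntipodalMapCommutes}; consequently $\sigma(l_1)=l_1$ and $\sigma(r')=r$. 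Then $\sigma^2$ fixes $r$ set-wise, so $\sigma^2=id$ on $r$ by \ref{RigidMotionsViaRaysProp}, whence $\sigma^2=id$ on all of $c$ by \ref{FixedPointsOnCircle}. Noting that $a$ interchanges the arcs ($a(l_1)=l_2$, since an involution of a connected line always has a fixed point while $a$ has none), any $P\in l_1$ has $\sigma(P)\in l_1$ non-antipodal to $P$, so $s[P,\sigma(P)]$ is a connected $\sigma$-invariant segment and \ref{FixedPointTheorem} yields a fixed point $M\in l_1$. Then $\sigma(s[A,M])=s[a(A),M]$ forces $|AM|=|Ma(A)|=|MB|$, so $M$ is a midpoint on $l_1$; the same construction on $l_2$ gives a second midpoint.

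Finally I would count: $A$ and $a(A)$ are not midpoints ($|AA|=0$ while $|Aa(A)|>0$), and every other point lies in $l_1$ or $l_2$, so it suffices to show each arc carries a unique midpoint. If $M,N\in l_1$ are both midpoints with $N$ farther from $A$ along $l_1$, additivity of measure (\ref{AddingLengthsLemma}) along the arc gives $|AM|=|Ma(A)|=|MN|+|Na(A)|=|MN|+|AN|=|AM|+2\cdot|MN|$, forcing $|MN|=0$ and $M=N$. The step I expect to be the main obstacle is part (b): constructing the correct ray-swapping map $\sigma$ and verifying rigorously that $\sigma^2=id$ (so that the Fixed Point Theorem applies and the fixed point genuinely lies on the chosen arc), together with making precise that $a$ interchanges $l_1$ and $l_2$ so that all the segments entering the length computation are connected.
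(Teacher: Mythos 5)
Your proof of part (a) is correct and is essentially the paper's own argument: the same isomorphism $\rho$ carrying $ray[A,B]$ onto $ray[B,A]$, the Fixed Point Theorem \ref{FixedPointTheorem} to produce $M$, and length algebra for uniqueness. You are in fact more careful than the paper, whose uniqueness claim (``any other midpoint would be a fixed point of $\rho$, which is not the identity'') glosses over the fact that a reflection of a circle has a second fixed point, namely $a(M)$; that point simply lies outside $s[A,B]$. Your route through \ref{DividingLemma} and \ref{AddingLengthsLemma} sidesteps this entirely.

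Part (b) is where you genuinely diverge. The paper disposes of (b) in one line by identifying midpoints of $s[A,a(A)]$ with points forming a right angle with $A$ in the sense of \ref{RightAngleDef}, and then invoking the earlier (unlabeled) proposition that exactly two points of $c$ form a right angle with $A$. That proposition was itself proved by a fixed-point argument on an arc (using $a\circ i_A$ and \ref{TranslationLemma}), so the two proofs are cousins; but the paper's reduction silently uses the equivalence ``midpoint $\iff$ right angle with $A$'', which requires \ref{AntipodalMapIsAnIsometry} plus a cancellation argument that is never written down. Your construction of $\sigma\in\mathcal{I}$ swapping $A$ and $a(A)$ while preserving each arc is self-contained, stays inside $\mathcal{I}$ throughout, and delivers existence and the per-arc uniqueness count in one sweep; the price is length. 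One small debt remains in your uniqueness computation: the step $|Ma(A)|=|MN|+|Na(A)|$ needs that $M\in s(A,N)$ forces $N\in s[M,a(A)]$, i.e. that the ray orders on $l_1$ from $A$ and from $a(A)$ are opposite. This follows from condition 1 of \ref{SpaceWithMaximalRays} (available for circles by \ref{CircleIsMaxRaysSpace}): if $M$ were an interior point of both $s[N,A]$ and $s[N,a(A)]$, one of these segments would contain the other, which is impossible since the first contains $A$ but not $a(A)$ and the second the reverse. With that detail supplied, your proof is complete.
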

\begin{proof}
Obvious if $A=B$, so assume $A\ne B$.

b. follows from the previous section as in the case of antipodal $A$ and $B$, $A$ and $M$ form a right triangle if $M$ is the midpoint of $A$ and $B$.

a. Pick an isomorphism $\rho$ sending $ray[A,B]$ onto $ray[B,A]$. It sends $s[A,B]$ to itself, so $\rho$ has a fixed point $M$ in $s(A,B)$. This is the midpoint we were looking for.
There are no more midpoints as they would be fixed points of $\rho$ which is not the identity.
\end{proof}

\begin{Lemma}\label{DividingLemmaCircles}
Let $c$ be an Erlangen circle and let $n \ge 1$ be a natural number.
For each measure $L$ of line segments there is a unique measure $L_n$ of line segments
such that $2^n\cdot L_n=L$.
\end{Lemma}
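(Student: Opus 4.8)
The plan is to mirror the proof of the corresponding result for Erlangen lines (\ref{DividingLemma}), replacing the general $n$-fold division by repeated bisection, since on a circle the only division operation guaranteed to exist is halving, carried out by the midpoint construction of \ref{MidpointLemmaCircles}. I would therefore induct on $n$, the base case being division by two and the inductive step bisecting each piece of the previously constructed subdivision.

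For existence, in the base case $n=1$ I would pick a representative $s[A,B]$ of $L$ and a midpoint $M$ of $s[A,B]$ furnished by \ref{MidpointLemmaCircles}; then $|AM|=|MB|$, and by the definition of the sum of measures (its first clause when $A,B$ are non-antipodal and its antipodal clause when $A=a(B)$) the measure $L_1:=|AM|$ satisfies $2\cdot L_1=L$. For the inductive step, suppose $2^{n-1}\cdot L_{n-1}=L$ is witnessed by points $C_0=A,C_1,\dots,C_{2^{n-1}}=B$ of a representative $s[A,B]$ with each $s[C_i,C_{i+1}]$ representing $L_{n-1}$ and consecutive pieces meeting in at most one point. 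Each $s[C_i,C_{i+1}]$ is non-antipodal, since its measure $L_{n-1}$ is strictly smaller than the maximal, antipodal measure even when $L$ itself is maximal, so it has a unique midpoint $M_i$ by \ref{MidpointLemmaCircles}(a). Inserting the $M_i$ produces $2^n+1$ points subdividing $s[A,B]$ into $2^n$ pieces; these pieces are mutually congruent because the $s[C_i,C_{i+1}]$ are congruent and any congruence carrying one onto another carries midpoint to midpoint, so setting $L_n:=|C_0M_0|$ gives $2^n\cdot L_n=L$.

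For uniqueness I would again induct on $n$ and reduce to $n=1$: if $2\cdot L_1=L=2\cdot L_1'$, choose a representative $s[A,B]$ of $L$; each equality exhibits a midpoint of $s[A,B]$ whose distance to $A$ represents $L_1$, respectively $L_1'$. When $A,B$ are non-antipodal the midpoint is unique by \ref{MidpointLemmaCircles}(a), so $L_1=L_1'$; when $A=a(B)$ the two midpoints of \ref{MidpointLemmaCircles}(b) are interchanged by the symmetry fixing $A$ and $B$ and hence determine the same measure, again giving $L_1=L_1'$. Alternatively, uniqueness follows from strict monotonicity of doubling, the circle analogue of the cancellation clause in \ref{AddingLengthsLemma}, together with the trichotomy of measures established earlier in this section.

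The step I expect to be the main obstacle is the verification, in the inductive refinement, that the $2^n$ half-pieces are all congruent to a single measure $L_n$ and that they genuinely lie along one component of $c\setminus\{A,a(A)\}$ without wrapping around the circle. The first point rests on the fact that congruences preserve midpoints, which in turn relies on the uniqueness half of \ref{MidpointLemmaCircles}; the second is a boundedness bookkeeping matter, guaranteed because the total measure $L$ never exceeds the antipodal measure while each constructed piece is strictly smaller, so the subdivision points stay on a single line-component and consecutive pieces overlap in at most one point.
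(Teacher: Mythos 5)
Your proposal is correct and takes essentially the same route as the paper: existence by repeated bisection via \ref{MidpointLemmaCircles}, and uniqueness reduced to uniqueness of midpoints together with the fact that the two midpoints of an antipodal pair are congruent (right angles). The only difference is organizational -- the paper disposes of the non-antipodal case by citing \ref{DividingLemma} and argues only the antipodal measure directly, whereas you inline that bisection induction; the key lemmas and the uniqueness mechanism are identical.
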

\begin{proof}
The proof for $L$ being the measure of a connected line segment follows from \ref{DividingLemma}, so assume $L$ is the measure of $s[A,a(A)]$ for some $A\in c$.
Existence of $L_n$ follows from \ref{MidpointLemmaCircles}. Uniqueness of $L_n$ follows from the fact $2^{n-1}\cdot L_n$ is the length of a right angle.
\end{proof}

\section{Isometries of Erlangen circles}

Following Euclid, we consider the measure of a right angle as the unit of measure for circles.
One can easily switch to degrees by multiplying by $90$ which we will quite often do.

\begin{Proposition}\label{IsomorphismBetweenCircles}
Let $f:c_1\to c_2$ be a bijection between Erlangen circles preserving closed line segments. If $f^{-1}\circ \rho\circ f\in \mathcal{I}_1$ for any isomorphism
$\rho\in \mathcal{I}_2$, then $f$ preserves measures of closed line segments.
\end{Proposition}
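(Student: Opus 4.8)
The plan is to reduce the statement to the claim that $f$ carries congruent segments to congruent segments, and then to deduce that claim from the hypothesis by showing that conjugation by $f$ actually carries $\mathcal{I}_1$ \emph{onto} $\mathcal{I}_2$. First I would recall that, by the lemma identifying equality of measures with congruence, ``$f$ preserves measures'' is equivalent to the assertion: whenever $s[A,B]$ is congruent to $s[C,D]$ in $c_1$ (that is, $\tau(s[A,B])=s[C,D]$ for some $\tau\in\mathcal{I}_1$), the segments $s[f(A),f(B)]$ and $s[f(C),f(D)]$ are congruent in $c_2$. The natural candidate witnessing the latter is $\sigma:=f\circ\tau\circ f^{-1}$. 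Since $f$ preserves closed line segments, so does $f^{-1}$ by \ref{BijectionPreservingBetweennessProp}, whence $f^{-1}(s[f(A),f(B)])=s[A,B]$ and therefore $\sigma(s[f(A),f(B)])=f(\tau(s[A,B]))=f(s[C,D])=s[f(C),f(D)]$. So the whole statement comes down to the single membership $\sigma\in\mathcal{I}_2$, i.e. to showing $f\,\mathcal{I}_1\,f^{-1}\subseteq\mathcal{I}_2$.

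The heart of the argument is to upgrade the hypothesis $f^{-1}\mathcal{I}_2 f\subseteq\mathcal{I}_1$ to the equality $f^{-1}\mathcal{I}_2 f=\mathcal{I}_1$, which is exactly $f\mathcal{I}_1 f^{-1}=\mathcal{I}_2$. For this I would exploit Condition \ref{Homogeneity and rigidity of the circle}: it says precisely that $\mathcal{I}_1$ acts simply transitively (regularly) on the set $R_1$ of maximal rays of $c_1$, and likewise $\mathcal{I}_2$ on the set $R_2$ of maximal rays of $c_2$. Because $f$ is a bijection preserving closed line segments, it respects the entire betweenness structure (in particular it sends antipodal pairs to antipodal pairs and components of complements to components of complements), so it induces a bijection $R_1\to R_2$. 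Conjugation then transports the $\mathcal{I}_2$-action on $R_2$ to the action of the subgroup $H:=f^{-1}\mathcal{I}_2 f$ on $R_1$, and this transported action is again simply transitive. Thus $H$ is a subgroup of $\mathcal{I}_1$ that already acts transitively on $R_1$, a set on which $\mathcal{I}_1$ itself acts regularly; the standard fact about regular actions (fix a ray $r_0$; given $g\in\mathcal{I}_1$ choose $h\in H$ with $h(r_0)=g(r_0)$, so that $g^{-1}h$ fixes $r_0$ and hence equals the identity, giving $g=h\in H$) forces $H=\mathcal{I}_1$. Equivalently $f\mathcal{I}_1 f^{-1}=\mathcal{I}_2$, giving $\sigma\in\mathcal{I}_2$ and completing the reduction above.

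I expect the main obstacle to be exactly this upgrade from an inclusion to an equality of groups: the hypothesis only guarantees $f^{-1}\mathcal{I}_2 f\subseteq\mathcal{I}_1$, and without the rigidity built into Condition \ref{Homogeneity and rigidity of the circle} there would be no reason for a proper inclusion to be impossible. The regular-action lemma is what rules this out, and verifying its hypotheses amounts to checking that $f$ genuinely induces a bijection of maximal rays and conjugates one regular action into another; these checks are routine once one notes that the maximal rays of a circle are determined by the betweenness relation (see \ref{CircleIsMaxRaysSpace}), a structure that both $f$ and $f^{-1}$ preserve.
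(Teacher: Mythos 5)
Your core argument is correct, but it takes a genuinely different route from the paper's. The paper never upgrades the hypothesis $f^{-1}\mathcal{I}_2 f\subseteq\mathcal{I}_1$ to an equality of groups. Instead it argues by contradiction from the one-sided inclusion alone: if $s[C,D]$ and $s[E,F]$ are congruent in $c_1$ but their images are not congruent in $c_2$, then some $\tau\in\mathcal{I}_2$ carries $s[f(C),f(D)]$ onto a \emph{proper} subset of $s[f(E),f(F)]$; pulling back, $f^{-1}\circ\tau\circ f\in\mathcal{I}_1$ carries $s[C,D]$ onto a proper subset of $s[E,F]$, and composing with the congruence witness in $\mathcal{I}_1$ produces an element of $\mathcal{I}_1$ sending $s[E,F]$ onto a proper subset of itself, contradicting the fact that Erlangen circles are spaces with rigid motions (\ref{ErlangenCirclesAreRM}). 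Your regular-action lemma (a subgroup $H=f^{-1}\mathcal{I}_2 f\subseteq\mathcal{I}_1$ acting transitively on the maximal rays, inside a group acting simply transitively by Condition \ref{Homogeneity and rigidity of the circle}, must equal $\mathcal{I}_1$) is clean group theory and buys strictly more than the paper proves: the full conjugation identity $f\,\mathcal{I}_1\,f^{-1}=\mathcal{I}_2$, i.e.\ that $f$ is an isomorphism of the Erlangen structures, not merely measure-preserving. The paper's route is more economical; yours is more structural and reusable.

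One step does need patching: your opening reduction, that ``preserves measures'' is equivalent to ``preserves congruence'' by the equal-measures-iff-congruent lemma, is too quick. That lemma compares segments within a single circle; preservation of measures across two circles also requires the unit of measure (the right angle) of $c_1$ to be carried to the unit of $c_2$ --- this is exactly why the paper spends the first two thirds of its proof showing $f^{-1}\circ a_2\circ f=a_1$ and that right angles correspond in both directions. In your framework the missing piece is quickly supplied: since $f$ and $f^{-1}$ preserve closed line segments (\ref{BijectionPreservingBetweennessProp}), two-point segments go to two-point segments, so $f\circ a_1\circ f^{-1}=a_2$; and by \ref{RightAngleAlgebraically} points $A,B$ form a right angle iff $\tau_{AB}^2=a_1$, a condition stable under conjugation because $f\circ\tau_{AB}\circ f^{-1}$ is fixed-point free (a fixed point would pull back to one for $\tau_{AB}$), sends $f(A)$ to $f(B)$, and hence equals $\tau_{f(A)f(B)}$ by \ref{StructureOfIsoCircle}, with square $a_2$. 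With that observation added, your proof is complete.
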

\begin{proof}
Notice $f^{-1}\circ a_2\circ f=a_1$, where $a_i$ is the antipodal map of the circle $c_i$, $i=1,2$. It is so since $f(s[A,a_1(A)])$ contains $f(A)$, is a closed line segment, and contains exactly two points.

Suppose $\rho$ is the translation from $A$ to $B$ in $c_2$ such that $A$ and $B$ form a right angle, i.e. $|AB|=1$. Notice $\rho^2=a_2$ (see \ref{RightAngleAlgebraically})
Therefore $(f^{-1}\circ \rho\circ f)^2= f^{-1}\circ \rho^2\circ f= f^{-1}\circ a_2\circ f=a_1$
and it means that $f^{-1}\circ \rho\circ f$ is a translation in $c_1$ by a right angle
(see \ref{RightAngleAlgebraically}).

Pick a right angle $A$, $B$ in $c_1$. Let $\rho$ be the translation from $f(A)$ to $f(B)$ in $c_2$. $ f^{-1}\circ \rho\circ f$ sends $A$ to $B$ and does not have any fixed point $C$ as in that case $f(C)$ would be a fixed point of $\rho$. 
Therefore $ f^{-1}\circ \rho\circ f$ is the translation from $A$ to $B$ (see 
\ref{StructureOfIsoCircle}) and its square is $a_1$. Therefore $\rho^2=a_2$
and $f(A)$, $f(B)$ form a right angle.

It remains to show that congruence of line segments $s[C,D]$ and $s[E,F]$ in $c_1$
implies congruence of line segments $s[f(C),f(D)]$ and $s[f(E),f(F)]$ in $c_2$.
Suppose line segments $s[f(C),f(D)]$ and $s[f(E),f(F)]$ are not congruent in $c_2$.
Without loss of generality, we may assume there is an isomorphism $\tau$ in $\mathcal{I}_2$ sending the line segment $s[f(C),f(D)]$ onto a proper subset of $s[f(E),f(F)]$. In that case $ f^{-1}\circ \rho\circ f$ sends $s[C,D]$ onto a proper subset of $s[E,F]$, a contradiction.
\end{proof}

\begin{Theorem}
Let $(c,\mathcal{I})$ be an Erlangen circle. The following conditions are equivalent
for any function $f:c\to c$:\\
a. $f\in \mathcal{I}$,\\
b. $f:c\to c$ preserves measures of closed line segments.
\end{Theorem}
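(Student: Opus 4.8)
The plan is to mirror the proof of \ref{IsometriesAreIsomorphismsLine}, replacing the statement ``two fixed points force the identity'' for lines by ``two non-antipodal fixed points, together with the antipodal map, force the identity'' for circles. The implication a.$\implies$b. is immediate: if $f\in\mathcal I$, then $f(s[A,B])=s[f(A),f(B)]$, so these two segments are congruent via $f$ and therefore have equal measure, giving $|f(A)f(B)|=|AB|$ for all $A,B$.

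For b.$\implies$a., assume $f$ preserves measures. First I would record that $f$ is injective (distinct non-antipodal points and antipodal points both span segments of positive measure) and that $f$ commutes with the antipodal map $a$: the segments $s[X,a(X)]$ are exactly the segments of maximal measure, since any connected segment has strictly smaller measure (it contains more than two points, hence cannot be congruent to the two-point segment $s[X,a(X)]$, and equal measure means congruence). Thus $s[f(X),f(a(X))]$ again has maximal measure, forcing $f(a(X))=a(f(X))$. Next, fix two non-antipodal points $A\ne B$. Since $|AB|=|f(A)f(B)|$, Condition \ref{Homogeneity and rigidity of the circle} supplies a unique $\rho\in\mathcal I$ sending $ray[A,B]$ onto $ray[f(A),f(B)]$; then $\rho(A)=f(A)$, and $\rho(B)=f(B)$ by monotonicity of measure along a ray. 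Replacing $f$ by $g:=\rho^{-1}\circ f$, it suffices to prove that a measure-preserving $g$ fixing the two non-antipodal points $A,B$ is the identity, for then $f=\rho\in\mathcal I$.

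The core lemma is the main obstacle, and it is where the circle genuinely differs from the line. Since $g$ also commutes with $a$, it fixes all four points $A,B,a(A),a(B)$; these cut $c$ into four connected sub-arcs, each bounded by two non-antipodal fixed endpoints. I would then finish exactly as in \ref{IsometriesAreIsomorphismsLine}: for $C$ interior to such an arc $s[P,Q]$ one has $|Pg(C)|=|PC|$ and $|Qg(C)|=|QC|$ with $|PC|+|CQ|=|PQ|$, so $g(C)$ cannot leave $s[P,Q]$ (a point off the arc would violate one of these two measure equalities), and then $g(C)=C$ because measure is strictly monotone along the connected line containing $s[P,Q]$. Hence $g$ is the identity on every arc, so $g=\mathrm{id}$ and $f=\rho\in\mathcal I$. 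The delicate point to pin down rigorously is precisely the claim that on a circle no point outside $s[P,Q]$ realizes the same two distances to the non-antipodal endpoints $P,Q$; this is the analogue of, and slightly subtler than, the one-line assertion used for lines, and I expect it to require the order structure on the two components of $c\setminus\{P,a(P)\}$ coming from \ref{CircleIsMaxRaysSpace}.
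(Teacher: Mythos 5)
Your proposal is, in substance, the paper's own proof: the paper disposes of this theorem with the single line ``Similar to that of \ref{IsometriesAreIsomorphismsLine}'', and your argument --- reduce via $\rho\in\mathcal{I}$ (supplied by Condition \ref{Homogeneity and rigidity of the circle}) to a measure-preserving $g$ fixing two non-antipodal points, observe $g$ commutes with $a$ so that $A,B,a(A),a(B)$ are all fixed, then run the two-distance argument of the line case on each of the four arcs --- is exactly that adaptation, carried out in more detail than the paper provides. The one step you flag is real, and a warning: pure arc-location bookkeeping does not settle it, because when the measure $|PQ|$ exceeds $120$ degrees both $i_P(C)$ and $i_Q(C)$ land in the \emph{same} arc $s(a(P),a(Q))$; you must either compute positions inside that arc using additivity of measures and $|a(P)a(Q)|=|PQ|$ (from \ref{AntipodalMapIsAnIsometry}), where $i_P(C)=i_Q(C)$ forces $|PQ|=180$ degrees, contradicting non-antipodality, or argue group-theoretically: $i_P(C)=i_Q(C)$ makes $i_Q\circ i_P$ a non-identity element of $\mathcal{I}$ fixing $C$, hence an involution by \ref{FixedPointsOnCircle}, indeed the reflection $i_C$ by \ref{StructureOfIsoCircle}; commutativity of $i_P$ and $i_Q$ then shows $i_P$ fixes $i_Q(P)$, so $i_Q(P)=a(P)$, and $i_C(P)=i_Q(i_P(P))=a(P)$ as well, so by symmetry of right angles both $Q$ and $C$ form a right angle with $P$, whence $C\in\{Q,a(Q)\}$ (there are exactly two such points and they are antipodal), contradicting $C\in s(P,Q)$. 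With either completion your proof is correct.
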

\begin{proof}
Similar to that of \ref{IsometriesAreIsomorphismsLine}.
\end{proof}

\begin{Theorem}
Let $c_1, c_2$ be two Erlangen circles. Given a point $A$ of $c_1$ and a point $B$ of $c_2$
there is an isometry $f:c_1\to c_2$ such that
$f(A)=B$.
\end{Theorem}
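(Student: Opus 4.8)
The plan is to transport the construction behind Theorem \ref{IsoOfErlangenLines} from lines to circles, using the measure of a right angle as the canonical unit that both circles share. First I would record the constraints any isometry must satisfy. Writing $A'=a_1(A)$ and $B'=a_2(B)$ for the antipodal points, an isometry $f$ preserves two-point segments, so $f$ must send the antipodal pair $\{A,A'\}$ to a two-point segment; combined with $f(A)=B$ this forces $f(A')=B'$ and, more generally, $f\circ a_1=a_2\circ f$. Next I would fix an orientation on each circle and single out the four reference points that cut it into quarter-arcs: $A$, its two right-angle points $M$ and $M'=a_1(M)$ (which exist and are antipodal, with $|AM|$ one right angle, by \ref{MidpointLemmaCircles} and the right-angle discussion), and $A'$; analogously $B,N,N',B'$ in $c_2$. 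Because \ref{Axiom 4 of Euclid} makes all right angles congruent, ``one right angle'' denotes the same measure in $c_1$ and in $c_2$, which is exactly what allows the two circles to be matched.

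The construction itself I would carry out arc by arc. Each quarter-arc such as $s[A,M]$ joins non-antipodal points, hence is a connected closed segment of measure one right angle sitting inside the line $l_1^{+}$ obtained from Definition \ref{CircleDef}(d) by deleting $\{A,A'\}$. On such a connected segment the measure behaves exactly as the length on an Erlangen line, so \ref{RatiosOfLengthsLine} lets me attach to each point $X\in s[A,M]$ the real number $t_X$ with $|AX|=t_X\cdot(\text{right angle})$, and I would define $f$ on $s[A,M]$ to be the unique measure-preserving, orientation-matching bijection onto $s[B,N]$ with $A\mapsto B$ and $M\mapsto N$, precisely as in \ref{IsoOfErlangenLines}. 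Performing this on all four arcs $s[A,M]$, $s[M,A']$, $s[A',M']$, $s[M',A]$ and checking that neighbouring definitions agree at the shared endpoints $A,M,A',M'$ (they do, since both prescribe the same reference point and the same unit) yields a single bijection $f:c_1\to c_2$ with $f(A)=B$.

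It remains to verify that $f$ genuinely preserves the measure of \emph{all} closed line segments, including those straddling a junction point, and this is where I expect the real work to lie. The point is that a closed segment of a circle has measure at most a straight angle, so every such segment either lies inside one quarter-arc or can be written as a union of at most three sub-segments split at the junction points it contains; using the additivity of measure from the algebra-of-measures subsection, preservation on the whole segment reduces to preservation on each piece, which is built in. Antipodal two-point segments are handled separately by $f\circ a_1=a_2\circ f$. Once $f$ is shown to preserve the measure of all segments it is, by definition, an isometry carrying $A$ to $B$, which is all the theorem asks (and one may then note, as in \ref{IsometriesAreIsomorphismsLine} and \ref{IsomorphismBetweenCircles}, that it is in fact an isomorphism of Erlangen circles). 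The main obstacle is thus the bookkeeping at the junctions together with the check that the orientation-consistent choices on the four arcs assemble into a map respecting the circle's betweenness; the canonicity of the right-angle unit guaranteed by \ref{Axiom 4 of Euclid} is exactly the ingredient that makes the four quarter-isometries compatible.
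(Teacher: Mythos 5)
Your proposal is correct and takes essentially the same route as the paper: the paper's entire proof is ``Similar to \ref{CharOfIsoOfErlangenLines}'', i.e.\ transport the measure-ratio parametrization used to build isomorphisms of Erlangen lines over to circles, which is exactly what you do. Your quarter-arc decomposition, the use of the right angle (via \ref{Axiom 4 of Euclid}) as the common unit, and the junction bookkeeping are simply the details the paper leaves implicit in that one-line reference.
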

\begin{proof}
Similar to \ref{CharOfIsoOfErlangenLines}.
\end{proof}

\section{Planes}

We define planes analogously to the lines. Another view of planes is that of Pasch spaces with connected ine segments whose all boundries at infinity are circles.

\begin{Definition}\label{PlaneDef}
A \textbf{plane} is a space $\Pi$ with maximal rays satisfying the following conditions:\\
a. $\Pi$ contains at least two points,\\
b. for each pair of different points $A$ and $B$ of $\Pi$, 
the line segment $s[A,B]$ is connected and the complement $\Pi\setminus l(A,B)$ can be expressed as the union of two disjoint non-empty convex sets $h_1$ and $h_2$ such that $s[C,D]$ intersects $l(A,B)$ at exactly one point whenever $C\in h_1$ and $D\in h_2$. 
\end{Definition}

\begin{Definition}
Each of the components of the complement of $l(A,B)$ is called a \textbf{half-plane}
and $l(A,B)$ is called the \textbf{boundary} of each of them.
\end{Definition}

\begin{Example}
Any two-dimensional real vector space $\Pi$ is a plane. Indeed, we define betweenness in $\Pi$ as follows: $C$ is between $A$ and $B$
if there is $t\ge 0$ such that $A-C=t(B-C)$. Notice $l(A,B)=\{A+t(B-A)\mid t\in \mathbb{R}\}$ if $A\ne B$ and $\Pi\setminus l(A,B)$ has two components:
$h_1=\{A+t(B-A)+s\cdot C\mid s > 0 \mbox{ and } t\in \mathbb{R}\}$
and $h_2=\{A+t(B-A)+s\cdot C\mid s < 0 \mbox{ and } t\in \mathbb{R}\}$, where $C$ is a point of $\Pi$ such that $\{B-A,C-A\}$ form a basis of $\Pi$.
\end{Example}

\begin{Example}[Descartes plane]\label{Descartes plane}
\textbf{Descartes plane} is a special case of a two-dimensional real vector space. It is the set of all pairs $(x,y)$ of real numbers with the standard addition and standard multiplication by scalars.
\end{Example}

\begin{Example}[Klein plane]\label{Klein plane}
\textbf{Klein plane} is the interior of the unit circle in the Descartes plane. It inherits betweenness from the Descartes plane and therefore lines in that model are intersections of Descartes lines with the interior of the unit circle.
\end{Example}

\subsection{Planes and Pasch spaces}

Let's start by showing the following:

\begin{Lemma} Let $\Pi$ be a regular Pasch space and $C\in s[A,B]$, $E\in s[A,D]$. If line segments $s[B,D]$ and $s[C,E]$ intersect at a point in their interior and $A$, $B$, $D$ do not lie on one line, then $B=C$ and $E=D$.
\end{Lemma}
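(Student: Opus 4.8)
The plan is to argue by contradiction, showing that an interior crossing is impossible unless $s[C,E]$ already coincides with the far side, i.e. unless $C=B$ and $E=D$. Write $F$ for the common interior point, so that $F\in s(B,D)\cap s(C,E)$; note $F\neq A$, since $A\in s(B,D)$ would place $A,B,D$ on one line. First I would dispose of the endpoint positions $C=A$ and $E=A$. If $C=A$, then $s[C,E]=s[A,E]\subseteq s[A,D]$ by \ref{InclusionOfSegmentsProp}, so $F$ is a point of $s[A,D]\cap s[B,D]$ lying in the interior of each (it differs from $A$ and from $D$); the common-interior-point clause of \ref{SpaceWithMaximalRays} then forces one of $s[D,A]$, $s[D,B]$ into the other, giving $A\in s[B,D]$ or $B\in s[A,D]$ and contradicting non-collinearity. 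Hence $C\neq A$, and symmetrically $E\neq A$, so $C\in s(A,B]$ and $E\in s(A,D]$.

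To prove $C=B$ I would assume $C\neq B$, so $C\in s(A,B)$, and eliminate the remaining positions of $E$. If $E=D$, then $F\in s(C,D)\cap s(B,D)$ is interior to both segments, which share the endpoint $D$; \ref{SpaceWithMaximalRays} again forces $C\in s[B,D]$ or $B\in s[C,D]$, and since $C\in l(A,B)$ each of these collapses $l(A,B)$ together with $l(B,D)$ (respectively $l(C,D)$) onto a single line through $A$, $B$, $D$ --- a contradiction. This leaves the principal case in which $C\in s(A,B)$ and $E\in s(A,D)$ are both strict.

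In that principal case I would pass to the boundary at infinity at the \emph{third} vertex $C$ and read off two incompatible betweenness relations there. Put $u=ray[C,A]$, its antipode $u'=ray[C,B]=a(u)$ (antipodal because $C\in s(A,B)$), $w=ray[C,D]$, and $v=ray[C,E]=ray[C,F]$ (equal since $F\in s(C,E)$). Since $E\in s(A,D)$ the ray $v$ meets $s[A,D]$, so by \ref{BoundaryAtInfinityProp} $v$ is strictly between $u$ and $w$ in $\partial(\Pi,C)$; applying \ref{PaschBetweennessLemma} converts this into the statement that $w$ lies between $v$ and $a(u)=u'$. On the other hand $F\in s(B,D)$ shows that $v$ meets $s[B,D]$, so by \ref{BoundaryAtInfinityProp} $v$ lies between $u'$ and $w$. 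Thus on the segment $s[u',w]$ of $\partial(\Pi,C)$ the three distinct points $u'$, $v$, $w$ would simultaneously have $v$ between $u',w$ and $w$ between $u',v$, contradicting the uniqueness in \ref{BetwennessOfTripleProp}. This contradiction yields $C=B$, and the identical argument with the roles of $B,C$ interchanged with $D,E$ yields $E=D$.

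The main obstacle is exactly this principal case: it encodes the ``radial'' impossibility that $s(C,E)$, sitting strictly inside the angle at $A$, should touch the opposite side $s(B,D)$, and a purely radial argument would want a half-plane separation that is not yet available for Pasch spaces. The key idea that makes it tractable is to test \emph{angular} betweenness at the vertex $C$ instead, where the two memberships $E\in s(A,D)$ and $F\in s(B,D)$ become contradictory orderings of the same three directions, with \ref{PaschBetweennessLemma} supplying the crucial passage from an ordering relative to $u$ to one relative to the antipode $u'$. The one point demanding care is that \ref{BoundaryAtInfinityProp} only defines betweenness for pairs of rays that are neither equal nor antipodal, so at each invocation I must check that the relevant pairs among $u,u',v,w$ are distinct and non-antipodal; all of these checks reduce to the standing hypothesis that $A$, $B$, $D$ are not collinear.
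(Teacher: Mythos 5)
Your proof is correct, and it takes a genuinely different route from the paper's. The paper stays inside $\Pi$ and runs the Pasch machinery on segments directly (in the spirit of \ref{StrongPaschCondition}): from the common interior point $F$ it produces $G\in s[A,F]\cap s[C,D]$, then $H\in s[A,G]\cap s[C,E]$, concludes that $A$, $H$, $F$, $C$, $E$ lie on one segment in the line $l(F,H)$, and derives a collinearity contradiction by forcing $B$ and then $D$ into $l(F,H)$. You instead clear the endpoint degeneracies ($C=A$, $E=A$, $E=D$, and the mirror case) using the common-interior-point clause of \ref{SpaceWithMaximalRays} together with uniqueness of maximal lines (\ref{MaximalLineCor}), and in the principal case you transfer everything to the boundary at infinity $\partial(\Pi,C)$: the membership $E\in s(A,D)$ makes $v=ray[C,E]$ strictly between $u=ray[C,A]$ and $w=ray[C,D]$, which via \ref{PaschBetweennessLemma} puts $w$ between $v$ and $u'=a(u)=ray[C,B]$, while $F\in s(B,D)$ puts $v$ between $u'$ and $w$; these two orderings of the distinct rays $u',v,w$ contradict \ref{BetwennessOfTripleProp}. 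Both arguments ultimately rest on the Pasch Condition (it underlies \ref{BoundaryAtInfinityProp} and \ref{PaschBetweennessLemma} just as it underlies \ref{StrongPaschCondition}), so neither is more elementary, but yours buys a complete and explicit case analysis --- the paper rules out only $C=A$ explicitly and buries the $E$-degeneracies in the parenthetical ``otherwise $B=C$'' --- and it reuses the boundary-at-infinity formalism in the same way the paper itself later argues (compare the proof of \ref{PaschSpacesAndPlanes1}). What the paper's route buys is brevity: it needs no antipodality or distinctness checks, whereas your angular argument requires verifying that the relevant pairs among $u$, $u'$, $v$, $w$ are distinct and non-antipodal; you correctly flag these checks, and they do all reduce to the non-collinearity of $A$, $B$, $D$.
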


\begin{proof} Assume $C\ne B$ and let $F$ be a point in the interior of $s[B,D]$ and in the interior of $s[C,E]$. Notice $C\ne A$ as in that case $A$, $B$, $D$ do lie on a ray emanating from $D$.

The line segment $s[A,F]$ intersects $s[C,D]$ at a point $G$ different from $F$ (otherwise $B=C$). Now, $s[A,G]$ intersects $s[C,E]$ at $H$ different from $F$. Therefore all five points $A$, $H$, $F$, $C$, $E$ lie one one line segment in the line $l(F,H)$. In conclusion, $B\in l(H,F)$ resulting in $D\in l(H,F)$, a contradiction.
\end{proof}

\begin{Lemma}
Let $\Pi$ be a space with maximal rays.
If it satisfies the Strong Pasch Condition (A maximal line intersecting one side of a triangle must intersect another side as well), then every line not containing any of vertices of a triangle that is not lying on a line either intersects two of its sides or none.
\end{Lemma}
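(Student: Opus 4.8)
The plan is to reduce the statement to a single bad configuration and then destroy that configuration by a threefold symmetric use of the Pasch machinery. Write the triangle as $A,B,C$ (not lying on one line) and let $\ell$ be a maximal line meeting none of $A,B,C$. If $\ell$ meets no side, we are done. Otherwise $\ell$ meets some side, and since $\Pi$ satisfies the Strong Pasch Condition, $\ell$ must meet a second side; so it meets at least two. Hence the entire content is to show that $\ell$ cannot meet all three sides: this forces the number of sides met to be exactly $0$ or exactly $2$.

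First I would record the elementary features of the intersection points. If $\ell$ met a side in two points it would contain that subsegment, hence (by uniqueness of the maximal line through a segment, \ref{MaximalLineCor} and \ref{MaximalRaysProp}) the whole maximal line through it, hence a vertex; so $\ell$ meets each side it meets in exactly one point, and since $\ell$ avoids the vertices, that point is interior. Thus a triple intersection would produce $P\in s(A,B)$, $Q\in s(B,C)$, $R\in s(C,A)$, all lying on $\ell$. These three points are pairwise distinct: if, say, $P=Q$, then $s[B,A]$ and $s[B,C]$ would share a point other than $B$, forcing one to contain the other and $A,B,C$ to be collinear, contrary to hypothesis.

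The heart of the argument is a single claim, to be applied three times by symmetry: $Q$ lies between $P$ and $R$. To see it I would apply the Medium Pasch Condition (Condition 3 of the Pasch Lemma, which follows from the Strong Pasch Condition by the equivalences proved there) at the apex $A$, with $P\in s(A,B)$, $R\in s(A,C)$ and $G=Q\in s(B,C)$; this yields a point $H\in s[P,R]$ between $A$ and $Q$. Now $H\in s[P,R]\subset\ell$ and $H\in s[A,Q]$. Since $A\notin s[P,R]$ (otherwise $ray[A,B]$ and $ray[A,C]$ would be antipodal, i.e. $A\in s(B,C)$), we have $H\ne A$; and if $H\ne Q$, then $H$ and $Q$ are two distinct points lying on both $\ell$ and $s[A,Q]$, so by \ref{InclusionOfSegmentsProp} and the uniqueness of the maximal line the line $\ell$ would coincide with $l(A,Q)$, whence $A\in\ell$, a contradiction. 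Therefore $H=Q$, and $H\in s[P,R]$ gives $Q\in s[P,R]$; as $Q\ne P,R$, in fact $Q\in s(P,R)$.

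The same argument at the apex $B$ (with $P\in s(B,A)$, $Q\in s(B,C)$, $G=R$) gives $R\in s(P,Q)$, and at the apex $C$ it gives $P\in s(Q,R)$. But once $Q\in s[P,R]$ all three of $P,Q,R$ lie in the single segment $s[P,R]$, so by \ref{BetwennessOfTripleProp} exactly one of them is between the other two; having all three simultaneously be the middle point is the desired contradiction. This rules out a triple intersection and finishes the proof. The main obstacle is precisely this triple-intersection case, since the Strong Pasch Condition by itself only guarantees a second crossing; the extra input is the threefold symmetric use of Medium Pasch together with the uniqueness of the middle point among three collinear points, the recurring technical subtlety being that each Pasch-produced point must coincide with the vertex-opposite intersection, for otherwise $\ell$ would run through a vertex.
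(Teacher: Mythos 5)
Your reduction to the triple-intersection configuration is correct, and the inner argument (three symmetric applications of a crossbar-type statement, then uniqueness of the middle point via \ref{BetwennessOfTripleProp}) would be valid mathematics \emph{if} the Medium Pasch Condition were available at this point. It is not, and that is the genuine gap: your proof is circular within the paper's logical structure. You justify Medium Pasch by citing the equivalences in the lemma on Pasch conditions, but the only proof given there of the implication (Strong Pasch $\Rightarrow$ Pasch Condition $\Leftrightarrow$ Medium Pasch) is the remark that it ``follows from the fact that spaces satisfying the Strong Pasch Condition are planes (see subsequent sections).'' That forward reference is to the lemma ``Strong Pasch $\Rightarrow$ plane,'' whose proof needs the claim that for $Y\notin l$ \emph{exactly one} of $s[A,Y]$, $s[B,Y]$ meets $l$ --- and the ``at most one'' half of that claim is precisely the statement you are proving (a line meeting $s[A,B]$ cannot meet all three sides of $\Delta(ABY)$). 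So, in this paper, Medium Pasch for Strong Pasch spaces sits strictly downstream of the present lemma; invoking it here closes a circle. This is not a technicality: the crossbar-type statement is essentially a form of plane separation, and both in this paper and in standard developments of ordered geometry it is obtained \emph{from} the two-sides-or-none lemma, not the other way around. No independent derivation of Medium Pasch from Strong Pasch alone is given or apparent, so your key step inherits exactly the difficulty it is supposed to resolve.

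The paper's own proof stays entirely inside Strong Pasch: assuming intersection points $X\in s[A,C]$, $Y\in s[B,C]$, $Z\in s[A,B]$ with $Y$ between $X$ and $Z$, it picks auxiliary points $D\in s(X,C)$ and $E\in s(B,Y)$, and applies Strong Pasch to the small triangles $\Delta(XCY)$ and $\Delta(ZBY)$ (together with collinearity bookkeeping of the kind you carry out in your first paragraph) to show that the auxiliary maximal line $l(D,E)$ meets $s(X,Y)$ and $s(Z,B)$ but neither $s[A,X]$ nor $s[A,Z]$. Hence $l(D,E)$ crosses exactly one side of the triangle $\Delta(AXZ)$, contradicting Strong Pasch itself. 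To salvage your outline you would have to replace the appeal to Medium Pasch by an argument of this kind, i.e., manufacture the crossing point $H$ on $s[A,Q]$ using only Strong Pasch and auxiliary lines; as written, the proposal does not prove the lemma.
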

\begin{proof}
Suppose $X$, $Y$, and $Z$ are different three points belonging to interior of each side of a triangle such that $Y$ is between $X$ and $Z$. Label the vertices of the triangle so that $X\in s[A,C]$, $Y\in s[B,C]$, and $Z\in s[A,B]$. Choose $D\in s[X,C]$ and $E\in s[B,Y]$.
The line $l(D,E)$ must intersect $s(X,Y)$ as it does not intersect $s[C,Y]$.

Therefore $l(D,E)$ must intersect $s(Z,B)$ as $l(D,E)$ does not intersect $s[Z,Y]$. $l(D,E)$ does intersect $s(X,Z)$ but it does not intersect any other side of the triangle $\Delta(AXZ)$, a contradiction.
\end{proof}

\begin{Lemma}
Let $\Pi$ be a space with maximal rays.
If it satisfies the Strong Pasch Condition (A maximal line intersecting one side of a triangle must intersect another side as well), then $\Pi$ is a plane.
\end{Lemma}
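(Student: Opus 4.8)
The plan is to build the two half-planes of Definition~\ref{PlaneDef} as the equivalence classes of a separation relation and to extract all their properties from the preceding Lemma (a line meeting no vertex of a genuine triangle meets either two of its sides or none). Throughout I use the standing hypotheses of this subsection that make the statement meaningful: that every closed line segment of $\Pi$ is connected, so that by \ref{MaximalLineCor} together with Condition~2 of \ref{SpaceWithMaximalRays} each pair $A\ne B$ determines a maximal line $l(A,B)$ and the connectedness clause of \ref{PlaneDef} is available; and that $\Pi$ contains three non-collinear points, without which the Strong Pasch Condition is vacuous and $\Pi$ could be a bare line. The second hypothesis gives clause~(a) at once, and for any fixed line $l=l(A,B)$ it furnishes a point $P\notin l$, since a line cannot contain all three non-collinear points.

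Fix $A\ne B$, write $l=l(A,B)$, and on $\Pi\setminus l$ define $C\sim D$ to mean $C=D$ or $s[C,D]\cap l=\emptyset$. Reflexivity and symmetry are immediate. For transitivity, suppose $s[C,D]\cap l=\emptyset$ and $s[D,E]\cap l=\emptyset$. If $C,D,E$ are non-collinear they span a triangle no vertex of which lies on $l$, so by the preceding Lemma $l$ meets two of its sides or none; missing $s[C,D]$ and $s[D,E]$ it must meet none, whence $s[C,E]\cap l=\emptyset$. If $C,D,E$ are collinear, one lies between the other two, so $s[C,E]\subseteq s[C,D]\cup s[D,E]$ and again misses $l$. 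Thus $\sim$ is an equivalence relation, and each class is convex, since $X\in s[C,D]$ forces $s[C,X]\subseteq s[C,D]$ by \ref{InclusionOfSegmentsProp}, giving $X\sim C$.

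Next I count the classes. There are at most two: if $C,D,E$ lay in three distinct classes then $l$ would meet all three sides of triangle $CDE$, which is impossible by the preceding Lemma in the non-collinear case, and impossible in the collinear case because the three segments then lie on a single line $m\ne l$; as $m$ meets $l$ in at most one point and the two shorter segments overlap only in the middle point of the triple (which is off $l$), $l$ cannot meet all three. There are at least two: taking $P\notin l$ and any $X\in l$, the maximal ray $ray[X,P]$ is connected and so has an antipodal ray (the remark after \ref{AntipodalRaysDef}); choosing $P'\ne X$ on it puts $X$ strictly between $P$ and $P'$, so $s[P,P']$ meets $l$ and $P\not\sim P'$. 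Here $P'\notin l$, for otherwise $s[X,P']$ would be a non-degenerate subsegment of both $l$ and $l(P,P')$, forcing $l=l(P,P')$ and hence $P\in l$. Thus $\Pi\setminus l=h_1\cup h_2$ with $h_1,h_2$ disjoint, non-empty, and convex.

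Finally, for $C\in h_1$ and $D\in h_2$ the segment $s[C,D]$ meets $l$ by the definition of distinct classes, and it cannot meet $l$ in two points $P\ne Q$: the non-degenerate segment $s[P,Q]\subseteq s[C,D]$ would then lie in both $l$ and the unique maximal line $l(C,D)$, forcing $l=l(C,D)$ by the uniqueness established in \ref{MaximalLineCor} and hence $C,D\in l$, a contradiction. So the intersection is a single point, which completes clause~(b). The main obstacle is not any one Pasch computation—that work is already packaged in the preceding Lemma—but the bookkeeping around degenerate configurations: the collinear sub-cases in transitivity and in the two-class count, and securing a point genuinely off $l(A,B)$, which is precisely why the existence of a non-collinear triple must be part of the hypotheses.
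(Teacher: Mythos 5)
Your proof is correct and follows essentially the same route as the paper's: both carve $\Pi\setminus l$ into two convex pieces using the relation ``$s[C,D]$ misses $l$,'' with convexity and transitivity supplied by the preceding two-sides-or-none lemma (the paper anchors the two pieces at basepoints $A$ and $B\notin l\cup\Sigma_A$ rather than phrasing it as an equivalence relation, but that is the same mechanism). Your additional verifications --- non-emptiness of both sides via antipodal rays, the collinear degenerate cases, the exactly-one-point clause of Definition~\ref{PlaneDef}, and the standing hypotheses of connected segments and a non-collinear triple --- address points the paper's sketch leaves implicit, and they check out.
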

\begin{proof}
Suppose $l$ is a maximal line in $\Pi$. Given $A$ outside of $l$ define $\Sigma_A$ as the set of points $X\in \Pi\setminus \{A\}$ such that $s[A,X]$ does not intersect $l$. Notice $\Sigma_A$ is convex: $s[X,Y]$ misses $l$ if $X,Y\in \Sigma_A$ as $l$ cannot intersect only one side of the triangle $\Delta(AXY)$.
Also, if $B\in \Pi\setminus (l\cup \Sigma_A)$, then $\Sigma_A$ is disjoint from $\Sigma_B$ and $\Sigma_A\cup \Sigma_B=\Pi\setminus l$. Indeed, given $Y$ outside of $l$ exactly one of line segments $s[A,Y]$, $s[B,Y]$ intersects $l$ and $Y$ belongs to exactly one of the sets $\Sigma_A$, $\Sigma_B$. That proves $\Pi$ is a plane.
\end{proof}

\begin{Lemma}
If $\Pi$ is a plane, then it satisfies the Strong Pasch Condition (A maximal line intersecting one side of a triangle must intersect another side as well).
\end{Lemma}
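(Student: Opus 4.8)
The plan is to exploit the half-plane separation built into the definition of a plane (\ref{PlaneDef}b): any maximal line $l$ splits its complement into two disjoint non-empty convex half-planes $h_1$ and $h_2$ so that a closed segment joining a point of $h_1$ to a point of $h_2$ meets $l$, while (by convexity of the half-planes) a segment joining two points lying in the same half-plane avoids $l$ entirely. A maximal line in a plane is of the form $l(X,Y)$ for two of its distinct points, so this decomposition and crossing property are indeed available for $l$.

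I would take three points $A,B,C$ not lying on one line and a maximal line $l$ meeting a side, say $s[A,B]$. The first move is to dispose of the case in which $l$ passes through a vertex of that side. If $A\in l$, then $A\in l\cap s[A,C]$, so $l$ already meets the side $s[A,C]$ and we are done; the case $B\in l$ is symmetric. This handles every situation in which $l$ contains $A$ or $B$, independently of where on $s[A,B]$ the intersection point lies.

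The essential case is $A,B\notin l$. Since $s[A,B]$ meets $l$ but neither endpoint lies on $l$, the endpoints $A$ and $B$ cannot belong to the same half-plane, for convexity would then force $s[A,B]$ to avoid $l$; relabelling if necessary, $A\in h_1$ and $B\in h_2$. It remains to locate the third vertex $C$. If $C\in l$, then $l$ meets both remaining sides at $C$. If $C\in h_1$, then $C$ and $B$ lie in opposite half-planes, so $s[B,C]$ meets $l$; and if $C\in h_2$, then $C$ and $A$ lie in opposite half-planes, so $s[A,C]$ meets $l$. In every case $l$ meets a second side, which is exactly the Strong Pasch Condition.

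I do not expect a serious obstacle. The only points needing care are the two consequences of \ref{PlaneDef}b that are used repeatedly --- that a segment across the two half-planes crosses $l$ and that a segment within a single half-plane misses $l$ --- together with the preliminary verification that the half-plane decomposition is genuinely available for the given maximal line $l$ rather than merely for lines presented in the form $l(A,B)$.
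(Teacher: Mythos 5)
Your proof is correct and is essentially the paper's argument: both rest on the half-plane separation of \ref{PlaneDef}b, placing $A$ and $B$ in opposite half-planes and then locating $C$ to force a crossing of a second side. The paper merely phrases it contrapositively (assuming $l$ meets only one side and deriving that $C$ would lie on the same side as two points lying on opposite sides of $l$), while you argue directly and handle the vertex cases explicitly; the content is the same.
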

\begin{proof}
Suppose $l$ is a line intersecting only one of the sides of a triangle $\Delta(ABC)$. In particular, none of the points $A$, $B$, $C$ lies on $l$, two of them lie on opposite sides of $l$, and the third one lies on the same side of $l$ as the other two. That is not possible. 
\end{proof}

\begin{Theorem}\label{CircleOfRaysThm}
Every plane $\Pi$ is a Pasch space and every boundary at infinity $\partial(\Pi,A)$ is a circle. 
\end{Theorem}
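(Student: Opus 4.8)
The statement has two parts, and the plan is to dispatch them in order. Since a plane is \emph{by definition} a space with maximal rays (Definition \ref{PlaneDef}), to see that it is a Pasch space it remains only to verify the Pasch Condition (Definition \ref{PaschCondition}); once that is in hand, the second assertion follows from the earlier Theorem identifying the boundary at infinity of a Pasch space as a spherical Pasch space, together with the characterization of circles in Proposition \ref{CircleasPaschChar}.

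For the first part I would \emph{not} invoke the implication that the Strong Pasch Condition yields the Pasch Condition from the earlier Pasch-conditions Lemma, since its proof there forward-references precisely this theorem; instead I would verify the Pasch Condition directly, thereby discharging that forward reference. The preceding lemmas already give that a plane satisfies the Strong Pasch Condition and that a line meeting no vertex of a non-degenerate triangle meets either two of its sides or none. The engine of the verification is the half-plane property of Definition \ref{PlaneDef}: each maximal line $l(X,Y)$ (which exists by \ref{MaximalLineCor}) separates $\Pi$ into two open convex half-planes, the closed half-planes are again convex, and a segment meets $l(X,Y)$ exactly when its endpoints lie in opposite open half-planes. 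Given $\{r_1,r_2,r_3\}$ as in \ref{PaschCondition} with $B,C\in r_1$, $D\in r_3$, and $r_2$ meeting $s[B,D]$ at a point $P$, the points $A,B,D$ are non-collinear (else two of the rays would coincide or be antipodal). Taking $C\in s(A,B)$, one reads off from the separating line $l(C,D)$ that $A$ and $B$ lie on opposite sides while $P$ lies with $B$; hence $A$ and $P$ lie on opposite sides of $l(C,D)$, so $s[A,P]$ crosses it at a point $H\in s(A,P)\subseteq r_2$. A second separating line $l(A,P)$ places $C$ and $D$ on opposite sides, so $s[C,D]$ meets $l(A,P)$ at a point of $s(C,D)$; as the distinct lines $l(C,D)$ and $l(A,P)$ meet in a single point, that point is $H$. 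Thus $H\in s[C,D]\cap r_2$, which is the Pasch Condition.

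For the second part, once $\Pi$ is a Pasch space the earlier Theorem yields that $\partial(\Pi,A)$ is a spherical Pasch space for every $A$. By Proposition \ref{CircleasPaschChar} it then suffices to exhibit one point of $\partial(\Pi,A)$ at which its own boundary at infinity consists of exactly two points. I would do this by identifying, for a direction $r$ with antipode $a(r)$, the two maximal rays of $\partial(\Pi,A)$ emanating from $r$ with the two half-planes cut out by the line $l=r\cup a(r)$ in $\Pi$: a direction $r'\neq r,a(r)$ points into exactly one of the half-planes $h_1,h_2$, and using the betweenness on $\partial(\Pi,A)$ from \ref{BoundaryAtInfinityProp} together with convexity of $h_1,h_2$ one checks that the two families of directions (those into $h_1$ and those into $h_2$), each with $r$ adjoined, are exactly the two connected maximal rays from $r$, with no others. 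Hence $\partial(\partial(\Pi,A),r)$ is a two-point set and \ref{CircleasPaschChar} applies. The side-conditions of \ref{CircleDef}, such as containing at least three points, are immediate: $\Pi$ has a point off any given line $l(A,B)$ because the half-planes are non-empty, producing a third direction.

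The main obstacle is the half-plane/direction correspondence in the second part: proving that the directions pointing into a fixed half-plane form a \emph{single connected} maximal ray of $\partial(\Pi,A)$, and that exactly two such rays emanate from each point. This is where the one-dimensionality of the boundary is actually established, and it requires translating the linear order along a half-plane's worth of directions into the betweenness of \ref{BoundaryAtInfinityProp}. In the first part there is no conceptual difficulty, only the bookkeeping of tracking which open half-plane each of $A,B,C,D,P$ occupies across two separating lines; convexity of the \emph{closed} half-planes is what cleanly handles the remaining orderings, in particular the opposite ordering of $B,C$ on $r_1$ and the degenerate cases where $P$ or $H$ coincides with an endpoint.
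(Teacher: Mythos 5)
Your overall plan is sound, and your main-case verification of the Pasch Condition is correct (including your observation that one should argue directly rather than through the Strong Pasch implication, which is exactly what the paper does). However, two steps that you defer as routine are in fact where the weight of the theorem sits, and the first would fail as described. Your two-line mechanism only works for the ordering $C\in s(A,B)$. If instead $B\in s(A,C)$, the line $l(C,D)$ no longer separates $A$ from $B$: the segment $s[A,B]\subset s[A,C]$ meets $l(C,D)$ only at $C$, which lies outside $s[A,B]$, so there is no crossing point $H\in s(A,P)\cap l(C,D)$ to transfer. The second line still separates ($l(A,P)=r_2\cup a(r_2)$ has $C$ and $D$ on opposite sides, giving a crossing $H'\in s(C,D)\cap l(A,P)$), but nothing in your argument decides whether $H'$ lands on $r_2$ or on $a(r_2)$, and convexity of the closed half-planes of your two lines cannot decide it: with $B\in s(A,C)$, the points $A$ and $P$ lie on the \emph{same} side of $l(C,D)$, so the configuration $A\in s(P,H')$ produces no contradiction there. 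Excluding $H'\in a(r_2)$ requires a third separating line, namely $r_1\cup a(r_1)$: if $H'\in a(r_2)$, then $A\in s(P,H')$, while $P\in s(B,D)$ and $H'\in s(C,D)$ both lie strictly in the open half-plane of $r_1\cup a(r_1)$ containing $D$, so convexity of that half-plane would force $A$ off the line $r_1\cup a(r_1)$, a contradiction. This antipodal-ray exclusion is the central step of the paper's proof, which for that very reason works with the lines $r_2\cup a(r_2)$ and $r_1\cup a(r_1)$ from the outset and is ordering-independent.

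For the second assertion, your reduction through the spherical-Pasch-space theorem and \ref{CircleasPaschChar} is a legitimate alternative packaging, but you then label the pivotal step --- that the directions pointing into a fixed half-plane of $l=r\cup a(r)$ form a single maximal ray of $\partial(\Pi,A)$, so that $\partial(\partial(\Pi,A),r)$ has exactly two points --- as ``the main obstacle'' and leave it unproved. That correspondence is not side bookkeeping; it is the entire content of the claim that the boundary is one-dimensional, and it is precisely what the paper's proof spends its second half establishing (directly against Definition \ref{CircleDef}, bypassing \ref{CircleasPaschChar}): the set $l_i$ of rays pointing into the half-plane $\Sigma_i$ is convex, because for $B,C\in\Sigma_i$ the rays $ray[A,X]$, $X\in s[B,C]$, constitute the segment $s[ray[A,B],ray[A,C]]$ of $\partial(\Pi,A)$ in the betweenness of \ref{BoundaryAtInfinityProp}, and each element of $l_i$ separates $l_i$ into the two pieces corresponding to the half-planes of its own extending line, which makes each $l_i$ a line. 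Until that correspondence (or the equivalent two-point statement about $\partial(\partial(\Pi,A),r)$) is actually proved, the second half of the theorem has not been established; so as written the proposal is a correct strategy with its two decisive arguments missing.
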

\begin{proof}
We need to verify Condition \ref{PaschCondition}: for every two maximal rays $r_1,r_2$ emanating from $A$ and every points $B,C\in r_1$ and $D\in \Pi$, if $r_2$ intersects $s[B,D]$, then it also intersects $s[C,D]$.

It is clearly so if $r_1=r_2$. If $r_2=a(r_1)$, then $D$ must belong to $r_2$ and $r_2$ intersects $s[C,D]$. Assume $r_2\ne r_1$, $r_2\ne a(r_1)$, and $D$ does not lie on $r_2$. $B$ and $D$ are on different sides of the line $r_2\cup a(r_2)$, and $B,C$ are on the same side of that line. Therefore $C$ and $D$
are on different sides and the worst case is that of $a(r_2)$ intersecting $s[C,D]$. That cannot happen. Indeed, in that case $A$ would be between two points, one on $s(B,D)$ and one on $s(C,D)$. However, all those points belong to one side of the line $r_1\cup a(r_1)$, a contradiction.

To conclude the proof we need to show that the complement of $\{r, a(r)\}$ in $\partial(\Pi,A)$
splits into two lines for any maximal ray $r$ at $A$.
The two components $\Sigma_1$ and $\Sigma_2$ of the complement of $r\cup a(r)$ in $\Pi$ lead naturally to splitting of the complement of $\{r, a(r)\}$ in $\partial(\Pi,A)$ into $l_1$ and $l_2$. Namely, $l_i$ is the set of maximal rays $r$ at $A$ such that $r\setminus\{A\}$ is contained in $\Sigma_i$ for $i=1,2$.

To show $l_1$ is a line, notice it is convex: given two points $B,C\in \Sigma_1$
the rays $ray[A,X]$, $X\in s[B,C]$ form the segment joining $ray[A,B]$ and $ray[A,C]$ and are contained in $l_1$. Each ray $r\in l_1$ splits $l_1$ into two components corresponding to the components of $\Pi\setminus (r\cup a(r))$.
\end{proof}

\begin{Theorem}\label{PaschSpacesAndPlanes1}
Let $\Pi$ be a Pasch space with all line segments being connected.
If the boundary at infinity $\partial(\Pi,A)$ is a circle for some $A$, then $\Pi$
is a plane.
\end{Theorem}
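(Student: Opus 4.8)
The plan is to verify Definition \ref{PlaneDef} directly. Since $\Pi$ is a Pasch space it is already a space with maximal rays, and because every segment is connected and (by condition 2 of \ref{SpaceWithMaximalRays}) sits in the interior of a larger connected segment, \ref{MaximalLineCor} produces a maximal line $l(A,B)$ through any two distinct points; moreover $\Pi$ has at least two points, since the circle $\partial(\Pi,O)$ has at least three. Thus the whole content of the theorem is the separation clause of \ref{PlaneDef}: for every maximal line $m$ the complement $\Pi\setminus m$ is a disjoint union of two non-empty convex half-planes whose joining segments cross $m$ in exactly one point. I would factor this through a single \emph{lifting lemma}: if $\partial(\Pi,P)$ is a circle, then every line through $P$ separates $\Pi$ in this way.

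To prove the lifting lemma, write a line $m$ through $P$ as $\sigma\cup a(\sigma)$, where $\sigma,a(\sigma)\in\partial(\Pi,P)$ are the two (antipodal) directions it determines. Circle axiom (d) of \ref{CircleDef} splits $\partial(\Pi,P)\setminus\{\sigma,a(\sigma)\}$ into two convex lines $D_1,D_2$, and I set $h_i=\bigcup_{d\in D_i}(d\setminus\{P\})$. Every point of $\Pi\setminus m$ lies on a unique maximal ray from $P$, and that ray is neither $\sigma$ nor $a(\sigma)$ (a ray meeting the line $m$ in a second point would share a subsegment with $m$ and hence be one of these two directions, by uniqueness of the maximal line through two points); so $\Pi\setminus m=h_1\sqcup h_2$. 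Convexity of $h_i$ follows from convexity of $D_i$ via the weak reformulation of betweenness in \ref{BoundaryAtInfinityProp}: for $X,Y\in h_i$ and $Z\in s[X,Y]$ the ray $ray[P,Z]$ meets $s[X,Y]$, so it lies between $ray[P,X]$ and $ray[P,Y]$, hence in $D_i$. For the crossing property, take $C\in h_1$ and $D\in h_2$; if their directions are antipodal then $P\in s[C,D]$ and $s[C,D]\cap m=\{P\}$, while otherwise exactly one of $\sigma,a(\sigma)$ lies between the two directions, which by \ref{BoundaryAtInfinityProp} forces $m$ to meet $s[C,D]$, and a second intersection point would again put a whole subsegment of $s[C,D]$ onto $m$, contradicting $C,D\notin m$.

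Applying the lemma at $P=O$ shows that every line through $O$ separates $\Pi$. The remaining, and genuinely hard, step is to propagate the circle structure: I claim $\partial(\Pi,W)$ is a circle for \emph{every} $W$. The boundary theorem for Pasch spaces already gives that $\partial(\Pi,W)$ is a spherical Pasch space, so by \ref{CircleasPaschChar} it is enough to exhibit a point of $\partial(\Pi,W)$ whose own boundary at infinity is two points. For this I would use the line $n=l(O,W)$, which passes through $O$ and therefore separates $\Pi$ into half-planes $h_1,h_2$; viewed from $W$ these yield $D_i=\{ray[W,X]:X\in h_i\}$, two disjoint convex subsets whose union is $\partial(\Pi,W)$ minus the two directions $\sigma',a(\sigma')$ along $n$. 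The crux is to show each $D_i$ is a line with $\sigma',a(\sigma')$ as its two ends, so that $\sigma'$ has exactly two maximal rays in $\partial(\Pi,W)$; this is where the Medium Pasch Condition must be used to control how the half-plane $h_i$ of $n$ looks from the new base point $W$, and it is the main obstacle, since everything preceding it is essentially bookkeeping about rays. Once $\partial(\Pi,W)$ is a circle for all $W$, the lifting lemma applied at any $P\in m$ gives the separation property for an arbitrary line $m$, and \ref{PlaneDef} is verified.
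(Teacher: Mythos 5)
There is a genuine gap, and it is exactly the step you yourself flag as ``the main obstacle'': the claim that $\partial(\Pi,W)$ is a circle for \emph{every} point $W$ is never proved, only reduced to showing that each $D_i$ is a line with ends $\sigma',a(\sigma')$, with a vague appeal to the Medium Pasch Condition. Everything you do prove --- the lifting lemma --- is the case of a line passing through the distinguished point, which is the case the paper dispatches in a single sentence (``The case of $A\in l$ is simpler. Look at two components of $\partial(\Pi,A)$ minus rays in $l$.''). So the entire content of Theorem \ref{PaschSpacesAndPlanes1} has been pushed into the unproved propagation claim. Worse, that claim is essentially as strong as the theorem itself: once $\Pi$ is known to be a plane, \ref{CircleOfRaysThm} gives circle boundaries at all points, and conversely your route derives the plane structure from propagation. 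A reduction of a theorem to an unproved statement of at least comparable difficulty is not a proof, and there is no reason to expect the missing step to be easier than the theorem --- making $D_i\cup\{\sigma'\}$ into a maximal ray of $\partial(\Pi,W)$, so that \ref{CircleasPaschChar} applies, requires convexity and separation arguments of the same kind as the ones you are trying to avoid.

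The paper's proof shows that no propagation is needed: the circle hypothesis at the \emph{single} point $A$, combined with the Pasch Conditions, already controls lines that miss $A$. Given a maximal line $l$ with $A\notin l$, the paper splits $\Pi\setminus l$ into $\Sigma_1:=\{B\mid s[A,B]\cap l=\emptyset\}$ and $\Sigma_2:=\{B\mid s[A,B]\cap l\ne\emptyset\}$, both defined by viewing $l$ from $A$. Convexity of $\Sigma_2$ is a direct application of the Medium Pasch Condition; convexity of $\Sigma_1$ and the one-point-crossing property are proved by contradiction, using that $\{ray[A,X]\mid X\in l\}$ forms a line in the circle $\partial(\Pi,A)$ and then applying the Weak and Medium Pasch Conditions to suitably chosen segments. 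If you want to salvage your outline, you should abandon the propagation step and instead handle lines avoiding $O$ directly by this $\Sigma_1/\Sigma_2$ decomposition; your lifting lemma then remains as the (correct, but easy) complementary case.
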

\begin{proof}
Suppose $l$ is a line in $\Pi$. We need to show $l$ separates $\Pi$ into two components of convexity. First, assume $A\notin l$. Define two disjoint subsets of $\Pi$
$$\Sigma_1:=\{B\in \Pi\setminus l \mid s[A,B]\cap l=\emptyset\}$$
and $$\Sigma_2:=\{B\in \Pi\setminus l \mid s[A,B]\cap l\ne\emptyset\}.$$
$\Sigma_2$ is convex.

 Indeed, if $C\in l$ is between $A$ and $B$,
and $C'\in l$ is between $A$ and $B'$, then for every $D\in s[B,B']$,
the line segment $s[A,D]$ intersects $s[C,C']$ by the Medium Pasch Condition, hence $D\in \Sigma_2$. 

$\Sigma_1$ is convex by contradiction: 
Suppose $B, C\in \Sigma_1$ but $l$ intersects $s(B,C)$ at $D$.  In the space $\partial(\Pi,A)$ the set
$ray[A,X]$, $X\in l$ forms a line that intersects the line segment
$s[ray[A,B],ray[A,C]]$ in its interior. Therefore we can find points $E, F\in s(B,C)$ and find points $E', F'\in l$ such that
$D\in s(E,F)$,
$ray[A,F]=ray[A,F']$, and $ray[A,E]=ray[A,E']$.

By what we already know about Pasch spaces we may assume $E'\in s(A,E)$ and $E\in s(D,C)$. However, in that case $l$ intersects $s[A,C]$ (by Pasch Condition), a contradiction.

Suppose $B\in \Sigma_2$ and $C\in \Sigma_1$. Let $D\in l$ lie on
$s[A,B]$. Suppose $s[B,C]$ does not intersect $l$. The rays $ray[A,X]$, $X\in l$ form a line $L$ in $\partial(\Pi,A)$ containing $ray[A,B]$.
The rays $ray[A,Y]$, $Y\in s[B,C]$ form a line segment $S$ in $\partial(\Pi,A)$ emanating from $ray[A,B]$, hence intersecting $L$. 

Hence there is a an interior point of $S$ of the form
$r'=ray[A,E]$, $E\in l$. In particular, $r'$ intersects $s[B,C]$ at some interior point $F$. If $F\in s[A,E]$, then the Weak Pasch Condition says $s[B,F]$ and $s[D,E]$ intersect implying $l$ intersects $s[B,C]$, a contradiction.

Thus, $E\in s(A,F)$. Look at the intersection $G$ of $s[D,C]$
and $s[A,F]$. All three possibilities: $G=E$, $G\in s[A,E)$,
$G\in s[F,E)$ lead to intersection of $l$ with either $s[A,C]$ 
or $s[B,C]$ by applying Pasch Condition. A contradiction!
The case of $A\in l$ is simpler. Look at two components of $\partial(\Pi,A)$ minus rays in $l$. They give rise to two components of $\Pi$ minus $l$.
\end{proof}

\begin{Theorem}\label{PaschSpacesAndPlanes2}
Let $X$ be a Pasch space with all line segments being connected.
Given two different maximal rays $r_1$ and $r_2$ emanating from $A$ and not antipodal, the union $\Pi(r_1,r_2)$ of all maximal rays contained in $$s[r_1,r_2]\cup s[a(r_1),r_2]\cup s[r_1,a(r_2)]\cup s[a(r_1),a(r_2)]$$
is a plane.
\end{Theorem}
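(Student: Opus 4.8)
The plan is to recognize $\Pi:=\Pi(r_1,r_2)$ as the union of a whole circle of directions based at $A$ and then to invoke \ref{PaschSpacesAndPlanes1}. Since $X$ is a Pasch space, its boundary at infinity $\partial(X,A)$ is a spherical Pasch space, and because $r_1,r_2$ are non-antipodal the four segments
$$s[r_1,r_2]\cup s[r_2,a(r_1)]\cup s[a(r_1),a(r_2)]\cup s[a(r_2),r_1]$$
assemble into a circle $c\subset\partial(X,A)$ by \ref{SubCirclesOfPaschSpaces}; in particular $c$ is convex and satisfies $a(c)=c$. Thus $\Pi$ is exactly the union of the maximal rays $r\in c$ viewed as subsets of $X$. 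The whole argument then reduces to verifying that $\Pi$, with the betweenness inherited from $X$, is a Pasch space with all segments connected whose boundary at infinity at $A$ is the circle $c$.

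The decisive step I would isolate is the following Key Lemma: for any two distinct points $P,Q\in\Pi$ the entire maximal line $l(P,Q)$ (which exists in $X$ by \ref{MaximalLineCor}) is contained in $\Pi$; equivalently $ray[A,R]\in c$ for every $R\in l(P,Q)$. Granting this, $\Pi$ is convex (so segments in $\Pi$ coincide with those in $X$) and every connected segment can be extended within the ambient line $l(P,Q)\subseteq\Pi$, which is Condition 2 of \ref{SpaceWithMaximalRays}; Condition 1 of \ref{SpaceWithMaximalRays} is inherited verbatim from $X$. Hence $\Pi$ is a space with maximal rays. Moreover all its segments are connected because they are segments of $X$, and the connected maximal rays of $\Pi$ emanating from $A$ are precisely the rays of $c$, so as a set $\partial(\Pi,A)=c$; the betweenness of $\partial(\Pi,A)$ defined via \ref{BoundaryAtInfinityProp} agrees with that of $c$ since it is tested on the same rays and the same segments.

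For the Pasch Condition I would avoid re-checking it ray-by-ray and instead observe that the Medium Pasch Condition is a statement purely about points and segments: given the data $C\in s[A',B']$, $E\in s[A',D']$ and $G\in s[B',D']$ in $\Pi$, the witness $H\in s[C,E]$ produced in $X$ already lies in the convex set $\Pi$, and symmetrically. Thus $\Pi$ inherits the Medium Pasch Condition from $X$, and for a space with maximal rays this is equivalent to the Pasch Condition of \ref{PaschCondition}. Consequently $\Pi$ is a Pasch space with all line segments connected and with $\partial(\Pi,A)$ a circle, so \ref{PaschSpacesAndPlanes1} concludes that $\Pi$ is a plane.

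The hard part will be the Key Lemma, and within it the degenerate configurations. If $A\in l(P,Q)$ then $l(P,Q)=r_P\cup a(r_P)$ for the ray $r_P=ray[A,P]\in c$, and since $a(c)=c$ the whole line is in $\Pi$ immediately. If $A\notin l(P,Q)$, then for $R\in s[P,Q]$ the claim follows from convexity of $c$ and the betweenness definition \ref{BoundaryAtInfinityProp}, while for $R$ beyond the segment, say $Q\in s(P,R)$, I would show $ray[A,Q]$ is strictly between $ray[A,P]$ and $ray[A,R]$ and then apply \ref{PaschBetweennessLemma} to get that $ray[A,R]$ lies between $ray[A,Q]$ and $a(ray[A,P])$, both of which are in $c$, whence $ray[A,R]\in c$ by convexity. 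The subtle point is that this step presupposes that $ray[A,P]$ and $ray[A,R]$ are not antipodal; I expect to dispatch this by noting that antipodality of these two directions would make $r_P\cup a(r_P)$ convex and force $s[P,R]$ to pass through $A$, giving $A\in l(P,Q)$ — exactly the case already handled separately — so no contradiction with the genericity assumption $A\notin l(P,Q)$ can survive.
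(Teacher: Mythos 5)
Your proposal is correct and follows essentially the same route as the paper: both reduce the theorem to the strong convexity statement that $l(P,Q)\subset\Pi(r_1,r_2)$ for all distinct $P,Q\in\Pi(r_1,r_2)$, prove it using \ref{PaschBetweennessLemma} together with the convexity of the circle $c$ supplied by \ref{SubCirclesOfPaschSpaces}, and then conclude via \ref{PaschSpacesAndPlanes1}. The only differences are organizational: the paper establishes convexity first for $P\in r_1$, $Q\in r_2$ and then re-bases (showing $\Pi(r_1,r_2)=\Pi(r_1',r_2')$ for non-antipodal $r_1',r_2'$ in the four segments), whereas you track $ray[A,R]$ directly along an arbitrary line $l(P,Q)$, and you make explicit the inheritance of the Pasch Condition and the identification $\partial(\Pi(r_1,r_2),A)=c$, which the paper leaves implicit.
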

\begin{proof}
It suffices to show $\Pi(r_1,r_2)$ is convex in a strong sense:
given $B,C\in \Pi(r_1,r_2)$, $l(B,C)\subset \Pi(r_1,r_2)$ if $B\ne C$.
Indeed, in that case $\Pi(r_1,r_2)$ is a Pasch space whose boundary at infinity is a circle by 
\ref{SubCirclesOfPaschSpaces}. Using \ref{PaschSpacesAndPlanes1}
one concludes $\Pi(r_1,r_2)$ is a plane.

First, consider the case of $B\in r_1$ and $C\in r_2$. If $B=A$ or $C=A$, then
clearly $l(B,C)\subset \Pi(r_1,r_2)$.
Assume $B\ne A$ and $C\ne A$.
Use 
 \ref{PaschBetweennessLemma} to conclude $l(B,C)\subset \Pi(r_1,r_2)$.

Applying  \ref{PaschBetweennessLemma} one can see $\Pi(r_1,r_2)=\Pi(r_1,r'_2)$
if $r_2'\in s[r_1,r_2]\cup s[a(r_1),r_2]\cup s[r_1,a(r_2)]\cup s[a(r_1),a(r_2)]$.
Therefore $\Pi(r_1,r_2)=\Pi(r'_1,r'_2)$ if
$r_1',r_2'\in
s[r_1,r_2]\cup s[a(r_1),r_2]\cup s[r_1,a(r_2)]\cup s[a(r_1),a(r_2)]$
are not antipodal. Since for every two different points $B,C\in \Pi(r_1,r_2)$,
that do not lie on the same line with $A$, there are $r_1',r_2'\in
s[r_1,r_2]\cup s[a(r_1),r_2]\cup s[r_1,a(r_2)]\cup s[a(r_1),a(r_2)]$
containing $B$ and $C$, respectively, we are done.
\end{proof}

\subsection{Parallel lines}

\begin{Definition}\label{ParallelLinesDef}
Two lines $l(A,B)$ and $l(C,D)$ on a plane $\Pi$ are \textbf{parallel} if either they are equal or they do not intersect each other.
\end{Definition}

\begin{Definition}\label{ViewingAngleDef}
Given a point $P$ and a line $l(Q,R)$ on a plane $\Pi$ consider the set of maximal rays $ray[P,X]$, where $X\in l(Q,R)$. If $P\in l(Q,R)$, it consists of two antipodal points $r_1, r_2$ in the circle $c_P$ of rays emanating from $P$. If $P\notin l(Q,R)$,
it forms a line in $c_P$ which is of the form $s(r_1,r_2)$ for some maximal rays $r_1$ $r_2$. In both cases we call $s[r_1,r_2]$ the \textbf{viewing angle} of $l(Q,R)$ from $P$.
\end{Definition}

\begin{Definition}
An angle $s[r_1,r_2]$ in the circle of rays $c_P$, where $P$ is a point of a plane $\Pi$ is \textbf{full} if $r_1$ is antipodal to $r_2$.
\end{Definition}

\begin{Proposition}\label{ExistenceOfParallelLinesProp}
For every point $P$ and for every line $l(Q,R)$ of a plane $\Pi$ there is a maximal line passing through $P$ and parallel to $l(Q,R)$. That line is unique if and only if the viewing angle of $l(Q,R)$ from $P$ is full.
\end{Proposition}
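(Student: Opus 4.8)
The plan is to transport everything to the circle of rays $c_P:=\partial(\Pi,P)$, which is a circle with antipodal map $a$ by Theorem~\ref{CircleOfRaysThm}. Maximal lines through $P$ are in bijection with antipodal pairs $\{r,a(r)\}$ of maximal rays at $P$: such a pair gives the maximal line $r\cup a(r)$ (convex by \ref{AntipodalRaysDef}, maximal by \ref{MaximalLineCor}), and every maximal line through $P$ splits at $P$ into its two antipodal rays. If $P\in l(Q,R)$ the statement is immediate: every line through $P$ already meets $l(Q,R)$ at $P$, so the only parallel is $l(Q,R)$ itself, it is unique, and its viewing angle is the antipodal pair of directions along $l(Q,R)$, hence full.

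So assume $P\notin l(Q,R)$ and let $V\subset c_P$ be the open arc of directions $ray[P,X]$, $X\in l(Q,R)$, that actually meet the line; its endpoints are the two limiting rays $r_1,r_2$, the viewing angle is its closure, and by definition the angle is \emph{full} exactly when $r_2=a(r_1)$. The key reduction is that $r\cup a(r)$ is parallel to $l(Q,R)$ if and only if neither $r$ nor $a(r)$ lies in $V$: a point of $(r\cup a(r))\cap l(Q,R)$ different from $P$ would realise a viewing direction, and conversely. Thus the parallels through $P$ correspond exactly to the antipodal pairs contained in the closed, $a$-invariant set $C:=c_P\setminus(V\cup a(V))$.

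Two facts then settle existence. First, $V\cap a(V)=\emptyset$: if $r$ and $a(r)$ both met $l(Q,R)$, the single line $r\cup a(r)$ would meet $l(Q,R)$ in two distinct points and hence equal $l(Q,R)$ by uniqueness of the maximal line through two points (\ref{MaximalLineCor}), contradicting $P\notin l(Q,R)$. Second, the endpoints lie in $C$: $r_1\notin V$ since $V$ is open and $r_1$ is a boundary point, while if $a(r_1)$ were in $V$ then, $V$ being a neighborhood of $a(r_1)$ and $a(r_1)$ being a limit of interior points of the nonempty arc $a(V)$ (here $a$ is a homeomorphism by \ref{AntipodalMapOnCirclePreservesSegments}), we would get $V\cap a(V)\neq\emptyset$, contradicting the first fact; hence $a(r_1)\notin V$, i.e.\ $r_1\notin a(V)$. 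The same holds for $r_2$, so $\{r_1,a(r_1)\}\subset C$ and a parallel always exists.

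Finally the dichotomy. If the angle is not full then $r_2\neq a(r_1)$, so $r_1\cup a(r_1)$ and $r_2\cup a(r_2)$ are two \emph{distinct} parallels (they coincide only when $r_2\in\{r_1,a(r_1)\}$, i.e.\ only in the full case), and uniqueness fails. If the angle is full, then $r_2=a(r_1)$, so $V$ is one of the two components of $c_P\setminus\{r_1,a(r_1)\}$ and $a(V)$ is the other (\ref{CircleDef}); hence $V\cup a(V)=c_P\setminus\{r_1,a(r_1)\}$ and $C=\{r_1,r_2\}$ is a single antipodal pair, giving exactly one parallel. I expect the only delicate point to be the second fact above — that the limiting endpoints $r_1,r_2$ are not viewing directions of the opposite pencil; the openness/boundary argument given, which uses only that $V$ is open, that $a$ is continuous, and that $V\cap a(V)=\emptyset$, seems cleaner than a direct Pasch-type analysis of how $r_1$ points relative to the two ends of $l(Q,R)$.
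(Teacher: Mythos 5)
Your proof is correct and follows the same route as the paper's: pass to the circle of rays $c_P$, take the boundary rays $r_1,r_2$ of the viewing angle, note that the maximal lines extending them are parallel to $l(Q,R)$, and tie uniqueness to $r_2=a(r_1)$. The difference is completeness, and here your write-up is genuinely stronger. The paper merely ``notices'' that the line extending $r_1$ is parallel; your Facts 1 and 2 supply exactly the missing justification, and your argument for Fact 1 (two intersection points would force $r\cup a(r)=l(Q,R)\ni P$ by uniqueness of maximal lines through two points) is the right one. Moreover, for ``full $\Rightarrow$ unique'' the paper only checks that its two constructed parallels coincide, never excluding other parallels; your identification of the parallels through $P$ with the antipodal pairs in $C=c_P\setminus(V\cup a(V))$ closes that gap. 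Two details to tighten. First, in the full case you cite \ref{CircleDef} for ``$a(V)$ is the other component,'' but \ref{CircleDef} by itself does not say the antipodal map swaps the two components of $c_P\setminus\{r_1,a(r_1)\}$: Fact 1 and convexity give $a(V)\subset l_2$ (the other component), while the reverse inclusion needs that no component --- equivalently, no maximal ray of $c_P$ --- contains an antipodal pair. That holds because $c_P$ is a spherical Pasch space (the corollary following \ref{AntipodalMapOnCirclePreservesSegments}), so by condition 3 of \ref{PaschSpacesDef} such a ray would be antipodal to itself, contradicting condition 1 of \ref{RayDef}. Second, the topological language in Fact 2 (openness, limit points, homeomorphism) is not developed for circles in the paper; an order-theoretic substitute: if $a(r_1)\in V$, then $s[r_1,a(r_1)]=\{r_1,a(r_1)\}$ forces $V=s(r_1,r_2)=s[a(r_1),r_2)$, and the point $a(r_1)$ then separates nothing in $V$ (every segment with both ends in the convex set $s(a(r_1),r_2)$ stays inside it by \ref{InclusionOfSegmentsProp}), contradicting the assertion in \ref{ViewingAngleDef} that $V$ is a line.
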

\begin{proof}
It is clearly so if $P$ belongs to $l(Q,R)$. Assume $P\notin l(Q,R)$ and let $s[r_1,r_2]$ be the viewing angle of $l(Q,R)$ from $P$. Notice the maximal line extending $r_1$ is parallel to $l(Q,R)$ and the same is true of $r_2$. The two lines are equal if and only if $r_1$ is antipodal to $r_2$, i.e. the viewing angle is full.
\end{proof}

\begin{Remark}
Given a plane $\Pi$ and given two different points $A,B\in \Pi$ there is a canonical map $f:\partial(\Pi,A)\to \partial(\Pi,B)$ defined as follows:
for every maximal ray $r$ emanating from $A$ not containing $B$ the rays $ray[B,X]$, $X\in r$, form a ray in $ \partial(\Pi,B)$, so it "ends" at some point
$f(r)$. In case of $r=ray[A,B]$ we declare $f(r)$ to be the subset of $r$
and we declare $f(a(r))$ to be $a(f(r))$. Notice $f$ preserves betweenness but
it preserves antipodal pairs of rays if and only if there is uniqueness of lines containing $B$ parallel to lines containing $A$. One can view this observation the following way: assigning $\partial(\Pi,A)$ to $A$ is functorial if and only if $\Pi$ satisfies Axiom 5 of Euclid.
\end{Remark}

\section{Erlangen planes}

\begin{Definition}
An \textbf{Erlangen plane} is a pair $(\Pi,\mathcal{I})$ consisting of a plane $\Pi$ equipped with a subgroup $\mathcal{I}$ of isomorphisms of $\Pi$
such that the Condition \ref{Homogeneity and rigidity of the plane} is satisfied.
\end{Definition}

\begin{Condition}[Homogeneity and rigidity of the Erlangen plane]\label{Homogeneity and rigidity of the plane}
For every two pairs $(h_1,r_1)$, $(h_2,r_2)$ consisting of a half-plane and a maximal ray in its boundary there is exactly one isomorphism sending $h_1$ onto $h_2$ and sending $r_1$ onto $r_2$.
\end{Condition}

\begin{Example}[Hilbert plane] 
The \textbf{Hilbert plane} is the Descartes plane with the set of isomorphisms consisting of functions of the form $f(x,y)=h(x,y)+(a,b)$, where $a$, $b$ are real constants
and $h$ is a linear transformation preserving the quadratic from $x^2+y^2$.

Indeed, any maximal ray emanating from $(x_0,y_0)$ can be expressed as the set of points
$(x_0,y_0)+t\cdot (x_1,y_1)$, where $t\ge 0$ and $(x_1,y_1)$ is the direction of the ray (in particular, the $x_1^2+y_1^2$ is $1$). One can map such a ray onto non-negative reals via the isomorphism $f(x,y):=(x_1\cdot x+y_1\cdot y,-y_1\cdot x+x_1\cdot y) -(x_0,y_0)$. The only two isomorphisms preserving non-negative reals are the identity and the map $(x,y)\to (x,-y)$.
\end{Example}

\begin{Example}[Complex plane] 
The \textbf{complex plane} is the set of all complex numbers $z$ with the betweenness relation induced by its structure of a $2$-dimensional real vector space.
The set of isomorphisms consists of functions of one of the two forms: $f(z)=(a+bi)\cdot z+w$
or $f(z)=(a+bi)\cdot \bar z+w$,
where $a$, $b$ are real constants, $a^2+b^2=1$, and $w$ is a complex constant.

Indeed, any maximal ray emanating from $z_0$ can be expressed as the set of points
$z_0+t\cdot z_1$, where $t\ge 0$ and $z_1$ is the direction of the ray (in particular, the modulus of $z_1$ is $1$). One can map such a ray onto non-negative reals via the isomorphism $f(z):=\frac{z-z_0}{z_1}$. The only two isomorphisms preserving non-negative reals are the identity and the conjugation.
\end{Example}

\begin{Corollary}\label{InvolutionAndIdPlanes} Let $(\Pi,\mathcal{I})$ be an Erlangen plane.\\
a. An isomorphism in $\mathcal{I}$ that has a fixed point $A$ and sends a ray emanating from $A$ to itself is an involution. \\
b. An isomorphism in $\mathcal{I}$ that has two fixed points is an involution. \\
c. An isomorphism in $\mathcal{I}$ that has three non-collinear fixed points is the identity. 
\end{Corollary}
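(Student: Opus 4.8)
The plan is to extract from Condition \ref{Homogeneity and rigidity of the plane} the single fact that does all the work: taking $h_1=h_2=h$ and $r_1=r_2=r$ in that condition shows that the \emph{only} element of $\mathcal{I}$ fixing both a half-plane $h$ and a maximal ray $r$ in its boundary $\partial h$ is the identity (since $id$ obviously qualifies, and the condition guarantees uniqueness). Thus the whole corollary reduces to manufacturing, from the given fixed-point data, a half-plane and a boundary ray that $f$ (or $f^2$) fixes.

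For (a), suppose $f(A)=A$ and $f(r)=r$ for a maximal ray $r$ emanating from $A$. I would pick $B\in r\setminus\{A\}$ and let $l=l(A,B)$ be the maximal line through $A$ containing $r$ (via \ref{MaximalLineCor}), so that $l=r\cup a(r)$. Since $f$ is an isomorphism fixing $A$ and carrying $r$ to $r$, it carries $a(r)$ to the unique ray antipodal to $f(r)=r$, which is $a(r)$ itself (uniqueness of the antipodal ray, noted after \ref{AntipodalRaysDef}); hence $f(l)=l$. Consequently $f$ permutes the two half-planes $h_1,h_2$ that make up $\Pi\setminus l$ (Definition \ref{PlaneDef}). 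If $f$ fixes each of them, then $f$ fixes the flag $(h_1,r)$, so $f=id$ and in particular $f^2=id$. If $f$ swaps $h_1$ and $h_2$, then $f^2$ fixes each half-plane while still fixing $A$ and $r$, so $f^2$ fixes $(h_1,r)$ and again $f^2=id$. Either way $f$ is an involution.

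Part (b) is then immediate: if $f$ fixes two distinct points $A$ and $B$, it fixes the maximal ray $r=ray[A,B]$ emanating from $A$, because $f(ray[A,B])=ray[f(A),f(B)]=ray[A,B]$; applying (a) gives $f^2=id$. For (c), let $f$ fix three non-collinear points $A,B,C$. As in (b), $f$ fixes $r=ray[A,B]$ and the maximal line $l(A,B)$, hence permutes the two half-planes bounded by $l(A,B)$. Non-collinearity gives $C\notin l(A,B)$, and since $f(C)=C$ the half-plane $h$ containing $C$ is fixed rather than swapped. Thus $f$ fixes the flag $(h,r)$ with $r$ a maximal ray in $\partial h=l(A,B)$, and the uniqueness in Condition \ref{Homogeneity and rigidity of the plane} forces $f=id$.

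The step requiring the most care is the claim $f(l)=l$, i.e.\ that $f$ sends $a(r)$ to $a(r)$: this relies on an isomorphism of a plane carrying maximal rays emanating from $A=f(A)$ to maximal rays emanating from $A$ and preserving the antipodal relation, combined with the uniqueness of the antipodal ray. Once $l$ is shown invariant, the rest is only the bookkeeping of whether the two half-planes are fixed or swapped, and the whole argument rests on the single conceptual point that the rigidity axiom pins down a motion by its effect on one (half-plane, boundary-ray) flag.
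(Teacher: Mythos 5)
Your proof is correct and is essentially the paper's own argument: make the maximal line $l=r\cup a(r)$ invariant, then either both half-planes are preserved, in which case the uniqueness in Condition \ref{Homogeneity and rigidity of the plane} applied to the flag $(h,r)$ forces the identity (exactly how part c.\ is concluded, using the fixed point off the line), or they are swapped, in which case $f^2$ preserves the flag and is the identity. The only step the paper adds is reducing the stated hypothesis of a.\ (an arbitrary, not necessarily maximal, ray emanating from $A$ is preserved) to your hypothesis that a \emph{maximal} ray is preserved; this holds because $f$ carries the maximal ray containing the preserved ray to a maximal ray emanating from $A$ containing the same preserved ray, and such a maximal ray is unique.
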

\begin{proof}
a. and c. Suppose $\rho\in \mathcal{I}$ fixes $A$ and sends a ray at $A$ to itself.
Therefore it sends a maximal ray $r$ emanating from $A$ to itself. Indeed, $\rho(r)$ is a maximal ray at $A$ by \ref{BijectionsBetweenSpacesMaxRays} 
so it must be equal to $r$. Therefore $\rho(l)=l$ as well, $l$ being the maximal line containing $r$.
Consider the half-planes $h_1$ and $h_2$ that arise from components of $\Pi\setminus l$. Notice $\rho(l)=l$ and either $\rho(h_1)=h_1$ and $\rho(h_2)=h_2$ (this is so if $\rho$ has a fixed point in either $h_1$ or $h_2$) and $\rho$ must be the identity or $\rho(h_1)=h_2$ and $\rho(h_2)=h_1$ in which case $\rho^2$ preserves both $r$ and $h_1$ and must therefore be equal to the identity.\\
b. follows from a. as $\rho$ preserves $ray[A,B]$ if $A$, $B$ are fixed points of $\rho$.
\end{proof}

\begin{Corollary}\label{EPAreRM}
If $(\Pi,\mathcal{I})$ is an Erlangen plane, then $(\Pi,\mathcal{I})$ is a space with rigid motions.
\end{Corollary}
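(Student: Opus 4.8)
The plan is to verify the two clauses in the definition of a space with rigid motions (\ref{SpaceWithRigidMotionsDef}): that $(\Pi,\mathcal{I})$ is isotropic, and that every $f\in\mathcal{I}$ is rigid. Isotropy is immediate from Condition \ref{Homogeneity and rigidity of the plane}: given maximal rays $r_1$ and $r_2$, pick any half-planes $h_1$ and $h_2$ having $r_1$ and $r_2$ in their respective boundaries; the unique isomorphism carrying $(h_1,r_1)$ to $(h_2,r_2)$ in particular carries $r_1$ onto $r_2$.

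For rigidity I would invoke \ref{RigidMotionsViaRaysProp}, which, once isotropy is known, reduces the claim to a single implication: if $f(r)=r$ for some maximal ray $r$, then $f|r$ is the identity. So let $f\in\mathcal{I}$ fix a maximal ray $r$ setwise and let $A$ be its initial point. By \ref{OrderingOfRays} the betweenness on $r$ comes from a linear order with minimum $A$ and no maximum, and $f|r$ is a betweenness-preserving bijection of $r$ onto itself. Such a map induces either that order or its reverse; the reverse is impossible, since it would send the minimum $A$ to a maximum and $r$ has none. Hence $f|r$ is order-preserving and $f(A)=A$.

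Now $f$ fixes $A$ and maps the ray $r$ emanating from $A$ to itself, so \ref{InvolutionAndIdPlanes}(a) shows $f$ is an involution, $f^2=\mathrm{id}$. An order-preserving involution of a linearly ordered set is the identity: if $f(X)>X$ for some $X\in r$ then applying $f$ gives $X=f^2(X)>f(X)$, a contradiction, and $f(X)<X$ is excluded symmetrically. Therefore $f|r=\mathrm{id}$, and \ref{RigidMotionsViaRaysProp} yields that $(\Pi,\mathcal{I})$ is a space with rigid motions.

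Alternatively, one can mirror the proofs of \ref{ErlangenLinesViaRigidMotionsCor} and \ref{ErlangenCirclesAreRM}: starting from $f(s[A,B])\subset s[A,B]$, use that segments in a plane are connected (\ref{PlaneDef}) together with \ref{FixedPointTheorem} to obtain a fixed point $C\in s[A,B]$, then check that $f|_{s[A,B]}$ has finite order and appeal to \ref{NilpotentCaseOfRM}. The delicate point of that route is that $f$ preserves the maximal line $l(A,B)$ and so merely permutes the two maximal rays at $C$ lying in it: if it fixes them, \ref{InvolutionAndIdPlanes}(a) gives $f^2=\mathrm{id}$; if it swaps them, then $f^2$ fixes such a ray and the same corollary gives $f^4=\mathrm{id}$. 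I expect the only real obstacle in either approach to be this small bookkeeping of the action of $f$ on the rays at the fixed point, and the ray-based argument above is the cleaner one precisely because it bypasses the fixed-point analysis altogether.
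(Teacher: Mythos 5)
Your proposal is correct, and your primary route is a genuinely different decomposition from the paper's. The paper verifies the rigid-motion condition head-on: from $f(s[A,B])\subset s[A,B]$ it produces a fixed point $M$ via the Fixed Point Theorem \ref{FixedPointTheorem} (legitimate since segments in a plane are connected), then does exactly the bookkeeping you call delicate — if $M$ is an endpoint, $f$ preserves $ray[A,B]$ and \ref{InvolutionAndIdPlanes}(a) gives $f^2=id$; if $M\in s(A,B)$, then $f$ either preserves the two rays at $M$ (so $f^2=id$) or swaps them (so $f^4=id$) — and closes with \ref{NilpotentCaseOfRM}. So your ``alternative'' sketch is precisely the paper's proof. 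Your main argument instead makes isotropy explicit (the paper leaves it implicit) and then discharges rigidity through the characterization \ref{RigidMotionsViaRaysProp}, reducing everything to the single claim that $f(r)=r$ forces $f|r=id$; you settle that with the order structure of $r$ (order-reversal is impossible since $r$ has no maximum, whence $f(A)=A$), then \ref{InvolutionAndIdPlanes}(a), then the observation that an order-preserving involution of a linearly ordered set is the identity. Both proofs ultimately rest on the same key lemma, \ref{InvolutionAndIdPlanes}(a), i.e., on the uniqueness half of Condition \ref{Homogeneity and rigidity of the plane}; yours is shorter on the page because the fixed-point case analysis is already packaged inside the proof of \ref{RigidMotionsViaRaysProp} (which also needs connected segments, available here), while the paper's is self-contained and runs parallel to its treatments of Erlangen lines and circles. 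The one statement you assert without proof — that a betweenness-preserving bijection of a linearly ordered ray is monotone — is standard and no less rigorous than similar assertions made elsewhere in the paper.
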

\begin{proof}
Let $A\ne B$ be two points of $\Pi$.
Suppose $f(s[A,B])\subset s[A,B]$ for some $f\in \mathcal{I}$. By \ref{FixedPointTheorem} $f$ has a fixed point $M$. If $M$ is one of the endpoints of 
$s[A,B]$, say $M=A$, then $f$ preserves the ray $ray[A,B]$
and by \ref{InvolutionAndIdPlanes}
$f^2=id$. If $M\in s(A,B)$, then either $f$ preserves $ray[M,A]$ and $f^2=id$
or $f$ flips $ray[M,A]$ onto $ray[M,B]$ and $f^2$ preserves $ray[M,A]$ resulting in $f^4=id$.
By \ref{NilpotentCaseOfRM} $(\Pi,\mathcal{I})$ is a space with rigid motions.
\end{proof}

\begin{Theorem}\label{EPVsRM}
Let $\Pi$ be a plane and let $\mathcal{I}$ be a subgroup of isomorphisms of $\Pi$.
$(\Pi,\mathcal{I})$ is an Erlangen plane if and only if the following conditions are satisfied:\\
a. $(\Pi,\mathcal{I})$ is a space with rigid motions,\\
b. there is a maximal line $l_0$ in $\Pi$ and $f_0\in \mathcal{I}$ fixing $l_0$
and flipping the half-planes whose boundary is $l_0$,\\
c. any $f\in \mathcal{I}$ fixing $l_0$ is either identity or is equal to $f_0$.
\end{Theorem}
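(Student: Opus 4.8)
The plan is to prove both implications by exploiting two consequences of the hypotheses: \emph{isotropy}, which supplies, for any two maximal rays, an element of $\mathcal{I}$ carrying one onto the other, and \emph{rigidity}, which by \ref{RigidMotionsViaRaysProp} forces any isomorphism fixing a maximal ray setwise to fix it pointwise. Throughout I read ``$f$ fixes $l_0$'' as ``$f$ fixes $l_0$ pointwise''; note that if $f$ fixes a maximal ray $r\subset l_0$ pointwise then it fixes all of $l_0$ pointwise, since $f$ then fixes two points of $l_0$ and hence preserves $l_0$, so it permutes the two maximal rays of $l_0$ at the initial point of $r$; as $f(r)=r$, bijectivity gives $f(a(r))=a(r)$, and rigidity makes $f$ the identity on each ray.

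For the forward direction, suppose $(\Pi,\mathcal{I})$ is an Erlangen plane. Condition (a) is immediate: isotropy follows from Condition \ref{Homogeneity and rigidity of the plane}, because the unique isomorphism sending a pair $(h_1,r_1)$ to $(h_2,r_2)$ sends $r_1$ to $r_2$, and rigidity is exactly \ref{EPAreRM}. For (b) and (c), fix any maximal line $l_0$ with half-planes $h,h'$ and a maximal ray $r_0\subset l_0$ in their common boundary. Applying Condition \ref{Homogeneity and rigidity of the plane} to the pairs $(h,r_0)$ and $(h',r_0)$ produces a unique $f_0\in\mathcal{I}$ with $f_0(h)=h'$ and $f_0(r_0)=r_0$; by rigidity $f_0$ fixes $r_0$, hence $l_0$, pointwise while swapping the half-planes, giving (b) (and $f_0\neq\mathrm{id}$ since $f_0(h)=h'$). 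Finally, any $f\in\mathcal{I}$ fixing $l_0$ pointwise fixes $r_0$ and sends $h$ to $h$ or to $h'$; the uniqueness clause of Condition \ref{Homogeneity and rigidity of the plane} forces $f=\mathrm{id}$ in the first case and $f=f_0$ in the second, which is (c).

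For the reverse direction, I would first manufacture a reflection across every maximal line. Given a line $l$ and a maximal ray $r\subset l$, isotropy (from (a)) yields $\phi\in\mathcal{I}$ with $\phi(r_0)=r$, whence $\phi(l_0)=l$; then $f_l:=\phi\circ f_0\circ\phi^{-1}\in\mathcal{I}$ fixes $l$ pointwise and swaps the two half-planes bounded by $l$, because $\phi$ carries the half-planes of $l_0$ onto those of $l$. To prove \emph{existence} for a target pair $(h_2,r_2)$, use isotropy to pick $g\in\mathcal{I}$ with $g(r_1)=r_2$; then $g(h_1)$ is one of the two half-planes bounded by the line $l_2\supset r_2$. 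If $g(h_1)=h_2$ we are done; otherwise compose with the reflection $f_{l_2}$ built from $r_2$, so that $f_{l_2}\circ g$ still sends $r_1$ to $r_2$ (as $f_{l_2}$ fixes $r_2$) while correcting the half-plane.

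It remains to prove \emph{uniqueness}, which I expect to be the main obstacle and where all three hypotheses combine. If $f,f'\in\mathcal{I}$ both send $(h_1,r_1)$ to $(h_2,r_2)$, set $g=f'\circ f^{-1}$; then $g(r_2)=r_2$ and $g(h_2)=h_2$. By rigidity $g$ fixes $r_2$, hence $l_2$, pointwise. Choosing $\phi\in\mathcal{I}$ with $\phi(r_0)=r_2$ (so $\phi(l_0)=l_2$), the conjugate $\phi^{-1}\circ g\circ\phi$ fixes $l_0$ pointwise, so by (c) it equals $\mathrm{id}$ or $f_0$. Since $f_0$ swaps the half-planes of $l_0$ while $g$ preserves the half-plane $h_2$ (and $\phi$ matches the half-planes of $l_0$ with those of $l_2$), the value $f_0$ is excluded; hence $\phi^{-1}\circ g\circ\phi=\mathrm{id}$, giving $g=\mathrm{id}$ and $f=f'$. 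The delicate point is precisely this transport of the local statement (c) at $l_0$ to an arbitrary line $l_2$ by conjugation, together with the two appeals to rigidity that upgrade ``fixes a ray'' to ``fixes a line pointwise.''
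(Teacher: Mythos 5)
Your proof is correct and follows essentially the same route as the paper's: the forward direction via \ref{EPAreRM} and the uniqueness clause of Condition \ref{Homogeneity and rigidity of the plane}, and the converse by using isotropy (from condition a.) to move rays, composing with $f_0$ (conjugated to the relevant line) to correct half-planes for existence, and combining rigidity via \ref{RigidMotionsViaRaysProp} with condition c., transported by conjugation, for uniqueness. Your write-up is simply more explicit than the paper's about two points it leaves implicit — that ``fixing $l_0$'' must be read pointwise, and that fixing a maximal ray setwise upgrades to fixing the whole line pointwise.
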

\begin{proof}
In view of \ref{EPAreRM} any Erlangen plane satisfies conditions a.-c., hence it suffices to show the converse.

Given two maximal rays $r_1$, $r_2$ in $l_0$ there is $g\in \mathcal{I}$ sending
$r_1$ onto $r_2$ due to $(\Pi,\mathcal{I})$ being a space with rigid motions.
If $g$ preserves/flips the half-planes of $l_0$, then $f_0\circ g$ flips/preserves those half-planes. The same can be said of any two pairs $(h_1,r_1)$, $(h_2,r_2)$ consisting of a half-plane and a maximal ray in its boundary as $r_1$, $r_2$ can be sent to $l_0$
via some elements of $\mathcal{I}$. The only remaining issue is to show that
any $f\in \mathcal{I}$ preserving a pair $(h_1,r_1)$ must be the identity. As above we reduce it to $r_1\subset l_0$ in which case $f|l_0$ must be the identity. As $f$ cannot be equal to $f_0$, $f=id$.
\end{proof}

\begin{Definition}
Let $(\Pi,\mathcal{I})$ be an Erlangen plane. If $A,B\in\Pi$, then
the \textbf{translation} $\tau_{AB}$ is the identity if $A=B$. Otherwise,
it sends $ray[A,B]$ to the maximal ray emanating from $B$ contained in $ray[A,B]$ and it preserves both
half-planes of $l(A,B)$.

Given two maximal rays $r_1$ and $r_2$ at a point $A$, the \textbf{rotation}
from $r_1$ to $r_2$ is the identity if $r_1=r_2$. Otherwise, it is the isomorphism of $\mathcal{I}$ sending $r_1$ onto $r_2$ and sending the half-plane of $r_1$ containing $r_2$ onto the half-plane of $r_2$ not containing $r_1$.

Given a maximal line $l$ in $\Pi$ the \textbf{reflection} in $l$ is the isomorphism of $\mathcal{I}$ fixing $l$ and swapping the two half-planes of $l$.

A \textbf{glide reflection} is the composition of a non-trivial translation $\tau_{AB}$
with the reflection in the line $l(A,B)$.
\end{Definition}

\subsection{Length of line segments}

Since Erlangen planes are spaces with rigid motions, we have the concept of congruence of line segments and the concept of length of line segments.
Similarly as in the case of Erlangen lines one can see that lengths can be added, multiplied by positive reals, divided, and a smaller length can be subtracted from a larger length.
\subsection{Angles}

The boundary at infinity $\partial(\Pi,A)$ at $A$, i.e. the maximal rays emanating from a given point $A$ of a plane $\Pi$, form a circle by \ref{CircleOfRaysThm}. By an \textbf{angle} at $A$ we mean a line segment of $\partial(\Pi,A)$.

\begin{Theorem}\label{CircleOfAnglesErlangenPlane}
For any point $A$ of an Erlangen plane $(\Pi,\mathcal{I})$ the boundary at infinity $\partial(\Pi,A)$ is an Erlangen circle when equipped with isomorphisms induced by $\mathcal{I}$. All isomorphisms in $\mathcal{I}$ preserve measures of angles. 
\end{Theorem}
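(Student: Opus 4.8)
The plan is to construct the Erlangen-circle structure on $\partial(\Pi,A)$ from the stabilizer of $A$ in $\mathcal{I}$, and then to read off Condition \ref{Homogeneity and rigidity of the circle} directly from Condition \ref{Homogeneity and rigidity of the plane} by means of a dictionary between maximal rays of the boundary circle and (half-plane, boundary-ray) pairs in $\Pi$. First I would recall from \ref{CircleOfRaysThm} that $\partial(\Pi,A)$ is a circle and, more precisely, that the two components of $\partial(\Pi,A)\setminus\{r,a(r)\}$ are exactly the two families of maximal rays at $A$ lying in the two half-planes $\Sigma_1,\Sigma_2$ of the maximal line $r\cup a(r)$. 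Hence a maximal ray of the circle $\partial(\Pi,A)$ emanating from the circle-point $r$ is the datum $\{r\}\cup \Sigma_i$, which is precisely a pair $(\Sigma_i,r)$ consisting of a half-plane $\Sigma_i$ of $\Pi$ together with a maximal ray $r$ emanating from $A$ and lying in its boundary. This identification is the crux of the whole argument.

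Next, let $\mathcal{I}_A:=\{f\in\mathcal{I}\mid f(A)=A\}$ be the stabilizer, a subgroup of $\mathcal{I}$. Each $f\in\mathcal{I}_A$ permutes the maximal rays emanating from $A$, hence induces a bijection $\hat f$ of $\partial(\Pi,A)$, and $f\mapsto\hat f$ is a group homomorphism. I would check that $\hat f$ preserves closed line segments of the circle using the description of circle-betweenness in \ref{BoundaryAtInfinityProp}: since $f(A)=A$ it sends antipodal pairs to antipodal pairs, and if $r_3$ meets every $s[B,C]$ with $B\in r_1$, $C\in r_2$, then $f(r_3)$ meets every $s[f(B),f(C)]$; applying the same reasoning to $f^{-1}$ and invoking \ref{BijectionPreservingBetweennessProp} shows $\hat f$ is an isomorphism of the circle. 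Write $\widehat{\mathcal{I}_A}$ for the image subgroup.

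The heart of the proof is verifying Condition \ref{Homogeneity and rigidity of the circle} for $(\partial(\Pi,A),\widehat{\mathcal{I}_A})$. Given two maximal rays of the circle, translate them through the dictionary into pairs $(\Sigma_1,r_1)$ and $(\Sigma_2,r_2)$ with $r_1,r_2$ emanating from $A$. Condition \ref{Homogeneity and rigidity of the plane} furnishes a unique $f\in\mathcal{I}$ with $f(\Sigma_1)=\Sigma_2$ and $f(r_1)=r_2$; because $r_1,r_2$ both emanate from $A$, this $f$ sends the initial point of $r_1$ to the initial point of $r_2$, so it automatically fixes $A$, whence $f\in\mathcal{I}_A$ and $\hat f$ carries the first circle-ray onto the second. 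Uniqueness transfers the same way: any $\hat g$ with $g\in\mathcal{I}_A$ carrying the first circle-ray to the second must carry $(\Sigma_1,r_1)$ to $(\Sigma_2,r_2)$, forcing $g=f$ by the uniqueness clause of \ref{Homogeneity and rigidity of the plane}. This proves that $\partial(\Pi,A)$ is an Erlangen circle. The step I expect to be the main obstacle is exactly this translation together with the observation that the matching plane-isomorphism fixes $A$: everything rests on the correspondence between components of $\partial(\Pi,A)\setminus\{r,a(r)\}$ and half-planes of $r\cup a(r)$, and on recognizing that a half-plane plus a boundary ray at $A$ is the same datum as a maximal ray of the boundary circle.

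For the final assertion, take an arbitrary $f\in\mathcal{I}$, not necessarily fixing $A$. It induces a bijection $\bar f:\partial(\Pi,A)\to\partial(\Pi,f(A))$, $\bar f(r)=f(r)$, which preserves closed line segments by the betweenness computation already used above, both targets being Erlangen circles by the first part applied at $A$ and at $f(A)$. I would then observe that for any $\rho=\hat g\in\widehat{\mathcal{I}_{f(A)}}$ one has $\bar f^{-1}\circ\rho\circ\bar f=\widehat{f^{-1}\circ g\circ f}$, and that $f^{-1}\circ g\circ f$ fixes $A$ whenever $g$ fixes $f(A)$, so the conjugate lies in $\widehat{\mathcal{I}_A}$. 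Proposition \ref{IsomorphismBetweenCircles} then yields that $\bar f$ preserves measures of closed line segments, that is, $f$ preserves measures of angles. Once the dictionary is in place, both halves of Condition \ref{Homogeneity and rigidity of the circle} and the measure-preservation statement follow mechanically.
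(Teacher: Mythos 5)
Your proposal is correct and follows essentially the same route as the paper's own proof: both pass to the stabilizer $\mathcal{I}_A$, use the correspondence between components of $\partial(\Pi,A)\setminus \{r,a(r)\}$ and half-planes with boundary $r\cup a(r)$ to transfer Condition \ref{Homogeneity and rigidity of the plane} into Condition \ref{Homogeneity and rigidity of the circle}, and then obtain preservation of angle measures from \ref{IsomorphismBetweenCircles} via the conjugation $f^{-1}\circ g\circ f\in\mathcal{I}_A$. Your write-up is in fact somewhat more explicit than the paper's at the two points it treats tersely, namely the half-plane/circle-ray dictionary and the observation that the plane isomorphism matching two boundary rays at $A$ automatically fixes $A$.
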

\begin{proof}
Consider all isomorphisms $\rho$ in $\mathcal{I}$ preserving $A$. 
$\rho$ induces the bijection of $\partial(\Pi,A)$, namely $r\to \rho(r)$, so we will denote it by the same letter
$\rho:\partial(\Pi,A)\to \partial(\Pi,A)$ and the set of those bijections of $\partial(\Pi,A)$ will be denoted by $\mathcal{I}_A$.
Notice $\rho:\partial(\Pi,A)\to \partial(\Pi,A)$ preserves closed line intervals. Observe that components of $\partial(\Pi,A)\setminus \{r,a(r)\}$ in $\partial(\Pi,A)$ correspond precisely to half-planes with boundary $r\cup a(r)$ in $\Pi$.
Therefore for every two maximal rays $r_1$, $r_2$ at $A$ and for every two maximal rays $l_1$ and $l_2$ in $c_A$ emanating from $r_1$ and $r_2$, respectively, there is a unique $\rho$ sending $l_1$ onto $l_2$. That proves $\partial(\Pi,A)$ is an Erlangen circle.

Suppose $f:\Pi\to\Pi$ belongs to $\mathcal{I}$ and $f(A)=B$. Our plan is to use \ref{IsomorphismBetweenCircles} in order to conclude that $f$ preserves measures of angles. To accomplish that simply realize that given $\rho\in \mathcal{I}_B$
the composition $f\circ \rho\circ f^{-1}:\partial(\Pi,B)\to \partial(\Pi,B)$ corresponds to
the isomorphism of $\partial(\Pi,B)$ induced by $f\circ \rho\circ f^{-1}\in \mathcal{I}$. As such it does belong to $\mathcal{I}_B$ and \ref{IsomorphismBetweenCircles} applies.
\end{proof}

\begin{Corollary}
Two rotations about a point $A$ of an Erlangen plane are equal or are inverse to each other if and only if the measures of angles between corresponding maximal rays are equal.
\end{Corollary}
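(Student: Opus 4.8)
The plan is to push everything onto the Erlangen circle $c_A:=\partial(\Pi,A)$ produced by \ref{CircleOfAnglesErlangenPlane}, and to translate the statement into one about its isomorphisms. Each rotation about $A$ fixes $A$, hence induces an isomorphism of $c_A$. Comparing \ref{TranslationCircleDef} with the definition of a rotation, and using that the components of $c_A\setminus\{r,a(r)\}$ are exactly the half-planes with boundary $r\cup a(r)$ (as recorded in the proof of \ref{CircleOfAnglesErlangenPlane}), the rotation from $r_1$ to $r_2$ induces precisely the circle-translation $\tau_{r_1,r_2}$ of $c_A$; in particular a non-trivial rotation is fixed-point-free on $c_A$ and so is a translation of the circle by \ref{StructureOfIsoCircle}, with the half-turn giving the antipodal map and the trivial rotation the identity. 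Under this identification the angle between $r$ and $\rho(r)$ is the measure $|r\,\rho(r)|$ in $c_A$, which is independent of $r$ by the circle analogue of \ref{TranslationSizeProp}. The statement thus reduces to the claim: for two translations $\sigma_1,\sigma_2$ of an Erlangen circle $c$, one has $|X\sigma_1(X)|=|X\sigma_2(X)|$ if and only if $\sigma_1=\sigma_2$ or $\sigma_1=\sigma_2^{-1}$.

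For the easy direction, if $\sigma_1=\sigma_2$ the measures obviously agree. If $\sigma_1=\sigma_2^{-1}$, I would set $Y=\sigma_2^{-1}(X)$ and invoke the circle analogue of \ref{TranslationSizeProp} (constancy of $|Z\,\sigma_2(Z)|$ over $Z$) to get $|X\sigma_2^{-1}(X)|=|Y\,\sigma_2(Y)|=|X\sigma_2(X)|$, so the two displacement measures coincide.

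For the converse, fix $X$ and put $m=|X\sigma_1(X)|=|X\sigma_2(X)|$. If $m=0$ both images equal $X$, so each $\sigma_i$ has a fixed point and is the identity by \ref{StructureOfIsoCircle}. Otherwise the measure from $X$ is strictly monotone along each of the two maximal rays $R_1,R_2$ of $c$ at $X$, so there are exactly two points at measure $m$ when $m$ is below the straight angle $|X\,a(X)|$, namely $P_1\in R_1$ and $P_2\in R_2$, and the single point $a(X)$ when $m=|X\,a(X)|$. The structural input is that a translation displaces $X$ in a definite direction: by \ref{TranslationCircleDef} the translation sends $ray[X,\sigma(X)]$ to the ray at $\sigma(X)$ away from $X$, and reading the same definition for $\sigma^{-1}=\tau_{\sigma(X),X}$ shows $\sigma^{-1}(X)$ lies on the maximal ray at $X$ \emph{not} containing $\sigma(X)$, again at measure $m$. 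Hence $\sigma_1(X),\sigma_2(X)\in\{P_1,P_2\}$. If $\sigma_1(X)=\sigma_2(X)$ then $\sigma_2^{-1}\sigma_1$ fixes $X$; being a product of translations it lies in the translation subgroup, and having a fixed point it is the identity by \ref{StructureOfIsoCircle}, so $\sigma_1=\sigma_2$. If $\sigma_1(X)\ne\sigma_2(X)$, say $\sigma_2(X)=P_2$, then $\sigma_2^{-1}(X)$ sits on $R_1$ at measure $m$, i.e. $\sigma_2^{-1}(X)=P_1=\sigma_1(X)$; thus $\sigma_2\sigma_1$ fixes $X$ and is the identity, giving $\sigma_1=\sigma_2^{-1}$. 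Finally the degenerate case $m=|X\,a(X)|$ forces $\sigma_1(X)=\sigma_2(X)=a(X)$, whence $\sigma_1=\sigma_2$ (both equal the antipodal map).

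I expect the main obstacle to be the converse, and within it the bookkeeping that decides \emph{which} of the two equalities holds: this rests on the oriented character of translations, namely that $\sigma$ and $\sigma^{-1}$ carry $X$ to the two opposite maximal rays of $c$ at the common measure $m$, combined with the uniqueness of an isomorphism prescribed on a ray (\ref{Homogeneity and rigidity of the circle}), used here through the fact that a fixed-point-free isomorphism cannot fix $X$ (\ref{StructureOfIsoCircle}). Extra care is needed at the two degenerate measures $m=0$ and $m=|X\,a(X)|$, where the pair $\{P_1,P_2\}$ collapses to one point, and in justifying the circle analogue of \ref{TranslationSizeProp} that makes the rotation angle well defined and underpins the easy direction.
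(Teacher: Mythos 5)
Your proposal is correct and takes essentially the same route as the paper: the paper likewise identifies rotations about $A$ with translations of the Erlangen circle $\partial(\Pi,A)$ (via \ref{CircleOfAnglesErlangenPlane}) and then appeals, ``as in \ref{DividingLengthViaTranslationsProp}'', to the fact that equality of displacement measures forces the two translations to be equal or mutually inverse. What you add is a self-contained proof of that fact for circles --- the strict monotonicity of measure along the two maximal rays at $X$, the bookkeeping of which ray carries $\sigma(X)$ versus $\sigma^{-1}(X)$, and the degenerate cases $m=0$ and $m=|X\,a(X)|$ --- together with an explicit acknowledgment of the circle analogue of \ref{TranslationSizeProp}, all of which the paper compresses into a citation by analogy.
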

\begin{proof}
Translations in the circle $\partial(\Pi,A)$ of angles at $A$ correspond to rotations at $A$ of the whole plane. As in \ref{DividingLengthViaTranslationsProp} equality of measures of angles means that the corresponding translations are either equal or are inverses of each other.
\end{proof}

\begin{Theorem}\label{CharErlangenPlanesViaBoundary}
Suppose $(\Pi,\mathcal{I})$ is a space with rigid motions. If $\Pi$ is a plane and for each point $A\in \Pi$ the induced space $(\partial(\Pi,A),\mathcal{I}_A)$ is an Erlangen circle,
where $ \mathcal{I}_A$ consists of all elements of $ \mathcal{I}$ fixing $A$,
then $(\Pi,\mathcal{I})$ is an Erlangen plane.
\end{Theorem}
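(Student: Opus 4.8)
The plan is to verify the three conditions of Theorem \ref{EPVsRM}, which reduces the Erlangen-plane axiom to a statement about a single maximal line. Condition (a) is exactly part of the hypothesis. For conditions (b) and (c) I would fix a point $A\in\Pi$ and a maximal line $l_0$ through $A$; writing $l_0=r\cup a(r)$ for a maximal ray $r$ emanating from $A$ and its antipode in the circle $\partial(\Pi,A)$, the two half-planes bounded by $l_0$ correspond, by the proof of \ref{CircleOfRaysThm} (see also \ref{CircleOfAnglesErlangenPlane}), to the two components of $\partial(\Pi,A)\setminus\{r,a(r)\}$, i.e.\ to the two maximal rays of the circle emanating from the point $r$.

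To produce $f_0$ for condition (b), I would take the reflection $i_r$ in the point $r$ of the Erlangen circle $\partial(\Pi,A)$ (\ref{SymmetryCircleDef}); since the circle group is exactly $\mathcal{I}_A$, this reflection is induced by an element $f_0\in\mathcal{I}$ fixing $A$. By construction $i_r$ fixes both $r$ and $a(r)$ (it commutes with the antipodal map by \ref{AntipodalMapCommutes}) and switches the two maximal rays at $r$. Hence $f_0(r)=r$ and $f_0(a(r))=a(r)$ as maximal rays of $\Pi$; because $(\Pi,\mathcal{I})$ is a space with rigid motions, \ref{RigidMotionsViaRaysProp} upgrades this to $f_0$ fixing $l_0=r\cup a(r)$ pointwise, while the interchange of the two circle-components forces $f_0$ to swap the two half-planes bounded by $l_0$. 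Thus $f_0$ witnesses (b), and the same rigid-motion argument shows $f_0^2=id$.

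For condition (c), suppose $f\in\mathcal{I}$ fixes $l_0$ pointwise. Then $f$ fixes $A$ and the rays $r,a(r)$, so it induces an isomorphism of $\partial(\Pi,A)$ fixing the antipodal pair $\{r,a(r)\}$; this induced map either preserves or exchanges the two components. If it preserves them it fixes the maximal ray of the circle determined by $r$ together with one component, so by the rigidity half of Condition \ref{Homogeneity and rigidity of the circle} (the identity is the unique circle isomorphism sending a maximal ray onto itself) it is the identity on $\partial(\Pi,A)$; rigid motions then force $f$ to fix every ray emanating from $A$ pointwise, whence $f=id$ since every point of $\Pi$ lies on such a ray. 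If instead it exchanges the components, then $f\circ f_0$ falls into the previous case and equals $id$, so $f=f_0$. This establishes (c) and completes the application of \ref{EPVsRM}.

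The main obstacle, and the step requiring the most care, is the repeated passage from setwise to pointwise fixing of rays and lines: everything hinges on \ref{RigidMotionsViaRaysProp}, together with faithfulness of the $\mathcal{I}_A$-action on $\partial(\Pi,A)$ (two elements of $\mathcal{I}$ fixing $A$ and inducing the same circle map differ by a rigid motion fixing every ray, hence coincide). One must also check that the correspondence between half-planes of $l_0$ and components of the punctured circle is the intrinsic one, so that interchanging components really does mean swapping half-planes; this is precisely the dictionary recorded in the proof of \ref{CircleOfRaysThm}.
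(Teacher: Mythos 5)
Your proof is correct and takes essentially the same route as the paper's: the paper likewise produces the half-plane flip from the circle reflection at $r$ (an element of $\mathcal{I}_A$ flipping the components of $\partial(\Pi,A)\setminus\{r,a(r)\}$) and proves uniqueness by observing that the quotient of two such maps induces the identity on $\partial(\Pi,A)$ and hence, by the rigid-motion property, is the identity on $\Pi$. Your explicit reduction via Theorem \ref{EPVsRM} and the careful setwise-to-pointwise bookkeeping through \ref{RigidMotionsViaRaysProp} merely spell out what the paper's terse argument leaves implicit.
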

\begin{proof}
Given a maximal ray $r$ emanating from $A$ there is $f\in \mathcal{I}_A$ preserving $A$ and flipping components of  $\partial(\Pi,A)\setminus \{r,a(r)\}$. That $f$ preserves $r$ and flips components of $\Pi\setminus (r\cup a(r))$. Any other $g\in \mathcal{I}_A$
doing the same results in $h:=g^{-1}\circ f$ preserving all rays emanating from $A$. Due to length considerations, $h$ must be the identity.
\end{proof}

\subsection{Translations of Erlangen planes}

\begin{Proposition}\label{TranslationsAreTwoSymmetries}
For any two points $A$ and $B$ of an Erlangen plane $\Pi$ the translation
$\tau_{AB}$ equals the composition of the reflection $i_M$ in the midpoint $M$ of $s[A,B]$ followed by the reflection $i_B$ in $B$.
\end{Proposition}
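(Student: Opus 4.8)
The plan is to exploit the uniqueness half of Condition \ref{Homogeneity and rigidity of the plane}: an element of $\mathcal{I}$ is completely determined by the image of a single pair $(h,r)$ consisting of a half-plane $h$ together with a maximal ray $r$ in its boundary. Since $\mathcal{I}$ is a group and the point reflections $i_M$, $i_B$ lie in $\mathcal{I}$, the composite $i_B\circ i_M$ is again in $\mathcal{I}$, as is $\tau_{AB}$. Hence to prove $\tau_{AB}=i_B\circ i_M$ it suffices to exhibit one pair $(h,r)$ that both maps carry to the same image pair, after which equality is forced.

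First I would fix the line $l:=l(A,B)$, its two half-planes $h_1,h_2$, and the maximal ray $r:=ray[A,B]$, whose boundary line is $l$; I will track the pair $(h_1,r)$. Take as the working meaning of $i_M$ that it is the half-turn (straight-angle rotation) about $M$: it fixes $M$, sends each maximal ray at $M$ to its antipodal ray, and swaps the two half-planes of every line through $M$, in particular of $l$. Because $M$ is the midpoint of $s[A,B]$ we have $M\in s(A,B)$ and $|MA|=|MB|$; since $ray[M,A]$ and $ray[M,B]$ are the two antipodal rays of $l$ at $M$ and $i_M$ is a rigid motion by \ref{EPAreRM}, the map $i_M$ carries $A$ to the unique point of $ray[M,B]$ at distance $|MA|=|MB|$ from $M$, namely $B$. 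Consequently $i_M$ reverses $l$, sends $r=ray[A,B]$ to $ray[B,A]$, and simultaneously replaces $h_1$ by $h_2$.

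Next, $i_B$ is the half-turn about $B$: it fixes $B$, swaps $h_1$ and $h_2$ again, and sends $ray[B,A]$ to its antipodal ray at $B$, which is precisely the maximal ray emanating from $B$ contained in $ray[A,B]$; call it $\bar r$. Composing, $i_B\circ i_M$ sends $(h_1,r)$ to $(h_1,\bar r)$, the two half-plane swaps cancelling so that $h_1$ is restored. On the other hand, by the definition of $\tau_{AB}$ on planes, $\tau_{AB}$ preserves both half-planes of $l$ and sends $ray[A,B]$ to exactly this maximal ray $\bar r$ at $B$. Thus $\tau_{AB}$ and $i_B\circ i_M$ agree on $(h_1,r)$, and the uniqueness clause of Condition \ref{Homogeneity and rigidity of the plane} forces $\tau_{AB}=i_B\circ i_M$.

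The step I expect to be the main obstacle is the half-plane bookkeeping, namely establishing cleanly that a point reflection swaps the two half-planes of a line through its center, so that the swaps performed by $i_M$ and $i_B$ cancel, together with the fact that $i_M$ reverses $l$ while carrying $A$ to $B$. This is where the precise sense of ``reflection in a point'' on a plane must be pinned down (as the half-turn reversing every maximal ray at its center), in conjunction with the midpoint identity $|MA|=|MB|$ and the rigidity supplied by \ref{EPAreRM} and \ref{InvolutionAndIdPlanes}. Once the action of $i_M$ and $i_B$ on the single pair $(h_1,r)$ is nailed down, the conclusion is immediate from uniqueness.
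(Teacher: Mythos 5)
Your proof is correct and follows essentially the same route as the paper's: both arguments check that $i_B\circ i_M$ agrees with $\tau_{AB}$ on the data attached to $l(A,B)$ — each point reflection swaps the two half-planes and reverses the line, so the composition preserves both while carrying $A$ to $B$ — and then conclude equality from the uniqueness clause of Condition \ref{Homogeneity and rigidity of the plane}. The only (trivial) omission is the degenerate case $A=B$, which the paper dispatches first and where $M=B$ gives $i_B\circ i_M=i_B^2=id=\tau_{AB}$.
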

\begin{proof}
It is so if $A=B$, hence assume $A\ne B$. Notice $i_B\circ i_M$ preserves each component of $\Pi\setminus l(A,B)$ as each of $i_M$, $i_B$ flips them. The same reasoning applies to maximal rays of $l(A,B)$ if we orient $l(A,B)$ by picking $ray[A,B]$ as a positive maximal ray. Each of $i_M$, $i_B$ flips the orientation of $l(A,B)$, so their composition preserves it. In addition, $i_B\circ i_M(A)=B$, so $i_B\circ i_M$ must be equal to $\tau_{AB}$.
\end{proof}

\begin{Corollary}\label{NoFixedPointsForTranslations}
A translation of an Erlangen plane that has a fixed point is the identity.
\end{Corollary}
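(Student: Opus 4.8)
The plan is to reduce everything to the decomposition of a plane translation into two point reflections supplied by \ref{TranslationsAreTwoSymmetries}, together with uniqueness of midpoints. First I would dispose of the trivial case: if the translation is $\tau_{AB}$ with $A=B$, it is by definition the identity and there is nothing to prove, so assume $A\ne B$. Then the midpoint $M$ of $s[A,B]$ satisfies $M\in s(A,B)$, and in particular $M\ne B$. By \ref{TranslationsAreTwoSymmetries} we may write $\tau_{AB}=i_B\circ i_M$, where $i_M$ and $i_B$ are the point reflections (half-turns) in $M$ and $B$.

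The two facts I would isolate about a point reflection $i_C$ are that it is an involution whose only fixed point is $C$, and that $C$ is the midpoint of $s[X,i_C(X)]$ whenever $X\ne C$. Both follow from the defining property that $i_C$ sends every maximal ray emanating from $C$ to its antipodal ray: for $X\ne C$ the point $i_C(X)$ lies on the opposite side of $C$ with $C$ between $X$ and $i_C(X)$, and since $i_C\in\mathcal{I}$ preserves length one gets $|CX|=|Ci_C(X)|$, so $C$ is the midpoint.

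Now suppose $\tau_{AB}(P)=P$ for some $P$. Applying the involution $i_B$ to $i_B(i_M(P))=P$ yields $i_M(P)=i_B(P)$; call this common point $Q$. If $P=Q$, then $i_M(P)=P$ forces $P=M$, yet $\tau_{AB}(M)=i_B(i_M(M))=i_B(M)\ne M$ because $M\ne B$, contradicting that $P$ is fixed. If instead $P\ne Q$, then $s[P,Q]$ is a nontrivial connected line segment (all segments of a plane are connected) of which both $M$ and $B$ are midpoints; since an Erlangen plane is a space with rigid motions by \ref{EPAreRM}, \ref{MidpointLemma} gives uniqueness of the midpoint and hence $M=B$, again a contradiction. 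Either way a nontrivial translation has no fixed point, which is exactly the contrapositive of the assertion.

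The step I expect to require the most care is the second one: pinning down precisely which properties of the point reflections $i_M,i_B$ are legitimately available. The decomposition \ref{TranslationsAreTwoSymmetries} is phrased in terms of these half-turns, and its proof already uses that each of them flips the half-planes of $l(A,B)$ and reverses the orientation of that line; from these it is routine that $i_C$ fixes only $C$ and realizes $C$ as a midpoint, but I would spell this out rather than treat it as obvious, since the whole argument turns on the midpoint characterization and on the inequality $M\ne B$.
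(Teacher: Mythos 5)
Your proof is correct and takes essentially the same route as the paper: both decompose $\tau_{AB}=i_B\circ i_M$ via \ref{TranslationsAreTwoSymmetries}, deduce $i_M(P)=i_B(P)$ from a fixed point $P$, and conclude $M=B$ from uniqueness of midpoints (\ref{MidpointLemma}), contradicting $M\ne B$. Your explicit handling of the degenerate case $P=Q$ and of the fixed-point and midpoint properties of the point reflections merely spells out what the paper's terser argument leaves implicit.
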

\begin{proof}
Suppose a non-trivial translation has a fixed point $C$. In view of \ref{TranslationsAreTwoSymmetries} it means existence of two different points $A$ and $B$ such that $i_A(C)=i_B(C)$. But $A$ is the midpoint of $s[C,i_A(C)]$ and $B$ is the midpoint of $s[C,i_B(C)]$, hence $A=B$, a contradiction.
\end{proof}

\subsection{Defect of triangles}

Given a triangle $\Delta(ABC)$ (i.e. three different points $A$, $B$, and $C$ that are called the \textbf{vertices} of the triangle) on an Erlangen plane we can talk about angles of the triangle at each of its vertices. In case of $A$ it means we are talking about the angle between maximal rays $ray[A,B]$ and $ray[A,C]$. 
\begin{Example}\label{AnglesOfFlatTriangle}
If all three points $A$, $B$, and $C$ lie on a line, then one of the angles of the triangle 
$\Delta(ABC)$ is $180$ degrees and the other two are $0$ degrees. The angle of $180$ degrees corresponds to the vertex that is between the other two.
\end{Example}

\begin{Proposition}\label{DefectIsNonNegativeProp}
Given three non-collinear points on an Erlangen plane, the sum of the angles of the triangle formed by the points is at most $180$ degrees.
\end{Proposition}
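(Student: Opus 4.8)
The plan is to argue by contradiction in the style of the Saccheri--Legendre theorem, using the point-reflections (half-turns) available in any Erlangen plane together with the additivity and the congruence-invariance of the angle measure on the circles of rays $\partial(\Pi,A)$ established in \ref{CircleOfAnglesErlangenPlane}. Write $\sigma$ for the measure of a full (straight) angle, i.e. $180$ degrees, and let $\alpha,\beta,\gamma$ be the measures of the angles of $\Delta(ABC)$ at $A$, $B$, $C$. Suppose, contrary to the claim, that $\alpha+\beta+\gamma=\sigma+\delta$ for some $\delta>0$.

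The central step is an \emph{angle-halving lemma}: every triangle admits another triangle with the \emph{same} angle sum but with one of its angles at most half of a prescribed angle of the original. To produce it, let $D$ be the midpoint of the connected segment $s[B,C]$, which exists by \ref{MidpointLemma} since $(\Pi,\mathcal{I})$ is a space with rigid motions by \ref{EPAreRM}, and let $E:=i_D(A)$ be the image of $A$ under the half-turn $i_D$ about $D$, exactly the point-reflection used in \ref{TranslationsAreTwoSymmetries}. Since $i_D\in\mathcal{I}$ interchanges $B$ and $C$, it carries $\Delta(ABC)$ onto the congruent triangle $\Delta(EBC)$, and the triangle $\Delta(ABE)$ is nondegenerate (collinearity of $A,B,E$ would force that of $A,B,C$). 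Because $D\in s(A,E)$ and $D\in s(B,C)$, the ray $ray[A,D]=ray[A,E]$ lies between $ray[A,B]$ and $ray[A,C]$, so by additivity of the angle measure the angles of $\Delta(ABE)$ at $A$ and at $E$ (which is the $i_D$-image of the angle $\angle DAC$) sum to $\alpha$; meanwhile $i_D$ sends $\angle ACB$ to $\angle EBC$, and since $ray[B,C]$ lies between $ray[B,A]$ and $ray[B,E]$ the angle at $B$ equals $\beta+\gamma$. Hence $\Delta(ABE)$ has angle sum $\alpha+\beta+\gamma$, and one of its two angles $\angle BAD,\angle DAC$ has measure $\le\alpha/2$. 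Every ``lies between'' assertion here is a statement of betweenness in some $\partial(\Pi,\cdot)$, forced by the half-plane separation of \ref{PlaneDef} together with the fact that $i_D$ exchanges the two sides of $l(B,C)$.

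Iterating the lemma and invoking the Archimedean property of angle measures (the circle analogue of \ref{Archimedes' Axiom}, so that $\alpha/2^{n}<\delta$ for some $n$) yields a triangle whose angle sum is still $\sigma+\delta$ but one of whose angles has measure $<\delta$; its remaining two angles then total more than $\sigma$. This contradicts the \emph{exterior-angle inequality}: in any triangle the measures of two of its angles sum to strictly less than $\sigma$. I would establish that inequality first, as a separate lemma, by the analogous half-turn about the midpoint of a side (reflecting one vertex so as to produce a ray strictly inside the exterior angle, whence a remote interior angle is congruent to a proper sub-angle of the exterior angle and therefore strictly smaller, using that proper subsegments are never congruent and hence a proper sub-angle has strictly smaller measure). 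The main obstacle is precisely this betweenness bookkeeping on the circles of rays — checking that each reflected ray lands on the correct side of each line and in the correct betweenness order — since every inequality between angle measures ultimately reduces to one ray being strictly between two others in some $\partial(\Pi,A)$.
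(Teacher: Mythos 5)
Your proposal is correct and takes essentially the same route as the paper, whose proof is exactly this Saccheri--Legendre argument: flip the triangle by the half-turn about the midpoint of a side, note that the angle sum is preserved while one angle gets split (hence at most halved), and iterate until the supposed excess forces a contradiction. The only divergence is the closing step: you prove a separate exterior-angle lemma (two angles of a triangle sum to strictly less than $180$ degrees), whereas the paper gets the needed bound for free from the very same flip, observing that the sum of the two largest angles of the original triangle appears as a single angle of the flipped triangle and is therefore at most $180$ degrees, so your extra lemma, while provable, is not needed.
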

\begin{proof}
Suppose there is a triangle whose sum of angles is more than $180$ degrees.
By flipping it about the midpoint of the side corresponding to the smallest angle we create a quadrilateral and the
new triangle consisting of one diagonal and two sides that has the same sum of angles but the largest angle is the sum of the two largest angles of the original triangle.
That shows that particular sum can never be larger than $180$ degrees.

 By repeating the process a few times we reach the stage where the sum of two largest angles of a triangle is more than 180. This is not possible as we have already observed.
\end{proof}

\begin{Definition}\label{DefectOfTriangleDef}
Given three non-collinear points on an Erlangen plane, the \textbf{defect} of the triangle formed by the points is $180$ degrees minus the sum of angles of the triangle.
By \ref{DefectIsNonNegativeProp} the defect is always non-negative.
\end{Definition}

The fact the defect of a triangle is always non-negative allows for a standard proof of the following result.

\begin{Corollary}[Triangle Inequality]\label{TriangleInequality}
The largest side of a triangle $\Delta(ABC)$ is opposite to the largest angle of the triangle. Therefore $|AB|+|BC| > |AC|$.
\end{Corollary}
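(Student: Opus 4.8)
The plan is to follow Euclid's classical route (Propositions I.18--I.20), but to replace the one genuinely nontrivial ingredient---the Exterior Angle Theorem---by an immediate appeal to \ref{DefectIsNonNegativeProp}. This is precisely the ``standard proof'' alluded to above, and the non-negativity of the defect is exactly what lets every step run uniformly on Euclidean and non-Euclidean Erlangen planes.

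First I would record the Exterior Angle Theorem: in a triangle $\Delta(ABC)$ the exterior angle at a vertex exceeds each remote interior angle. Since the three points are non-collinear, each interior angle is strictly positive (a vanishing angle at $B$ would force $A$ and $C$ onto a single ray from $B$, hence collinearity by \ref{AnglesOfFlatTriangle}). The exterior angle at $C$ has measure $180^\circ-\angle C$, and by \ref{DefectIsNonNegativeProp} we have $\angle A+\angle B+\angle C\le 180^\circ$, so $\angle A+\angle C\le 180^\circ-\angle B<180^\circ$; thus $\angle A<180^\circ-\angle C$, which is the exterior angle at $C$. The same computation handles $\angle B$, so no separate neutral-geometry argument is required.

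Next I would establish the two halves of Euclid I.18/I.19. For I.18 (the greater side subtends the greater angle), assume $|AC|>|AB|$ and lay off on $s[A,C]$ a point $D$ with $|AD|=|AB|$, which is possible because $(\Pi,\mathcal{I})$ is a space with rigid motions by \ref{EPAreRM} and lengths can be reproduced along a ray (cf.\ \ref{RatiosOfLengthsLine}). The triangle $\Delta(ABD)$ is isosceles, so its base angles are equal; this \emph{pons asinorum} step I would justify by the reflection fixing $A$ and interchanging $ray[A,B]$ and $ray[A,D]$, an element of $\mathcal{I}$ supplied by \ref{Homogeneity and rigidity of the plane}, which carries $\angle ABD$ onto $\angle ADB$ and hence equates their measures. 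Since $D$ lies between $A$ and $C$, the ray $ray[B,D]$ lies strictly between $ray[B,A]$ and $ray[B,C]$ in $\partial(\Pi,B)$, giving $\angle ABC=\angle ABD+\angle DBC$; and $\angle ADB$ is supplementary to $\angle BDC$ (the rays $ray[D,A]$ and $ray[D,C]$ being antipodal), so it is the exterior angle of $\Delta(BDC)$ at $D$. The Exterior Angle Theorem then yields $\angle ABD=\angle ADB>\angle ACB$, whence $\angle ABC>\angle ACB$, i.e.\ the angle opposite the longer side $AC$ is the larger. The assertion that the largest side is opposite the largest angle is then just the application of this to the two comparisons involving the longest side. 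For the converse I.19 (greater angle subtended by greater side) I would argue by trichotomy: if the sides were equal the base angles would coincide by the isosceles step, and if the inequality were reversed I.18 would reverse the angle inequality; both contradict the hypothesis.

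Finally, the inequality $|AB|+|BC|>|AC|$ follows from I.19 in Euclid's manner. I would extend $s[C,B]$ beyond $B$ to a point $D$ with $|BD|=|BA|$, so that $|CD|=|CB|+|BA|$. The triangle $\Delta(ABD)$ is isosceles, hence $\angle ADB=\angle DAB$; because $B$ lies between $C$ and $D$, the ray $ray[A,B]$ is interior to $\angle DAC$, giving $\angle DAC=\angle DAB+\angle BAC>\angle DAB=\angle ADC$. Applying I.19 to $\Delta(ACD)$, the side opposite the larger angle at $A$ exceeds the side opposite the angle at $D$, i.e.\ $|CD|>|AC|$, which is exactly $|AB|+|BC|>|AC|$. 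I expect the main obstacle to be none of the metric estimates but rather the book-keeping of betweenness of rays---verifying that the cevian and the extended side genuinely lie interior to the relevant angles, so that the additivities $\angle ABC=\angle ABD+\angle DBC$ and $\angle DAC=\angle DAB+\angle BAC$ are legitimate; this I would settle using the betweenness relation on the circle of angles furnished by \ref{BoundaryAtInfinityProp} and \ref{CircleOfRaysThm} together with the Pasch Condition.
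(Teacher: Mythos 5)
Your proof is correct and follows essentially the same route as the paper's: the classical Euclid I.18--I.20 argument, with the exterior angle step supplied by the non-negativity of the defect (\ref{DefectIsNonNegativeProp}), the isosceles base angles obtained from the reflection in the bisecting ray at the apex, and the final inequality obtained by extending a side to form an isosceles triangle and applying ``greater angle subtends greater side.'' The only cosmetic difference is that you extend $s[C,B]$ beyond $B$ by $|BA|$, whereas the paper extends along $l(A,B)$ beyond $B$ by $|BC|$ --- an immaterial relabeling of vertices.
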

\begin{proof}
Suppose $|AC|$ is the largest side of the triangle. Pick $D\in s[A,C]$ satisfying $|AD|=|AB|$. Notice the angle at $B$ of $\Delta(ABD)$ is equal to the angle at $D$ of that triangle (use reflection in the ray bisecting the angle at $A$) which in turn is bigger than the angle of $\Delta(ABC)$ at $C$. Thus, the angle at $B$ is larger than any other angle.

To conclude $|AB|+|BC| > |AC|$ note that the only interesting case is that of $|AC|$ being the largest side. On the maximal ray at $B$ in the line $l(A,B)$ choose the point $D$ on the other side of $B$ such that $|BD|=|BC|$. Notice the angle at $C$ of $\Delta(ACD)$ is larger than the angle at $D$.
\end{proof}

\begin{Corollary}
Let $\Pi,\mathcal{I})$ be an Erlangen plane. If $|AB|+|BC|= |AC|$ for some points $A,B,C$ of $\Pi$, then $B$ is between $A$ and $C$.
\end{Corollary}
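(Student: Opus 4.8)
The plan is to argue by contraposition: I will show that whenever $B$ fails to be between $A$ and $C$, the sum $|AB|+|BC|$ strictly exceeds $|AC|$, so the hypothesis $|AB|+|BC|=|AC|$ forces betweenness. First I would dispose of the degenerate cases. If two of the three points coincide, betweenness holds automatically, since by Definition \ref{BetweennessDef}(a) each point is between itself and any other point: if $A=B$ or $B=C$ then $B\in s[A,C]$ trivially, and if $A=C$ the hypothesis collapses to $|AB|+|BC|=0$, forcing $A=B=C$. Thus I may assume $A$, $B$, $C$ are pairwise distinct.

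Next I would split on collinearity. If the three points are non-collinear, then the Triangle Inequality \ref{TriangleInequality} gives the strict inequality $|AB|+|BC|>|AC|$, directly contradicting the hypothesis. Hence the three points lie on a common line; since in a plane every closed line segment is connected (Definition \ref{PlaneDef}), they lie on a connected line $l$, and the trichotomy of \ref{BetwennessCorollary} applies: among three different points of $l$ exactly one lies between the other two. If that point is $B$ we are done, so it remains only to rule out the two alternatives.

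Suppose instead that $A$ is between $B$ and $C$. Then $s[B,C]=s[B,A]\cup s[A,C]$ with the two subsegments meeting only at $A$, so by the additivity of length (Definition \ref{SumOfLengthsDef}) one has $|BC|=|AB|+|AC|$ and therefore $|AB|+|BC|=2\cdot|AB|+|AC|$. Because $A\ne B$ the length $|AB|$ is positive, whence $2\cdot|AB|+|AC|>|AC|$, contradicting the hypothesis. The symmetric alternative, with $C$ between $A$ and $B$, yields $|AB|=|AC|+|CB|$ and hence $|AB|+|BC|=|AC|+2\cdot|BC|>|AC|$, the same contradiction. Therefore $B$ is between $A$ and $C$.

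The only genuinely delicate point is the strict monotonicity step, namely that adding a positive length strictly increases the total, i.e. $|AC|<2\cdot|AB|+|AC|$ whenever $A\ne B$. Here $s[A,C]$ is congruent to a \emph{proper} subsegment of a segment of length $2\cdot|AB|+|AC|$, so by the remark following the definition of congruence — that a line segment is never congruent to a proper subsegment of itself — the evident inequality $|AC|\le 2\cdot|AB|+|AC|$ is in fact strict; alternatively one may invoke \ref{AddingLengthsLemma} with a degenerate (one-point) segment to reach the same conclusion. Everything else is routine bookkeeping with the trichotomy of \ref{BetwennessCorollary} and the length-addition of Definition \ref{SumOfLengthsDef}.
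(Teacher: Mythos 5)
Your proof is correct, and it is essentially the argument the paper intends: the paper states this corollary without any written proof, placing it immediately after the Triangle Inequality \ref{TriangleInequality}, so the intended reasoning is exactly your dichotomy (non-collinear points give the strict inequality $|AB|+|BC|>|AC|$, hence the points are collinear, and the trichotomy of \ref{BetwennessCorollary} plus length additivity rules out $A$ or $C$ being the middle point). Your final paragraph on strict monotonicity, via the paper's remark that a segment is never congruent to a proper subsegment or via \ref{AddingLengthsLemma}, supplies precisely the bookkeeping the paper leaves implicit.
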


\begin{Proposition}\label{DefectZeroCase}
Let $\Pi,\mathcal{I})$ be an Erlangen plane. If there is one non-flat triangle in $\Pi$
with the defect equal $0$, then every triangle has defect equal $0$.
\end{Proposition}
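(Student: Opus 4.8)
The plan is to show that the \emph{defect is additive under subdivision} and then to use non-negativity of the defect (\ref{DefectIsNonNegativeProp}) to propagate its vanishing from a single triangle to all triangles. Writing $\operatorname{def}(XYZ)$ for the defect of $\Delta(XYZ)$, the central lemma I would prove first is that for $D\in s(A,C)$,
\[
\operatorname{def}(ABD)+\operatorname{def}(DBC)=\operatorname{def}(ABC).
\]
To see this, note that the angle at $A$ is shared by $\Delta(ABC)$ and $\Delta(ABD)$ (since $ray[A,D]=ray[A,C]$), and likewise the angle at $C$ is shared by $\Delta(ABC)$ and $\Delta(DBC)$. At $D$ the rays $ray[D,A]$ and $ray[D,C]$ are antipodal in $\partial(\Pi,D)$ because $D\in s(A,C)$, so the two angles $\angle ADB$ and $\angle BDC$ are supplementary and their measures add up to a straight angle, namely $180$ degrees. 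Finally, since $D$ lies in the interior of side $s[A,C]$, the plane-separation property of \ref{PlaneDef} (applied to the line $l(A,C)$, in crossbar fashion) forces $ray[B,D]$ to lie strictly between $ray[B,A]$ and $ray[B,C]$ in $\partial(\Pi,B)$, so that the angle at $B$ of $\Delta(ABC)$ is the sum of the angles at $B$ of the two pieces. Adding the three contributions and invoking additivity of angle measure in the Erlangen circles $\partial(\Pi,D)$ and $\partial(\Pi,B)$ from \ref{CircleOfAnglesErlangenPlane} yields the identity. Because each defect is non-negative by \ref{DefectIsNonNegativeProp}, a triangle of defect $0$ splits into pieces each of defect $0$; iterating, every triangle obtained by repeated subdivision of a zero-defect triangle again has defect $0$.

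Next I would reduce to right triangles and then manufacture arbitrarily large ones. A non-flat triangle has at least two acute angles (otherwise the angle sum would exceed $180$ degrees, contradicting \ref{DefectIsNonNegativeProp}), so dropping the altitude from the vertex of the largest angle puts its foot in the interior of the opposite side, splitting the given zero-defect triangle into two right triangles, each of defect $0$ by the additivity lemma. Starting from such a right triangle $\Delta(CAB)$ with the right angle at $C$, the point reflection $i_M$ in the midpoint $M$ of the hypotenuse (an element of $\mathcal{I}$, available since an Erlangen plane is a space with rigid motions by \ref{EPAreRM}) produces a congruent copy whose union with $\Delta(CAB)$ is a quadrilateral with four right angles, i.e.\ a rectangle of defect $0$. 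Translating congruent copies of this rectangle by elements of $\mathcal{I}$ tiles an $m\times n$ array into one large rectangle whose defect is $0$ by additivity, and a diagonal cuts that rectangle into two right triangles of defect $0$ with legs as large as desired.

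Finally I would conclude. Given an arbitrary non-flat triangle $T$, split it into two right triangles as above, so it suffices to treat a right triangle $T'=\Delta(CAB)$ with right angle at $C$. Using the previous paragraph together with a rigid motion placing a large zero-defect right triangle so that its right-angle vertex is $C$ and its legs run along $ray[C,A]$ and $ray[C,B]$, choose such a triangle $T_0=\Delta(CA_0B_0)$ with $|CA|\le|CA_0|$ and $|CB|\le|CB_0|$. Then $A\in s[C,A_0]$ and $B\in s[C,B_0]$, and the cevians $s[A,B_0]$ and $s[A,B]$ subdivide $T_0$ into $T'$ together with $\Delta(AA_0B_0)$ and $\Delta(ABB_0)$. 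Since $\operatorname{def}(CA_0B_0)=0$ and all three summands are non-negative, each vanishes; in particular $\operatorname{def}(CAB)=0$, whence $\operatorname{def}(T)=0$.

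The main obstacle will be the betweenness bookkeeping: justifying the crossbar claim that $ray[B,D]$ lies between $ray[B,A]$ and $ray[B,C]$ purely from the plane-separation axiom \ref{PlaneDef}, and verifying that the congruent copies used to build the large rectangles fit edge-to-edge with adjacent angles summing to straight angles, so that the tiled figure really is a rectangle of defect $0$. Both hinge on careful use of the Pasch/separation structure of the plane and of the angle-measure algebra on the boundary circles, with no metric input beyond what \ref{CircleOfAnglesErlangenPlane} already supplies.
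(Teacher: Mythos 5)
Your proposal is correct, and its two pillars are exactly the paper's: additivity of the defect under subdivision plus non-negativity (\ref{DefectIsNonNegativeProp}), combined with the production of arbitrarily large zero-defect triangles that swallow any given triangle. Where you differ is in the growth mechanism. The paper's proof is a three-sentence sketch: flip the zero-defect triangle over the midpoints of its three sides, so that the three reflected copies together with the original form a triangle twice as large and still of defect $0$ (the medial-triangle configuration), iterate until the result swallows the given triangle, then invoke additivity. Your route detours through right triangles and rectangles: drop an altitude to split the zero-defect triangle into two zero-defect right triangles, flip one over the midpoint of its hypotenuse to get a rectangle, tile rectangles into arbitrarily large ones, and cut by a diagonal. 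Both constructions need the defect-$0$ hypothesis in the same essential way, to guarantee that glued angles sum to straight angles so the assembled pieces really form a triangle (in the paper's version, at the vertices of the original triangle) or a rectangle (in yours, at the corners); neither the paper nor you can avoid that bookkeeping, and you are more honest in flagging it. Your version is longer but buys two things: the edge-fitting verification is more transparent because every glued angle is a right angle, and the final swallowing step is explicit (placement of the legs along $ray[C,A]$ and $ray[C,B]$ with Archimedes' Axiom \ref{Archimedes' Axiom}), whereas the paper leaves both the fitting and the swallowing implicit. You also supply a proof of the cevian additivity identity, which the paper uses without argument.
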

\begin{proof}
By flipping a non-collinear triangle over the midpoints of its sides we create a triangle twice as big and with the defect $0$. We can repeat it until it swallows a given triangle. Since defect is additive, the given triangle must have the defect $0$ as well.
\end{proof}

\begin{Proposition}\label{DefectViewingAngleConnectionDef}
Let $\Pi,\mathcal{I})$ be an Erlangen plane. If $P\notin l(Q,R)$, then the supremum of defects of triangles $\Delta(PAB)$, $A,B\in l(Q,R)$, equals $180$ degrees minus the viewing angle of $l(Q,R)$ from $P$.
\end{Proposition}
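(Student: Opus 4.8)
The plan is to reformulate the claim in terms of the sum of the interior angles. By \ref{DefectOfTriangleDef} one has $\operatorname{defect}(\Delta(PAB)) = 180 - \angle P - \angle A - \angle B$, so proving that the supremum of defects equals $180$ minus the viewing angle is the same as proving that the infimum of the angle sums $\angle P + \angle A + \angle B$, taken over all $A,B\in l(Q,R)$ with $A\ne B$, equals the measure $V$ of the viewing angle $s[r_1,r_2]$. I would orient $l(Q,R)$, write $\omega_1,\omega_2$ for its two ends, and recall from \ref{ViewingAngleDef} that the rays $ray[P,X]$, $X\in l(Q,R)$, sweep out exactly the open segment $s(r_1,r_2)$ of the circle $\partial(\Pi,P)$, with $r_i$ the limiting ray obtained as $X\to\omega_i$.

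First I would establish monotonicity of the defect under enlarging the base. If $A'$ lies beyond $A$ (toward $\omega_1$) and $B'$ beyond $B$ (toward $\omega_2$), then the cevians $PA$ and $PB$ cut $\Delta(PA'B')$ into $\Delta(PA'A)$, $\Delta(PAB)$ and $\Delta(PBB')$; comparing angle sums and using that the two angles along $l(Q,R)$ at $A$ (resp.\ at $B$) are supplementary gives
$$\operatorname{defect}(\Delta(PA'B')) = \operatorname{defect}(\Delta(PA'A)) + \operatorname{defect}(\Delta(PAB)) + \operatorname{defect}(\Delta(PBB')).$$
Since every defect is non-negative by \ref{DefectIsNonNegativeProp}, the defect of $\Delta(PAB)$ increases weakly as $A\to\omega_1$ and $B\to\omega_2$, and any finite triangle is contained in a larger one. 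Hence the supremum equals the limit of $\operatorname{defect}(\Delta(PAB))$ along any sequence with $A\to\omega_1$, $B\to\omega_2$.

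Next I would compute that limit. As $A\to\omega_1$ and $B\to\omega_2$ the rays $ray[P,A]\to r_1$ and $ray[P,B]\to r_2$ by the description of the viewing angle, so by continuity of the measure on the Erlangen circle $\partial(\Pi,P)$ (additivity of measure together with connectedness of its segments) the angle $\angle APB$ tends to the measure of $s[r_1,r_2]$, namely to $V$. It then remains to show the two base angles $\angle PAB$ and $\angle PBA$ tend to $0$; granting this, the angle sum tends to $V$, the defect tends to $180-V$, and the proof is complete.

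The main obstacle is precisely this last point, the planar analogue of ``the angle of parallelism tends to zero'', which cannot follow from angle combinatorics alone since non-Archimedean models violate it. I would first record that the base angle is monotonically decreasing as $A$ recedes: in $\Delta(PAA')$ with $A'$ beyond $A$, the interior angle at $A$ is $180-\angle PAB$ and the interior angle at $A'$ is $\angle PA'B$, so the bound (angle sum)$\,\le 180$ forces $\angle PA'B\le\angle PAB$; thus $\beta_1:=\lim\angle PAB$ exists. To see $\beta_1=0$ I would invoke Aristotle's axiom, a consequence of Archimedes' Axiom \ref{Archimedes' Axiom}: along a ray issuing from a point of $l(Q,R)$ at a fixed positive angle to $l(Q,R)$, the distance to $l(Q,R)$ increases without bound. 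Dropping the perpendicular from $P$ to $l(Q,R)$ with foot $F$, the point $P$ lies on $ray[A,P]$ at distance $|AP|\ge|AF|$ from $A$, which tends to infinity as $A\to\omega_1$, while $\dist(P,l(Q,R))=|PF|$ stays constant. If $\beta_1>0$, then $ray[A,P]$ meets $l(Q,R)$ at angle $\ge\beta_1$, and Aristotle's axiom forces $|PF|$ to exceed any bound once $|AP|$ is large enough, a contradiction. Hence $\beta_1=0$, and symmetrically $\lim\angle PBA=0$, giving $\sup\operatorname{defect}=180-V$.
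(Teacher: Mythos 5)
Your proposal has the same skeleton as the paper's proof, but the paper's proof is only a two-sentence sketch: it asserts that the viewing angle equals the supremum of the angles at $P$, and then says that ``making the other angles approaching $0$'' finishes the argument, with no justification of either claim. What you add is precisely the substance the paper omits. The defect-additivity computation (cutting $\Delta(PA'B')$ by the cevians $PA$, $PB$ and using supplementary angles at $A$ and $B$) gives the monotonicity that turns the supremum into a limit; the bisection/continuity argument on the Erlangen circle $\partial(\Pi,P)$ handles the angle at $P$; and, most importantly, you correctly isolate the real crux --- that the base angles tend to $0$ --- and correctly observe that it cannot follow from angle bookkeeping alone (it fails in non-Archimedean planes), so Archimedes' Axiom \ref{Archimedes' Axiom} must enter; the paper never acknowledges this. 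Two soft spots remain in your write-up. First, the implication ``Archimedes implies Aristotle'' is cited, not proved; it is a genuine theorem of neutral geometry (discussed in \cite{Gree1}) that is established nowhere in the paper, so a complete proof in this framework would have to derive it (e.g., doubling the distance along one side of an angle at least doubles the perpendicular distance to the other side, via a Lambert-quadrilateral or hinge-theorem comparison --- the congruence tools \ref{SAS in Erlangen planes}--\ref{SSS in Erlangen planes} suffice for this). Second, Aristotle's axiom concerns a fixed angle at a fixed vertex, whereas in your application both the vertex $A$ and the angle $\angle PAB$ vary; this is repairable --- homogeneity of the Erlangen plane normalizes the vertex, and a crossbar argument comparing against the ray at angle exactly $\beta_1$ handles the varying angle --- but it deserves a sentence. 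With those two points filled in, your argument is complete, and it is considerably more of a proof than the one the paper gives.
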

\begin{proof}
The viewing angle (more precisely, the measure of it) of $l(Q,R)$ from $P$ equals 
the supremum of angles at $P$ of triangles $\Delta(PAB)$. Making the other angles approaching $0$ one arrives at the conclusion of \ref{DefectViewingAngleConnectionDef}.
\end{proof}

\begin{Proposition}\label{CompositionOfTranslationsProp}
Given three non-collinear points $A$, $B$, and $C$ on an Erlangen plane, the composition $\rho:=\tau_{CA}\circ \tau_{BC}\circ \tau_{AB}$ of translations is the rotation at $A$ by the defect of the triangle $\Delta(ABC)$ in the direction away from the triangle $\Delta(ABC)$.
\end{Proposition}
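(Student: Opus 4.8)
The plan is to show first that $\rho$ fixes $A$, then that it is a rotation about $A$, and finally to compute its angle by transporting a single ray once around the triangle and reading off the accumulated turning.

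First I would verify the fixed point. Since $\tau_{AB}(A)=B$, $\tau_{BC}(B)=C$, and $\tau_{CA}(C)=A$, we get $\rho(A)=A$ at once, so $\rho$ is a self-isomorphism of $\Pi$ fixing $A$ and therefore induces a self-isomorphism of the Erlangen circle $\partial(\Pi,A)$ of rays at $A$ (see \ref{CircleOfAnglesErlangenPlane}). Each translation is orientation-preserving, since it does not interchange the two half-planes of its axis; hence the composite $\rho$ is orientation-preserving and the induced map on $\partial(\Pi,A)$ preserves the cyclic orientation of that circle. By \ref{StructureOfIsoCircle} an orientation-preserving self-isomorphism of the circle (one reversing the cyclic order would have fixed rays and be a reflection) is a translation of $\partial(\Pi,A)$, and translations of the circle of rays at $A$ are exactly the rotations about $A$. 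Thus $\rho$ is a rotation about $A$, possibly the identity (a rotation by $0^\circ$).

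Next I would pin down the angle by tracking the ray $r_0:=ray[A,B]$; since $\rho$ is a rotation about $A$, the oriented angle from $r_0$ to $\rho(r_0)$ is the rotation angle. The key input is that a translation, being an orientation-preserving isometry that fixes its axis setwise and preserves the forward direction along it, carries a ray emanating from a point of the axis to a ray at the image point making the \emph{same} oriented angle with the forward-oriented axis; that is, translations parallel-transport directions along their axis. Applying $\tau_{AB}$ sends $r_0$ to the forward ray $a(ray[B,A])$ at $B$ along $l(A,B)$. Applying $\tau_{BC}$ carries this to a ray at $C$ making with the forward direction $a(ray[C,B])$ of $l(B,C)$ the same oriented angle it made with $ray[B,C]$ at $B$, namely the exterior angle at $B$ of magnitude $180^\circ-\angle B$. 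A final application of $\tau_{CA}$ transports the result back to a ray at $A$, contributing the exterior angle $180^\circ-\angle C$ at $C$, and comparing with $r_0$ at $A$ contributes the exterior angle $180^\circ-\angle A$. Summing gives a total turning of $540^\circ-(\angle A+\angle B+\angle C)=360^\circ+\bigl(180^\circ-(\angle A+\angle B+\angle C)\bigr)$, i.e.\ one full revolution plus the defect of $\Delta(ABC)$ in the sense of \ref{DefectOfTriangleDef}. A full revolution acts trivially on $\partial(\Pi,A)$, so the net rotation angle of $\rho$ is exactly the defect, and the common sense in which the three exterior contributions turn the transported ray (always to the side of each side opposite the triangle) shows the rotation is in the direction away from $\Delta(ABC)$. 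In the Euclidean case the defect is $0$ and $\rho$ is the identity, matching $\vec{AB}+\vec{BC}+\vec{CA}=0$.

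The main obstacle is the orientation bookkeeping in the last two paragraphs: one must fix a consistent cyclic orientation on each circle $\partial(\Pi,X)$, verify that translations preserve it, and confirm that the three exterior-angle contributions accumulate with the \emph{same} sign so that the total is $360^\circ$ plus (not minus) the defect, which is what also yields the stated ``away from the triangle'' direction. Justifying the parallel-transport property synthetically — that a translation preserves the oriented angle a ray makes with its axis — is precisely where the isometry and orientation structure of \ref{CircleOfAnglesErlangenPlane} and the rigidity of $\mathcal{I}$ must be invoked. As an independent cross-check I would use the half-turn decomposition $\tau_{AB}=i_B\circ i_{M}$ of \ref{TranslationsAreTwoSymmetries} (and the equivalent form $\tau_{AB}=i_{M}\circ i_A$) to rewrite $\rho=i_A\circ i_{M_{CA}}\circ i_{M_{BC}}\circ i_{M_{AB}}$ as a product of four point-reflections through $A$ and the three midpoints; this is manifestly orientation-preserving and fixes $A$, and in the Euclidean case it collapses to the identity, confirming the computation while isolating the defect as the obstruction to that collapse in the non-Euclidean case.
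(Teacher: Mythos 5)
Your transport computation is essentially the paper's own: the paper also carries a direction around the triangle using the two facts that a translation preserves the half-planes of its axis and preserves angle measures (\ref{CircleOfAnglesErlangenPlane}), and its intermediate claim that the image of $ray[B,A]$ forms the angle $\alpha+\mathrm{defect}$ with $ray[C,A]$ is just the antipodal-ray version of your exterior-angle step at $B$ and $C$. The genuine gap is your step ``$\rho$ is a rotation about $A$.'' You deduce it from ``each translation is orientation-preserving, hence $\rho$ is, hence the induced map on $\partial(\Pi,A)$ is a translation of that circle.'' But the paper has no notion of orientation for planes or for circles of rays: its classification results (\ref{StructureOfIsoCircle}, \ref{StructureOfIsoPlane}) are phrased purely in terms of fixed points, and ``preserves the two half-planes of its own axis'' is a statement about three \emph{different} axes that you cannot compose into a property of $\rho$ without first constructing a global orientation invariant (consistent cyclic orders on all the circles $\partial(\Pi,X)$, shown to be preserved by every translation). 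You acknowledge this yourself as ``the main obstacle,'' but you never supply it, and it cannot be waved away: tracking a single ray genuinely cannot finish the proof, because the reflection in the bisector of $r_0$ and $\rho(r_0)$ is also an isometry fixing $A$ and sending $r_0$ to $\rho(r_0)$, hence it agrees with your candidate rotation on every point of $ray[A,B]$; one ray's worth of data is consistent with both a rotation and a reflection. The same unproved sign conventions are what underwrite your claim that the three exterior-angle contributions accumulate with one consistent sign.

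The fix is exactly what the paper does, and it eliminates any appeal to orientation: run the same transport computation a second time on $l(A,C)$ (the paper finds it is rotated by the defect ``towards $ray[A,B]$,'' i.e., by the same rotation), and then note that $\rho$ and that rotation are isometries agreeing on $A$ and on points of the two non-collinear image rays; so they agree on three non-collinear points and are equal by \ref{InvolutionAndIdPlanes}. Your closing cross-check $\rho=i_A\circ i_{M_{CA}}\circ i_{M_{BC}}\circ i_{M_{AB}}$ is correct (the two copies of $i_B$ cancel) and is a nice identity, but as written it certifies only ``manifest'' orientation-preservation, i.e., it leans on the same undefined notion rather than closing the gap.
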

\begin{proof}
Look at the line $l(A,B)$ and see where it arrives after the composition of the first two translations $\tau_{AB}$ and $\tau_{BC}$. $\tau_{AB}$ preserves $l(A,B)$
and $\tau_{BC}$ sends $l(A,B)$ to the line at $C$ so that the image of $ray[B,A]$ forms the angle $\alpha+defect$ with $ray[C,A]$, where $\alpha$ is the angle of $\Delta(ABC)$ at $A$. Therefore $\tau_{CA}\circ \tau_{BC}\circ \tau_{AB}$ rotates $l(A,B)$ by the defect of the triangle in the direction away from the triangle $\Delta(ABC)$. Apply the same reasoning to $l(A,C)$ to see it is rotated by the defect in the direction towards $ray[A,B]$.
Since $\rho$ coincides with rotation by the defect on three non-collinear points, it is equal to that rotation.
\end{proof}

\subsection{Congruence of triangles in Erlangen planes}

When talking about congruence of triangles in an Erlangen plane one needs to think of triangles as ordered triples of points.
\begin{Definition}\label{CongruenceOfTrianglesDef}
Two triangles $\Delta(ABC)$ and $\Delta(A'B'C')$ on an Erlangen plane $(\Pi,\mathcal{I})$ are \textbf{congruent} if there is $f\in\mathcal{I}$ satisfying $f(A)=A'$, $f(B)=B'$, and $f(C)=C'$.
\end{Definition}

\begin{Theorem}[SAS in Erlangen planes]\label{SAS in Erlangen planes}
Two triangles $\Delta(ABC)$ and $\Delta(A'B'C')$ on an Erlangen plane $(\Pi,\mathcal{I})$ are congruent if and only if $|AB|=|A'B'|$, $|AC|=|A'C'|$, and the measure of the angle of $\Delta(ABC)$ at $A$ equals the measure of the angle of $\Delta(A'B'C')$ at $A'$.
\end{Theorem}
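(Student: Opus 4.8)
The plan is to prove both implications, with the forward direction immediate and the converse carrying the real content. For the forward direction, suppose $f\in\mathcal{I}$ realizes the congruence, so $f(A)=A'$, $f(B)=B'$, $f(C)=C'$. Since every Erlangen plane is a space with rigid motions by \ref{EPAreRM}, the map $f$ sends each line segment to a congruent, hence equally long, segment, so $|AB|=|A'B'|$ and $|AC|=|A'C'|$; and $f$ preserves measures of angles by \ref{CircleOfAnglesErlangenPlane}, so the angle of $\Delta(ABC)$ at $A$ has the same measure as that of $\Delta(A'B'C')$ at $A'$. This disposes of one direction.

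For the converse I would first treat the non-collinear case. Let $h$ be the half-plane bounded by $l(A,B)$ that contains $C$, and let $h'$ be the half-plane bounded by $l(A',B')$ containing $C'$. Since $ray[A,B]$ and $ray[A',B']$ are maximal rays in the respective boundaries, Condition \ref{Homogeneity and rigidity of the plane} furnishes a unique $f\in\mathcal{I}$ with $f(h)=h'$ and $f(ray[A,B])=ray[A',B']$. As an isomorphism carries the initial point of a ray to the initial point of its image, $f(A)=A'$. Because $f$ preserves lengths and $f(B)\in ray[A',B']$ satisfies $|A'f(B)|=|AB|=|A'B'|$, the uniqueness of the point at a prescribed length on a ray gives $f(B)=B'$.

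The heart of the argument is locating $C$. Both $f(ray[A,C])$ and $ray[A',C']$ emanate from $A'$ and, minus their initial point, lie in $h'$ (the former because $f(h)=h'$, the latter because $C'\in h'$ and a non-collinear ray from $A'$ meets $l(A',B')$ only at $A'$). In the Erlangen circle $\partial(\Pi,A')$ of \ref{CircleOfAnglesErlangenPlane} both rays subtend the same measured angle with $ray[A',B']$: for $f(ray[A,C])$ because $f$ preserves angle measure and the angle at $A$ equals the angle at $A'$ by hypothesis, and for $ray[A',C']$ because that is precisely the angle at $A'$. The component of $\partial(\Pi,A')$ determined by $h'$ is one of the two lines into which $ray[A',B']$ and its antipode split the circle, and on such a line a ray is uniquely determined by its measured angle to $ray[A',B']$, since congruence there coincides with equality of measure. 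Hence $f(ray[A,C])=ray[A',C']$, and then $f(C)\in ray[A',C']$ with $|A'f(C)|=|AC|=|A'C'|$ forces $f(C)=C'$, so $f$ realizes the congruence. I expect this angle step to be the main obstacle, as it is the one place where the full Erlangen circle structure of the boundary and the identification of congruence with equal measure must be invoked.

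Finally I would handle the collinear (flat) case, where $C\in l(A,B)$ and the angle at $A$ is $0$ or $180$ degrees. Here the choice of half-plane is immaterial: picking any $f\in\mathcal{I}$ carrying $ray[A,B]$ to $ray[A',B']$ (with a matching half-plane) still yields $f(A)=A'$ and $f(B)=B'$ as above, while $ray[A,C]$ is either $ray[A,B]$ or its antipodal ray according as the angle is $0$ or $180$ degrees. Thus $f(ray[A,C])=ray[A',C']$ automatically, and the equality $|AC|=|A'C'|$ pins down $f(C)=C'$.
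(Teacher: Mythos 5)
Your proof is correct and takes essentially the same approach as the paper's: superimpose the triangles by the isomorphism matching $ray[A,B]$ with $ray[A',B']$ and the half-plane of $C$ with that of $C'$ (the paper does this in two steps, an isometry followed by a reflection in $l(A,B)$ if necessary), and then conclude that equal angle measures within the same half-plane plus equal lengths force the images of $C$ and $C'$ to coincide. The only difference is one of detail: the paper compresses the final step into ``in that case $C'$ must be equal to $C$,'' while you spell out the uniqueness argument in the Erlangen circle $\partial(\Pi,A')$ and also treat the forward direction and the collinear case explicitly.
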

\begin{proof}
By applying an $f\in \mathcal{I}$ sending $s[A,B]$ onto $s[A',B']$ one reduces the proof to the case $A=A'$ and $B=B'$. By applying the reflection in $l(A,B)$, if necessary,
one reduces the proof to the case of $C$ and $C'$ being on the same side of $l(A,B)$. However, in that case $C'$ must be equal to $C$ and we are done.
\end{proof}

\begin{Theorem}[ASA in Erlangen planes]\label{ASA in Erlangen planes}
Two triangles $\Delta(ABC)$ and $\Delta(A'B'C')$ on an Erlangen plane $(\Pi,\mathcal{I})$ are congruent if and only if $|AB|=|A'B'|$, the measure of the angle of $\Delta(ABC)$ at $A$ equals the measure of the angle of $\Delta(A'B'C')$ at $A'$, and the measure of the angle of $\Delta(ABC)$ at $B$ equals the measure of the angle of $\Delta(A'B'C')$ at $B'$.
\end{Theorem}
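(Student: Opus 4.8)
The plan is to mirror the proof of SAS \ref{SAS in Erlangen planes}, with the roles of the prescribed data exchanged: here two angles and the included side are given rather than two sides and the included angle. The "only if" direction is immediate. If $f\in\mathcal{I}$ carries $\Delta(ABC)$ to $\Delta(A'B'C')$, then $f$ is a rigid motion, so $|AB|=|A'B'|$, and by \ref{CircleOfAnglesErlangenPlane} $f$ preserves measures of angles, whence the angle at $A$ matches the angle at $A'=f(A)$ and likewise the angle at $B$ matches that at $B'$. The content lies in the "if" direction.

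First I would normalize the configuration. Since $|AB|=|A'B'|$, there is an isomorphism in $\mathcal{I}$ sending $ray[A,B]$ onto $ray[A',B']$; as it preserves lengths it sends $A$ to $A'$ and $B$ to $B'$. Replacing $\Delta(ABC)$ by its image (and renaming) I may assume $A=A'$ and $B=B'$, so that both triangles share the side $s[A,B]$ and the line $l(A,B)$. Composing with the reflection in $l(A,B)$ if necessary, I may further assume that $C$ and $C'$ lie on the same side of $l(A,B)$. None of these moves disturbs the two angle hypotheses, again by \ref{CircleOfAnglesErlangenPlane}.

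Next I would pin down the rays issuing from $A$ and from $B$. Working in the Erlangen circle $\partial(\Pi,A)$ of rays at $A$, the ray $ray[A,B]$ is fixed, and the measure of the angle between $ray[A,B]$ and $ray[A,C]$ equals that between $ray[A,B]$ and $ray[A,C']$. Since $C$ and $C'$ are on the same side of $l(A,B)$, the rays $ray[A,C]$ and $ray[A,C']$ lie in the same component of $\partial(\Pi,A)\setminus\{ray[A,B],a(ray[A,B])\}$; because a given angle measure on a fixed side determines the ray uniquely (rigidity of the Erlangen circle, Condition \ref{Homogeneity and rigidity of the circle}), this forces $ray[A,C]=ray[A,C']$. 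Running the identical argument in $\partial(\Pi,B)$ with base ray $ray[B,A]$ yields $ray[B,C]=ray[B,C']$.

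Finally I would conclude $C=C'$. Both points lie on $ray[A,C]=ray[A,C']$ and on $ray[B,C]=ray[B,C']$, so both are common points of these two rays. I expect the one delicate step to be exactly this: justifying that the two rays meet in a single point. For a non-degenerate triangle the non-collinearity of $A,B,C$ gives $l(A,C)\ne l(B,C)$, and since a line in a plane is determined by any two of its points, two distinct lines share at most one point; hence the common point is unique and $C=C'$, so the isomorphism already produced exhibits the desired congruence. The degenerate (flat) case, where one of the angles is $0$ or $180$ degrees, is treated separately using \ref{AnglesOfFlatTriangle}.
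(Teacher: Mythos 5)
Your proof of the non-degenerate case is correct and is essentially the paper's own route: the paper proves ASA by the words ``Same as that of \ref{SAS in Erlangen planes}'', i.e., exactly your normalization (send $s[A,B]$ onto $s[A',B']$, compose with the reflection in $l(A,B)$ if needed so that $C$ and $C'$ lie on the same side), followed by the observation that the given data pins down $C'=C$. You merely make explicit the step that is specific to ASA: the angle hypotheses force $ray[A,C]=ray[A,C']$ and $ray[B,C]=ray[B,C']$, and two distinct maximal lines in a plane meet in at most one point, so $C=C'$.

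One caveat: your closing sentence about the flat case is not right. For collinear triples ASA is actually \emph{false}, not merely in need of a separate treatment: take $C\ne C'$ both strictly between $A=A'$ and $B=B'$. Then both triangles have angle of measure $0$ at $A$ and at $B$ and the same side $s[A,B]$, yet any $f\in\mathcal{I}$ fixing $A$ and $B$ preserves $ray[A,B]$ and hence is the identity on it by \ref{RigidMotionsViaRaysProp}, so no isomorphism can carry $C$ to $C'$. (Contrast with SAS, where the extra length $|AC|=|A'C'|$ does determine $C$ on a flat triangle.) So the theorem must be read as a statement about non-collinear triples --- the same implicit assumption in the paper's proof of SAS, where $C$ is taken to lie on one side of $l(A,B)$ --- rather than the flat case being handled via \ref{AnglesOfFlatTriangle}.
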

\begin{proof}
Same as that of \ref{SAS in Erlangen planes}.
\end{proof}

\begin{Theorem}[SSS in Erlangen planes]\label{SSS in Erlangen planes}
Two triangles $\Delta(ABC)$ and $\Delta(A'B'C')$ on an Erlangen plane $(\Pi,\mathcal{I})$ are congruent if and only if $|AB|=|A'B'|$, $|AC|=|A'C'|$, and $|BC|=|B'C'|$.
\end{Theorem}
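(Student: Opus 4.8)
The forward ("only if") implication is immediate: if $f\in\mathcal{I}$ realizes the congruence, then $f$ is an isometry, since by \ref{EPAreRM} an Erlangen plane is a space with rigid motions, so $|AB|=|f(A)f(B)|=|A'B'|$ and likewise for the other two sides. The content is the reverse implication, and the plan is to reduce it to SAS (\ref{SAS in Erlangen planes}).

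First I would normalize the positions. Since $|AB|=|A'B'|$ the segments $s[A,B]$ and $s[A',B']$ are congruent, so some $f\in\mathcal{I}$ satisfies $f(A)=A'$ and $f(B)=B'$ (compose with the point reflection $i_{M'}$ in the midpoint of $s[A',B']$ if the endpoints come out swapped). As $f$ preserves all lengths, $\Delta(A'B'f(C))$ has the same three side lengths as $\Delta(A'B'C')$. If $f(C)$ and $C'$ lie on opposite sides of $l(A',B')$, I replace $f$ by $\sigma\circ f$, where $\sigma\in\mathcal{I}$ is the reflection in $l(A',B')$ (it fixes $A'$ and $B'$ and interchanges the half-planes); after this $f(C)$ and $C'$ lie on the same side. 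Writing $A:=A'$, $B:=B'$, $C:=f(C)$, everything reduces to the single assertion: if $C$ and $C'$ lie on the same side of $l(A,B)$ and $|AC|=|AC'|$, $|BC|=|BC'|$, then $C=C'$, for then $f$ itself is the required congruence.

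To prove this core assertion I would show $C=C'$ by an isosceles-triangle argument. Assume $C\neq C'$ and draw $s[C,C']$, which stays on the chosen side because half-planes are convex. The triangle $\Delta(ACC')$ is isosceles ($|AC|=|AC'|$) and $\Delta(BCC')$ is isosceles ($|BC|=|BC'|$); the base-angle equalities $\angle ACC'=\angle AC'C$ and $\angle BCC'=\angle BC'C$ each follow from \ref{SAS in Erlangen planes} applied to the vertex correspondence that swaps the two equal sides (e.g. $A\to A$, $C\to C'$, $C'\to C$, with common apex angle at $A$). Comparing these through the arrangement of the rays $ray[C,A]$, $ray[C,B]$, $ray[C,C']$ yields $\angle ACB=\angle AC'B$. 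Then \ref{SAS in Erlangen planes} applied to $\Delta(CAB)$ and $\Delta(C'AB)$ produces $g\in\mathcal{I}$ with $g(A)=A$, $g(B)=B$, $g(C)=C'$. Since $g$ fixes the two points $A$ and $B$, by \ref{InvolutionAndIdPlanes} it is either the identity or the reflection in $l(A,B)$; the latter swaps the half-planes and cannot carry $C$ to the same-side point $C'$, so $g=\mathrm{id}$ and $C=C'$, a contradiction.

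The main obstacle is precisely the angle comparison that produces $\angle ACB=\angle AC'B$: depending on whether $ray[C,B]$ falls inside or outside the angle $\angle ACC'$ (equivalently, on the betweenness relations among $ray[C,A]$, $ray[C,B]$, $ray[C,C']$ in the circle of directions $\partial(\Pi,C)$, and symmetrically at $C'$), the two base-angle equalities must be added or subtracted, and one must separately exclude the sub-case where the three rays become collinear. This is the classical Euclid I.7/I.8 case split; carrying it out in the present framework means tracking the betweenness alternatives in $\partial(\Pi,C)$ via \ref{CircleOfRaysThm} and \ref{BoundaryAtInfinityProp} rather than reading them off a figure. Finally, the fully degenerate case where $A$, $B$, $C$ are collinear is handled apart but easily: by the equality case of the triangle inequality \ref{TriangleInequality} collinearity is detected by the three side lengths, hence holds for $\Delta(A'B'C')$ as well, and the configuration along $l(A,B)$ is then pinned down by the one-dimensional congruence theory of \ref{IsoOfErlangenLines}.
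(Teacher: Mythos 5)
Your proposal is correct in its overall strategy but takes a genuinely different route from the paper. You reduce to the configuration where $C$ and $C'$ lie on the \emph{same} side of $l(A,B)$ and then run the classical Euclid I.7 argument: isosceles base angles (obtained, as you say, from \ref{SAS in Erlangen planes} with the swapped vertex correspondence), an angle comparison at $C$ and at $C'$, and finally SAS together with the structure of isometries fixing two points. The paper does the opposite: it reduces to $C$ and $C'$ on \emph{different} sides of $l(A,B)$, so that $s[C,C']$ crosses the line, takes the points $M,N\in s[C,C']$ where the bisectors of the apex angles of the isosceles triangles $\Delta(ACC')$ and $\Delta(BCC')$ meet $s[C,C']$, and observes that the composition of the reflections in $l(A,M)$ and $l(B,N)$ fixes both $C$ and $C'$; by \ref{StructureOfIsoPlane} this forces $M=N$ with the bisecting line equal to $l(A,B)$, whose reflection then realizes the congruence directly. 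The payoff of the paper's choice of sides is precisely that it eliminates the configuration case analysis you identify as your main obstacle: on opposite sides the bisector construction is uniform, whereas on the same side one must track where $ray[C,C']$ sits relative to $ray[C,A]$ and $ray[C,B]$, and likewise at $C'$.

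One caution about your middle step: as stated, the claim that the two base-angle equalities always combine to give $\angle(ACB)=\angle(AC'B)$ is not accurate across all cases. For instance, when $C'$ lies in the cone at $C$ spanned by $ray[C,A]$ and $ray[C,B]$, none of the three rays at $C'$ is between the other two, so the relation at $C'$ reads $\alpha+\beta+\angle(AC'B)=360$ degrees; there the contradiction comes not from an angle equality but from the fact that angle measures are at most $180$ degrees (equivalently, from non-negativity of the defect, \ref{DefectIsNonNegativeProp}). Similarly, in the configuration where each of $C$, $C'$ lies outside the other triangle, one gets $\angle(ACB)=\alpha-\beta$ and $\angle(AC'B)=\beta-\alpha$, forcing both to be zero, i.e.\ a collinearity contradiction rather than a usable equality. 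So the case analysis terminates sometimes in the equality you want and sometimes in a direct contradiction; every branch closes, but the argument must be organized branch by branch rather than funneled through a single equality feeding into SAS. With that repair (and your separate treatment of the collinear case, which is fine), your argument is a valid, more classical alternative to the paper's reflection-composition proof.
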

\begin{proof}
Assume $|AB|$ is the largest side of the triangle $\Delta(ABC)$.
By applying an $f\in \mathcal{I}$ sending $s[A,B]$ onto $s[A',B']$ one reduces the proof to the case $A=A'$ and $B=B'$. By applying the reflection in $l(A,B)$, if necessary,
one reduces the proof to the case of $C$ and $C'$ being on different sides of $l(A,B)$.

Let $M$ be the point on $s[C,C']$ such that the line $l(A,M)$ bisects the angle at $A$ of the triangle $\Delta(ACC')$. Let $N$ be the point on $s[CC']$ such that the line $l(B,N)$ bisects the angle at $B$ of the triangle $\Delta(BCC')$. The composition of reflections in lines $l(A,M)$ and $l(B,N)$ fixes both $C$ and $C'$, hence it is either the identity, in which case $M=N$ and the triangles $\Delta(ABC)$ and $\Delta(ABC')$ are congruent,
or it is the reflection in the line $l(C,C')$, in which case $M=N$ again (otherwise
the composition does not fix neither $M$ nor $N$).
\end{proof}

\begin{Theorem}[AAS in Erlangen planes]\label{AAS in Erlangen planes}
Two triangles $\Delta(ABC)$ and $\Delta(A'B'C')$ on an Erlangen plane $(\Pi,\mathcal{I})$ are congruent if and only if $|AB|=|A'B'|$, $\angle(BAC)=\angle(B'A'C')$,
and $\angle(ACB)=\angle(A'B'C')$.
\end{Theorem}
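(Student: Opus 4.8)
The plan is to reduce AAS to the configuration already exploited for SAS and ASA and then invoke the non-negativity of the defect. By the same opening move as in the proof of Theorem \ref{SAS in Erlangen planes}, I would apply an $f\in\mathcal{I}$ sending $s[A,B]$ onto $s[A',B']$ (available because $|AB|=|A'B'|$) and, if necessary, the reflection in $l(A,B)$, so that we may assume $A=A'$, $B=B'$, and that $C$ and $C'$ lie on the same side of the line $l(A,B)$. Once this normalization is in place, proving the theorem amounts to showing $C=C'$, for then $f$ (possibly composed with the reflection) is the desired congruence.

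Next I would pin down the ray through $C$. Since the angle of $\Delta(ABC)$ at $A$ equals the angle of $\Delta(A'B'C')$ at $A'=A$, and since $C,C'$ lie on the same side of $l(A,B)$, the rays $ray[A,C]$ and $ray[A,C']$ subtend equal angles with $ray[A,B]$ on one and the same side. By the uniqueness built into Condition \ref{Homogeneity and rigidity of the plane} — equivalently, by the Erlangen-circle structure of $\partial(\Pi,A)$ established in Theorem \ref{CircleOfAnglesErlangenPlane} — this forces $ray[A,C]=ray[A,C']$, so that $C'$ lies on the ray $ray[A,C]$.

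Now assume for contradiction that $C\ne C'$; without loss of generality $C$ is strictly between $A$ and $C'$ (the other case is symmetric). Because $A,C,C'$ are collinear with $C$ between them, the rays $ray[C,A]$ and $ray[C,C']$ are antipodal in the circle of rays $\partial(\Pi,C)$, so the angles $\angle(ACB)$ and $\angle(C'CB)$ add to a straight angle (cf. Example \ref{AnglesOfFlatTriangle}); that is, $\angle(C'CB)=180^\circ-\angle(ACB)$. Viewed from $C'$, the points $A$ and $C$ determine the same ray, hence $\angle(CC'B)=\angle(AC'B)$, and the hypothesis that the angle of $\Delta(ABC)$ at $C$ equals the angle of $\Delta(A'B'C')$ at $C'$ yields, after the identification $A'=A$, $B'=B$, the equality $\angle(CC'B)=\angle(ACB)$. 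Therefore the triangle $\Delta(BCC')$ — genuinely non-collinear, since $B\notin l(A,C)=l(C,C')$ — already has angle sum $\angle(C'CB)+\angle(CC'B)=180^\circ$ from the two vertices $C$ and $C'$ alone, forcing its total angle sum to exceed $180^\circ$ and contradicting the non-negativity of the defect (Proposition \ref{DefectIsNonNegativeProp}). Hence $C=C'$.

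The main obstacle I anticipate is not the concluding contradiction but the careful bookkeeping of angles in the axiomatic language: establishing the straight-angle identity $\angle(ACB)+\angle(C'CB)=180^\circ$ from antipodality in $\partial(\Pi,C)$, and confirming $\angle(CC'B)=\angle(AC'B)$ from the fact that $C\in ray[A,C']$ collapses the relevant rays at $C'$. Once these two angle equalities are secured, the argument is a direct application of Proposition \ref{DefectIsNonNegativeProp}, exactly as in neutral geometry.
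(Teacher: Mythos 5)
Your proof is correct and takes essentially the same route as the paper: after the SAS-style normalization $A=A'$, $B=B'$ with $C$ and $C'$ on one ray from $A$, both arguments apply the non-negativity of the defect (Proposition \ref{DefectIsNonNegativeProp}) to the auxiliary triangle $\Delta(BCC')$ via the straight-angle identity at the collinear vertex, the paper merely phrasing this as the exterior angle $\angle(AC'B)$ strictly exceeding $\angle(ACB)$ when $\angle(C'BC)\ne 0$. The only cosmetic differences are your opposite choice of which point lies between ($C$ between $A$ and $C'$ versus $C'\in s[A,C]$) and your more explicit bookkeeping of the supplementary-angle and ray-collapse identities, which the paper leaves implicit.
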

\begin{proof}
It suffices to consider the case of $A=A'$, $B=B'$, and $C'\in s[A,C]$.
If $\angle(C'BC)$ is non-zero, then $\angle(AC'B)$ is bigger than the angle $\angle(ACB)$ (due to the defect of $\Delta(BC'C)$ being non-negative), a contradiction.
Therefore, $C'=C$ and we completed the proof.
\end{proof}

\begin{Theorem}[AAA in Erlangen planes]\label{AAA in Erlangen planes}
Two triangles $\Delta(ABC)$ and $\Delta(A'B'C')$ on an Erlangen plane $(\Pi,\mathcal{I})$ with positive defects are congruent if and only if their corresponding angles are equal.
\end{Theorem}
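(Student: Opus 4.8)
The plan is to prove the non-trivial implication, that equality of corresponding angles forces congruence; the converse is immediate, since every $f\in\mathcal{I}$ preserves angle measures by \ref{CircleOfAnglesErlangenPlane}. First I would normalize the position of the two triangles. Using Condition \ref{Homogeneity and rigidity of the plane} I send $A'$ to $A$ and $ray[A',B']$ onto $ray[A,B]$, and then, composing with the reflection in $l(A,B)$ if necessary, arrange that $ray[A',C']$ lands on $ray[A,C]$; this is possible precisely because the measure of the angle at $A'$ equals that at $A$, so the image of $ray[A',C']$ makes the correct angle with $ray[A,B]$ and there are only two such rays, one on each side of $l(A,B)$. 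After relabeling, we have $A=A'$ with $B'\in ray[A,B]$ and $C'\in ray[A,C]$, and the hypotheses become $\angle AB'C'=\angle ABC$ and $\angle AC'B'=\angle ACB$. Swapping the two triangles if needed, I may also assume $|AB'|\leq|AB|$, i.e. $B'\in s[A,B]$.

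Next I would record two tools. Under the standing hypothesis of positive defect, \emph{every} non-degenerate triangle in $\Pi$ has positive defect, this being the contrapositive of \ref{DefectZeroCase}. Second, I would note the strict Exterior Angle Theorem: in any non-degenerate triangle the exterior angle at a vertex exceeds each remote interior angle. This follows at once from \ref{DefectIsNonNegativeProp}, since the exterior angle equals the sum of the two remote interior angles plus the nonnegative defect, and the third angle is strictly positive.

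I would then split into cases according to the position of $C'$. If $|AC'|\leq|AC|$ as well, so that both $B'$ and $C'$ lie on the sides of $\Delta(ABC)$, then the quadrilateral $B'BCC'$ is convex, being $\Delta(ABC)$ with the apex triangle $\Delta(AB'C')$ removed; a diagonal cuts it into two non-degenerate triangles, so its angle sum equals $360$ degrees minus the two positive defects, hence is strictly less than $360$ degrees. On the other hand, a direct computation of its four interior angles, using that $\angle BB'C'$ and $\angle CC'B'$ are the supplements of $\angle AB'C'=\angle ABC$ and $\angle AC'B'=\angle ACB$ while the angles at $B$ and $C$ are $\angle ABC$ and $\angle ACB$, gives the sum exactly $360$ degrees, a contradiction unless $B'=B$ and $C'=C$. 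In the degenerate subcases where exactly one of $B'=B$, $C'=C$ fails, the same contradiction is obtained more cheaply from the Exterior Angle Theorem applied to $\Delta(B'BC)$ (respectively $\Delta(C'CB)$).

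The remaining mixed case, $B'\in s(A,B)$ but $|AC'|>|AC|$ so that $C\in s(A,C')$, is where I expect the main obstacle to lie, namely justifying the crossing of $s[B',C']$ with $s[B,C]$. Using the Strong Pasch Condition satisfied by planes, I would argue that $s[B',C']$ meets $s(B,C)$ at an interior point $P$: it cannot cross the open side $s(A,C)$, for then $l(B',C')$ would contain two points of $l(A,C)$ and hence equal it, forcing $B'\in l(A,C)$; and it cannot pass through $B$ or $C$; so it must exit $\Delta(ABC)$ through $s(B,C)$. Since $B'$ lies on $ray[B,A]$ and $P$ on $ray[B,C]$, and $P\in ray[B',C']$, the angle $\angle AB'C'=\angle AB'P$ is the exterior angle at $B'$ of $\Delta(B'BP)$, while $\angle B'BP=\angle ABC$ is a remote interior angle; the Exterior Angle Theorem then yields $\angle AB'C'>\angle ABC$, contradicting $\angle AB'C'=\angle ABC$. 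Collecting all cases, $B'=B$ and $C'=C$, so the triangles are congruent.
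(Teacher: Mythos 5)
Your proof is correct and follows essentially the same route as the paper's: normalize by a rigid motion so the triangles share the vertex $A$ and the two rays, rule out the case $C\in s(A,C')$ by a Pasch crossing of $s[B',C']$ with $s[B,C]$ followed by the exterior-angle (non-negative defect) inequality, and force $B'=B$, $C'=C$ by additivity of defects -- your quadrilateral angle-sum computation is exactly that additivity in disguise, and your Strong Pasch argument is the paper's appeal to Pasch-ness (indeed the Weak Pasch Condition applied at apex $A$, with $B'$ between $A$ and $B$ and $C$ between $A$ and $C'$, gives the segment-segment intersection immediately, which cleanly closes the small line-versus-segment jump in your "exit" step). The only cosmetic differences are that you apply the exterior-angle inequality at $B'$ where the paper applies it at $C$ and $C'$, and that you spell out the converse direction and the degenerate subcases that the paper leaves implicit.
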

\begin{proof}
It suffices to consider the case of $A=A'$ and $B'\in s[A,B]$. In that case $C'\in s[A,C]$.
Indeed, if $C\in s(A,C')$, then line segments $s[B,C]$ and $s[B',C']$
intersect at an interior point $M$ (use \ref{CircleOfRaysThm}) in which case $\angle(ACB) > \angle(A'C'B')$, a contradiction.
If $C'\ne C$, then the defect of the triangle $\Delta(BCC')$ is $0$ due to additivity of defects and the fact the defect of $\Delta(ABC)$ equals the defect of $\Delta(A'B'C')$.
That contradicts \ref{DefectZeroCase}.
Therefore, $C'=C$. Similarly, $B'=B$ and we completed the proof.
\end{proof}

\subsection{Characterizing rotations and reflections in lines of an Erlangen plane}

Here we prove a characterization of rotations and reflections in lines of an Erlangen plane in an exceptionally simple way when compared to the classical approaches. 

\begin{Proposition}\label{StructureOfIsoPlane}
Let $(\Pi,\mathcal{I})$ be an Erlangen plane and $f\in \mathcal{I}$ is not equal to the identity.\\
1. $f$ is a rotation about a point if and only if it has a unique fixed point. \\
2. $f$ is a reflection in a line if and only if it has at least two fixed points.
\end{Proposition}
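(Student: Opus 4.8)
The plan is to reduce everything to a dichotomy for a non-identity $f\in\mathcal{I}$ possessing a fixed point, analyzed through the boundary circle at that point. I would first record two preliminary facts. (a) If $f$ fixes two distinct points $A,B$, then $f$ fixes the whole line $l(A,B)$ pointwise: indeed $f$ carries maximal rays at $A$ to maximal rays at $A$ by \ref{BijectionsBetweenSpacesMaxRays}, so $f(ray[A,B])=ray[A,B]$ (it is the ray at $A$ containing $f(B)=B$), and since $(\Pi,\mathcal{I})$ is a space with rigid motions by \ref{EPAreRM}, \ref{RigidMotionsViaRaysProp} forces $f|ray[A,B]=\mathrm{id}$; the same applies to the antipodal ray $a(ray[A,B])$, so $f$ fixes $l(A,B)=ray[A,B]\cup a(ray[A,B])$ pointwise. (b) An element of $\mathcal{I}$ fixing a point $A$ is determined by the isomorphism it induces on $\partial(\Pi,A)$: if such an $f$ induces the identity there, it fixes every ray at $A$, hence fixes a ray together with the two half-planes it bounds, and Condition \ref{Homogeneity and rigidity of the plane} gives $f=\mathrm{id}$.

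With (a) in hand, Part 2 is immediate. A reflection in a line $l$ fixes $l$ pointwise and $l$ has at least two points, giving the easy implication. Conversely, if $f\ne\mathrm{id}$ has two fixed points $A,B$, then $f$ is an involution by \ref{InvolutionAndIdPlanes} and fixes $l(A,B)$ pointwise by (a); as $f\ne\mathrm{id}$ it cannot preserve both half-planes of $l(A,B)$ (that would force $f=\mathrm{id}$ via Condition \ref{Homogeneity and rigidity of the plane}), so it swaps them and is therefore, by definition and the uniqueness in Condition \ref{Homogeneity and rigidity of the plane}, the reflection in $l(A,B)$.

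For Part 1, I would fix a fixed point $A$ of $f$ and pass to the induced isomorphism $\bar f$ of the Erlangen circle $\partial(\Pi,A)$ (\ref{CircleOfAnglesErlangenPlane}). By \ref{StructureOfIsoCircle}, $\bar f$ is either a reflection of the circle, in which case it has a fixed ray $r$, or a translation, in which case it has none. In the first case $f$ fixes the line $r\cup a(r)$ pointwise and so has more than one fixed point; in the second case $f$ fixes no ray at $A$, so a second fixed point $B$ (yielding the fixed ray $ray[A,B]$) is impossible and $A$ is the unique fixed point. Thus $f$ has a unique fixed point exactly when $\bar f$ is a circle-translation. To identify $f$ as a rotation in that case, I would pick any ray $r_1$ at $A$, set $r_2:=f(r_1)\ne r_1$, and check that the rotation $\rho$ from $r_1$ to $r_2$ induces on $\partial(\Pi,A)$ precisely the circle-translation $\tau_{r_1,r_2}$ of \ref{TranslationCircleDef}, by matching ``the half-plane of $r_1$ containing $r_2$'' with the corresponding arc of $\partial(\Pi,A)$; since the circle-translation sending $r_1$ to $r_2$ is unique and both $\bar f$ and $\bar\rho$ are such translations, $\bar f=\bar\rho$, whence $f=\rho$ by (b). The reverse implication is the same matching argument read backwards: a rotation induces a circle-translation, hence is fixed-point-free on $\partial(\Pi,A)$ and has $A$ as its only fixed point.

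The main obstacle I anticipate is the bookkeeping in this last step, namely verifying that the plane-level definition of rotation corresponds exactly to the circle-level definition of translation on $\partial(\Pi,A)$: that the half-plane condition in the definition of the rotation matches the ``ray at $r_2$ not containing $r_1$'' condition in \ref{TranslationCircleDef}. Everything else is a routine application of rigid motions and the uniqueness clause of Condition \ref{Homogeneity and rigidity of the plane}.
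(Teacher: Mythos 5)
Your proof is correct and takes essentially the same route as the paper: part 2 via the fixed line plus swapped half-planes, and part 1 by passing to the boundary circle at the fixed point and invoking the reflection/translation dichotomy of \ref{StructureOfIsoCircle}. The only difference is that you spell out what the paper leaves implicit (the rigidity argument that a fixed ray is fixed pointwise, and the matching of plane rotations with circle translations via the half-plane/arc correspondence), so yours is a more detailed rendering of the same argument.
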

\begin{proof}
1. Obviously, rotations about points have fixed points. Suppose $f(A)=A$ and look at the way $f$ acts on the circle $c_A$ of maximal rays at $A$. This action cannot have any fixed points, hence it is a translation and $f:\Pi\to\Pi$ is a rotation.\\
2. If $f$ has at leat two fixed points $A$ and $B$, it fixes the line $l(A,B)$. As $f$ is not the identity, it must swap the two components of the complement of $l(A,B)$ in $\Pi$.
Thus, $f$ is the reflection in $l(A,B)$.
\end{proof}

\section{Euclidean planes}

In contrast to Section \ref{Euclidean lines} we are not going to pursue explaining Euclidean planes using the concept of length. Instead, we will use Axiom 5 of Euclid \ref{Axiom 5 of Euclid} and seek its equivalent formulations. The reason we are using Euclid's axiom is mostly for historic reasons. From the modern point of view the fact Euclidean planes are characterized by commutativity of translations (see \ref{CharOfEuclideanPlanesThm}) is much more important. Non-Euclidean geometries are the first noncommutative geometries in that sense.

\begin{Definition}\label{EuclideanPlaneDef}
A \textbf{Euclidean plane} is an Erlangen plane $(\Pi,\mathcal{I})$ satisfying Condition \ref{Axiom 5 of Euclid}.
\end{Definition}

\begin{Condition}[Axiom 5 of Euclid]\label{Axiom 5 of Euclid}
For every line $l(A,B)$ of a plane and for every point $C$ of a plane there is a
unique maximal line containing $C$ and parallel to $l(A,B)$.
\end{Condition}

See \cite{Roe lectures} (pp.1--2) for an interesting discussion connecting large scale geometry to Legendre's attempt of proving that Axiom 5 of Euclid is valid.

Notice \ref{ExistenceOfParallelLinesProp} says parallel lines exist and it gives a necessary and sufficient condition for uniqueness of parallel lines.

If the defect of every triangle is $0$, then the plane satisfies 
\ref{Axiom 5 of Euclid} by \ref{DefectViewingAngleConnectionDef} and \ref{ExistenceOfParallelLinesProp}.

\begin{Lemma}\label{TranslationsAndParallelLinesLem}
Let $(\Pi,\mathcal{I})$ be an Erlangen plane, $\tau_{AB}$ is a non-trivial translation,
and $D=\tau_{AB}(C)$. If the defect of every triangle is $0$, then 
the line $l(C,D)$ is parallel to $l(A,B)$. Consequently, $\tau_{AB}(l)=l$
for every line $l$ parallel to $l(A,B)$.
\end{Lemma}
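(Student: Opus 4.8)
The plan is to split the proof into two stages: first prove the geometric core, that $l(C,D)$ is parallel to $l(A,B)$ whenever $C\notin\ell:=l(A,B)$, and then read off the ``consequently'' clause from uniqueness of parallels. The crucial point is that the hypothesis of defect $0$ upgrades \ref{ExistenceOfParallelLinesProp} to the full parallel postulate \ref{Axiom 5 of Euclid} (exactly as remarked just before the lemma, via \ref{DefectViewingAngleConnectionDef}), so throughout I may assume that through each point there passes a \emph{unique} line parallel to a given line.

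First I would dispose of the case $C\in\ell$: since $\tau_{AB}$ preserves $\ell$ setwise, $D=\tau_{AB}(C)\in\ell$ and $l(C,D)=\ell$ is parallel to itself. For the main case $C\notin\ell$, note the configuration is non-degenerate: $D$ lies strictly on the same side of $\ell$ as $C$ (because $\tau_{AB}$ preserves the half-planes of $\ell$), while $D\neq C$ by \ref{NoFixedPointsForTranslations}, and $D\notin\{A,B\}\cup\ell$ since otherwise $C=\tau_{AB}^{-1}(D)$ would land on $\ell$. Now, because $\tau_{AB}\in\mathcal{I}$ is an isometry preserving measures of angles (\ref{CircleOfAnglesErlangenPlane}) and sends $ray[A,B]$ to the forward ray at $B$ with $A\mapsto B$ and $C\mapsto D$, it carries $ray[A,C]$ to $ray[B,D]$ while preserving the angle each makes with the forward direction of $\ell$. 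Hence the corresponding angles cut on $l(A,C)$ and $l(B,D)$ by the transversal $\ell$ are equal, and the criterion that equal corresponding angles force parallelism (a consequence of the non-negativity of the defect, \ref{DefectIsNonNegativeProp}) yields $l(A,C)\parallel l(B,D)$; moreover $|AC|=|BD|$ since $\tau_{AB}$ is an isometry. Thus $ABDC$ is a parallelogram.

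The hard part is to pass from this one pair of equal, parallel sides to parallelism of the \emph{other} pair, and this is precisely where defect $0$ is indispensable (the statement is false in the hyperbolic case). Here I would use the diagonal $s[A,D]$ as a transversal: since $l(A,C)\parallel l(B,D)$ and the plane is now Euclidean, the alternate interior angles are equal, $\angle(CAD)=\angle(BDA)$. Combined with $|AC|=|BD|$ and the common side $s[A,D]$, SAS (\ref{SAS in Erlangen planes}) shows $\Delta(CAD)$ and $\Delta(BDA)$ are congruent, whence $\angle(ADC)=\angle(DAB)$. These last two are alternate interior angles for $l(C,D)$ and $l(A,B)$ along the transversal $l(A,D)$, so their equality forces $l(C,D)\parallel l(A,B)$, completing the first stage.

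For the ``consequently'' clause, fix a line $l$ parallel to $l(A,B)$. First, $\tau_{AB}(l)$ is again parallel to $\ell$: it is a line, and if it met $\ell$ at a point $P$ then $\tau_{AB}^{-1}(P)\in l$ would lie in $\tau_{AB}^{-1}(\ell)=\ell$, contradicting $l\parallel\ell$ (the case $l=\ell$ being trivial). Next, picking any $C\in l$ and setting $D=\tau_{AB}(C)$, the first stage gives $l(C,D)\parallel\ell$; since $C\in l\cap l(C,D)$ and both are parallel to $\ell$, uniqueness of parallels forces $l(C,D)=l$, so $D\in l$. Therefore $\tau_{AB}(l)$ is a line parallel to $\ell$ through the point $D\in l$, and $l$ is also a line through $D$ parallel to $\ell$; uniqueness of parallels gives $\tau_{AB}(l)=l$, as required.
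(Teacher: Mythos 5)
Your proof is correct, but for the main claim it takes a genuinely different route from the paper's (your treatment of the ``consequently'' clause is essentially the paper's own: both rest on uniqueness of parallels, which defect $0$ supplies via \ref{DefectViewingAngleConnectionDef} and \ref{ExistenceOfParallelLinesProp}). The paper proves that $l(C,D)$ cannot meet $l(A,B)$ by pure translation algebra: assuming the two lines meet, without loss of generality at $B$, it sets $E=\tau_{CD}^{-1}(B)$, notes that $E,B\in l(C,D)$ forces $\tau_{EB}=\tau_{CD}$ (two translations along the same line agreeing at a point), and uses \ref{CompositionOfTranslationsProp} with defect $0$ to write $\tau_{AB}=\tau_{EB}\circ\tau_{AE}=\tau_{CD}\circ\tau_{AE}$; evaluating at $C$ gives $\tau_{AE}(C)=C$, so $A=E$ by \ref{NoFixedPointsForTranslations}, whence $l(A,B)=l(C,D)$, a contradiction. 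Your argument is instead the classical parallelogram proof by angle-chasing and \ref{SAS in Erlangen planes}. It works, but to be self-contained in this paper it needs two transversal facts the paper never states: the neutral criterion ``equal corresponding angles imply parallelism'' (you correctly trace it to \ref{DefectIsNonNegativeProp}: an intersection point would create a triangle whose angle sum exceeds $180$ degrees) and its Euclidean converse ``parallel lines cut equal alternate interior angles'' (which requires uniqueness of parallels, available here). You also use configuration facts silently, e.g.\ that $B$ and $C$ lie on opposite sides of the diagonal $l(A,D)$ --- without this your two angle pairs are not genuinely ``alternate interior''; this can be justified by observing that $ray[A,D]$ lies strictly between $ray[A,B]$ and $ray[A,C]$ in $\partial(\Pi,A)$ (since $D$ is on the $C$-side of $l(A,B)$ and on the $B$-side of $l(A,C)$), hence meets $s(B,C)$. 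The trade-off: the paper's proof is short and stays entirely inside the group-theoretic machinery it has already built (composition of translations as rotation by the defect, translations having no fixed points), while yours is the familiar Euclid-style argument but requires first developing the transversal-angle toolkit and the incidence checks that make it rigorous.
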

\begin{proof}
It is clearly true if $C\in l(A,B)$, so assume $C\notin l(A,B)$ and the two lines intersect at $B$. Consider $E=\tau_{CD}^{-1}(B)$.
Since $\tau_{AB}=\tau_{EB}\circ \tau_{AE}$ by \ref{CompositionOfTranslationsProp}
and $\tau_{CD}=\tau_{EB}$, we infer $\tau_{AE}(C)=C$. By \ref{NoFixedPointsForTranslations}
points $A$ and $E$ coincide resulting in $l(A,B)=l(C,D)$, a contradiction.

If $X\in \tau_{AB}(l)\setminus l$ for some point $X$, then $X=\tau_{AB}(Y)$
for some $Y\in l$ and $l(Y,X)$ is parallel to $l(A,B)$, hence equal to $l$, a contradiction.
\end{proof}

\begin{Theorem}\label{CharOfEuclideanPlanesThm}
Let $(\Pi,\mathcal{I})$ be an Erlangen plane. The following conditions are equivalent:\\
1. $(\Pi,\mathcal{I})$ is a Euclidean plane.\\
2. The defect of every triangle in $(\Pi,\mathcal{I})$ is $0$.\\
3. For any translation $\tau$ one has $\tau=\tau_{CD}$ if $D=\tau(C)$.\\
4. The defect of some non-flat triangle in $(\Pi,\mathcal{I})$ is $0$.\\
5. Every two translations commute.\\
6. Translations form an Abelian subgroup of $\mathcal{I}$.\\
7. Translations form a subgroup of $\mathcal{I}$.
\end{Theorem}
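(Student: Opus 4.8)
The plan is to make condition 2 (every triangle has defect $0$) the central hub: first show the three ``geometric'' conditions $1$, $2$, $4$ are equivalent, then bridge across to the four ``group‑theoretic'' conditions $3$, $5$, $6$, $7$ using \ref{CompositionOfTranslationsProp}, which is the engine of the whole argument because it converts the defect of a triangle into the rotational part of the specific composition $\tau_{CA}\circ\tau_{BC}\circ\tau_{AB}$ of three translations.

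For the geometric block I would prove $1\Leftrightarrow 2$ by combining \ref{ExistenceOfParallelLinesProp} with \ref{DefectViewingAngleConnectionDef}: a point $P$ off a line $l(Q,R)$ has a unique parallel through it exactly when the viewing angle of $l(Q,R)$ from $P$ is full, and by \ref{DefectViewingAngleConnectionDef} that angle is full precisely when the supremum of the defects of the triangles $\Delta(PAB)$, $A,B\in l(Q,R)$, is $0$; since defects are nonnegative (\ref{DefectIsNonNegativeProp}), that supremum vanishes iff each such defect vanishes. Ranging over all $P$ and all lines shows Axiom 5 (Condition \ref{Axiom 5 of Euclid}) holds iff all defects are $0$, i.e. $1\Leftrightarrow 2$. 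The equivalence $2\Leftrightarrow 4$ is immediate: $2\Rightarrow 4$ is trivial and $4\Rightarrow 2$ is exactly \ref{DefectZeroCase}.

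The bridge rests on reading \ref{CompositionOfTranslationsProp} in two directions. For $7\Rightarrow 2$: if translations form a subgroup then, for non-collinear $A,B,C$, the map $\tau_{CA}\circ\tau_{BC}\circ\tau_{AB}$ is simultaneously a translation (by closure) and the rotation at $A$ by the defect of $\Delta(ABC)$; this rotation fixes $A$, so by \ref{NoFixedPointsForTranslations} the map is the identity and every non-flat triangle has defect $0$. For the reverse, defect $0$ makes the same composition the identity, giving $\tau_{BC}\circ\tau_{AB}=\tau_{AC}$ for non-collinear triples (the collinear case reducing to the one-dimensional theory of \ref{TranslationsOfLineSubgroup} on the line through the points); this establishes closure, hence $7$, and comparing this identity with its reversal—together with \ref{TranslationsAndParallelLinesLem}, which shows a translation preserves every line parallel to its direction and thus behaves like a displacement—yields commutativity, i.e. $2\Rightarrow 6$. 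From an abelian subgroup the remaining arrows are formal: $6\Rightarrow 5$ and $6\Rightarrow 7$ are immediate, while $6\Rightarrow 3$ (equivalently $7\Rightarrow 3$) follows since, if $\tau$ is a translation with $\tau(C)=D$, then $\tau_{CD}^{-1}\circ\tau$ is a translation fixing $C$, hence the identity by \ref{NoFixedPointsForTranslations}, so $\tau=\tau_{CD}$.

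It remains to feed the pointwise conditions $5$ and $3$ back into the hub, and this is where I expect the real difficulty. The subtlety is that $5$ (``every two translations commute'') and $3$ (``a translation is determined by one point's image'') are stated for individual maps, so a priori I may not invoke closure: the composite of two translations need not be a translation until defect $0$ is known. I would therefore argue $5\Rightarrow 2$ and $3\Rightarrow 2$ by contraposition. Assuming some triangle has defect $\delta>0$, \ref{CompositionOfTranslationsProp} exhibits $\tau_{CA}\circ\tau_{BC}\circ\tau_{AB}$ as a genuine rotation by $\delta$; rearranging this relation into a commutator of two translations and checking it equals a nontrivial rotation contradicts $5$. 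For $3$ I would instead take a translation $\tau_{AB}$ and a point $C$ off $l(A,B)$ with $D=\tau_{AB}(C)$: equality $\tau_{AB}=\tau_{CD}$ would make $\tau_{AB}$ carry $s[A,C]$ to a parallel, equal, equidirectional segment $s[B,D]$, producing a closed parallelogram, which (via the defect additivity behind \ref{DefectZeroCase}) can occur only when all defects vanish; so $\tau_{AB}\neq\tau_{CD}$, violating $3$. The main obstacle is carrying out these reversals rigorously without prematurely assuming the group structure, handling the orientation and fixed‑point bookkeeping (that a translation composed with a nontrivial rotation acquires a fixed point) using only the dichotomy of \ref{StructureOfIsoPlane} and the half‑turn decomposition \ref{TranslationsAreTwoSymmetries} rather than an ambient notion of orientation.
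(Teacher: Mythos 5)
Your architecture (hub at Condition 2, geometric block $1\Leftrightarrow 2\Leftrightarrow 4$, group-theoretic block fed by \ref{CompositionOfTranslationsProp}) matches the paper in outline, and several individual arrows are sound: $7\Rightarrow 2$ via \ref{CompositionOfTranslationsProp} and \ref{NoFixedPointsForTranslations}, $2\Leftrightarrow 4$ via \ref{DefectZeroCase}, $1\Leftrightarrow 2$ via viewing angles, and your parallelogram argument for $3\Rightarrow 2$ is essentially the paper's $3\Rightarrow 4$. But there is a genuine circularity in how you return from the hub to the group conditions. Your $2\Rightarrow 7$ rests on the claim that the chained identity $\tau_{BC}\circ\tau_{AB}=\tau_{AC}$ ``establishes closure.'' It does not: closure must handle an arbitrary pair $\tau_{XY}$, $\tau_{ZW}$ with no incidence between $Y$ and $Z$, and to reduce such a pair to the chained case you must rewrite $\tau_{ZW}$ as the canonical translation $\tau_{YV}$ where $V=\tau_{ZW}(Y)$ --- which is precisely Condition 3. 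In your scheme, however, Condition 3 is only obtained \emph{from} 7 (via $\tau_{CD}^{-1}\circ\tau$ fixing $C$), so the derivation $2\Rightarrow 7\Rightarrow 3$ is circular. The same rebasing problem undermines your $2\Rightarrow 6$: the commutativity argument (as in the paper, where $D=\tau_{BC}(A)=i_M(B)$ forces $\tau_{BC}=\tau_{AD}$ and $\tau_{AB}=\tau_{DC}$) needs Condition 3 to identify a translation by where it sends a single off-axis point. With $2\Rightarrow 7$ and $2\Rightarrow 6$ both gapped, your proof shows only that each of $3,5,6,7$ implies $2$, not the converse, so the equivalence is incomplete.

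The missing ingredient is the paper's direct proof of $2\Rightarrow 3$, which is the step your plan skips: assuming every defect is $0$, \ref{TranslationsAndParallelLinesLem} shows that for $D=\tau_{AB}(C)$ the line $l(C,D)$ is parallel to $l(A,B)$ and that translations preserve every line parallel to their axis; hence $\tau_{CD}\circ\tau_{AB}^{-1}$ preserves this family of parallels and fixes $D$, and one concludes it is the identity, i.e.\ $\tau_{AB}=\tau_{CD}$. Once $2\Rightarrow 3$ is in hand, rebasing is legitimate, your chained identity does give closure ($2\Rightarrow 7$), and the midpoint argument gives commutativity ($2\Rightarrow 6$); your remaining arrows then close the cycle. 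I would also flag, more mildly, that your $5\Rightarrow 2$ should follow the paper's rearrangement --- under 5 the product $\tau_{CA}\circ\tau_{BC}\circ\tau_{AB}$ can be regrouped so that it is simultaneously the rotation at $A$ and the rotation at $B$ by the defect, hence the identity --- rather than the stated ``commutator'' reduction, which is not a chained product and is not obviously evaluable by \ref{CompositionOfTranslationsProp}.
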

\begin{proof}
1.$\implies$2. Consider three non-collinear points $A$, $B$, and $C$. Let $M$ be the midpoint of $s[B,C]$ and let $N$ be the midpoint of $s[A,C]$. The image $i_M(l(A,B))$
is a line passing through $C$ and not intersecting $l(A,B)$ (if the lines had an intersection at $D$, $i_M(D)$ also would belong to both lines). For the same reason 
the image $i_N(l(A,B))$
is a line passing through $C$ and not intersecting $l(A,B)$. Hence the two lines are equal and looking at angles at $C$ one sees the defect is $0$.

2.$\implies$3. 
It is obviously so for all points $C\in l(A,B)$, where $\tau=\tau_{AB}$, so assume
$C$ is outside of the line $l(A,B)$. 

 $l(C,D)$ does not intersect $l(A,B)$ by \ref{TranslationsAndParallelLinesLem} and
\ref{TranslationsAndParallelLinesLem} shows every translation $\tau_{AB}$ sends every parallel line $l$ to $l(A,B)$ to itself. Therefore the same is true of $\tau_{CD}\circ \tau_{AB}^{-1}$.
However, this isomorphism has $D$ as a fixed point, hence the line passing through $D$ and perpendicular to $l(C,D)$ consists of fixed points of $\tau_{CD}\circ \tau_{AB}^{-1}$. In addition, $\tau_{CD}\circ \tau_{AB}^{-1}$ has $D$ as a fixed point, so $\tau_{CD}\circ \tau_{AB}^{-1}=id$.

3.$\implies$4. Consider two different points $A$ and $B$. Pick $C$ on the line perpendicular to $l(A,B)$ crossing $l(A,B)$ at $A$. Look at $D=\tau_{AB}(C)$.
The quadrangle $ABCD$ has the sum of angles equal to $360$, hence both trangles $ABD$ and $ACD$ have the defect equal to $0$.

4.$\implies$2. That is the content of \ref{DefectZeroCase}.\\
2.$\implies$1. Use \ref{Axiom 5 of Euclid}, \ref{DefectViewingAngleConnectionDef}, and \ref{ExistenceOfParallelLinesProp}.

2.$\implies$6. By 3. the composition of translations is a translation. Obviously, the inverse of a translation is a translation. 

Suppose $A$, $B$, and $C$ are three points of the plane. We need to show
$\tau_{AB}\circ \tau_{BC}=\tau_{BC}\circ \tau_{AB}$ and use 3) to conclude translations for an Abelian subgroup of isomorphisms of the plane. Let $D=\tau_{BC}(A)$.
Notice $D=i_M(B)$, where $M$ is the midpoint of $s[A,C]$. Therefore $C=\tau_{AB}(D)$
and $\tau_{AB}=\tau_{DC}$, $\tau_{AD}=\tau_{BC}$ by 2). Now $\tau_{AB}\circ \tau_{BC}=\tau_{DC}\circ \tau_{AD}=\tau_{AC}=\tau_{BC}\circ \tau_{AB}$
and we are done.

5.$\implies$2. Suppose $\Delta(ABC)$ is a non-flat triangle.
Since $\tau_{BC}\circ \tau_{AB}=  \tau_{AB}\circ \tau_{BC}$,
$$\tau_{CA}\circ \tau_{BC}\circ \tau_{AB}=  \tau_{CA}\circ \tau_{AB}\circ \tau_{BC}=
 \tau_{AB}\circ \tau_{CA}\circ \tau_{BC}.$$
The left side it a rotation at $A$ by \ref{CompositionOfTranslationsProp}
and the right side is the rotation at $B$, hence both are equal to the identity.
Since the rotations are by the defect of the triangle $\Delta(ABC)$, the defect is $0$.

6.$\implies$5. is obvious.

6.$\implies$7. is obvious.

7.$\implies$2. Apply \ref{CompositionOfTranslationsProp} and \ref{NoFixedPointsForTranslations}.
\end{proof}

\begin{Proposition}[Similarity of triangles]\label{SimilarityOfTrianglesProp}
Let $r_1$ and $r_2$ be two maximal rays emanating from a point $O$ of an Euclidean plane
and let $t > 0$ be a real number. If $A_1,A_2\in r_1$ and $B_1,B_2\in r_2$ are points such that $t\cdot |OA_1|=|OA_2|$ and $t\cdot |OB_1|=|OB_2|$, then $t\cdot |A_1B_1|=|A_2B_2|$.
\end{Proposition}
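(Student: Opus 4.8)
The plan is to recast the statement in terms of the abelian group of translations of the Euclidean plane and to reduce everything to a single distributive law for ``scalar multiplication'' of translations by positive reals. First I would dispose of the degenerate case in which $O$, $A_1$, $B_1$ are collinear: then all four points $A_1,A_2,B_1,B_2$ lie on the connected line $l(O,A_1)$, which is an Erlangen line, and the claim becomes a statement about lengths of line segments on a line that follows from the algebra of lengths of Section~\ref{LengthOfLineSegmentsSec} (in particular \ref{RatiosOfLengthsLine}); one only has to keep track of whether $r_1$ and $r_2$ point the same way or are antipodal. So assume $O,A_1,B_1$ form a genuine triangle. By \ref{CharOfEuclideanPlanesThm} the translations of $\Pi$ form an abelian subgroup $V$ of $\mathcal I$, and for a translation $\rho$ the displacement length $|X\rho(X)|$ is independent of $X$ (the planar analogue of \ref{TranslationSizeProp}); write $\|\rho\|$ for this common length. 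Put $\sigma_1:=\tau_{OA_1}$ and $\sigma_2:=\tau_{OB_1}$, so that $\|\sigma_1\|=|OA_1|$, $\|\sigma_2\|=|OB_1|$, and $\tau_{A_1B_1}=\sigma_2\circ\sigma_1^{-1}$.

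Next I would define, for a translation $\rho\in V$ and a real $c\ge 0$, the translation $c\cdot\rho$: choosing any $X$ and letting $Z$ be the point of $ray[X,\rho(X)]$ with $|XZ|=c\cdot\|\rho\|$, set $c\cdot\rho:=\tau_{XZ}$; constancy of $\|\rho\|$ makes this independent of $X$, and for $c<0$ one uses the opposite ray. Three properties are then wanted: (iii) $\|c\cdot\rho\|=c\,\|\rho\|$, immediate from the definition; (ii) $(c_1+c_2)\cdot\rho=(c_1\cdot\rho)\circ(c_2\cdot\rho)$, immediate from additivity of lengths on the line carrying the orbit of $\rho$; and the crucial distributive law (i) $c\cdot(\rho_1\circ\rho_2)=(c\cdot\rho_1)\circ(c\cdot\rho_2)$. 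Granting (i), the proof finishes quickly: since $A_2=(t\cdot\sigma_1)(O)$ and $B_2=(t\cdot\sigma_2)(O)$, commutativity of $V$ gives $\tau_{A_2B_2}=(t\cdot\sigma_2)\circ(t\cdot\sigma_1)^{-1}=(t\cdot\sigma_2)\circ(t\cdot\sigma_1^{-1})$, and (i) together with scalar compatibility turns this into $t\cdot(\sigma_2\circ\sigma_1^{-1})=t\cdot\tau_{A_1B_1}$; applying (iii) yields $|A_2B_2|=\|\tau_{A_2B_2}\|=t\,\|\tau_{A_1B_1}\|=t\,|A_1B_1|$.

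The main obstacle is therefore the distributive law (i). I would establish it first for $c=\tfrac12$: this is exactly the midpoint theorem, saying that halving both ``coordinates'' halves the connecting translation, and it is where defect zero enters. Concretely one compares $\tfrac12\cdot(\rho_1\circ\rho_2)$ with $(\tfrac12\cdot\rho_1)\circ(\tfrac12\cdot\rho_2)$ using reflections in midpoints, which are involutions in $\mathcal I$ (via \ref{TranslationsAreTwoSymmetries} and \ref{InvolutionAndIdPlanes}) and which send each line to a parallel line precisely because every triangle has defect $0$; the resulting figure is a parallelogram, giving the halving. From the case $c=\tfrac12$ one gets every $c=1/2^k$ by iteration, hence every dyadic $c=m/2^k$ by writing $c\cdot(-)$ as an $m$-fold composition through (ii) and regrouping with the commutativity of $V$ (this regrouping is exactly the Euclidean hypothesis at work).

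Finally I would pass from dyadic $c$ to arbitrary real $c$ by the connectedness argument used repeatedly in the paper, e.g. in \ref{DividingLemma}: the two sides of (i), evaluated at a base point, determine points of a line that agree on a dense dyadic set, so locating the real scalar as the unique point lying outside a union of two convex sets that are not closed segments forces agreement for all $c\ge 0$, and the case $c<0$ follows by inverting. This yields (i) in full and completes the proof. I expect the parallelogram step underlying the $c=\tfrac12$ case to be the only genuinely delicate point, since it is the sole place where the hypothesis of being Euclidean (defect $0$, equivalently commutativity of translations) is actually consumed; everything else is bookkeeping with the length algebra and the connectedness machinery already in place.
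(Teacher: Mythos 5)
Your proposal is correct in its overall architecture and rests on the same mechanism as the paper's proof: push everything into the abelian group of translations supplied by \ref{CharOfEuclideanPlanesThm}, identify $\tau_{A_2B_2}$ as the $t$-fold scaling of $\tau_{A_1B_1}$, and read lengths off displacements. (Note that your unnamed lemma that $\|\rho\|:=|X\rho(X)|$ is independent of $X$ is itself a consequence of commutativity --- $\tau_{XY}$ commutes with $\rho$, hence carries $s[X,\rho(X)]$ onto $s[Y,\rho(Y)]$ --- and not a formal ``planar analogue'' of \ref{TranslationSizeProp}: in a non-Euclidean Erlangen plane the statement is false, so this is another place where the Euclidean hypothesis is consumed.) Where you diverge is in how the scaling identity is obtained. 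The paper does natural $t=n$ by pure algebra: \ref{CompositionOfTranslationsProp} gives $\tau_{A_1B_1}=\tau_{OB_1}\circ\tau_{A_1O}$, commutativity gives $\tau_{A_1B_1}^n=\tau_{OB_1}^n\circ\tau_{A_1O}^n=\tau_{OB_2}\circ\tau_{A_2O}=\tau_{A_2B_2}$, and \ref{DividingLengthViaTranslationsProp} converts powers of translations back into multiples of lengths; rationals follow. You instead prove the distributive law (i) starting from $c=\tfrac12$ by a midpoint/parallelogram argument. That step is workable (it is the same midpoint-reflection device the paper uses for $1\Rightarrow 2$ of \ref{CharOfEuclideanPlanesThm}), but it is unnecessary labor: your own property (ii) gives $(\tfrac12\cdot\rho)^2=\rho$, so both $\tfrac12\cdot(\rho_1\circ\rho_2)$ and $(\tfrac12\cdot\rho_1)\circ(\tfrac12\cdot\rho_2)$ square to $\rho_1\circ\rho_2$ by commutativity, and square roots in the translation group are unique because that group is torsion-free (a non-trivial translation acts without fixed points on an invariant line, so \ref{PeriodicityLemma} applies). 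Thus all dyadic --- indeed all rational --- scalars come out of the algebra alone, exactly as in the paper; the parallelogram figure is never needed.

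The genuine gap is your dyadic-to-real step. The connectedness argument requires the points $P(c)=\bigl((c\cdot\rho_1)\circ(c\cdot\rho_2)\bigr)(X)$, $c\ge 0$, to ``determine points of a line,'' so that the dyadic instances can be compared with the real one by betweenness and convexity. But only the other family $Q(c)=\bigl(c\cdot(\rho_1\circ\rho_2)\bigr)(X)$ visibly traces a ray; collinearity of the $P(c)$ is essentially the identity (i) you are trying to prove, so the argument is circular exactly where it has content. (The paper is no better here --- ``hence also valid for all real $t$'' is asserted without proof --- but your specific argument, as written, fails.) A correct completion stays inside the paper's toolkit: for dyadic $q<t$ let $A(q)\in r_1$, $B(q)\in r_2$ be the points with $|OA(q)|=q\cdot|OA_1|$ and $|OB(q)|=q\cdot|OB_1|$, so that the rational case gives $|A(q)B(q)|=q\cdot|A_1B_1|$, while subtraction of lengths along a ray gives $|A_2A(q)|=(t-q)\cdot|OA_1|$ and $|B_2B(q)|=(t-q)\cdot|OB_1|$. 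Two applications of the triangle inequality \ref{TriangleInequality} then yield
$$q\cdot|A_1B_1|\ \le\ |A_2B_2|+(t-q)\cdot\bigl(|OA_1|+|OB_1|\bigr),\qquad |A_2B_2|\ \le\ q\cdot|A_1B_1|+(t-q)\cdot\bigl(|OA_1|+|OB_1|\bigr).$$
Since $t-q$ can be made smaller than $\tfrac1n$ for every natural $n$, Archimedes' Axiom \ref{Archimedes' Axiom} forces $|A_2B_2|=t\cdot|A_1B_1|$, which is the assertion. With this replacement for your final step (and with the parallelogram step replaced or kept, as you prefer), the proof is complete.
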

\begin{proof}
By proving \ref{SimilarityOfTrianglesProp} for $t$ being natural, we get \ref{SimilarityOfTrianglesProp} is true for $t$ being rational, hence also valid for all real $t$.

Suppose $t=n$ is natural. By \ref{CompositionOfTranslationsProp} one has
$\tau_{A_1A_2}=\tau_{OA_2}\circ \tau_{A_1O}$ and
$\tau_{B_1B_2}=\tau_{OB_2}\circ \tau_{B_1O}$.
\ref{DividingLengthViaTranslationsProp} combined with \ref{CharOfEuclideanPlanesThm}  gives
$\tau_{A_1A_2}^n= \tau_{OA_2}^n\circ \tau_{A_1O}^n=
\tau_{OB_2}\circ \tau_{B_1O}=\tau_{B_1B_2}$ and we are done.
\end{proof}

\begin{Corollary}[Pythagoras Theorem]\label{Pythagoras Theorem}
If the angle at $A$ of the triangle $\Delta(ABC)$ on an Euclidean plane is a right angle
and $|AB|=t\cdot |BC|$, $|AC|=w\cdot |BC|$, then $t^2+w^2=1$.
\end{Corollary}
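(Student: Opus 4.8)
The plan is to run the classical altitude argument, powered by the similarity result \ref{SimilarityOfTrianglesProp}. First I would drop the perpendicular from the right-angle vertex $A$ to the hypotenuse: reflecting $A$ in the line $l(B,C)$ produces a point $A'$ on the opposite side, the segment $s[A,A']$ crosses $l(B,C)$ at a single point $H$, and $l(A,A')$ meets $l(B,C)$ there at a right angle. Because the angle of $\Delta(ABC)$ at $A$ is a right angle and the defect of every triangle is $0$ by \ref{CharOfEuclideanPlanesThm}, the angles at $B$ and at $C$ are positive and sum to a right angle; in particular each is smaller than a right angle, which forces $H$ to lie strictly between $B$ and $C$. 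Thus $p+q=1$, where $p:=|BH|/|BC|$ and $q:=|CH|/|BC|$ are the real ratios furnished by \ref{RatiosOfLengthsLine}, and likewise $t:=|AB|/|BC|$ and $w:=|AC|/|BC|$ are well defined reals.

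Next I would establish the two geometric-mean relations in ratio form, namely $t^2=p$ and $w^2=q$. These come from the similarities $\Delta(BHA)\sim\Delta(BAC)$ and $\Delta(CHA)\sim\Delta(CAB)$: in the first, comparing the leg adjacent to $B$ with the hypotenuse gives $|BH|/|BA|=|BA|/|BC|$, hence $p=t^2$, and symmetrically $q=w^2$ at $C$. To obtain these similarities inside the present framework I would first prove an angle-angle similarity lemma: two triangles with two pairs of equal angles have proportional sides. Granting the lemma, $\Delta(BHA)$ and $\Delta(BAC)$ share the angle at $B$ and each carries a right angle (at $H$, respectively at $A$), so the lemma applies, and the same argument works at $C$.

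Adding the two relations then yields $t^2+w^2=p+q=1$, which is exactly the assertion.

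The main obstacle is the angle-angle similarity lemma, because \ref{SimilarityOfTrianglesProp} is a homothety statement: it scales the two sides issuing from a common vertex along the \emph{same} two rays and concludes the third side scales by the same factor. In the Pythagorean configuration the correspondence matching $\Delta(BHA)$ with $\Delta(BAC)$ interchanges the two legs at $B$, so it is not a homothety and \ref{SimilarityOfTrianglesProp} cannot be quoted directly. To bridge the gap I would use a rigid motion of the Erlangen plane to place one triangle so that the equal angles share a vertex and an edge, invoke uniqueness of parallels (Condition \ref{Axiom 5 of Euclid}, cf.\ \ref{ExistenceOfParallelLinesProp}) to see that the sides opposite the shared vertex become parallel, and then combine \ref{SimilarityOfTrianglesProp} with the uniqueness of that parallel to pin down the corresponding third vertex and read off equal ratios along the two rays. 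Care must also be taken that the altitude foot $H$ genuinely lies between $B$ and $C$, since the additivity $p+q=1$ used in the final step depends on it.
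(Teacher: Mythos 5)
Your proposal is correct and follows essentially the same route as the paper's own proof, which consists of the single sentence ``Drop the height from $A$ on the side $s[B,C]$ and analyze similarity of triangles'' --- i.e.\ precisely your altitude-foot construction together with the two geometric-mean relations $t^2=p$, $w^2=q$ and the additivity $p+q=1$. The extra care you take --- constructing the foot $H$ by reflection in $l(B,C)$, checking $H\in s(B,C)$ via the zero-defect angle count, and especially noting that \ref{SimilarityOfTrianglesProp} is only a homothety (SAS-type) statement so that the needed angle-angle similarity requires a bridging argument through rigid motions and uniqueness of parallels (\ref{Axiom 5 of Euclid}, \ref{ExistenceOfParallelLinesProp}) --- fills in exactly the details that the paper leaves entirely to the reader.
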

\begin{proof}
Drop the height from $A$ on the side $s[B,C]$ and analyze similarity of triangles.
\end{proof}

\begin{Theorem}\label{CharOfEuclideanPlanesThm}
Each Euclidean plane $(\Pi,\mathcal{I})$ is isomorphic to the complex plane.
\end{Theorem}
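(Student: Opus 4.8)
The plan is to fix an origin $O\in\Pi$ and transport the whole plane onto $\mathbb{C}$ through the abelian group of translations. By the characterisation of Euclidean planes (\ref{CharOfEuclideanPlanesThm}) the translations of $\Pi$ form an abelian group $T$, and by \ref{NoFixedPointsForTranslations} a translation fixing a point is the identity, so the action of $T$ on $\Pi$ is simply transitive and $P\mapsto\tau_{OP}$ is a bijection $\Pi\to T$. First I would upgrade $T$ to a real vector space. Integer multiples are the powers $\tau^n$; unique $n$-th roots exist because the restriction of a translation to a line in its direction is a translation of an Erlangen line, so \ref{DividingLemma} applies; real scalars are then obtained by the Dedekind-completeness argument used in \ref{RatiosOfLengthsLine}. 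The law $(s+t)\tau=s\tau+t\tau$ is one-dimensional and reduces, via \ref{IsoOfErlangenLines}, to addition on the real line, while the genuinely two-dimensional law $t(\tau_1+\tau_2)=t\tau_1+t\tau_2$ is where the Euclidean hypothesis really enters and follows from similarity of triangles \ref{SimilarityOfTrianglesProp}.

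Next I would choose a basis. Pick a ray $r_1$ from $O$ and, using the right-angle structure of the Erlangen circle $\partial(\Pi,O)$ (see \ref{RightAngleDef} and \ref{CircleOfAnglesErlangenPlane}), the perpendicular ray $r_2$; take $E_i\in r_i$ with $|OE_1|=|OE_2|$ and set $\tau_1=\tau_{OE_1}$, $\tau_2=\tau_{OE_2}$. Every translation decomposes uniquely as $x\tau_1+y\tau_2$: existence holds because the line through $\tau(O)$ parallel to $l(O,E_2)$ must meet $l(O,E_1)$ (two non-parallel lines in a plane intersect, and parallelism is transitive by \ref{ExistenceOfParallelLinesProp} together with Axiom 5 of Euclid \ref{Axiom 5 of Euclid}), and uniqueness is the independence of $\tau_1,\tau_2$. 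Define $\Phi:\Pi\to\mathbb{C}$ by $\Phi(P)=x+iy$, where $\tau_{OP}=x\tau_1+y\tau_2$; this is a bijection.

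The heart of the argument is that $\Phi$ is an isomorphism of planes. Writing $\tau_{PQ}=\tau_{OQ}-\tau_{OP}$ in $T$ and using that $t\mapsto(t\tau_{PQ})(P)$ parametrises $s[P,Q]$ for $t\in[0,1]$ as an isomorphism of Erlangen lines (the scalar action of $T$ along a line agrees with that line's Erlangen structure by \ref{IsoOfErlangenLines} and \ref{TranslationSizeProp}), one obtains $s[P,Q]=\{((1-t)\tau_{OP}+t\tau_{OQ})(O):t\in[0,1]\}$; applying $\Phi$ turns this into the complex segment joining $\Phi(P)$ and $\Phi(Q)$, so $\Phi$ and $\Phi^{-1}$ preserve betweenness. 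Moreover $\Phi$ is an isometry: translating $Q$ to $O$ and decomposing $P$ as $(y\tau_2)((x\tau_1)(O))$ produces a right triangle with legs $|x|$ and $|y|$ by perpendicularity of $\tau_1$ and $\tau_2$, so Pythagoras \ref{Pythagoras Theorem} gives $|OP|^2=x^2+y^2=|\Phi(P)|^2$.

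Finally I would match the groups. I would prove, as in \ref{IsometriesAreIsomorphismsLine} and with the rigidity arguments of \ref{InvolutionAndIdPlanes}, that $\mathcal{I}$ is exactly the group of length-preserving bijections of $\Pi$: any isometry can be composed with an element of $\mathcal{I}$ so as to fix a frame (a point, a ray at it, and a side), and a frame-fixing isometry is forced to be the identity by the Pythagorean coordinates just constructed. The isometries of $\mathbb{C}$ are precisely the maps $z\mapsto(a+bi)z+w$ and $z\mapsto(a+bi)\bar z+w$ with $a^2+b^2=1$, as recorded in the complex-plane example. Since $\Phi$ is a betweenness-preserving isometry, conjugation $g\mapsto\Phi\circ g\circ\Phi^{-1}$ carries length-preserving bijections to length-preserving bijections in both directions, hence $\mathcal{I}$ onto the isometry group of $\mathbb{C}$; therefore $\Phi$ is an isomorphism of Erlangen planes. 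The main obstacle I anticipate is the rigorous construction of the real vector-space structure on $T$ — in particular the two-dimensional distributive law via \ref{SimilarityOfTrianglesProp} and the passage from rational to real scalars — together with the proof that $\mathcal{I}$ coincides with the full isometry group.
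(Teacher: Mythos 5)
Your proposal is correct and follows essentially the same route as the paper: perpendicular axes through an origin, coordinates obtained from translations together with Axiom 5 (your decomposition $\tau_{OP}=x\tau_1+y\tau_2$ is exactly the paper's parallel-projection definition of $x$ and $y$ in vector-space clothing), the Pythagoras Theorem \ref{Pythagoras Theorem} to show the coordinate map is an isometry onto $\mathbb{C}$, and conjugation of the transformation groups to conclude it is an isomorphism of Erlangen planes. The differences are presentational rather than substantive: you build the real vector space structure on the translation group explicitly before extracting coordinates, and you spell out the final group-identification step (that $\mathcal{I}$ is the full group of length-preserving bijections, via frame-fixing rigidity), a point the paper compresses into ``as in \ref{CharOfIsoOfErlangenLines}.''
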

\begin{proof}
Pick a line $l_1$ and two different points $O$ and $P$ on $l_1$. Rotate $l_1$ at $O$ by $90$ degrees and call the resulting line $l_2$. Let $Q\in l_2$ be the image of $P$ 
under rotation.
Let $x:l_1\to \mathbb{R}$ be
the isomorphism to the reals with $x(O)=0$ and $x(P)=1$. Let $y:l_2\to\mathbb{R}$ be the isomorphism with $y(O)=0$ and $y(Q)=1$. The function $x$ can be extended over the whole $\Pi$ as follows: given a point $C\in \Pi$ select the line $l_3$ passing through $C$ and parallel to $l_1$. That line intersects $l_2$ at a point $C'$
and the translation $\tau_{C'O}$ sends $C$ to a point $C''$ on $l_1$. We define $x(C)$ as equal to $x(C'')$. Similarly, one extends $y$ over the whole $\Pi$ and now one can
combine $x$ and $y$ to the function $z$, namely
$A$ is sent to $x(A)+i\cdot y(A)$, from $\Pi$ to the complex plane. Using the Pythagoras Theorem \ref{Pythagoras Theorem} one can see it has the property
that $|UV|=t\cdot |OP|$ implies $|z(U)-z(V)|=t$. Hence, for every $f\in \mathcal{I}$
the function $z\circ f\circ z^{-1}$ preserves distances on the complex plane and must belong to the isomorphisms of the complex plane. As in
 \ref{CharOfIsoOfErlangenLines} one can see $z$ is an isomorphism of Erlangen planes.
\end{proof}

\section{Models of hyperbolic geometry}

In a typical introduction to hyperbolic geometry one usually defines the hyperbolic length first and then investigates isometries (see \cite{Stah} for example). Our approach allows for a much simpler exposition. Namely, we start with the group of isomorphisms, choose a line, and propagate its betweenness relation all over to create a plane. This approach corresponds very well to the way measurements are made: one has a ruler and uses it to draw lines all over the plane.

\begin{Theorem}\label{PropagationTheorem}
Let $(\Pi,\mathcal{I})$ be a set $\Pi$ with a subgroup $\mathcal{I}$ of its bijections.
Suppose there is a line $l_0$ in $\Pi$ and a point $A_0\in l_0$ satisfying the following properties:\\
i. if $f\in \mathcal{I}$, $f(A_0)=A_0$, and $f(B)\in l_0$ for some $B\in l_0$ different from $A_0$, then $f(l_0)=l_0$ and $f_0$ preserves betweenness of $l_0$,\\
ii. $(l_0,\mathcal{J})$ is an Erlangen line, where $\mathcal{J}$ consists of restrictions to $l_0$ of functions in $\mathcal{I}$ that preserve $l_0$,\\
iii. for every two points $A$, $B$ of $\Pi$ there is $f\in \mathcal{I}$ such that $f(A),f(B)\in l_0$,\\
iv. $\Pi\setminus l_0$ is the union of two disjoint non-empty sets $h_+$ and $h_-$ with the property
that every $f\in \mathcal{I}$ preserving $l_0$ either preserves each of $h_+$, $h_-$, or flips them,\\
v. every function $f\in \mathcal{I}$ that fixes $A_0$ but does not preserve $l_0$
sends one of the two maximal rays in $l_0$ emanating from $A_0$ to  $h_+$ and the other to $h_-$,\\
vi. if $f(l_0)$ is disjoint from $l_0$ for some $f\in \mathcal{I}$, then $f(l_0)$ is contained in one of the sets $h_+$, $h_-$,\\
vii. there is exactly one function $f_0$ in $\mathcal{I}$ that fixes $l_0$ and flips sets $h_+$ and $h_-$.

If one defines betweenness on $\Pi$ by the condition that $C$ is between $A$ and $B$ whenever there is $f\in \mathcal{I}$ sending all of $A,B,C$ to $l_0$ such that $f(C)$ is between $f(A)$ and $f(B)$ in $l_0$, then this relation is well-defined, $\Pi$ is a plane, and $(\Pi,\mathcal{I})$ is an Erlangen plane.
\end{Theorem}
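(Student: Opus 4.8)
The plan is to build the structure on $\Pi$ in four layers: a transport lemma letting every configuration be compared inside $l_0$; the betweenness and space-with-maximal-rays axioms; the separation property making $\Pi$ a plane; and finally Theorem \ref{EPVsRM} to upgrade $(\Pi,\mathcal{I})$ to an Erlangen plane.

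First I would prove the \emph{transport lemma}: if $h\in\mathcal{I}$ carries two distinct points of $l_0$ back into $l_0$, then $h(l_0)=l_0$ and $h$ preserves the betweenness of $l_0$. Using (ii), $(l_0,\mathcal{J})$ is an Erlangen line, so for each point of $l_0$ there is an element of $\mathcal{I}$ preserving $l_0$ and sending that point to $A_0$; pre- and post-composing $h$ with two such $u,v$ yields $g=v\circ h\circ u^{-1}$ fixing $A_0$ and sending a second point of $l_0$ into $l_0$, whence (i) forces $g(l_0)=l_0$ and $g|l_0\in\mathcal{J}$, and undoing $u,v$ gives the claim. Well-definedness is then immediate: if $f,g$ both send $A,B,C$ into $l_0$ with not all images equal, then $g\circ f^{-1}$ carries two distinct points of $l_0$ into $l_0$, so it preserves betweenness, giving $f(C)\in s[f(A),f(B)]\iff g(C)\in s[g(A),g(B)]$; the case $A=B=C$ is trivial.

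Next I record the structural consequence that for $A\neq B$ and any $f\in\mathcal{I}$ with $f(A),f(B)\in l_0$ one has $s[A,B]=f^{-1}(s[f(A),f(B)])$, with $f$ restricting to a betweenness isomorphism $s[A,B]\to s[f(A),f(B)]\subseteq l_0$ (any other witness $g$ for $X\in s[A,B]$ is compared to $f$ by the lemma). Since $l_0$ is a connected line (\ref{ErlangenLineDef}), every $s[A,B]$ is connected and inherits the axioms of \ref{BetweennessDef}, and $s[P,Q]\subseteq l_0$ whenever $P,Q\in l_0$. The same comparison shows every $f\in\mathcal{I}$ preserves the new betweenness, hence is a self-isomorphism of $\Pi$ by \ref{BijectionPreservingBetweennessProp}, so $\mathcal{I}$ maps lines to lines and maximal rays to maximal rays. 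Two facts I would extract for repeated use: a maximal ray at $P\in l_0$ is either a ray of $l_0$ or meets $l_0$ only at $P$ (a subsegment with both ends in $l_0$ lies in $l_0$, then apply axiom 1 of \ref{SpaceWithMaximalRays}), and the stabilizer of $A_0$ is transitive on maximal rays at $A_0$ (an isomorphism taking one ray at $A_0$ to another must fix the common endpoint $A_0$).

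Verifying \ref{SpaceWithMaximalRays} is then transport: segments extend because a line has no endpoints by \ref{LineDef}, and if $s[A,B]$, $s[A,C]$ share an interior point $M$ a single $f$ can be arranged to send $A,B,C,M$ faithfully into $l_0$ (via (iii) and the lemma), where \ref{IntersectionOfSegmentsLine} applies; maximal lines $l(A,B)$ exist and are unique by \ref{MaximalLineCor}. \emph{The crux, and the step I expect to be the main obstacle, is the separation property of \ref{PlaneDef}.} It suffices to prove it for $l_0$ and transport along an $f\in\mathcal{I}$ with $f(l(A,B))=l_0$ (from (iii), since $l_0$ is the unique maximal line through $f(A),f(B)$). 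For $l_0$ I would argue in order: (at most one crossing) if $s[C,D]$ with $C\notin l_0$ met $l_0$ twice then, taking $f$ with $f(C),f(D)\in l_0$, the lemma would force $C\in l_0$; (ray splitting) transporting (v) to any $M_0\in l_0$ by $\mathcal{J}$ shows every line $\neq l_0$ through $M_0$ has one ray in $h_+$ and one in $h_-$; (opposite sides cross) for $C\in h_+$, $D\in h_-$, condition (vi) forbids $l(C,D)$ from avoiding $l_0$, so it crosses at some $M_0$, and since $C,D$ lie on the two opposite rays at $M_0$ the point $M_0$ is between $C$ and $D$; (convexity of $h_\pm$) the last two items rule out a point of $s[C,D]$, $C,D\in h_+$, lying in $l_0$ or in $h_-$. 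This makes $\Pi$ a plane.

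Finally I invoke \ref{EPVsRM}. Isotropy holds because every maximal ray of $\Pi$ is carried into $l_0$ (send its two determining points into $l_0$ by (iii); a ray of the maximal line $l_0$ is maximal in $\Pi$) and then moved to $r_1$ by $\mathcal{J}$; rigidity holds by \ref{RigidMotionsViaRaysProp}, since $f(r)=r$ conjugates to an element fixing $r_1\subseteq l_0$, which is the identity on $r_1$ by the rigidity of the Erlangen line (\ref{ErlangenLinesViaRigidMotionsCor}). Condition (b) of \ref{EPVsRM} is the $f_0$ of (vii) together with the half-planes of $l_0$ just built. For (c), if $f|l_0=\mathrm{id}$ then by (iv) either $f$ flips the sides, whence $f=f_0$ by the uniqueness in (vii), or $f$ preserves them, whence $f\circ f_0$ flips the sides while fixing $l_0$ and the same uniqueness gives $f\circ f_0=f_0$, i.e. $f=\mathrm{id}$. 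Thus \ref{EPVsRM} applies and $(\Pi,\mathcal{I})$ is an Erlangen plane.
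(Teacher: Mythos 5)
Your proposal is correct and takes essentially the same route as the paper's own proof: your transport lemma is precisely the paper's strengthening of conditions i, iii, v to arbitrary points of $l_0$, and the remaining steps (well-definedness by composing witnesses, verification of the maximal-ray axioms by pushing configurations into $l_0$, convexity of $h_+$ and $h_-$ via (v) and (vi), transport of the separation property along maps carrying $l(A,B)$ to $l_0$, and the final appeal to Theorem \ref{EPVsRM} together with (vii)) mirror the paper's argument step for step. The only cosmetic difference is that you check rigidity through \ref{RigidMotionsViaRaysProp} on rays, while the paper verifies the segment condition of \ref{SpaceWithRigidMotionsDef} directly; both reduce to the rigidity of the Erlangen line $(l_0,\mathcal{J})$.
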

\begin{proof}
First, let's strengthen some conditions as follows:\\
i'. if $f\in \mathcal{I}$, $f(A)=A$, and $f(B)\in l_0$ for some $B\in l_0$ different from $A\in l_0$, then $f(l_0)=l_0$ and $f_0$ preserves betweenness of $l_0$,\\
iii'. for every two points $A$, $B$ of $\Pi$ there is $f\in \mathcal{I}$ such that $f(A),f(B)\in l_0$ and $f(A)=A_0$,\\
v'. every function $f\in \mathcal{I}$ that fixes some $A\in l_0$ but does not preserve $l_0$
sends one of the two maximal rays in $l_0$ emanating from $A$ to  $h_+$ and the other to $h_-$.

To show i'. choose, using iii., $g\in \mathcal{I}$ preserving $l_0$ and sending $A$ to $A_0$. Apply i. to $h=g\circ f\circ g^{-1}$. To show iii' use ii. To show v'. choose, using ii., $g\in \mathcal{I}$ preserving $l_0$ and sending $A$ to $A_0$. Apply v. to $h=g\circ f\circ g^{-1}$.

Suppose $f\in\mathcal{I}$ sends three points $A$, $B$, and $C$ of $\Pi$ to $l_0$
and $f(A)$ is between $f(B)$ and $f(C)$. What is needed to be shown is that if another element
$g\in\mathcal{I}$ sends $A$, $B$, and $C$ to the $l_0$, then $g(A)$ is between $g(B)$ and $g(C)$. We may assume $f(A)=A_0$ and $g(A)=A_0$ by composing $f$ and $g$
with appropriate elements $\mathcal{I}$ preserving $l_0$ and preserving betweenness of $l_0$. 
Let $h=g\circ f^{-1}$. It is an element of $\mathcal{I}$ that sends two different points of the $l_0$ to $l_0$ in addition to $h(A_0)=A_0$. By i. $h$ preserves $h_0$ and
by ii. it preserves betweenness of $l_0$. Therefore $h(f(A))$ is between $h(f(B))$ and $h(f(C))$, i.e. $g(A)$ is between $g(B)$ and $g(C)$.

Since every two points $A$, $B$ of $\Pi$ can be sent to $l_0$, the line segment $s[A,B]$ is isomorphic to a line segment in $l_0$ and must be connected. For the same reason each connected closed line segment is contained in the interior of another connected closed line segment. To demonstrate $\Pi$ is a space with maximal rays we need to show that
if two connected line segments $s[A,B]$ and $s[A,C]$ have a common interior point, then one of them is contained in the other. By applying an element of $\mathcal{I}$
we can reduce it to the case $A=A_0$ and the common interior point being $D\in l_0$.
Choose $f\in \mathcal{I}$ sending $A,D,B$ to $l_0$. We may assume $f(A_0)=A_0$
by composing $f$ with another element of $ \mathcal{I}$ (use ii.). Now $f$ preserves $l_0$ resulting in $B\in l_0$. For the same reason $C\in l_0$ and one of $s[A,B]$ and $s[A,C]$ is contained in the other since $l_0$ is a space with maximal rays.

In order to apply Theorem \ref{EPVsRM} we need to show $(\Pi,\mathcal{I})$
is a space with rigid motions and $\Pi$ is a plane. Given a maximal ray $r$ in $\Pi$
one can use iii'. and ii. to send $r$ to a particular maximal ray in $l_0$ emanating from $A_0$. That shows $\Pi$ is isotropic. The rigidity of elements of $\mathcal{I}$ can be shown as follows: given $f(s[A,B])\subset s[A,B]$, where $A\ne B$, find $g\in \mathcal{I}$ sending
$s[A,B]$ to $l_0$. Now, $h=g\circ f\circ g^{-1}$ preserves $l_0$ by i. 
and $h(s[g(A),g(B)]\subset s[g(A),g(B)]$. Consequently, $h(s[g(A),g(B)]= s[g(A),g(B)]$
leading to $f(s[A,B])= s[A,B]$.

Notice that maximal lines $l(A,B)$ in $\Pi$ are of the form $f(l_0)$ for some $f\in \mathcal{I}$. 

Observe each of $h_+$ and $h_-$ is convex in $\Pi$. Indeed, if $A,B\in h_+$
and $l(A,B)$ does not intersect $l_0$, then vi. says $l(A,B)\subset h_+$.
Consider $l(A,B)$ containing $C\in l_0$. Choose $f\in \mathcal{I}$
sending $l_0$ onto $l(A,B)$ and fixing $C$. By v'. $f$
sends one of the two maximal rays in $l_0$ emanating from $C$ to  $h_+$ and the other to $h_-$. That means $s[A,B]$ does not contain $C$ and $s[A,B]\subset h_+$.
For similar reasons $s[A,B]$ intersects
$l_0$ if $A\in h_+$ and $B\in h_-$. 

Given any maximal line $l$ in $\Pi$ we represent it as $f(l_0)$ for some $f\in \mathcal{I}$ and notice that $\Pi\setminus l$ has $f(h_+)$ and $f(h_-)$ as components.

Apply vii. and Theorem \ref{EPVsRM} to conclude the proof.
\end{proof}

If one wishes to apply some knowledge of topology, then Theorem \ref{PropagationTheorem} has a simpler version:

\begin{Theorem}\label{TopPropagationTheorem}
Let $(\Pi,\mathcal{I})$ be a topological space $\Pi$ with a subgroup $\mathcal{I}$ of its homeomorphisms.
Suppose there is a topological line $l_0$ which is closed in $\Pi$ and a point $A_0\in l_0$ satisfying the following properties:\\
i. if $f\in \mathcal{I}$, $f(A_0)=A_0$, and $f(B)\in l_0$ for some $B\in l_0$ different from $A_0$, then $f(l_0)=l_0$,\\
ii. $(l_0,\mathcal{J})$ is an Erlangen line, where $\mathcal{J}$ consists of restrictions to $l_0$ of functions in $\mathcal{I}$ that preserve $l_0$,\\
iii. for every two points $A$, $B$ of $\Pi$ there is $f\in \mathcal{I}$ such that $f(A),f(B)\in l_0$,\\
iv. $\Pi\setminus l_0$ is the union of two disjoint, non-empty, open, and connected sets $h_+$ and $h_-$,\\
v. there is exactly one function in $\mathcal{I}$ that fixes $l_0$ and flips sets $h_+$ and $h_-$.

If one defines betweenness on $\Pi$ by the condition that $C$ is between $A$ and $B$ whenever there is $f\in \mathcal{I}$ sending all of $A,B,C$ to $l_0$ such that $f(C)$ is between $f(A)$ and $f(B)$ in $l_0$, then this relation is well-defined, $\Pi$ is a plane, and $(\Pi,\mathcal{I})$ is an Erlangen plane.
\end{Theorem}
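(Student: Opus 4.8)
The plan is to deduce the seven hypotheses of Theorem \ref{PropagationTheorem} from the five topological hypotheses above and then invoke that theorem verbatim. Four of the seven are essentially free. Conditions ii and iii of \ref{PropagationTheorem} are literally our ii and iii, and condition vii of \ref{PropagationTheorem} is literally our v. Condition i of \ref{PropagationTheorem} differs from our i only in the extra clause that $f$ preserves betweenness of $l_0$; since $l_0$ is a topological line and, by the Remark following \ref{LineDef}, its betweenness is the unique relation compatible with its topology, any self-homeomorphism of $l_0$ automatically preserves that betweenness. Thus our i, together with the fact that elements of $\mathcal{I}$ are homeomorphisms, yields condition i of \ref{PropagationTheorem}.

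The next step is to upgrade our iv to condition iv of \ref{PropagationTheorem}. Because $h_+$ and $h_-$ are open, connected, disjoint, and cover $\Pi\setminus l_0$, each is clopen in the subspace $\Pi\setminus l_0$, so they are precisely the two connected components of $\Pi\setminus l_0$. Any $f\in\mathcal{I}$ with $f(l_0)=l_0$ restricts to a homeomorphism of $\Pi\setminus l_0$, which must permute its connected components; hence $f$ either preserves each of $h_+,h_-$ or interchanges them, which is condition iv of \ref{PropagationTheorem}. Condition vi of \ref{PropagationTheorem} is just as quick: if $f(l_0)$ is disjoint from $l_0$, then $f(l_0)$, being the homeomorphic image of the connected set $l_0$, is a connected subset of $h_+\sqcup h_-$ and therefore lies in a single component.

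The heart of the matter is condition v of \ref{PropagationTheorem}: if $f\in\mathcal{I}$ fixes $A_0$ but does not preserve $l_0$, then the two maximal rays $r_1,r_2$ of $l_0$ at $A_0$ are sent to different components. First I would record that the contrapositive of our i forces $f(l_0)\cap l_0=\{A_0\}$, since any second common point would be the image of a point $B\in l_0\setminus\{A_0\}$ with $f(B)\in l_0$, whence $f(l_0)=l_0$. Consequently each of the connected sets $f(r_1)\setminus\{A_0\}$ and $f(r_2)\setminus\{A_0\}$ lies in one of $h_+,h_-$, and what must be excluded is that they lie in the same one. Here I would use that both $l_0$ and $f(l_0)$ separate $\Pi$ into exactly two connected pieces (the second because $f$ is a global homeomorphism, so $\Pi\setminus f(l_0)=f(h_+)\sqcup f(h_-)$), that $f$ carries the antipodal pair $\{r_1,r_2\}$ to an antipodal pair (as $\mathcal{I}$ preserves the already well-defined betweenness), and that $f(l_0)\neq l_0$ together with $f(A_0)=A_0$ forces the pair $\{f(r_1),f(r_2)\}$ to be genuinely distinct from $\{r_1,r_2\}$. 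The geometric content to be made rigorous is that two distinct antipodal pairs of directions at $A_0$ mutually separate, so the images cannot share a side. This separation step is the main obstacle, and it is exactly where the connectedness of $h_+,h_-$ and the closedness of $l_0$ enter; the easy half, namely that a segment from $h_+$ to $h_-$ must meet $l_0$, is immediate from connectedness of segments, but the converse, equivalent to convexity of the half-planes, is what this argument must secure.

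Once condition v is in hand, all seven hypotheses of Theorem \ref{PropagationTheorem} hold, and that theorem delivers the conclusion: the betweenness is well defined, $\Pi$ is a plane, and $(\Pi,\mathcal{I})$ is an Erlangen plane.
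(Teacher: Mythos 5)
Your reduction is exactly the paper's own proof: the paper, too, disposes of \ref{TopPropagationTheorem} by asserting that the remaining hypotheses iv.--vi. of \ref{PropagationTheorem} follow from the topological assumptions and then invoking \ref{PropagationTheorem}. Your treatments of conditions i, iv, and vi are correct and in fact more detailed than the paper's: the component argument for iv, the connectedness-of-the-image argument for vi, and the appeal to the uniqueness of the betweenness relation on a topological line to obtain the betweenness clause in i are all sound.

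The gap is condition v of \ref{PropagationTheorem}, whose key step you explicitly leave unproven (``\ldots is what this argument must secure''), and the sketch you offer has two defects. First, it is circular as written: you invoke ``the already well-defined betweenness'' on $\Pi$ and the preservation of antipodal pairs, but well-definedness of betweenness on $\Pi$ is part of the \emph{conclusion} of \ref{PropagationTheorem}, which you cannot use while still verifying its hypotheses. (This is repairable --- the well-definedness portion of the proof of \ref{PropagationTheorem} uses only conditions i.--iii., so it could be run first --- but you would have to say so.) Second, and more seriously, you locate the missing argument in ``the connectedness of $h_+,h_-$ and the closedness of $l_0$'', and no argument from those facts alone can exist: the map $(x,y)\mapsto (x,\,y+|x|)$ is a homeomorphism of $\mathbb{R}^2$ fixing the origin and carrying the $x$-axis onto the graph of $y=|x|$, a closed topological line meeting $l_0$ only at $A_0$ with \emph{both} of its rays in $h_+$, even though both lines separate the plane into two open connected pieces. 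So the conclusion of condition v is simply false for an arbitrary homeomorphism satisfying the purely topological constraints; a correct proof must show that the group hypotheses --- the Erlangen line structure in ii, the two-point transitivity in iii, and the uniqueness of the flip (your v, i.e.\ vii of \ref{PropagationTheorem}) --- exclude such maps from $\mathcal{I}$, and your sketch never touches these tools. In fairness, the paper is equally cursory here: its claim that condition v ``follows from connectivity of $h_+$ and $h_-$'' is the same overstatement, so your proposal reproduces the paper's route, including its weakest point; but as a proof it is incomplete precisely there.
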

\begin{proof}
\ref{TopPropagationTheorem} can be proved directly following the ideas in the proof of \ref{PropagationTheorem}. On the other hand iv.--vi. of \ref{PropagationTheorem} follow from connectivity of $h_+$ and $h_-$.
\end{proof}

\begin{Example}[Poincare Model]\label{Poincare Model}
Let $\mathbb{PM}^2$ be the interior of the unit disk 
$$\{z=x+y\cdot i\mid x^2+y^2 < 1\}$$
in the complex plane and let $\mathcal{I}$
be the union of two sets:\\
a. one consisting of all functions $f(z)=\frac{a\cdot z+\bar c}{c\cdot z+\bar a}$, where $a,c$ are complex numbers satisfying $|a| > |c|$,\\
b. the other consisting of all functions $f(z)=\frac{-a\cdot \bar z+\bar c}{-c\cdot \bar z+\bar a}$, where $a,c$ are complex numbers satisfying $|a| > |c|$,\\
In both cases one has $|f(0)| < 1$ and $|z|=1$
implies $|f(z)|=1$. In other words, $f(\mathbb{PM}^2)= \mathbb{PM}^2$.

We consider the intersection $l_0$ of $\mathbb{PM}^2$ with the $x$-axis equipped with the standard relation of betweenness. This relation is propagated to the whole $\mathbb{PM}^2$ as follows: $A$ is between $B$ and $C$ if there is $f\in\mathcal{I}$ sending all points $A$, $B$, and $C$ to the $x$-axis and $f(A)$ is between $f(B)$ and $f(C)$. The result is an Erlangen plane that is not Euclidean.

Let $h_+=\{x+yi\in \mathbb{PM}^2\mid y > 0\}$ and let $h_-=\{x+yi\in \mathbb{PM}^2\mid y < 0\}$. Both are open and connected.

For simplicity, let's apply \ref{TopPropagationTheorem} (applying \ref{PropagationTheorem} is easy but a bit longer). Put $A_0=0$.
The restrictions $\mathcal{J}$ of functions in $\mathcal{I}$ which preserve $l_0$ are those from Hyperbolic line II, hence $(l_0,\mathcal{J})$ is an Erlangen line by \ref{The hyperbolic line II}. That means ii. of \ref{TopPropagationTheorem} holds. Among the functions in $\mathcal{I}$ fixing $l_0$ exactly one ($z\to \bar z$) flips sets $h_+$ and $h_-$ and v. of \ref{TopPropagationTheorem} holds. iv. is obvious.

Every point $A$ in $ \mathbb{PM}^2$ can be moved to $l_0$ by a function of the form $z\to c\cdot z$ for some $c$ of modulus $1$, and then sent to $A_0$ by an element of $\mathcal{J}$. Therefore iii. follows as for every point $B$ of $\Pi$ there is $f\in \mathcal{I}$ of the form $z\to c\cdot z$ for some $c$ of modulus $1$ such that $f(B)\in l_0$ and $f(A_0)=A_0$. 

Functions $f$ in $\mathcal{I}$ fixing $A_0$ are of one of two forms: $f(z)=a\cdot z$, where $a$ is of modulus $1$, and $f(z)=a\cdot \bar z$, where $a$ is of modulus $1$. Those which do not preserve $l_0$ must have $a$ not being real. Therefore i. of \ref{TopPropagationTheorem} holds.

To see that $(\mathbb{PM}^2,\mathcal{I})$ is not a Euclidean plane it suffices to find three parallel lines such that every two of them lie on the same side of the other one.

Look at $f(z)=\frac{a\cdot z+\bar c}{c\cdot z+\bar a}$ that sends $-1$ to itself and sends $1$ to $i$. $a=2+i$ and $c=-i$ works. If $z$ is real, the real part of $f(z)$ is negative. That means sending $z$ to the negative of the conjugate of $f(z)$ produces a line in $h_+$ disjoint with $f(l_0)$.
\end{Example}

\begin{Example}[Upper half-plane model]\label{Upper half-plane model}
Let $\mathbb{H}^2$ be the upper half-plane $$\{z=x+y\cdot i\mid y > 0\}$$ and let $\mathcal{I}$
be the union of two sets:\\
a. one consisting of all rational functions $f(z)=\frac{a\cdot z+b}{c\cdot z+d}$, where $a,b,c,d$ are real
and $a\cdot d-b\cdot c > 0$,\\
b. the other consisting of all functions of the form $f(z)=\frac{-a\cdot \bar z+b}{-c\cdot \bar z+d}$, where $a,b,c,d$ are real
and $a\cdot d-b\cdot c > 0$.

We consider the upper part of the $y$-axis with the standard relation of betweenness and that relation is propagated to the whole $\mathbb{H}^2$ as follows: $A$ is between $B$ and $C$ if there is $f\in\mathcal{I}$ sending all points $A$, $B$, and $C$ to the $y$-axis and $f(A)$ is between $f(B)$ and $f(C)$. The result is an Erlangen plane that is not Euclidean.

Define $l_0$ as the upper part of the $y$-axis and consider the quadrants $h_+=\{x+yi\in \mathbb{H}^2\mid x > 0\}$ and $h_-=\{x+yi\in \mathbb{H}^2\mid x < 0\}$. 

We will apply \ref{TopPropagationTheorem} (applying \ref{PropagationTheorem} is easy but a bit longer). Put $A_0=0+i$.
The restrictions $\mathcal{J}$ of functions in $\mathcal{I}$ which preserve $l_0$ are of the form $f(z)=c\cdot z$ for some real $c > 0$ or of the form $f(z)=\frac{-c}{\bar z}$ for some real $c > 0$. $(l_0,\mathcal{J})$ is basically the Hyperbolic line I, hence an Erlangen line by \ref{The hyperbolic line I}. That means ii. of \ref{TopPropagationTheorem} holds. Among the functions in $\mathcal{I}$ fixing $l_0$ exactly one ($z\to \frac{-1}{\bar z}$) flips sets $h_+$ and $h_-$ and v. of \ref{TopPropagationTheorem} holds. iv. is obvious.

Every point $A$ in $ \mathbb{H}^2$ can be moved to $l_0$ by a function of the form $z\to z-c$ for some real $c$, and then sent to $A_0$ by a dilation. Therefore, to show iii. it suffices to demonstrate that for every point $B$ of $\Pi$ there is $f\in \mathcal{I}$ such that $f(B)\in l_0$ and $f(A_0)=A_0$. Functions $f$ in $\mathcal{I}$ fixing $A_0$ are of one of two forms: $f(z)=\frac{a\cdot z+b}{-b\cdot z+a}$, where $a,b$ are real
and $a^2+b^2 > 0$, and $f(z)=\frac{-a\cdot \bar z+b}{b\cdot \bar z+a}$, where $a,b$ are real and $a^2+b^2 > 0$. Those which do not preserve $l_0$ must have $a\ne 0$ and $b\ne 0$, hence can be reduced to one of two forms: $f(z)=\frac{z+b}{-b\cdot z+1}$, where $b\ne $ is real, and $f(z)=\frac{-\bar z+b}{b\cdot \bar z+1}$, where $b\ne 0$ is real.
Notice in both cases $f(t\cdot i)$ being purely imaginary can happen only if $t=1$. Therefore i. of \ref{TopPropagationTheorem} holds.

Given any complex number $u+i\cdot v$ with $v > 0$ and $u\ne 0$ we find intersections with the $x$-axis
of the Euclidean circle centered on the $x$-axis and passing through $(0,1)$ and $(u,v)$.
That leads to the equation $x^2+1=(u-x)^2+v^2$ or $x=\frac{u^2+v^2-1}{2u}$
for the center of the circle and $r^2=1+x^2$ for its radius.
The function we are looking for is $f(z)=c\cdot \frac{z-x+r}{z+x+r}$.
For some positive $c$ it satisfies $f(i)=i$ and $f(u+i\cdot v)$ is purely imaginary for all positive $c$. Therefore iii. of \ref{TopPropagationTheorem} holds.

To see that $(\mathbb{H}^2,\mathcal{I})$ is not a Euclidean plane look at $f(z)=2\cdot z$. It is a translation from $1$ to $2$ but it is not a translation from $z$ to $2z$
for $z$ outside of the $y$-axis. Indeed, the set of points $\{(x,y) | \frac{y}{x}=c\}$
is not a line in $\mathbb{H}^2$ for any constant $c$.
\end{Example}

\begin{Example}[Beltrami Model]\label{Beltrami Model}
Let $\mathbb{BM}^2$ be the the upper part of the hyperboloid $\{(t,x,y)\mid t^2-x^2-y^2=1\}$ (i.e. $t > 0$) and let $\mathcal{I}$ be the orthochronous group $O^+(1,2)$
of Lorentz transformations. It is the group of linear transformations of the space $\mathbb{R}^3$ preserving the quadratic form $t^2-x^2-y^2$ and preserving the orientation of $t$.

We consider the intersection $l_0$ of $\mathbb{BM}^2$ with the $tx$-plane equipped with the relation of betweenness determined by the order on $x$. This relation is propagated to the whole $\mathbb{BM}^2$ as follows: $A$ is between $B$ and $C$ if there is $f\in\mathcal{I}$ sending all points $A$, $B$, and $C$ to the $tx$-plane and $f(A)$ is between $f(B)$ and $f(C)$. The result is an Erlangen plane that is not Euclidean.

Let $h_+=\{(t,x,y)\in \mathbb{BM}^2\mid y > 0\}$ and $h_-=\{(t,x,y)\in \mathbb{BM}^2\mid y < 0\}$. Both are open and connected in $ \mathbb{BM}^2$ which is topologically a plane.

We will apply \ref{TopPropagationTheorem} (applying \ref{PropagationTheorem} is easy but a bit longer). Put $A_0=(1,0,0)$.

The restrictions $\mathcal{J}$ of functions in $\mathcal{I}$ which preserve $l_0$ are those from Hyperbolic line III, hence $(l_0,\mathcal{J})$ is an Erlangen line by \ref{The hyperbolic line III}. That means ii. of \ref{TopPropagationTheorem} holds. Among the functions in $\mathcal{I}$ fixing $l_0$ exactly one ($(t,x,y)\to (t,x,-y)$) flips sets $h_+$ and $h_-$. Therefore, v. of \ref{TopPropagationTheorem} holds. iv. is obvious.

Functions $f$ in $\mathcal{I}$ fixing $A_0$ are the forms: $f(t,x,y)=(t,h(x,y)$, where 
$h(x,y)$ is a linear transformation preserving the quadratic form $x^2+y^2$. Those which do not preserve $l_0$ move all $l_0\setminus \{A_0\}$ outside of $l_0$. Therefore i. of \ref{TopPropagationTheorem} holds.

Every point $A$ in $ \mathbb{BM}^2$ can be moved to $l_0$ by a boost in $y$-direction, and then sent to $A_0$ by an element of $\mathcal{J}$. Therefore iii. follows as for every point $B$ of $\mathbb{BM}^2$ there is $f\in \mathcal{I}$ fixing $A_0$ such that $f(B)\in l_0$. 

Maximal lines in $\mathbb{BM}^2$ are non-empty intersections of $\mathbb{BM}^2$
with Euclidean planes in $\mathbb{R}^3$ passing through $(0,0,0)$. To see that $(\mathbb{BM}^2,\mathcal{I})$ is not a Euclidean plane it suffices to find three disjoint lines
such that every pair of them is on the same side of the third one. Let one of them be $l_0$, let the second be the intersection of $\mathbb{BM}^2$ with the line $t=x+y$,
and let the third one be the intersection of $\mathbb{BM}^2$ with the line $t=-x+y$.
Look at their projections onto the $xy$-plane.
\end{Example}

\section{Spherical geometry}

One can use our approach and create an axiomatization for spherical geometry. In contrast to typical textbooks, we do not use lines. Instead, we use circles.

\begin{Definition}\label{SphereDef}
A \textbf{sphere} is a space $\Sigma$ with maximal rays containing at least three points and equipped with an antipodal map $a$  such that each pair of non-antipodal points $A$, $B$ in $\Sigma$ is contained in a unique circle $c(A,B)$. $c(A,B)$ has the property that its complement $\Sigma\setminus c(A,B)$ can be expressed as the union of two disjoint non-empty convex sets $h_1$ and $h_2$ (called \textbf{half-spheres}) such that $s[C,D]$ intersects $c(A,B)$ at exactly one point whenever $C\in h_1$ and $D\in h_2$. 
Also, $s[A,B]$ consists of exactly two points $A$ and $B$ if they are antipodal.
\end{Definition}

One can show spheres are spherical Pasch spaces whose boundary at infinity is a circle. Conversely, every spherical Pasch space whose boundary at some point is a circle, must be a sphere.

\begin{Definition}
An \textbf{Erlangen sphere} is a pair $(\Sigma,\mathcal{I})$ consisting of a sphere $\Sigma$ equipped with a subgroup $\mathcal{I}$ of isomorphisms of $\Sigma$
such that the Condition \ref{Homogeneity and rigidity of the sphere} is satisfied.
\end{Definition}

\begin{Condition}[Homogeneity and rigidity of the Erlangen sphere]\label{Homogeneity and rigidity of the sphere}
For every two pairs $(h_1,r_1)$, $(h_2,r_2)$ consisting of a half-sphere and a maximal ray in its boundary there is exactly one isomorphism sending $h_1$ onto $h_2$ and sending $r_1$ onto $r_2$.
\end{Condition}

The basic distinction between Erlangen spheres and Erlangen planes is that in Erlangen spheres the defect of every triangle is negative. Indeed, each Erlangen sphere has a triangle with all angles being right. There cannot be a triangle with defect $0$ as that leads to arbitrarily large distances and each length in an Erlangen sphere is at most $2$ (recall we consider the right angle to be the unit of length on Erlangen circles). Therefore each triangle has the sum of angles bigger than $180$ degrees for continuity reasons. There is a formal similarity between Erlangen spheres and hyperbolic planes in the sense that the same congruence results hold for triangles.

Spherical geometry is simpler for the following reason:

\begin{Theorem}\label{SphericalPaschSpacesThm}
Let $(\Pi,\mathcal{I})$ be a space with rigid motions. If $\Pi$ is a spherical Pasch space, then for every $A\in \Pi$ the functions in $\mathcal{I}$ fixing $A$ induce rigid motions of $\partial(\Pi,A)$.
\end{Theorem}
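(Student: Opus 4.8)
\section*{Proof proposal}

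The plan is to first turn each $f\in\mathcal{I}$ with $f(A)=A$ into a self-isomorphism $\bar f$ of the betweenness space $\partial(\Pi,A)$, and then verify that $\bar f$ cannot contract a line segment. For the first part, set $\bar f(r):=f(r)$ for a connected maximal ray $r$ emanating from $A$; since $f$ is an isomorphism of $\Pi$ fixing $A$ it permutes the maximal rays at $A$ and preserves connectedness of subsegments, so $\bar f$ is a bijection of $\partial(\Pi,A)$. That $\bar f$ preserves the betweenness of \ref{BoundaryAtInfinityProp} is immediate from the definition: if $r_3$ meets every $s[B,C]$ with $B\in r_1$, $C\in r_2$, then $f(r_3)=\bar f(r_3)$ meets every $s[B',C']$ with $B'\in\bar f(r_1)$, $C'\in\bar f(r_2)$. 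Moreover $\bar f$ preserves antipodality, because $r_1\cup r_2$ convex forces $f(r_1)\cup f(r_2)=f(r_1\cup r_2)$ convex, so $\bar f$ respects the antipodal structure of $\partial(\Pi,A)$. Being a bijection preserving betweenness in both directions, $\bar f$ preserves closed line segments by \ref{BijectionPreservingBetweennessProp}. Finally, since $f\in\mathcal{I}$ maps each $s[P,Q]$ onto $s[f(P),f(Q)]$, those two segments are congruent, so $f$ preserves lengths in the sense of Section \ref{Spaces with rigid motions}; in particular $|AB|=|Af(B)|$ for every $B$, as $f(A)=A$, and $f$ commutes with the antipodal map (which is determined by betweenness), so $f$ also fixes $a(A)$.

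Recall $\partial(\Pi,A)$ is a spherical Pasch space (the theorem that the boundary at infinity of a Pasch space is a spherical Pasch space). To prove $\bar f$ is a rigid motion I must show that $\bar f(s[\rho_1,\rho_2])\subset s[\rho_1,\rho_2]$ implies equality. If $\rho_1,\rho_2$ are antipodal, then $s[\rho_1,\rho_2]=\{\rho_1,\rho_2\}$ has exactly two points, and an injection of a two--point set into itself is onto, so equality holds. The remaining, essential, case is $\rho_1,\rho_2$ non-antipodal, in which $s[\rho_1,\rho_2]$ is connected.

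In the connected case I intend to derive the conclusion from length preservation together with a hinge comparison. Using isotropy, choose $g\in\mathcal{I}$ with $g(\rho_1)=\rho_2$; such a $g$ fixes $A$, so for $B\in\rho_1\setminus\{A\}$ the point $C:=g(B)\in\rho_2$ satisfies $|AC|=|AB|$, giving equidistant representatives $B\in\rho_1$, $C\in\rho_2$. Now $\bar f(\rho_1)=ray[A,f(B)]$ and $\bar f(\rho_2)=ray[A,f(C)]$ both lie in $s[\rho_1,\rho_2]$, so by \ref{InclusionOfSegmentsProp} the image segment $s[\bar f(\rho_1),\bar f(\rho_2)]=\bar f(s[\rho_1,\rho_2])$ is nested inside $s[\rho_1,\rho_2]$. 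The points $f(B)\in\bar f(\rho_1)$ and $f(C)\in\bar f(\rho_2)$ are again equidistant from $A$, since $|Af(B)|=|AB|=|AC|=|Af(C)|$, and length preservation gives $|f(B)f(C)|=|BC|$. If the nesting were proper, the viewing angle subtended at $A$ by $f(B),f(C)$ would be strictly smaller than that subtended by $B,C$; for two pairs of points at a common distance from $A$, the strict hinge (SAS) inequality then yields $|f(B)f(C)|<|BC|$, contradicting $|f(B)f(C)|=|BC|$. Hence $\bar f(s[\rho_1,\rho_2])=s[\rho_1,\rho_2]$, as required.

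The main obstacle is the strict hinge comparison ``equal legs and a strictly larger angle produce a strictly longer chord'' in the bare spherical-Pasch-with-rigid-motions setting, where the planar machinery of \ref{TriangleInequality} is not yet available. I would establish it from the length theory of Section \ref{Spaces with rigid motions}, comparing $s[B,C]$ with $s[f(B),f(C)]$ through the point in which $ray[A,f(C)]$ (lying between $\rho_1$ and $\rho_2$) meets $s[B,C]$, and invoking the triangle inequality for the resulting configuration. As a fallback that isolates the same monotonicity, one can first produce a fixed direction: by the Fixed Point Theorem \ref{FixedPointTheorem} applied to $\bar f$ on the connected segment, $\bar f$ fixes some $r^{*}\in s[\rho_1,\rho_2]$, so $f(r^{*})=r^{*}$ and hence $f|r^{*}=\mathrm{id}$ by \ref{RigidMotionsViaRaysProp}; since $f$ fixes $a(r^{*})$ as well, $f$ is the identity on the whole line $r^{*}\cup a(r^{*})$, and the chord comparison can then be run relative to this fixed axis, where the spherical hypothesis keeps everything within the non-antipodal, connected regime.
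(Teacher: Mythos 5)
Your reduction of the statement is correct (rigidity of each induced map $\bar f$; the antipodal case is trivial; only the connected case matters), and your setup of $\bar f$ as a segment-preserving bijection of $\partial(\Pi,A)$ is fine. But the proof has a genuine gap exactly where you flag it: the strict hinge comparison ``equal legs and a strictly smaller angle give a strictly shorter chord'' is not available in this setting, and neither of your proposed routes supplies it. Invoking \ref{TriangleInequality} is circular: that result is proved only for Erlangen planes via defect theory, which in turn requires the boundaries at infinity to already carry the rigid-motion (Erlangen circle) structure that this very theorem is meant to produce. A general spherical Pasch space with rigid motions has, at this stage, only the order-theoretic length comparison of Section \ref{Spaces with rigid motions}; no hinge monotonicity, no triangle inequality, and no angle measure on $\partial(\Pi,A)$ (measure on the boundary is precisely what becomes available \emph{after} the theorem). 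Your fallback gets further --- the Fixed Point Theorem \ref{FixedPointTheorem} plus \ref{RigidMotionsViaRaysProp} does yield a ray $r^{*}$ with $f$ the identity on $r^{*}\cup a(r^{*})$, which is exactly the paper's starting configuration --- but ``the chord comparison can then be run relative to this fixed axis'' still appeals to the same unproven comparison, so the argument does not close.

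What the paper does instead avoids all metric comparisons. Assuming $f$ is the identity on a ray $r_1$ (your $r^{*}$) but moves some $B$ on a ray $r_2$ with $ray[A,f(B)]$ strictly between $r_1$ and $r_2$, it uses the antipodal map of the spherical Pasch space: by \ref{PaschBetweennessLemma}, $a(r_1)$ lies between $r_2$ and $ray[A,a(f(B))]$, hence $a(r_1)$ meets $s[B,a(f(B))]$ at a point $C$. Since $f$ is also the identity on $a(r_1)$, the point $C$ is fixed; and since $ray[C,B]$ is antipodal to $ray[C,a(f(B))]$, it must contain $f(B)$. Thus $ray[C,B]$ is an $f$-invariant maximal ray, hence pointwise fixed by the rigid-motion property of $(\Pi,\mathcal{I})$, forcing $f(B)=B$, a contradiction. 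That antipodal construction --- producing a second fixed point $C$ from which $B$ and $f(B)$ are seen on one ray --- is the missing idea in your proposal; if you graft it onto your fallback, your argument becomes essentially the paper's proof.
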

\begin{proof}
Suppose $f\in \mathcal{I}$ fixes $A$ and preserves $ray[r_1,r_2]$ in $\partial(\Pi,A)$ for some rays $r_1, r_2$ emanating from $A$ such that $f(r_1)=r_1$ but $f(r_2)\ne r_2$. Therefore $f|r_1$ is the identity. Choose $B\in r_2$ different from $A$. Let $r_3:=ray[A,f(B)]$. We may assume $r_3$ is strictly between $r_1$ and $r_2$. Hence $a(r_1)$ is between $r_2$ and $ray[A,a(f(B))]$
which means $a(r_1)$ intersects $s[B,a(f(B))]$ at some point $C$. Now, $ray[C,B]$ is antipodal to $ray[C,a(f(B))]$, so $r_4:=ray[C,B]$ must contain $f(B)$. Since $f(C)=C$ we see $f(r_4)=r_4$, therefore $f|r_4$ is the identity, in particular $f(B)=B$, a contradiction.
\end{proof}

\begin{Corollary}\label{ErlangenSpheresViaRigidMotions}
$(\Pi,\mathcal{I})$ is an Erlangen sphere if and only if $\Pi$ is a sphere, $(\Pi,\mathcal{I})$ is a space with rigid motions, and $(\partial(\Pi,A),\mathcal{I}_A)$
is an isotropic space for some $A\in \Pi$.
\end{Corollary}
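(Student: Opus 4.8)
The plan is to prove the two implications separately, mirroring the Erlangen-plane development (Theorems \ref{CircleOfAnglesErlangenPlane} and \ref{CharErlangenPlanesViaBoundary}), with Theorem \ref{SphericalPaschSpacesThm} supplying the decisive simplification that makes isotropy of the boundary at a \emph{single} point suffice. For the forward implication, assume $(\Pi,\mathcal{I})$ is an Erlangen sphere; then $\Pi$ is a sphere by definition. To see $(\Pi,\mathcal{I})$ is a space with rigid motions I would first establish the sphere analogue of \ref{InvolutionAndIdPlanes}: if $\rho\in\mathcal{I}$ fixes a point $M$ and sends some maximal ray $r$ at $M$ to itself, then $\rho^2=id$. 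Indeed, $\rho$ then preserves the great circle $c=r\cup a(r)\cup\{a(M)\}$ (whose two maximal rays at $M$ are exactly $r$ and $a(r)$) and either preserves or swaps the two half-spheres bounded by $c$; applying the uniqueness clause of Condition \ref{Homogeneity and rigidity of the sphere} to the pair $(h,r)$ yields $\rho=id$ in the first case and $\rho^2=id$ in the second. With this lemma the argument of \ref{EPAreRM} transfers with only notational changes: given $f(s[A,B])\subset s[A,B]$ with $A\ne a(B)$, Theorem \ref{FixedPointTheorem} produces a fixed point $M\in s[A,B]$, the involution lemma forces $f^2=id$ or $f^4=id$ on $s[A,B]$, and \ref{NilpotentCaseOfRM} concludes (the case $A=a(B)$ being a two-point segment). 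Finally, isotropy of $(\partial(\Pi,A),\mathcal{I}_A)$ is precisely the existence half of Condition \ref{Homogeneity and rigidity of the sphere} restricted to data based at $A$: a maximal ray of the circle $\partial(\Pi,A)$ emanating from $r$ corresponds, as in the proof of \ref{CircleOfRaysThm}, to a choice of half-sphere bounded by the great circle $r\cup a(r)\cup\{a(A)\}$, so the condition furnishes an $f\in\mathcal{I}_A$ carrying any such ray to any other.

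For the converse, assume $\Pi$ is a sphere, $(\Pi,\mathcal{I})$ is a space with rigid motions, and $(\partial(\Pi,A_0),\mathcal{I}_{A_0})$ is isotropic. Since spheres are spherical Pasch spaces, Theorem \ref{SphericalPaschSpacesThm} shows every element of $\mathcal{I}$ fixing a point induces a rigid motion of the corresponding boundary circle; combined with the assumed isotropy, this makes $(\partial(\Pi,A_0),\mathcal{I}_{A_0})$ a space with rigid motions. I would then record the circle analogue of \ref{ErlangenLinesViaRigidMotionsCor}: a circle that is a space with rigid motions is an Erlangen circle. Existence of the required isomorphisms is isotropy; for uniqueness, if two elements agree on a maximal ray $\rho$ of the circle, their quotient $g$ fixes $\rho$ pointwise by \ref{RigidMotionsViaRaysProp}, and a second application of \ref{RigidMotionsViaRaysProp} to the complementary ray $a(\rho)$ (whose endpoints $g$ fixes) forces $g=id$. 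Hence $(\partial(\Pi,A_0),\mathcal{I}_{A_0})$ is an Erlangen circle; since a space with rigid motions is isotropic, conjugating by an $f\in\mathcal{I}$ carrying $A_0$ to an arbitrary $A$ shows $(\partial(\Pi,A),\mathcal{I}_A)$ is an Erlangen circle for every $A$.

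It then remains to verify Condition \ref{Homogeneity and rigidity of the sphere}. Using the sphere analogue of the correspondence in \ref{CircleOfRaysThm}, a pair $(h,r)$ consisting of a half-sphere and a maximal ray $r$ of its boundary great circle is encoded by the initial point $P$ of $r$ together with a maximal ray $\mu$ of the Erlangen circle $\partial(\Pi,P)$. For existence, given data $(P_1,\mu_1)$ and $(P_2,\mu_2)$, I would use isotropy of the sphere to find $f_0\in\mathcal{I}$ with $f_0(P_1)=P_2$, then correct within $\partial(\Pi,P_2)$ by an element of $\mathcal{I}_{P_2}$ (available since that boundary is an Erlangen circle) so that the image of $\mu_1$ lands on $\mu_2$. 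For uniqueness, if $f,f'$ both realize the passage, then $h:=f'^{-1}\circ f\in\mathcal{I}_{P_1}$ fixes $\mu_1$ as a maximal ray of the Erlangen circle $\partial(\Pi,P_1)$, hence induces the identity there, fixing every maximal ray of $\Pi$ at $P_1$; rigidity (\ref{RigidMotionsViaRaysProp}) then gives $h=id$ on each such ray and therefore on all of $\Pi$. This establishes both implications.

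The main obstacle I expect is bookkeeping rather than a single deep step: one must keep straight the three distinct meanings of ``maximal ray'' (of the sphere at a point, of a great circle, and of a boundary circle $\partial(\Pi,A)$) and confirm that the half-sphere/boundary-ray data of Condition \ref{Homogeneity and rigidity of the sphere} translates cleanly, via the analogue of \ref{CircleOfRaysThm}, into maximal-ray data on the boundary circles. The one genuinely new ingredient is the circle analogue of \ref{ErlangenLinesViaRigidMotionsCor}, whose uniqueness half rests on the two-fold use of \ref{RigidMotionsViaRaysProp} described above; everything else is a transcription of the Erlangen-plane arguments with ``line'' replaced by ``great circle'' and with the boundary Erlangen circle playing the role that the boundary Erlangen circle plays in \ref{CharErlangenPlanesViaBoundary}.
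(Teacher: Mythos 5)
Your proposal is correct and follows essentially the route the paper intends: the paper states this result as an immediate consequence of Theorem \ref{SphericalPaschSpacesThm} with no written proof, and your converse direction (isotropy of the boundary plus \ref{SphericalPaschSpacesThm} makes $(\partial(\Pi,A),\mathcal{I}_A)$ a space with rigid motions, hence an Erlangen circle, hence Condition \ref{Homogeneity and rigidity of the sphere} follows) is exactly that intended derivation. Your forward direction, the involution lemma for spheres, and the circle analogue of \ref{ErlangenLinesViaRigidMotionsCor} simply fill in details the paper leaves implicit, and they do so correctly.
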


More generally, spherical geometry can be considered as the study of spherical Pasch spaces. More narrowly, spherical geometry can be considered as the study of boundaries at infinity of Pasch spaces whose all line segments are connected. Are these ideas the same?

\begin{Question}
Is every spherical Pasch space the boundary at infinity of some regular Pasch space?
\end{Question}

\section{Erlangen spaces}

We would like to generalize Erlangen lines, circles, planes, and spheres to Erlangen spaces in such a way that the boundary at infinity of $\Pi$ is also an Erlangen space if $\Pi$ is an Erlangen space. This idea makes the task actually quite simple and straightforward. Our plan is to generalize \ref{ErlangenLinesViaRigidMotionsCor} and \ref{CharErlangenPlanesViaBoundary} while using \ref{RigidMotionsViaRaysProp}. Also, we plan to follow the idea of what a Klein geometry ought to be as expressed by Heinrich Guggenheimer \cite{Gugg}  (on p. 139): \emph{A Klein geometry is the theory of geometric invariants of a transitive transformation group}. 

In the simplest language an Erlangen space is a Pasch space of rigid motions such that applying the operator of the boundary at infinity leads to spaces with rigid motions. That's the strongest definition possible. The weakest definition possible is to say that an Erlangen space is a Pasch space of rigid motions such that applying the operator of the boundary at infinity leads to isotropic spaces. We do not know if those are equivalent (it hinges on properties of Euclidean planes - see \ref{ErlangenPlanesViaRigidMotionsQuestion}), therefore our choice is to use the strongest definition possible.

Notice our definition of an Erlangen line $l$ involved choices of $A\in l$ and $r\in\partial(l,A)$.
Erlangen circles were defined similarly. In case of Erlangen planes $\Pi$ we had a choice of $A\in \Pi$, $r\in\partial(\Pi,A)$, and a choice of a half-plane can be viewed as choosing $h\in\partial(\partial(\Pi,A),r)$. We can generalize this observation as follows:
\begin{Definition}\label{ScaffoldingOfRaysDef}
Let $\Pi$ be a Pasch space and $n\ge 0$. An \textbf{$n$-scaffolding of rays} in $\Pi$
is a sequence $(\Pi_i,r_i)$, $0\leq i\leq n$, such that $\Pi_0= \Pi$, $r_0\in \Pi_0$ is a point,
$\Pi_{i+1}=\partial(\Pi_i,r_i)$ and $r_{i+1}\in \Pi_{i+1}$ for $i < n$.
\end{Definition}

The concept of an $n$-scaffolding of rays allows for a simple definition of dimension of Pasch spaces.

\begin{Definition}\label{DimensionOfPaschSpacesDef}
Let $\Pi$ be a Pasch space and $n\ge 0$. The \textbf{dimension} of $\Pi$
is at least $n$ if there is an $n$-scaffolding of rays in $\Pi$.
We say $\dim(\Pi)=n$ if $\dim(\Pi)\ge n$ holds but $\dim(\Pi)\ge n+1$
is false.
\end{Definition}

\begin{Example}
$\dim(\Pi)=1$ means $\Pi$ is either a line or a circle. $\dim(\Pi)=2$ means $\Pi$ is either a plane or a sphere.
\end{Example}

\begin{Definition}\label{ErlangenSpaceDef}
An \textbf{Erlangen space} is a pair $(\Pi,\mathcal{I})$ such that the following conditions are satisfied:\\
1. $\Pi$ is a Pasch space,\\
2. $(\Pi,\mathcal{I})$ is a space with rigid motions,\\
3. Given an $n$-scaffolding of rays $(\Pi_i,r_i)$, $0\leq i\leq n$, in $\Pi$,
the group of bijections $\mathcal{I}_n$ of $\Pi_n$ induced by $\mathcal{I}$
makes $(\Pi_n,\mathcal{I}_n)$ a space with rigid motions,\\
4. If $\Pi$ is a spherical Pasch space, then the antipodal map belongs to $\mathcal{I}$.
\end{Definition}

\begin{Example}
Erlangen lines, circles, planes, and spheres are Erlangen spaces.
\end{Example}

\begin{Corollary}
If $(\Pi,\mathcal{I})$ is an Erlangen space, then its boundaries at infinity (equipped with induced morphisms from $\mathcal{I}$) are Erlangen spaces.
\end{Corollary}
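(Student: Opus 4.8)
Write $\Sigma:=\partial(\Pi,A)$ and let $\mathcal{J}$ denote the group of bijections of $\Sigma$ induced by those elements of $\mathcal{I}$ that fix $A$; this is the ``induced morphisms from $\mathcal{I}$'' referred to in the statement. The plan is to verify the four conditions of Definition \ref{ErlangenSpaceDef} for the pair $(\Sigma,\mathcal{J})$. Condition~1 is immediate: by the Theorem that the boundary at infinity of a Pasch space is a spherical Pasch space, $\Sigma$ is spherical Pasch, hence a Pasch space by Definition \ref{PaschSpacesDef}.

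Conditions~2 and~3 I would obtain from a single \emph{scaffolding--prepending} device. Given an $m$-scaffolding of rays $(\Sigma_i,\rho_i)_{0\le i\le m}$ in $\Sigma$ (Definition \ref{ScaffoldingOfRaysDef}), form the $(m+1)$-scaffolding $(\Pi_j,r_j)_{0\le j\le m+1}$ in $\Pi$ by setting $\Pi_0=\Pi$, $r_0=A$, and $\Pi_{j}=\Sigma_{j-1}$, $r_{j}=\rho_{j-1}$ for $1\le j\le m+1$; this is legitimate because $\Pi_1=\partial(\Pi_0,r_0)=\partial(\Pi,A)=\Sigma=\Sigma_0$ and thereafter $\Pi_{j+1}=\partial(\Pi_j,r_j)=\partial(\Sigma_{j-1},\rho_{j-1})=\Sigma_{j}$. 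The crucial bookkeeping step is that the group $\mathcal{J}_k$ induced on $\Sigma_k$ by the $\Sigma$-scaffolding coincides with the group $\mathcal{I}_{k+1}$ induced on $\Pi_{k+1}$ by the $\Pi$-scaffolding: both are produced by the same iterated operation ``pass to those maps fixing the current distinguished ray, then induce on the next boundary'', and the base identity is $\mathcal{J}_0=\mathcal{J}=\mathcal{I}_1$. Hence $\mathcal{J}_m=\mathcal{I}_{m+1}$, and condition~3 of $(\Pi,\mathcal{I})$ applied to the $(m+1)$-scaffolding says $(\Pi_{m+1},\mathcal{I}_{m+1})=(\Sigma_m,\mathcal{J}_m)$ is a space with rigid motions. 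This is exactly condition~3 for $(\Sigma,\mathcal{J})$; taking $m=0$ (equivalently, the $1$-scaffolding $(\Pi,A)$ in $\Pi$) gives condition~2.

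The main obstacle is condition~4. Because $\Sigma$ is \emph{always} a spherical Pasch space, I would need to show that the antipodal map $a_\Sigma$ of $\Sigma$ lies in $\mathcal{J}$. Unwinding the induction of morphisms, $a_\Sigma\in\mathcal{J}$ is equivalent to the existence of an $f\in\mathcal{I}$ fixing $A$ that reverses every maximal ray emanating from $A$ --- a \emph{central symmetry} at $A$. I would prove that every spherical Pasch Erlangen space carries such a map by induction on the dimension (Definition \ref{DimensionOfPaschSpacesDef}). In dimension $1$ the space is a circle and $(\Sigma,\mathcal{J})$ is an Erlangen circle, so $a_\Sigma\in\mathcal{J}$ by \ref{AntipodalMapIsInI}. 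For the inductive step I would pass to the boundary $\partial(\Sigma,P)$ at a point $P$, which by conditions~1--3 already established is a spherical Pasch Erlangen space of one lower dimension; the inductive hypothesis then supplies a central symmetry at $P$, and I would assemble $a_\Sigma$ from this lower-dimensional antipodal datum together with a translation carrying $P$ to $a(P)$, pinning the result down to be exactly $a_\Sigma$ by rigidity (two rigid motions agreeing on a scaffolding of rays coincide, by \ref{RigidMotionsViaRaysProp}). An alternative route to the same end is to show directly that $a_\Sigma$ preserves measures of line segments --- the spherical analogue of \ref{AntipodalMapIsAnIsometry} and \ref{AntipodalMapOnCirclePreservesSegments} --- and then invoke the characterization that every isometry of an Erlangen sphere belongs to its group. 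The delicate point in either approach, and where I expect the real work to lie, is the compatibility needed to glue the pointwise/lower-dimensional antipodal behaviour into one globally defined rigid motion; the rigidity built into condition~3 is precisely what makes this gluing forced rather than merely possible.
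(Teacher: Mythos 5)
Your handling of conditions 1--3 of Definition \ref{ErlangenSpaceDef} coincides with the paper's own (implicit) argument: the paper prints no proof of this Corollary at all, treating it as immediate from the definition, and the reason it is immediate is precisely the prepending device you describe --- every $k$-scaffolding of rays in $\partial(\Pi,A)$ extends to a $(k+1)$-scaffolding in $\Pi$ with matching induced groups, an observation the paper itself uses without comment inside the proof of \ref{DimensionOfErlangenSpacesCorollary}. Condition 1 follows, as you say, from the theorem that the boundary at infinity of a Pasch space is a spherical Pasch space. Up to this point your proposal is just a careful write-up of what the paper leaves unsaid.

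Where you genuinely diverge is condition 4, and there the deficiency is the paper's, not yours: you have located a real gap. Since $\partial(\Pi,A)$ is \emph{always} a spherical Pasch space, condition 4 is a live requirement for every boundary, even when condition 4 for $(\Pi,\mathcal{I})$ itself is vacuous (e.g.\ $\Pi$ a plane or a hyperbolic space), and it is not a formal consequence of conditions 1--3: one must actually exhibit an element of $\mathcal{I}$ fixing $A$ that sends \emph{every} maximal ray at $A$ to its antipodal ray, whereas isotropy only moves one prescribed ray at a time. The paper's results settle this only when $\partial(\Pi,A)$ is a circle, i.e.\ $\dim\Pi=2$: a circle which is a space with rigid motions satisfies Condition \ref{Homogeneity and rigidity of the circle} (rigidity propagates the identity across overlapping maximal rays --- a lemma the paper records for lines in \ref{ErlangenLinesViaRigidMotionsCor} but never for circles), hence is an Erlangen circle, and then \ref{AntipodalMapIsInI} places the antipodal map in $\mathcal{I}_A$. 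For higher-dimensional boundaries your induction has the right shape but is not yet a proof at exactly the spot you flag: you cannot ``pin the assembled candidate down to equal $a_\Sigma$ by rigidity,'' because \ref{RigidMotionsViaRaysProp} and \ref{RigidityOfErlangenSpacesThm} apply only to elements of the given group, and whether $a_\Sigma$ belongs to that group is the very statement being proved. What is missing is the analogue of the extension trick in the proof of \ref{AntipodalMapIsAnIsometry}: adjoin $a_\Sigma$ to $\mathcal{I}_A$, prove the enlarged group still consists of rigid motions (plausibly by restricting to the sub-circles of \ref{SubCirclesOfPaschSpaces} through the relevant points, where the circle case applies), and only then compare your candidate with $a_\Sigma$. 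So your proposal is strictly more complete than the paper's treatment, but the higher-dimensional case of condition 4 is closed by neither.
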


\begin{Theorem}\label{ScaffoldingToScaffoldingThm}
If $(\Pi,\mathcal{I})$ is an Erlangen space, then for any two $n$-scaffoldings of rays in $\Pi$ there is $f\in \mathcal{I}$ sending the first $n$-scaffolding of rays in $\Pi$ to the second $n$-scaffolding of rays in $\Pi$.
\end{Theorem}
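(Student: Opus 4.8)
The plan is to induct on $n$, proving the statement simultaneously for \emph{every} Erlangen space, and to peel off the base point $r_0$ at each step, recursing into the boundary at infinity. The engine of the argument is that each clause of \ref{ErlangenSpaceDef} makes the relevant induced structure a space with rigid motions, hence isotropic in the sense of \ref{IsotropicSpaceDef} and \ref{SpaceWithRigidMotionsDef}, and that isotropy, combined with the fact that isomorphisms preserve the initial point of a ray, upgrades transitivity on maximal rays to transitivity on points.

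First I would dispose of the base case $n=0$, where a scaffolding is a single point $r_0\in\Pi$. Since $(\Pi,\mathcal{I})$ is a space with rigid motions it is isotropic, so $\mathcal{I}$ acts transitively on the maximal rays of $\Pi$; given points $A,B$ I pick maximal rays emanating from each, map one onto the other by some $f\in\mathcal{I}$, and observe that $f$ carries the initial point of the first ray (characterized in \ref{OrderingOfRays} as the minimum of the induced order, i.e.\ the unique point not strictly between two others) to the initial point of the second, so $f(A)=B$. Thus $\mathcal{I}$ is transitive on points, and the same reasoning applies at every level to the induced rigid motions of the boundary spaces.

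For the inductive step, given two $n$-scaffoldings $S=((\Pi_i,r_i))$ and $S'=((\Pi_i',r_i'))$ I would use the base case to find $g\in\mathcal{I}$ with $g(r_0)=r_0'$. Replacing $S$ by its image $g(S)$, which is again an $n$-scaffolding because $g$ commutes with the boundary operator $\partial$, I may assume $r_0=r_0'$, so that the two tails $(r_1,\dots,r_n)$ and $(r_1',\dots,r_n')$ are both $(n-1)$-scaffoldings of rays, in the sense of \ref{ScaffoldingOfRaysDef}, in the single boundary space $\partial(\Pi,r_0)$. By the Corollary that boundaries at infinity of an Erlangen space are themselves Erlangen spaces, the pair $(\partial(\Pi,r_0),\mathcal{I}_{r_0})$ is an Erlangen space, where $\mathcal{I}_{r_0}$ is the group of bijections of $\partial(\Pi,r_0)$ induced by the elements of $\mathcal{I}$ fixing $r_0$. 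The inductive hypothesis, applied to this Erlangen space, yields $\bar h\in\mathcal{I}_{r_0}$ carrying the first tail to the second; lifting $\bar h$ to some $h\in\mathcal{I}$ with $h(r_0)=r_0$ (possible by the very definition of $\mathcal{I}_{r_0}$), the composite $h\circ g$ sends $S$ to $S'$, since $h\circ g(r_0)=r_0'$ and the map induced by $h\circ g$ on the tails is $\bar h$ following the transport already arranged by $g$.

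The step I expect to be the main obstacle is the naturality bookkeeping. I must verify carefully that an isomorphism in $\mathcal{I}$ sending $r_0$ to $r_0'$ really does induce an isomorphism $\partial(\Pi,r_0)\to\partial(\Pi,r_0')$ of spherical Pasch spaces, that these induced maps compose functorially level by level, and that under this correspondence a tail of $S$ is carried to a genuine $(n-1)$-scaffolding. All of this is formal, flowing from the fact that elements of $\mathcal{I}$ preserve betweenness, closed line segments, maximal rays, and antipodal pairs, so that every piece of data defining $\partial$ is transported; but it is precisely the place where one must be exact about what ``induced'' means at each stage and about identifying the orbits of the stabilizer $\mathcal{I}_{r_0}$ acting on $\partial(\Pi,r_0)$ with restrictions of $\mathcal{I}$-orbits, so that the lift $h$ of $\bar h$ exists and behaves as claimed.
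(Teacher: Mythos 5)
Your proof is correct and follows essentially the same route as the paper's: induction on $n$, transporting the base point by an element of $\mathcal{I}$, and applying the inductive hypothesis to the tails viewed as scaffoldings in the Erlangen space $\partial(\Pi,r_0)$. The only differences are cosmetic: you start the induction at $n=0$ (with the extra observation that isotropy plus preservation of initial points gives transitivity on points) where the paper starts at $n=1$, and you make explicit the lifting of the induced map $\bar h\in\mathcal{I}_{r_0}$ back to some $h\in\mathcal{I}$ fixing $r_0$, a step the paper leaves implicit.
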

\begin{proof}
By induction on $n$. For $n=1$ it corresponds to the definition of spaces with rigid motions. Suppose \ref{ScaffoldingToScaffoldingThm} holds for $n=k$
and suppose we have two $(k+1)$-scaffoldings of rays in $\Pi$.
Pick $g\in \mathcal{I}$ sending the basepoint of the initial element of the first scaffolding to the the basepoint $B$ of the initial element of the second scaffolding. Using $g$ create the third scaffolding as the image under $g$ of the first scaffolding.
By removing initial elements of the third and the second scaffolding we arrive at two $k$-scaffoldings of rays in $\partial(\Pi,B)$. By the inductive hypothesis, there is $h$ in $\mathcal{I}$ sending the modified third scaffolding to the modified second scaffolding. Now, $f:=h\circ g$ sends the first scaffolding to the second scaffolding.
\end{proof}

\begin{Corollary}\label{DimensionOfErlangenSpacesCorollary}
Let $(\Pi,\mathcal{I})$ be an Erlangen space and $n\ge 0$. If $\dim(\Pi)=n$,
then $\dim(\partial(\Pi,A))=n-1$ for all $A\in \Pi$.
\end{Corollary}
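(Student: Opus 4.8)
The plan is to reduce everything to a single bookkeeping observation about scaffoldings and then play a lower and an upper bound off against each other. The observation is that, by the very recursion $\Pi_{i+1}=\partial(\Pi_i,r_i)$ in \ref{ScaffoldingOfRaysDef}, there is a bijection between $(n-1)$-scaffoldings of rays in $\partial(\Pi,A)$ and $n$-scaffoldings of rays in $\Pi$ whose initial point $r_0$ is $A$. In one direction, given an $n$-scaffolding $(\Pi_i,r_i)_{0\le i\le n}$ of $\Pi$ with $r_0=A$, one has $\Pi_1=\partial(\Pi,A)$, and discarding the initial pair and re-indexing ($\Sigma_j:=\Pi_{j+1}$, $s_j:=r_{j+1}$) yields an $(n-1)$-scaffolding of $\partial(\Pi,A)$; in the other direction one prepends the pair $(\Pi,A)$ to an $(n-1)$-scaffolding of $\partial(\Pi,A)$ to recover an $n$-scaffolding of $\Pi$ based at $A$. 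Both passages are immediate from the definitions, so I would state this correspondence first and verify that the index conditions of \ref{ScaffoldingOfRaysDef} are met under prepending and removal.

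For the upper bound $\dim(\partial(\Pi,A))\le n-1$ I would argue by contradiction. If $\partial(\Pi,A)$ admitted an $n$-scaffolding of rays, then prepending $(\Pi,A)$ as above produces an $(n+1)$-scaffolding of $\Pi$, whence $\dim(\Pi)\ge n+1$ by \ref{DimensionOfPaschSpacesDef}; this contradicts $\dim(\Pi)=n$. Hence no $n$-scaffolding of $\partial(\Pi,A)$ exists and the assertion $\dim(\partial(\Pi,A))\ge n$ is false. This direction uses no homogeneity and works for every $A$ and every $n\ge 0$.

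For the lower bound $\dim(\partial(\Pi,A))\ge n-1$ (for $n\ge 1$; the case $n=0$ is vacuous under the convention $\dim(\emptyset)=-1$) I would first use $\dim(\Pi)=n$ to fix some $n$-scaffolding of $\Pi$, based at a point $P$. Removing its initial pair gives an $(n-1)$-scaffolding of $\partial(\Pi,P)$, so $\dim(\partial(\Pi,P))\ge n-1$. To pass from $P$ to an arbitrary $A$ I would invoke homogeneity: since $(\Pi,\mathcal{I})$ is a space with rigid motions it is isotropic, so choosing maximal rays at $P$ and at $A$ and sending one to the other produces $f\in\mathcal{I}$ with $f(P)=A$. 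Such an $f$ induces an isomorphism $\partial(\Pi,P)\to\partial(\Pi,A)$, $r\mapsto f(r)$, and more generally carries scaffoldings to scaffoldings level by level — exactly the transport of scaffoldings already exploited in the proof of \ref{ScaffoldingToScaffoldingThm}. Transporting the $(n-1)$-scaffolding of $\partial(\Pi,P)$ along $f$ gives one in $\partial(\Pi,A)$, so $\dim(\partial(\Pi,A))\ge n-1$. Combining the two bounds yields $\dim(\partial(\Pi,A))=n-1$.

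The only genuinely non-formal point — and the step I would treat most carefully — is the functoriality used in the lower bound: that an isomorphism of Pasch spaces (in particular any $f\in\mathcal{I}$) induces isomorphisms of all iterated boundaries at infinity and therefore preserves dimension. Everything else is index arithmetic on the recursion of \ref{ScaffoldingOfRaysDef}. I would record this as a short lemma, namely that an isomorphism $g:X\to Y$ sends maximal rays at a point $Q$ to maximal rays at $g(Q)$ while preserving the betweenness of \ref{BoundaryAtInfinityProp}, so that $\partial(X,Q)\cong\partial(Y,g(Q))$, and then iterate it. With that in hand both the invariance of dimension under isomorphism and the transport of scaffoldings become automatic, and the corollary follows.
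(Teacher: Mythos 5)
Your proof is correct and follows essentially the same route as the paper: prepending $(\Pi,A)$ to scaffoldings of $\partial(\Pi,A)$ gives the upper bound $\dim(\partial(\Pi,A))\le n-1$, and removing the initial pair from an $n$-scaffolding transported to the basepoint $A$ gives the lower bound. The only difference is one of explicitness: where you spell out the transport step via isotropy and record as a lemma that elements of $\mathcal{I}$ induce isomorphisms of all iterated boundaries at infinity, the paper simply cites \ref{ScaffoldingToScaffoldingThm}, whose proof contains exactly that argument.
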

\begin{proof}
Any $k$-scaffolding of rays in $\partial(\Pi,A)$ extends to a $(k+1)$-scaffolding of rays in $\Pi$. Therefore $\dim(\partial(\Pi,A))\ge \dim(\Pi,A)-1$. By \ref{ScaffoldingToScaffoldingThm} existence of an $n$-scaffolding of rays in $\Pi$ implies existence of an $(n-1)$-scaffolding of rays in $\partial(\Pi,A)$. Therefore $\dim(\partial(\Pi,A))=n-1$.
\end{proof}

\begin{Theorem}\label{RigidityOfErlangenSpacesThm}
Let $(\Pi,\mathcal{I})$ be an Erlangen space and $n\ge 0$.
If $\dim(\Pi)=n$, then for any two $n$-scaffoldings of rays in $\Pi$ there is a unique $f\in \mathcal{I}$ sending the first $n$-scaffolding of rays in $\Pi$ to the second $n$-scaffolding of rays in $\Pi$.
\end{Theorem}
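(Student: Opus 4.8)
The plan is to separate the statement into existence and uniqueness. Existence is already in hand: \ref{ScaffoldingToScaffoldingThm} produces an $f\in\mathcal{I}$ carrying the first $n$-scaffolding to the second, with no hypothesis on $\dim(\Pi)$. So the whole content of the theorem lies in uniqueness, and the dimension hypothesis should be exactly what forces it. I would reduce uniqueness to a stabilizer statement in the usual way: if $f_1,f_2\in\mathcal{I}$ both send the first $n$-scaffolding $S$ to the second, then $g:=f_2^{-1}\circ f_1$ sends $S$ to itself, and it suffices to prove that the only element of $\mathcal{I}$ fixing a given $n$-scaffolding (when $\dim(\Pi)=n$) is the identity. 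I would prove this stabilizer claim by induction on $n$.

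For the inductive step, suppose $g\in\mathcal{I}$ fixes the $n$-scaffolding $(\Pi_i,r_i)$, $0\le i\le n$, with $n\ge 1$. Then $g(r_0)=r_0$, so $g$ induces a bijection $\bar g\in\mathcal{I}_1$ of $\Pi_1=\partial(\Pi,r_0)$, and fixing the scaffolding means $\bar g$ fixes the induced $(n-1)$-scaffolding $(\Pi_i,r_i)$, $1\le i\le n$, of $\Pi_1$. Now $(\Pi_1,\mathcal{I}_1)$ is again an Erlangen space (by the Corollary preceding \ref{ScaffoldingToScaffoldingThm}), and $\dim(\Pi_1)=n-1$ by \ref{DimensionOfErlangenSpacesCorollary}; so the inductive hypothesis gives $\bar g=\mathrm{id}$ on $\Pi_1$. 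This says precisely that $g$ fixes setwise every connected maximal ray $r$ emanating from $r_0$. Since $(\Pi,\mathcal{I})$ is a space with rigid motions (condition 2 of \ref{ErlangenSpaceDef}), \ref{RigidMotionsViaRaysProp} upgrades $g(r)=r$ to $g|r=\mathrm{id}$ for each such ray. Every point $B$ of $\Pi$ with $s[r_0,B]$ connected lies on the connected maximal ray $ray[r_0,B]$, hence is fixed by $g$; in the regular case this is already all of $\Pi$. In the spherical case the only point left is $a(r_0)$, and since $g$ preserves closed line segments it carries the two-point segment $s[r_0,a(r_0)]$ to $s[r_0,g(a(r_0))]$, forcing $g(a(r_0))=a(g(r_0))=a(r_0)$ (i.e. $g$ commutes with $a$). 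Thus $g=\mathrm{id}$.

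For the base case $n=0$ a $0$-scaffolding is a single point $r_0$ and $\dim(\Pi)=0$ means $\partial(\Pi,A)=\emptyset$ for every $A$, i.e. $\Pi$ carries no nontrivial connected maximal ray; combined with the space-with-maximal-rays axioms this forces $\Pi$ to be degenerate (a single point in the regular case, and only the antipodal pair $\{r_0,a(r_0)\}$ in the spherical case), so a point-fixing $g\in\mathcal{I}$ is the identity, using $g\circ a=a\circ g$ in the two-point case. I expect the main obstacle to be precisely this interface between boundary and total space: verifying that \ref{RigidMotionsViaRaysProp} does propagate pointwise-fixing of all rays at $r_0$ to all of $\Pi$, and in particular the careful treatment of the antipodal point $a(r_0)$ and of the degenerate low-dimensional cases, rather than any single hard computation. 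The inductive skeleton itself is routine once \ref{ScaffoldingToScaffoldingThm}, \ref{DimensionOfErlangenSpacesCorollary}, and \ref{RigidMotionsViaRaysProp} are in place.
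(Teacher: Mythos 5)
Your proposal is correct and takes essentially the same route as the paper's own proof: reduce uniqueness to showing that any element of $\mathcal{I}$ fixing an $n$-scaffolding is the identity, induct on $n$ by passing to the boundary at infinity (using \ref{DimensionOfErlangenSpacesCorollary} to apply the inductive hypothesis there), and then use the rigid-motions property to upgrade preservation of all maximal rays at the base point to the identity on all of $\Pi$. If anything, your treatment of the edge cases---the base case $n=0$ and the antipodal point $a(r_0)$ in the spherical case, where one needs $g\circ a=a\circ g$ since $a(r_0)$ lies on no connected maximal ray from $r_0$---is more careful than the paper's, which starts the induction at $n=1$ and compresses the final step into a one-line appeal to rigid motions.
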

\begin{proof}
It suffices to show any $f\in \mathcal{I}$ fixing an $n$-scaffolding of rays in $\Pi$ is the identity function.
By induction on $n$. It is so for $n=1$ by the results on Erlangen lines and Erlangen circles. Suppose \ref{RigidityOfErlangenSpacesThm} holds for $n=k$
and suppose we have $g\in \mathcal{I}$ fixing an $(k+1)$-scaffolding of rays in $\Pi$ starting at $A$. The induced map $f$ on $\partial(\Pi,A)$ fixes
a $k$-scaffolding of rays. By \ref{DimensionOfErlangenSpacesCorollary}
and by the inductive assumption, $f$ is identity on $\partial(\Pi,A)$.
That means $g$ preserves all maximal rays in $\Pi$ emanating from $A$.
Since $\Pi$ is a space with rigid motions, $g$ must be equal to the identity function.
\end{proof}

\begin{Theorem}\label{SphericalErlangenSpacesThm}
Let $(\Pi,\mathcal{I})$ be a space with rigid motions such that $\Pi$ is a spherical Pasch space and the antipodal map belongs to $\mathcal{I}$.
If for every two $n$-scaffoldings of rays in $\Pi$ there is $f\in \mathcal{I}$ sending the first $n$-scaffolding of rays in $\Pi$ to the second $n$-scaffolding of rays in $\Pi$, then $(\Pi,\mathcal{I})$ is an Erlangen space.
\end{Theorem}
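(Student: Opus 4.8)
The plan is to verify, one by one, the four conditions of Definition \ref{ErlangenSpaceDef} for $(\Pi,\mathcal{I})$. Conditions 1, 2, and 4 are handed to us directly: $\Pi$ is a (spherical) Pasch space, $(\Pi,\mathcal{I})$ is a space with rigid motions, and the antipodal map lies in $\mathcal{I}$. So the whole content is condition 3, namely that for every $n$-scaffolding of rays $(\Pi_i,r_i)$, $0\le i\le n$, the induced pair $(\Pi_n,\mathcal{I}_n)$ is a space with rigid motions. Before attacking it I would record the structural fact that every $\Pi_i$ is a spherical Pasch space: $\Pi_0=\Pi$ is spherical Pasch by hypothesis, and for $i\ge 1$ the space $\Pi_i=\partial(\Pi_{i-1},r_{i-1})$ is the boundary at infinity of a Pasch space, hence a spherical Pasch space by the earlier theorem asserting that boundaries at infinity of Pasch spaces are spherical Pasch spaces.

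I would establish isotropy of $(\Pi_n,\mathcal{I}_n)$ directly from the scaffolding hypothesis. Given two maximal rays $\rho$ and $\rho'$ in $\Pi_n$, emanating from points $p$ and $p'$, I view $\rho$ as a point $\bar\rho\in\partial(\Pi_n,p)$ and $\rho'$ as a point $\bar\rho'\in\partial(\Pi_n,p')$. I then form two $(n+1)$-scaffoldings that share their first $n$ terms $r_0,\dots,r_{n-1}$ and end, respectively, with $(\Pi_n,p),(\partial(\Pi_n,p),\bar\rho)$ and $(\Pi_n,p'),(\partial(\Pi_n,p'),\bar\rho')$. The scaffolding hypothesis supplies $f\in\mathcal{I}$ carrying the first to the second; since the two scaffoldings agree on their first $n$ entries, $f$ fixes $r_0,\dots,r_{n-1}$ and therefore descends to an element $g\in\mathcal{I}_n$. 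Tracing the last two entries shows $g(p)=p'$ and that the map induced on the next boundary sends $\bar\rho$ to $\bar\rho'$, that is, $g(\rho)=\rho'$. Hence $(\Pi_n,\mathcal{I}_n)$ is isotropic.

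Finally I would prove the rigidity half by induction on $n$, with \ref{SphericalPaschSpacesThm} as the engine. For $n=0$ the pair $(\Pi_0,\mathcal{I}_0)=(\Pi,\mathcal{I})$ is a space with rigid motions by hypothesis. Assuming $(\Pi_i,\mathcal{I}_i)$ is a space with rigid motions, I apply \ref{SphericalPaschSpacesThm} to it, which is legitimate because $\Pi_i$ is spherical Pasch, to conclude that the functions in $\mathcal{I}_i$ fixing $r_i$ induce rigid motions of $\partial(\Pi_i,r_i)=\Pi_{i+1}$; these induced maps are exactly the elements of $\mathcal{I}_{i+1}$. Thus every element of $\mathcal{I}_{i+1}$ fixing a maximal ray of $\Pi_{i+1}$ is the identity on it, and combining this with the isotropy already obtained via \ref{RigidMotionsViaRaysProp} shows $(\Pi_{i+1},\mathcal{I}_{i+1})$ is a space with rigid motions. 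This closes the induction and gives condition 3 for all scaffoldings, completing the proof. The main obstacle I anticipate is purely the bookkeeping: matching the group $\mathcal{I}_n$ attached to a scaffolding with the maps produced by iterating \ref{SphericalPaschSpacesThm}, and confirming that the scaffolding map built for isotropy genuinely fixes $r_0,\dots,r_{n-1}$ so that it descends to $\mathcal{I}_n$. Once the correspondence between maximal rays in $\Pi_n$ and points of the succeeding boundary at infinity is pinned down, the remaining verifications are routine.
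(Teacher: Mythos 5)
Your proof is correct and takes essentially the same route as the paper's: the paper's entire proof is a one-line application of \ref{SphericalPaschSpacesThm} to conclude that, for any $n$-scaffolding, the induced pair $(\Pi_n,\mathcal{I}_n)$ is a space with rigid motions. Your write-up merely makes explicit what that one-liner leaves implicit --- the induction along the scaffolding, the observation that each $\Pi_i$ is again a spherical Pasch space, and the isotropy of $(\Pi_n,\mathcal{I}_n)$ extracted from the scaffolding-transitivity hypothesis via \ref{RigidMotionsViaRaysProp} --- so it is a faithful, fleshed-out version of the same argument.
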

\begin{proof}
Apply \ref{SphericalPaschSpacesThm} to conclude that, given an $n$-scaffolding of rays $(\Pi_i,r_i)$, $0\leq i\leq n$, in $\Pi$,
the group of bijections $\mathcal{I}_n$ of $\Pi_n$ induced by $\mathcal{I}$
makes $(\Pi_n,\mathcal{I}_n)$ a space with rigid motions.
\end{proof}

Ideally, we would like \ref{SphericalErlangenSpacesThm} to hold for all Pasch spaces. However, we face some obstacles in proving such generalization.
Right now we can prove a generalization of \ref{SphericalErlangenSpacesThm} under an additional condition.

\begin{Theorem}\label{RegularErlangenSpacesThm}
Let $(\Pi,\mathcal{I})$ is a space with rigid motions, where $\Pi$ is a regular Pasch space.
$(\Pi,\mathcal{I})$ is an Erlangen space if the following conditions are satisfied:\\
1. $(\partial(\Pi,A),\mathcal{I}_A)$ is a space with rigid motions for some $A\in \Pi$,\\
2. For every two $n$-scaffoldings of rays in $\Pi$ there is $f\in \mathcal{I}$ sending the first $n$-scaffolding of rays in $\Pi$ to the second $n$-scaffolding of rays in $\Pi$.
\end{Theorem}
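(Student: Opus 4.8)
The plan is to verify the four defining conditions of an Erlangen space (Definition~\ref{ErlangenSpaceDef}) in turn, with essentially all the work concentrated in Condition~3. Conditions~1 and 2 are immediate: $\Pi$ is a Pasch space because a regular Pasch space is by definition a Pasch space, and $(\Pi,\mathcal{I})$ is a space with rigid motions by hypothesis. Condition~4 is vacuous here: a spherical Pasch space contains two-point segments $s[A,a(A)]$, and such a segment is not connected (it splits as a union of two singleton closed segments), so a regular Pasch space, in which every segment is connected, is never spherical; hence the hypothesis of Condition~4 never triggers. This parallels Theorem~\ref{SphericalErlangenSpacesThm}, with the spherical hypothesis there replaced by Condition~1 here.

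The heart of the proof is Condition~3: for every $n$-scaffolding $(\Pi_i,r_i)_{0\le i\le n}$ the induced pair $(\Pi_n,\mathcal{I}_n)$ is a space with rigid motions. I would prove this by induction on $n$, carrying the full assertion ``space with rigid motions'' (isotropy together with rigidity) at each level. The isotropy half is uniform and comes entirely from Condition~2: given two maximal rays $\rho,\rho'$ of $\Pi_n$ emanating from points $p,p'$, I extend the common prefix $(\Pi_i,r_i)_{0\le i\le n-1}$ to two $(n+1)$-scaffoldings ending in $(\Pi_n,p),(\partial(\Pi_n,p),\rho)$ and $(\Pi_n,p'),(\partial(\Pi_n,p'),\rho')$ respectively. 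Since these two scaffoldings agree on levels $0,\dots,n-1$, the element $f\in\mathcal{I}$ supplied by Condition~2 must fix $r_0,\dots,r_{n-1}$ at each level, so it descends to an element of $\mathcal{I}_n$ carrying $\rho$ to $\rho'$. This yields isotropy of $(\Pi_n,\mathcal{I}_n)$ for every $n\ge 1$, and the case $n=0$ is the hypothesis.

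The rigidity half is where the two hypotheses genuinely enter, and it splits by level. The base level $n=0$ is the hypothesis. The level $n=1$ is exactly the point where Condition~1 is indispensable: here $\Pi_0=\Pi$ is regular, not spherical, so Theorem~\ref{SphericalPaschSpacesThm} does not apply; instead I use Condition~1, which gives that $(\partial(\Pi,A_0),\mathcal{I}_{A_0})$ is a space with rigid motions for one point $A_0$, and transport this to an arbitrary base point $r_0$ via the $f\in\mathcal{I}$ with $f(r_0)=A_0$ furnished by Condition~2 (this $f$ is an equivariant betweenness isomorphism $\partial(\Pi,r_0)\to\partial(\Pi,A_0)$, and being a space with rigid motions is preserved along such isomorphisms). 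For $n\ge 2$ the boundaries have become spherical: $\Pi_{n-1}=\partial(\Pi_{n-2},r_{n-2})$ is the boundary at infinity of a Pasch space, hence a spherical Pasch space, and by the inductive hypothesis $(\Pi_{n-1},\mathcal{I}_{n-1})$ is a space with rigid motions; Theorem~\ref{SphericalPaschSpacesThm} then shows that the elements of $\mathcal{I}_n$ induced on $\Pi_n=\partial(\Pi_{n-1},r_{n-1})$ are rigid motions. Combining this with the isotropy established above, and invoking Proposition~\ref{RigidMotionsViaRaysProp} for the ray formulation, gives that $(\Pi_n,\mathcal{I}_n)$ is a space with rigid motions, completing the induction and hence Condition~3.

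The main obstacle, and the conceptual reason Condition~1 is assumed only for a single point and only at the first boundary, is precisely this asymmetry between the bottom level and the higher ones: Theorem~\ref{SphericalPaschSpacesThm} propagates rigidity for free once a space is spherical, but the passage from the regular space $\Pi$ to its first boundary $\partial(\Pi,A)$ is covered by no earlier result, and that is exactly the gap Condition~1 fills. The routine points to nail down are that ``space with rigid motions'' really does transfer across the equivariant isomorphisms produced by Condition~2 (so that one good base point $A_0$ spreads to all base points), and that the two $(n+1)$-scaffoldings used for isotropy genuinely share their first $n$ levels, so that the transporting element of $\mathcal{I}$ descends to $\mathcal{I}_n$.
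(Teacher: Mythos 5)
Your proof is correct and takes essentially the same route as the paper: the paper's entire proof is the single line ``Use \ref{SphericalPaschSpacesThm} to conclude $(\partial(\Pi,A),\mathcal{I}_A)$ is an Erlangen space for all $A\in \Pi$,'' and your argument is precisely the detailed expansion of that sentence --- Condition~1, transported to every base point via Condition~2, supplies rigidity at the first boundary level (where $\Pi$ itself is regular and \ref{SphericalPaschSpacesThm} cannot be applied), while \ref{SphericalPaschSpacesThm} propagates rigidity to all higher, spherical, levels, with isotropy at every level coming from Condition~2 and Condition~4 vacuous since a regular Pasch space has no antipodal pairs. The only difference is that you make explicit the induction, the scaffolding-extension argument for isotropy, and the transfer of rigidity across conjugation, all of which the paper leaves implicit.
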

\begin{proof}
Use \ref{SphericalPaschSpacesThm} to conclude $(\partial(\Pi,A),\mathcal{I}_A)$ is an Erlangen space for all $A\in \Pi$.
\end{proof}

The reason we cannot drop Condition 1. in \ref{RegularErlangenSpacesThm} is because we do not know if \ref{ErlangenSpheresViaRigidMotions} can be generalized to all planes.

\begin{Question}\label{ErlangenPlanesViaRigidMotionsQuestion}
Let $(\Pi,\mathcal{I})$ be a space with rigid motions such that $\Pi$ is a plane.
Is $(\Pi,\mathcal{I})$
 an Erlangen plane if $(\partial(\Pi,A),\mathcal{I}_A)$
is an isotropic space for some $A\in \Pi$?
\end{Question}

However, we can answer \ref{ErlangenPlanesViaRigidMotionsQuestion} for hyperbolic planes.

\begin{Theorem}\label{ErlangenHypPlaneCharThm}
Let $(\Pi,\mathcal{I})$ be a space with rigid motions such that $\Pi$ is a hyperbolic plane.
If $(\partial(\Pi,A),\mathcal{I}_A)$
is an isotropic space for some $A\in \Pi$, then $(\Pi,\mathcal{I})$
is an Erlangen plane.
\end{Theorem}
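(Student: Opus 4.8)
The plan is to verify the three conditions of Theorem \ref{EPVsRM}, which characterizes Erlangen planes among planes equipped with a subgroup of rigid motions. Condition (a) is exactly our hypothesis, so the work is to produce a single maximal line $l_0$ carrying a reflection $f_0\in\mathcal{I}$ (condition (b)) and to show that $\mathrm{id}$ and $f_0$ are the only elements of $\mathcal{I}$ fixing $l_0$ pointwise (condition (c)). Both facts will be extracted from the isotropy of the boundary circle $(\partial(\Pi,A),\mathcal{I}_A)$ together with the rigid–motion criterion \ref{RigidMotionsViaRaysProp}. The point $A$ supplied by the hypothesis is the one through which $l_0$ passes; since \ref{EPVsRM} requires only one line, no propagation of the boundary structure to the other points of $\Pi$ is needed. (An alternative route through \ref{CharErlangenPlanesViaBoundary} would force us to make every boundary circle an Erlangen circle, which is strictly more than necessary.)

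First I would build $f_0$. Fix a direction $r_0\in\partial(\Pi,A)$ and let $l_0=r_0\cup a(r_0)$ be the maximal line through $A$ in that direction. The two maximal rays of the circle $\partial(\Pi,A)$ emanating from the circle–point $r_0$ correspond to the two half-planes $h_1,h_2$ bounded by $l_0$; isotropy of $(\partial(\Pi,A),\mathcal{I}_A)$ supplies $\phi\in\mathcal{I}_A$ carrying the first of these rays onto the second, so that $\phi(r_0)=r_0$ and $\phi(h_1)=h_2$. Since isomorphisms of a circle commute with its antipodal map (\ref{AntipodalMapCommutes}), also $\phi(a(r_0))=a(r_0)$. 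Thus $\phi$ fixes each of the maximal rays $r_0$ and $a(r_0)$ of $\Pi$ setwise, and \ref{RigidMotionsViaRaysProp} forces $\phi$ to be the identity on both, hence on all of $l_0$. Consequently $f_0:=\phi$ fixes $l_0$ pointwise and interchanges the half-planes, which is condition (b). For condition (c), let $f\in\mathcal{I}$ fix $l_0$ pointwise; then $f$ either preserves or swaps the two half-planes. In the swapping case $f_0\circ f$ fixes $l_0$ pointwise and preserves each half-plane, so both cases of (c) follow once we establish the single statement
$$(\ast)\qquad f\in\mathcal{I},\ f|l_0=\mathrm{id},\ f(h_i)=h_i \implies f=\mathrm{id}.$$

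It remains to prove $(\ast)$, and this is where the hyperbolic hypothesis must enter. Such an $f$ is an isometry fixing $l_0$ pointwise, so for every $B\notin l_0$ the image $f(B)$ lies on the same side of $l_0$ and satisfies $|PB|=|Pf(B)|$ for all $P\in l_0$. The natural approach is via the perpendicular foot: using $f_0$, the segment $s[B,f_0(B)]$ meets $l_0$ in a unique point $F$ for which $l_0$ is the perpendicular bisector, and I would show that $F$ is the unique point of $l_0$ closest to $B$; since $f$ fixes $l_0$ pointwise and preserves distances, $f(B)$ must have the same closest point $F$, the same distance $|FB|$, and the same side, whence $f(B)=B$. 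The main obstacle is proving that the foot \emph{strictly} minimizes the distance from $B$ to $l_0$ — equivalently, that two distance-circles centered at distinct points of $l_0$ meet in at most one point on each side — because the triangle inequality and the non-negativity of the defect (\ref{TriangleInequality}, \ref{DefectIsNonNegativeProp}) on which the usual proof rests are themselves established only for Erlangen planes and so are not yet available here. The hyperbolic hypothesis is precisely what breaks this circularity: fixing the two ideal endpoints of $l_0$ via the asymptotic-parallel (boundary-at-infinity) structure of a hyperbolic plane, and using the strict positivity of the defect (equivalently, the viewing angle of $l_0$ from $B$ being non-full), furnishes the strict inequality that forces $f(B)=B$. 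I expect essentially all of the remaining work, and the only place where ``hyperbolic'' is indispensable, to be this minimization step; its unavailability for an arbitrary plane is exactly what leaves \ref{ErlangenPlanesViaRigidMotionsQuestion} open.
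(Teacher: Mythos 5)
Your frame is sound: reducing to Theorem \ref{EPVsRM}, building $f_0$ from the isotropy of $(\partial(\Pi,A),\mathcal{I}_A)$ together with \ref{AntipodalMapCommutes} and \ref{RigidMotionsViaRaysProp}, and reducing condition (c) to the single statement $(\ast)$ are all correct. But $(\ast)$ is the entire content of the theorem, and your proposal does not prove it. The closest-point strategy leaves two things unestablished: first, that a distance-minimizing point of $l_0$ exists at all (this needs a completeness argument you never supply); second, and crucially, that the foot $F$ is the \emph{strict} minimizer. For the latter you invoke ``strict positivity of the defect,'' but defect is defined through angle measure (\ref{DefectOfTriangleDef}), and angle measure exists only once the boundary circles carry Erlangen-circle structure (\ref{CircleOfAnglesErlangenPlane}) --- i.e., once $(\Pi,\mathcal{I})$ is already known to be an Erlangen plane. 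Your hypothesis gives only isotropy of one boundary circle, so this is exactly the circularity you yourself flagged for \ref{TriangleInequality} and \ref{DefectIsNonNegativeProp}; relabeling it ``hyperbolic'' does not remove it. Even the purely incidence-theoretic reading (the viewing angle of $l_0$ from $B$ is non-full) is not what the paper's hyperbolic hypothesis says: that hypothesis asserts the existence of \emph{one} configuration of two intersecting lines $l_1,l_2$ parallel to a third line $l_3$, and transporting it to every pair $(B,l_0)$ needs its own argument; and even granting it, you give no mechanism by which a non-full viewing angle forces $|FB|<|PB|$ for $P\ne F$, or forces $f(B)=B$.

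The paper proves $(\ast)$ (in the form: $f$ fixing $A$, a ray $r$ at $A$, and both half-planes of $l_A:=r\cup a(r)$ must be the identity) with no metric notions whatsoever. Assume $f\ne id$. Then $f(B)\ne B$ for \emph{every} $B\notin l_A$, since a fixed point off $l_A$ would make $f$ fix pointwise enough rays to exhaust $\Pi$ via \ref{PaschSpacesAndPlanes2} and \ref{RigidMotionsViaRaysProp}; next, $l(B,f(B))$ cannot meet $l_A$, and the viewing angle of $l(B,f(B))$ from every point of $l_A$ must be full, because otherwise the bounding rays of that viewing angle are preserved by $f$, again forcing $f=id$. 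Isotropy then converts the assumed failure of rigidity into: for every point $B$ and every line $l_A\not\ni B$ there is a line $l_B$ through $B$ whose viewing angle from each point of $l_A$ is $180$ degrees, so that every line through $A$ other than $l_A$ meets $l_B$. Placing, by isotropy, $l_B$ on $l_3$ and $A$ at the intersection point of $l_1\cap l_2$ gives the contradiction: $l_1$ and $l_2$ both pass through $A$ and both miss $l_3$, yet at most one of them can equal $l_A$. The missing idea in your proposal is precisely this reversal of viewpoint --- analyzing viewing angles of the displacement lines $l(B,f(B))$ from points of $l_A$, instead of distances from $B$ to $l_0$ --- which is what allows the incidence-only hyperbolic hypothesis to be used directly, without any length or angle-measure theory.
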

\begin{proof}
In the context of \ref{ErlangenHypPlaneCharThm} by a hyperbolic plane we mean a plane such that there exist two intersecting lines $l_1$ and $l_2$, both parallel to a third line $l_3$.

Suppose $f\in \mathcal{I}$ fixes $A$, preserves a maximal ray $r$ emanating from $A$, preserves both half-planes with boundary $l_A:=r\cup a(r)$, and is not equal to the identity function. Therefore $f|l_A$ is the identity. Choose and $B$ outside of $l_A$. Observe $f(B)\ne B$. Indeed, applying \ref{PaschSpacesAndPlanes2} one deduces $f$ is the identity function.
Notice $l(B,f(B))$ cannot intersect $l_A$ for the same reason.
Applying the same argument again one obtains that the visual angle of $l(B,f(B))$ from every point of $_A$ must be $180$ degrees (otherwise there are rays preserved by $f$, the rays on the boundary of the visual angle). Since $(\Pi,\mathcal{I})$ is isotropic, we conclude that for every two points $A$ and $B$ of $\Pi$ and for every line $l_A$ containing $A$ but missing $B$ there is a line $l_B$ passing through $B$ whose viewing angle from every point on the line $l_A$ is $180$ degrees. 
Therefore every line passing through $A$ but different from $l_A$ must intersect $l_B$. That means, if we choose the line for $l_B$, it must be exactly $l_A$.
Since $(\Pi,\mathcal{I})$ is isotropic, we can 
assume $l_B=l_3$ and $A$ is in $l_1\cap l_2$. That leads to a contradiction:
the line chosen for $l_B$ has to be different from either $l_1$ or $l_2$ in which case the other line intersects $l_B$ and is parallel to $l_B$ at the same time.
\end{proof}

\section{Philosophical and pedagogical implications}

Philosophically, this paper shows topology to be at the foundation of geometry in agreement with modern physics (gauge theory, string theory, etc). Also, this paper contributes to compactification of knowledge: our ideas provide a shorter path to modern science than typical expositions of foundations of geometry. Moreover, its roots are deeply connected to the original concepts of Euclid and are far removed from the formalism of Hilbert or Tarski.

From the pedagogical point of view, our approach corresponds well to the Standards for Mathematical Practice
\cite{StandardsMathPractice}, especially the following two:\\
MP7. Look for and make use of structure.\\
MP8. Look for and express regularity in repeated reasoning.

Our approach can be easily taught at the university level to students who took an Introduction to Abstract Mathematics course and have some exposure to group theory. It introduces them to an application of a powerful tool in mathematics, namely a Fixed Point Theorem, it gives them a taste of topology, and it introduces them to group actions, a fundamental concept in modern mathematics and physics (see \cite{BH}). In addition, it is a better fit for future physicists wishing to master gauge theory eventually (see \cite{Fram}). Also, it gives math majors a quicker way to understand the connection of special relativity (boosts) to hyperbolic geometry.

Notice we spent a lot of time on properties of lines. The simplicity of proofs there is useful in helping students in understanding of mathematical reasoning. Classical Euclidean geometry is like juggling many objects at the same time. Our approach allows for a slower exposure to mathematical juggling of concepts and proofs.

One could create a version of our approach for high schools by formulating the Fixed Point Theorem \ref{FixedPointTheorem} as an axiom.

Notice that some proofs in Section \ref{Convexity and connectedness} are really saying that connected line segments are compact. Usually, the concepts of connectedness and compactness are considered to be independent. Having the two concepts emerge from planar geometry is of some educational value.

Marek Kordos \cite{Kord} has an interesting account (in Polish) of dilemmas in teaching geometry in high schools in Europe.

Hung-Hsi Wu (see \cite{WuPre}) promotes a very interesting
\textbf{Fundamental Assumption of School Mathematics} (\textbf{FASM}), which states that
if an identity or an inequality $\leq$  among numbers is valid for all fractions (respectively, all rational numbers), then it is also valid for all nonnegative real numbers (respectively, all real numbers.)
Notice that in our approach to geometry it acquires a simpler form: if an identity or an inequality $\leq$  among lengths of line segments is valid for all natural numbers, then it is also valid for all nonnegative real numbers (see the proofs of \ref{CharOfIsoOfErlangenLines} and \ref{SimilarityOfTrianglesProp}). 
It is highly likely that the reason math education has such enormous problems is that math is detached from its physical roots. Teaching of fractions is detached from measurements and students do not understand that physical units are chosen arbitrarily. One should not manipulate fractions abstractly following some canned formulae. There is always a unit of measure in whatever we do. $\frac{3}{4}$ means a quantity $z$ related to two units $x$ and $y$ such that $x=4y$ and $z=3y$.
$z$ expressed as $\frac{3}{4}$ means it is expressed in terms of unit $x$. That reasoning allows for applying formulae for natural numbers to deducting of formulae for fractions.

\begin{Example}
Consider showing $\frac{a}{b}+ \frac{c}{d}= \frac{a\cdot d+b\cdot c}{b\cdot d}$ as follows. $x=\frac{a}{b}$ is expressed in Marsian feet. It is the quantity of $a$ Pluto feet and each Marsian foot is $b$ Pluto feet. The quantity $y=\frac{c}{d}$ is also expressed in Marsian feet. It is $c$ Saturn feet and each Marsian foot is 
$d$ Saturn feet. One Marsian foot is $b\cdot d$ Lunar feet, hence $x$ is $a\cdot d$ Lunar feet and $y$ is $b\cdot c$ Lunar feet.
Measured in Lunar feet the quantity $x+y$ is $a\cdot d+b\cdot c$ which is what we wanted to show because
$\frac{a\cdot d+b\cdot c}{b\cdot d}$ Marsian feet equals $a\cdot d+b\cdot c$ Lunar feet.
\end{Example}

The above example is another illustration of noncommutativity of teaching and learning (see \cite{Dyda}). Another view of this paper is that it promotes applying of Common Core ideas at the university level. Indeed, one way to see Common Core is as a movement to promote the mainstream of knowledge. This paper fits the bill very well. It shows one can combine ideas from other STEM fields (Science, Technology, Engineering, and Mathematics) in teaching geometry. In particular, one can see the importance of complex numbers here. One should wish that studying complex numbers in depth (beyond multiplying and dividing them) ought to be a basic staple of high school education with the resulting abandonment of classical trigonometry in favor of applications of the Moivre Formula.

Notice Hung-Hsi Wu (see \cite{WuGeo} and \cite{WuCCSS}) has his own set of axioms for planar geometry that is well-suited for teaching geometry in high schools and which can be considered as a useful bridge for teaching our system of axioms at the university level.

\end{document}